\documentclass[a4paper]{article}
\usepackage[T1]{fontenc}
\usepackage{lmodern}
\usepackage{amsmath,amssymb}
\usepackage{amsthm}
\usepackage{mathrsfs}
\usepackage[all]{xy}
\usepackage{rotate}
\usepackage{enumitem}
\usepackage{mathtools}
\usepackage[dvipdfmx]{graphics}
\usepackage{color}




\usepackage{here}

\usepackage{atbegshi}


\newcommand{\Ad}{\textup{Ad}}
\newcommand{\ad}{\textup{ad}}

\renewcommand{\Im}{\textup{Im}}

\newcommand{\id}{\textup{id}}
\newcommand{\tr}{\textup{tr}}

\newcommand{\GL}{\textup{GL}}
\newcommand{\gl}{\mathfrak{gl}}

\renewcommand{\sl}{\mathfrak{sl}}

\renewcommand{\sp}{\mathfrak{sp}}

\newcommand{\upO}{\textup{O}}
\newcommand{\SO}{\textup{SO}}
\newcommand{\so}{\mathfrak{so}}

\newcommand{\End}{\textup{End}}
\newcommand{\RR}{\mathbb{R}}

\newcommand{\CC}{\mathbb{C}}
\newcommand{\ZZ}{\mathbb{Z}}
\newcommand{\NN}{\mathbb{N}}

\newcommand{\OO}{\mathbb{O}}
\newcommand{\DD}{\mathbb{D}}

\newcommand{\Ind}{\textup{Ind}}
\newcommand{\sgn}{\textup{sgn}}
\newcommand{\calF}{\mathcal{F}}
\newcommand{\calI}{\mathcal{I}}
\newcommand{\calO}{\mathcal{O}}

\newcommand{\calW}{\mathcal{W}}
\newcommand{\calB}{\mathcal{B}}
\newcommand{\calC}{\mathcal{C}}
\newcommand{\calH}{\mathcal{H}}
\newcommand{\calP}{\mathcal{P}}
\newcommand{\calU}{\mathcal{U}}
\newcommand{\calL}{\mathcal{L}}

\newcommand{\calD}{\mathcal{D}}

\newcommand{\calV}{\mathcal{V}}
\newcommand{\calZ}{\mathcal{Z}}

\newcommand{\calR}{\mathcal{R}}
\newcommand{\calM}{\mathcal{M}}
\newcommand{\calN}{\mathcal{N}}

\newcommand{\rad}{\textup{rad}}
\newcommand{\soc}{\textup{soc}}

\newcommand{\spec}{\textup{spec}}
\newcommand{\Ann}{\textup{Ann}}

\newcommand{\gr}{\textup{gr}}
\newcommand{\Hom}{\textup{Hom}}

\newcommand{\ev}{\textup{ev}}

\newcommand{\Tor}{\mathrm{Tor}}

\theoremstyle{plain}
\newtheorem{theorem}{Theorem}[section]
\newtheorem*{theorem*}{Theorem}

\newtheorem{proposition}[theorem]{Proposition}
\newtheorem{lemma}[theorem]{Lemma}
\newtheorem{corollary}[theorem]{Corollary}

\newtheorem{fact}[theorem]{Fact}

\theoremstyle{definition}
\newtheorem{definition}[theorem]{Definition}
\newtheorem{example}[theorem]{Example}

\newtheorem{remark}[theorem]{Remark}

\theoremstyle{remark}

\numberwithin{equation}{subsection}

\newcommand{\rring}[1]{\calO(#1)}
\newcommand{\rsheaf}[1]{\calO_{#1}}
\newcommand{\define}[1]{\textit{#1}}

\newcommand{\AV}{\mathcal{AV}}

\newcommand{\PIdeg}{\textup{PI.deg}}
\newcommand{\U}{\textup{U}}
\newcommand{\calA}{\mathcal{A}}
\newcommand{\spn}[1]{\mathrm{span}_{\CC}\{#1\}}

\newcommand{\set}[1]{\left\{#1 \right\}}

{\begin{equation} 
\addtolength{\abovedisplayskip}{-1ex}
\addtolength{\abovedisplayshortskip}{-1ex}
\addtolength{\belowdisplayskip}{-1ex}
\addtolength{\belowdisplayshortskip}{-1ex}
\begin{minipage}[t]{0.87\linewidth}}
{\end{minipage} \end{equation} \ignorespacesafterend}

\newcommand{\frakS}{\mathfrak{S}}
\newcommand{\univ}[1]{\calU(\mathfrak{#1})}
\newcommand{\univcent}[1]{\calZ(\mathfrak{#1})}
\newcommand{\esssup}{\textup{ess sup}}

\newcommand{\pro}[2]{\mathrm{pro}^{#1}_{#2}}
\newcommand{\Ext}{\mathrm{Ext}}

\newcommand{\Len}{\mathrm{Len}}

\newcommand{\Ker}{\mathrm{Ker}}

\newcommand{\Irr}{\mathbf{Irr}}

\newcommand{\roots}{\Delta}
\newcommand{\proots}{{\Delta^+}}

\newcommand{\lie}[1]{{\mathfrak{#1}}}

\newcommand{\lieR}[1]{\mathfrak{#1_{\RR}}}

\newcommand{\lieCent}{\mathfrak{c}}
\newcommand{\supmul}{\mathcal{M}}

\setlength{\fboxrule}{2pt}
\setlength{\fboxsep}{5pt}

\newcommand{\Spec}{\mathrm{Spec}}
\newcommand{\Mod}{\mathrm{Mod}}

\newcommand{\sect}{\Gamma}

\newcommand{\zuck}[2]{\Gamma^{#1}_{#2}}
\newcommand{\Dzuck}[3]{\DD^{#3}\zuck{#1}{#2}}

\newcommand{\ind}[2]{\mathrm{ind}^{\lie{#1}}_{\lie{#2}}}
\newcommand{\dual}[1]{#1^{\vee}}

\newcommand{\ntDsheaf}{\mathscr{D}}

\newcommand{\PR}{\mathrm{Pr}}

\newcommand{\opalg}{\mathrm{op}}

\newcommand{\Loewy}[1]{\mathrm{LLen}({#1})}

\newcommand{\SupDim}{\mathrm{Dim}}
\newcommand{\CWCat}[1]{\mathcal{CW}({#1})}

\newcommand{\topdual}[1]{#1^*}

\newcommand{\supp}{\mathrm{supp}}
\newcommand{\ccInd}{C^\infty_c\text{-}\Ind}
\newcommand{\uInd}{L^2\text{-}\Ind}
\newcommand{\toptensor}[1][]{\mathbin{\widehat{\otimes}_{#1}}}
\newcommand{\Oind}[2]{\calO^{\lie{#1}}_{\lie{#2}}}

\newenvironment{enumparen}{\begin{enumerate}[label=\upshape(\arabic*),ref=(\arabic*)]}{\end{enumerate}}
\newenvironment{enumalpha}{\begin{enumerate}[label=\upshape(\alph*),ref=(\alph*)]}{\end{enumerate}}
\newenvironment{enumalphadash}{\begin{enumerate}[label=\upshape(\alph*'),ref=(\alph*')]}{\end{enumerate}}

\begin{document}

\title{Uniformly bounded multiplicities, polynomial identities and coisotropic actions}

\author{Masatoshi Kitagawa}

\date{}

\maketitle

\begin{abstract}
	Let $G_\RR$ be a real reductive Lie group and $G'_\RR$ a reductive subgroup of $G_\RR$ such that $\lie{g'}$ is algebraic in $\lie{g}$.
	In this paper, we consider restrictions of irreducible representations of $G_\RR$ to $G'_\RR$ and induced representations of irreducible representations of $G'_\RR$ to $G_\RR$.
	Our main concern is when such a representation has uniformly bounded multiplicities, i.e.\ the multiplicities in the representation are (essentially) bounded.
	We give characterizations of the uniform boundedness by polynomial identities and coisotropic actions.

	For the restriction of (cohomologically) parabolically induced representations, we find a sufficient condition for the uniform boundedness by spherical actions and some fiber condition.
	This result gives an affirmative answer to a conjecture by T. Kobayashi.

	Our results can be applied to $(\lie{g}, K)$-modules, Casselman--Wallach representations, unitary representations and objects in the BGG category $\calO$.
	We also treat with an upper bound of cohomological multiplicities.
\end{abstract}
\textbf{Keywords}: representation theory, algebraic group, Lie group, harmonic analysis, branching problem, spherical variety, nilpotent orbit, polynomial identity, Poisson variety \\
\textbf{MSC2020}: primary 22E46; secondary 17B08, 17B63, 16R20

\section{Introduction}

Our main concern in this paper is the boundedness of multiplicities in a representation of a real reductive Lie group.
We characterize the boundedness of multiplicities using two invariants of representations.
One is a purely algebraic invariant called the PI degree, and the other is a geometric invariant called a nilpotent coadjoint orbit.
We deal with restrictions and inductions of unitary representations,
Casselman--Wallach representations, $(\lie{g}, K)$-modules and objects in the BGG category $\calO$.

Our study is originated from several studies of multiplicity-free representations and uniform boundedness theorems.
We recall a characterization of spherical varieties because our theorems are described by very similar forms to the characterization.
For an affine homogeneous variety $X$ of a connected reductive algebraic group $G$ over $\CC$, the following conditions are equivalent:
\begin{enumerate}
	\item $X$ is $G$-spherical
	\item (representation theory) the coordinate ring $\rring{X}$ is multiplicity-free as a $G$-module
	\item (algebra) the algebra of $G$-invariant differential operators on $X$ is commutative
	\item (symplectic geometry) the $G$-action on the cotangent bundle $T^*(X)$ is coisotropic, i.e.\ for generic $x \in T^*(X)$, the tangent space $T_x(Gx)$ is coisotropic in $T_x(T^*(X))$ with respect to the canonical symplectic form on $T^*(X)$.
\end{enumerate}
See \cite[Theorem 25.4]{Ti11} and references therein.
Our main theorems (Theorems \ref{intro:thm:Restriction} and \ref{intro:thm:Induction}) can be considered as an analogue and a generalization of this characterization to restrictions and inductions of infinite-dimensional representations of real reductive Lie groups.

Another origin of our study is Kobayashi's uniform boundedness theorem for unitary highest weight modules.
Following to the study \cite{Ko08}, we recall the uniform boundedness of multiplicities.
Let $G_\RR$ be a real reductive Lie group with a maximal compact subgroup $K_\RR$.
It is well-known that a unitary representation $\calH$ of $G_\RR$ has the irreducible decomposition:
\begin{align}
	\calH\simeq \int_{\widehat{G}_\RR}^\oplus \calH_\pi\toptensor \calM_\pi d\mu(\pi), \label{intro:eqn:DirectIntegral}
\end{align}
where $\mu$ is a Borel measure on the unitary dual $\widehat{G}_\RR$ of $G_\RR$, $\calH_\pi$ is a representation space of $\pi \in \widehat{G}_\RR$ and $\calM_\pi$ is a Hilbert space on which $G_\RR$ acts trivially.
We say that $\calH$ has uniformly bounded multiplicities if $\dim_{\CC}(\calM_\pi)$ is essentially bounded on $\pi \in \widehat{G}_\RR$.
Similarly, a smooth representation $V$ of $G_\RR$ (resp.\ $(\lie{g}, K)$-module $V$) is said to have uniformly bounded multiplicities if there exists a constant $C$ such that
\begin{align*}
	\dim_{\CC}(\Hom_{G_\RR}(V, W)) \leq C\quad (\text{resp.\ }\dim_{\CC}(\Hom_{\lie{g}, K}(V, W)) \leq C)
\end{align*}
for any irreducible Casselman--Wallach representation $W$ of $G_\RR$ (resp.\ $(\lie{g}, K)$-module $W$).
Here $\lie{g}$ is the complexification of the Lie algebra $\lie{g}_\RR$ of $G_\RR$ and $K$ is the complexification of $K_\RR$.

As we have mentioned, the commutativity of an algebra of invariant operators (e.g.\ differential operators) is strongly related to the multiplicity-freeness.
To capture the uniform boundedness of multiplicities, the commutativity is not enough.
The notion of PI degree defined below plays a central role in our study.
We refer the reader to \cite[Chapter 13]{McRo01_noncommutative} and \cite{DrFo04}.

\begin{definition}\label{def:PIdeg}
	Let $\calA$ be a $\CC$-algebra.
	For a $\ZZ$-coefficient non-commutative polynomial $f$ with $n$ indeterminates, we say that $f$ is a \define{polynomial identity} of $\calA$ if $f(X_1, X_2, \ldots, X_n) = 0$ for any $X_i \in \calA$.
	We say that $\calA$ has \define{PI degree} $n$ if $n$ is the largest natural number such that any polynomial identity of $\calA$ is 
	a polynomial identity of the matrix algebra $M_n(\CC)$.
	We denote by $\PIdeg(\calA)$ the PI degree of $\calA$.
	(We set $\PIdeg(\calA) = \infty$ if $n$ does not exist.)
\end{definition}

Note that the PI degree distinguishes the matrix algebras, that is,
we have $\PIdeg(M_n(\CC)) = n$ by the Amitsur--Levitzki theorem \cite{AmLe50}.
Let $\calA$ be a $\CC$-algebra of at most countable dimension.
Obviously, if $m > \PIdeg(\calA)$, there is no surjective homomorphism $\calA \rightarrow M_m(\CC)$.
In other words, the dimension of any irreducible $\calA$-module is less than or equal to $\PIdeg(\calA)$.
Conversely, if there is a family $\set{(\pi_\lambda, V_\lambda)}_{\lambda \in \Lambda}$ of finite-dimensional $\calA$-modules
such that $\bigcap_\lambda \Ker(\pi_\lambda) = 0$, then we have
\begin{align*}
	\PIdeg(\calA) \leq \sup_{\lambda \in \Lambda} \dim_{\CC}(V_\lambda).
\end{align*}
These properties are the reason why the PI degree can capture the supremum of multiplicities.
In many cases, we will let $V_\lambda$ be the space $\Hom_{G'_\RR}(V, V')$ of $G'_\RR$-homomorphisms or similar spaces such as $\Hom_{\lie{g'}, K'}(V, V')$ and $V\otimes_{\univ{g'}}V'$.

Let us state the main theorem for the restriction case.
Let $G_\RR$ be a real reductive Lie group and $G'_\RR$ a reductive subgroup of $G_\RR$.
Fix a maximal compact subgroup $K_\RR$ of $G_\RR$ such that $K'_\RR:= K_\RR\cap G'_\RR$ is maximal compact in $G'_\RR$.
Let $\lie{g}$ (resp.\ $\lie{g'}$, $K$ and $K'$) denote the complexification of the Lie algebra $\lie{g}_\RR$ of $G_\RR$ (resp.\ $\lie{g}'_\RR$, $K_\RR$ and $K'_\RR$).
In our results, the assumption that $\lie{g'}$ is algebraic in $\lie{g}$ is essential, that is, there exist a reductive algebraic group $G$ with the Lie algebra $\lie{g}$ and a (Zariski closed) reductive subgroup $G'$ of $G$ with the Lie algebra $\lie{g'}$.
If necessary, replacing $G$ and $G'$, we can assume that the inclusion $\lie{g}_\RR \hookrightarrow \lie{g}$ (resp.\ $\lie{g}'_\RR \hookrightarrow \lie{g'}$) lifts to a Lie group homomorphism $G_\RR \rightarrow G$ (resp.\ $G'_\RR \rightarrow G'$) with Zariski dense image.

For an affine variety $X$ and an ideal $I$ of the coordinate ring $\rring{X}$, we denote by $\calV(I)$ the closed subset of $X$ defined by $I$.

\begin{theorem}\label{intro:thm:Restriction}
	Let $V$ be a Casselman--Wallach representation of $G_\RR$.
	Set $I:=\Ann_{\univ{g}}(V)$.
	Then the following conditions on $V$ are equivalent:
	\begin{enumparen}
		\item\label{intro:item:RestrictionCW} $V|_{G'_\RR}$ has uniformly bounded multiplicities
		\item\label{intro:item:RestrictionGK} $V_K|_{\lie{g'}, K'}$ has uniformly bounded multiplicities
		\item\label{intro:item:RestrictionPIdeg} $\PIdeg((\univ{g}/I)^{G'}) < \infty$
		\item\label{intro:item:RestrictionPoisson} $(S(\lie{g})/\sqrt{\gr(I)})^{G'}$ is Poisson-commutative
		\item\label{intro:item:RestrictionCoisotropic} the action of $G'$ on $\calV(\gr(I)) (\subset \lie{g}^*)$ is coisotropic
		\item\label{intro:item:RestrictionFinGen} $(\univ{g}/I)^{G'}$ is finitely generated as a $\univcent{g'}$-module.
	\end{enumparen}
	If, in addition, $V$ is unitarizable and $\calH$ is the Hilbert completion of $V$, then the above conditions are equivalent to
	\begin{enumparen}
		\setcounter{enumi}{6}
		\item\label{intro:item:RestrictionUnitary} $\calH|_{G'_\RR}$ has uniformly bounded multiplicities.
	\end{enumparen}
\end{theorem}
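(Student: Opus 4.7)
The proof hinges on the algebra $A := (\univ{g}/I)^{G'}$. Conditions (3)--(6) are intrinsic properties of $A$ and its associated graded, while (1), (2), (7) link to them through the tautological action of $A$ on the multiplicity spaces $M_W := \Hom_{\lie{g'}, K'}(V_K, W)$ by pre-composition. The plan is first to establish the algebraic block (3)$\Leftrightarrow$(4)$\Leftrightarrow$(5)$\Leftrightarrow$(6), then to bridge to the representation-theoretic conditions.

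For (3)$\Leftrightarrow$(4), the standard filtration on $\univ{g}$ induces one on $A$ with $\gr(A) \subseteq (S(\lie{g})/\gr(I))^{G'}$, a commutative Noetherian Poisson algebra; a filtered algebra of this type satisfies a polynomial identity precisely when the commutator-induced Poisson bracket on $\gr(A)$ vanishes, which passes to Poisson-commutativity of $(S(\lie{g})/\sqrt{\gr(I)})^{G'}$ since nilpotents are irrelevant to the bracket. The equivalence (4)$\Leftrightarrow$(5) is the Vinberg-style criterion for coisotropic actions cited in the paper via \cite[Theorem 25.4]{Ti11}, applied to the Poisson subvariety $\calV(\gr(I)) \subset \lie{g}^*$. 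For (3)$\Leftrightarrow$(6), the implication (6)$\Rightarrow$(3) is immediate, since a module-finite extension of a commutative ring satisfies a standard identity; conversely, Posner's theorem on prime PI rings combined with the Chevalley identification $\gr(\univcent{g'}) = S(\lie{g'})^{G'}$ shows that the image of $\univcent{g'}$ in $A$ is a central Noetherian subring of full generic rank, lifting to module-finiteness of $A$ itself.

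For the bridge, (6)$\Rightarrow$(2) follows because $\univcent{g'}$ acts on $M_W$ by the central character of $W$, so each $M_W$ is a module over a fixed central-character fiber of the $\univcent{g'}$-finite algebra $A$; its dimension is therefore bounded uniformly in $W$ by the generic $\univcent{g'}$-rank of $A$. Conversely, (2)$\Rightarrow$(3): a uniform bound $\dim_\CC M_W \leq C$ makes every quotient $A \to \End(M_W)$ satisfy the standard identity of degree $2C$, and provided the map $A \to \prod_W \End(M_W)$ is injective, $A$ itself is PI. The equivalence (1)$\Leftrightarrow$(2) is the Casselman--Wallach matching $\Hom_{G'_\RR}(V, W^\infty) \cong \Hom_{\lie{g'}, K'}(V_K, W_{K'})$ for irreducible $W$. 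For the unitary block, (1)$\Rightarrow$(7) follows from the fact that smooth multiplicities dominate Plancherel multiplicities after $K'$-finite analysis on the direct integral in \eqref{intro:eqn:DirectIntegral}, and (7)$\Rightarrow$(1) uses that the smooth vectors form a Casselman--Wallach subspace of $\calH$ whose restriction multiplicities control those of $\calH$.

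The main obstacle is the injectivity of $A \to \prod_W \End(M_W)$ in (2)$\Rightarrow$(3). When $V_K|_{\lie{g'}, K'}$ fails to be discretely decomposable, the $(\lie{g'}, K')$-radical of $V_K$ may be nonzero, and an element of $A$ pushing $V_K$ into this radical would be annihilated by every test map $V_K \to W$ with $W$ irreducible. The natural remedy is to enlarge the family of test modules beyond irreducibles, for instance by using $(\lie{g'}, K')$-subquotients of a universal module such as $\uVerm$ introduced elsewhere in the paper, or by passing to continuous/algebraic duals of injective resolutions, so that the combined family faithfully represents $A$. Converting this faithfulness into a numerical comparison between $\PIdeg(A)$ and the uniform bound $C$ is the technical heart of the proof.
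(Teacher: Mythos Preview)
Your outline has the right architecture, and you correctly identify the faithfulness of $A \to \prod_W \End(M_W)$ as the crux of (2)$\Rightarrow$(3). But two of your bridging steps rest on claims that are either false or equivalent to what must be proved.

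First, your argument for (1)$\Leftrightarrow$(2) invokes an isomorphism $\Hom_{G'_\RR}(V, W^\infty) \cong \Hom_{\lie{g'}, K'}(V_K, W_{K'})$. The paper states explicitly (just after \eqref{eqn:CWandGKhom}) that only the inequality $\dim\Hom_{G'_\RR}(V,V') \leq \dim\Hom_{\lie{g'},K'}(V_K,V'_{K'})$ is known and that equality is an open problem. The paper therefore does \emph{not} prove (1)$\Leftrightarrow$(2) directly; instead it sandwiches both supremums between $\PIdeg(A)$ and $C\cdot\PIdeg(A)$ (Theorem~\ref{thm:BoundedRestriction}), so that (1), (2), (3) are all equivalent without ever comparing (1) and (2) to each other.

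Second, your claim that ``a filtered algebra of this type satisfies a polynomial identity precisely when the commutator-induced Poisson bracket on $\gr(A)$ vanishes'' is only half right. The direction (3)$\Rightarrow$(4) is the content of Theorem~\ref{thm:PIdegAndPoissonCommutative}, proved via Cayley--Hamilton identities. The converse is not a general filtered-algebra fact: the paper obtains (4)$\Rightarrow$(3) only by the detour (4)$\Leftrightarrow$(5)$\Rightarrow$(6)$\Rightarrow$(3), where (5)$\Rightarrow$(6) relies on Losev's theorem (Theorem~\ref{thm:FinitePoissonCenter}) that the Poisson center of $\rring{X}^{G'}$ is $S(\lie{g'})^{G'}$-finite for a conical Hamiltonian $G'$-variety $X$ with finitely many symplectic leaves. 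Your proposed direct route (3)$\Rightarrow$(6) via Posner has the same gap in disguise: Posner gives finiteness of a prime PI ring over its own center, but showing that the image of $\univcent{g'}$ has ``full generic rank'' in $A$ is precisely Losev's integrality statement (Theorem~\ref{thm:FinitePoissonCenter}\ref{enum:FinitePoissonCenterIntegral}) and does not follow from PI alone.

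For the obstacle you flag in (2)$\Rightarrow$(3), the paper's resolution (Lemmas~\ref{lem:AnnhilatorRad} and~\ref{lem:lowerboundQuot}) is not to enlarge the test family but to exploit two facts: $\Loewy{\calC_{\lie{g'},K'}}<\infty$ (Theorem~\ref{thm:FiniteLoewyGK}), and every nonzero $V$ in the relevant category has $\rad(V,\calC_{\lie{g'},K'})\neq V$ (Lemma~\ref{lem:ExistenceQuotRest}). Together these force a power of $J=\bigcap_W\Ann_A(M_W)$ to land inside $\Ann_A(V)$, so $\PIdeg(A/J)=\PIdeg(A)$ by Proposition~\ref{prop:pidegIdeal}.
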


We shall sketch the proof of the theorem.
The implication \ref{intro:item:RestrictionPIdeg} $\Rightarrow$ \ref{intro:item:RestrictionPoisson} is a purely algebraic result.
Roughly speaking, an algebra of finite PI degree is `almost commutative' and hence its associated graded algebra should be `almost Poisson-commutative'.
We prove the implication in Subsection \ref{subsect:PoissonPolynomialIdentity} using polynomial identities coming from the Cayley--Hamilton theorem and their associated graded versions.

A large part of the equivalence \ref{intro:item:RestrictionPoisson} $\Leftrightarrow$ \ref{intro:item:RestrictionCoisotropic}
and the implication \ref{intro:item:RestrictionCoisotropic} $\Rightarrow$ \ref{intro:item:RestrictionFinGen} is a Poisson geometrical result.
The proof largely depends on the study of conical $G$-Hamiltonian Poisson varieties by I. Losev \cite{Lo09}.
We summarize several results in Poisson geometry in Section \ref{sect:Coisotropic}.

For the condition \ref{intro:item:RestrictionUnitary}, we use the theory of disintegration of unbounded operators and representations.
To study the algebra $\univ{g}^{G'}$, we construct $*$-representations of $\univ{g}^{G'}$ on the spaces $\calM_{\pi}$ in the direct integral decomposition \eqref{intro:eqn:DirectIntegral}.
We deal with unitary representations only in Section \ref{sect:Unitary}.

The first half of this paper is devoted to the proof of \ref{intro:item:RestrictionCW} $\Leftrightarrow$ \ref{intro:item:RestrictionGK} $\Leftrightarrow$ \ref{intro:item:RestrictionPIdeg}.
We will show that there exists a constant $C > 0$ such that for any irreducible Casselman--Wallach representation $V$ of $G_\RR$, 
\begin{align}
	\PIdeg((\univ{g}/I)^{G'}) &\leq \sup_{V'}\dim_{\CC}(\Hom_{G'_\RR}(V, V'))\\
	&\leq \sup_{V'}\dim_{\CC}(\Hom_{\lie{g'}, K'}(V_K, V'_{K'})) \\
	&\leq C \cdot \PIdeg((\univ{g}/I)^{G'}), \label{intro:eqn:BoundSup}
\end{align}
where the two supremums are over all irreducible Casselman--Wallach representations $V'$ of $G'_\RR$.
Note that we assume the irreducibility of $V$ for simplicity, and if $V$ is not irreducible, we need additional terms.
The inequation \eqref{intro:eqn:BoundSup} is proved in Theorem \ref{thm:BoundedRestriction}.
We will prove a similar upper bound for cohomological multiplicities in Lemma \ref{lem:UpperBoundTorGeneral} and Corollary \ref{cor:BoundedHomology}.

If $G_\RR$ is connected and $G'_\RR$ is compact, then we can take $C = 1$ in \eqref{intro:eqn:BoundSup}.
This fact is proved by Penkov--Serganova in the proof of \cite[Theorem 4.3]{PeSe12} in the context of generalized Harish-Chandra modules.
In the case, the inequation follows from the properties of the PI degree and the following two claims:
\begin{enumalpha}
	\item\label{intro:item:Jacobson} The $\univ{g}^{G'}$-module $\Hom_{G'_\RR}(F', V)$ is irreducible for any irreducible finite-dimensional representation $F'$ of $G'_\RR$.
	\item\label{intro:item:Faithful} The annihilator of the $\univ{g}^{G'}$-module $\prod_{F'} \Hom_{G'_\RR}(F', V)$ coincides with $\Ann_{\univ{g}^{G'}}(V)$,
	where the direct product is taken over all irreducible finite-dimensional representations $F'$ of $G'_\RR$.
\end{enumalpha}
The property \ref{intro:item:Jacobson} is a classical result following from the Jacobson density theorem.
The property \ref{intro:item:Faithful} is trivial since $V|_{G'_\RR}$ is completely reducible.

Our strategy to show the inequation \eqref{intro:eqn:BoundSup} is to generalize the properties \ref{intro:item:Jacobson} and \ref{intro:item:Faithful} to the infinite-dimensional case as
\begin{enumalphadash}
	\item\label{intro:item:JacobsonDash} There exists a constant $C > 0$ such that for any irreducible $(\lie{g}, K)$-module $V$ and irreducible $(\lie{g'}, K')$-module $V'$, we have
	\begin{align*}
		\Len_{\univ{g}^{G'}}(H_0(\lie{g'}, K'; V\otimes V')) \leq C,
	\end{align*}
	where $\Len(\cdot)$ means the length of a module.
	\item\label{intro:item:FaithfulDash} Let $V$ be an irreducible Casselman--Wallach representation of $G_\RR$ and set $J:= \bigcap_{V'} \Ann_{\univ{g}^{G'}} (\Hom_{G'_\RR}(V, V'))$,
	where the union is over all irreducible Casselman--Wallach representations $V'$ of $G'_\RR$.
	Then there exists a constant $C > 0$ independent of $V$ such that $J^C \subset \Ann_{\univ{g}^{G'}}(V)$.
\end{enumalphadash}
The property \ref{intro:item:JacobsonDash} has been proved in our previous paper \cite[Theorem 7.18]{Ki20} (Fact \ref{fact:BoundedTorHom}).
The property \ref{intro:item:FaithfulDash} is proved in Section \ref{sect:LowerBound} in a general setting.

The upper bound in \eqref{intro:eqn:BoundSup} follows easily from \ref{intro:item:JacobsonDash} and the natural isomorphism
$H_0(\lie{g'}, K'; V\otimes V')^* \simeq \Hom_{\lie{g'}, K'}(V, (V')^*_{K'})$.
Note that the $\univ{g}^{G'}$-module $\Hom_{\lie{g'}, K'}(V, V')$ may not have finite length unlike $H_0(\lie{g'}, K'; V\otimes V')$.
In fact, if $\Hom_{\lie{g'}, K'}(V, V')$ is not finite dimensional, then it is uncountable infinite dimensional and hence should have infinite length as a $\univ{g}^{G'}$-module.
The lower bound in \eqref{intro:eqn:BoundSup} follows from \ref{intro:item:FaithfulDash}.
Roughly speaking, \ref{intro:item:FaithfulDash} asserts that $V|_{G'_\RR}$ has enough many irreducible quotients to recover $\Ann_{\univ{g}^{G'}}(V)$.

If $G'_\RR$ is compact, the equivalence \ref{intro:item:RestrictionGK} $\Leftrightarrow$ \ref{intro:item:RestrictionCoisotropic} is proved by A. Petukhov in \cite[Theorem 3.1]{Pe18} in the context of generalized Harish-Chandra modules.
The sphericity of the $G'$-action on the associated variety of $V$ plays an important role in his proof.
Since $G'$ may not act on the associated variety in our setting, we can not generalize his proof to our setting directly.
See Fact \ref{fact:Petukhov} and its surroundings.

If $V$ is a parabolically induced representation $\Ind^{G_\RR}_{P_\RR}(W)$, we can reduce the uniform boundedness of multiplicities in $V|_{G'_\RR}$ to that of the fiber $W$.
We shall state this result.
For simplicity, we assume that $G_\RR$ and $G'_\RR$ are connected.
Let $P_\RR$ be a parabolic subgroup of $G_\RR$ and $P$ the analytic subgroup of $G$ with the Lie algebra $\lie{p}\subset \lie{g}$.

If $G/P$ is $G'$-spherical (i.e.\ a Borel subgroup $B'$ has an open orbit $B'xP$ in $G/P$), we set
\begin{align}
	P'&:= \set{g \in G': gB'xP = B'xP}, \\
	L'&:= x^{-1}P'x \cap P. \label{intro:eqn:L}
\end{align}
By the local structure theorem \cite{BLV86}, $L'$ is a reductive subgroup of $x^{-1}G'x \cap P$.
In the case, we can assume that $G'_\RR P_\RR$ is open in $G_\RR$
and hence we can choose the point $x = e$.
Moreover, we can replace the Borel subgroup $B'$ with a good one for $\lie{l'}\cap \lie{g}'_\RR$ to be a real from of $\lie{l'}$ (see Proposition \ref{prop:BLVreal}).
Let $L'_\RR$ be the analytic subgroup of $G'_\RR$ with the Lie algebra $\lie{l}'_\RR$.

Let $M_\RR$ be a Levi subgroup of $P_\RR$ containing $L'_\RR$.
We extend a representation of $M_\RR$ to one of $P_\RR$ letting the nilradical of $P_\RR$ act trivially.
We denote by $\Ind^{G_\RR}_{P_\RR}(V)$ the induced representation of a representation $V$ of $P_\RR$ (without $\rho$-shift).
Let $K_M$ be the complexification of a maximal compact subgroup of $M_\RR$.

\begin{theorem}\label{intro:thm:ReductionToFiberParabolic}
	Let $V$ be a Casselman--Wallach representation of $M_\RR$ and $X$ a non-zero closed subrepresentation of $\Ind^{G_\RR}_{P_\RR}(V)$.
	Then $X|_{G'_\RR}$ has uniformly bounded multiplicities if $G/P$ is $G'$-spherical and $V|_{L'_\RR}$ has uniformly bounded multiplicities.
	The converse is also true if $\univ{g}\otimes_{\univ{p}} (V^*)_{K_M}$ is irreducible as a $(\lie{g}, K_M)$-module or $X = \Ind^{G_\RR}_{P_\RR}(V)$.
\end{theorem}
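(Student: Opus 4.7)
My plan is to apply Theorem \ref{intro:thm:Restriction} twice---once to the pair $(G_\RR, G'_\RR)$ acting on $X$, and once to the pair $(M_\RR, L'_\RR)$ acting on $V$---and then bridge the two resulting coisotropy/PI-degree conditions through the geometry of the $G'$-spherical variety $G/P$. Throughout, set $I_X := \Ann_{\univ{g}}(X)$, $J := \Ann_{\univ{m}}(V)$, and $Y := \Ind^{G_\RR}_{P_\RR}(V)$ with $I_Y := \Ann_{\univ{g}}(Y)$.

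For the ``if'' direction, I would first reduce to $X = Y$. Since $X \hookrightarrow Y$ gives $I_X \supseteq I_Y$, and since reductivity of $G'$ makes $G'$-invariants exact on locally finite $G'$-modules, there is a surjection $(\univ{g}/I_Y)^{G'} \twoheadrightarrow (\univ{g}/I_X)^{G'}$; because PI degree cannot increase under quotients, Theorem \ref{intro:thm:Restriction}\ref{intro:item:RestrictionPIdeg} reduces the ``if'' statement to proving uniform boundedness for $Y|_{G'_\RR}$. Next I would identify the associated variety of the induction: a standard filtration / characteristic-cycle computation for parabolically induced Harish-Chandra modules gives
\begin{align*}
  \calV(\gr(I_Y)) \;=\; \overline{G \cdot \bigl(\lie{p}^\perp + \iota(\calV(\gr(J)))\bigr)} \;\subset\; \lie{g}^*,
\end{align*}
where $\iota\colon \lie{m}^* \hookrightarrow \lie{p}^* \subset \lie{g}^*$ is the $M$-equivariant splitting from the Levi decomposition. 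The inclusion $\supseteq$ is immediate from $I_Y \supseteq \univ{g}\cdot J\cdot\univ{g}$; the reverse uses the surjection $\univ{g}\otimes_{\univ{p}}V \twoheadrightarrow Y_K$ together with associated-graded dimension bounds.

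The heart of the argument is the geometric equivalence:
\begin{align*}
  G' \text{ acts coisotropically on } G\cdot\bigl(\lie{p}^\perp + \iota(\calV(\gr(J)))\bigr) \;\;\Longleftrightarrow\;\; L' \text{ acts coisotropically on } \calV(\gr(J)).
\end{align*}
I would prove this by applying the local structure theorem \cite{BLV86} at the base point $eP$ (legitimate because, by the discussion preceding \eqref{intro:eqn:L}, one may take $x = e$): there is a $P'$-stable affine open slice $U \subset G/P$ with $U \cong \mathrm{Rad}_u(P') \times Z$ carrying an $L'$-action fixing $eP$. Pulling back to $T^*(G/P) = G\times^P \lie{p}^\perp$ and restricting to the $G'$-saturation of the zero section over $U$, the $G'$-action factors through the free ``horizontal'' action on $\mathrm{Rad}_u(P')$ (whose coisotropy is precisely the sphericity of $G/P$ for $G'$) together with the Hamiltonian $L'$-action on the fiber $\lie{p}^\perp$. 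Under the $L$-equivariant splitting $\lie{p}^\perp \cong \lie{m}^* \oplus \mathrm{nil}$, symplectic reduction identifies the coisotropy of $G'$ on the whole orbit with the coisotropy of $L'$ on the $\lie{m}^*$-part, namely on $\iota(\calV(\gr(J)))$. Applying Theorem \ref{intro:thm:Restriction}\ref{intro:item:RestrictionCoisotropic} to the pair $(M_\RR,L'_\RR)$ (using that $\lie{l'}$ is algebraic in $\lie{m}$ by construction) translates the latter back into uniform boundedness of $V|_{L'_\RR}$, completing the ``if'' direction.

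For the converse, the additional hypothesis---either $X = Y$ outright, or irreducibility of $\univ{g}\otimes_{\univ{p}}(V^*)_{K_M}$ (which forces $I_X = I_Y$ by an annihilator computation: $I_X \cap \univ{p}$-invariance pins down $J$ up to the induction)---yields $\calV(\gr(I_X)) = \calV(\gr(I_Y))$, so the chain of equivalences runs backwards. Then $G'$-coisotropy on $\calV(\gr(I_Y))$, projected to the base $G/P$, forces sphericity of $G/P$ for $G'$ (the cotangent direction alone already obstructs coisotropy otherwise), and restricted through the slice forces $L'$-coisotropy on $\calV(\gr(J))$, hence uniform boundedness of $V|_{L'_\RR}$. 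The main obstacle I anticipate is Step 3: executing the local-structure-theorem reduction cleanly at the symplectic level---in particular, matching the Poisson structure on the $G$-orbit in $\lie{g}^*$ with the symplectic form on $T^*(G/P)$ via the moment map, and tracking the $L'$-action under the splitting $\lie{p}^\perp \cong \lie{m}^* \oplus \mathrm{nil}$ precisely enough that symplectic reduction preserves the coisotropy property in both directions.
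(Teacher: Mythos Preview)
Your route is genuinely different from the paper's. The paper never invokes the coisotropic criterion for this theorem: its proof (Theorem~\ref{thm:ReductionToFiberParabolicInd}) stays entirely on the algebraic side of Theorem~\ref{intro:thm:Restriction}. It first identifies $\Ann_{\univ{g}}(\Ind^{G_\RR}_{P_\RR}(V))$ with ${}^t\Ann_{\univ{g}}(\ind{g}{p}((V^*)_{K_M}))$ (Lemma~\ref{lem:annihilatorParabolicInduction}), and then appeals to the key PI-degree comparison (Theorem~\ref{thm:pidegAnnihilatorVerma})
\[
C^{-1}\,\PIdeg((\univ{m}/I)^{L'}) \;\leq\; \PIdeg((\univ{g}/J)^{G'}) \;\leq\; C\,\PIdeg((\univ{m}/I)^{L'}),
\]
which is proved by reducing to primitive $I$ via Duflo's theorem, realizing $I$ as the annihilator of a highest weight module, and computing $\ind{g}{p}(V)\otimes_{\univ{g'}}\ind{g'}{p'}(V')\simeq V\otimes_{\univ{l'}}V'$ explicitly (Lemmas~\ref{lem:ReductionToFiberParabolic} and~\ref{lem:NonVanishingParabolicInd}). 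No associated varieties, no symplectic reduction.

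Your geometric approach would, if completed, give an independent proof of the coisotropy equivalence that the paper obtains only \emph{a posteriori} by combining Theorems~\ref{thm:pidegAnnihilatorVerma} and~\ref{thm:PIdegAndCoisotropicRest}; in the degenerate case $J=\lie{m}\univ{m}$ it would reprove Fact~\ref{fact:ParabolicSpherical}, which the paper cites from \cite{AvPe14}. What the paper's algebraic route buys is precisely that it bypasses the obstacle you flag in Step~3: the moment map $G\times^P(\lie{p}^\perp+\iota(\calV(\gr J)))\to\lie{g}^*$ is only generically finite onto its image, the induced variety need not be irreducible, and tracking the KKS Poisson structure through the local-structure slice and the splitting $\lie{p}^\perp\cong\lie{m}^*\oplus\lie{n}$ so that coisotropy survives in both directions is genuinely delicate---your sketch does not yet do this. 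Your associated-variety formula for $\calV(\gr I_Y)$ is also asserted rather than proved; the paper avoids needing it by working directly with annihilators rather than their graded ideals. The reduction from $X$ to $Y$ via $I_X\supseteq I_Y$, and the use of irreducibility of the generalized Verma module to force $I_X=I_Y$, match the paper exactly.
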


Note that the assumption in the converse part implies $\Ann_{\univ{g}^{G'}}(X) = \Ann_{\univ{g}^{G'}}(\Ind^{G_\RR}_{P_\RR}(V))$.
Although we can replace the assumption with this condition about the annihilators, the condition is not easy to verify directly.
Theorem \ref{intro:thm:ReductionToFiberParabolic} is proved in Theorem \ref{thm:ReductionToFiberParabolicInd}.

For cohomological parabolic inductions, we can prove an analogue of Theorem \ref{intro:thm:ReductionToFiberParabolic}.
Let $\lie{p}$ be a parabolic subalgebra of $\lie{g}$ with a Levi subalgebra $\lie{m}$, and $K_M$ a reductive subgroup of $K$ such that $\lie{k}_M$ is contained in $\lie{m}$ and $K_M$ normalizes $\lie{p}$ and $\lie{m}$.
For an $(\lie{m}, K_M)$-module $V$, we consider the cohomologically induced module
\begin{align*}
	\calR^i(V) := \Dzuck{K}{K_M}{i}(\univ{g}\otimes_{\univ{p}}V),
\end{align*}
where $\Dzuck{K}{K_M}{i}$ is the $i$-th Zuckerman derived functor.
We refer the reader to \cite{KnVo95_cohomological_induction} for the cohomological induction.
If $G/P$ is $G'$-spherical, we take a reductive subgroup $L'$ of $M$ as in \eqref{intro:eqn:L}.
The following theorem is proved in Theorem \ref{thm:ReductionToFiberCohInd}.

\begin{theorem}\label{intro:thm:ReductionToFiberCoh}
	Let $V$ be an irreducible $(\lie{m}, K_M)$-module with annihilator $I \subset \univ{m}$.
	If $G/P$ is $G'$-spherical and $\PIdeg((\univ{m}/I)^{L'}) < \infty$, then $\calR^i(V)|_{(\lie{g'}, K')}$ has uniformly bounded multiplicities.
	If $\Ann_{\univ{g}}(\calR^i(V)) = \Ann_{\univ{g}}(\univ{g}\otimes_{\univ{p}}V)$, then the converse is also true.
\end{theorem}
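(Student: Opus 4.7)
The plan is to translate both the hypothesis and the conclusion into conditions on PI degrees via Theorem~\ref{intro:thm:Restriction}, and then to reduce to Theorem~\ref{intro:thm:ReductionToFiberParabolic} applied to the generalized Verma-type module $M := \univ{g}\otimes_{\univ{p}}V$. By the equivalence \ref{intro:item:RestrictionGK} $\Leftrightarrow$ \ref{intro:item:RestrictionPIdeg} of Theorem~\ref{intro:thm:Restriction}, uniform boundedness of multiplicities in $\calR^i(V)|_{(\lie{g'}, K')}$ is equivalent to $\PIdeg((\univ{g}/J')^{G'}) < \infty$, where $J' := \Ann_{\univ{g}}(\calR^i(V))$. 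Similarly, the purely algebraic PI-degree content of Theorem~\ref{intro:thm:ReductionToFiberParabolic} characterizes $\PIdeg((\univ{g}/J)^{G'}) < \infty$, for $J := \Ann_{\univ{g}}(M)$, as precisely the conjunction of $G'$-sphericity of $G/P$ and $\PIdeg((\univ{m}/I)^{L'}) < \infty$.

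First I would establish the annihilator containment $J' \supseteq J$, which is a general feature of Zuckerman derived functors: the $\univ{g}$-action on $\Dzuck{K}{K_M}{i}(M)$ is inherited pointwise from that on $M$, so computing this cohomology via a $(\lie{k}, K_M)$-injective resolution of $M$ shows that every element of $J$ acts as zero on each term of the complex and hence on cohomology. Because $J$ and $J'$ are $G'$-stable and $G'$ is reductive, the functor of $G'$-invariants is exact on the quotient $\univ{g}/J \twoheadrightarrow \univ{g}/J'$, producing a surjection $(\univ{g}/J)^{G'} \twoheadrightarrow (\univ{g}/J')^{G'}$. Since PI degree is non-increasing under quotients, the forward implication follows: under the stated hypotheses, $\PIdeg((\univ{g}/J)^{G'}) < \infty$ forces $\PIdeg((\univ{g}/J')^{G'}) < \infty$.

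For the converse, the additional assumption $J' = J$ upgrades the surjection to an isomorphism, so finiteness of $\PIdeg((\univ{g}/J')^{G'})$ is equivalent to finiteness of $\PIdeg((\univ{g}/J)^{G'})$. The reverse direction of the PI-degree content of Theorem~\ref{intro:thm:ReductionToFiberParabolic} then yields simultaneously $G'$-sphericity of $G/P$ and $\PIdeg((\univ{m}/I)^{L'}) < \infty$, which is what we want.

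The main technical obstacle is extracting a purely algebraic, PI-degree version of Theorem~\ref{intro:thm:ReductionToFiberParabolic} that applies to the generalized Verma-type object $M$, which is not itself a Casselman--Wallach representation. The bridge is to verify that $\Ann_{\univ{g}}(M)$ coincides, up to the standard $\rho$-shift, with the annihilator of a Casselman--Wallach parabolic induction $\Ind^{G_\RR}_{P_\RR}(W)$ for a suitable admissible globalization $W$ of $V$, so that both PI-degree reformulations concern the same algebra $(\univ{g}/J)^{G'}$. Tracking this shift, and checking that the subgroup $L'$ and the $G'$-sphericity condition produced by Theorem~\ref{intro:thm:ReductionToFiberParabolic} match the data in the present statement, is the most delicate bookkeeping in the argument.
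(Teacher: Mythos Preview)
Your overall architecture is correct and matches the paper exactly: use the annihilator containment $J' \supseteq J$ for Zuckerman functors (this is Fact~\ref{fact:LowerBoundAnnihilatorCohInd}), translate uniform boundedness into finiteness of $\PIdeg((\univ{g}/J')^{G'})$ via Theorem~\ref{intro:thm:Restriction}, and then compare $\PIdeg((\univ{g}/J)^{G'})$ with $\PIdeg((\univ{m}/I)^{L'})$.

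The gap is in your proposed resolution of the ``technical obstacle.'' You suggest bridging to Theorem~\ref{intro:thm:ReductionToFiberParabolic} by identifying $\Ann_{\univ{g}}(M)$ with the annihilator of a Casselman--Wallach parabolic induction $\Ind^{G_\RR}_{P_\RR}(W)$. But Theorem~\ref{intro:thm:ReductionToFiberParabolic} concerns a \emph{real} parabolic subgroup $P_\RR \subset G_\RR$, whereas in the cohomological induction setting $\lie{p}$ is an arbitrary (typically $\theta$-stable) parabolic subalgebra of $\lie{g}$, and in general there is no real parabolic $P_\RR$ with complexified Lie algebra $\lie{p}$. So no globalization $W$ and no induced representation $\Ind^{G_\RR}_{P_\RR}(W)$ are available to compare with; the bridge simply does not exist in the cases of interest (e.g.\ $A_{\lie{q}}(\lambda)$).

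The paper avoids this by proving the needed PI-degree comparison as a standalone algebraic statement, Theorem~\ref{thm:pidegAnnihilatorVerma}, valid for any complex parabolic $P$ with $G/P$ $G'$-spherical. The proof never touches Casselman--Wallach representations: it uses Duflo's theorem (Fact~\ref{fact:DufloIdeal}) to realize $I$ as the annihilator of an irreducible highest weight $\lie{m}$-module, then computes multiplicities against $\calO_{\lie{g'}}$ via the reduction isomorphism $\ind{g}{p}(V)\otimes_{\univ{g'}} \ind{g'}{p'}(V') \simeq V\otimes_{\univ{l'}} V'$ (Lemma~\ref{lem:ReductionToFiberParabolic}). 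In fact Theorem~\ref{intro:thm:ReductionToFiberParabolic} is itself deduced from Theorem~\ref{thm:pidegAnnihilatorVerma}, not the other way around; so the ``PI-degree content'' you want to extract is logically prior to, not contained in, the real-parabolic result. Once you substitute Theorem~\ref{thm:pidegAnnihilatorVerma} for your bridging step, your argument is the paper's proof verbatim.
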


The assumption $\Ann_{\univ{g}}(\calR^i(V)) = \Ann_{\univ{g}}(\univ{g}\otimes_{\univ{p}}V)$ is not true in general.
In fact, $\calR^i(V)$ can be finite dimensional and non-zero even if $P$ is Borel.
If $K_M = K$ and $i = 0$, then the assumption of the annihilators clearly holds.
In the case, $\calR^0(V)$ is the underlying Harish-Chandra module of a holomorphic discrete series representation (or its analytic continuation).
In \cite[Theorems B and D]{Ko08}, using the method of visible actions, T. Kobayashi shows that if $(G_\RR, G'_\RR)$ is a holomorphic symmetric pair, then $\calR^0(V)|_{\lie{g'}, K'}$ has uniformly bounded multiplicities.
Theorem \ref{intro:thm:ReductionToFiberCoh} gives another proof and a generalization of his theorems.

In the case of holomorphic discrete series representations, the unipotent radical of $P$ is abelian.
Suppose that $(G, G')$ is a symmetric pair and the unipotent radical of $P$ is abelian.
T. Kobayashi shows in \cite[Corollary 15]{Ko05} that $G/P$ is $G'$-spherical, and gives a conjecture in \cite[Conjecture 4.3]{Ko11} that $\calR^i(V)|_{\lie{g'}, K'}$ has uniformly bounded multiplicities if $V$ is finite dimensional.
Theorem \ref{intro:thm:ReductionToFiberCoh} gives an affirmative answer to the conjecture.

To use Theorems \ref{intro:thm:ReductionToFiberParabolic} and \ref{intro:thm:ReductionToFiberCoh}, it is important to find $G'$-spherical flag varieties $G/P$.
Recently, the sphericity of flag varieties are well-studied by several mathematicians.
If $(G, G')$ is a symmetric pair, the classification of $G'$-spherical flag varieties $G/P$ is given by He--Ochiai--Nishiyama--Oshima \cite{HeOcNiOs13_double_flag}.
If $G$ is a classical group, R. Avdeev and A. Petukhov give a complete classification of tuples $(G, G', P)$ such that $G/P$ is $G'$-spherical in \cite{AvPe14,AvPe20}.
See also references therein.

We shall state the main theorem for the induction case.
A large part of the theorem and its proof is essentially the same as one for the restriction case.
In fact, the problem to determine multiplicities in induced representations can be regarded as the `branching problem' from $\rring{G/G'}\otimes \univ{g}$ to $\univ{g}$.
Suppose that $G$ and $G'$ are connected.

For a $G'$-stable subvariety $S\subset \lie{g'}^*$, we set
\begin{align*}
	\Ind^G_{G'}(S) := \set{(gG', \lambda) \in G/G'\times \lie{g}^*:
	(\Ad^*(g^{-1})\lambda)|_{\lie{g'}} \in S}.
\end{align*}
Then if $S$ is a coadjoint $G'$-orbit, $\Ind^G_{G'}(S)$ is a symplectic subvariety of the Poisson variety $G/G'\times \lie{g}^*$.
See Lemma \ref{lem:InducedOrbit}.
If $G/G'$ is $G$-spherical, we take a connected reductive subgroup $L_\RR$ of $G'_\RR$ as $L'_\RR$ in Theorem \ref{intro:thm:ReductionToFiberParabolic}.

\begin{theorem}\label{intro:thm:Induction}
	Let $V'$ be a Casselman--Wallach representation of $G'_\RR$.
	Set $I:=\Ann_{\univ{g'}}(V')$.
	Then the following conditions on $V'$ are equivalent:
	\begin{enumparen}
		\item $\sup_{V}\dim_{\CC}(\Hom_{G'_\RR}(V, V')) < \infty$, where the supremum is over all irreducible Casselman--Wallach representations $V$ of $G_\RR$
		\item $\sup_{V}\dim_{\CC}(\Hom_{\lie{g'}, K'}(V, V'_K)) < \infty$, where the supremum is over all irreducible $(\lie{g}, K)$-modules $V$
		\item $\PIdeg((\univ{g}/I\univ{g})^{G'}) < \infty$
		\item $(S(\lie{g})/\sqrt{\gr(I)}S(\lie{g}))^{G'}$ is Poisson-commutative
		\item the action of $G$ on $\Ind^{G}_{G'}(\calV(\gr(I)))$ is coisotropic
		\item $(\univ{g}/I\univ{g})^{G'}$ is finitely generated as a $\univcent{g}$-module
		\item $G/G'$ is $G$-spherical and $V'|_{L_\RR}$ has uniformly bounded multiplicities.
	\end{enumparen}
	If, in addition, $V'$ is unitarizable and $\calH'$ is the Hilbert completion, then the above conditions are equivalent to
	\begin{enumparen}
		\setcounter{enumi}{7}
		\item the unitarily induced representation $\uInd^{G_\RR}_{G'_\RR}(\calH')$ of $\calH'$ has uniformly bounded multiplicities.
	\end{enumparen}
\end{theorem}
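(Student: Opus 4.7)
The plan is to mirror the proof of Theorem \ref{intro:thm:Restriction}, exploiting the viewpoint already flagged in the text that induction is a branching problem from $\rring{G/G'}\otimes\univ{g}$ to $\univ{g}$. The algebra $(\univ{g}/I\univ{g})^{G'}$ is set up to play the role that $(\univ{g}/I)^{G'}$ played in the restriction theorem: it acts on every $\Hom_{G'_\RR}(V,V')$ via the $\univ{g}$-action on the source $V$ (this action factors through $\univ{g}/I\univ{g}$ because homomorphisms kill the annihilator of the target), and, by Frobenius reciprocity, on the multiplicity spaces in any induced picture.

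For the algebraic chain \ref{intro:thm:Induction}(3) $\Leftrightarrow$ (4) $\Leftrightarrow$ (5) $\Leftrightarrow$ (6), I would first note that $\gr(\univ{g}/I\univ{g})=S(\lie{g})/\gr(I)S(\lie{g})$, so the PI-degree-to-Poisson-commutativity step (3) $\Rightarrow$ (4) is the purely algebraic result of Subsection \ref{subsect:PoissonPolynomialIdentity} applied verbatim. For (4) $\Leftrightarrow$ (5) $\Leftrightarrow$ (6) I would identify
\[
\calV(\gr(I)S(\lie{g}))=\{\lambda\in\lie{g}^*:\lambda|_{\lie{g'}}\in\calV(\gr(I))\},
\]
which set-theoretically is the image in $\lie{g}^*$ of $\Ind^G_{G'}(\calV(\gr(I)))\subset G/G'\times\lie{g}^*$ under projection, and then invoke the Losev-style Poisson-geometric results of Section \ref{sect:Coisotropic}: coisotropicity of the $G$-action on $\Ind^G_{G'}(\calV(\gr(I)))$ (which carries the required conical Hamiltonian structure) translates into Poisson-commutativity of the $G'$-invariants and into finite generation over $\univcent{g}$.

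For the analytic chain (1) $\Leftrightarrow$ (2) $\Leftrightarrow$ (3), I would establish an analogue of the two-sided bound \eqref{intro:eqn:BoundSup}. The upper bound (3) $\Rightarrow$ (1), (2) follows from Fact \ref{fact:BoundedTorHom} applied to pairs $V\otimes V'$: it bounds the $\univ{g}^{G'}$-length of $H_0(\lie{g'},K';V\otimes V')$, and duality identifies this with $\Hom_{\lie{g'},K'}(V,(V')^*_{K'})$. The lower bound (1) $\Rightarrow$ (3) uses the faithful-annihilator property of Section \ref{sect:LowerBound}, now with the family of irreducible $V$'s on the $G_\RR$-side recovering enough of the annihilator of $V'$ in $(\univ{g}/I\univ{g})^{G'}$ to bound its PI degree. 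The equivalence with (7) is the spherical/local-structure reduction of Theorem \ref{intro:thm:ReductionToFiberParabolic}: sphericity of $G/G'$ produces an open $B$-orbit, and the local structure theorem (with Proposition \ref{prop:BLVreal} supplying the real form) yields the reductive subgroup $L_\RR\subset G'_\RR$ so that multiplicities in the induction reduce to those in $V'|_{L_\RR}$. The unitary equivalence with (8) is obtained from the disintegration theory of Section \ref{sect:Unitary}: $\uInd^{G_\RR}_{G'_\RR}(\calH')$ disintegrates, its multiplicity spaces carry $*$-representations of $(\univ{g}/I\univ{g})^{G'}$, and essentially bounded dimension of these representations is equivalent to (3) exactly as in the restriction case.

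The main obstacle will be the geometric step (4) $\Leftrightarrow$ (5), because the relevant symplectic object is the induced variety $\Ind^G_{G'}(\calV(\gr(I)))$ living in $G/G'\times\lie{g}^*$ rather than inside $\lie{g}^*$; matching coisotropicity of the $G$-action on this larger variety with Poisson-commutativity of the $G'$-invariants of its image requires a careful identification of symplectic leaves together with verification of Losev's conical-Hamiltonian hypotheses. A secondary difficulty is (7): Theorem \ref{intro:thm:ReductionToFiberParabolic} was formulated for the flag variety $G/P$, so its adaptation to the homogeneous space $G/G'$ needs the local structure theorem to be set up directly on $G/G'$ and the real form of $L$ to be extracted compatibly with $G'_\RR$.
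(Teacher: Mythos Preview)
Your plan is essentially correct and matches the paper's assembly of the theorem from Theorem~\ref{thm:BoundedInduction}, Corollary~\ref{cor:ReductionToFiber}, Theorem~\ref{thm:PIdegAndCoisotropicInd}, and Theorem~\ref{thm:BoundedInductionUnitary}. One point needs correction, however, at the step (4) $\Leftrightarrow$ (5). The set $\calV(\gr(I)S(\lie{g}))=\{\lambda\in\lie{g}^*:\lambda|_{\lie{g'}}\in\calV(\gr(I))\}$ is \emph{not} the image of $\Ind^G_{G'}(\calV(\gr(I)))$ under projection to $\lie{g}^*$; it is the fiber over $eG'\in G/G'$ (the projection is the $G$-saturation of that fiber, generally strictly larger and not the object you want). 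The paper does not use any projection. The bridge is the canonical anti-isomorphism $(\rring{G/G'}\otimes\univ{g})^G\simeq(\univ{g}^{G'})^{\opalg}$ (recorded after Lemma~\ref{lem:AnnhilatorOUmod}), which at the graded level identifies $G$-invariant regular functions on $\Ind^G_{G'}(\calV(\gr(I)))\subset G/G'\times\lie{g}^*$ directly with $(S(\lie{g})/\sqrt{\gr(I)S(\lie{g})})^{G'}$; see Lemma~\ref{lem:InductionAndOrbit}. Once this identification is in place, Corollary~\ref{cor:CoisotropicPoisson} and Theorem~\ref{thm:FinitePoissonCenter} apply to the conical Hamiltonian $G$-variety $\Ind^G_{G'}(\calV(\gr(I)))$ (its symplectic-leaf decomposition is Corollary~\ref{cor:InductionOrbit}), and the equivalences (3)--(6) drop out exactly as in Theorem~\ref{thm:PIdegAndCoisotropicRest}. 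So the ``main obstacle'' you describe dissolves once you use the invariant-ring isomorphism rather than a projection.

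For (7), the paper does not adapt Theorem~\ref{intro:thm:ReductionToFiberParabolic} from $G/P$ to $G/G'$; it applies the local structure theorem (Fact~\ref{fact:BLV}, Proposition~\ref{prop:BLVreal}) directly to the spherical affine variety $G/G'$ and carries out the reduction to $V'|_{L_\RR}$ in Corollary~\ref{cor:ReductionToFiber} via the elementary isomorphism $\ind{g}{p}(V_L)|_{\lie{g'}}\simeq\ind{g'}{l}(V_L)$ of Lemma~\ref{lem:ReductionToFiber}. Your ``secondary difficulty'' is therefore handled by a parallel, self-contained construction rather than by adapting the parabolic case.
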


If $V'$ is the trivial representation, then $\Ind^{G}_{G'}(\calV(\gr(I)))$ is just the cotangent bundle $T^*(G/G')$ and $(\univ{g}/I\univ{g})^{G'}$ is isomorphic to the algebra of $G$-invariant differential operators on $G/G'$.
Hence Theorem \ref{intro:thm:Induction} is a generalization of the characterization of the sphericity of $G/G'$ stated in the beginning of this section.

Suppose that $G/G'$ is $G$-spherical and $V'$ is irreducible.
In Corollary \ref{cor:ReductionToFiber}, we will see that there exists a constant $C > 0$ independent of $V'$ such that
\begin{align}
	C^{-1}\cdot \sup_{V_L}\dim_{\CC}(\Hom_{L_\RR}(V', V_L)) &\leq 
	\sup_{V} \dim_{\CC}(\Hom_{G'_\RR}(V, V')) \\
	&\leq C\cdot \sup_{V_L}\dim_{\CC}(\Hom_{L_\RR}(V', V_L)) \label{intro:eqn:ReductionToFiber}
\end{align}
where $V$ (resp.\ $V_L$) is an irreducible Casselman--Wallach representation of $G_\RR$ (resp.\ $L_\RR$).
Note that a similar inequation holds for Theorem \ref{intro:thm:ReductionToFiberParabolic}.

The inequation \eqref{intro:eqn:ReductionToFiber} can be considered as a generalization of special cases of known results.
In \cite{Sa93}, F. Sato shows that we can choose $C = 1$ in $\eqref{intro:eqn:ReductionToFiber}$ in the context of reductive algebraic groups (i.e.\ $G_\RR$ and $G'_\RR$ are compact in our setting).
In the case, we need to replace $L_\RR$ by a complexification $L$ defined as $L'$ in \eqref{intro:eqn:L}.
He also shows that the multiplicity $\Hom_{G'}(V, V')$ is given by $\Hom_{L}(V', V_L)$ if the highest weight of $V$ is enough large in some sense.
The author generalize these results to quasi-affine spherical varieties in \cite{Ki14}.

T. Kobayashi introduces the notion of visible actions on complex manifolds in \cite{Ko04}, and shows the propagation theorem of the multiplicity-freeness in \cite{Ko97_multiplicity_free,Ko13}.
Let $X$ be a complex manifold with a strongly visible $G_\RR$-action
and $\calV$ a $G_\RR$-equivariant vector bundle on $X$ with some properties.
Roughly speaking, the propagation theorem says that if the fiber $\calV_x|_{L_\RR}$ is multiplicity-free for a suitable subgroup $L_\RR$ of $(G_\RR)_x$, then any unitary representation realized in $\calO(X, \calV)$ is multiplicity-free.
Our theorems can be considered as a propagation theorem for the uniform boundedness of multiplicities based on spherical actions.
Remark that unlike the above two known results, we can not prove the multiplicity-freeness by our estimate.

The Kobayashi--Oshima uniformly bounded theorem \cite[Theorem B]{KoOs13} says that if $G/G'$ is $G$-spherical, then there exists a constant $C > 0$ such that $\dim_{\CC}\Hom_{G'_\RR}(V, V')\leq C\cdot \dim_{\CC}(V')$ for any irreducible Casselman--Wallach representation $V$ of $G_\RR$ and irreducible finite-dimensional representation $V'$ of $G'_\RR$.
The estimate \eqref{intro:eqn:ReductionToFiber} is a refinement of this theorem for reductive and algebraic $G'_\RR$.
Remark that in the paper, $G'_\RR$ is not assumed to be reductive and algebraic, and the constant $C$ is given explicitly.
See \cite{AiGoDm16,Ta18} for alternative proofs and generalizations.

In \cite{Ya94}, H. Yamashita gives a sufficient condition for the finiteness of multiplicities in a $\lie{g}$-module by the associated variety of the module.
His theorem says that the restriction of a finitely generated $\lie{g}$-module $V$ to a subalgebra $\lie{h}$ is finitely generated if $\AV(V)\cap \lie{h}^{\perp} = 0$, where $\AV(V)$ is the associated variety of $V$.
As an application of his result to a Borel subalgebra of $\lie{g'}$, one can give a sufficient condition for the uniform boundedness by the associated variety.
We do not know any relation between Yamashita's criterion and our criteria.

Our theorem can be applied to the cases of so called the multiplicity one theorem.
If $(G_\RR, G'_\RR) = (\GL(n+1,\CC), \GL(n, \CC))$, $(\upO(n+1, \CC), \upO(n, \CC))$, $(\SO(n+1, \CC), \SO(n,\CC)$ or their four real forms, then $\dim_{\CC}\Hom_{G'_\RR}(V, V')$ is at most one for any irreducible Casselman--Wallach representation $V$ of $G_\RR$ and irreducible Casselman--Wallach representation $V'$ of $G'_\RR$.
This result is proved by Aizenbud--Gourevitch \cite{AiGo09} (for $\GL$) and Sun--Zhu \cite{SuZh12}.
If $(G_\RR, G'_\RR)$ is locally isomorphic to one of the above $7$ pairs, 
$\dim_{\CC}\Hom_{G'_\RR}(V, V')$ is bounded by a constant independent of $V$ and $V'$ by the Kobayashi--Oshima uniformly bounded theorem \cite[Theorem D]{KoOs13}.
Applying Theorem \ref{intro:thm:Induction} to the setting, we can give another proof of the Kobayashi--Oshima's result.
Sun--Zhu also proves the multiplicity one theorem for the Jacobi group cases.
We can apply Theorem \ref{intro:thm:Induction} to the cases and obtain the uniform boundedness of multiplicities.
We treat this problem in Subsection \ref{subsect:MultiplicityOne}.

In the above known results, the uniform boundedness of multiplicities (or the multiplicity-freeness) is proved only for Casselman--Wallach representations.
Although our estimate is not sharp, our theorems can be applied to $(\lie{g}, K)$-modules, Casselman--Wallach representations and unitary representations (and objects in the BGG category $\calO$).
Remark that it is an open problem whether the restriction map
\begin{align*}
	\Hom_{G'_\RR}(V, V') \hookrightarrow \Hom_{\lie{g'}, K'}(V_K, V'_K)
\end{align*}
is bijective if they are finite dimensional.

This paper is organized as follows.
In Section 2, we review the properties of the PI degree and prove \ref{intro:item:RestrictionPIdeg} $\Rightarrow$ \ref{intro:item:RestrictionPoisson} in Theorem \ref{intro:thm:Restriction}.
In Section 3, we recall several fundamental notions about $(\lie{g}, K)$-modules and Casselman--Wallach representations.
In Section 4, we recall several results in our previous paper \cite{Ki20} related to uniformly bounded families, and prove the upper bound of \eqref{intro:eqn:BoundSup}.
Section 5 is devoted to prove the lower bound of \eqref{intro:eqn:BoundSup} in a general setting.
To show Theorem \ref{intro:thm:Induction}, we study $\rring{G/G'}\otimes \univ{g}$-modules in Section 6.
The proof of \ref{intro:item:RestrictionCW} $\Leftrightarrow$ \ref{intro:item:RestrictionGK} $\Leftrightarrow$ \ref{intro:item:RestrictionPIdeg} in Theorem \ref{intro:thm:Restriction} is completed in Section 7.
In Section 8, we deal with (cohomologically) parabolically induced representations.
We prove Theorems \ref{intro:thm:ReductionToFiberParabolic} and \ref{intro:thm:ReductionToFiberCoh} here.
Section 9 is devoted to the study of Poisson varieties.
Here we prove the equivalence \ref{intro:item:RestrictionPIdeg} $\Leftrightarrow$ \ref{intro:item:RestrictionPoisson} $\Leftrightarrow$ \ref{intro:item:RestrictionCoisotropic} in Theorem \ref{intro:thm:Restriction}.
We treat with unitary representations in Section 10.
We summarize fundamental facts in functional analysis in the section.

\subsection*{Notation and convention}

Let $\rring{X}$ denote the algebra of regular functions on an algebraic variety $X$.
Any algebraic variety in this paper is defined over $\CC$.
Whenever we say that $\calA$ is a $\CC$-algebra, $\calA$ is unital and associative.

In this paper, any Lie algebra (without free ones) is assumed to be finite dimensional.
We express real Lie groups and their Lie algebras by Roman alphabets and corresponding German letters with subscript $(\cdot)_\RR$, and express complex Lie groups (or affine algebraic groups) and their Lie algebras by Roman alphabets and corresponding German letters without the subscript $(\cdot)_\RR$, respectively.
Similarly, we express the complexification of a real Lie algebra by the same German letter as that of the real form without the subscript $(\cdot)_\RR$.
For example, the Lie algebras of real Lie groups $G_\RR, K_\RR$ and $H_\RR$ are denoted as $\lie{g}_\RR, \lie{k}_\RR$ and $\lie{h}_\RR$ with complexifications $\lie{g}$, $\lie{k}$ and $\lie{h}$, respectively.
For a topological group $G$, let $G_0$ denote the identity component of $G$.

In this paper, we use several kinds of categories of modules (or representations) such as $\Mod(\calA)$, $\Mod_{fl}(\lie{g}, K)$, $\Mod_{fd}(G)$, $\calC_{G_\RR}$ and $\calO_{\lie{b}}$.
See Subsection \ref{subsect:CategoriesGKCW} for the notations.
Any representation and module in this paper are assumed to be defined over $\CC$.
Let $\lie{g}$ be a complex reductive Lie algebra and $\lie{p}$ a parabolic subalgebra of $\lie{g}$ with a Levi subalgebra $\lie{l}$.
We regard an $\lie{l}$-module as a $\lie{p}$-module by letting the nilradical of $\lie{p}$ act trivially.
We do the same identification for the Lie group case.

For a representation $V$ of a group $G$ (resp.\ a $\lie{g}$-module of a Lie algebra $\lie{g}$), we write $V^G$ (resp.\ $V^{\lie{g}}$) for the space of all invariant vectors in $V$.
Similarly, for an $\calA$-module $V$ of a $\CC$-algebra $\calA$ and a subset $I \subset \calA$, we denote by $V^I$ the space of all vectors annihilated by $I$.

We list fundamental notations and operations used in a large part of this paper:
\begin{itemize}
	\item $\univ{g}$: the universal enveloping algebra of a complex Lie algebra $\lie{g}$
	\item $\univcent{g}$: the center of $\univ{g}$
	\item $\PIdeg(\cdot)$: the PI degree of a ring defined in Definition \ref{def:PIdeg}
	\item $\Len_{\calA}(\cdot)$: the length of an $\calA$-module
	\item $\Loewy{\cdot}$: the Loewy length of a module
	\item $\SupDim(F(\calC)) := \sup_V \dim_{\CC}(F(V))$, where the supremum is over all isomorphism classes of irreducible objects in an abelian category $\calC$
	\item $(\cdot) \otimes (\cdot)$ (without subscript): the tensor product over $\CC$
	\item $(\cdot) \boxtimes (\cdot)$: the external tensor product over $\CC$.
	\item $\Ind^{G_\RR}_{G'_\RR}(\cdot)$: the (non-normalized) induced representation defined by smooth functions
	\item $\Dzuck{K}{M}{i}(\cdot)$: the $i$-the Zuckerman derived functor
\end{itemize}

\subsection*{Acknowledgments}

I would like to thank T. Tauchi  for reading the manuscript very carefully and for many corrections.
The author was partially supported by Waseda University Grants for Special Research Projects (No. 2019C-528).

\section{Polynomial identity}

In this section, we deal with polynomial identities on algebras.
The notion of PI degree plays a central role in this paper.

\subsection{PI degree}

We review fundamental properties of the PI degree defined in Definition \ref{def:PIdeg}.
We refer the reader to \cite[Chapter 13]{McRo01_noncommutative}.

As a $\ZZ$-coefficient non-commutative polynomial with $n$ indeterminates, we set
\begin{align*}
s_{n}(X_1, X_2, \ldots, X_n) := \sum_{w \in \frakS_n} \sgn(w)X_{w(1)}X_{w(2)}\dots X_{w(n)},
\end{align*}
where $\frakS_n$ is the symmetric group of degree $n$ and $\sgn$ is the signature character of $\frakS_n$.
The following fact is a key to control multiplicities in the representation theory by the PI degree.

\begin{fact}[{\cite[Proposition 3.2 and Theorem 3.3]{McRo01_noncommutative}}]\label{fact:amitsurLevitzki}
	Fix an integer $n > 0$.
	\begin{enumparen}
		\item(Amitsur--Levitzki) The polynomial $s_{m}$ ($m\geq 2n$) is a polynomial identity of the matrix algebra $M_n(\CC)$.
		\item Conversely, $s_m$ ($m < 2n$) is not a polynomial identity of $M_n(\CC)$.
	\end{enumparen}
	In particular, we have $\PIdeg(M_n(\CC)) = n$.
\end{fact}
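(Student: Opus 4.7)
The plan is to prove (a) via Rosset's Grassmann-algebra argument, then handle (b) by an explicit construction with matrix units, after reducing in both parts to the sharp cases via the recursion
\[
	s_{m+1}(X_1,\ldots,X_{m+1}) = \sum_{j=1}^{m+1}(-1)^{j+1} X_j\, s_m(X_1,\ldots,\widehat{X_j},\ldots,X_{m+1}),
\]
which shows that the collection of $m$ for which $s_m$ is an identity of a given $\CC$-algebra is upward closed. Hence (a) reduces to $m = 2n$ and (b) to $m = 2n-1$.

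For (a) I would pass to the Grassmann algebra $E$ on $2n$ anticommuting generators $e_1,\ldots,e_{2n}$ and set $Y := \sum_{i=1}^{2n} X_i \otimes e_i \in M_n(\CC) \otimes E$. Expanding and collecting by permutation yields $Y^{2n} = s_{2n}(X_1,\ldots,X_{2n}) \otimes e_1 e_2 \cdots e_{2n}$, so the goal becomes $Y^{2n} = 0$. The key observation is that $Y^2 = \sum_{i<j}[X_i,X_j]\otimes e_i e_j$ has entries in the commutative even subalgebra $E_0 \subset E$, so $Y^2$ is an $n\times n$ matrix over a commutative ring and satisfies the Cayley--Hamilton theorem. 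Via Newton's identities the coefficients of its characteristic polynomial are polynomials in the power sums $\tr((Y^2)^k) = \tr(Y^{2k})$, so it suffices to show $\tr(Y^{2k}) = 0$ for all $k\geq 1$. This is a short computation: cyclically rotating the $2k$ factors of $Y$ inside the trace and relabeling the summation, one then moves the rotated Grassmann factor $e_{j_1}$ past the remaining $2k-1$ generators, picking up the sign $(-1)^{2k-1} = -1$ and forcing $\tr(Y^{2k}) = -\tr(Y^{2k})$. Consequently the characteristic polynomial of $Y^2$ is $T^n$, so $Y^{2n} = (Y^2)^n = 0$, and since $e_1\cdots e_{2n}\neq 0$ in $E$ we conclude $s_{2n}(X_1,\ldots,X_{2n}) = 0$.

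For (b) I would evaluate $s_{2n-1}$ on the matrix units $X_{2k-1} := E_{kk}$ ($k=1,\ldots,n$) and $X_{2k} := E_{k,k+1}$ ($k=1,\ldots,n-1$) in $M_n(\CC)$. A monomial $X_{\sigma(1)}\cdots X_{\sigma(2n-1)}$ is nonzero iff adjacent factors chain column-to-row, identifying nonzero monomials with Eulerian paths in the directed multigraph on $\{1,\ldots,n\}$ with one loop at each vertex and one forward edge $k \to k+1$; an in-/out-degree analysis shows there is exactly one such path, namely $\sigma = \id$, giving $s_{2n-1}(X_1,\ldots,X_{2n-1}) = E_{1n}\neq 0$. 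The equality $\PIdeg(M_n(\CC)) = n$ is then immediate from Definition \ref{def:PIdeg}: by (a), $s_{2n}$ is an identity of $M_n(\CC)$, whereas by (b) applied to $M_{n+1}(\CC)$ it is not an identity there, so $n+1$ fails the defining condition while $n$ trivially succeeds.

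The main obstacle is the trace vanishing $\tr(Y^{2k}) = 0$ in part (a); this is where the graded anticommutativity of the Grassmann algebra must mesh with the cyclicity of the matrix trace, and it is the reason that precisely $2n$ (rather than some smaller or larger integer) is the right degree in the Amitsur--Levitzki bound.
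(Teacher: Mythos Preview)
The paper does not give its own proof of this statement: it is recorded as a \emph{Fact} with a citation to \cite[Proposition~3.2 and Theorem~3.3]{McRo01_noncommutative}, so there is nothing in the paper to compare your argument against.

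Your argument itself is correct. The reduction via the recursion for $s_{m+1}$ is standard and valid in both directions (upward closure gives both that (a) reduces to $m=2n$ and, by contrapositive, that (b) reduces to $m=2n-1$). For (a) you reproduce Rosset's Grassmann-algebra proof: the identity $Y^{2n}=s_{2n}(X_1,\ldots,X_{2n})\otimes e_1\cdots e_{2n}$, the fact that $Y^2\in M_n(E_0)$ with $E_0$ commutative, and the trace computation $\tr(Y^{2k})=-\tr(Y^{2k})$ are all correct as stated; Newton's identities then force the characteristic polynomial of $Y^2$ to be $T^n$, and Cayley--Hamilton over the commutative ring $E_0$ finishes. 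For (b) your staircase of matrix units $E_{11},E_{12},E_{22},\ldots,E_{n-1,n},E_{nn}$ gives the unique Eulerian path you describe, so indeed $s_{2n-1}$ evaluates to $E_{1n}\neq 0$. The deduction $\PIdeg(M_n(\CC))=n$ from Definition~\ref{def:PIdeg} is also fine: $s_{2n}$ is an identity of $M_n(\CC)$ but, by (b) applied with $n$ replaced by any $k>n$, not of $M_k(\CC)$.
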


Let $\calA$ be a $\CC$-algebra of at most countable dimension
and $(\pi, V)$ an irreducible $\calA$-module.
Then $\pi(\calA)$ is dense in $\End_{\CC}(V)$ in the sense of the Jacobson density theorem as follows.
If $\dim_{\CC}(V) < \infty$, then we have $\pi(\calA) = \End_{\CC}(V)$.
If $\dim_{\CC}(V) = \infty$, then we have $\pi(\calA)|_F = \Hom_{\CC}(F, V)$
for any finite-dimensional subspace $F$ of $V$.
This implies $\PIdeg(\pi(\calA)) = \dim_{\CC}(V)$ by Fact \ref{fact:amitsurLevitzki}.
Note that $\pi(\calA)$ may not be dense in $\End_{\CC}(V)$
without the assumption that $\calA$ has at most countable dimension
because $\End_{\calA}(V)$ may be greater than $\CC$.

This observation tells us that the PI degree restricts the dimensions of all irreducible $\calA$-modules.
More precisely, the following propositions hold.

\begin{proposition}\label{prop:pidLowerbound}
	Let $\calA$ be a $\CC$-algebra of at most countable dimension.
	Then the dimension of any irreducible $\calA$-module is less than or equal to $\PIdeg(\calA)$.
\end{proposition}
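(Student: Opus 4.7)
The plan is to combine the Jacobson density theorem with Fact~\ref{fact:amitsurLevitzki} and the very definition of $\PIdeg$. If $\PIdeg(\calA) = \infty$ the statement is vacuous, so I would assume $n := \PIdeg(\calA) < \infty$ and take an arbitrary irreducible $\calA$-module $(\pi, V)$.

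The first step is to verify that $\End_\calA(V) = \CC$, which is the place where the at-most-countable dimension hypothesis is genuinely needed. Because $V$ is a cyclic $\calA$-module, both $V$ and hence the division algebra $D := \End_\calA(V)$ have at most countable dimension over $\CC$. If some $\phi \in D$ were not scalar, then $\phi$ would be transcendental over $\CC$ (since $\CC$ is algebraically closed), and the family $\{(\phi - \lambda)^{-1} : \lambda \in \CC\}$ would be an uncountable $\CC$-linearly independent subset of $D$, a contradiction. Thus $D = \CC$, so the Jacobson density theorem applies and yields, for every finite-dimensional subspace $F \subset V$, that the restriction map $\pi(\calA) \to \End_\CC(F)$ is surjective.

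The second step is a short contradiction argument. Suppose $\dim_\CC V \geq n+1$. By the definition of the PI degree, there exists a $\ZZ$-coefficient non-commutative polynomial $f$ which is a polynomial identity of $\calA$ but is not a polynomial identity of $M_{n+1}(\CC)$. Pick an $(n+1)$-dimensional subspace $F \subset V$ and a tuple $A_1, \ldots, A_k \in \End_\CC(F) \cong M_{n+1}(\CC)$ with $f(A_1, \ldots, A_k) \neq 0$, then lift it by the previous step to $a_i \in \calA$ with $\pi(a_i)|_F = A_i$. This gives $\pi(f(a_1, \ldots, a_k))|_F = f(A_1, \ldots, A_k) \neq 0$, contradicting $f(a_1, \ldots, a_k) = 0$ in $\calA$. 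Hence $\dim_\CC V \leq n$. The only subtle point in the whole argument is the Schur's-lemma step; everything after it is a routine application of density and of the Amitsur--Levitzki-type characterization of $\PIdeg(M_{n+1}(\CC))$.
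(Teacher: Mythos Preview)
Your proof is correct and follows essentially the same approach as the paper. Both arguments use the countable-dimension hypothesis to obtain $\End_\calA(V)=\CC$ (Dixmier's lemma), then invoke the Jacobson density theorem; the paper packages the conclusion as $\dim_\CC(V)=\PIdeg(\pi(\calA))\leq\PIdeg(\calA)$, while you unwind this into an explicit contradiction by lifting matrices from an $(n+1)$-dimensional subspace --- but the substance is identical.
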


\begin{proof}
	As we have seen in the above, we have
	\begin{equation*}
	\dim_{\CC}(V) = \PIdeg(\pi(\calA)) \leq \PIdeg(\calA)
	\end{equation*}
	for any irreducible $\calA$-module $(\pi, V)$.
\end{proof}

\begin{proposition}\label{prop:pidegUpperbound}
	Let $\calA$ be a $\CC$-algebra and $\set{(V_\lambda, \pi_\lambda)}_{\lambda \in \Lambda}$
	a family of $\calA$-modules.
	If $\bigcap_{\lambda \in \Lambda}\Ker(\pi_\lambda)=0$, then we have
	\begin{align*}
		\PIdeg(\calA)\leq \sup_{\lambda\in \Lambda}\set{\dim_{\CC}(V_{\lambda})}.
	\end{align*}
\end{proposition}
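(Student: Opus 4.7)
The plan is to use the standard polynomials $s_m$ and the Amitsur--Levitzki theorem (Fact~\ref{fact:amitsurLevitzki}) as explicit test polynomials. Set $N := \sup_{\lambda \in \Lambda} \dim_\CC(V_\lambda)$ and assume $N < \infty$, since otherwise the inequality is vacuous. I will show that the standard polynomial $s_{2N}$ is a polynomial identity of $\calA$; combined with the second half of Fact~\ref{fact:amitsurLevitzki}, which says that $s_{2N}$ is not a polynomial identity of $M_n(\CC)$ for any $n > N$, this immediately forces $\PIdeg(\calA) \leq N$ directly from the definition of the PI degree.

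To verify that $s_{2N}$ vanishes on $\calA$, I will exploit the fact that the hypothesis $\bigcap_{\lambda} \Ker(\pi_\lambda) = 0$ is equivalent to injectivity of the product map $\calA \hookrightarrow \prod_\lambda \End_\CC(V_\lambda)$. For any elements $X_1, \ldots, X_{2N} \in \calA$ and any $\lambda \in \Lambda$, the image $\pi_\lambda(s_{2N}(X_1, \ldots, X_{2N})) = s_{2N}(\pi_\lambda(X_1), \ldots, \pi_\lambda(X_{2N}))$ lies in $\End_\CC(V_\lambda) \cong M_{\dim_\CC(V_\lambda)}(\CC)$. Since $2N \geq 2\dim_\CC(V_\lambda)$, the first half of Fact~\ref{fact:amitsurLevitzki} gives vanishing of this image; by injectivity, $s_{2N}(X_1, \ldots, X_{2N}) = 0$ in $\calA$.

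There is essentially no serious obstacle here: the argument reduces the claim to Amitsur--Levitzki inside each finite-dimensional quotient $\pi_\lambda(\calA) \subset \End_\CC(V_\lambda)$, and the separation hypothesis is exactly what is needed to lift the pointwise vanishing back to $\calA$. A single universal polynomial $s_{2N}$ suffices simultaneously for every $\lambda$ precisely because Fact~\ref{fact:amitsurLevitzki} gives the PI property of $s_m$ on $M_n(\CC)$ for all $m \geq 2n$, not only at the critical degree $m = 2n$; this uniformity across the varying dimensions $\dim_\CC(V_\lambda) \leq N$ is the only subtlety worth flagging in an otherwise routine deduction.
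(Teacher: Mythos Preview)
Your proof is correct and follows essentially the same approach as the paper: both use the injection $\calA \hookrightarrow \prod_\lambda \End_\CC(V_\lambda)$ together with Amitsur--Levitzki, the only difference being that the paper phrases the conclusion abstractly as $\PIdeg(\calA) \leq \sup_\lambda \PIdeg(\End_\CC(V_\lambda)) = \sup_\lambda \dim_\CC(V_\lambda)$ while you unpack this by exhibiting the concrete witness $s_{2N}$.
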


\begin{proof}
	By assumption, the direct product of $\pi_\lambda$ is injective:
	\begin{align*}
		\prod_{\lambda \in \Lambda}\pi_{\lambda}\colon \calA \hookrightarrow \prod_{\lambda \in \Lambda}\End_{\CC}(V_{\lambda}).
	\end{align*}
	Hence we have $\PIdeg(\calA)\leq \sup_{\lambda\in \Lambda}\set{\PIdeg(\End_{\CC}(V_{\lambda}))}=\sup_{\lambda\in \Lambda}\set{\dim_{\CC}(V_{\lambda})}$.
\end{proof}

Recall that the Jacobson radical $J(\calA)$ of a $\CC$-algebra $\calA$ is the intersection of all primitive ideals in $\calA$.
The following fact is stated for universal enveloping algebras in \cite[Proposition 3.1.15]{Di96}.
The same proof works for our setting.
See also \cite[Lemma 3.1.14]{Di96}.

\begin{fact}\label{fact:NilpotentJacobson}
	Let $\calA$ be a noetherian $\CC$-algebra of at most countable dimension.
	The intersection of all prime ideals in $\calA$ coincides with the Jacobson radical $J(\calA)$.
	In particular, $J(\calA)$ is a nilpotent ideal.
\end{fact}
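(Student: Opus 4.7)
The plan is to prove this by the classical two-step route: first show $J(\calA)$ is a nil ideal by exploiting the countable-dimension hypothesis, and then use the noetherian hypothesis to upgrade ``nil'' to ``nilpotent''. The inclusion $N(\calA) \subseteq J(\calA)$, where $N(\calA)$ denotes the intersection of all prime ideals, is immediate because every primitive ideal is prime. The substantive content is the reverse inclusion $J(\calA) \subseteq N(\calA)$; once we know $J(\calA)$ is nilpotent, say $J(\calA)^n = 0$, then $J(\calA) \subseteq P$ for every prime $P$, yielding $J(\calA) \subseteq N(\calA)$ and simultaneously the ``in particular'' clause.

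To show $J(\calA)$ is nil, I would fix $x \in J(\calA)$ and consider the family $\{(1-\lambda x)^{-1}\}_{\lambda \in \CC}$ of elements of $\calA$; these inverses exist because $\lambda x \in J(\calA)$. There are two cases. If $1, x, x^2, \ldots$ are linearly independent, then for any distinct nonzero $\lambda_1, \ldots, \lambda_n \in \CC$ a linear dependence $\sum c_i (1-\lambda_i x)^{-1} = 0$ can be cleared of denominators to give a polynomial identity $\sum_i c_i \prod_{j \ne i}(1 - \lambda_j x) = 0$ in $\calA$; independence of the powers of $x$ forces the corresponding polynomial $p(t)$ in $\CC[t]$ to vanish identically, and evaluating at $t = \lambda_k^{-1}$ then forces each $c_k = 0$. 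Thus the family $\{(1-\lambda x)^{-1}\}_{\lambda \in \CC}$ would be uncountably linearly independent in the countable-dimensional space $\calA$, a contradiction. Hence $x$ is algebraic: some polynomial $f \in \CC[t]$ with $f(x) = 0$ exists, which we factor as $f(t) = t^k g(t)$ with $g(0) \neq 0$. Rewriting $g(x) = g(0)(1 - y)$ with $y \in x\calA \subseteq J(\calA)$, the element $1 - y$ is a unit of $\calA$, and multiplying $x^k g(x) = 0$ by $(1-y)^{-1}$ yields $x^k = 0$. In either case $x$ is nilpotent, so $J(\calA)$ is a nil ideal.

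To finish, I invoke Levitzki's theorem: in a right noetherian ring every nil one-sided ideal is nilpotent (see \cite[Theorem 2.3.7]{McRo01_noncommutative} or its equivalents). Applied to the two-sided ideal $J(\calA)$ in the noetherian algebra $\calA$, this gives $J(\calA)^n = 0$ for some $n$, which as noted forces $J(\calA) \subseteq P$ for every prime $P$ and completes the proof.

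The main obstacle is really the first case of the nil-radical argument, where one has to turn ``uncountably many resolvents in a countable-dimensional space'' into a usable contradiction; the bookkeeping with the polynomial $p(t)$ and the separation of the two cases (algebraic versus transcendental $x$) is where care is needed. Once that Dixmier-type lemma is in hand, Levitzki's theorem does the remaining work essentially for free.
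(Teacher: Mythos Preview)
Your proof is correct and follows essentially the same route the paper points to: the paper does not supply its own argument but cites Dixmier \cite[Proposition 3.1.15 and Lemma 3.1.14]{Di96}, and Dixmier's proof is precisely the resolvent/countable-dimension argument you give (showing every element of $J(\calA)$ is algebraic, hence nilpotent) followed by the noetherian upgrade from nil to nilpotent. One cosmetic remark: your phrase ``In either case $x$ is nilpotent'' is slightly misleading, since Case 1 is ruled out by contradiction rather than yielding nilpotence directly; but the logic is sound.
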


In many places, we will use the following results to reduce the computation of the PI degree to that of easier algebras.

\begin{proposition}\label{prop:pidegIdeal}
	Let $\calA$ be a $\CC$-algebra and $I$ a two-sided ideal of $\calA$.
	\begin{enumparen}
		\item\label{item:pidegIdealNilpotent} If $I$ is a nilpotent ideal, then we have $\PIdeg(\calA/I) = \PIdeg(\calA)$.
		\item\label{item:pidegIdealProduct} Let $J_1, J_2, \ldots, J_n$ be two-sided ideals containing $I$ such that $J_1J_2\ldots J_n \subset I$.
		Then we have
		\begin{align*}
			\PIdeg(\calA/I) = \max_i\set{\PIdeg(\calA/J_i)}.
		\end{align*}
	\end{enumparen}
\end{proposition}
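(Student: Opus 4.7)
The plan is to prove part (2) first; part (1) then follows as a special case by taking the ambient ideal there to be $0$ and letting $J_1 = \cdots = J_N$ all equal the given nilpotent ideal, where $N$ is its index of nilpotence. For (2), I first replace $\calA$ by $\calA/I$ to assume $I = 0$ and $J_1 J_2 \cdots J_n = 0$; this reduction preserves each $\calA/J_i$.

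The inequality $\PIdeg(\calA) \geq \max_i \PIdeg(\calA/J_i)$ is the easy half: any polynomial identity of $\calA$ descends to a polynomial identity of any quotient, so setting $d_i := \PIdeg(\calA/J_i)$ (assumed finite), the chain $\mathrm{PI}(\calA) \subseteq \mathrm{PI}(\calA/J_i) \subseteq \mathrm{PI}(M_{d_i}(\CC))$ gives $\PIdeg(\calA) \geq d_i$ immediately.

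The heart of the proof is the reverse inequality $\PIdeg(\calA) \leq d$, where $d := \max_i d_i$. The plan is to build a single polynomial that is a PI of $\calA$ but not of $M_{d+1}(\CC)$. For each $i$, the maximality of $d_i$ furnishes a PI $f_i$ of $\calA/J_i$ that is not a PI of $M_{d_i+1}(\CC)$; since $M_{d_i+1}(\CC)$ embeds as the upper-left block of $M_{d+1}(\CC)$, $f_i$ is also not a PI of $M_{d+1}(\CC)$. Using disjoint variable tuples $X^{(i)}$ for each $f_i$ together with fresh connector indeterminates $Y_1, \ldots, Y_{n-1}$, I consider
\begin{align*}
g := f_1(X^{(1)})\, Y_1\, f_2(X^{(2)})\, Y_2\, \cdots\, Y_{n-1}\, f_n(X^{(n)}).
\end{align*}
On any evaluation in $\calA$, each factor $f_j(X^{(j)})$ lies in $J_j$, so $g$ lies in $J_1 \calA J_2 \calA \cdots \calA J_n \subseteq J_1 J_2 \cdots J_n = 0$ by two-sidedness; thus $g$ is a PI of $\calA$.

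The main obstacle is verifying that $g$ is not a PI of $M_{d+1}(\CC)$: a naive product $f_1 f_2 \cdots f_n$ without connectors might vanish in $M_{d+1}(\CC)$ because each $B_i := f_i(A^{(i)})$ can be nilpotent, and unrelated $B_i$'s may annihilate one another. The inserted free variables $Y_j$ are exactly the device that circumvents this. Concretely, after fixing $A^{(i)}$ with $B_i \neq 0$, I pick a vector $v_n$ with $u_n := B_n v_n \neq 0$ and then, descending over $j = n-1, \ldots, 1$, choose $v_j$ with $u_j := B_j v_j \neq 0$ and a linear map $Y_j \in M_{d+1}(\CC)$ sending $u_{j+1}$ to $v_j$; this forces $B_1 Y_1 B_2 Y_2 \cdots Y_{n-1} B_n v_n = u_1 \neq 0$, exhibiting a nonzero value of $g$ on $M_{d+1}(\CC)$. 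Hence $\PIdeg(\calA) \leq d$, finishing (2) and consequently (1).
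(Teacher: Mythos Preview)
Your proof is correct and takes a genuinely different route from the paper. The paper proves \ref{item:pidegIdealNilpotent} first via an idempotent trick---given a PI $f$ of $\calA/I$ not a PI of $M_{n+1}(\CC)$, it finds $A_1,\ldots,A_{m+1}$ so that $f(A_1,\ldots,A_m)A_{m+1}$ is idempotent, whence $(f\cdot X_{m+1})^r$ is a PI of $\calA$ but not of $M_{n+1}(\CC)$---and then deduces \ref{item:pidegIdealProduct} by passing to $J:=\bigcap_i J_i$, noting $J^n\subset I\subset J$, invoking \ref{item:pidegIdealNilpotent}, and using the embedding $\calA/J\hookrightarrow\prod_i\calA/J_i$. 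You reverse the logical order: you attack \ref{item:pidegIdealProduct} head-on with the product $g=f_1(X^{(1)})Y_1\cdots Y_{n-1}f_n(X^{(n)})$ and the connector-variable argument, then recover \ref{item:pidegIdealNilpotent} as the special case $J_1=\cdots=J_N=I$. Your approach avoids the idempotent step entirely and is arguably more elementary; the paper's approach has the merit of isolating the nilpotent case as a clean standalone statement and then reducing the general case to it via a structural observation. Both are short and self-contained.
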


\begin{proof}
	We shall show the first assertion.
	$\PIdeg(\calA/I) \leq \PIdeg(\calA)$ is trivial.
	To show the converse inequality, we assume $\PIdeg(\calA/I) < \infty$.
	Set $n := \PIdeg(\calA/I)$ and take a constant $r$ such that $I^r = 0$.
	Then we can take a polynomial identity $f$ of $\calA/I$ with $m$ indeterminates such that $f$ is not a polynomial identity of $M_{n+1}(\CC)$.
	Since $f$ is not a polynomial identity of $M_{n+1}(\CC)$, we can take $A_1, A_2, \ldots, A_{m+1} \in M_{n+1}(\CC)$ such that $f(A_1, A_2, \ldots, A_m)A_{m+1}$ is an idempotent.
	Hence $(f\cdot X_{m+1})^r$ is not a polynomial identity of $M_{n+1}(\CC)$.
	Since $(f\cdot X_{m+1})^r$ is a polynomial identity of $\calA$, we obtain $\PIdeg(\calA) \leq n$.

	We shall show the second assertion.
	Set $J:= \bigcap_i J_i$.
	By assumption, we have $J^n \subset I \subset J$.
	Hence, by \ref{item:pidegIdealNilpotent}, we obtain
	\begin{align*}
		\PIdeg(\calA/J) \leq \PIdeg(\calA/I) \leq \PIdeg(\calA/J^n) = \PIdeg(\calA/J).
	\end{align*}
	Since $\PIdeg(\calA/J) = \max_i\set{ \PIdeg(\calA/J_i)}$, this shows the assertion.
\end{proof}

\begin{proposition}\label{prop:PIdegSemiprimitive}
	Let $\calA$ be a $\CC$-algebra of at most countable dimension.
	If $\calA$ is semiprimitive (i.e. $J(\calA) = 0$) or noetherian, then the supremum of the dimensions of all irreducible $\calA$-modules is equal to $\PIdeg(\calA)$.
\end{proposition}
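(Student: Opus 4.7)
The plan is to show the non-trivial inequality $\PIdeg(\calA) \leq \sup_V \dim_\CC(V)$, where $V$ ranges over irreducible $\calA$-modules; the reverse inequality is immediate from Proposition \ref{prop:pidLowerbound}. The main idea is to apply Proposition \ref{prop:pidegUpperbound} to the family of all irreducible $\calA$-modules, so the key point reduces to showing that the intersection of their annihilators vanishes. This intersection is by definition the Jacobson radical $J(\calA)$, so the whole problem collapses to controlling $J(\calA)$.

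First I would dispose of the semiprimitive case: by hypothesis $J(\calA) = 0$, meaning the intersection of annihilators of irreducible $\calA$-modules is zero, and Proposition \ref{prop:pidegUpperbound} immediately yields
\[
\PIdeg(\calA) \leq \sup_V \dim_\CC(V),
\]
completing the equality in that case.

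Next I would reduce the noetherian case to the semiprimitive case. By Fact \ref{fact:NilpotentJacobson}, $J(\calA)$ is nilpotent, so Proposition \ref{prop:pidegIdeal}\ref{item:pidegIdealNilpotent} gives $\PIdeg(\calA) = \PIdeg(\calA/J(\calA))$. The quotient $\calA/J(\calA)$ is semiprimitive and still of at most countable dimension, so the semiprimitive case applies and
\[
\PIdeg(\calA/J(\calA)) = \sup_W \dim_\CC(W),
\]
where $W$ ranges over irreducible $\calA/J(\calA)$-modules. Since $J(\calA)$ annihilates every irreducible $\calA$-module, the irreducible $\calA/J(\calA)$-modules are precisely the irreducible $\calA$-modules, and the two suprema coincide.

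There is no real obstacle here once Fact \ref{fact:NilpotentJacobson} and Proposition \ref{prop:pidegIdeal}\ref{item:pidegIdealNilpotent} are in hand; the only subtlety worth flagging is the at-most-countable-dimension hypothesis, which is needed so that Proposition \ref{prop:pidLowerbound} (built on the Jacobson density theorem with $\End_\calA(V) = \CC$) applies to the irreducible modules of $\calA/J(\calA)$ when we invoke the semiprimitive case — and this is preserved under passage to the quotient.
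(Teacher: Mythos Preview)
Your proof is correct and follows essentially the same route as the paper: handle the semiprimitive case via Propositions \ref{prop:pidLowerbound} and \ref{prop:pidegUpperbound}, then reduce the noetherian case to it using the nilpotency of $J(\calA)$ from Fact \ref{fact:NilpotentJacobson} together with Proposition \ref{prop:pidegIdeal}\ref{item:pidegIdealNilpotent}. Your write-up is in fact slightly more explicit than the paper's in justifying that the irreducible modules of $\calA$ and $\calA/J(\calA)$ coincide and that countable dimension is preserved under the quotient.
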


\begin{proof}
	If $\calA$ is semiprimitive, we can take a family $((\pi_\lambda, V_\lambda))_{\lambda \in \Lambda}$ of irreducible $\calA$-modules such that $\bigcap_{\lambda} \Ker(\pi_\lambda) = 0$.
	Hence the assertion follows from Propositions \ref{prop:pidLowerbound} and \ref{prop:pidegUpperbound}.

	If $\calA$ is noetherian, then $J(\calA)$ is nilpotent by Fact \ref{fact:NilpotentJacobson}.
	Hence the assertion follows from Proposition \ref{prop:pidegIdeal} \ref{item:pidegIdealNilpotent} and the semiprimitive case.
\end{proof}

By Proposition \ref{prop:PIdegSemiprimitive}, if $\calA$ is semiprimitive and $\PIdeg(\calA) = 1$, then $\calA$ is commutative.
We consider the PI degree of an algebra of group invariant elements.
A $\CC$-algebra $\calA$ equipped with a linear $H$-action of a group $H$ is called an $H$-algebra if the $H$-action is given by automorphisms of the algebra.
An $\calA$-module $V$ equipped with a linear $H$-action is said to be an $(\calA, H)$-module if the multiplication $\calA\otimes V\rightarrow V$ is $H$-linear.

\begin{lemma}\label{lem:GeneralizedPairConnected2}
	Let $H$ be a finite group and $V$ an $\calA$-module of an $H$-algebra $\calA$.
	Then we have
	\begin{align*}
		\Len_{\calA}(V)\leq \Len_{\calA^H}(V) \leq |H|\Len_{\calA}(V).
	\end{align*}
\end{lemma}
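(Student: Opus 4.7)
The first inequality is immediate: every $\calA$-submodule of $V$ is automatically an $\calA^H$-submodule, so any $\calA$-composition series of $V$ can be refined to an $\calA^H$-composition series, giving $\Len_{\calA}(V) \leq \Len_{\calA^H}(V)$ (and the infinite-length case is handled by the same refinement argument applied to strict chains). My main effort will go into the second inequality $\Len_{\calA^H}(V) \leq |H|\Len_{\calA}(V)$, where I plan to exploit the smash product $\calA \# H$ whose multiplication is determined by $h \cdot a = h(a) \cdot h$ for $h \in H$, $a \in \calA$. The central tool is the averaging idempotent $e := |H|^{-1}\sum_{h\in H} h \in \calA \# H$, which satisfies $e(\calA \# H) e = \calA^H \cdot e \cong \calA^H$; consequently, for any $(\calA \# H)$-module $N$, the subspace $eN$ inherits a natural $\calA^H$-module structure.

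The next step will be to form the induced $(\calA \# H)$-module $\tilde V := (\calA \# H) \otimes_{\calA} V$. As a left $\calA$-module, this decomposes as $\bigoplus_{h \in H} h \otimes V$, and each summand is a copy of $V$ whose $\calA$-action is twisted through the automorphism $h^{-1}$; since twisting by an automorphism preserves the submodule lattice, I expect $\Len_{\calA}(\tilde V) = |H| \cdot \Len_{\calA}(V)$. A routine computation should then identify $e\tilde V$ with $V$ as an $\calA^H$-module via $v \mapsto \frac{1}{|H|}\sum_{h\in H} h \otimes v$. With these identifications in hand, the second inequality reduces to the key technical claim that $\Len_{\calA^H}(eN) \leq \Len_{\calA}(N)$ holds for every $(\calA \# H)$-module $N$.

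The core of this technical claim rests on the relation $h \cdot e = e$ in $\calA \# H$, which forces $H$ to act trivially on $eN$ and yields the identity $e \cdot (au) = t(a) \cdot u$ for $a \in \calA$, $u \in eN$, where $t(a) := |H|^{-1}\sum_{h\in H} h(a) \in \calA^H$ is the averaging map. Since $t$ restricts to the identity on $\calA^H$, we have $t(\calA) = \calA^H$, and therefore $e(\calA U) = \calA^H U = U$ for every $\calA^H$-submodule $U$ of $eN$. This shows that the assignment $U \mapsto \calA U$, from $\calA^H$-submodules of $eN$ to $\calA$-submodules of $N$, is strictly order-preserving: any strict chain of length $m$ on the source side pulls back to a strict chain of length $m$ on the target side. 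Chaining the steps then produces $\Len_{\calA^H}(V) = \Len_{\calA^H}(e\tilde V) \leq \Len_{\calA}(\tilde V) = |H|\Len_{\calA}(V)$.

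The main obstacle I anticipate is entirely computational rather than conceptual: I must track the twisted $\calA$-actions on the summands of $\tilde V$ carefully to justify the length formula $\Len_{\calA}(\tilde V) = |H|\Len_{\calA}(V)$, and derive the identity $e \cdot (au) = t(a) u$ using only the smash-product relations together with $h \cdot u = u$ for $u \in eN$. Once these bookkeeping steps are settled, the comparison $\Len_{\calA^H}(eN) \leq \Len_{\calA}(N)$ and the final chain of inequalities are essentially automatic, and no characteristic-zero hypothesis enters beyond the invertibility of $|H|$ that is already implicit in working over $\CC$.
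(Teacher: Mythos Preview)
Your proof is correct and follows essentially the same strategy as the paper: both construct the induced $(\calA\#H)$-module (the paper writes it as $\CC[H]\otimes V$ with the twisted action $a\cdot(g\otimes v)=g\otimes (g^{-1}a)v$), identify $V$ with its $H$-invariants, and compare lengths through $\Len_\calA(\CC[H]\otimes V)=|H|\Len_\calA(V)$. The only minor difference is that the paper obtains $\Len_{\calA^H}(N^H)\le\Len_\calA(N)$ by factoring through $\Len_{\calA\#H}(N)$ and using that $(\cdot)^H$ is exact and sends irreducibles to irreducibles or zero, whereas you give a direct submodule-lattice argument via $U\mapsto \calA U$; both are equally short.
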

	
\begin{proof}
	The first inequality is trivial.
	We denote by $\CC[H]$ the group ring of $H$.
	
	We consider $\CC[H]\otimes V$ as an $(\calA, H)$-module via
	$a\cdot (g \otimes v) = g \otimes (g^{-1}a)v$ ($a\in \calA, g\in H, v \in V$)
	and the left translation of $H$ on $\CC[H]$.
	Then $(\CC[H]\otimes V)^H$ is isomorphic to $V$ as an $\calA^{H}$-module.
	The functor $(\cdot)^H$ is exact and sends an irreducible $(\calA, H)$-module to an irreducible $\calA^H$-module or zero.
	Hence we have
	\begin{equation*}
		\Len_{\calA^H}(V) \leq \Len_{\calA, H}(\CC[H]\otimes V) \leq \Len_{\calA}(\CC[H]\otimes V) = |H|\Len_{\calA}(V). \qedhere
	\end{equation*}
\end{proof}

\begin{proposition}\label{prop:PIdegStableConnected}
	Let $H$ be a finite group, $\calA$ a noetherian $H$-algebra of at most countable dimension and $I$ a two-sided ideal of $\calA$.
	Then we have
	\begin{align*}
		\PIdeg(\calA^H / I \cap \calA^H) \leq \PIdeg(\calA/I) \leq |H| \PIdeg(\calA^H /I\cap \calA^H).
	\end{align*}
\end{proposition}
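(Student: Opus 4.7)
The plan is to prove the two inequalities separately, treating the first as a direct consequence of subalgebra behaviour and the second as an application of Lemma \ref{lem:GeneralizedPairConnected2} combined with Proposition \ref{prop:PIdegSemiprimitive}.

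For the first inequality, I would observe that the natural map $\calA^H/(I \cap \calA^H) \to \calA/I$ sending $a + I\cap \calA^H$ to $a + I$ is an injective algebra homomorphism, so $\calA^H/(I\cap \calA^H)$ is realised as a subalgebra of $\calA/I$. Now for any inclusion $B \subset C$ of $\CC$-algebras, every polynomial identity of $C$ is automatically a polynomial identity of $B$, so the set of polynomial identities of $B$ contains that of $C$. Hence if $n$ is such that every polynomial identity of $B$ is a polynomial identity of $M_n(\CC)$, the same holds for $C$, which yields $\PIdeg(B) \leq \PIdeg(C)$ directly from Definition \ref{def:PIdeg}. Applied to $B = \calA^H/(I\cap \calA^H) \subset C = \calA/I$, this gives the left-hand inequality.

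For the second inequality, the plan is to pass through irreducible modules. First I would note that $\calA/I$ is noetherian (as a quotient of the noetherian algebra $\calA$) and of at most countable dimension, so Proposition \ref{prop:PIdegSemiprimitive} identifies
\begin{equation*}
	\PIdeg(\calA/I) = \sup_{V} \dim_{\CC}(V),
\end{equation*}
where $V$ ranges over irreducible $\calA/I$-modules. Fix such a $V$ and regard it as an $\calA$-module annihilated by $I$. Since $V$ is simple over $\calA$ and the hypotheses of Lemma \ref{lem:GeneralizedPairConnected2} are satisfied, we obtain $\Len_{\calA^H}(V) \leq |H|$. Because $I \cap \calA^H \subset I$ annihilates $V$, it annihilates every $\calA^H$-composition factor of $V$, so each such factor is an irreducible $\calA^H/(I\cap \calA^H)$-module. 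Proposition \ref{prop:pidLowerbound}, applied to the algebra $\calA^H/(I\cap \calA^H)$ (which is of at most countable dimension as a subquotient of $\calA$), bounds the dimension of each factor by $\PIdeg(\calA^H/(I\cap \calA^H))$. Summing over the at most $|H|$ composition factors yields $\dim_\CC(V) \leq |H|\cdot\PIdeg(\calA^H/(I\cap \calA^H))$, and taking the supremum over $V$ completes the proof.

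There is no serious obstacle here; the only subtlety is the bookkeeping to verify that Proposition \ref{prop:PIdegSemiprimitive} and Proposition \ref{prop:pidLowerbound} apply to the relevant quotient algebras, which follows from the standing hypotheses that $\calA$ is noetherian of at most countable dimension, together with the fact that both noetherianness and countability of dimension pass to quotients and to $\calA^H$.
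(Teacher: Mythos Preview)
Your proof is correct and follows essentially the same approach as the paper: the first inequality via the subalgebra inclusion, and the second by combining Proposition~\ref{prop:PIdegSemiprimitive}, Lemma~\ref{lem:GeneralizedPairConnected2}, and Proposition~\ref{prop:pidLowerbound} to bound $\dim_\CC(V)$ for each irreducible $\calA/I$-module $V$ by $|H|\cdot\PIdeg(\calA^H/(I\cap\calA^H))$.
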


\begin{proof}
	The first inequality is trivial since $\calA^H / I \cap \calA^H$ is a subalgebra of $\calA/I$.
	By Proposition \ref{prop:PIdegSemiprimitive}, $\PIdeg(\calA/I)$ coincides with the supremum of the dimensions of all irreducible $\calA$-modules annihilated by $I$.
	Let $V$ be an irreducible $\calA$-module annihilated by $I$.
	Then we have
	\begin{align*}
		\dim_{\CC}(V) &\leq \Len_{\calA^H}(V)\cdot \PIdeg(\calA^H / I \cap \calA^H) \\
		&\leq |H| \PIdeg(\calA^H /I\cap \calA^H)
	\end{align*}
	by Proposition \ref{prop:pidLowerbound} and Lemma \ref{lem:GeneralizedPairConnected2}.
	This shows the second inequality.
\end{proof}

Under a strong assumption, we can give an upper bound of the dimension of an algebra using the PI degree.

\begin{proposition}\label{prop:UpperBoundDimA}
	Let $\calA$ be a noetherian $\CC$-algebra of at most countable dimension.
	Then we have
	\begin{align*}
		\dim_{\CC}(\calA) \leq \Len_{\calA\otimes \calA^\opalg}(\calA) \cdot \PIdeg(\calA)^2,
	\end{align*}
	where $\calA^\opalg$ is the opposite algebra of $\calA$.
\end{proposition}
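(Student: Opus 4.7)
The plan is to bound $\dim_\CC \calA$ by viewing $\calA$ as a module over $\calA \otimes \calA^\opalg$ and controlling the dimensions of its composition factors. If either $\PIdeg(\calA) = \infty$ or $\Len_{\calA \otimes \calA^\opalg}(\calA) = \infty$ the inequality is vacuous, so one may assume both are finite. I would then fix a composition series $0 = M_0 \subsetneq M_1 \subsetneq \cdots \subsetneq M_n = \calA$ of $\calA$-bimodules with $n = \Len_{\calA \otimes \calA^\opalg}(\calA)$; since $\dim_\CC \calA = \sum_{i=1}^n \dim_\CC(M_i/M_{i-1})$, it suffices to show that every irreducible $\calA$-bimodule $N$ satisfies $\dim_\CC N \leq \PIdeg(\calA)^2$.

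Fix such an $N$ and let $\calA_L, \calA_R \subseteq \End_\CC(N)$ denote the images of the left and right $\calA$-actions; these commute, and $\calA_L \cdot \calA_R$ acts irreducibly on $N$. The first step is to check $\dim_\CC N < \infty$. If $\dim N \geq k$, then by the Jacobson density theorem (applicable because $\calA \otimes \calA^\opalg$ has countable dimension) one can pick linearly independent $v_1, \ldots, v_k \in N$ and find $e_{ij} \in \calA_L \cdot \calA_R$ satisfying $e_{ij}(v_l) = \delta_{jl} v_i$; the subalgebra of $\calA \otimes \calA^\opalg$ generated by lifts of these $e_{ij}$ stabilizes $F := \mathrm{span}(v_1, \ldots, v_k)$ and surjects onto $\End_\CC(F) = M_k(\CC)$, exhibiting $M_k(\CC)$ as a subquotient. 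Combined with the standard tensor-product bound $\PIdeg(\calA \otimes \calA^\opalg) \leq \PIdeg(\calA) \cdot \PIdeg(\calA^\opalg) = \PIdeg(\calA)^2$ (using $\PIdeg(\calA^\opalg) = \PIdeg(\calA)$, which follows by reversing polynomial identities, together with Regev's theorem on tensor products of PI algebras), this forces $k \leq \PIdeg(\calA)^2$ and in particular $\dim N < \infty$.

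Assuming $\dim_\CC N = d < \infty$, Jacobson density now yields $\calA_L \cdot \calA_R = \End_\CC(N)$. A Nakayama-type step then shows $\calA_L$ is semisimple: the Jacobson radical $J$ of the finite-dimensional algebra $\calA_L$ is nilpotent, and $JN$ is stable under $\calA_L$ and (since $\calA_R$ commutes element-wise with $\calA_L$, hence with $J$) also under $\calA_R$, hence under $\End_\CC(N)$, so $JN = 0$ or $JN = N$ and the latter iterates to a contradiction. The double centralizer theorem then provides a decomposition $N \cong W \otimes_\CC U$ with $W$ an irreducible $\calA_L$-module (hence an irreducible $\calA$-module) and $U = \Hom_{\calA_L}(W, N)$; matching ranks in $\calA_L \cdot \calA_R = \End_\CC(W) \otimes \End_\CC(U)$ forces $\calA_R = 1 \otimes \End_\CC(U)$, so $U$ is irreducible as an $\calA^\opalg$-module. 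Proposition \ref{prop:pidLowerbound} applied to $W$ and $U$ will give $\dim W \leq \PIdeg(\calA)$ and $\dim U \leq \PIdeg(\calA^\opalg) = \PIdeg(\calA)$, completing $\dim N \leq \PIdeg(\calA)^2$. The main obstacle is the finite-dimensionality step; everything after it is a clean double centralizer computation.
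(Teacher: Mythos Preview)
Your argument is correct but takes a different route from the paper. The paper first shows that $\calA$ itself is finite dimensional: since $\calA/J(\calA)$ is semiprimitive with finite bimodule length and finite PI degree, it embeds in a finite product of matrix algebras and is therefore finite dimensional; each $J(\calA)^k/J(\calA)^{k+1}$ is a finitely generated $\calA/J(\calA)$-module (by noetherianity) hence finite dimensional; and $J(\calA)$ is nilpotent by Fact~\ref{fact:NilpotentJacobson}. Once $\calA$ is finite dimensional, every irreducible $\calA\otimes\calA^\opalg$-module is automatically an external tensor product $M_1\boxtimes M_2$ of irreducibles, and the bound follows from Proposition~\ref{prop:pidLowerbound}. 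This route avoids Regev's theorem entirely and stays within the tools already assembled in the section.

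Your approach instead attacks the irreducible bimodules directly via Jacobson density, Regev's theorem, and the double centralizer theorem. One small point: the precise inequality $\PIdeg(\calA\otimes\calA^\opalg)\leq\PIdeg(\calA)^2$ that you attribute to Regev is not what Regev's theorem literally asserts; Regev only guarantees that a tensor product of PI algebras is PI, hence of \emph{finite} PI degree. Fortunately that is all you need for the finite-dimensionality step, and your subsequent double-centralizer argument independently yields the sharp bound $\dim_\CC N\leq\PIdeg(\calA)^2$. So the overstated product formula is harmless. The paper's route buys self-containment, while yours goes straight at the composition factors at the cost of importing an external theorem.
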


\begin{proof}
	Clearly we can assume $\Len_{\calA\otimes \calA^\opalg}(\calA), \PIdeg(\calA) < \infty$.
	We shall show that $\calA$ is finite dimensional.

	Since $\calA/J(\calA)$ is semiprimitive and $\Len_{\calA\otimes \calA^\opalg}(\calA), \PIdeg(\calA) < \infty$, the algebra $\calA/J(\calA)$ can be embedded in a finite direct product of matrix algebras.
	In particular, $\calA/J(\calA)$ is finite dimensional.
	$J(\calA)^k/J(\calA)^{k+1}$ is a finitely generated $\calA/J(\calA)$-module and hence finite dimensional for any $k \geq 0$.
	Since $J(\calA)$ is nilpotent ideal by Fact \ref{fact:NilpotentJacobson}, $\calA$ is also finite dimensional.

	Any irreducible $\calA\otimes \calA^\opalg$-module $M$ is finite dimensional and
	we have $M\simeq M_1 \boxtimes M_2$ for some irreducible $\calA$-module $M_1$ and irreducible $\calA^\opalg$-module $M_2$.
	By Proposition \ref{prop:pidegUpperbound}, we obtain
	\begin{equation*}
		\begin{split}
			\dim_\CC(\calA) &\leq \Len_{\calA\otimes \calA^\opalg}(\calA) \cdot \PIdeg(\calA) \PIdeg(\calA^\opalg) \\
			&= \Len_{\calA\otimes \calA^\opalg}(\calA) \cdot \PIdeg(\calA)^2. \qedhere
		\end{split}
	\end{equation*}
\end{proof}

\subsection{Poisson algebra and polynomial identity}\label{subsect:PoissonPolynomialIdentity}

Recall that when a $\CC$-algebra $\calA$ is semiprimitive, then $\calA$ is commutative if and only if $\PIdeg(\calA) = 1$.
Intuitively, a $\CC$-algebra of finite PI degree is `almost commutative'.
If a $\CC$-algebra $\calA$ has a filtration such that $\gr\calA$ is commutative,
then $\gr\calA$ has a natural Poisson structure.
In this subsection, we will show that if $\calA$ has finite PI degree and the Jacobson radical of $\calA$ is nilpotent, then $\gr \calA$ is Poisson-commutative modulo its nilradical.

We consider polynomial identities on the matrix algebra $M_n(\CC)$.
Several polynomial identities on the algebra are known, e.g.\ $s_{2n}$ (Fact \ref{fact:amitsurLevitzki}).
We use naive identities coming from the Cayley--Hamilton theorem.
We define non-commutative polynomials $p_i$ ($i\geq 1$) with indeterminates $X$, $Y$ and $Z$ by
\begin{align}
	p_1(X, Y; Z) &:= [Z, X], \\
	p_{i}(X, Y; Z) &:= [p_{i-1}(X, Y; Z), p_{i-1}(X, Y; Y^{i-1})] \quad (i \geq 2). \label{def:IdentityMatrix}
\end{align}

\begin{lemma}\label{lem:IdentityMatrix}
	For any $A, B \in M_n(\CC)$, we have $p_n(A, B; B^n) = 0$.
\end{lemma}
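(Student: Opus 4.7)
The plan is to control the subspaces swept out by $p_i(A, B; B^k)$ as $k$ varies, and show each step of the recursion drops the dimension by at least one. For $i \geq 1$, define
\[
U_i := \mathrm{span}_{\CC}\{p_i(A, B; B^k) : k \geq 1\} \subseteq M_n(\CC).
\]
The lemma follows immediately once one shows $\dim U_i \leq n - i$ for $i = 1, \ldots, n$, since then $U_n = 0$ and in particular $p_n(A, B; B^n) = 0$.

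For the base case $i = 1$, the Cayley--Hamilton theorem forces $\mathrm{span}\{B^k : k \geq 0\}$ to have $\CC$-dimension at most $n$. Since $p_1(A, B; Z) = [Z, A]$ is linear in $Z$ and $[I, A] = 0$, the image of this span under $Q \mapsto [Q, A]$ has dimension at most $n - 1$, giving $\dim U_1 \leq n - 1$.

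For the inductive step, I use the defining recursion, which yields
\[
p_{i+1}(A, B; B^k) = [p_i(A, B; B^k),\, w_i], \qquad w_i := p_i(A, B; B^i).
\]
The crucial observation is that $w_i$ itself lies in $U_i$ (it is the $k = i$ value), so the linear map $u \mapsto [u, w_i]$ sends $U_i$ to a space containing $U_{i+1}$ while killing $w_i \in U_i$. Hence either $w_i = 0$, forcing $U_{i+1} = 0$, or $w_i \neq 0$, forcing $\dim U_{i+1} \leq \dim U_i - 1$. Iterating from $\dim U_1 \leq n - 1$ gives $\dim U_n \leq 0$, completing the proof.

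Practically everything in the plan is a dimension count; the only nontrivial observation is the alignment of indices in the recursion, namely that the second slot at level $i+1$ is $Y^i$, which is exactly the exponent making $w_i = p_i(A, B; B^i)$ an element of $U_i$. Once one notices this self-commuting feature, the drop by one dimension at each step is automatic, and there is no further obstacle.
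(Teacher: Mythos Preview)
Your proof is correct and follows essentially the same mechanism as the paper's: both use Cayley--Hamilton at the base and the self-commutator $[w_i, w_i] = 0$ to lose one term at each level of the recursion. The only difference is packaging---the paper tracks the explicit Cayley--Hamilton relation $p_i(A,B;B^n) + a_{n-1}p_i(A,B;B^{n-1}) + \cdots + a_i p_i(A,B;B^i) = 0$ and watches the lowest-index term disappear at each step, while you abstract this into a dimension count on the span $U_i$.
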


\begin{proof}
	By the Cayley--Hamilton theorem, there are $a_0, a_1, \ldots, a_{n-1} \in \CC$ such that $B^n + a_{n-1}B^{n-1} + \cdots + a_0 = 0$.
	Applying $[\cdot, A]$ to this equation, we have
	\begin{align*}
		p_1(A, B; B^n) + a_{n-1} p_1(A, B; B^{n-1}) + \cdots + a_1 p_1(A, B; B) = 0.
	\end{align*}
	Applying $[\cdot, p_1(A, B; B)]$ to the equation, we have
	\begin{align*}
		p_2(A, B; B^n) + a_{n-1} p_2(A, B; B^{n-1}) + \cdots + a_2 p_2(A, B; B^2) = 0.
	\end{align*}
	Applying such operations iteratively, we obtain
	$p_n(A, B; B^n) = 0$.
\end{proof}

We recall the definition of Poisson algebras.
We say that a $\CC$-algebra $\calA$ equipped with a skew-symmetric bilinear product $\{\cdot, \cdot\}$ is a Poisson algebra if $\{\cdot, \cdot\}$ satisfies
the Jacobi identity and $\{X, \cdot\} \in \End_{\CC}(\calA)$ is a derivation of the $\CC$-algebra $\calA$ for any $X \in \calA$.

Let $\calP\langle X_1, X_2, \ldots, X_n\rangle$ be the free Poisson algebra with free generators $X_i$, which is the symmetric algebra of the free Lie algebra with free generators $X_i$.
We define a Poisson algebra version of the polynomials $p_i$ by
\begin{align}
	q_1(X, Y; Z) &:= \{Z, X\}, \\
	q_{i}(X, Y; Z) &:= \{q_{i-1}(X, Y; Z), q_{i-1}(X, Y; Y^{i-1})\} \quad (i \geq 2), \label{def:PoissonIdentity}
\end{align}
which are elements of $\calP\langle X, Y, Z\rangle$.

\begin{lemma}\label{lem:RelationQ}
	For any $j \geq i \geq 1$, we have
	\begin{align*}
		q_i(X, Y; Y^j) &= \binom{j}{i} q_i(X, Y; Y^i)Y^{j-i}, \\
		q_i(X, Y; Y^i) &= i q_{i-1}(X, Y; Y^{i-1}) \{Y, q_{i-1}(X;Y,Y^{i-1})\} \quad (i \geq 2).
	\end{align*}
\end{lemma}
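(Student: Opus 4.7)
The plan is to prove both identities together by a single induction on $i$, using only the Leibniz rule and skew-symmetry of the Poisson bracket (and the commutativity of the underlying symmetric algebra). The second identity will drop out as the $j=i$ specialization of the inductive computation producing the first.

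For the base case $i=1$, the first identity reduces to $\{Y^j,X\} = jY^{j-1}\{Y,X\} = \binom{j}{1}q_1(X,Y;Y)Y^{j-1}$ by the Leibniz rule and commutativity. The second identity is vacuous here, as it is only claimed for $i \geq 2$.

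For the inductive step, assume the first identity holds at level $i-1$. By the defining recursion \eqref{def:PoissonIdentity},
\begin{align*}
q_i(X,Y;Y^j) &= \{q_{i-1}(X,Y;Y^j),\, q_{i-1}(X,Y;Y^{i-1})\}\\
&= \tbinom{j}{i-1}\{q_{i-1}(X,Y;Y^{i-1})\,Y^{j-i+1},\, q_{i-1}(X,Y;Y^{i-1})\},
\end{align*}
where I have applied the inductive hypothesis in the first slot. Now apply Leibniz on the left argument: the term where the bracket lands on $q_{i-1}(X,Y;Y^{i-1})$ itself vanishes by skew-symmetry, leaving
\[
q_i(X,Y;Y^j) = \tbinom{j}{i-1}\, q_{i-1}(X,Y;Y^{i-1})\,\{Y^{j-i+1},\, q_{i-1}(X,Y;Y^{i-1})\}.
\]
A second application of Leibniz to the remaining bracket extracts $(j-i+1)Y^{j-i}$, giving
\[
q_i(X,Y;Y^j) = \tbinom{j}{i-1}(j-i+1)\, q_{i-1}(X,Y;Y^{i-1})\,\{Y, q_{i-1}(X,Y;Y^{i-1})\}\,Y^{j-i}.
\]

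Specializing $j=i$ and observing the elementary identity $\binom{j}{i-1}(j-i+1) = i\binom{j}{i}$ (so in particular the coefficient is $i$ when $j=i$), I obtain the second asserted identity $q_i(X,Y;Y^i) = i\, q_{i-1}(X,Y;Y^{i-1})\{Y,q_{i-1}(X,Y;Y^{i-1})\}$. Substituting this back into the displayed formula then yields the first identity at level $i$, completing the induction. No substantial obstacle is expected; the only bookkeeping is the binomial identity and the sign-cancellation from skew-symmetry.
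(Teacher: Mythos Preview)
Your proof is correct and follows essentially the same route as the paper's: induction on $i$, applying the inductive hypothesis inside the recursion, then Leibniz together with the skew-symmetry cancellation $\{q_{i-1},q_{i-1}\}=0$, and finally the binomial identity $\binom{j}{i-1}(j-i+1)=i\binom{j}{i}$, extracting the second identity as the $j=i$ case and feeding it back into the general case. You are in fact slightly more explicit than the paper about the skew-symmetry step, but otherwise the arguments are identical.
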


\begin{proof}
	We show the lemma by induction on $i$.
	Since $\{\cdot, X\}$ is a derivation, the equation is trivial for $i = 1$.
	For $i > 1$, we have
	\begin{align*}
		&q_i(X, Y; Y^j) \\
		= &\{q_{i-1}(X, Y; Y^j), q_{i-1}(X, Y; Y^{i-1})\} \\
		= & \binom{j}{i-1} \{q_{i-1}(X, Y; Y^{i-1})Y^{j-i+1}, q_{i-1}(X, Y; Y^{i-1})\} \\
		= & \binom{j}{i-1} q_{i-1}(X, Y; Y^{i-1}) (j-i+1)Y^{j-i}\{Y, q_{i-1}(X, Y; Y^{i-1})\} \\
		= & \binom{j}{i-1}\frac{j - i + 1}{i}Y^{j-i}q_i(X, Y; Y^i)\\
		=& \binom{j}{i} Y^{j-i}q_i(X, Y; Y^i).
	\end{align*}
	In the fourth line for $j = i$, we have obtained the second equation in the assertion,
	and in the fifth line, we have used the obtained equation.
\end{proof}

\begin{lemma}\label{lem:PolynomialPoissonIdentity}
	For any $n\geq 1$, there exists a unique polynomial $f_n(X, Y; t) \in \calP\langle X, Y \rangle [t]$ such that for any integer $m\geq n$, we have
	\begin{align*}
		q_n(X^m, Y; Y^n) = X^{(m-n)2^{n-1}} f_{n}(X, Y; m).
	\end{align*}
	The leading coefficient of $f_{n}(X, Y; t)$ is of the form $c\set{Y, X}^k X^{l-1}$ for some positive integers $c$, $k$ and $l$.
\end{lemma}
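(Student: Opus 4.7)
The plan is to prove both existence and uniqueness by induction on $n$, using the second recurrence of Lemma~\ref{lem:RelationQ} together with the Leibniz rule for $\{Y, \cdot\}$. For the base case $n = 1$, one computes $q_1(X^m, Y; Y) = \{Y, X^m\} = m X^{m-1}\{Y, X\}$, so $f_1(X, Y; t) := t\{Y, X\}$ works, with leading coefficient $\{Y, X\} = 1 \cdot \{Y, X\}^1 \cdot X^{0}$ of the required form with $c = k = l = 1$.

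For the inductive step, assume $q_{n-1}(X^m, Y; Y^{n-1}) = X^a f_{n-1}(X, Y; m)$ for all $m \geq n-1$, where $a := (m-n+1)2^{n-2}$. Applying the Poisson algebra endomorphism $X \mapsto X^m$ to the second identity of Lemma~\ref{lem:RelationQ} gives $q_n(X^m, Y; Y^n) = n \cdot q_{n-1}(X^m, Y; Y^{n-1}) \cdot \{Y, q_{n-1}(X^m, Y; Y^{n-1})\}$. Expanding the inner bracket via the Leibniz rule $\{Y, X^a f_{n-1}\} = a X^{a-1}\{Y, X\} f_{n-1} + X^a \{Y, f_{n-1}\}$ and multiplying through yields two summands whose $X$-exponents are $2a-1$ and $2a$ respectively. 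Since $2a = (m-n)2^{n-1} + 2^{n-1}$, the common factor $X^{(m-n)2^{n-1}}$ can be pulled out of both summands, defining $f_n(X, Y; t)$ as an element of $\calP\langle X, Y\rangle[t]$.

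To control the leading coefficient in $t$, I would observe that the first summand carries the factor $a = 2^{n-2}m + \text{const}$, raising its $t$-degree to $2\deg_t f_{n-1} + 1$, whereas the second summand $n f_{n-1}\{Y, f_{n-1}\}X^{2^{n-1}}$ has $t$-degree at most $2\deg_t f_{n-1}$ (since $\{Y, \cdot\}$ acts only on the $\calP\langle X, Y\rangle$-coefficients and does not raise the $t$-degree). Thus the leading coefficient of $f_n$ equals $n \cdot 2^{n-2}\{Y, X\} \cdot (\text{leading coefficient of }f_{n-1})^2 \cdot X^{2^{n-1}-1}$; squaring a term of shape $c\{Y, X\}^{k}X^{l-1}$ and multiplying by $n \cdot 2^{n-2}\{Y, X\}X^{2^{n-1}-1}$ yields a term of the same shape with new positive integer parameters ($c' = n \cdot 2^{n-2}c^2$, $k' = 2k+1$, $l' = 2l + 2^{n-1} - 2$).

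Non-vanishing of this leading coefficient and the uniqueness of $f_n$ both follow from the fact that $\calP\langle X, Y\rangle$, being the symmetric algebra of the free Lie algebra on $X, Y$, is an integral domain containing $\QQ$: monomials such as $\{Y, X\}^{k}X^{l-1}$ are nonzero, $X$ is not a zero divisor, and a polynomial in $t$ over such a ring that vanishes at infinitely many integers must be zero. The main technical point is the exponent bookkeeping to verify that the prefactor $X^{(m-n)2^{n-1}}$ divides both summands; this reduces to the elementary inequality $2a - 1 - (m-n)2^{n-1} = 2^{n-1} - 1 \geq 0$.
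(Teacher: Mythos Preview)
Your proof is correct and essentially the same as the paper's. Both proceed by induction on $n$, apply the second identity of Lemma~\ref{lem:RelationQ} with $X$ replaced by $X^m$, expand via the Leibniz rule, factor out $X^{(m-n)2^{n-1}}$, and identify the leading term; the only cosmetic difference is that you frame the substitution as applying the Poisson endomorphism $X\mapsto X^m$, while the paper performs the substitution directly.
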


\begin{proof}
	We write $f_n(t) = f_{n}(X, Y; t)$ for short.
	As a $\CC$-algebra, $\calP\langle X, Y\rangle$ is an integral domain since $\calP\langle X, Y \rangle$ is the symmetric algebra of the free Lie algebra.
	This shows the uniqueness in the assertion.

	We construct $f_n$ inductively on $n$.
	By $\set{Y, X^m} = mX^{m-1}\set{Y, X}$, the polynomial $f_1(t) = \set{Y, X} t$ satisfies the assertion.
	For $m\geq n \geq 2$, we have
	\begin{align*}
		&q_{n}(X^m, Y; Y^n)\\
		= &n X^{(m-n+1)2^{n-2}}f_{n-1}(m)\{Y, X^{(m-n+1)2^{n-2}}f_{n-1}(m)\} \\
		= &n X^{(m-n)2^{n-1}} f_{n-1}(m)\\
		&\times ((m-n+1)2^{n-2} X^{2^{n-1}-1}\{Y, X\}f_{n-1}(m) + X^{2^{n-1}}\{Y, f_{n-1}(m)\})
	\end{align*}
	by Lemma \ref{lem:RelationQ}.
	Hence the polynomial
	\begin{align*}
		f_n(t) := n f_{n-1}(t)((t-n+1)2^{n-2} X^{2^{n-1}-1}\{Y, X\}f_{n-1}(t) + X^{2^{n-1}}\{Y, f_{n-1}(t)\})
	\end{align*}
	satisfies the desired equation, and the leading coefficient of $f_n$ is 
	\begin{align*}
		n 2^{n-2} c_{n-1}^2 X^{2^{n-1}-1}\{Y, X\},
	\end{align*}
	where $c_{n-1}$ is the leading coefficient of $f_{n-1}$.
	We have completed the proof.
\end{proof}

Let $\calA$ be a filtered (associative) $\CC$-algebra with filtration $0 = \calA_{-1} \subset \calA_0 \subset \cdots $ satisfying the following conditions:
\begin{enumerate}
	\item $\CC\cdot 1 \subset \calA_0$
	\item $\bigcup_{i} \calA_i = \calA$
	\item $\calA_i \cdot \calA_j \subset \calA_{i+j}$ for any $i, j \geq 0$
	\item $[\calA_i, \calA_j] \subset \calA_{i+j-1}$ for any $i, j \geq 0$.
\end{enumerate}
Then the associated graded algebra $\gr\calA = \bigoplus_{i=0}^{\infty} \calA_{i+1}/\calA_i$ is a Poisson algebra with the Poisson bracket
\begin{align*}
	\{a + \calA_{i - 1}, b + \calA_{j-1}\} := [a, b] + \calA_{i + j - 2}
	\quad (a \in \calA_i, b \in \calA_j).
\end{align*}

In addition to the above conditions, we assume the following:
\begin{enumerate}
	\setcounter{enumi}{4}
	\item \label{item:FinGenGr} $\gr\calA$ is finitely generated as a $\CC$-algebra
\end{enumerate}
By the assumption \ref{item:FinGenGr}, $\calA$ is noetherian and has at most countable dimension.
Note that every minimal prime ideals and the nilradical $\sqrt{(0)}$
of $\gr\calA$ are Poisson ideals, i.e.\ they are stable under $\set{X, \cdot}$ for any $X \in \gr\calA$ (see \cite[Lemma 3.3.3]{Di96}).

\begin{lemma}\label{lem:PIdegAndIdentityP}
	If $n:=\PIdeg(\calA)$ is finite, then $p_n(A, B; B^n) \in J(\calA)$ for any $A, B\in \calA$.
\end{lemma}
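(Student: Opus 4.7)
My plan is to evaluate $p_n(A,B;B^n)$ on an arbitrary irreducible $\calA$-module and show that it acts as zero; since the annihilators of irreducible modules are precisely the primitive ideals whose intersection is $J(\calA)$, this will give the conclusion.

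First I would invoke assumption \ref{item:FinGenGr}, which implies that $\calA$ has at most countable dimension (as a quotient of a finitely generated free algebra whose filtration pieces are finite dimensional). Hence Proposition \ref{prop:pidLowerbound} applies: every irreducible $\calA$-module $(\pi,V)$ satisfies $k := \dim_\CC(V) \leq n$. Fix such a $V$ and fix $A,B \in \calA$. Then $\pi(A),\pi(B) \in \End_\CC(V) \cong M_k(\CC)$. By the Cayley--Hamilton theorem $\pi(B)$ satisfies a monic polynomial of degree $k$, so running the telescoping argument in the proof of Lemma \ref{lem:IdentityMatrix} for $k$ steps already yields $p_k(\pi(A),\pi(B);\pi(B)^k)=0$. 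But from the recursion \eqref{def:IdentityMatrix}, once $p_k(\pi(A),\pi(B);\pi(B)^k)=0$ we get $p_{k+1}(\pi(A),\pi(B);Z)=[p_k(\pi(A),\pi(B);Z),p_k(\pi(A),\pi(B);\pi(B)^k)]=0$ for every $Z$, and iterating upward gives $p_m(\pi(A),\pi(B);\pi(B)^m)=0$ for every $m \geq k$. In particular $\pi(p_n(A,B;B^n))=0$.

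Since $\pi$ was arbitrary, $p_n(A,B;B^n)$ lies in the annihilator $\Ker(\pi)$ of every irreducible $\calA$-module, hence in the intersection of all primitive ideals, which by definition is $J(\calA)$. This completes the argument.

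There is essentially no obstacle; the only point requiring a line of care is that the irreducible quotients can have dimension strictly less than $n$, which is handled by the observation that $p_m(\cdot,\cdot;\cdot)$ on a matrix algebra is zero as soon as any earlier $p_k(\cdot,\cdot;B^k)$ vanishes, because of the bracket structure of the recursion.
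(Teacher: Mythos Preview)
Your proof is correct and follows essentially the same route as the paper: reduce to irreducible modules, bound their dimension by $n$ via Proposition~\ref{prop:pidLowerbound}, and use Lemma~\ref{lem:IdentityMatrix} to see that $p_n(A,B;B^n)$ acts as zero. The only cosmetic difference is in handling the case $\dim V = k < n$: the paper simply invokes Lemma~\ref{lem:IdentityMatrix} (implicitly using that $M_k(\CC)$ sits inside $M_n(\CC)$, so an identity of $M_n$ is an identity of $M_k$), whereas you unwind the recursion to show $p_k(\cdot,\cdot;B^k)=0$ forces $p_m(\cdot,\cdot;Z)=0$ for all $m>k$; both are valid, the embedding argument being a bit shorter.
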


\begin{proof}
	By the definition of the Jacobson radical, we can take a family $\set{M_i}_{i\in I}$ of irreducible $\calA$-modules such that $\bigcap_{i \in I} \Ann_{\calA}(M_i) = J(\calA)$.
	By Propositions \ref{prop:pidLowerbound} and \ref{prop:pidegUpperbound}, we have $\sup\set{\dim_{\CC}(M_i): i\in I} = n$.
	This implies $p_n(A, B; B^n) \in J(\calA)$ by Lemma \ref{lem:IdentityMatrix}.
\end{proof}

\begin{theorem}\label{thm:PIdegAndPoissonCommutative}
	If $\PIdeg(\calA)$ is finite, then $\gr\calA / \sqrt{(0)}$ is Poisson-commutative, i.e.\ $\{A, B\} = 0$ for any $A, B \in \gr\calA/ \sqrt{(0)}$.
\end{theorem}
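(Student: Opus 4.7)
The plan is to reduce Poisson-commutativity of $\gr\calA/\sqrt{(0)}$ to a polynomial argument in Poisson integral domain quotients, by combining the identity $p_n(a, b; b^n) \in J(\calA)$ of Lemma \ref{lem:PIdegAndIdentityP} with the structural formula for $q_n$ of Lemma \ref{lem:PolynomialPoissonIdentity}. Set $n = \PIdeg(\calA)$. Because $\calA$ is noetherian of at most countable dimension, Fact \ref{fact:NilpotentJacobson} yields an integer $r$ with $J(\calA)^r = 0$; hence $p_n(a^m, b; b^n)^r = 0$ in $\calA$ for all $a, b \in \calA$ and all $m \geq 1$.

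The key bridge between $\calA$ and $\gr\calA$ is the identity $\sigma([u,v]) = \{\sigma(u), \sigma(v)\}$ for principal symbols. Using this together with the parallel recursive definitions of $p_k$ and $q_k$, a routine induction on $k$ shows the following: for homogeneous $A \in \gr_i\calA$ and $B \in \gr_j\calA$ lifted to $a \in \calA_i$, $b \in \calA_j$, there is a filtration level $d_k$ (depending on $k, m, n, i, j$) such that $p_k(a^m, b; b^n) \in \calA_{d_k}$ with principal symbol equal to $q_k(A^m, B; B^n)$ in $\gr_{d_k}\calA$. Applying this at $k = n$ and using $p_n(a^m, b; b^n)^r = 0$ forces $q_n(A^m, B; B^n)^r = 0$ in $\gr\calA$, so $q_n(A^m, B; B^n) \in \sqrt{(0)}$ for every $m \geq n$.

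To conclude, it suffices by $\CC$-bilinearity to show $\{A, B\} \in \sqrt{(0)}$ for homogeneous $A, B$. Writing $\sqrt{(0)}$ as the intersection of the minimal primes of $\gr\calA$ (each a Poisson ideal, as noted in the paper), suppose for contradiction that $\{A, B\} \notin \lie{p}$ for some minimal prime $\lie{p}$. Then $\gr\calA/\lie{p}$ is a characteristic-zero Poisson integral domain in which $A \neq 0$ (otherwise $\{A, B\} = 0$), and the previous paragraph gives $A^{(m-n)2^{n-1}} f_n(A, B; m) = 0$ for every $m \geq n$. Cancelling the power of $A$ in the domain, $f_n(A, B; m) = 0$ for all $m \geq n$; hence the one-variable polynomial $f_n(A, B; \cdot)$ over this characteristic-zero domain has infinitely many zeros and therefore vanishes identically. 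But its leading coefficient is $c\{B, A\}^k A^{l-1}$ with positive integers $c, k, l$ (Lemma \ref{lem:PolynomialPoissonIdentity}), and this is nonzero in the domain since $A$, $\{B, A\} = -\{A, B\}$, and $c$ are all nonzero—a contradiction.

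The main obstacle is the inductive symbol identification in the second paragraph. While it is conceptually transparent—the Poisson bracket is by definition the principal symbol of the commutator—one must verify at each level of the nested commutators in the recursive definition of $p_k$ that the filtration degrees add up exactly as predicted by the corresponding Poisson computation, so that no degeneration of principal symbols occurs and the final identification at $k=n$ remains valid.
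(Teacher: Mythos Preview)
Your proof is correct and follows the same route as the paper: the paper compresses your symbol-tracking step into the single line ``by the definition of $p_n$ and $q_n$, we have $q_n(A, B; B^n) \in \gr(J(\calA)) \subset \sqrt{(0)}$'', and the passage to a minimal prime, the polynomial argument with $f_n$, and the reading-off of the leading coefficient all match exactly. Your stated obstacle is not actually an obstacle: the inductive claim you need is only that the image of $p_k(a^m,b;b^j)$ in the \emph{expected} graded piece $\gr_{d_k}\calA$ equals $q_k(A^m,B;B^j)$, and this holds automatically from $[\calA_s,\calA_t]\subset\calA_{s+t-1}$ regardless of whether that image is zero---if some intermediate $q_k$ vanishes, the element $p_k$ simply sits lower in the filtration and its image in $\gr_{d_k}$ is zero, matching $q_k=0$, so the final identification at $k=n$ is never in danger.
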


\begin{proof}
	Set $n:=\PIdeg(\calA)$.
	By Lemma \ref{lem:PIdegAndIdentityP}, $p_n(A, B; B^n) \in J(\calA)$ for any $A, B \in \calA$.
	Recall that $J(\calA)$ is a nilpotent ideal by Fact \ref{fact:NilpotentJacobson}.
	By the definition of $p_n$ and $q_n$ (see \eqref{def:IdentityMatrix} and \eqref{def:PoissonIdentity}), we have $q_n(A, B; B^n) \in \gr(J(\calA)) \subset \sqrt{(0)}$ for any $A, B\in \gr\calA$.

	Let $P$ be a minimal prime ideal of $\gr\calA$.
	Then $\gr \calA/P$ is a Poisson algebra.
	Consider the polynomial $f_n(X, Y; t)$ in Lemma \ref{lem:PolynomialPoissonIdentity}.
	Fix $A, B \in \gr\calA / P$.
	Then we have $q_n(A^m, B; B^n) = 0$ and hence $f_n(A, B; m) = 0$ for any integer $m \geq n$.
	Since $f_n(A, B; t)$ is a polynomial on $t$ with coefficients in the integral domain $\gr\calA/ P$, this implies $f_n(A, B; t) = 0$.
	The leading coefficient of $f_n(X, Y; t)$ is of the form $c\{Y, X\}^k X^{l-1}$ for some positive integer $c$, $k$ and $l$.
	Therefore we obtain $\{B, A\} = 0$.
	This shows that $\gr \calA / \sqrt{0}$ is Poisson-commutative
	since $\sqrt{0}$ is the intersection of all minimal prime ideals of $\gr \calA$.
\end{proof}

Let $G$ be an affine algebraic group over $\CC$ and $G'$ a reductive subgroup of $G$.
The following corollary is important to relate the uniform boundedness of multiplicities to coisotropic actions on nilpotent coadjoint orbits.
We will show the relation in Section \ref{sect:Coisotropic} (see Theorems \ref{thm:PIdegAndCoisotropicRest} and \ref{thm:PIdegAndCoisotropicInd}).

\begin{corollary}\label{cor:PidegPoissonCommutativeUg}
	Let $I$ be a two-sided ideal of $\univ{g}^{G'}$.
	If $\PIdeg(\univ{g}^{G'}/I) < \infty$, then $S(\lie{g})^{G'}/\sqrt{\gr(I)}$ is Poisson-commutative.
\end{corollary}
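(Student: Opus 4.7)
The plan is to apply Theorem \ref{thm:PIdegAndPoissonCommutative} to the filtered algebra $\calA := \univ{g}^{G'}/I$, so the task reduces to setting up the filtration correctly and to verifying the hypotheses of that theorem.

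First, I would equip $\univ{g}$ with the standard PBW filtration and restrict it to $\univ{g}^{G'}$. Since $G'$ acts on $\univ{g}$ by filtration-preserving algebra automorphisms, and since $G'$ is reductive, the action on each $\univ{g}_n$ is completely reducible. Consequently any homogeneous $G'$-invariant element of $\gr\univ{g} = S(\lie{g})$ lifts to a $G'$-invariant element of $\univ{g}_n$, yielding an isomorphism of graded Poisson algebras $\gr(\univ{g}^{G'}) \cong S(\lie{g})^{G'}$, where the bracket on the left is induced from the commutator and the one on the right is the Kirillov--Kostant bracket. This is the only step in the argument where the reductivity of $G'$ is essential, and I expect it to be the main technical point.

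Next, I would endow $I$ with the induced filtration $I_n := I \cap \univ{g}_n$, so that $\gr I$ becomes a graded ideal of $\gr(\univ{g}^{G'}) = S(\lie{g})^{G'}$. Passing to associated graded algebras then gives a natural identification
\begin{equation*}
\gr\calA \;=\; \gr(\univ{g}^{G'}/I) \;\cong\; S(\lie{g})^{G'}/\gr(I)
\end{equation*}
of graded Poisson algebras. Conditions (1)--(4) of the filtered algebra setup preceding Theorem \ref{thm:PIdegAndPoissonCommutative} are inherited directly from the corresponding properties of $\univ{g}$, while condition (5)---finite generation of $\gr\calA$ as a $\CC$-algebra---follows from Hilbert's finiteness theorem: $S(\lie{g})^{G'}$ is finitely generated because $G'$ is reductive acting on the finitely generated algebra $S(\lie{g})$, and a quotient of a finitely generated algebra is finitely generated.

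Finally, the hypothesis $\PIdeg(\calA) < \infty$ allows me to invoke Theorem \ref{thm:PIdegAndPoissonCommutative}, which gives that $\gr\calA/\sqrt{(0)}$ is Poisson-commutative. Under the identification above, the nilradical $\sqrt{(0)}$ of $S(\lie{g})^{G'}/\gr(I)$ is $\sqrt{\gr(I)}/\gr(I)$, so the conclusion is exactly that $S(\lie{g})^{G'}/\sqrt{\gr(I)}$ is Poisson-commutative, as required.
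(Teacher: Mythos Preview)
Your proposal is correct and is precisely the argument the paper has in mind: the corollary is stated immediately after Theorem~\ref{thm:PIdegAndPoissonCommutative} with no proof, so the intended reasoning is exactly to apply that theorem to $\calA=\univ{g}^{G'}/I$ with the filtration induced from the standard filtration of $\univ{g}$, using reductivity of $G'$ both for the identification $\gr(\univ{g}^{G'})\cong S(\lie{g})^{G'}$ and for the finite generation required in condition~(5).
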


\section{Category of \texorpdfstring{$\lie{g}$}{g}-modules}

We prepare notation about categories of $(\lie{g}, K)$-modules and representations.
Categories $\calC_{\lie{g}, K}$ and $\calC_{G_\RR}$ play an important role to give lower bounds of multiplicities of restrictions and inductions of $(\lie{g}, K)$-modules and representations of $G_\RR$.

\subsection{\texorpdfstring{$(\lie{g}, K)$}{(g, K)}-module}

Our main concern in this paper is restrictions and inductions of $(\lie{g}, K)$-modules.
We recall the notion of pairs $(\lie{g}, K)$ and $(\lie{g}, K)$-modules referring to \cite[Chapter I, IV]{KnVo95_cohomological_induction}.

Let $G$ be an affine algebraic group over $\CC$.
We say that a representation $V$ of $G$ as an abstract group is a $G$-module
if the $G$-action on $V$ is locally finite and for any finite-dimensional subrepresentation $W$ of $V$, the action $G\times W\rightarrow W$ is a morphism of varieties.
Then we say that the $G$-action on $V$ is rational.

\begin{definition}\label{def:pair}
	Let $\lie{g}$ be a complex Lie algebra and $K$ an algebraic group with Lie algebra $\lie{k} \subset \lie{g}$.
	We say that $(\lie{g}, K)$ is a \define{pair} if it satisfies the following conditions.
	\begin{enumparen}
			\item A rational action of $K$ on $\lie{g}$ by Lie algebra automorphisms is given, and the inclusion map $\lie{k}\hookrightarrow \lie{g}$ is $K$-linear.
			\item The differential of the $K$-action on $\lie{g}$ coincides with the adjoint action of $\lie{k}$ on $\lie{g}$.
	\end{enumparen}
	The action of $K$ on $\lie{g}$ is denoted by $\Ad_{\lie{g}}$ (or $\Ad$).
	We say that a pair $(\lie{g'}, K')$ is a \define{subpair} of $(\lie{g}, K)$ if $\lie{g'}$ is a subalgebra of $\lie{g}$ and $K'$ is a (Zariski) closed subgroup of $K$.
\end{definition}

A typical example of pairs is one constructed from a real reductive Lie group $G_{\RR}$.
Let $K_\RR$ be a maximal compact subgroup of $G_\RR$.
Then $(\lie{g}, K)$ forms a pair, where $\lie{g}$ and $K$ are the complexifications of $\lie{g}_\RR$ and $K_\RR$, respectively.

\begin{definition}
	Let $(\lie{g}, K)$ be a pair.
	A $\lie{g}$-module $V$ with a rational $K$-action is called a \define{$(\lie{g}, K)$-module} if the following conditions hold.
	\begin{enumparen}
		\item The multiplication $\lie{g}\otimes V\rightarrow V$ is $K$-linear.
		\item The differential of the $K$-action on $V$ coincides with the restriction of the $\lie{g}$-action to $\lie{k}$.
	\end{enumparen}
\end{definition}

\subsection{Categories \texorpdfstring{$\calC_{\lie{g}, K}$, $\calC_{G_\RR}$ and $\calO_{\lie{g}}$}{CgK, CGR and Og}}\label{subsect:CategoriesGKCW}

We prepare several categories and notation related to $(\lie{g}, K)$-modules.
For a compact Lie group $K_\RR$ and a continuous representation $V$ of $K_\RR$, we denote by $V_K$ the subspace of all $K_\RR$-finite vectors in $V$.
Then the $K_\RR$-action on $V_K$ lifts canonically to a rational $K$-action, where $K$ is the complexification of $K_\RR$.

Let $G_\RR$ be a real reductive Lie group and $(\lie{g}, M)$ a pair.
We use the following notation:
\begin{itemize}
	\item $\Mod(\lie{g}, M)$: the category of $(\lie{g}, M)$-modules
	\item $\Mod_{fl}(\lie{g}, M)$: the category of $(\lie{g}, M)$-modules of finite length
	\item $\Mod_{fd}(\lie{g}, M)$: the category of completely reducible finite-dimensional \sloppy$(\lie{g}, M)$-modules
	\item $\Mod(G_\RR)$: the category of continuous representations of $G_\RR$
	\item $\Mod_{fd}(G_\RR)$: the category of completely reducible finite-dimensional (continuous) representations of $G_\RR$
	\item $\CWCat{G_\RR}$: the full subcategory of $\Mod(G_\RR)$ whose objects are Casselman--Wallach representations.
\end{itemize}

A Casselman--Wallach representation of $G_\RR$ is a smooth admissible representation on a nuclear Fr\'echet space of moderate growth and of finite length.
See e.g.\ \cite[Chapter 11]{Wa92_real_reductive_II} and \cite{Vo08} for Casselman--Wallach representations.
If $M = \set{e}$, we omit the second parameter, e.g.\ $\Mod(\lie{g})$ and $\Mod_{fd}(\lie{g})$.
If the Lie algebra of $M$ is equal to $\lie{g}$, we omit the first parameter, e.g.\ $\Mod(M)$ and $\Mod_{fd}(M)$.

Let $K_\RR$ be a maximal compact subgroup of $G_\RR$.
Then $(\lie{g}, K)$ is a pair.
The following equivalence of categories is well-known.

\begin{fact}[Casselman--Wallach]\label{fact:CasselmanWallach}
	The functor $(\cdot)_K \colon \CWCat{G_\RR}\rightarrow \Mod_{fl}(\lie{g}, K)$ ($V\mapsto V_K$) gives an equivalence of categories.
\end{fact}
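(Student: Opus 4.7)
The plan is to verify the three standard conditions for an equivalence of categories: the functor $(\cdot)_K$ takes values in $\Mod_{fl}(\lie{g}, K)$, it is essentially surjective, and it is fully faithful. The first is routine given Harish-Chandra's admissibility theorem for Casselman--Wallach representations: for any $V \in \CWCat{G_\RR}$, every $K_\RR$-isotype in $V$ is finite-dimensional, so $V_K$ carries a natural $(\lie{g}, K)$-module structure, and the finite length hypothesis built into the Casselman--Wallach condition passes to $V_K$.

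For essential surjectivity, given $X \in \Mod_{fl}(\lie{g}, K)$ I would first embed $X$ into the Harish-Chandra module of a minimal principal series $\Ind^{G_\RR}_{P_\RR}(W)$ using Casselman's subrepresentation theorem, and then take $Y$ to be the closure of $X$ inside the smooth moderate-growth globalization of that principal series. The moderate growth property is inherited by a closed $G_\RR$-stable subspace, so $Y$ is a Casselman--Wallach representation; the equality $Y_K = X$ follows from admissibility together with the density of $K_\RR$-finite vectors inside any smooth $K_\RR$-representation of finite multiplicities.

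For full faithfulness, the crux is automatic continuity: any $(\lie{g}, K)$-homomorphism $f \colon V_K \to W_K$ between the underlying Harish-Chandra modules of Casselman--Wallach representations extends uniquely to a continuous $G_\RR$-equivariant map $V \to W$. Uniqueness is immediate because $V_K$ is dense in $V$ (combine the Dixmier--Malliavin theorem on smooth vectors with the Peter--Weyl decomposition of $V$ under $K_\RR$). Existence is the deep content and requires controlling the asymptotic behaviour of matrix coefficients of $W$ using moderate growth, so that convergence of $K_\RR$-finite approximants in $V$ forces convergence of their $f$-images in the Fr\'echet topology of $W$.

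The principal obstacle I would expect is the automatic continuity step. It rests on the Casselman--Wallach comparison of globalizations: every object of $\Mod_{fl}(\lie{g}, K)$ has a unique smooth globalization of moderate growth. The proof of that uniqueness requires the analysis of $\lie{n}$-homology via leading exponents and convergent asymptotic expansions at infinity on the group, and in practice bootstraps from the principal series realization used for essential surjectivity. Once the unique globalization functor $X \mapsto X^{\mathrm{CW}}$ is established, it supplies both a quasi-inverse to $(\cdot)_K$ and the canonical extension of homomorphisms needed for full faithfulness, completing the equivalence.
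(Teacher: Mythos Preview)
The paper does not prove this statement: it is recorded as a \emph{Fact} attributed to Casselman--Wallach, with references to \cite[Chapter 11]{Wa92_real_reductive_II} and \cite{Vo08}. The only content the paper adds is a one-line description of the quasi-inverse, namely $V \mapsto \calH^\infty$ where $\calH$ is any Hilbert globalization of $V$; no argument is given.

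Your outline is a reasonable sketch of how the theorem is actually established in the cited sources, and you correctly isolate automatic continuity (equivalently, uniqueness of the smooth moderate-growth globalization) as the deep analytic input. Two remarks. First, the quasi-inverse the paper names---smooth vectors in a Hilbert globalization---differs superficially from your closure-in-a-principal-series construction, but of course the whole point of the theorem is that all such constructions coincide. Second, your essential surjectivity step via the Casselman subrepresentation theorem, as written, handles only irreducible $X$; for general $X \in \Mod_{fl}(\lie{g}, K)$ one must iterate along a composition series and use that the class of Casselman--Wallach representations is closed under extensions, which again rests on the same uniqueness-of-globalization machinery you flag as the principal obstacle.
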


The quasi-inverse of the functor $(\cdot)_K$ is given by $V\mapsto \calH^\infty$,
where $\calH$ is a Hilbert globalization of the $(\lie{g}, K)$-module $V$ (i.e.\ $\calH_K\simeq V$) and $\calH^\infty$ is the subspace of all smooth vectors in $\calH$.
To study restrictions and inductions, we need the following categories:

\begin{itemize}
	\item $\calC_{\lie{g}, K}$: the full subcategory of $\Mod_{fl}(\lie{g}, K)$ whose object is isomorphic to a subquotient of $F\otimes V$ for some $F \in \Mod_{fd}(\lie{g}, K)$ and semisimple object $V \in \Mod_{fl}(\lie{g}, K)$
	\item $\calC_{G_\RR}$: the full subcategory of $\CWCat{G_\RR}$ whose object is isomorphic to a subquotient of $F\otimes V$ for some $F \in \Mod_{fd}(G_\RR)$ and semisimple object $V \in \CWCat{G_\RR}$.
\end{itemize}

By Fact \ref{fact:CasselmanWallach}, the categories $\calC_{\lie{g}, K}$ and $\calC_{G_\RR}$ are equivalent.
Let $\lie{b}$ be a Borel subalgebra of $\lie{g}$ and $\lie{t} \subset \lie{b}$ a Cartan subalgebra of $\lie{g}$.

\begin{itemize}
	\item $\calO_{\lie{b}}$: the BGG category of $\lie{g}$ with respect to $\lie{b}$, which is the full subcategory of $\Mod(\lie{g})$ whose object is finitely generated, $\lie{t}$-semisimple and locally $\lie{b}$-finite
	\item $\calO_{\lie{g}}$: the full subcategory of $\Mod(\lie{g})$ whose object is isomorphic to a subquotient of $V_1\oplus V_2 \oplus \cdots \oplus V_r$ for some Borel subalgebras $\lie{b}_i$ and $V_i \in \calO_{\lie{b}_i}$.
\end{itemize}

$\calO_{\lie{b}}$ does not depend on the choice of $\lie{t}$.
We refer the reader to \cite{Hu08_category_o} for the BGG category $\calO$.
By definition, $\calO_{\lie{b}}$ is a full subcategory of $\calO_{\lie{g}}$.

All the categories defined above are closed under taking subquotients and tensoring with finite-dimensional objects.
The categories $\calC_{\lie{g}, K}$, $\calC_{G_\RR}$, $\calO_{\lie{b}}$ and $\calO_{\lie{g}}$ satisfy the following finiteness property: there exists a constant $C > 0$ such that the socle filtration of any object has length $\leq C$.
This is the reason why we use the categories $\calC_{\lie{g}, K}$ and $\calC_{G_\RR}$.
We will show this property in Subsection \ref{sect:Category}.

The constant $C$ will appear in many theorems in this paper.
We give a notation for the constant.

\begin{definition}\label{def:Loewy}
	Let $V$ be an object of an abelian category $\calC$.
	We denote by $\soc(V)$ the maximum semisimple subobject of $V$ and denote by $\soc_k(V)$ the unique subobject of $V$ such that $\soc_k(V)/\soc_{k-1}(V) = \soc(V/\soc_{k-1}(V))$ for $k > 0$.
	Then the filtration
	\begin{align*}
		0 = \soc_0(V) \subset \soc_1(V) \subset \cdots \subset \soc_i(V) \subset \cdots \subset V
	\end{align*}
	is called the socle filtration of $V$.
	We set
	\begin{align*}
		\Loewy{V}:=\inf\set{i \in \ZZ_{>0} : \soc_i(V) = V},
	\end{align*}
	which is called the Loewy length of $V$, and set
	\begin{align*}
		\Loewy{\calC} := \sup\set{\Loewy{V}: V \in \calC}.
	\end{align*}
\end{definition}

Remark that $\Loewy{0} = 1$ in our definition.
As we have mentioned above, we can show $\Loewy{\calC_{\lie{g}, K}}, \Loewy{\calC_{G_\RR}} < \infty$ although $\Loewy{\Mod_{fl}(\lie{g}, K)} = \infty$ in general.

\begin{definition}
	Let $V$ be an object in $\calC$.
	We denote by $\rad(V)$ the radical of $V$, that is, $\rad(V)$ is the intersection of maximal subjects of $V$.
\end{definition}

The following proposition is easy and well-known.

\begin{proposition}\label{prop:LoewyAndRad}
	Let $V$ be an object in $\calC$.
	Set $\rad_0(V) := V$ and $\rad_k(V) := \rad(\rad_{k-1}(V))$ for $k>0$.
	Then we have
	\begin{align*}
		\Loewy{V} = \inf\set{k \in \ZZ_{>0} : \rad_k(V) = 0}.
	\end{align*}
\end{proposition}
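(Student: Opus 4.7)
The plan is to establish the classical duality between the socle filtration and the radical filtration by showing that both realize the minimal length of a filtration with semisimple subquotients. Concretely, I would prove the following two comparison claims and then combine them.

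\textbf{Step 1 (upper bound for the radical filtration).} By induction on $i$, I would show that for any ascending filtration $0 = V_0 \subseteq V_1 \subseteq \cdots \subseteq V_n = V$ with $V_j / V_{j-1}$ semisimple, one has $\rad_i(V) \subseteq V_{n-i}$ for every $i \leq n$. The base case $i = 0$ is trivial. For the inductive step, if $\rad_i(V) \subseteq V_{n-i}$, then $\rad_{i+1}(V) = \rad(\rad_i(V)) \subseteq \rad(V_{n-i})$, and the surjection $V_{n-i} \twoheadrightarrow V_{n-i}/V_{n-i-1}$ onto a semisimple object forces $\rad(V_{n-i}) \subseteq V_{n-i-1}$ (because $\rad$ of a semisimple object is zero, and radicals are natural under surjections). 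Applying this to $V_j := \soc_j(V)$ with $n := \Loewy{V}$ yields $\rad_{\Loewy{V}}(V) \subseteq \soc_0(V) = 0$, so $\inf\{k : \rad_k(V) = 0\} \leq \Loewy{V}$.

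\textbf{Step 2 (lower bound via socle).} Dually, I would prove by induction that for any descending filtration $V = W_0 \supseteq W_1 \supseteq \cdots \supseteq W_n = 0$ with $W_{j-1}/W_j$ semisimple, one has $W_{n-i} \subseteq \soc_i(V)$. The base case $i = 0$ gives $W_n = 0 \subseteq \soc_0(V) = 0$. For the inductive step, assuming $W_{n-i} \subseteq \soc_i(V)$, the image of $W_{n-i-1}$ in $V / \soc_i(V)$ is a quotient of the semisimple object $W_{n-i-1}/W_{n-i}$, hence a semisimple subobject of $V/\soc_i(V)$, which therefore lies in $\soc(V/\soc_i(V)) = \soc_{i+1}(V)/\soc_i(V)$. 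Applying this to $W_j := \rad_j(V)$ with $n := \inf\{k : \rad_k(V) = 0\}$ gives $V = W_0 \subseteq \soc_n(V)$, whence $\Loewy{V} \leq \inf\{k : \rad_k(V) = 0\}$.

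Combining Steps 1 and 2 yields the equality. Both inequalities extend to the case where one of the two invariants is infinite, since then the finiteness of the other would contradict the comparison just proved. There is no real obstacle here: the argument only uses the universal properties that $\soc$ is the maximal semisimple subobject and $\rad$ is the intersection of maximal subobjects (equivalently, the kernel of the projection onto the maximal semisimple quotient), both of which hold in any abelian category in which these constructions make sense. The only mild care needed is to ensure that in the category $\calC$ the radical is well behaved under subobjects (so that $\rad_k$ is defined inductively) and that the socle series exhausts $V$ exactly when $V$ has finite Loewy length, which is built into Definition \ref{def:Loewy}.
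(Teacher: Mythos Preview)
Your proof is correct and follows the standard argument. The paper itself omits the proof entirely, stating only that the proposition is ``easy and well-known,'' so your write-up supplies exactly what the paper leaves to the reader; the two-sided comparison you give (radical filtration versus socle filtration both realize the minimal length of a semisimple filtration) is the canonical approach.
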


\subsection{Induced representations and Zuckerman derived functors}

We recall several kinds of induced representations.
Let $G_\RR$ be a Lie group and $G'_\RR$ a closed subgroup of $G_\RR$.

\begin{definition}
	Let $(\pi, V')$ be a smooth Fr\'echet representation of $G'_\RR$.
	We set
	\begin{align*}
		\Ind^{G_\RR}_{G'_\RR}(V') := \set{f\colon G_\RR\rightarrow V' \text{ (smooth)}: \begin{array}{l} f(gh) = \pi(h)^{-1}f(g) \\(\forall g \in G_\RR, h \in G'_\RR) \end{array}}.
	\end{align*}
	We provide $\Ind^{G_\RR}_{G'_\RR}(V')$ with the topology of uniform convergence on compact subsets of functions and their derivatives.
\end{definition}

It is well-known that $\Ind^{G_\RR}_{G'_\RR}(V')$ is a smooth (and hence continuous) representation of $G_\RR$ by the left translation.
We consider unitarily induced representations only in Section \ref{sect:Unitary}.
Hence we postpone the definition of the induction.

Here we consider two kinds of infinitesimal versions of $\Ind^{G_\RR}_{G'_\RR}(V')$.
Let $(\lie{g}, M)$ be a pair and $\lie{g'}$ an $M$-stable subalgebra of $\lie{g}$ containing $\lie{m}$.
Then $(\lie{g'}, M)$ is a subpair of $(\lie{g}, M)$.

\begin{definition}
	For a $(\lie{g'}, M)$-module $V'$, we set
	\begin{align*}
		\pro{\lie{g}, M}{\lie{g'}, M}(V') &:= \Hom_{\univ{g'}}(\univ{g}, V')_M.
	\end{align*}
	Here $\univ{g}$ is regarded as a $\lie{g'}$-module via the left action
	and the subspace of $M$-finite vectors is taken with respect to the action $(m\cdot \varphi)(\cdot) = m\varphi(\Ad(m)^{-1}\cdot )$ ($m \in M$, $\varphi \in \Hom_{\univ{g'}}(\univ{g}, V')$).
\end{definition}

It is easy to see that $\pro{\lie{g}, M}{\lie{g'}, M}(V')$ is a $(\lie{g}, M)$-module.
The $\lie{g}$-action is induced from the right $\lie{g}$-action on $\univ{g}$.
The $(\lie{g}, M)$-module $\pro{\lie{g}, M}{\lie{g'}, M}(V')$ is called the production of $V'$.
See \cite[II.1]{KnVo95_cohomological_induction} for the production and the induction below.

\begin{definition}
	For a $\lie{g'}$-module $V'$, we set
	\begin{align*}
		\ind{g}{g'}(V') := \univ{g}\otimes_{\univ{g'}} V'.
	\end{align*}
\end{definition}

$\ind{g}{g'}(V')$ is a $\lie{g}$-module via the left action on $\univ{g}$.
The $\lie{g}$-module is called the induction of $V'$.
If $V'$ is a $(\lie{g'}, M)$-module, then $\ind{g}{g'}(V')$ admits a natural $(\lie{g}, M)$-module structure.
We use the same notation for the $(\lie{g}, M)$-module.

In many places, we regard $\ind{g}{g'}(V')$ as a right $\univ{g}$-module using the transpose ${}^t(\cdot)$ on $\univ{g}$.
Note that ${}^t(\cdot)$ is the antiautomorphism of $\univ{g}$ such that ${}^tX = -X$ for any $X \in \lie{g}$.
In the case, $\ind{g}{g'}(V')$ is naturally isomorphic to $V'\otimes_{\univ{g'}} \univ{g}$, where $V'$ is regarded as a right $\univ{g'}$-module using the transpose.

We recall the Zuckerman derived functors following \cite{KnVo95_cohomological_induction}.
Let $(\lie{g}, K)$ be a pair and $(\lie{g}, M)$ a subpair.
Assume that $K$ and $M$ are reductive.

Let $(L, \rring{K})$ (resp.\ $(R, \rring{K})$) denote the left (resp.\ right) regular representation
on the coordinate ring $\rring{K}$ on $K$.
For a $(\lie{g}, M)$-module $(\pi, V)$, we provide $\rring{K}\otimes V$ with a $\lie{g}$-action via
\begin{align*}
	\mu(X)(f \otimes v) = \sum_i f_i f \otimes \pi(X_i) v,
\end{align*}
where $f_i \in \rring{K}$ and $X_i \in \lie{g}$ with $\sum_i f_i(g)X_i = \Ad(g^{-1})X$.

\begin{definition}\label{def:Zuckerman}
	For a $(\lie{g}, M)$-module $(\pi, V)$, we set
	\begin{align*}
		\Dzuck{K}{M}{i}(V):=H^i(\lie{k}, M; \rring{K}\otimes V),
	\end{align*}
	where we apply the relative Lie algebra cohomology $H^i(\lie{k}, M;\cdot)$ with respect to
	the tensor product $R\otimes \pi$.
	Then the actions $\mu$ and $L$ on $\rring{K}\otimes V$ give a $(\lie{g}, K)$-module structure on $\Dzuck{K}{M}{i}(V)$.
	We call the functors $\Dzuck{K}{M}{i}\colon \Mod(\lie{g}, M)\rightarrow \Mod(\lie{g}, K)$ the Zuckerman derived functors.
\end{definition}

\subsection{Multiplicity}

A unitary representation of a reductive Lie group has a canonical irreducible decomposition by direct integral (see Subsection \ref{subsect:DirectIntegral}).
Then we can consider the (essential) supremum of the multiplicities.
Unlike unitary representations, there are several ways to define multiplicities for $(\lie{g}, K)$-modules and Casselman--Wallach representations.
We refer the reader to \cite{Ko15_vogan} for the difficulty.

For an abelian category $\calC$, we denote by $\Irr(\calC)$ the set of all equivalence classes of simple objects in $\calC$.
Our main concern in this paper is the supremum of multiplicities.
We prepare the following notation.

\begin{definition}\label{def:supmul}
	Let $\calC$ be an abelian category.
	For a functor $F\colon \calC \rightarrow \Mod(\CC)$, we set
	\begin{align*}
		\SupDim(F(\calC)) := \sup\set{\dim_{\CC}(F(V)): V \in \Irr(\calC)}.
	\end{align*}
\end{definition}

Let $G_\RR$ be a reductive Lie group with a Cartan involution $\theta$ and $G'_\RR$ a $\theta$-stable reductive subgroup of $G_\RR$.
Set $K_\RR:=G_\RR^\theta$ and $K'_\RR := (G'_\RR)^\theta$.
For a Casselman--Wallach representation $V$ of $G_\RR$, we can consider the following supremums of multiplicities:
\begin{align*}
	&\SupDim(\Hom_{G'_\RR}(V, \calC_{G'_\RR})), \\
	&\SupDim(\Hom_{G'_\RR}(\calC_{G'_\RR}, V)), \\
	&\SupDim(\Hom_{\lie{g'}, K'}(V_K, \calC_{\lie{g'}, K'})), \\
	&\SupDim(\Hom_{\lie{g'}, K'}(\calC_{\lie{g'}, K'}, V_K)).
\end{align*}
Although these are similar quantities, their relationships to each other are non-trivial.
Since $V_K$ is dense in $V$, we have
\begin{align}
	\dim_{\CC}(\Hom_{G_\RR}(V, V'))\leq \dim_{\CC}(\Hom_{\lie{g}, K}(V_K, V'_{K'}))
	\label{eqn:CWandGKhom}
\end{align}
for any $V' \in \calC_{G'_\RR}$
It is still open when the equality holds.
Remark that all of the supremums do not change if we replace $\calC_{\lie{g'}, K'}$ (resp.\ $\calC_{G'_\RR}$) with $\Mod_{fl}(\lie{g}, K)$ (resp.\ $\CWCat{G'_\RR}$).
In fact, we have $\Irr(\calC_{\lie{g}, K}) = \Irr(\Mod_{fl}(\lie{g}, K))$ by definition.

Note that we can prove
\begin{align*}
	\SupDim(\Hom_{G'_\RR}(\calC_{G'_\RR}, V)) = \SupDim(\Hom_{\lie{g'}, K'}(\calC_{\lie{g'}, K'}, V_K)).
\end{align*}
The equation is strongly related to the theory of discretely decomposable branching laws (see \cite[Problem 5.15]{Ko15_vogan}).
We do not treat this topic in this paper and we postpone the proof of the equation.

For a Casselman--Wallach representation $V'$ of $G'_\RR$, we can consider the induced representation version of the supremums:
\begin{align*}
	\SupDim(\Hom_{G'_\RR}(\calC_{G_\RR}, V')) &= \SupDim(\Hom_{G_\RR}(\calC_{G_\RR}, \Ind^{G_\RR}_{G'_\RR}(V'))), \\
	\SupDim(\Hom_{\lie{g'}, K'}(\calC_{\lie{g}, K}, V'_{K'})) &= \SupDim(\Hom_{\lie{g}, K'}(\calC_{\lie{g}, K}, \pro{\lie{g}, K'}{\lie{g'}, K'}(V'_{K'}))).
\end{align*}
By \eqref{eqn:CWandGKhom}, we have
\begin{align*}
	\SupDim(\Hom_{G'_\RR}(\calC_{G_\RR}, V')) \leq \SupDim(\Hom_{\lie{g'}, K'}(\calC_{\lie{g}, K}, V'_{K'})).
\end{align*}
The finiteness of the above quantities are the main targets of our study.

\subsection{Two-sided ideal of \texorpdfstring{$\univ{g}$}{U(g)}}

Let $\lie{g}$ be a complex reductive Lie algebra.
We will use the following result by M. Duflo in many places to reduce results about $(\lie{g}, K)$-modules to ones about highest weight modules.

\begin{fact}[{\cite{Du77_primitive_ideal}}]\label{fact:DufloIdeal}
	Let $\lie{b}$ be a Borel subalgebra of $\lie{g}$.
	For any primitive ideal $I$ of $\univ{g}$, there exists some irreducible highest weight module $V$ of $\lie{g}$ with respect to $\lie{b}$ such that $\Ann_{\univ{g}}(V) = I$.
\end{fact}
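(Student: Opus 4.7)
The plan is to start with an arbitrary primitive ideal $I=\Ann_{\univ{g}}(V)$ with $V$ a simple $\univ{g}$-module, attach an infinitesimal character to $V$, reduce to a convenient choice of parameter, and then exhibit a simple highest weight module with annihilator $I$ through the bridge between Harish-Chandra bimodules and the BGG category $\calO_{\lie{b}}$. To begin, Dixmier's countable-dimensional version of Schur's lemma applies because $\univ{g}$ has countable $\CC$-dimension, so $V$ admits an infinitesimal character $\chi\colon \univcent{g}\to\CC$; by Harish-Chandra's isomorphism this is of the form $\chi_\lambda$ for some $\lambda\in\lie{t}^*$. The primitive spectrum therefore decomposes over $\lie{t}^*/W$, and it suffices to realize each primitive ideal with a fixed infinitesimal character as the annihilator of a simple highest weight module.

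Next I would use Jantzen's translation principle to reduce to the case where $\lambda$ is regular, integral and dominant. Translation functors, defined by tensoring with a finite-dimensional $\lie{g}$-module and projecting onto the appropriate generalized central character, are exact; they send primitive ideals to primitive ideals (or to the unit ideal in degenerate cases) and intertwine annihilators of simple highest weight modules with those of their translates. Combined with the translation-to-the-wall procedure, this reduces the problem to a regular integral dominant $\lambda$, at the cost of some bookkeeping over which Weyl translates of the target are hit. For such $\lambda$, one can then invoke the Bernstein--Joseph correspondence: the functor $M\mapsto M\otimes_{\univ{g}} L(\lambda-\rho)$ embeds a full subcategory of finitely generated Harish-Chandra bimodules into $\calO_{\lie{b},\chi_\lambda}$, and applied to the bimodule $\univ{g}/I$ it produces a nonzero module whose simple highest weight subquotients $L(\mu)$ all satisfy $I\subset \Ann_{\univ{g}}(L(\mu))$; faithfulness of the functor on the subcategory generated by $\univ{g}/I$ then forces equality for at least one $\mu$.

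The main obstacle is the last step, specifically the surjectivity of $w\mapsto \Ann_{\univ{g}}(L(w\lambda-\rho))$ onto the set of primitive ideals with infinitesimal character $\chi_\lambda$. This is where the real content of Duflo's theorem sits: it requires Joseph's analysis of Goldie rank polynomials, the Borho--Jantzen parametrization of primitive ideals by left cells in $W$, and control of associated varieties under translation, none of which follow formally from the setup above.
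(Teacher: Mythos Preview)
The paper does not prove this statement. It is recorded as a \emph{Fact} with a citation to Duflo's original paper \cite{Du77_primitive_ideal} and is used repeatedly as a black box (in Lemma~\ref{lem:TwosidedPrimitive}, Propositions~\ref{prop:boundedAndSphericalGH} and~\ref{prop:boundedAndSphericalGP}, and Theorem~\ref{thm:pidegAnnihilatorVerma}). There is therefore no proof in the paper to compare your proposal against.

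Regarding the proposal itself: your outline is a reasonable modern road map, and you correctly isolate the hard point, namely the surjectivity of $w\mapsto \Ann_{\univ{g}}(L(w\cdot\lambda))$ onto the set of primitive ideals with a given infinitesimal character. However, your claim that this step ``requires Joseph's analysis of Goldie rank polynomials'' and the left-cell parametrization is historically and logically off: those developments postdate Duflo's 1977 paper and are not needed for the theorem. Duflo's original argument proceeds instead via the Joseph--Conze embedding and a direct study of how annihilators behave under the functors relating Harish-Chandra bimodules to category $\calO$, together with an induction on the length of a Weyl group element. So while your sketch identifies the correct difficulty, the machinery you invoke to resolve it is both anachronistic and heavier than what is actually required; as written, the proposal is an outline that stops precisely at the nontrivial step rather than a proof.
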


For non-primitive $I$, we can use the following lemma.

\begin{lemma}\label{lem:TwosidedPrimitive}
	Let $I$ be a proper two-sided ideal of $\univ{g}$ such that $I \cap \univcent{g}$
	has finite codimension in $\univcent{g}$.
	Then there exist some finitely many primitive ideals $J_1, J_2, \ldots, J_n \subset \univ{g}$ containing $I$ such that $J_1 J_2\cdots J_n \subset I$.
	Moreover, $n$ is bounded by a constant depending only on the codimension of $I\cap \univcent{g}$ in $\univcent{g}$.
\end{lemma}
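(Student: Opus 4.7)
The plan is to combine commutative algebra on the center $\univcent{g}$ with Duflo's theorem to first bound the number of candidate primitive ideals, and then use Noetherianity to control the exponent in the product.

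First, set $d:=\dim_\CC \univcent{g}/(I\cap\univcent{g})$. By the Harish-Chandra isomorphism, $\univcent{g}$ is a finitely generated polynomial algebra, so the Artinian quotient $\univcent{g}/(I\cap\univcent{g})$ has at most $d$ maximal ideals $\mathfrak{m}_1,\ldots,\mathfrak{m}_k$ ($k\leq d$), and its Jacobson radical is nilpotent of degree at most $d$, yielding $(\mathfrak{m}_1\cap\cdots\cap\mathfrak{m}_k)^d\subset I\cap\univcent{g}$.

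Next, set $\calA:=\univ{g}/I$. For any primitive ideal $\tilde P\supset I$ in $\univ{g}$, the intersection $\tilde P\cap\univcent{g}$ is a prime containing $I\cap\univcent{g}$ of finite codimension, hence equals some $\mathfrak{m}_i$, so $\tilde P$ has a well-defined central character $\chi_i$. By Duflo's theorem (Fact \ref{fact:DufloIdeal}), any such $\tilde P$ is $\Ann_{\univ{g}}(L(\mu))$ for some irreducible highest weight module $L(\mu)$ with $\chi_\mu=\chi_i$; since the weights with a given central character form a single $W$-orbit under the dot action, there are at most $|W|$ primitives per $i$, and hence at most $n_0\leq d|W|$ primitive ideals $\tilde P_1,\ldots,\tilde P_{n_0}$ containing $I$. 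Since $\univ{g}$ is Noetherian and of countable dimension, so is $\calA$, and by Fact \ref{fact:NilpotentJacobson} the Jacobson radical $J(\calA)=\bigcap_j\tilde P_j/I$ is nilpotent, say $J(\calA)^N=0$. Together with $\tilde P_1\cdots\tilde P_{n_0}\subset\tilde P_1\cap\cdots\cap\tilde P_{n_0}$, this yields $(\tilde P_1\cdots\tilde P_{n_0})^N\subset I$, a product of $n_0N$ primitive ideals each containing $I$.

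The main obstacle is producing a bound on $N$ that depends only on the codimension $d$ (and $\lie{g}$), since Fact \ref{fact:NilpotentJacobson} alone gives only the existence of $N$. I would obtain such a uniform bound by passing to the associated graded algebra $\gr\calA=S(\lie{g})/\gr(I)$: this is a finitely generated commutative algebra whose support in $\lie{g}^*$ projects into the finite set $\calV(\gr(I\cap\univcent{g}))\subset\lie{g}^*/\!\!/G$ of cardinality at most $d$, and hence lies in a finite union of closures of $G$-orbits in fibers of the Chevalley quotient map. The nilradical of such an algebra has nilpotency degree bounded by a function of $d$ and the (fixed) combinatorics of $\lie{g}$. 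This commutative bound transfers to the Jacobson radical of $\calA$ via the standard comparison $\gr(J(\calA)^k)\subset (\sqrt{\gr(0)})^k$ in the PBW-filtered algebra $\calA$, yielding $N\leq C(\lie{g},d)$ and thus the desired bound on $n=n_0N$ depending only on $d$.
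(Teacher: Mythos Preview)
Your argument is correct up through obtaining $(\tilde P_1\cdots\tilde P_{n_0})^N\subset I$ for \emph{some} $N$, but the final step bounding $N$ uniformly in $d$ has a genuine gap. The claimed ``standard comparison'' $\gr(J(\calA)^k)\subset(\sqrt{0})^k$ is not standard and is not justified: for ideals in a filtered algebra one has $\gr(J)\cdot\gr(J')\subset\gr(JJ')$, which is the \emph{wrong} direction for your purpose. Concretely, an element $b=\sum_i a_{i,1}\cdots a_{i,k}\in J(\calA)^k$ can have filtration degree strictly less than $\max_i\sum_j\deg a_{i,j}$ by cancellation, and then its principal symbol need not lie in $(\gr J(\calA))^k$. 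So even granting that the nilradical of $\gr\calA$ has nilpotency index $M$, you cannot conclude $J(\calA)^M=0$. Your preliminary claim that the nilradical of $S(\lie{g})/\gr(I)$ has nilpotency bounded by a function of $d$ is also asserted rather than proved; the support of $\gr\calA$ constrains the radical $\sqrt{\gr(I)}$ but says nothing about how far $\gr(I)$ sits inside its radical.

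The paper avoids this difficulty by a different route: it views $\univ{g}/I$ as a $(\lie{g}\oplus\lie{g},\Delta(\lie{g}))$-module, observes that the finite-codimension hypothesis on $I\cap\univcent{g}$ forces this Harish-Chandra bimodule to have finite length $n$, and takes $J_i$ to be the left annihilator of the $i$-th composition factor. Primitivity of each $J_i$ then follows from Duflo's theorem applied to the irreducible $\lie{g}\oplus\lie{g}$-module $V_i/V_{i-1}$. The uniform bound on $n$ comes not from any nilpotency estimate but from \cite[Proposition~7.7]{Ki20}, which bounds the number of two-sided ideals with a fixed infinitesimal character independently of that character. This replaces your associated-graded transfer by a direct length bound in the category of Harish-Chandra bimodules, and is what you would need to complete your argument as well: the nilpotency index of $J(\calA)$ is at most the Loewy length of $\calA$ as a bimodule, which is at most this length $n$.
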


\begin{proof}
	Since $I\cap \univcent{g}$ has finite codimension in $\univcent{g}$,
	the finitely generated $(\lie{g\oplus g}, \Delta(\lie{g}))$-module $\univ{g}/I$ has finite length.
	Let $0 = V_0 \subset V_1 \subset \cdots \subset V_n = \univ{g}/I$ be a composition series of the $(\lie{g\oplus g})$-module $\univ{g}/I$
	and put $J_i := \Ann_{\univ{g}}(V_i / V_{i - 1})$ for $1\leq i \leq n$.
	Here we take the annihilators with respect to the actions of $\lie{g}\oplus 0 \subset \lie{g\oplus g}$.
	Then we have $J_1J_2\cdots J_n \subset I$.

	We shall show that $J_i$ is primitive.
	Since the $(\lie{g\oplus g})$-module $V_i/V_{i-1}$ is irreducible,
	$\Ann_{\univ{g\oplus g}}(V_i/V_{i-1})$ is primitive.
	By Fact \ref{fact:DufloIdeal}, there exist some irreducible highest weight modules $H_1$ and $H_2$ of $\lie{g}$ such that
	\begin{align*}
		\Ann_{\univ{g\oplus g}}(V_i / V_{i - 1}) = \Ann_{\univ{g\oplus g}}(H_1\boxtimes H_2).
	\end{align*}
	Hence we have $J_i = \Ann_{\univ{g\oplus g}}(V_i / V_{i - 1}) \cap (\univ{g}\otimes 1) = \Ann_{\univ{g}}(H_1)$.
	This shows that $J_i$ is primitive.

	Since the number of two-sided ideals of $\univ{g'}$ with a fixed infinitesimal character is bounded by a constant independent of the infinitesimal character \cite[Proposition 7.7]{Ki20}, $n$ is bounded by a constant depending only on the codimension of $I \cap \univcent{g}$ in $\univcent{g}$.
\end{proof}

Let $G$ be a reductive algebraic group and $G'$ a reductive subgroup of $G$.
For a $\lie{g}$-module $V$ and a two-sided ideal $I$ of $\univ{g'}$, we can consider the two $\univ{g}^{G'}$-modules
$V^I$ and $V/IV$.
More precisely, $\univ{g}^{G'}/\univ{g}I\cap \univ{g}^{G'}$ acts on $V^I$ and $\univ{g}^{G'}/I\univ{g}\cap \univ{g}^{G'}$ on $V/IV$.
We shall show that the two algebras are equal.
Note that the two algebras may not coincide if $G'$ is not reductive.

If $I = \lie{g'}\univ{g'}$, the equation $\univ{g}I\cap \univ{g}^{G'} = I\univ{g}\cap \univ{g}^{G'}$ can be interpreted
as follows.
$V/\lie{g'}V$ and $V^{\lie{g'}}$ can be realized as the Lie algebra cohomology and homology:
\begin{align*}
	V/\lie{g'}V \simeq H_0(\lie{g'}; V)\simeq H^n(\lie{g'}; V), \quad V^{\lie{g'}} \simeq H^0(\lie{g'}; V) \simeq H_n(\lie{g'}; V),
\end{align*}
where $n = \dim_{\CC}(\lie{g'})$.
The homology groups $H_i(\lie{g'}; V)$ (resp.\ $H^i(\lie{g'}; V)$) can be computed by taking a flat resolution (resp.\ injective resolution) of $V$
as a $\univ{g}$-module since $\univ{g}$ is a free left/right $\univ{g'}$-module.
Hence $(\univ{g}/\lie{g'}\univ{g})^{G'}$ acts naturally on $H_i(\lie{g'}; V)$, and $(\univ{g}/\univ{g}\lie{g'})^{G'}$ on $H^i(\lie{g'}; V)$.
In particular, the two algebras acts naturally on both of $V/\lie{g'}V$ and $V^{\lie{g'}}$.
This is the reason why the two algebras should coincide.

\begin{lemma}\label{lem:CommuteTwoSidedIdeal}
	Let $I$ be a two-sided ideal of $\univ{g'}$.
	Then we have $I\univ{g}\cap \univ{g}^{G'} = \univ{g}I \cap \univ{g}^{G'}$.
	In particular, we have
	\begin{align*}
		\PIdeg(\univ{g}^{G'}/I\univ{g}\cap \univ{g}^{G'}) = \PIdeg(\univ{g}^{G'}/{}^t I\univ{g}\cap \univ{g}^{G'}).
	\end{align*}
\end{lemma}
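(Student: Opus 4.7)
The plan is to prove the key inclusion $I\univ{g} \cap \univ{g}^{G'} \subset \univ{g}I$ by induction on the PBW filtration degree; the reverse inclusion will follow by symmetry, and the ``in particular'' claim will then be a formal consequence via the anti-automorphism ${}^t(\cdot)$.

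First I will reduce to the case where $G'$ is connected: since $G'_0$ has finite index in $G'$ and $\univ{g}^{G'} = (\univ{g}^{G'_0})^{G'/G'_0}$, intersecting the connected-case equality with $\univ{g}^{G'}$ recovers the general statement. Under connectedness every two-sided ideal $I \subset \univ{g'}$ is automatically $\ad(\lie{g'})$-stable (for $a \in I$ and $y \in \lie{g'}$, $[y, a] = ya - ay \in I$) and hence $G'$-stable, so both $I\univ{g}$ and $\univ{g}I$ are $G'$-submodules of $\univ{g}$ and the functor $(\cdot)^{G'}$ is exact.

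The essential input will be the identity $\gr(I\univ{g}) = \gr(I) \cdot S(\lie{g}) = S(\lie{g}) \cdot \gr(I) = \gr(\univ{g}I)$ inside $S(\lie{g})$, which holds because the symmetric algebra is commutative. By symmetry it suffices to prove the stronger inclusion $(I\univ{g} + \univ{g}I) \cap \univ{g}^{G'} \subset \univ{g}I$, which I will establish by induction on the filtration degree. Given $x$ in this intersection with filtration degree $n$, the top symbol $\sigma_n(x)$ lies in $\gr_n(\univ{g}I) \cap S(\lie{g})^{G'}$; I would lift it to some $y \in \univ{g}I \cap \univ{g}_{\leq n}$ and then apply the Reynolds operator to obtain $\bar{y} \in (\univ{g}I)^{G'}$ with $\sigma_n(\bar{y}) = \sigma_n(x)$, using that the projection to $G'$-invariants commutes with the symbol map on $G'$-invariant symbols (because the PBW filtration is $G'$-stable and $G'$ is reductive). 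Then $x - \bar{y}$ is $G'$-invariant, still lies in $I\univ{g} + \univ{g}I$, and has filtration degree strictly less than $n$, so the inductive hypothesis places it in $\univ{g}I$, forcing $x \in \univ{g}I$.

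For the ``in particular'' statement, applying the main equality to $\invt{I}$ yields $\invt{I}\univ{g} \cap \univ{g}^{G'} = \univ{g}\invt{I} \cap \univ{g}^{G'}$; the anti-automorphism ${}^t(\cdot)$ preserves $\univ{g}^{G'}$ setwise and carries $\univ{g}\invt{I}$ to $I\univ{g}$, hence induces an anti-isomorphism $\univ{g}^{G'}/(\invt{I}\univ{g} \cap \univ{g}^{G'}) \cong (\univ{g}^{G'}/(I\univ{g} \cap \univ{g}^{G'}))^{\opalg}$, and $\PIdeg$ is invariant under anti-isomorphism because reversing variables in a polynomial identity yields a polynomial identity of the opposite algebra of the same degree. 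The main technical delicacy will lie in the inductive step, where the $G'$-equivariant lift of the top symbol demands simultaneous compatibility among the PBW filtration, the $G'$-action, and the one-sided products with $I$.
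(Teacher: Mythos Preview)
Your inductive step has a genuine gap at the assertion that $\sigma_n(x) \in \gr_n(\univ{g}I)$ for $x \in (I\univ{g}+\univ{g}I)\cap\univ{g}^{G'}\cap F_n$. The identity $\gr(I\univ{g})=\gr(\univ{g}I)=\gr(I)S(\lie{g})$ is correct (it follows from PBW freeness of $\univ{g}$ over $\univ{g'}$), but it does \emph{not} imply $\gr(I\univ{g}+\univ{g}I)=\gr(I)S(\lie{g})$: for subspaces $A,B$ of a filtered space with $\gr A=\gr B$, one can still have $\gr(A+B)\supsetneq\gr A$. Concretely, take $\lie{g}=\sl_2$ with $\lie{g'}=\CC H$ and $I=H\CC[H]$; then $E=\tfrac12(HE-EH)\in H\univ{g}+\univ{g}H$ has filtration degree~$1$ and $\sigma_1(E)=E\notin H\cdot S(\lie{g})_0=\CC H$. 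So the passage from $x\in(I\univ{g}+\univ{g}I)\cap F_n$ to $\sigma_n(x)\in\gr_n(\univ{g}I)$ fails in general.

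You might hope the $G'$-invariance of $x$ rescues this, and indeed in the $\sl_2$ example $E$ is not $G'$-invariant. But making this precise amounts to showing $\gr\bigl((I\univ{g})^{G'}+(\univ{g}I)^{G'}\bigr)=\gr\bigl((\univ{g}I)^{G'}\bigr)$, and the obvious way to establish that is to prove $(I\univ{g})^{G'}=(\univ{g}I)^{G'}$ first --- which is the lemma itself. The circularity is exactly the ``simultaneous compatibility'' you flagged as delicate; your outline does not actually resolve it.

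The paper's proof avoids filtrations entirely. It first treats the special case $I=\lie{g'}\univ{g'}$ by a direct computation: writing a generic element of $(\lie{g'}\univ{g})^{G'}$ as $\sum_i X_iY_i$ coming from $\sum_i X_i\otimes Y_i\in(\lie{g'}\otimes\univ{g})^{G'}$, one checks $\sum_i X_iY_i-\sum_i Y_iX_i=\sum_j\tr(\ad_{\lie{g'}}X_j)\,Y_j=0$ because $\lie{g'}$ is reductive (hence unimodular). The general $I$ is then reduced to this special case by a diagonal trick: one identifies $\univ{g}^{G'}$ with $(\univ{g\oplus g'}/\univ{g\oplus g'}\Delta(\lie{g'}))^{\Delta(G')}$ via two maps $\varphi_1(X\otimes Y)=X\,{}^tY$ and $\varphi_2(X\otimes Y)={}^tY\,X$, observes from the special case that their domains coincide, and checks $\varphi_1=\varphi_2$ since both invert $X\mapsto X\otimes 1$. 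Your derivation of the ``in particular'' statement from the main equality via the anti-automorphism is fine.
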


\begin{proof}
	Clearly, it is enough to show the assertion for connected $G'$.
	First we consider the case of $I = \lie{g'}\univ{g'}$.
	Since $G'$ is reductive, the multiplication map $\lie{g'}\otimes \univ{g} \rightarrow \lie{g'}\univ{g}$
	induces a surjection $(\lie{g'}\otimes \univ{g})^{G'} \twoheadrightarrow (\lie{g'}\univ{g})^{G'}$.

	Let $\set{X_i}_i$ be a basis of $\lie{g'}$, $\set{\lambda_i}_i$ its dual basis
	and $\sum_i X_i \otimes Y_i \in (\lie{g'}\otimes \univ{g})^{G'}$.
	For any $Z \in \lie{g'}$, we have
	\begin{align*}
		\sum_i X_i \otimes [Z, Y_i] = -\sum_i [Z, X_i]\otimes Y_i = \sum_{i,j} \lambda_j([X_i, Z]) X_j\otimes Y_i.
	\end{align*}
	This implies $[Z, Y_i] = \sum_j \lambda_i([X_j, Z]) Y_j$.
	Hence we have
	\begin{align*}
		\sum_i X_i Y_i &= \sum_i Y_i X_i + \sum_i [X_i, Y_i] \\
		&= \sum_i Y_i X_i + \sum_{i, j} \lambda_i([X_j, X_i]) Y_j \\
		&= \sum_i Y_i X_i + \sum_{j} \tr(\ad(X_j)) Y_j = \sum_i Y_i X_i.
	\end{align*}
	This shows the assertion for $I = \lie{g'}\univ{g'}$.

	We shall show
	\begin{align*}
		\sum_i X_i Y_i = \sum_i Y_i X_i
	\end{align*}
	for any $X_i \in \univ{g}$ and $Y_i \in \univ{g'}$ with $\sum_i X_i Y_i \in \univ{g}^{G'}$.
	It is clear that the assertion follows from this claim.

	Let $\Delta\colon G'\rightarrow G\times G'$ be the diagonal embedding.
	We consider the following two isomorphisms:
	\begin{align*}
		\varphi_1 &\colon (\univ{g\oplus g'}/\univ{g\oplus g'}\Delta(\lie{g'}))^{\Delta(G')} \rightarrow \univ{g}^{G'}, \\
		\varphi_2 &\colon (\univ{g\oplus g'}/\Delta(\lie{g'})\univ{g\oplus g'})^{\Delta(G')} \rightarrow \univ{g}^{G'}
	\end{align*}
	defined by $\varphi_1(X\otimes Y) = X{}^t Y$ and $\varphi_2(X\otimes Y) = {}^t Y X$.
	Here we identify $\univ{g\oplus g'}$ with $\univ{g}\otimes \univ{g'}$.
	As we have shown for $I = \lie{g'}\univ{g'}$, we have
	\begin{align*}
		\univ{g\oplus g'}\Delta(\lie{g'}) \cap \univ{g\oplus g'}^{\Delta(G')} = \Delta(\lie{g'})\univ{g\oplus g'} \cap \univ{g\oplus g'}^{\Delta(G')}.
	\end{align*}
	Hence $\varphi_1$ and $\varphi_2$ have the same domain.
	The inverses of $\varphi_1$ and $\varphi_2$ are given by the same map $\psi(X) = X\otimes 1$ ($X \in \univ{g}^{G'}$).
	This implies $\varphi_1 = \varphi_2$.
	Therefore we have shown the claim.
\end{proof}

\section{Uniformly bounded family and applications}

We have introduced the notion of uniformly bounded families of $\lie{g}$-modules
in \cite{Ki20}.
We recall the notion and related results.
As applications of uniformly bounded families, we will give an upper bound of multiplicities by the PI degree
and prove $\Loewy{\calC_{\lie{g}, K}} < \infty$.

\subsection{Uniformly bounded family}

Let $\lie{g}$ be a complex reductive Lie algebra.
A uniformly bounded family of $\lie{g}$-modules is a family of $\lie{g}$-modules with bounded lengths whose Beilinson--Bernstein localization is a uniformly bounded family of twisted $\ntDsheaf$-modules on the full flag variety of $\lie{g}$.
For a pair $(\lie{g}, K)$, a family of $(\lie{g}, K)$-modules is said to be uniformly bounded if the family is uniformly bounded as a family of $\lie{g}$-modules.
We omit the details because we do not use the definition itself of uniformly bounded families in this paper.
See \cite[Definition 7.2]{Ki20} for the definition.

It is important that the uniform boundedness is preserved by several operations of $\lie{g}$-modules.
We shall enumerate such results.

\begin{fact}[{\cite[Proposition 7.3]{Ki20}}]
	Let $\lie{g}$ and $\lie{h}$ be complex reductive Lie algebras.
	\begin{enumparen}
		\item For a short exact sequence $0\rightarrow L \rightarrow M \rightarrow N \rightarrow 0$ in $\prod_{i \in I} \Mod(\lie{g})$, both $L$ and $N$ are uniformly bounded if and only if so is $M$.
		\item For a family $(V_i)_{i\in I}$ (resp, $(W_i)_{i\in I}$) of $\lie{g}$-modules (resp.\ $\lie{h}$-modules),
		$(V_i\boxtimes W_i)_{i \in I}$ is a uniformly bounded family of $(\lie{g}\oplus \lie{h})$-modules if and only if $(V_i)_{i\in I}$ and $(W_i)_{i\in I}$ are uniformly bounded.
	\end{enumparen}
\end{fact}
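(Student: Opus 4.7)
The plan is to unpack the definition of a uniformly bounded family from \cite{Ki20} and reduce both assertions to the analogous statements for twisted $\ntDsheaf$-modules on the full flag variety $X$ of $\lie{g}$ via the Beilinson--Bernstein localization functor. A uniformly bounded family of $\lie{g}$-modules is, by definition, a family with uniformly bounded lengths whose localizations form a uniformly bounded family of twisted $\ntDsheaf$-modules. Hence both claims follow once the two invariants (length and the geometric invariant controlling the $\ntDsheaf$-module side, typically a characteristic cycle or uniform bound on multiplicities at generic points of irreducible components of the characteristic variety) are shown to behave additively in short exact sequences and multiplicatively in external tensor products.

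For part (i), I would first decompose with respect to the center $\univcent{g}$ to reduce to a fixed infinitesimal character, where Beilinson--Bernstein localization is exact. A short exact sequence $0\to L\to M\to N\to 0$ in $\prod_{i\in I}\Mod(\lie{g})$ then produces, termwise, short exact sequences of twisted $\ntDsheaf$-modules. Length is additive, and the defining geometric invariant of uniform boundedness is additive in extensions and sub-additive on subobjects and quotients. Consequently a uniform bound on $M$ yields uniform bounds on $L$ and $N$ (with the same constant), and uniform bounds on both $L$ and $N$ yield a uniform bound on $M$ (with constant at most the sum). Summing over the finitely many central characters contributing to each $M_i$, controlled because lengths are uniformly bounded, finishes the argument.

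For part (ii), the crucial input is compatibility of Beilinson--Bernstein localization with external tensor products: the localization of $V_i\boxtimes W_i$ on $X_{\lie{g}}\times X_{\lie{h}}$ is the external tensor product of the individual localizations. Characteristic cycles multiply under external tensor product, so the geometric invariant of $V_i\boxtimes W_i$ equals the product of the individual invariants; similarly lengths multiply. Therefore uniform boundedness on both sides is equivalent to uniform boundedness of the external tensor family, with explicit multiplicative constants in both directions. The reverse implication can also be seen by specializing: for any fixed $i_0$, the family $(V_i\boxtimes W_{i_0})_{i}$ being uniformly bounded forces $(V_i)_i$ to be uniformly bounded, provided $W_{i_0}\neq 0$.

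The main obstacle is bookkeeping the \emph{uniformity} of these bounds across the index set $I$. Uniform boundedness is not just a pointwise condition on each characteristic variety but requires all the characteristic cycles to be controlled by a single geometric object (for instance, contained in a fixed closed subvariety of $T^*X$, with a uniform bound on the multiplicity along each irreducible component). Checking that taking a subquotient of $M_i$ stays inside this ambient geometric control, and that the Kunneth-type formula for characteristic cycles preserves uniform bounds over the whole family, is the technical heart of the argument; once this is established, both statements become formal consequences of the additivity and multiplicativity properties of characteristic cycles and lengths.
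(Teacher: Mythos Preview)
The paper does not prove this statement at all: it is imported verbatim as a \emph{fact} from the author's previous paper \cite[Proposition 7.3]{Ki20}, and the present paper explicitly says ``We omit the details because we do not use the definition itself of uniformly bounded families in this paper.'' So there is no proof here to compare your proposal against.

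Your sketch is a reasonable outline of how one would expect such a result to be proved in \cite{Ki20}, given that uniformly bounded families are defined there via Beilinson--Bernstein localization to twisted $\ntDsheaf$-modules on the flag variety. Whether your specific mechanism (additivity/multiplicativity of characteristic cycles) matches what is actually done in \cite{Ki20} cannot be determined from this paper; you would need to consult \cite{Ki20} directly. One caution: since the present paper never states the precise definition of uniform boundedness, your proposal is necessarily speculative about which geometric invariant plays the controlling role, and phrases like ``typically a characteristic cycle or uniform bound on multiplicities'' signal that you are guessing at the definition rather than working from it.
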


Let $(\lie{g}, K)$ be a pair and $\lie{p}$ a parabolic subalgebra of $\lie{g}$ with a Levi subalgebra $\lie{l}$.
Let $(\lie{l}, K_L)$ be a subpair of $(\lie{g}, K)$ such that $K_L$ normalizes $\lie{p}$.
We consider $(\lie{l}, K_L)$-modules as $(\lie{p}, K_L)$-modules through the natural surjection $\lie{p}\rightarrow \lie{l}$.

\begin{fact}[{\cite[Theorem 7.11]{Ki20}}]\label{fact:BoundedGeneralizedVerma}
	Suppose that $K$ and $K_L$ are reductive.
    Let $(V_i)_{i \in I}$ be a uniformly bounded family of $(\lie{l}, K_L)$-modules.
	Then the family $(\Dzuck{K}{K_L}{j}(\univ{g}\otimes_{\univ{p}}V_i))_{i\in I, j\in \ZZ}$ of $(\lie{g}, K)$-modules is uniformly bounded.
	In particular, $(\univ{g}\otimes_{\univ{p}}V_i)_{i\in I}$ is uniformly bounded.
\end{fact}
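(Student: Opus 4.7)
The plan is to use Beilinson--Bernstein localization to translate uniform boundedness into a geometric condition on twisted $\ntDsheaf$-modules over the flag variety of $\lie{g}$, and then to check that the two building blocks $V\mapsto \univ{g}\otimes_{\univ{p}}V$ and $V\mapsto \Dzuck{K}{K_L}{j}(V)$ each correspond to geometric operations that preserve this condition. As a preliminary, I would decompose the $V_i$ according to generalized $\univcent{l}$-infinitesimal characters; the Harish-Chandra homomorphism then supplies a finite set of generalized $\univcent{g}$-infinitesimal characters controlling the outputs, and this decomposition is harmless because uniform boundedness is preserved under it and the relevant functors are $\univcent{g}$-equivariant.

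For the parabolic induction step, I would identify the Beilinson--Bernstein localization of the generalized Verma module $\univ{g}\otimes_{\univ{p}}V_i$ on the full flag variety $G/B$ as the pushforward, along the smooth fibration $\pi\colon G/B\to G/P$, of the localization of $V_i$ on the fiber $L/(L\cap B)$, viewed as a $P$-equivariant family on $G/P$. Since $\pi$ has fixed smooth fibers isomorphic to the flag variety of $\lie{l}$, and since the localization of a uniformly bounded family of $\lie{l}$-modules on this fiber is uniformly bounded by the very definition of uniform boundedness in \cite{Ki20}, the pushforward preserves the fiberwise dimension bound. Forgetting the $K_L$-equivariance, this already gives the `in particular' assertion about $(\univ{g}\otimes_{\univ{p}}V_i)_{i\in I}$.

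For the Zuckerman derived functors, I would use the explicit description
\begin{equation*}
\Dzuck{K}{K_L}{j}(V) = H^j(\lie{k}, K_L; \rring{K}\otimes V)
\end{equation*}
from Definition \ref{def:Zuckerman} together with the Koszul-type computation of relative Lie algebra cohomology. The latter is a complex of bounded length whose terms involve only $\Lambda^{\bullet}(\lie{k}/\lie{k}_L)\otimes \rring{K}\otimes V$, and on the sheaf side this multiplies fiber dimensions only by a universal constant depending on $\dim(\lie{k}/\lie{k}_L)$ and on $\rring{K/K_L}$. Moreover, $\Dzuck{K}{K_L}{j}$ vanishes outside $0\leq j\leq \dim_\CC(\lie{k}/\lie{k}_L)$, so only finitely many degrees contribute to the family indexed by $(i,j)\in I\times\ZZ$. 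Composing with the induction step of the previous paragraph then produces a uniformly bounded family of $(\lie{g}, K)$-modules, as required.

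The hard part will be making the localization description of generalized Verma modules precise enough to witness the geometric pushforward, since the paper's definition of uniform boundedness is stated on the full flag variety of $\lie{g}$ rather than on a partial flag variety, and one must check that the image of $\univ{g}\otimes_{\univ{p}}(\cdot)$ really corresponds to a pushforward that admits a uniform fiber-dimension bound. A related delicate point is ensuring that the constants in the Zuckerman step do not depend on the cohomological degree $j$; here the boundedness of the range of non-vanishing $j$ and the finite generation of $\rring{K}$ over $\rring{K/K_L}\otimes \univ{k}_L$ are essential.
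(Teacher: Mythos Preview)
The paper does not prove this statement: it is recorded as a Fact with a citation to \cite[Theorem 7.11]{Ki20}, and no argument appears in the present paper. There is therefore nothing here to compare your sketch against.

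Your outline---reducing to generalized infinitesimal characters, realizing $\univ{g}\otimes_{\univ{p}}V$ via pushforward along the fibration $G/B\to G/P$ in the Beilinson--Bernstein picture, and bounding the Zuckerman functors through the finite Koszul complex for relative Lie algebra cohomology---is broadly consonant with the $\ntDsheaf$-module framework of \cite{Ki20} that the paper invokes. However, whether these operations genuinely preserve the specific quantitative notion of uniform boundedness used there (a bound on the fiber dimensions of localized twisted $\ntDsheaf$-modules together with a bounded-length condition) is exactly what \cite[Theorem 7.11]{Ki20} establishes, and your sketch leaves the key technical points---the precise identification of the localization of a generalized Verma module with a direct image and the uniform control of $\rring{K}\otimes(\cdot)$ on the localized side---at the level of heuristics. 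To turn this into an actual proof you would need to consult the cited reference; the present paper supplies no independent argument.
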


Typical and important examples of uniformly bounded families are families of Harish-Chandra modules and families of objects in the BGG category $\calO$.

\begin{fact}[{\cite[Proposition 7.6]{Ki20}}]\label{fact:UniformlyBoundedFamilyGK}
	Suppose that $K$ has finitely many orbits in the full flag variety of $\lie{g}$.
	Then any family of $(\lie{g}, \lie{k})$-modules with bounded lengths is uniformly bounded.
	Here, a $(\lie{g}, \lie{k})$-module means a locally $\lie{k}$-finite $\lie{g}$-module.
\end{fact}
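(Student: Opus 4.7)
The plan is to prove this via Beilinson--Bernstein localization on the full flag variety $X$ of $\lie{g}$. The hypothesis gives a finite $K$-orbit stratification $X = \bigsqcup_{i=1}^r O_i$, and any $(\lie{g}, \lie{k})$-module localizes to a weakly $\lie{k}$-equivariant twisted $\ntDsheaf$-module on $X$; by the finite-orbit assumption such a module is automatically regular holonomic with characteristic variety contained in $\bigcup_i T^*_{\overline{O_i}} X$. The notion of uniform boundedness from \cite[Definition 7.2]{Ki20} amounts essentially to a uniform upper bound on the multiplicities of the irreducible components $[T^*_{\overline{O_i}} X]$ appearing in the characteristic cycles of the localizations across the family, so the task is to produce such a uniform bound.

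After reducing to a fixed (regular integral) infinitesimal character via translation functors --- which is harmless for the length and transforms characteristic cycles in a controlled way --- I would enumerate the simple weakly $\lie{k}$-equivariant twisted $\ntDsheaf$-modules on $X$. Each such simple is the minimal extension from some $O_i$ of an irreducible weakly $\lie{k}$-equivariant local system on $O_i$ whose twist matches the fixed parameter; the number of such local systems on each $O_i$ is finite (controlled by the component groups of the stabilizers), so at a fixed infinitesimal character there are only finitely many simple objects. Each such simple has a characteristic cycle with fixed nonnegative integer multiplicities along the components $T^*_{\overline{O_j}} X$, and the maximum of these multiplicities over the finite list of simples is a constant depending only on the pair $(X, K)$. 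Since the family has uniformly bounded length $\ell$, the characteristic cycle multiplicities of any member are bounded by $\ell$ times this constant.

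The main obstacle is making this argument uniform across varying infinitesimal characters, because \emph{a priori} both the list of simple objects and their characteristic cycle multiplicities depend on the twist. The key tool to overcome this is the coherent continuation / translation principle: the relevant characteristic cycle multiplicities vary in a polynomial / piecewise constant manner on each facet of the Weyl chamber of parameters, and are controlled by the same combinatorial data (Kazhdan--Lusztig-type numbers for the pair $(G, K)$) that appeared at a regular integral parameter. Once this parameter-independent bound is established, combining it with the assumption on the length gives the uniform bound for the whole family, as required.
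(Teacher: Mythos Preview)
The paper does not prove this statement at all: it is stated as a \emph{Fact} and attributed to \cite[Proposition 7.6]{Ki20}, with no argument given here. So there is no proof in the present paper to compare your proposal against; any comparison would have to be with the cited reference, whose Definition 7.2 (the precise meaning of ``uniformly bounded family'') is explicitly not reproduced here.

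That said, your outline is the natural one and is consistent with the one hint the paper does give, namely that uniform boundedness is a condition on the Beilinson--Bernstein localizations as twisted $\ntDsheaf$-modules on the full flag variety. Two points deserve care. First, your reduction ``to a fixed (regular integral) infinitesimal character via translation functors'' only moves within a single integral class of twists; it does not collapse the continuous family of non-integral twists to one point, so the uniformity across \emph{all} infinitesimal characters still needs an argument that works parameter-by-parameter (or a genuinely twist-independent bound). Second, without the exact content of \cite[Definition 7.2]{Ki20} you are guessing that the invariant to be bounded is the characteristic-cycle multiplicity along the finitely many conormal bundles $T^*_{\overline{O_i}}X$; this is plausible, but the final step of your sketch---invoking ``polynomial / piecewise constant'' behaviour of these multiplicities in the twist---is asserted rather than proved and is exactly where the work lies. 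If you want a self-contained argument you should either extract the definition from \cite{Ki20} and verify that a length bound plus the finite-orbit hypothesis yields the required $\ntDsheaf$-module bound directly, or give a precise statement (with reference) for the claimed uniform control of characteristic cycles across twists.
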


\begin{fact}[{\cite[Propositions 7.3 and 7.6]{Ki20}}] \label{fact:UniformlyBoundedO}
	Let $\set{V_i}_{i \in I}$ be a family of objects in $\calO_{\lie{g}}$ with bounded lengths.
	Then $\set{V_i}_{i\in I}$ is uniformly bounded.
\end{fact}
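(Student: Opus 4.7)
The plan is to reduce the claim to the finite-orbit criterion of Fact \ref{fact:UniformlyBoundedFamilyGK} applied to a Borel subgroup. By closure of uniformly bounded families under extensions, it suffices to uniformly bound the family of all simple composition factors of the $V_i$: from such a bound and the length hypothesis $\len(V_i) \leq \ell$, uniform boundedness of $\{V_i\}_{i \in I}$ follows by $\ell$-fold iterated extensions.

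Any simple object $L$ of $\calO_{\lie{g}}$ is, by definition, a simple subquotient of some direct sum $W_1 \oplus \cdots \oplus W_r$ with $W_j \in \calO_{\lie{b}_j}$. A composition factor of such a direct sum appears as a composition factor of some single summand $W_j$, so $L$ lies in $\calO_{\lie{b}_j}$ for some Borel $\lie{b}_j$ (using closure of $\calO_{\lie{b}_j}$ under subquotients). By the definition of $\calO_{\lie{b}}$, every such object is locally $\lie{b}$-finite, so $L$ is a locally $\lie{b}_j$-finite $\lie{g}$-module.

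Next I would fix a base Borel $\lie{b}_0$ with corresponding Borel subgroup $B_0 \subset G$ and full flag variety $\calB = G/B_0$. By the Bruhat decomposition, any Borel subgroup $B \subset G$ has exactly $|W|$ orbits on $\calB$. Applying Fact \ref{fact:UniformlyBoundedFamilyGK} with $K = B$ to the class of simple objects of $\calO_{\lie{b}}$ (all of length one, viewed as locally $\lie{b}$-finite $\lie{g}$-modules) yields a uniform bound on their Beilinson--Bernstein localizations. Since all Borels are $G$-conjugate and the $\Ad(G)$-action on $\calB$ intertwines localization, the bound does not depend on the choice of $B$, giving a single constant that covers all simples of $\calO_{\lie{g}}$. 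The main obstacle is precisely this $\Ad(G)$-invariance: one must verify that replacing $B$ by a conjugate $gBg^{-1}$ does not enlarge the uniform bound, which rests on $G$-equivariance of the Beilinson--Bernstein localization used in the definition of uniformly bounded families.
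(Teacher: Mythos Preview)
The paper does not prove this statement itself; it is recorded as a fact with a citation to \cite[Propositions 7.3 and 7.6]{Ki20}. Your argument follows exactly that combination: Proposition 7.3 (the closure of uniform boundedness under extensions, stated just above in the paper) gives your reduction to simples, and Proposition 7.6 is precisely Fact \ref{fact:UniformlyBoundedFamilyGK}, which you apply with $K=B$ via the Bruhat decomposition. So your route is the intended one.

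On the one point you flag as an obstacle: the worry about $B$-dependence is legitimate but resolves cleanly. The notion ``uniformly bounded family of $\lie{g}$-modules'' is intrinsic to $\lie{g}$ (it is phrased via Beilinson--Bernstein localization on the full flag variety, with no reference to a choice of $K$). Twisting the $\lie{g}$-action by $\Ad(g)$ carries simples of $\calO_{\lie{b}_0}$ bijectively to simples of $\calO_{\Ad(g)\lie{b}_0}$, and the localization functor is $\Aut(\lie{g})$-equivariant (pullback along the induced automorphism of the flag variety), so the numerical bounds witnessing uniform boundedness for the family of simples in $\calO_{\lie{b}_0}$ transfer verbatim to every conjugate Borel. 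Since the definition of ``uniformly bounded'' is a bound by fixed constants, the same constants then cover the union over all Borels. No further argument is needed beyond this equivariance, which is exactly what you identified.
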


Let $G'$ be a reductive algebraic group such that $(\lie{g}, G')$ is a pair.
As we have seen in Definition \ref{def:Zuckerman}, the Zuckerman derived functors is defined by relative Lie algebra homologies.
Hence we can deduce the boundedness of the lengths of $\univ{g}^{G'}$-modules on relative Lie algebra homologies.


\begin{fact}[{\cite[Corollary 7.16]{Ki20}}] \label{fact:BoundedTorCenter}
	Let $(V_i)_{i \in I}$ be a uniformly bounded family of $\lie{g}$-modules.
	Then there exists some constant $C > 0$ such that
	for any maximal ideal $\calI\subset \univcent{g'}$, $i \in I$ and $j\in \ZZ$, 
	we have 
	\begin{align*}
		\Len_{\univ{g}^{G'}\otimes \univ{g'}} (\Tor_j^{\univcent{g'}}(\univcent{g'}/\calI, V_i)) \leq C.
	\end{align*}
\end{fact}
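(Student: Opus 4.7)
The plan is in three parts: build a Koszul resolution of $\univcent{g'}/\calI$, use it to bound each Tor term by a fixed multiple of $V_i$ in the category of $\univ{g}^{G'}\otimes \univ{g'}$-modules, and finally bound $\Len_{\univ{g}^{G'}\otimes \univ{g'}}(V_i)$ uniformly by invoking the geometric content of the uniformly bounded family hypothesis.

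First I would exploit the Chevalley restriction theorem to view $\univcent{g'}$ as a polynomial ring on $n=\rank(\lie{g'})$ generators; then every maximal ideal $\calI$ is generated by a regular sequence $f_1,\ldots,f_n$. The associated Koszul complex $K_\bullet$ is a free resolution of $\univcent{g'}/\calI$ over $\univcent{g'}$. Applying $-\otimes_{\univcent{g'}} V_i$ produces a complex whose $j$-th term is $\binom{n}{j}$ copies of $V_i$; because the differentials are multiplication by elements of $\univcent{g'}$, which commute with both $\univ{g}^{G'}$ and $\univ{g'}$, the entire complex lives in $\Mod(\univ{g}^{G'}\otimes \univ{g'})$ and computes $\Tor_\bullet^{\univcent{g'}}(\univcent{g'}/\calI, V_i)$. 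Hence
\begin{align*}
\Len_{\univ{g}^{G'}\otimes \univ{g'}}\!\bigl(\Tor_j^{\univcent{g'}}(\univcent{g'}/\calI, V_i)\bigr) \leq \binom{n}{j}\,\Len_{\univ{g}^{G'}\otimes \univ{g'}}(V_i),
\end{align*}
so it suffices to produce a constant $L$ with $\Len_{\univ{g}^{G'}\otimes \univ{g'}}(V_i)\leq L$ for all $i\in I$.

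This last step is the main obstacle: $V_i$ may have infinite length both as a $\univ{g'}$-module and as a $\univ{g}^{G'}$-module, so the finer submodule lattice over $\univ{g}^{G'}\otimes \univ{g'}$ cannot be controlled from the $\univ{g}$-length alone (indeed, the multiplication map $\univ{g}^{G'}\otimes \univ{g'}\to \univ{g}$ is typically far from surjective). To attack it, I would unwind the geometric definition of a uniformly bounded family: each $V_i$ corresponds under Beilinson--Bernstein localization to a coherent twisted $\ntDsheaf$-module $\calV_i$ on the full flag variety of $\lie{g}$ whose generic rank is uniformly bounded in $i$. The two commuting actions can be read sheaf-theoretically: $\univ{g}^{G'}$ as $G'$-invariant global differential operators and $\univ{g'}$ through the canonical map $\lie{g'}\to \Gamma(\ntDsheaf)$. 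A uniform bound on the number of joint simple $\univ{g}^{G'}\otimes \univ{g'}$-constituents should then follow by a Noetherianity argument on the characteristic cycle of $\calV_i$, which is uniformly controlled by the definition of the family. I expect this geometric-to-algebraic translation, carried out in \cite[Corollary 7.16]{Ki20}, to be the hardest ingredient; the Koszul reduction above is the routine part.
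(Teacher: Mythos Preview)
Your Koszul reduction is fine formally, but the conclusion you draw from it is vacuous: the quantity $\Len_{\univ{g}^{G'}\otimes \univ{g'}}(V_i)$ is typically \emph{infinite}, so the inequality $\Len(\Tor_j)\le \binom{n}{j}\Len(V_i)$ gives nothing. For a concrete example, take $\lie{g}=\sl(2,\CC)$, $G'$ a maximal torus, and $V_i$ a Verma module. Then $\univ{g}^{G'}$ consists of weight-zero elements and $\univ{g'}=\univ{h}$, so every weight space of $V_i$ is a $\univ{g}^{G'}\otimes\univ{g'}$-submodule; since there are infinitely many weight spaces, the length is infinite. The geometric argument you sketch at the end cannot repair this, because the claim $\Len_{\univ{g}^{G'}\otimes \univ{g'}}(V_i)\le L$ is simply false, not merely hard to prove.

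The whole point of the statement is that passing to $\Tor$ against $\univcent{g'}/\calI$ \emph{creates} the finiteness: it is $V_i/\calI V_i$ and $V_i^{\calI}$ (and the intermediate Koszul cohomologies) that have finite $\univ{g}^{G'}\otimes\univ{g'}$-length, not $V_i$ itself. The paper does not prove this Fact but imports it from \cite{Ki20}; the argument there works with the localizations of the $V_i$ directly and bounds the lengths of the $\univcent{g'}$-cohomology sheaves using the uniform rank bound on the $\ntDsheaf$-module side. You cannot factor that argument through a bound on $V_i$; you have to control $V_i/\calI V_i$ (or equivalently the Koszul cohomology) from the outset.
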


\begin{remark}
	Since $\univcent{g'}$ is isomorphic to a polynomial ring,
	we can compute $\Tor_j^{\univcent{g'}}(\univcent{g'}/\calI, V_i)$ by the Koszul complex.
	Hence we have
	\begin{align*}
		\Tor_{r}^{\univcent{g'}}(\univcent{g'}/\calI, V_i) \simeq V_i^{\calI},
	\end{align*}
	where $r$ is the rank of $\lie{g'}$.
\end{remark}

Let $K'$ be a reductive subgroup of $K$ such that $(\lie{g'}, K')$ is a subpair of $(\lie{g}, K)$.
Suppose that $\Ad_{\lie{g}}(K')$ is contained in $\Ad_{\lie{g}}(G')$.
The following fact is a key result to give an upper bound of multiplicities.
We will apply the fact to Harish-Chandra modules and objects in the BGG category $\calO$.

\begin{fact}[{\cite[Theorem 7.18]{Ki20}}]\label{fact:BoundedTorHom}
	Let $(V_i)_{i\in I}$ and $(V'_i)_{i\in I}$ be uniformly bounded families of $(\lie{g}, K)$-modules and $(\lie{g'}, K')$-modules, respectively.
	Then there exists some constant $C > 0$ such that
	\begin{align*}
		\Len_{\univ{g}^{G'}}(H_j(\lie{g'}, K'; V_i\otimes V'_i)) \leq C
	\end{align*}
	for any $i\in I$ and $j\in \ZZ$.
\end{fact}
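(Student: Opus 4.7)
The plan is to bootstrap from Fact \ref{fact:BoundedTorCenter} --- which bounds lengths of $\Tor$ over $\univcent{g'}$ --- to relative Lie algebra homology via a dévissage on $V'_i$ combined with a Koszul--type spectral sequence.

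First I would reduce to the case where each $V'_i$ is irreducible. Uniform boundedness of $(V'_i)_{i\in I}$ yields a uniform bound $N$ on $\Len_{(\lie{g'}, K')}(V'_i)$, and the long exact sequence in relative Lie algebra homology reduces the claim, at the cost of a factor $N$, to the irreducible case. For irreducible $V'_i$ the center $\univcent{g'}$ acts through a character with maximal ideal $\calI_i$, so $\calI_i$ annihilates $V'_i$. I would then resolve $\univcent{g'}/\calI_i$ by its Koszul complex and interleave with the relative Chevalley--Eilenberg complex for $(\lie{g'}, K')$ acting on $V_i\otimes V'_i$, producing a double complex whose $E_1$-page has entries built from
\[
\bigl(\Tor^{\univcent{g'}}_p(\univcent{g'}/\calI_i, V_i)\otimes V'_i\otimes {\textstyle\bigwedge}^q(\lie{g'}/\lie{k'})\bigr)^{K'}
\]
with $p\leq \mathrm{rank}(\lie{g'})$ and $q\leq \dim_\CC(\lie{g'}/\lie{k'})$. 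Fact \ref{fact:BoundedTorCenter} supplies a uniform length bound on the first factor; tensoring with $V'_i$ (of uniformly bounded $\univ{g'}/\calI_i\univ{g'}$-length) and with the finite-dimensional $K'$-module $\bigwedge^q(\lie{g'}/\lie{k'})$, then taking $K'$-invariants, multiplies this bound by a constant depending only on $N$ and on the uniformly bounded $K'$-multiplicities of the families. Summing over the finitely many nonzero $(p,q)$-positions bounds the abutment uniformly in $i$.

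The main obstacle is converting the $\univ{g}^{G'}\otimes\univ{g'}$-length bound of Fact \ref{fact:BoundedTorCenter} into a $\univ{g}^{G'}$-length bound, since a $\univ{g}^{G'}$-submodule of a $\univ{g}^{G'}\otimes\univ{g'}$-module need not be $\univ{g'}$-stable. The rescue is that on the relative homology the $\lie{g'}$-action has been killed, so the residual $\univ{g'}$-action factors through the center $\univcent{g'}\subset \univ{g}^{G'}$ (shifted by the character $\chi_i$ of $V'_i$); combined with the uniform bound on the $\univ{g'}/\calI_i\univ{g'}$-length of $V'_i$, this lets one absorb the remaining $\univ{g'}$-factor into the $\univ{g}^{G'}$-length at a uniformly bounded cost. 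Threading this conversion compatibly through the Koszul spectral sequence, while tracking that each differential and $K'$-invariance step respects the $\univ{g}^{G'}$-module structure, is the technical heart of the argument.
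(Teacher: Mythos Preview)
This statement is a Fact quoted from \cite[Theorem 7.18]{Ki20}; the present paper does not prove it. The argument in \cite{Ki20} proceeds through the geometry of uniformly bounded families of twisted $\ntDsheaf$-modules on flag varieties and is quite different from the homological bootstrap you outline.

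Your proposal has a real gap precisely where you flag it. First, the double complex is not set up correctly: the $\univcent{g'}$-action on $V_i$ coming from $\univcent{g'}\subset\univ{g}$ and the diagonal $\lie{g'}$-action on $V_i\otimes V'_i$ are not compatible in the way you need (the comultiplication of a central element of $\univ{g'}$ is not central in each tensor factor), so there is no evident bicomplex whose total homology is $H_\bullet(\lie{g'}, K'; V_i\otimes V'_i)$ and whose $E_1$ page is built from $\Tor^{\univcent{g'}}_*(\univcent{g'}/\calI_i, V_i)$. Second, your ``rescue'' misstates the situation: on the abutment the $\lie{g'}$-action is trivial, so $\univ{g'}$ acts through $\univ{g'}/\lie{g'}\univ{g'}\cong\CC$, not through $\univcent{g'}$. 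Even with this correction, you would still need the spectral sequence differentials to be $\univ{g}^{G'}\otimes\univ{g'}$-linear in order to transport the length bound from $E_1$ to $E_\infty$, and that again presupposes a correctly constructed bicomplex. Finally, the appeal to ``uniformly bounded $K'$-multiplicities of the families'' is unjustified: uniform boundedness in the sense of \cite{Ki20} is a condition on $\ntDsheaf$-module localizations and does not imply bounded $K'$-isotypic multiplicities.
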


\subsection{Upper bound of multiplicities and polynomial identity}

As an application of uniformly bounded families, we give an upper bound of multiplicities by the PI degree.
Let $G$ be a reductive algebraic group and $G'$ a reductive subgroup of $G$.

\begin{lemma}\label{lem:DimOfUnivG}
	Let $I$ be a two-sided ideal of $\univ{g}^{G'}$.
	Assume that $I \cap \univ{g}^{G}$ and $I \cap \univ{g'}^{G'}$
	are maximal ideals of $\univ{g}^G$ and $\univ{g'}^{G'}$, respectively.
	Then there exists some constant $C > 0$ independent of $I$ such that
	\begin{align*}
		\dim_{\CC}(\univ{g}^{G'}/I) \leq C \cdot \PIdeg(\univ{g}^{G'}/I)^2.
	\end{align*}
\end{lemma}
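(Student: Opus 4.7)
The starting point is Proposition~\ref{prop:UpperBoundDimA}: for $\calA := \univ{g}^{G'}/I$ it gives
$\dim_\CC(\calA) \leq \Len_{\calA\otimes \calA^\opalg}(\calA)\cdot \PIdeg(\calA)^2$.
Thus the task reduces to bounding the length of $\univ{g}^{G'}/I$ as a bimodule over itself by a constant $C$ independent of $I$.

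Let $\chi_G$ and $\chi_{G'}$ be the characters of $\univcent{g}$ and $\univcent{g'}$ determined by the maximal ideals $I \cap \univcent{g}$ and $I \cap \univcent{g'}$. Since elements of $\univcent{g'}\subset \univ{g'}$ commute with every $G'$-invariant element of $\univ{g}$, both $\univcent{g}$ and $\univcent{g'}$ lie in the center of $\univ{g}^{G'}$. Consequently $\ker\chi_G\univ{g}^{G'} + \ker\chi_{G'}\univ{g}^{G'}$ is a two-sided ideal contained in $I$, and $\calA$ is a bimodule quotient of $\calB := \univ{g}^{G'}/(\ker\chi_G\univ{g}^{G'} + \ker\chi_{G'}\univ{g}^{G'})$. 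It therefore suffices to bound $\Len(\calB)$ uniformly in $(\chi_G,\chi_{G'})$. By $G'$-reductivity, $\calB = M^{G'}$ where $M := \univ{g}/(\univ{g}\ker\chi_G + \univ{g}\ker\chi_{G'})$ carries a natural $(\lie{g},\lie{g'})$-bimodule structure with fixed ``infinitesimal characters'' $\chi_G$ and $\chi_{G'}$ on the two sides, together with a locally finite diagonal $G'$-action.

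The plan is to realize the simple bimodule subquotients of $\calB$ through relative Lie algebra homologies of objects drawn from uniformly bounded families. Fixing Borels $\lie{b}\subset \lie{g}$ and $\lie{b}'\subset \lie{g'}$, the Verma modules $M(\lambda)\in \calO_{\lie{b}}$ with $\chi_\lambda = \chi_G$ and their duals $M(\mu)^{\vee}\in \calO_{\lie{b}'}$ with $\chi_\mu = \chi_{G'}$ form uniformly bounded families by Fact~\ref{fact:UniformlyBoundedO}. Fact~\ref{fact:BoundedTorHom} then supplies an absolute bound on the $\univ{g}^{G'}$-length of $H_j(\lie{g'}, K'; M(\lambda)\otimes M(\mu)^{\vee})$, and Fact~\ref{fact:BoundedTorCenter} yields the analogous bound after specialization along $\univcent{g'}$.

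The principal obstacle---and the most delicate step---is to upgrade these one-sided $\univ{g}^{G'}$-length bounds into a bimodule length bound on $\calB$. I would bridge this gap via a Jacobson density argument (the analogue of properties~(a) and (b) sketched in the introduction), realizing every simple $\univ{g}^{G'}$-bimodule subquotient of $\calB$ as $\Hom_{G'}(F, W)$ for an irreducible $\lie{g}$-module $W$ of infinitesimal character $\chi_G$ and an irreducible $G'$-representation $F$ of infinitesimal character $\chi_{G'}$. Fact~\ref{fact:DufloIdeal} and Lemma~\ref{lem:TwosidedPrimitive}, applied inside $\univ{g\oplus g'}$, would then bound the number of contributing pairs $(W,F)$ by a constant depending only on $\lie{g}$ and $\lie{g'}$ (the Duflo count of primitive ideals at a fixed infinitesimal character); combining this with the homology bound produces the required $C$.
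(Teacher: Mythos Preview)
Your reduction via Proposition~\ref{prop:UpperBoundDimA} to a uniform bound on $\Len_{\calA\otimes\calA^{\opalg}}(\calA)$ for $\calA=\univ{g}^{G'}/I$ is exactly what the paper does. The paper then disposes of that bimodule-length bound in one line by citing \cite[Theorem~7.29]{Ki20}, which states precisely this (the only additional remark is that connectedness of $G,G'$ assumed there is easy to remove).

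Your attempt to reprove the bimodule-length bound from the Facts quoted in this paper has a real gap where you flag it. First, the bridge you propose is misstated: $\Hom_{G'}(F,W)$ is a \emph{left} $\univ{g}^{G'}$-module, not a $(\univ{g}^{G'},\univ{g}^{G'})$-bimodule, so it cannot model a simple bimodule subquotient of $\calB$; a simple bimodule of countable dimension has the shape $S_1\boxtimes S_2$, and your description captures at most one tensor factor. Second, and more substantively, Facts~\ref{fact:BoundedTorCenter} and~\ref{fact:BoundedTorHom} give one-sided $\univ{g}^{G'}$-length bounds for homologies built from uniformly bounded families of $\lie{g}$- and $\lie{g'}$-modules; neither applies to $\calB$ as a bimodule. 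The right $\univ{g}^{G'}$-action on $\calB$ does not come from a right $\lie{g'}$-action alone (elements of $\univ{g}^{G'}$ need not lie in $\univ{g'}$), so packaging $M$ as a $(\lie{g},\lie{g'})$-bimodule, as you do, loses exactly the structure you need to bound. A correct argument has to treat $\univ{g}$ as a $(\lie{g\oplus g},\Delta(G'))$-module and invoke a two-sided analogue of Fact~\ref{fact:BoundedTorHom}; that is precisely the content of \cite[Theorem~7.29]{Ki20}, proved there with the same $\ntDsheaf$-module machinery that underlies the Facts you cite, but not derivable from those Facts alone.
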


\begin{proof}
	By \cite[Theorem 7.29]{Ki20}, the length of $\univ{g}^{G'}/I$ as a $(\univ{g}^{G'}/I, \univ{g}^{G'}/I)$-bimodule is bounded by some constant $C$ independent of $I$.
	Although $G$ and $G'$ are assumed to be connected in the reference, it is easy to generalize the result to disconnected case.
	Therefore the assertion follows from Proposition \ref{prop:UpperBoundDimA}.
\end{proof}

The following two results give upper bounds of (cohomological) multiplicities in branching laws and induced representations by PI degree.

\begin{theorem}\label{thm:UpperBoundHomGeneral}
	Let $(V_i)_{i \in I}$ be a uniformly bounded family of $\lie{g}$-modules.
	Then there exists some constant $C > 0$ such that for any irreducible $\lie{g'}$-module $V'$, we have
	\begin{align*}
		\dim_{\CC}(\Hom_{\lie{g'}}(V_i, V')) \leq C \cdot \PIdeg(\univ{g}^{G'}/\calI(i, V')),
	\end{align*}
	where we set $\calI(i, V'):= (\Ann_{\univ{g}}(V_i) + \Ann_{\univ{g'}}(V')\univ{g}) \cap \univ{g}^{G'}$.
	We can replace $\Hom_{\lie{g'}}(V_i, V')$ with $\Hom_{\lie{g'}}(V', V_i)$.
\end{theorem}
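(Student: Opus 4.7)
The strategy is to combine the length bound from Fact~\ref{fact:BoundedTorCenter} with a Dixmier-type factorization of irreducible $\univ{g}^{G'}\otimes \univ{g'}$-modules as external tensor products. I will treat the $\Hom_{\lie{g'}}(V',V_i)$ version; the $\Hom_{\lie{g'}}(V_i,V')$ case is entirely parallel, replacing the invariants $V_i^{J'}$ by the coinvariants $V_i/J'V_i$ and using $\Tor_0$ in place of the top Tor in Fact~\ref{fact:BoundedTorCenter}.

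First, set $J':=\Ann_{\univ{g'}}(V')$ and let $\calI_\chi\subset \univcent{g'}$ be the maximal ideal giving the infinitesimal character of $V'$, so $\calI_\chi\subset J'$. For any $\phi\in \Hom_{\lie{g'}}(V',V_i)$ and $v'\in V'$, $J'\phi(v')=\phi(J'v')=0$, so the image lies in $V_i^{J'}:=\{v\in V_i: J'v=0\}$; hence $\Hom_{\lie{g'}}(V',V_i)=\Hom_{\lie{g'}}(V',V_i^{J'})$. Since $V_i^{J'}\subset V_i^{\calI_\chi}$, applying Fact~\ref{fact:BoundedTorCenter} in top Tor degree yields a constant $C_1$, depending only on the family $(V_i)_{i\in I}$, with $\Len_{\univ{g}^{G'}\otimes \univ{g'}}(V_i^{J'})\leq C_1$.

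Next, I take a composition series $0=W_0\subset\cdots\subset W_n=V_i^{J'}$ of length $n\leq C_1$ in $\Mod(\univ{g}^{G'}\otimes\univ{g'})$; each factor $L_k:=W_k/W_{k-1}$ is an irreducible $\univ{g}^{G'}\otimes (\univ{g'}/J')$-module of countable dimension (as each $V_i$ is, being finitely generated over the countable-dimensional algebra $\univ{g}$). A Dixmier-type factorization then gives $L_k\cong M_k\boxtimes N_k$ with $M_k$ irreducible over $\univ{g}^{G'}$ and $N_k$ irreducible over $\univ{g'}/J'$. Applying left-exact $\Hom_{\lie{g'}}(V',\cdot)$ along the series, and using the cyclicity of $V'$ together with Dixmier's version of Schur's lemma, gives $\Hom_{\lie{g'}}(V',L_k)\cong M_k$ as $\univ{g}^{G'}$-modules when $N_k\cong V'$ and $0$ otherwise, so $\dim_{\CC}\Hom_{\lie{g'}}(V',V_i^{J'})\leq \sum_{k=1}^n \dim_{\CC} M_k$.

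The final step is to check $\Ann_{\univ{g}^{G'}}(M_k)\supset \calI(i,V')$, after which Proposition~\ref{prop:pidLowerbound} yields $\dim_{\CC} M_k\leq \PIdeg(\univ{g}^{G'}/\calI(i,V'))$ and the theorem holds with $C:=C_1$. The inclusion $\Ann_{\univ{g}}(V_i)\cap \univ{g}^{G'}\subset \Ann_{\univ{g}^{G'}}(M_k)$ is immediate because $M_k$ is the $\univ{g}^{G'}$-factor of a subquotient of $V_i^{J'}\subset V_i$. For $u\in \Ann_{\univ{g'}}(V')\univ{g}\cap \univ{g}^{G'}$, Lemma~\ref{lem:CommuteTwoSidedIdeal} lets me rewrite $u=yj$ with $y\in\univ{g}$ and $j\in J'$; then for any $v\in V_i^{J'}$ we have $jv=0$, hence $uv=0$, so $u$ annihilates $V_i^{J'}$ and a fortiori $M_k$. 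The main obstacle I expect is the Dixmier-type factorization $L_k\cong M_k\boxtimes N_k$, which requires countable-dimensionality and some care in the infinite-dimensional setting; the other steps are essentially bookkeeping with the tools assembled earlier in the paper.
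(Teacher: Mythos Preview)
Your approach is essentially the same as the paper's: pass to $V_i^{J'}$ (or $V_i/J'V_i$), invoke the length bound of Fact~\ref{fact:BoundedTorCenter}, factor each composition factor as an external tensor product, and bound the $\univ{g}^{G'}$-piece by $\PIdeg$ via Proposition~\ref{prop:pidLowerbound}. The one substantive difference is how the tensor factorization is obtained.

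The paper first observes that one may assume $\PIdeg(\calA)<\infty$ for $\calA:=\univ{g}^{G'}/\calI(i,V')$, and then invokes Lemma~\ref{lem:DimOfUnivG} to conclude that $\calA$ is \emph{finite dimensional}. With one factor finite dimensional, the decomposition of an irreducible $\calA\otimes\univ{g'}$-module as an external tensor product is completely standard, and the argument finishes immediately. Your version skips Lemma~\ref{lem:DimOfUnivG} and instead appeals to a ``Dixmier-type factorization'' for irreducible modules over a tensor product of two countable-dimensional $\CC$-algebras. That statement is not something you can simply quote, and you are right to flag it as the obstacle. However, your argument only needs the factorization for those $L_k$ with $\Hom_{\lie{g'}}(V',L_k)\neq 0$; for such $L_k$ there is a simple $\univ{g'}$-submodule $V'\hookrightarrow L_k$, so the $\univ{g'}$-socle is nonzero, hence $A\otimes B$-stable, hence all of $L_k$, and the isotypic-component argument gives $L_k\cong M_k\boxtimes V'$ with $M_k=\Hom_{\lie{g'}}(V',L_k)$ irreducible. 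So your route works after this refinement, but the paper's use of Lemma~\ref{lem:DimOfUnivG} sidesteps the issue entirely.

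One small correction: Lemma~\ref{lem:CommuteTwoSidedIdeal} gives $u\in\univ{g}J'$, hence $u=\sum_i y_ij_i$ as a finite sum, not a single product $yj$; the rest of that step is fine.
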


\begin{proof}
	Put $\calA:=\univ{g}^{G'}/\calI(i, V')$.
	Clearly we can assume $\PIdeg(\calA) < \infty$.
	Then $\calA$ is finite dimensional by Lemma \ref{lem:DimOfUnivG}.
	By Fact \ref{fact:BoundedTorCenter}, we can take a constant $C > 0$ independent of $i\in I$ and $V'$ such that
	\begin{align*}
		\Len_{\calA\otimes \univ{g'}}(V_i / \Ann_{\univ{g'}}(V') V_i)
		\leq C.
	\end{align*}

	Let $M$ be an irreducible subquotient of $V_i / \Ann_{\univ{g'}}(V') V_i$.
	Since $\calA$ is finite dimensional, $M$ is isomorphic to an external tensor product of an irreducible $\calA$-module and an irreducible $\univ{g'}$-module.
	By Proposition \ref{prop:pidLowerbound}, the dimension of any irreducible $\calA$-module is less than or equal to $\PIdeg(\calA)$.
	Hence the length of $M$ as a $\lie{g'}$-module is less than or equal to $\PIdeg(\calA)$.
	This shows
	\begin{align*}
		\dim_{\CC}(\Hom_{\lie{g'}}(V_i, V'))
		=\dim_{\CC}(\Hom_{\lie{g'}}(V_i/\Ann_{\univ{g'}}(V')V_i, V'))
		\leq C\cdot \PIdeg(\calA).
	\end{align*}
	Replacing $V_i/\Ann_{\univ{g'}}(V') V_i$
	with $V_i^{\Ann_{\univ{g'}}(V')}$, we can show the second assertion.
\end{proof}

Let $(\lie{g}, K)$ be a pair and $(\lie{g'}, K')$ a subpair of $(\lie{g}, K)$.
Suppose that $K'$ is reductive and $\Ad_{\lie{g}}(K')$ is contained in $\Ad_{\lie{g}}(G')$.
The following result is an easy consequence of Proposition \ref{prop:pidLowerbound} and Fact \ref{fact:BoundedTorHom}.

\begin{lemma}\label{lem:UpperBoundTorGeneral}
	Let $(V_i)_{i\in I}$ and $(V'_j)_{j\in J}$ be uniformly bounded families of $(\lie{g}, K)$-modules and $(\lie{g'}, K')$-modules, respectively.
	Put $I_i:= \Ann_{\univ{g}}(V_i)$ and $I'_j:={}^t\Ann_{\univ{g'}}(V'_j)$.
	Then there exists some constant $C > 0$ such that
	\begin{align*}
		\dim_{\CC}(H_k(\lie{g'}, K'; V_i\otimes V'_j)) \leq C \cdot \PIdeg(\univ{g}^{G'}/(I_i + I'_j\univ{g})\cap \univ{g}^{G'})
	\end{align*}
	for any $i\in I$, $j\in J$ and $k\in \ZZ$.
\end{lemma}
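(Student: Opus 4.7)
My plan is to derive the lemma by combining the uniform length bound of Fact~\ref{fact:BoundedTorHom} with the PI-degree dimension bound of Proposition~\ref{prop:pidLowerbound}. Set $J_{ij}:=(I_i+I'_j\univ{g})\cap\univ{g}^{G'}$. By Fact~\ref{fact:BoundedTorHom} applied to the tensor family $(V_i\otimes V'_j)$, there exists a constant $C>0$, independent of $i,j,k$, such that
\[\Len_{\univ{g}^{G'}}(H_k(\lie{g'},K';V_i\otimes V'_j))\le C.\]

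The crucial step is to show $J_{ij}\subset \Ann_{\univ{g}^{G'}}(M)$ for every irreducible $\univ{g}^{G'}$-subquotient $M$ of $H_k(\lie{g'},K';V_i\otimes V'_j)$. The $I_i$-part is immediate: $I_i$ annihilates $V_i$, hence $V_i\otimes V'_j$ together with its homology. For the $I'_j\univ{g}$-part, write $Y\in I'_j\univ{g}\cap\univ{g}^{G'}$ as $Y=\sum_m a_m b_m$ with $a_m\in I'_j\subset\univ{g'}$ and $b_m\in\univ{g}$. Under the natural isomorphism $H_*(\lie{g'},K';V_i\otimes V'_j)\simeq\Tor^{(\univ{g'},K')}_*(V_i^\sigma,V'_j)$, where $V_i^\sigma$ denotes $V_i$ equipped with the right $(\univ{g'},K')$-action $v\cdot x:={}^txv$, the left $\univ{g}^{G'}$-action transfers to left multiplication on $V_i^\sigma$. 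On $\Tor_0=V_i^\sigma\otimes_{\univ{g'}}V'_j$, the tensor relation gives $av\otimes v'=v\cdot{}^ta\otimes v'=v\otimes{}^ta\cdot v'$ for every $a\in\univ{g'}$ (verified by induction on monomial length from $\lie{g'}$); since $a_m\in I'_j={}^t\Ann_{\univ{g'}}(V'_j)$ forces ${}^ta_m\cdot v'=0$, one gets $Y\cdot(v\otimes v')=\sum_m(b_m v)\otimes {}^ta_m\cdot v'=0$. For higher $\Tor_k$, I would choose a projective resolution $P_\bullet\to V'_j$ and lift the zero map ${}^ta_m\colon V'_j\to V'_j$ to a $\univ{g'}$-linear chain map $\widetilde{a}_m\colon P_\bullet\to P_\bullet$, which by the standard uniqueness-up-to-homotopy of lifts is null-homotopic; the $G'$-invariance relation $\sum_m[x,a_m]b_m+\sum_m a_m[x,b_m]=0$ (for $x\in\lie{g'}$) is used to assemble the summand-wise non-$\univ{g'}$-linear maps into a single globally well-defined chain homotopy on $V_i^\sigma\otimes_{(\univ{g'},K')}P_\bullet$ implementing the vanishing of $Y$ on $H_k$.

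Combining, for each irreducible $\univ{g}^{G'}$-subquotient $M$ of $H_k$, Proposition~\ref{prop:pidLowerbound} yields
\[\dim_\CC(M)\le \PIdeg(\univ{g}^{G'}/\Ann_{\univ{g}^{G'}}(M))\le \PIdeg(\univ{g}^{G'}/J_{ij}),\]
and summing over a composition series of length at most $C$ gives the stated bound. The main obstacle I foresee is the higher-$k$ case of the annihilator claim: the element-wise identity $av\otimes p=v\otimes{}^tap$ in $V_i^\sigma\otimes_{\univ{g'}}P_\bullet$ does not define a chain map when ${}^ta$ fails to be central, so the null-homotopy of $\widetilde{a}_m$ must be combined with the $G'$-invariance of the total sum $Y=\sum_m a_mb_m$ to produce a globally defined null-homotopy of the $Y$-action on the whole tensor complex.
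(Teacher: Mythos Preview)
Your overall architecture is exactly what the paper intends: apply Fact~\ref{fact:BoundedTorHom} to bound the $\univ{g}^{G'}$-length of $H_k(\lie{g'},K';V_i\otimes V'_j)$ by a uniform constant $C$, verify that the ideal $J_{ij}=(I_i+I'_j\univ{g})\cap\univ{g}^{G'}$ annihilates the homology, and then invoke Proposition~\ref{prop:pidLowerbound} on each composition factor. The $I_i$-part and the $H_0$-case of the $I'_j$-part are handled correctly.

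The genuine gap is your treatment of the $I'_j$-annihilation for $k\ge 1$. When you resolve $V'_j$, the relation $a(b_m v)\otimes p=(b_m v)\otimes{}^ta\,p$ holds termwise, but left multiplication by ${}^ta_m$ on $P_\bullet$, while a chain map of complexes of vector spaces, is not $\univ{g'}$-linear; a null-homotopy obtained from the comparison lemma in the category of vector spaces need not descend to the quotient $V_i^\sigma\otimes_{\univ{g'}}P_\bullet$. Your proposed use of the invariance relation $\sum_m[x,a_m]b_m+\sum_m a_m[x,b_m]=0$ to ``assemble'' the individual homotopies into a well-defined map on the tensor complex is not justified, and I do not see how to make it work without substantially more effort.

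The clean fix---and presumably what the paper has in mind when it calls this an ``easy consequence''---is to use the balancedness of $\Tor$ and resolve the \emph{other} variable. Take a projective resolution $Q_\bullet\to V_i$ in the category of $(\lie{g},K')$-modules. Since $\univ{g}$ is free over $\univ{g'}$, each $Q_n^\sigma$ is projective as a right $(\lie{g'},K')$-module, so $\Tor_k^{(\lie{g'},K')}(V_i^\sigma,V'_j)=H_k(Q_\bullet^\sigma\otimes_{(\lie{g'},K')}V'_j)$, and the left $\univ{g}^{G'}$-action on $Q_\bullet$ descends to this complex. Now for $Y=\sum_m a_m b_m\in I'_j\univ{g}\cap\univ{g}^{G'}$ and $q\otimes v'\in Q_n^\sigma\otimes_{(\lie{g'},K')}V'_j$, the tensor relation gives
\[
Y\cdot(q\otimes v')=\sum_m a_m b_m q\otimes v'=\sum_m (b_m q)\otimes{}^ta_m v'=0,
\]
since ${}^ta_m\in\Ann_{\univ{g'}}(V'_j)$. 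Thus $Y$ annihilates every chain group, hence the homology, with no homotopy argument required.
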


\subsection{Loewy length of \texorpdfstring{$\lie{g}$}{g}-modules}\label{sect:Category}

In this subsection, we give an upper bound of the Loewy lengths of modules in $\calC_{\lie{g}, K}$ and $\calC_{G_\RR}$ as an application of uniformly bounded families.

Let $\lie{g}$ be a complex reductive Lie algebra.
Fix a Cartan subalgebra $\lie{t}$ of $\lie{g}$.
We denote by $\roots(\lie{g}, \lie{t})$ the set of all roots with respect to $\lie{t}$ and fix a set $\proots(\lie{g}, \lie{t})$ of positive roots.
We write $W_{\lie{g}}$ for the Weyl group of $\roots(\lie{g}, \lie{t})$.
Let $\calI_\chi$ denote the maximal ideal of $\univcent{g}$ corresponding to an infinitesimal character $\chi \in \lie{t}^*/W_{\lie{g}}$.

Following \cite[VII. 2]{KnVo95_cohomological_induction}, we review the primary component of a $\lie{g}$-module.
For a $\lie{g}$-module $V$ and $\chi \in \lie{t}^*/W_{\lie{g}}$, we put
\begin{align*}
\PR_{\chi}(V):= \set{v \in V: \calI_\chi^n v = 0 \text{ for some $n$}},
\end{align*}
which is called the $\chi$ primary component of $V$.
If $V$ is $\univcent{g}$-finite (e.g.\ $V$ has finite length), then $V$ is the direct sum of the primary components \cite[Proposition 7.20]{KnVo95_cohomological_induction}.
The following proposition is a consequence of Kostant's theorem \cite[Theorem 7.133]{KnVo95_cohomological_induction} and its proof.

\begin{proposition}\label{prop:generalizedInfCharTensorWithFin}
	Let $V$ be a $\lie{g}$-module with an infinitesimal character and $F$ a completely reducible finite-dimensional $\lie{g}$-module.
	Put $n := |W_{\lie{g}}|$.
	Then for any $\theta \in \lie{t}^*/W_{\lie{g}}$, we have
	\begin{align*}
		\calI_\theta^{n}\PR_\theta(V\otimes F) = 0.
	\end{align*}
\end{proposition}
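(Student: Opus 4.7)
The plan is to follow Kostant's approach to the proof of Theorem 7.133 in \cite{KnVo95_cohomological_induction}. Since $F$ is completely reducible and tensor product distributes over direct sums, I would first reduce to the case where $F$ is irreducible. Write the infinitesimal character of $V$ as $\chi_\lambda$ for some $\lambda \in \lie{t}^*$, so $\univcent{g}$ acts on $V$ via $\chi_\lambda$.

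The heart of the argument is the filtration constructed in Kostant's theorem: $V\otimes F$ admits a finite $\lie{g}$-stable filtration $0 = U_0\subset U_1\subset \dots \subset U_N = V\otimes F$ whose graded pieces, grouped by $\lie{t}$-weight of $F$, are parametrized by the distinct weights $\mu$ of $F$, with $\univcent{g}$ acting on the block associated to $\mu$ through the single character $\chi_{\lambda+\mu}$. Inspecting the construction in the proof of \cite[Theorem 7.133]{KnVo95_cohomological_induction} is what ensures these grouped blocks are honest $\univcent{g}$-eigenspaces (not merely generalized eigenspaces) — this refined grouping is the key point of the whole plan.

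Passing to the $\theta$-primary component then produces an induced filtration of $\PR_\theta(V\otimes F)$ whose nonzero graded pieces are exactly those blocks with $\chi_{\lambda+\mu} = \chi_\theta$, and on each such piece the maximal ideal $\calI_\theta$ acts as zero. Consequently the nilpotency degree of $\calI_\theta$ on $\PR_\theta(V\otimes F)$ is at most the number of such blocks.

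Finally I would bound this block count by $n=|W_\lie{g}|$ via a direct counting argument: fix a lift $\theta_0 \in \lie{t}^*$ of $\theta$; the condition $\chi_{\lambda+\mu}=\chi_\theta$ forces $\lambda+\mu \in W_\lie{g}\cdot \theta_0$, so the contributing weights $\mu$ lie in the set $W_\lie{g}\theta_0 - \lambda$, which has cardinality at most $|W_\lie{g}|$. Combining this with the filtration above yields $\calI_\theta^{n}\PR_\theta(V\otimes F) = 0$. The main obstacle is ensuring the coarsened (one-block-per-distinct-weight) filtration is available with $\univcent{g}$ acting as a scalar on each block; once this is extracted from Kostant's construction, the rest of the argument is purely combinatorial.
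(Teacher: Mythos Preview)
Your proposal is correct and takes essentially the same approach as the paper, which simply states that the proposition is ``a consequence of Kostant's theorem \cite[Theorem 7.133]{KnVo95_cohomological_induction} and its proof'' without further detail. You have spelled out precisely what that sentence means: extract from Kostant's construction the filtration of $V\otimes F$ whose graded pieces are indexed by the \emph{distinct} weights $\mu$ of $F$ with $\univcent{g}$ acting on each by the scalar $\chi_{\lambda+\mu}$, and then bound the number of graded pieces landing in a fixed primary component by $|W_{\lie{g}}|$.
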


\begin{lemma}\label{lem:BoundLoewy}
	Let $(V_i)_{i \in I}$ be a uniformly bounded family of $\lie{g}$-modules.
	There exists a constant $C > 0$ such that
	\begin{align*}
		\Loewy{V_i\otimes F} \leq C
	\end{align*}
	for any $F \in \Mod_{fd}(\lie{g})$ and $i \in I$.
\end{lemma}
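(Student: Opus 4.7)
The plan is to decompose $V_i \otimes F$ into primary components for the center $\univcent{g}$, apply Proposition \ref{prop:generalizedInfCharTensorWithFin} to bound the annihilator of each component in $\univcent{g}$ uniformly, and then use the radical filtration together with the uniform boundedness of the family to bound the Loewy length.

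First, since $(V_i)_{i \in I}$ is uniformly bounded it has uniformly bounded length, so fix $\ell$ with $\Len_{\lie{g}}(V_i) \leq \ell$. Each $V_i$ decomposes as a finite direct sum of $\univcent{g}$-primary components $V_i = \bigoplus_\chi \PR_\chi(V_i)$, and each summand is annihilated by $\calI_\chi^\ell$. Since the Loewy length of a direct sum of $\lie{g}$-modules is the maximum of the summands, I may assume $V_i$ has a single generalized infinitesimal character $\chi$ with $\calI_\chi^\ell V_i = 0$. Decomposing $V_i \otimes F = \bigoplus_\theta \PR_\theta(V_i \otimes F)$ and applying the same observation reduces the problem to bounding $\Loewy{M_\theta}$ uniformly in $i, F, \theta$, where $M_\theta := \PR_\theta(V_i \otimes F)$.

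Next, apply Proposition \ref{prop:generalizedInfCharTensorWithFin} to each of the $\ell$ successive quotients of the filtration $V_i \supset \calI_\chi V_i \supset \cdots \supset \calI_\chi^\ell V_i = 0$ (which do have an honest infinitesimal character $\chi$). Combined with the exactness of $\PR_\theta$ and the short-exact-sequence inequality $\Loewy(B) \leq \Loewy(A) + \Loewy(C)$, this produces a constant $N = N(\ell, |W_{\lie{g}}|)$ such that $\calI_\theta^N M_\theta = 0$ uniformly. Now every simple quotient of $M_\theta$ has infinitesimal character $\theta$, so $\calI_\theta M_\theta \subset \rad(M_\theta)$, and inductively $\calI_\theta^k M_\theta \subset \rad_k(M_\theta)$. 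Combined with $\calI_\theta^N M_\theta = 0$, this yields
\[
  \Loewy{M_\theta} \;\leq\; N \cdot C',
\]
where $C'$ is any uniform bound on the Loewy lengths of the successive quotients $\calI_\theta^k M_\theta / \calI_\theta^{k+1} M_\theta$.

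The main obstacle is securing the bound $C'$. Each such quotient is a $\univ{g}/\univ{g}\calI_\theta$-module appearing as a subquotient of $V_i \otimes F$, and its total $\lie{g}$-length may grow with $\dim F$, so a naive length bound will not do. The strategy is to invoke Fact \ref{fact:BoundedTorCenter} with $G' = G$ (so $\univ{g}^{G'} = \univcent{g}$), which gives a uniform bound on $\Len_{\univ{g}}(V_i/\calI V_i)$ over $i$ and over maximal ideals $\calI \subset \univcent{g}$, and then transfer this bound to $M_\theta/\calI_\theta M_\theta$ by a translation-functor argument: using that tensoring with $F$ is exact and commutes with primary decomposition, $M_\theta/\calI_\theta M_\theta$ can be identified (up to filtration with semisimple-controlled quotients) with a finite direct sum of translates of quotients $\PR_\chi(V_i)/\calI_\chi^k \PR_\chi(V_i)$, each of uniformly bounded $\univ{g}$-length by Fact \ref{fact:BoundedTorCenter}. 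The same analysis applies to every $\calI_\theta^k M_\theta/\calI_\theta^{k+1} M_\theta$, yielding a uniform $C'$ and hence the desired bound on $\Loewy{V_i \otimes F}$.
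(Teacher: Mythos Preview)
Your overall scaffold is the same as the paper's: reduce to primary components, use Proposition~\ref{prop:generalizedInfCharTensorWithFin} to bound the nilpotency degree of $\calI_\theta$ on $M_\theta$, and then bound the Loewy length of each layer $\calI_\theta^k M_\theta/\calI_\theta^{k+1} M_\theta$. The gap is in your final paragraph, where you try to bound the layer Loewy length $C'$.

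Invoking Fact~\ref{fact:BoundedTorCenter} with $G'=G$ gives only $\Len_{\univ{g}}(V_i/\calI_\chi V_i)\leq C$, which is no stronger than the bounded-length hypothesis you already have. The ``translation-functor argument'' you then gesture at is not a proof: you have not explained what ``identified (up to filtration with semisimple-controlled quotients) with a finite direct sum of translates'' means, nor why such an identification would control Loewy length rather than just length. Translation functors $T_\chi^\theta$ do not in general send $(V_i\otimes F)/\calI_\theta(V_i\otimes F)$ back to a module with bounded Loewy length, and the $\lie{g}$-length of $M_\theta/\calI_\theta M_\theta$ genuinely grows with $\dim F$, as you yourself note. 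So the step that converts a length bound on $V_i/\calI_\chi V_i$ into a Loewy-length bound on $M_\theta/\calI_\theta M_\theta$ is precisely the heart of the lemma, and it is missing.

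The paper closes this gap by a different application of Fact~\ref{fact:BoundedTorCenter}: it regards $V_i\otimes F$ as the $(\lie{g}\oplus\lie{g})$-module $V_i\boxtimes F$ and takes $G'=\Delta(G)$. The family $(V_i\boxtimes F)_{i,F}$ (with $F$ irreducible) is again uniformly bounded, so Fact~\ref{fact:BoundedTorCenter} yields a uniform bound on $\Len_{\univ{g\oplus g}^{\Delta(G)}\otimes\univ{g}}\bigl((V_i\otimes F)/\calI_\chi(V_i\otimes F)\bigr)$. The point of this larger algebra is that $(\univ{g\oplus g}/\Ann(V_i\boxtimes F))^{\Delta(G)}$ is \emph{finite dimensional} (by $G$-admissibility of $\univ{g}/\Ann(V_i)$), so every irreducible $\univ{g\oplus g}^{\Delta(G)}\otimes\univ{g}$-subquotient is an external tensor product and hence semisimple as a $\lie{g}$-module. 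That is exactly what gives $\Loewy\bigl((V_i\otimes F)/\calI_\chi(V_i\otimes F)\bigr)\leq C$ as a $\lie{g}$-module, and the rest of the argument proceeds as you outlined via the surjection $\calI_\chi^k\otimes\bigl((V_i\otimes F)/\calI_\chi(V_i\otimes F)\bigr)\twoheadrightarrow \calI_\chi^k(V_i\otimes F)/\calI_\chi^{k+1}(V_i\otimes F)$.
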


\begin{proof}
	Let $G$ be a connected algebraic group with the Lie algebra $\lie{g}$, and $\chi \in \lie{t}^*/W_{\lie{g}}$.
	Fix $i \in I$ and $F \in \Mod_{fd}(\lie{g})$.
	By Fact \ref{fact:BoundedTorCenter}, there exists some constant $C > 0$ independent of $i \in I$, $F$ and $\chi$ such that
	\begin{align*}
		\Len_{\univ{g\oplus g}^{\Delta(G)}\otimes \univ{g}}((V_i\otimes F)/\calI_\chi(V_i\otimes F)) \leq C,
	\end{align*}
	where $\Delta\colon G\rightarrow G\times G$ is the diagonal embedding.
	Since $F$ is finite dimensional and $\univ{g}/\Ann_{\univ{g}}(V_i)$ is $G$-admissible, we have
	\begin{align*}
		&\dim_{\CC}((\univ{g\oplus g}/\Ann_{\univ{g\oplus g}}(V_i\otimes F))^{\Delta(G)})\\
		\leq &\dim_{\CC}(\Hom_G(\End_\CC(F)^*, \univ{g}/\Ann_{\univ{g}}(V_i))) < \infty.
	\end{align*}
	Hence any composition factor of the $\univ{g\oplus g}^{\Delta(G)}\otimes \univ{g}$-module $(V_i\otimes F)/\calI_\chi(V_i\otimes F)$ is completely reducible as a $\lie{g}$-module (cf.\ the proof of Theorem \ref{thm:UpperBoundHomGeneral}).
	This implies
	\begin{align*}
		\Loewy{(V_i\otimes F)/\calI_\chi(V_i\otimes F)} \leq C
	\end{align*}
	as a $\lie{g}$-module.

	For any $k > 0$, the multiplication map induces a surjective $\lie{g}$-homomorphism
	\begin{align*}
		\calI_\chi^k \otimes ((V_i\otimes F)/\calI_\chi (V_i\otimes F))
		\twoheadrightarrow \calI_\chi^k (V_i\otimes F) / \calI_\chi^{k+1} (V_i\otimes F).
	\end{align*}
	By Proposition \ref{prop:generalizedInfCharTensorWithFin}, we have
	$\calI_\chi^{|W_\lie{g}|} \PR_\chi(V_i\otimes F) = 0$.
	Therefore we obtain
	\begin{align*}
		\Loewy{\PR_\chi(V_i\otimes F)} \leq C\cdot |W_\lie{g}|.
	\end{align*}
	Since $V_i\otimes F$ is the direct sum of the primary components, we have
	\begin{align*}
		\Loewy{V_i\otimes F} \leq C\cdot |W_\lie{g}|.
	\end{align*}
	We have shown the lemma.
\end{proof}

By Fact \ref{fact:UniformlyBoundedFamilyGK} and Lemma \ref{lem:BoundLoewy}, we obtain the following theorem as we have mentioned in Subsection \ref{subsect:CategoriesGKCW}.

\begin{theorem}\label{thm:FiniteLoewyGK}
	We have $\Loewy{\calC_{\lie{g}, K}} = \Loewy{\calC_{G_\RR}} < \infty$.
\end{theorem}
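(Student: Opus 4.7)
My plan is to reduce the theorem to Lemma \ref{lem:BoundLoewy} applied to the family of all irreducible $(\lie{g}, K)$-modules. The equality $\Loewy{\calC_{\lie{g}, K}} = \Loewy{\calC_{G_\RR}}$ I would dispatch first as a formal consequence of Fact \ref{fact:CasselmanWallach}: the functor $(\cdot)_K \colon \CWCat{G_\RR} \to \Mod_{fl}(\lie{g}, K)$ is an exact equivalence that commutes with tensoring by finite-dimensional representations (since any finite-dimensional continuous representation of $G_\RR$ is already $K$-finite), preserves semisimplicity, and matches the lattice of subquotients. Hence it restricts to an equivalence $\calC_{G_\RR} \simeq \calC_{\lie{g}, K}$, and since Loewy length is an invariant of the abelian structure the two numbers agree.

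For the finiteness assertion, I would take an arbitrary object $W \in \calC_{\lie{g}, K}$, which by definition is a subquotient of some $F \otimes V$ with $F \in \Mod_{fd}(\lie{g}, K)$ and $V$ semisimple in $\Mod_{fl}(\lie{g}, K)$. Loewy length is monotone under subquotients, so $\Loewy{W} \leq \Loewy{F \otimes V}$. Decomposing $V = \bigoplus_i V_i$ into irreducibles, we have $F \otimes V = \bigoplus_i F \otimes V_i$ and therefore $\Loewy{F \otimes V} = \max_i \Loewy{F \otimes V_i}$. It thus suffices to bound $\Loewy{F \otimes V'}$ by a single constant, independent of the choice of $F \in \Mod_{fd}(\lie{g}, K)$ and irreducible $(\lie{g}, K)$-module $V'$.

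To produce this uniform bound, I would apply Lemma \ref{lem:BoundLoewy} to the family of all irreducible $(\lie{g}, K)$-modules, viewed as $(\lie{g}, \lie{k})$-modules. This family consists of modules of length $1$, so the bounded-length hypothesis of Fact \ref{fact:UniformlyBoundedFamilyGK} is satisfied. The other hypothesis of that fact, namely that $K$ acts on the full flag variety of $\lie{g}$ with finitely many orbits, is the classical Matsuki finiteness theorem for the complexification of a maximal compact subgroup of a real reductive group. Fact \ref{fact:UniformlyBoundedFamilyGK} then gives uniform boundedness of the family, and Lemma \ref{lem:BoundLoewy} supplies a single constant $C$ such that $\Loewy{F \otimes V'} \leq C$ for every finite-dimensional $F$ and every irreducible $V'$, completing the proof.

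The argument is essentially a packaging step: all the heavy lifting is already absorbed into Lemma \ref{lem:BoundLoewy} (which controls how tensoring by a finite-dimensional module inflates Loewy length along a uniformly bounded family) and Fact \ref{fact:UniformlyBoundedFamilyGK} (which supplies the uniform boundedness from $K$-orbit finiteness). The only point requiring attention, though not a genuine obstacle, is the initial reduction from the definition of $\calC_{\lie{g}, K}$—a subquotient of $F \otimes V$ with $V$ merely semisimple of finite length, not irreducible—to the case where $V$ is irreducible, which is handled by the direct-sum decomposition above.
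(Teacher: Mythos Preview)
Your proposal is correct and follows exactly the approach the paper takes: the paper's own proof is the single sentence ``By Fact \ref{fact:UniformlyBoundedFamilyGK} and Lemma \ref{lem:BoundLoewy}, we obtain the following theorem,'' and you have unpacked precisely these two ingredients together with the Casselman--Wallach equivalence already noted in Subsection \ref{subsect:CategoriesGKCW}. One small imprecision: an irreducible $(\lie{g},K)$-module need not have $\lie{g}$-length~$1$ when $K$ is disconnected, but the length is still bounded by $|K/K_0|$, so the bounded-length hypothesis of Fact \ref{fact:UniformlyBoundedFamilyGK} is satisfied and your argument goes through unchanged.
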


Let $\lie{b}$ be a Borel subalgebra of $\lie{g}$.
We consider the BGG category $\calO_{\lie{b}}$ and the category $\calO_{\lie{g}}$ defined in Subsection \ref{subsect:CategoriesGKCW}.

\begin{proposition}\label{prop:FiniteLoewyO}
	We have $\Loewy{\calO_{\lie{b}}} = \Loewy{\calO_{\lie{g}}} < \infty$.
\end{proposition}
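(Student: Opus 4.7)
The proof will parallel that of Theorem \ref{thm:FiniteLoewyGK}, with irreducible antidominant Verma modules playing the role of irreducible Harish-Chandra modules. Since all Borel subalgebras of $\lie{g}$ are conjugate under the adjoint group, $\Loewy{\calO_{\lie{b}}}$ does not depend on $\lie{b}$. The inclusion $\calO_{\lie{b}} \subseteq \calO_{\lie{g}}$ provides one inequality, and in the other direction, an object of $\calO_{\lie{g}}$ is by definition a subquotient of a direct sum $V_1 \oplus \cdots \oplus V_r$ with $V_i \in \calO_{\lie{b}_i}$ for possibly different Borels $\lie{b}_i$. Thus a uniform bound on $\Loewy{V_i}$ for all $V_i \in \calO_{\lie{b}_i}$ and all $\lie{b}_i$ forces the same bound on $\calO_{\lie{g}}$, once one records the routine monotonicity of Loewy length under subobjects, quotients, and direct summands (proved by intersecting or imaging a socle filtration with semisimple subquotients, which remains such a filtration of the same length). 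It therefore suffices to prove $\Loewy{\calO_{\lie{b}}} < \infty$ for a single Borel $\lie{b}$.

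The bridge to Lemma \ref{lem:BoundLoewy} is supplied by the standard structure theory of the BGG category $\calO$: every $M \in \calO_{\lie{b}}$ is a quotient of its projective cover in $\calO_{\lie{b}}$, and each indecomposable projective is a direct summand of some tensor product $M(\lambda) \otimes F$ with $\lambda$ antidominant and $F \in \Mod_{fd}(\lie{g})$ (Humphreys, \emph{Representations of Semisimple Lie Algebras in the BGG Category $\calO$}). Antidominance ensures $M(\lambda)$ is irreducible, so every object of $\calO_{\lie{b}}$ is in particular a subquotient of $L \otimes F$ with $L$ an irreducible Verma module and $F \in \Mod_{fd}(\lie{g})$.

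To conclude, I would observe that the family of antidominant Verma modules $(M(\lambda))_\lambda$ consists of irreducible $\lie{g}$-modules and hence has length one, so Fact \ref{fact:UniformlyBoundedO} certifies that it is uniformly bounded. Lemma \ref{lem:BoundLoewy} then produces a single constant $C > 0$ with $\Loewy{M(\lambda) \otimes F} \leq C$ for every antidominant $\lambda$ and every $F \in \Mod_{fd}(\lie{g})$. Combined with the monotonicity of $\Loewy{\cdot}$ under subquotients and summands, this yields $\Loewy{M} \leq C$ for every $M \in \calO_{\lie{b}}$, and hence $\Loewy{\calO_{\lie{g}}} = \Loewy{\calO_{\lie{b}}} \leq C$. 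The only non-formal input is the realization of indecomposable projectives as summands of $M(\lambda) \otimes F$ with $M(\lambda)$ simple; this is the step that genuinely depends on the structure of $\calO$, while everything else is a direct transcription of the argument used for Theorem \ref{thm:FiniteLoewyGK}.
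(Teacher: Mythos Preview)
Your proof is correct and follows essentially the same approach as the paper. Both reduce to $\calO_{\lie{b}}$, invoke the uniform boundedness of a family of irreducible modules in $\calO$ (you cite Fact~\ref{fact:UniformlyBoundedO}, the paper cites Fact~\ref{fact:UniformlyBoundedFamilyGK}; either applies), apply Lemma~\ref{lem:BoundLoewy}, and then use Humphreys' realization of indecomposable projectives as summands of $M(\lambda)\otimes F$. The only cosmetic difference is that the paper inserts an intermediate step---first showing that all Verma modules lie in the relevant subcategory $\calC$, then passing to projectives as summands of (arbitrary) Verma $\otimes$ finite-dimensional---whereas you go directly from projectives to antidominant (hence simple) Verma modules tensored with finite-dimensional modules; your shortcut is legitimate since Humphreys' construction already produces projectives from antidominant Verma modules.
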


\begin{proof}
	By the definition of $\calO_{\lie{g}}$, it suffices to show $\Loewy{\calO_{\lie{b}}} < \infty$.
	Let $\calC$ be the full subcategory of $\calO_{\lie{b}}$ whose object is isomorphic to a subquotient of $F\otimes V$ for some $F \in \Mod_{fd}(\lie{g})$ and semisimple $V \in \calO_{\lie{b}}$.
	Then we have $\Loewy{\calC} < \infty$ by Fact \ref{fact:UniformlyBoundedFamilyGK} and Lemma \ref{lem:BoundLoewy}.
	We shall show that $\calC = \calO_{\lie{b}}$.

	Since any Verma module is isomorphic to a subquotient of a tensor product of an irreducible Verma module and an irreducible finite-dimensional $\lie{g}$-module, $\calC$ contains all Verma modules in $\calO_{\lie{b}}$.
	Any indecomposable projective object in $\calO_\lie{b}$ is isomorphic to a direct summand of a tensor product
	of a Verma module and an irreducible finite-dimensional $\lie{g}$-module by \cite[Theorems 3.8 and 3.9]{Hu08_category_o}.
	Hence any object in $\calO_\lie{b}$ is in $\calC$ since $\calO_{\lie{b}}$ has enough projectives.
\end{proof}

\section{Lower bound of multiplicities}\label{sect:LowerBound}

The main purpose in this section is to find a lower bound of the supremum of multiplicities in a general setting.
We will give concrete examples in Section \ref{sect:UniformlyBoundedTheorem}.

\subsection{\texorpdfstring{$(\lie{g}, K)$}{(g, K)}-module case}

We shall explain, using the case of $(\lie{g}, K)$-modules, how to obtain the lower bound.
Let $(\lie{g}, K)$ be a pair and $(\lie{g'}, K')$ a subpair of $(\lie{g}, K)$.
Suppose that $\lie{g}$ and $\lie{g'}$ are reductive and $K/K'$ is connected.
Here we denote by $\univ{g}^{G'}$ the algebra $\univ{g}^{\lie{g'}, K'}$ of all $(\lie{g'}, K')$-invariant elements in $\univ{g}$.
Let $V$ be an irreducible $(\lie{g}, K)$-module and set
\begin{align*}
	J := \bigcap_{V' \in \Irr(\calC_{\lie{g'},K'})}\Ann_{\univ{g}^{G'}}(\Hom_{\lie{g'}, K'}(V, V')).
\end{align*}
Then we have
\begin{align*}
	\PIdeg(\univ{g}^{G'}/J) \leq \SupDim(\Hom_{\lie{g'}, K'}(V, \calC_{\lie{g'},K'}))
\end{align*}
by Proposition \ref{prop:pidegUpperbound}.
We want to replace $J$ in the inequation with $\Ann_{\univ{g}^{G'}}(V)$.
Note that if $V$ is unitarizable, we can show $J = \Ann_{\univ{g}^{G'}}(V)$ (see Corollary \ref{cor:DirectIntegralAnnPIdeg}).
The author, however, has no proof and counterexample to the equality $J = \Ann_{\univ{g}^{G'}}(V)$ in general.

As we have seen in Theorem \ref{thm:FiniteLoewyGK}, we have $\Loewy{\calC_{\lie{g'}, K'}} < \infty$.
Hence $J^{\Loewy{\calC_{\lie{g'}, K'}}}$ annihilates $\Hom_{\lie{g'}, K'}(V, V')$ for any $V' \in \calC_{\lie{g'}, K'}$.
We have constructed $\calC_{\lie{g'}, K'}$ to be closed under tensoring with finite-dimensional modules.
This implies that
\begin{align*}
	W := \bigcap_{\substack{V' \in \calC_{\lie{g'}, K'}\\ f \in \Hom_{\lie{g'}, K'}(V, V')}} \Ker(f)
\end{align*}
is a $(\lie{g}, K')$-submodule of $V$.
Since $V$ is irreducible and $K/K'$ is connected, we have $W = 0$ or $V$.
Therefore, if $W = 0$, we have $J^{\Loewy{\calC_{\lie{g'}, K'}}}\subset \Ann_{\univ{g}^{G'}}(V)$ and hence
\begin{align*}
	\PIdeg(\univ{g}^{G'}/\Ann_{\univ{g}^{G'}}(V)) \leq \SupDim(\Hom_{\lie{g'}, K'}(V, \calC_{\lie{g'},K'}))
\end{align*}
by Proposition \ref{prop:pidegIdeal} \ref{item:pidegIdealNilpotent}.
The desired lower bound of $\SupDim(\Hom_{\lie{g'}, K'}(V, \calC_{\lie{g'},K'}))$ has been obtained.

The assumption $W = 0$ means that $V|_{\lie{g'}, K'}$ has at least one irreducible quotient.
We can prove this easily in the case of Harish-Chandra modules.
See Lemma \ref{lem:ExistenceQuotRest}.

The purpose in this section is to prove the above result in a general setting applicable to inductions and restrictions of $(\lie{g}, K)$-modules, Casselman--Wallach representations and objects in the BGG category $\calO$, and reducible cases.
There are some problems to generalize the above discussion.
For example, $W$ may not be $G_\RR$-stable if $V$ is a Casselman--Wallach representation.
We need to bridge the gaps.

\subsection{Categories with \texorpdfstring{$\Mod_{fd}(G)$}{Mod(G)}-action}\label{subsect:CategoryWithGaction}

Let $G_\RR$ be a reductive Lie group and $(\lie{g}, M)$ a pair.
Set $\calB:= \Mod(\lie{g}, M)$ or $\Mod(G_\RR)$.
We denote by $\calB_f$ the full subcategory of $\calB$ whose object is completely reducible and finite dimensional (i.e.\ $\calB_f = \Mod_{fd}(\lie{g}, M)$ or $\Mod_{fd}(G_\RR)$).

Let $\calC$ be an abelian category and $\calF^\calC \colon \calC\rightarrow \calB$ a faithful exact functor.
Suppose that there are a bifunctor $\otimes \colon \calB_f \times \calC \rightarrow \calC$ and a natural isomorphism $\calF^\calC(F \otimes V) \simeq F \otimes \calF^\calC(V)$ ($F \in \calB_f, V \in \calC$) of bifunctors.
We shall abuse notation and write $V$ for $\calF^\calC(V)$.
In many applications, $\calF^\calC$ is just an inclusion functor.
For example, we will apply results in this section to the following cases:
\begin{enumerate}
	\item $\calB = \Mod(\lie{g}, K)$, $\calC = \calC_{\lie{g}, K}$ ($K$ is the complexification of a maximal compact subgroup of $G_\RR$)
	\item $\calB = \Mod(G_\RR)$, $\calC = \calC_{G_\RR}$
	\item $\calB = \Mod(\lie{g})$, $\calC = \calO_{\lie{b}}$ (or $\calC = \calO_{\lie{g}}$).
\end{enumerate}

For simplicity, we say that a completely reducible locally finite $(\lie{g}, M)$-module (or $G_\RR$-representation) is a $G$-module
as if there is a reductive algebraic group $G$ such that $\Mod_{fl}(G) = \calB_f$.
Similarly, we use the terminology: $G$-invariant, $G$-submodule, $G$-linear and rational $G$-action.
In fact, if $\lie{g}$ is semisimple, there is a reductive algebraic group $G$ with the Lie algebra $\lie{g}$ such that $\Mod_{fl}(G)$ is canonically equivalent to $\calB_f$.

\begin{definition}\label{def:AB-module}
	Let $\calA$ be a $\CC$-algebra equipped with a rational $G$-action.
	We say that $\calA$ is a \define{$G$-algebra} if the multiplication map $\calA\otimes \calA \rightarrow \calA$ is $G$-linear.

	We say that an object $V$ in $\calB$ equipped with an $\calA$-action is an \define{$(\calA, \calB)$-module} if the multiplication map $F\otimes V \rightarrow V$
	is a morphism in $\calB$ for any finite-dimensional $G$-submodule $F$ of $\calA$.

	We denote by $\Mod(\calA, \calB)$ the category of $(\calA, \calB)$-modules.
	We define morphisms of $\Mod(\calA, \calB)$ in a natural way.	
\end{definition}

Let $\calA$ be a $G$-algebra.
It is easy to see that $\Mod(\calA, \calB)$ is abelian if $\calB = \Mod(\lie{g}, M)$.
If $\calB = \Mod(G_\RR)$, the category $\Mod(\calA, \calB)$ is, however, not abelian in general.
The following proposition is an easy consequence of the definition of $(\calA, \calB)$-modules.

\begin{proposition}
	For any $(\calA, \calB)$-module $V$, the annihilator $\Ann_{\calA}(V)$ is a $G$-submodule of $\calA$.
\end{proposition}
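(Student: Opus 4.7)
The plan is straightforward: use local finiteness of the $G$-action on $\calA$ to reduce to a finite-dimensional $G$-submodule, and then exploit the equivariance of the multiplication map to move the $G$-action across the annihilator relation.

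First I would pick any $a \in \Ann_\calA(V)$. Since the $G$-action on $\calA$ is rational, $a$ is contained in some finite-dimensional $G$-submodule $F \subset \calA$, i.e. $F \in \calB_f$. By the defining property of an $(\calA, \calB)$-module, the multiplication
\begin{align*}
\mu_F \colon F \otimes V \longrightarrow V
\end{align*}
is a morphism in $\calB$, where $F \otimes V$ carries its natural diagonal structure. I would then introduce the subspace $N_F := F \cap \Ann_\calA(V) \subset F$ and show it is $G$-stable.

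The key step is this $G$-stability of $N_F$, which follows from the equivariance of $\mu_F$. For $\calB = \Mod(\lie{g}, M)$, if $b \in N_F$ then for any $X \in \lie{g}$ and $v \in V$ the morphism property of $\mu_F$ applied to $X \cdot (b \otimes v) = Xb \otimes v + b \otimes Xv$ gives
\begin{align*}
(X \cdot b) \cdot v \;=\; X \cdot (b \cdot v) - b \cdot (X \cdot v) \;=\; 0 - 0 \;=\; 0,
\end{align*}
and the same sort of identity handles the $M$-action; for $\calB = \Mod(G_\RR)$, one uses $g \cdot (b \cdot v) = (g \cdot b) \cdot (g \cdot v)$ together with the bijectivity of $g \cdot (-) \colon V \to V$. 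Either way, $N_F$ is $G$-stable inside $F$. Since the $G$-orbit of $a$ lies in $N_F \subset \Ann_\calA(V)$ and $a$ was arbitrary, $\Ann_\calA(V)$ is $G$-stable, and hence a $G$-submodule of $\calA$ (being a $G$-stable subspace of the rational $G$-module $\calA$, it is automatically completely reducible and locally finite).

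There is essentially no real obstacle here; the proposition is a routine unpacking of the $(\calA, \calB)$-module axiom. The only mild point worth checking is that the $\calB$-structure on $F \otimes V$ implicit in Definition~\ref{def:AB-module} is the diagonal one, so that equivariance of $\mu_F$ yields the Leibniz-style identity above which is what actually propagates $G$-invariance of the annihilation condition.
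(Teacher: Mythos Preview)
Your proof is correct and is precisely the routine unpacking the paper has in mind: the paper does not give an explicit argument here, simply stating that the proposition ``is an easy consequence of the definition of $(\calA,\calB)$-modules,'' and your argument via the equivariance of $\mu_F$ on a finite-dimensional $G$-submodule $F$ is exactly that consequence spelled out.
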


We consider the branching problem of $(\calA, \calB)$-modules.
We want to give a lower bound of $\SupDim(\Hom_{\calB}(V, \calC))$ or $\SupDim(\Hom_{\calB}(\calC, V))$ for $V \in \Mod(\calA, \calB)$ by the PI degree.
To do so, we need the following notation, which represents how many irreducible quotients/submodules an $(\calA, \calB)$-module has in $\calC$.

\begin{definition}\label{def:RadSocInC}
	Let $V$ be an object in $\calB$.
	We set
	\begin{align*}
		\rad(V, \calC) &:= \bigcap_{\substack{W \in \calC \\ f \in \Hom_{\calB}(V, W)}} \Ker(f), \\
		\soc(V, \calC) &:= \sum_{\substack{W \in \calC \\ f \in \Hom_{\calB}(W, V)}} \Im(f).
	\end{align*}
	When we want to specify the functor $\calF^\calC$, we write $\rad(V, \calF^\calC(\calC)) = \rad(V,\calC)$ and $\soc(V, \calF^\calC(\calC)) = \soc(V, \calC)$.
\end{definition}

\begin{proposition}\label{prop:FundamentalRadSoc}
	Let $f\colon V\rightarrow W$ be a morphism of objects in $\calB$
	and $F$ a finite-dimensional $G$-module.
	Then we have
	\begin{align*}
		f(\rad(V,\calC)) &\subset \rad(W,\calC), \\
		f(\soc(V,\calC)) &\subset \soc(W,\calC), \\
		\rad(F\otimes V,\calC) &= F\otimes \rad(V,\calC), \\
		\soc(F\otimes V,\calC) &= F\otimes \soc(V,\calC).
	\end{align*}
\end{proposition}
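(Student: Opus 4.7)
The plan is to prove the four assertions in order, with the first two being routine bookkeeping and the last two relying on the tensor--hom adjunction $F\otimes(\cdot) \dashv F^*\otimes(\cdot)$ together with the hypothesis that $\calC$ is closed under tensoring with finite-dimensional $G$-modules (so both $F\otimes W'$ and $F^*\otimes W'$ remain in $\calC$ for any $W'\in\calC$).

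For part (1), I would take any morphism $\phi\colon W\to W'$ with $W'\in\calC$; then $\phi\circ f\colon V\to W'$ vanishes on $\rad(V,\calC)$ by definition, so $f(\rad(V,\calC))\subset \Ker(\phi)$, and intersecting over all such $\phi$ gives $f(\rad(V,\calC))\subset\rad(W,\calC)$. Dually, for (2), for any $\psi\colon W'\to V$ with $W'\in\calC$ we have $f\circ\psi\colon W'\to W$, so $f(\Im\psi)=\Im(f\circ\psi)\subset\soc(W,\calC)$; summing over $\psi$ yields $f(\soc(V,\calC))\subset\soc(W,\calC)$.

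For part (3), I would prove the two inclusions separately. For $F\otimes\rad(V,\calC)\subset\rad(F\otimes V,\calC)$: given $\phi\colon F\otimes V\to W'$ with $W'\in\calC$, the adjunction produces $\tilde\phi\colon V\to F^*\otimes W'$ with $F^*\otimes W'\in\calC$, so $\tilde\phi$ kills $\rad(V,\calC)$, and unwinding the adjunction shows $\phi$ kills $F\otimes\rad(V,\calC)$. For the reverse inclusion, given $\psi\colon V\to W'$ with $W'\in\calC$, the morphism $\id_F\otimes\psi\colon F\otimes V\to F\otimes W'$ lands in $\calC$ and hence vanishes on $\rad(F\otimes V,\calC)$; writing an element $y\in\rad(F\otimes V,\calC)$ as $\sum_i f_i\otimes v_i$ in a basis $\{f_i\}$ of $F$ and using linear independence, we get $\psi(v_i)=0$ for every such $\psi$, so each $v_i\in\rad(V,\calC)$. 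Part (4) is handled symmetrically: the inclusion $F\otimes\soc(V,\calC)\subset\soc(F\otimes V,\calC)$ comes from applying $\id_F\otimes(\cdot)$ to any $\psi\colon W'\to V$; for the reverse, a morphism $g\colon W'\to F\otimes V$ factors as $(\id_F\otimes\tilde g)\circ(\eta\otimes\id_{W'})$ where $\eta\colon\CC\to F\otimes F^*$ is the coevaluation and $\tilde g\colon F^*\otimes W'\to V$ is the adjoint, so $\Im(g)\subset F\otimes\Im(\tilde g)\subset F\otimes\soc(V,\calC)$.

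The only subtle point, and what I expect to be the main thing to verify rather than an obstacle, is that the adjunction $\Hom_\calB(F\otimes V, W)\cong\Hom_\calB(V, F^*\otimes W)$ is available in each instantiation of $\calB$ we care about (pairs $(\lie{g},M)$, Casselman--Wallach representations of $G_\RR$, and category $\calO$); in all these cases it is standard because $F$ is finite-dimensional and completely reducible. Also, since $F^*\in\calB_f$ whenever $F\in\calB_f$, the assumed bifunctor $\otimes\colon\calB_f\times\calC\to\calC$ automatically supplies the closure $F^*\otimes W'\in\calC$ needed in the $\rad$ and $\soc$ arguments, so no additional hypothesis is required.
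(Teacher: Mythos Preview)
Your proof is correct and follows essentially the same approach as the paper. The paper's proof is terse: it declares the first two inclusions ``clear by definition'' and says the last two equations ``follow from the natural isomorphisms'' $\Hom_{\calB}(F\otimes V, M)\simeq\Hom_{\calB}(V, F^*\otimes M)$ and $\Hom_{\calB}(M, F\otimes V)\simeq\Hom_{\calB}(F^*\otimes M, V)$, together with closure of $\calC$ under tensoring with finite-dimensional $G$-modules; your argument spells out exactly the details hidden in that sentence, including the basis/coevaluation bookkeeping for the reverse inclusions.
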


\begin{proof}
	The first two inclusions are clear by definition.
	Recall that $\calC$ is closed under tensoring with finite-dimensional $G$-modules.
	The last two equations follow from the natural isomorphisms
	\begin{align*}
		\Hom_{\calB}(F\otimes V, M) \simeq \Hom_{\calB}(V, F^* \otimes M), \\
		\Hom_{\calB}(M, F\otimes V) \simeq \Hom_{\calB}(F^* \otimes M, V).
	\end{align*}
	for $M \in \calC$.
\end{proof}

\begin{proposition}\label{prop:RadSoc}
	Let $V$ be an $(\calA, \calB)$-module.
	Then $\rad(V,\calC)$ and $\soc(V,\calC)$ are $(\calA,\calB)$-submodules of $V$.
\end{proposition}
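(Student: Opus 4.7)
The plan is to reduce the statement to Proposition \ref{prop:FundamentalRadSoc} by exploiting the local $G$-finiteness of $\calA$. The only ingredient beyond what has already been set up is the observation that for every $a \in \calA$ there exists a finite-dimensional $G$-submodule $F \subset \calA$ containing $a$; this is immediate from the rationality of the $G$-action on $\calA$. For any such $F$, the multiplication map $m_F \colon F \otimes V \to V$ is a morphism in $\calB$ by the very definition of an $(\calA, \calB)$-module, and this is the only structural fact needed to move between tensor products with $F$ and the $\calA$-action.

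For the radical, I would first note that $\rad(V, \calC)$ is a subobject of $V$ in $\calB$, since by construction it is the intersection of kernels of morphisms in $\calB$. To verify $\calA$-stability, I would apply Proposition \ref{prop:FundamentalRadSoc} to the morphism $m_F$: the first formula gives $m_F(\rad(F \otimes V, \calC)) \subset \rad(V, \calC)$, while the third gives $\rad(F \otimes V, \calC) = F \otimes \rad(V, \calC)$. Chaining these yields $F \cdot \rad(V, \calC) \subset \rad(V, \calC)$, and since every $a \in \calA$ lies in some such finite-dimensional $F$, one concludes $\calA \cdot \rad(V, \calC) \subset \rad(V, \calC)$.

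The treatment of $\soc(V, \calC)$ is entirely parallel: use the second and fourth formulas of Proposition \ref{prop:FundamentalRadSoc} in place of the first and third to obtain $F \cdot \soc(V, \calC) \subset \soc(V, \calC)$, and then extend to all of $\calA$ by local $G$-finiteness. There is no genuine obstacle: once the compatibility of $\rad$ and $\soc$ with tensoring by finite-dimensional $G$-modules is in hand, the $(\calA, \calB)$-submodule property is forced automatically, and the argument is really just a bookkeeping exercise in combining the four parts of Proposition \ref{prop:FundamentalRadSoc} with the rationality hypothesis on $\calA$.
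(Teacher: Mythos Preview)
Your proposal is correct and follows essentially the same approach as the paper: apply Proposition~\ref{prop:FundamentalRadSoc} to the multiplication map $F\otimes V\to V$ for a finite-dimensional $G$-submodule $F\subset\calA$, then use local $G$-finiteness of $\calA$ to conclude. You have simply spelled out in more detail which parts of Proposition~\ref{prop:FundamentalRadSoc} combine to give the inclusions $F\cdot\rad(V,\calC)\subset\rad(V,\calC)$ and $F\cdot\soc(V,\calC)\subset\soc(V,\calC)$.
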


\begin{proof}
	Let $F$ be a finite-dimensional $G$-submodule of $\calA$.
	Applying Proposition \ref{prop:FundamentalRadSoc} to the multiplication map $F\otimes V\rightarrow V$, we have $F\cdot \rad(V, \calC) \subset \rad(V, \calC)$ and $F\cdot \soc(V,\calC) \subset \soc(V,\calC)$.
	This shows the proposition.
\end{proof}

\begin{lemma}\label{lem:ABmodChainRadSoc}
	Let $V$ be an $(\calA, \calB)$-module.
	Set $I:=\Ann_{\calA}(V/\rad(V,\calC))$ and $J:=\Ann_\calA(\soc(V,\calC))$.
	Then we have
	\begin{align*}
		\overline{I^{k+1}V}&\subset \rad(\overline{I^k V}, \calC), \\
		\soc(V/V^{J^k},\calC) &\subset V^{J^{k+1}}/V^{J^k},
	\end{align*}
	where if $\calB = \Mod(\lie{g}, M)$, $V$ is regarded as a topological space with the discrete topology (i.e.\ $\overline{I^k V} = I^k V$).
\end{lemma}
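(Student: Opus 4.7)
The plan is to prove both inclusions by induction on $k$, with the key tool being Proposition \ref{prop:FundamentalRadSoc} applied to the multiplication map $\mu_F\colon F\otimes V\to V$ for a finite-dimensional $G$-submodule $F\subset \calA$, which is a $\calB$-morphism by the very definition of an $(\calA,\calB)$-module. The conceptual point is that multiplying by $I$ shifts an element one step deeper into the radical filtration of $V$ in $\calC$, while multiplying by $J$ pulls the socle of $V/V^{J^{k}}$ one step back along the filtration $0\subset V^{J}\subset V^{J^2}\subset\cdots$.

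For the first inclusion, the base case $k=0$ reads $\overline{IV}\subset \rad(V,\calC)$: this holds because $IV\subset \rad(V,\calC)$ by the definition of $I$, and $\rad(V,\calC)$ is closed (an intersection of kernels of continuous morphisms, trivially so in the $\Mod(\lie{g},M)$ case). For the inductive step, fix $g\colon \overline{I^k V}\to W$ with $W\in \calC$ and a finite-dimensional $G$-submodule $F\subset I$, and form the composite $g\circ \mu_F\colon F\otimes \overline{I^{k-1}V}\to \overline{I^k V}\to W$. Since $F\otimes W\in \calC$ implies $\rad(W,\calC)=0$, and $\rad(F\otimes M,\calC)=F\otimes \rad(M,\calC)$ by Proposition \ref{prop:FundamentalRadSoc}, the same proposition gives
\[
g\bigl(F\cdot \rad(\overline{I^{k-1}V},\calC)\bigr)=(g\circ \mu_F)\bigl(F\otimes \rad(\overline{I^{k-1}V},\calC)\bigr)\subset \rad(W,\calC)=0.
\]
Letting $F$ range over finite-dimensional submodules of $I$ yields $g(I\cdot \rad(\overline{I^{k-1}V},\calC))=0$, and the inductive hypothesis $\overline{I^k V}\subset \rad(\overline{I^{k-1}V},\calC)$ then forces $g(I\cdot \overline{I^k V})=0$, hence $g(I^{k+1}V)=0$ and, by continuity, $g(\overline{I^{k+1}V})=0$. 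Intersecting over all such $g$ completes the induction.

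For the second inclusion, I would run the dual induction. The base case $k=0$ is $\soc(V,\calC)\subset V^J$, immediate from $J\cdot \soc(V,\calC)=0$. For the inductive step, take $g\colon W\to V/V^{J^k}$ with $W\in \calC$ and a finite-dimensional $G$-submodule $F\subset J$. Because $F\cdot V^{J^k}\subset J\cdot V^{J^k}\subset V^{J^{k-1}}$, the map $\mu_F$ descends to a $\calB$-morphism $F\otimes V/V^{J^k}\to V/V^{J^{k-1}}$; composing with $\id_F\otimes g$ yields $h\colon F\otimes W\to V/V^{J^{k-1}}$. Since $F\otimes W\in \calC$, the image of $h$ lies in $\soc(V/V^{J^{k-1}},\calC)$, hence in $V^{J^k}/V^{J^{k-1}}$ by the inductive hypothesis. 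Unwinding the definition of $h$, this means $f\cdot g(w)=0$ in $V/V^{J^k}$ for every $f\in F$ and $w\in W$; letting $F$ exhaust $J$ gives $J\cdot \im(g)=0$ in $V/V^{J^k}$, so $\im(g)\subset (V/V^{J^k})^J=V^{J^{k+1}}/V^{J^k}$, and summing over $g$ proves the claim. The main subtlety throughout is bookkeeping the two adjacent filtration levels in each induction and verifying that all auxiliary multiplication maps are $\calB$-morphisms; both are handled uniformly by the definition of an $(\calA,\calB)$-module and the closure of $\calC$ under tensoring with finite-dimensional $G$-modules, with the topological closure in the $\Mod(G_\RR)$ case causing no additional trouble because $\rad(\cdot,\calC)$ is automatically closed.
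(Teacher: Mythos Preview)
Your proof is correct, and it rests on the same key tool as the paper---Proposition \ref{prop:FundamentalRadSoc} applied to multiplication morphisms $F\otimes V\to V$ for finite-dimensional $G$-submodules $F$ of the annihilator ideal. However, the paper's argument is direct rather than inductive: it multiplies by the full power $I^k$ (respectively $J^k$) in a single step. For the first inclusion, the paper simply writes
\[
I^{k+1}V = I^k(IV) \subset I^k\,\rad(V,\calC) \subset \rad(\overline{I^kV},\calC),
\]
the last containment coming from Proposition \ref{prop:FundamentalRadSoc} applied to the multiplication map $F\otimes V\to \overline{I^kV}$ for $F\subset I^k$; taking closure finishes. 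For the second inclusion, the paper notes that the multiplication $J^k\otimes V\to V$ factors through $J^k\otimes (V/V^{J^k})$, so the same proposition gives $J^k\cdot\soc(V/V^{J^k},\calC)\subset \soc(V,\calC)\subset V^J$ immediately, whence $J^{k+1}$ annihilates the preimage. Your inductive organization is not incorrect, but it unwinds into $k$ steps what the paper accomplishes in one application of Proposition \ref{prop:FundamentalRadSoc}.
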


\begin{proof}
	By Proposition \ref{prop:FundamentalRadSoc}, we have
	\begin{align*}
		I^{k+1} V \subset I^k \rad(V,\calC) \subset \rad(\overline{I^k V}, \calC).
	\end{align*}
	Hence we obtain $\overline{I^{k+1}V}\subset \rad(\overline{I^k V}, \calC)$.

	We write $W$ for the inverse image of $\soc(V/V^{J^k}, \calC)$ by the natural projection $V\rightarrow V/V^{J^k}$.
	Since the multiplication map $J^k\otimes V \rightarrow V$ factors through $J^k\otimes V/V^{J^k}$, we have
	\begin{align*}
		J^k\cdot W \subset \soc(V, \calC) \subset V^J
	\end{align*}
	by Proposition \ref{prop:FundamentalRadSoc}.
	This implies $J^{k+1} W = 0$ and hence $\soc(V/V^{J^k}, \calC) \subset V^{J^{k+1}}/V^{J^k}$.
\end{proof}

\subsection{Lower bound of multiplicities and polynomial identity}
\label{subsect:LowerBound}

Retain the notation in the previous subsection.
Let $\widetilde{\calC}$ be an abelian category and $\calF=\calF^{\widetilde{\calC}}\colon \widetilde{\calC}\rightarrow \Mod(\calA, \calB)$ a faithful exact functor.
Assume that any object in $\widetilde{\calC}$ has finite length
and assume $\Loewy{\calC} < \infty$.
See Definition \ref{def:Loewy}.

When we apply results in this subsection to the restriction case, $\calF$ is taken to be a forgetful functor.
For example, we will deal with the following settings:
\begin{enumerate}
	\item $(\calA, \calB, \calC, \widetilde{\calC}) = (\univ{\widetilde{g}}, \Mod(\lie{g}, K), \calC_{\lie{g}, K},  \Mod_{fl}(\lie{\widetilde{g}}, \widetilde{K}))$
	\item $(\calA, \calB, \calC, \widetilde{\calC}) = (\univ{\widetilde{g}}, \Mod(G_\RR), \calC_{G_\RR}, \CWCat{\widetilde{G}_\RR})$
	\item $(\calA, \calB, \calC, \widetilde{\calC}) = (\univ{\widetilde{g}}, \Mod(\lie{g}), \calO_{\lie{g}}, \calO_{\lie{\widetilde{g}}})$.
\end{enumerate}
Here $\widetilde{G}_\RR$ is a reductive Lie group containing $G_\RR$ as a reductive subgroup and we do the same for $(\lie{\widetilde{g}}, \widetilde{K})$.
For the induction case, $\calF$ is taken to be the induction functors $\Ind^{\widetilde{G}_\RR}_{G_\RR}(\cdot)$ and $\ind{\widetilde{g}}{g}(\cdot)$.
In the case, $\calA$ is an algebra of (algebraic) differential operators.
See Section \ref{sect:UniformlyBoundedTheorem} for the details.

In the results here, we assume complicated conditions.
Some of the conditions are trivial when we apply them to the branching problem and harmonic analysis.

\begin{lemma}\label{lem:AnnhilatorRad}
	Let $V$ be a non-zero object in $\widetilde{\calC}$ and set 
	\begin{align*}
		J:=\Ann_{\calA}(\calF(V)/\rad(\calF(V),\calC)).
	\end{align*}
	Assume that
	\begin{enumparen}
		\item\label{enum:AnnihilatorRadNonVanish} $\rad(\calF(W), \calC) \neq \calF(W)$ for any non-zero subobject $W$ of $V$
		\item\label{enum:AnnihilatorRadExist} for any $k\geq 0$, there exists a subobject $V^k \subset V$ such that $\calF(V^k) = \overline{J^k \calF(V)}$.
	\end{enumparen}
	Then we have $J^{\Loewy{V}} \subset \Ann_{\calA}(\calF(V))$.
\end{lemma}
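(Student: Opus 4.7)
The plan is to prove $V^{\Loewy V}=0$ as a subobject of $V$ in $\widetilde{\calC}$. Once this is established, condition (b) combined with the faithful exactness of $\calF$ yields $\overline{J^{\Loewy V}\calF(V)} = \calF(V^{\Loewy V}) = 0$, so $J^{\Loewy V}\subset \Ann_{\calA}(\calF(V))$ as required.

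The first step is to apply Lemma \ref{lem:ABmodChainRadSoc} (with $I = J$) to the $(\calA,\calB)$-module $\calF(V)$; its hypothesis $J = \Ann_{\calA}(\calF(V)/\rad(\calF(V),\calC))$ holds by the definition of $J$. Combining its conclusion with condition (b) gives
\[
\calF(V^{k+1}) \subset \rad(\calF(V^k),\calC) \qquad (k \geq 0).
\]
Condition (a) applied to the subobject $V^k \subset V$ gives $\rad(\calF(V^k),\calC) \subsetneq \calF(V^k)$ whenever $V^k \neq 0$, so $\calF(V^{k+1}) \subsetneq \calF(V^k)$, and hence $V^{k+1} \subsetneq V^k$ strictly in $\widetilde{\calC}$ by the faithful exactness of $\calF$.

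The heart of the proof is to sharpen this strict descent to the inclusion
\[
V^k \subset \rad_k(V)
\]
(iterated radical computed in $\widetilde{\calC}$) by induction on $k$; once established, Proposition \ref{prop:LoewyAndRad} immediately gives $V^{\Loewy V}\subset \rad_{\Loewy V}(V) = 0$. The base case $k = 0$ is immediate. For the inductive step, assuming $V^k \subset \rad_k(V)$, I would pick an arbitrary maximal $\widetilde{\calC}$-subobject $N$ of $\rad_k(V)$ and show $V^{k+1} \subset N$. Otherwise $V^{k+1} + N = \rad_k(V)$ and the simple quotient $S := \rad_k(V)/N$ appears as a simple $\widetilde{\calC}$-quotient of both $V^k$ and $V^{k+1}$; applying $\calF$ yields compatible surjections $\calF(V^{k+1}) \twoheadrightarrow \calF(S)$ and $\calF(V^k)\twoheadrightarrow \calF(S)$. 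A diagram chase combining the inclusion $\calF(V^{k+1})\subset \rad(\calF(V^k),\calC)$ with the non-zero $\calC$-morphisms from $\calF(V^k)$ supplied by condition (a) should then contradict the non-vanishing of the first surjection.

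The main obstacle is precisely this concluding diagram chase. The subtlety is that a simple object $S \in \widetilde{\calC}$ need not satisfy $\calF(S) \in \calC$, so the $\calC$-morphism $\calF(V^k)\to M$ coming from condition (a) does not automatically detect the simple quotient $S$, and a naive kernel comparison stalls. Bridging this gap---most likely by exploiting the assumption $\Loewy \calC < \infty$ together with condition (a) applied to a family of well-chosen subobjects of $V^k$---is the technical core of the lemma.
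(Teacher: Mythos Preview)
Your proposal has a genuine gap, which you yourself identify: the inductive step $V^k\subset\rad_k(V)\Rightarrow V^{k+1}\subset\rad_{k+1}(V)$ cannot be completed by the diagram chase you sketch. The obstruction you name is real: a simple object $S$ of $\widetilde{\calC}$ satisfies neither $\calF(S)\in\calC$ nor any relation between $\rad(\calF(V^k),\calC)$ and the $\widetilde{\calC}$-radical, so the inclusion $\calF(V^{k+1})\subset\rad(\calF(V^k),\calC)$ gives no purchase on the simple $\widetilde{\calC}$-quotient $S$. Neither $\Loewy{\calC}<\infty$ nor repeated applications of condition~(1) will bridge this, because those hypotheses control maps \emph{into} objects of $\calC$, not the $\widetilde{\calC}$-composition structure.

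The paper supplies the missing idea, and it is quite different from your inductive comparison. From the strict descent $V^{k+1}\subsetneq V^k$ (which you proved correctly) and the finite length of $V$, one gets $\bigcap_k V^k=0$. Hence every composition factor $W$ of $V$ is a subquotient of some $V^k/V^{k+1}$. But $J$ annihilates $\calF(V^k)/\calF(V^{k+1})$ (since $J\cdot\overline{J^k\calF(V)}\subset\overline{J^{k+1}\calF(V)}$), so by exactness of $\calF$, $J$ annihilates $\calF(W)$ for \emph{every} composition factor $W$. Now for any subobject $W\subset V$, the quotient $W/\rad(W)$ is semisimple in $\widetilde{\calC}$, so $J\calF(W)\subset\calF(\rad(W))$. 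Iterating this with $W=V,\rad(V),\rad_2(V),\ldots$ gives $J^{\Loewy{V}}\calF(V)\subset\calF(\rad_{\Loewy{V}}(V))=0$. The key point is that the intermediate claim ``$J$ kills every composition factor'' converts the problem from one about $\calC$-radicals (where you were stuck) to one about $\widetilde{\calC}$-radicals (where semisimplicity of $W/\rad(W)$ is available by definition).
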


\begin{proof}
	We claim that $J$ annihilates $\calF(W)$ for any composition factor $W$ of $V$.
	By the assumptions \ref{enum:AnnihilatorRadNonVanish} and \ref{enum:AnnihilatorRadExist} and Lemma \ref{lem:ABmodChainRadSoc}, we have
	\begin{align*}
		\calF(V^{k+1}) = \overline{J^{k+1} \calF(V)} \subset \rad(\overline{J^k \calF(V)}, \calC) \subsetneq \overline{J^k \calF(V)} = \calF(V^k)
	\end{align*}
	if $V^{k}\neq 0$.
	This implies $\bigcap_{k} V^k = 0$ since $V$ has finite length.
	Hence for any composition factor $W$ of $V$, we can take $k \in \NN$ such that $W$ is isomorphic to a subquotient of $V^{k}/V^{k+1}$.
	Since $\calF(V^k)/\calF(V^{k+1}) = \overline{J^k V}/\overline{J^{k+1}V}$ is annihilated by $J$, so does $\calF(W)$.
	
	By the claim, for any subobject $W$ of $V$, we have $J\calF(W) \subset \calF(\rad(W))$.
	This implies that $J^{\Loewy{V}} \subset \Ann_{\calA}(\calF(V))$.
\end{proof}

\begin{lemma}\label{lem:lowerboundQuot}
	Let $V$ be a non-zero object in $\widetilde{\calC}$ and set $I:=\Ann_\calA(\calF(V))$.
	Assume that $V$ satisfies the assumptions of Lemma \ref{lem:AnnhilatorRad}.
	Then we have
	\begin{align*}
		\PIdeg((\calA/I)^{G}) \leq \SupDim(\Hom_{\calB}(\calF(V), \calC)).
	\end{align*}
\end{lemma}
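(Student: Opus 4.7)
The plan is as follows. By Lemma~\ref{lem:AnnhilatorRad} we already know $J^{\Loewy{V}} \subset I$. Since $G$ acts completely reducibly on the $G$-algebra $\calA$, the $G$-invariants functor is exact, giving $(\calA/I)^G \cong \calA^G/(I \cap \calA^G)$ together with $(J \cap \calA^G)^{\Loewy{V}} \subset I \cap \calA^G$. Thus $(J \cap \calA^G)/(I \cap \calA^G)$ is a nilpotent ideal in $(\calA/I)^G$, and Proposition~\ref{prop:pidegIdeal}~\ref{item:pidegIdealNilpotent} reduces the claim to bounding $\PIdeg(\calA^G/(J \cap \calA^G))$.

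The key observation is that each $a \in \calA^G$ induces a morphism $\calF(V) \to \calF(V)$ in $\calB$: the one-dimensional subspace $\CC a \subset \calA$ is a trivial $G$-submodule, and by definition of an $(\calA, \calB)$-module the multiplication $\CC a \otimes \calF(V) \to \calF(V)$ is a $\calB$-morphism. Hence, for every $W \in \calC$, the space $\Hom_\calB(\calF(V), W)$ is a left $\calA^G$-module by pre-composition, and $a \in \calA^G$ acts trivially on it precisely when $f(a \calF(V)) = 0$ for every $f \in \Hom_\calB(\calF(V), W)$. Intersecting over all $W \in \calC$, the common annihilator is exactly $J \cap \calA^G$ by the very definition of $\rad(\calF(V), \calC)$. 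This produces an injection
\[
	\calA^G/(J \cap \calA^G) \hookrightarrow \prod_{W \in \calC} \End_\CC(\Hom_\calB(\calF(V), W)),
\]
whence Proposition~\ref{prop:pidegUpperbound} gives $\PIdeg(\calA^G/(J \cap \calA^G)) \le \sup_{W \in \calC} \dim_\CC \Hom_\calB(\calF(V), W)$.

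The main obstacle is that $\SupDim(\Hom_\calB(\calF(V), \calC))$ is a supremum only over $\Irr(\calC)$ rather than over all of $\calC$. I would close this gap using $\Loewy{\calC} < \infty$ together with the closure properties of $\calC$. For any $W \in \calC$ the Loewy quotients $\rad_k(W)/\rad_{k+1}(W)$ are semisimple and thus built from $\Irr(\calC)$; by iterating the semisimple-quotient step (a map into $W/\rad W$ factors through its irreducible summands, and a map whose image lies in $\rad W$ is classified by a map into the smaller-Loewy object $\rad W$), one shows that for any $a_1, \dots, a_{\Loewy{\calC}} \in \calA^G$ each annihilating $\Hom_\calB(\calF(V), W')$ for every $W' \in \Irr(\calC)$, the product $a_1 \cdots a_{\Loewy{\calC}}$ annihilates $\Hom_\calB(\calF(V), W)$ for every $W \in \calC$. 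Consequently the common annihilator over $\Irr(\calC)$ exceeds $J \cap \calA^G$ only by a nilpotent ideal; a final application of Proposition~\ref{prop:pidegIdeal}~\ref{item:pidegIdealNilpotent} together with Proposition~\ref{prop:pidegUpperbound} then yields the desired bound $\PIdeg((\calA/I)^G) \le \SupDim(\Hom_\calB(\calF(V), \calC))$.
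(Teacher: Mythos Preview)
Your proposal is correct and follows essentially the same strategy as the paper. The paper's proof is slightly more streamlined: it begins directly with
\[
J := \bigcap_{V' \in \Irr(\calC)} \Ann_{\calA^{G}}(\Hom_{\calB}(\calF(V), V')),
\]
observes (by the same Loewy-filtration argument you sketch) that $J^{\Loewy{\calC}}$ annihilates $\Hom_{\calB}(\calF(V), V')$ for every $V' \in \calC$, i.e.\ $J^{\Loewy{\calC}} \subset \Ann_{\calA}(\calF(V)/\rad(\calF(V),\calC))$, and then invokes Lemma~\ref{lem:AnnhilatorRad} to conclude $J^{\Loewy{\calC}\Loewy{V}} \subset I \cap \calA^G$; one application of Proposition~\ref{prop:pidegIdeal}~\ref{item:pidegIdealNilpotent} and one of Proposition~\ref{prop:pidegUpperbound} finish. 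In other words, the paper performs your two nilpotent reductions (from $I$ to your $J$, and from your $J$ to the intersection over irreducibles) in a single step, and it never passes through the bound $\sup_{W \in \calC}\dim_\CC\Hom_\calB(\calF(V),W)$ over all objects --- that detour in your middle paragraph is harmless but unnecessary, since the final paragraph recovers the irreducible bound anyway.
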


\begin{proof}
	We set
	\begin{align*}
		J := \bigcap_{V' \in \Irr(\calC)} \Ann_{\calA^{G}}(\Hom_{\calB}(\calF(V), V')).
	\end{align*}
	Then $J^{\Loewy{\calC}}$ annihilates $\Hom_{\calB}(\calF(V), V')$ for any $V' \in \calC$ since $\Loewy{V'} \leq \Loewy{\calC} < \infty$.
	In other words, we have
	\begin{align*}
		J^{\Loewy{\calC}} \subset \Ann_{\calA}(\calF(V)/\rad(\calF(V),\calC)).
	\end{align*}

	By Lemma \ref{lem:AnnhilatorRad}, we have $J^{\Loewy{\calC}\Loewy{V}} \subset I \cap \calA^G$.
	Hence we obtain
	\begin{align*}
		\PIdeg((\calA/I)^{G}) = \PIdeg(\calA^G/J) \leq \SupDim(\Hom_{\calB}(\calF(V), \calC))
	\end{align*}
	by Propositions \ref{prop:pidegUpperbound} and \ref{prop:pidegIdeal} \ref{item:pidegIdealNilpotent}.
	Note that $(\calA/I)^{G}$ is isomorphic to $\calA^{G}/I^{G}$ since $\calA$ is completely reducible as a $G$-module.
\end{proof}

\begin{lemma}\label{lem:AnnhilatorSoc}
	Let $V$ be a non-zero object in $\widetilde{\calC}$ and set $J:=\Ann_{\calA}(\soc(\calF(V),\calC))$.
	Assume that
	\begin{enumparen}
		\item\label{enum:AnnihilatorSocNonVanish} $\soc(\calF(W), \calC) \neq 0$ for any non-zero quotient $W$ of $V$
		\item\label{enum:AnnihilatorSocExist} for any $k\geq 0$, there exists a subobject $V_k \subset V$ such that $\calF(V_k) = \calF(V)^{J^k}$.
	\end{enumparen}
	Then we have $J^{\Loewy{V}} \subset \Ann_{\calA}(\calF(V))$.
\end{lemma}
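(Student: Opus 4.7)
The plan is to dualize the proof of Lemma \ref{lem:AnnhilatorRad} almost line-for-line, replacing the descending chain $\calF(V^k) = \overline{J^k \calF(V)}$ by the ascending chain $\calF(V_k) = \calF(V)^{J^k}$, and swapping the role of the radical filtration for the socle filtration. The ascending property $V_k \subset V_{k+1}$ is immediate: $J^{k+1} \subset J^k$ gives $\calF(V)^{J^k} \subset \calF(V)^{J^{k+1}}$, and faithful exactness of $\calF$ transports this containment to subobjects in $\widetilde{\calC}$.

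First I would show that this chain is strictly increasing until it reaches $V$. If $V_k \neq V$, then $V/V_k$ is a non-zero quotient of $V$, so by assumption \ref{enum:AnnihilatorSocNonVanish} we have $\soc(\calF(V/V_k), \calC) \neq 0$. Applying the second inclusion of Lemma \ref{lem:ABmodChainRadSoc} to $\calF(V)$ together with $\calF(V_k) = \calF(V)^{J^k}$ gives
\begin{align*}
\soc(\calF(V)/\calF(V)^{J^k}, \calC) \subset \calF(V)^{J^{k+1}}/\calF(V)^{J^k} = \calF(V_{k+1})/\calF(V_k),
\end{align*}
which forces $V_k \subsetneq V_{k+1}$. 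Since $V$ has finite length, $V_n = V$ for some $n$.

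Next, I would verify that $J$ annihilates $\calF(Y)$ for every composition factor $Y$ of $V$. Every such $Y$ is a subquotient of some $V_{k+1}/V_k$, and the calculation $J \cdot \calF(V)^{J^{k+1}} \subset \calF(V)^{J^k}$ (from $J \cdot J^{k+1} \subset J^k$ applied to an annihilator) shows that $J$ annihilates $\calF(V_{k+1}/V_k) = \calF(V)^{J^{k+1}}/\calF(V)^{J^k}$; hence $J \calF(Y) = 0$ by the exactness of $\calF$.

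Finally I would use the socle filtration $0 = \soc_0(V) \subset \soc_1(V) \subset \cdots \subset \soc_{\Loewy{V}}(V) = V$ to conclude. Each quotient $\soc_k(V)/\soc_{k-1}(V)$ is semisimple in $\widetilde{\calC}$, hence a (finite) direct sum of composition factors of $V$, so by additivity and exactness of $\calF$, $J$ annihilates $\calF(\soc_k(V))/\calF(\soc_{k-1}(V))$. Therefore $J \calF(\soc_k(V)) \subset \calF(\soc_{k-1}(V))$, and an easy induction on $k$ yields
\begin{align*}
J^{\Loewy{V}} \calF(V) = J^{\Loewy{V}} \calF(\soc_{\Loewy{V}}(V)) \subset \calF(\soc_0(V)) = 0,
\end{align*}
as desired. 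The only potential obstacle is verifying that the assumption \ref{enum:AnnihilatorSocExist} provides subobjects $V_k$ coming from $\widetilde{\calC}$ rather than just from $\calB$, so that the socle argument in Step 1 can be applied inside $\widetilde{\calC}$; this is exactly what \ref{enum:AnnihilatorSocExist} is designed to guarantee.
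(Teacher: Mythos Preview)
Your proposal is correct and follows essentially the same approach as the paper's proof: both show the chain $V_k$ is strictly ascending until it reaches $V$ via Lemma~\ref{lem:ABmodChainRadSoc}, deduce that $J$ annihilates every composition factor, and then conclude via the socle filtration. The paper compresses your final step into the single line ``for any subobject $W$ of $V$, we have $\calF(\soc(W))\subset \calF(W)^J$,'' which is just your explicit socle-filtration induction stated more tersely.
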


\begin{proof}
	We claim that $J$ annihilates $\calF(W)$ for any composition factor $W$ of $V$.
	By the assumptions \ref{enum:AnnihilatorSocNonVanish} and \ref{enum:AnnihilatorSocExist} and Lemma \ref{lem:ABmodChainRadSoc}, we have
	\begin{align*}
		0\neq \soc(\calF(V)/\calF(V_k), \calC) \subset \calF(V_{k+1})/\calF(V_k)
	\end{align*}
	if $V_{k}\neq V$.
	This implies $\bigcup_{k} V_k = V$ since $V$ has finite length.
	Hence for any composition factor $W$ of $V$, we can take $k \in \NN$ such that $W$ is isomorphic to a subquotient of $V_{k+1}/V_{k}$.
	Since $\calF(V_{k+1})/\calF(V_{k}) = \calF(V)^{J^{k+1}}/\calF(V)^{J^k}$ is annihilated by $J$, so does $\calF(W)$.
	
	By the claim, for any subobject $W$ of $V$, we have $\calF(\soc(W))\subset \calF(W)^J$.
	This implies that $J^{\Loewy{V}} \subset \Ann_{\calA}(\calF(V))$.
\end{proof}

\begin{lemma}\label{lem:lowerboundSub}
	Let $V$ be a non-zero object in $\widetilde{\calC}$ and set $I:=\Ann_{\calA}(\calF(V))$.
	Assume that $V$ satisfies the assumptions of Lemma \ref{lem:AnnhilatorSoc}.
	Then we have
	\begin{align*}
	\PIdeg((\calA/I)^{G}) \leq \SupDim(\Hom_{\calB}(\calC, \calF(V))).
	\end{align*}
\end{lemma}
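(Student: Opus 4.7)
The plan is to follow the structure of the proof of Lemma \ref{lem:lowerboundQuot} but dualised from quotients/radicals to subobjects/socles, substituting Lemma \ref{lem:AnnhilatorSoc} for Lemma \ref{lem:AnnhilatorRad}. Set
\[
J := \bigcap_{V' \in \Irr(\calC)} \Ann_{\calA^{G}}(\Hom_{\calB}(V', \calF(V))),
\]
where $\calA^{G}$ acts on each $\Hom$-space by post-composition; this is legitimate since each $a \in \calA^{G}$ induces a morphism $\calF(V) \to \calF(V)$ in $\calB$, as the one-dimensional $G$-submodule $\CC a$ of $\calA$ acts on $\calF(V)$ by a $\calB$-morphism.

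The key step is to propagate annihilation from $\Irr(\calC)$ to all of $\calC$ by dévissage along the socle filtration. For any short exact sequence $0 \to W' \to V' \to W'' \to 0$ in $\calC$, left-exactness of the contravariant functor $\Hom_{\calB}(-, \calF(V))$ produces an exact sequence of $\calA^{G}$-modules
\[
0 \to \Hom_{\calB}(W'', \calF(V)) \to \Hom_{\calB}(V', \calF(V)) \to \Hom_{\calB}(W', \calF(V)).
\]
Iterating this along the socle filtration and using induction on $\Loewy{V'}$ (the base case being the definition of $J$), one concludes that $J^{\Loewy{V'}}$ annihilates $\Hom_{\calB}(V', \calF(V))$ for every $V' \in \calC$, and in particular that $J^{\Loewy{\calC}}$ does. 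Unpacking the definition of $\soc(\calF(V), \calC)$ as a sum of images $\Im(f)$ of morphisms $f \colon V' \to \calF(V)$ with $V' \in \calC$, we obtain
\[
J^{\Loewy{\calC}} \subset \Ann_{\calA}(\soc(\calF(V), \calC)) \cap \calA^{G}.
\]

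Now apply Lemma \ref{lem:AnnhilatorSoc} (whose hypotheses hold for $V$ by assumption) with $J_{0} := \Ann_{\calA}(\soc(\calF(V), \calC))$ to obtain $J_{0}^{\Loewy{V}} \subset I$; combining with the previous display yields $J^{\Loewy{\calC}\Loewy{V}} \subset I \cap \calA^{G}$. The reverse inclusion $I \cap \calA^{G} \subset J$ is immediate, since any $a \in I$ kills $\calF(V)$ and hence $a \circ f = 0$ for every $f \colon V' \to \calF(V)$. Thus $J/(I \cap \calA^{G})$ is a nilpotent ideal of $\calA^{G}/(I \cap \calA^{G})$, so Proposition \ref{prop:pidegIdeal}\,\ref{item:pidegIdealNilpotent} gives $\PIdeg(\calA^{G}/(I \cap \calA^{G})) = \PIdeg(\calA^{G}/J)$. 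Finally, Proposition \ref{prop:pidegUpperbound} applied to the faithful family $\{\Hom_{\calB}(V', \calF(V))\}_{V' \in \Irr(\calC)}$ of $\calA^{G}/J$-modules, combined with the identification $(\calA/I)^{G} \simeq \calA^{G}/(I \cap \calA^{G})$ from exactness of $(-)^{G}$ on the completely reducible $G$-algebra $\calA$, yields the desired inequality. The one subtle point is checking the dévissage, which requires that post-composition by $a \in \calA^{G}$ commutes with the connecting maps induced by $\calB$-morphisms in the first variable; this is immediate from associativity of composition, so the whole argument proceeds in exact parallel with Lemma \ref{lem:lowerboundQuot}.
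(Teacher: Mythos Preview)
Your proof is correct and takes exactly the approach the paper has in mind: the paper's own proof reads in full ``We can show the lemma in a similar way as Lemma~\ref{lem:lowerboundQuot}. We omit the details.'' You have supplied precisely those omitted details --- the dualised definition of $J$, the d\'evissage along the socle filtration via left-exactness of $\Hom_{\calB}(-,\calF(V))$, the application of Lemma~\ref{lem:AnnhilatorSoc} in place of Lemma~\ref{lem:AnnhilatorRad}, and the concluding appeal to Propositions~\ref{prop:pidegIdeal}\,\ref{item:pidegIdealNilpotent} and~\ref{prop:pidegUpperbound} --- and even made explicit the easy inclusion $I\cap\calA^{G}\subset J$ that the paper's proof of Lemma~\ref{lem:lowerboundQuot} uses without comment.
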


\begin{proof}
	We can show the lemma in a similar way as Lemma \ref{lem:lowerboundQuot}.
	We omit the details.
\end{proof}

We consider tensor products over $\univ{g}$ instead of $\Hom_{\calB}(\cdot, \cdot)$.
Suppose $\calB=\Mod(\lie{g}, M)$.
For an object $V \in \calB$, let $\dual{V}$ denote the subspace of all $M$-finite vectors in $V^*$.
Then $\dual{(\cdot)}\circ \calF^\calC \colon \calC \rightarrow \calB$ is a contravariant faithful exact functor.

\begin{lemma}\label{lem:lowerboundTensor}
	Let $V$ be a non-zero object in $\widetilde{\calC}$ and set $I := \Ann_{\calA}(\calF(V))$.
	Assume that
	\begin{enumparen}
		\item for any non-zero subobject $W$ of $V$, there exists an object $W' \in \calC$ such that $(\calF(W)\otimes_{\univ{g}} W')^M \neq 0$
		\item for any $k \geq 0$, there exists a subobject $V^k \subset V$ such that $\calF(V^k) = J^k\calF(V)$, where $J = \Ann_\calA(\calF(V)/\rad(\calF(V), \dual{\calF^\calC(\calC)}))$.
	\end{enumparen}
	Then we have
	\begin{align*}
		\PIdeg((\calA/I)^{G}) \leq \SupDim((\calF(V)\otimes_{\univ{g}} \calC)^M).
	\end{align*}
\end{lemma}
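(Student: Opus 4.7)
The strategy parallels that of Lemma \ref{lem:lowerboundQuot}, with the covariant functor $\Hom_\calB(\calF(V), -)$ replaced by $(\calF(V) \otimes_{\univ{g}} -)^M$. I would first set
\[
J := \bigcap_{V' \in \Irr(\calC)} \Ann_{\calA^G}\bigl((\calF(V) \otimes_{\univ{g}} V')^M\bigr),
\]
where $\calA^G$ acts on $(\calF(V) \otimes_{\univ{g}} V')^M$ via its action on the first tensor factor, which is well defined because $G$-invariance of the action on $\calF(V)$ makes it both $\lie{g}$- and $M$-equivariant. Proposition \ref{prop:pidegUpperbound} then immediately yields $\PIdeg(\calA^G/J) \leq \SupDim((\calF(V) \otimes_{\univ{g}} \calC)^M)$, and the remaining task is to show $J^N \subset I \cap \calA^G$ for some explicit $N$.

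Next, I would extend the annihilator property from irreducibles to all of $\calC$. Since $\otimes_{\univ{g}}$ is right exact and $(-)^M$ is exact (because $M$ is reductive and the modules $\calF(V) \otimes_{\univ{g}} V'$ are locally $M$-finite), each short exact sequence $0 \to W \to V' \to V'/W \to 0$ in $\calC$ induces a right exact sequence after applying $(\calF(V) \otimes_{\univ{g}} -)^M$, with $\calA^G$-equivariant maps. A standard induction on Loewy length of $V'$, entirely analogous to the one used in Lemma \ref{lem:lowerboundQuot}, then gives that $J^{\Loewy{\calC}}$ annihilates $(\calF(V) \otimes_{\univ{g}} V')^M$ for every $V' \in \calC$.

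The key new step is to transfer this annihilation back to an annihilator condition on $\calF(V)$. For $V' \in \calC$, the Hom-tensor adjunction provides a natural $\calA^G$-equivariant injection
\[
\Hom_{\lie{g}, M}(\calF(V), \dual{V'}) \hookrightarrow \bigl((\calF(V) \otimes_{\univ{g}} V')^M\bigr)^*,
\]
sending $f$ to the functional $[v \otimes w] \mapsto f(v)(w)$; this factors the pairing through $H_0(\lie{g}, M; \calF(V)\otimes V') \simeq (\calF(V) \otimes_{\univ{g}} V')^M$. For $a \in J^{\Loewy{\calC}}$ and any $f$, one then obtains $f(av)(w) = 0$ for all $v, w$, so $av \in \Ker(f)$. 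Intersecting over all such $f$ and $V'$ yields $J^{\Loewy{\calC}} \calF(V) \subset \rad(\calF(V), \dual{\calF^\calC(\calC)})$, i.e.\ $J^{\Loewy{\calC}} \subset \Ann_\calA(\calF(V)/\rad(\calF(V), \dual{\calF^\calC(\calC)}))$.

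Finally, I would invoke Lemma \ref{lem:AnnhilatorRad} applied to the class $\dual{\calF^\calC(\calC)}$ of objects of $\calB$: condition (i) there becomes, via the same adjunction, exactly assumption (i) of the present lemma, while (ii) is assumption (ii) verbatim. The lemma yields $\Ann_\calA(\calF(V)/\rad(\calF(V), \dual{\calF^\calC(\calC)}))^{\Loewy{V}} \subset I$, and therefore $J^{\Loewy{\calC}\Loewy{V}} \subset I \cap \calA^G$. The conclusion then follows from Proposition \ref{prop:pidegIdeal}\ref{item:pidegIdealNilpotent} together with the identification $(\calA/I)^G = \calA^G/(I \cap \calA^G)$, exactly as at the end of Lemma \ref{lem:lowerboundQuot}. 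The main delicate point is Step 3: verifying that the Hom-tensor adjunction is genuinely $\calA^G$-equivariant and that $(-)^M$-exactness holds in the setting at hand; the rest is purely formal repetition of the pattern established by Lemmas \ref{lem:lowerboundQuot} and \ref{lem:lowerboundSub}.
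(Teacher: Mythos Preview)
Your proof is correct and follows essentially the same idea as the paper: the Hom-tensor duality $((\calF(V)\otimes_{\univ{g}} V')^M)^* \simeq \Hom_{\calB}(\calF(V), \dual{V'})$ reduces the tensor statement to the $\Hom$ statement of Lemma \ref{lem:lowerboundQuot}. The paper's proof simply records this isomorphism (yielding $\SupDim((\calF(V)\otimes_{\univ{g}} \calC)^M) = \SupDim(\Hom_\calB(\calF(V), \dual{\calC}))$) and then invokes Lemma \ref{lem:lowerboundQuot} as a black box, whereas you unroll that lemma's argument by hand; both routes are valid, the paper's is just shorter.
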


\begin{proof}
	Since $((\calF(V)\otimes_{\univ{g}} V')^M)^*$ is naturally isomorphic to $\Hom_{\calB}(\calF(V), \dual{V'})$ for any $V' \in \calC$, we have
	\begin{align*}
		\SupDim((\calF(V)\otimes_{\univ{g}} \calC)^M) = \SupDim(\Hom_\calB(\calF(V), \dual{\calC})).
	\end{align*}
	Hence the assertion follows from Lemma \ref{lem:lowerboundQuot}.
\end{proof}

\section{Induced representations}

To estimate the supremum of multiplicities in an induced representation,
we need some fundamental results about $\rring{G/G'}\otimes \univ{g}$-modules.

\subsection{\texorpdfstring{$\calO(G/G')\otimes \univ{g}$}{O(G/G')*U(g)}-module}

We want to consider multiplicities in $\Ind^{G_\RR}_{G'_\RR}(V')$, $\pro{\lie{g}, M}{\lie{g'}, M}(V')$ and $\ind{g}{g'}(V')$.
We shall reduce this problem to the setting in Section \ref{sect:LowerBound}.
If $G_\RR/G'_\RR$ is a real form of a homogeneous space $G/G'$ of algebraic groups, the induced representations $\Ind^{G_\RR}_{G'_\RR}(V')$, $\pro{\lie{g}, M}{\lie{g'}, M}(V')$ and $\ind{g}{g'}(V')$ are $\rring{G/G'}\otimes \univ{g}$-modules.
Here $\rring{G/G'}$ is the ring of regular functions on $G/G'$.
We shall describe this module structure.

Let $G$ be an affine algebraic group over $\CC$
and $G'$ a closed subgroup of $G$.
We denote by $\Delta\colon \univ{g}\rightarrow \univ{g}\otimes \univ{g}$ the comultiplication of $\univ{g}$, i.e.\ it is an algebra homomorphism such that $\Delta(X)=X\otimes 1 + 1\otimes X$ for any $X \in \lie{g}$.
Take an open subset $U$ of $G/G'$.
We consider $\rring{U}\otimes \univ{g}$ as a $\CC$-algebra via the multiplication
\begin{align}
(f\otimes X)\cdot (h \otimes Y) = \sum_i f L(a_i) h \otimes b_i Y
\label{eqn:DefOU}
\end{align}
for $f, h \in \rring{U}$ and $ X, Y \in \univ{g}$,
where $\Delta(X)=\sum_i a_i \otimes b_i$ as an element of $\univ{g}\otimes \univ{g}$ and $L(\cdot)$ is the differential of the left translation of $G$ on $G/G'$.

Let $V'$ be a $\lie{g'}$-module.
Suppose that $U$ is an open neighborhood of $eG'\in G/G'$.
We define a right $\rring{U}\otimes \univ{g}$-module structure
on $\ind{g}{g'}(V') = V'\otimes_{\univ{g'}}\univ{g}$ via
\begin{align}
v\otimes X \cdot (f\otimes Y) = \sum_i L(a_i) f(e) v\otimes b_i Y \label{eqn:defActionOnDelta}
\end{align}
for $v \in V', X, Y \in \univ{g}$ and $f \in \rring{U}$ with $\Delta(X)=\sum_i a_i\otimes b_i$.

Assume that $U$ is affine.
It is easy to see that $\ind{g}{g'}(V')$ is supported on $\set{eG'}$ as an $\rring{U}$-module.
Hence $\ind{g}{g'}(\cdot)$ is a functor from $\Mod(\lie{g'})$ to the category of right $\rring{U}\otimes \univ{g}$-modules supported on $\set{eG'}$.
Since $\univ{g}$ is a free left $\univ{g'}$-module, the functor $\ind{g}{g'}(\cdot)$
is faithful and exact.
The following proposition is a variation of the Kashiwara equivalence of $\ntDsheaf$-modules \cite[Theorem 1.6.1]{HTT08}.

\begin{proposition}\label{prop:irreducibleDistribution}
	Let $I$ be the maximal ideal of $\rring{U}$ at $eG' \in U$.
	The functor $\ind{g}{g'}(\cdot)$ gives an equivalence of categories
	and its quasi-inverse is given by the functor $(\cdot)^I$.
	In particular, if $V'$ is an irreducible $\lie{g'}$-module, then $\ind{g}{g'}(V')$
	is an irreducible right $\rring{U}\otimes\univ{g}$-module.
\end{proposition}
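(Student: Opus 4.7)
The proof mirrors the classical Kashiwara equivalence for $\ntDsheaf$-modules. The natural unit $\eta_{V'}\colon V'\to (\ind{g}{g'}(V'))^I$, $v\mapsto v\otimes 1$, and counit $\epsilon_M\colon \ind{g}{g'}(M^I)\to M$, $m\otimes X\mapsto m\cdot(1\otimes X)$, are well-defined by direct computation using $\Delta(1) = 1\otimes 1$ and $f(eG') = 0$ for $f\in I$; the plan is to show both are isomorphisms.

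For the unit, equip $\ind{g}{g'}(V') = V'\otimes_{\univ{g'}}\univ{g}$ with the PBW filtration $F_k := V'\otimes_{\univ{g'}} F_k\univ{g}$, whose associated graded is $V'\otimes S(\lie{g}/\lie{g'})$. Fix a vector-space complement $\lie{m}$ of $\lie{g'}$ in $\lie{g}$, and use the canonical isomorphism $\lie{g}/\lie{g'}\cong T_{eG'}(G/G')$ given by $X\mapsto L(X)|_{eG'}$ to identify $I/I^2$ with $\lie{m}^*$. Expanding $\Delta$ on a PBW monomial $X_{i_1}\cdots X_{i_k}$ with $X_{i_j}\in\lie{m}$ and using $f(eG')=0$, one checks that the right action of $f\in I$ on the associated graded is $\id\otimes \partial_{df|_{eG'}}$, where $\partial_\xi$ is the derivation of $S(\lie{m})$ extending $\xi\in\lie{m}^*$. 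Since the only elements of $S(\lie{m})$ killed by every $\partial_\xi$ are the constants, any $I$-invariant element of $\ind{g}{g'}(V')$ must lie in $F_0 = V'\otimes 1$, proving that $\eta_{V'}$ is an isomorphism.

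For the counit, first note an elementary lemma: any non-zero right $\rring{U}\otimes\univ{g}$-module $N$ supported on $\{eG'\}$ satisfies $N^I\neq 0$, since the smallest non-zero layer of the $I$-adic filtration $\Ann_N(I^n)$ is itself $I$-annihilated. Applying this to $\Ker(\epsilon_M)$: it is supported at $\{eG'\}$ as a submodule of $\ind{g}{g'}(M^I)$, and the unit isomorphism gives $\Ker(\epsilon_M)^I\subset (\ind{g}{g'}(M^I))^I = M^I\otimes 1$; since $\epsilon_M(m\otimes 1)=m$, we obtain $\Ker(\epsilon_M)^I = 0$ and hence $\Ker(\epsilon_M) = 0$. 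For surjectivity, I plan to show by induction on $n$ that $M_n := \Ann_M(I^n)$ is contained in $\im(\epsilon_M)$: a direct computation with \eqref{eqn:DefOU} gives $L(X)I^k\subset I^{k-1}$ for $X\in\lie{g}$ and $L(X)I^k\subset I^k$ for $X\in\lie{g'}$, so that $M^I\cdot (1\otimes F_k\univ{g})\subset M_{k+1}$, and the induced symbol map $M^I\otimes S^k(\lie{m})\to M_{k+1}/M_k$ can be shown surjective by a graded argument dual to the unit computation. Once $\epsilon_M$ is shown to be an isomorphism, the final assertion that $\ind{g}{g'}(V')$ is irreducible whenever $V'$ is follows immediately from the equivalence of categories.

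The main obstacle is establishing surjectivity of the symbol map $M^I\otimes S^k(\lie{m})\twoheadrightarrow M_{k+1}/M_k$ in the counit step. Heuristically, near $eG'$ the elements of $\lie{m}$ act on $M$ like partial derivatives on a formal neighborhood, and the statement says that a module supported at the origin is generated as an ``$S(\lie{m})$-module'' by its $I$-invariants. Making this rigorous requires careful bookkeeping of the interacting PBW and $I$-adic filtrations on the smash product $\rring{U}\otimes\univ{g}$; alternatively, one can invoke the classical Kashiwara equivalence for algebraic $\ntDsheaf$-modules on $U$, transported along the natural algebra map $\rring{U}\otimes\univ{g}\to \ntDsheaf_U$, taking care of its non-trivial kernel coming from the relations $L(X)=0$ for $X$ in the isotropy direction.
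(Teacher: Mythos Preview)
Your unit argument via the PBW filtration is correct and more explicit than the paper's; the paper instead observes that, as an $\rring{U}$-module, $\ind{g}{g'}(V')\simeq \ind{g}{g'}(\CC)^{\oplus\dim V'}$, identifies $\ind{g}{g'}(\CC)^*$ with the $I$-adic completion of $\rring{U}$, and then computes
\[
\Ext^i_{\rring{U}}(\rring{U}/I,\ind{g}{g'}(\CC))^*\;\simeq\;\Tor_i^{\rring{U}}(\rring{U}/I,\widehat{\rring{U}}_I)\;\simeq\;\begin{cases}\CC & i=0\\ 0 & i\neq 0.\end{cases}
\]
The $i=0$ case gives $\ind{g}{g'}(\CC)^I\simeq\CC$ and hence $\eta_{V'}$ is an isomorphism. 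Your injectivity of $\epsilon_M$ is essentially the same as the paper's.

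The gap you flag in surjectivity is real for your approach, and the paper sidesteps it entirely. The point is that the homological computation above yields not only the unit isomorphism but also $\Ext^1_{\rring{U}}(\rring{U}/I,\ind{g}{g'}(M^I))=0$. Applying $\Hom_{\rring{U}}(\rring{U}/I,-)$ to
\[
0\to \ind{g}{g'}(M^I)\xrightarrow{\epsilon_M} M\to M/\Im(\epsilon_M)\to 0
\]
and using that $\epsilon_M^I$ is an isomorphism (by the unit), this $\Ext^1$ vanishing forces $(M/\Im(\epsilon_M))^I=0$. Since the cokernel is supported at $\{eG'\}$, your own lemma gives $M/\Im(\epsilon_M)=0$. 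So the same duality-with-completion trick that proves the unit also closes the surjectivity gap, with no need for the graded induction or for transporting the statement through $\ntDsheaf_U$.
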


\begin{proof}
Let $V'$ be a $\lie{g'}$-module.
We shall show $\ind{g}{g'}(V')^I \simeq V'$.
As an $\rring{U}$-module, $\ind{g}{g'}(V')$ is isomorphic to $\ind{g}{g'}(\CC)^{\oplus \dim_{\CC}(V')}$.
Since $U$ is affine, $\ind{g}{g'}(\CC)^*$ is isomorphic to the $I$-adic completion of $\rring{U}$.
Hence, taking a resolution of $\rring{U}/I$ by free modules with finite rank, we have
\begin{align*}
	\Ext^i_{\rring{U}}(\rring{U}/I, \ind{g}{g'}(\CC))^* \simeq \Tor_i^{\rring{U}}(\rring{U}/I, \ind{g}{g'}(\CC)^*) \simeq \begin{cases}
		\CC & (i = 0)\\
		0 & (i\neq 0)
	\end{cases}
\end{align*}
for any $i \in \NN$.
In particular, we have $\ind{g}{g'}(\CC)^I \simeq \CC$.
This implies that the natural homomorphism $V'\rightarrow \ind{g}{g'}(V')^I$
is an isomorphism.

Let $V$ be a right $\rring{U}\otimes \univ{g}$-module supported on $\set{eG'}$.
We shall show that the natural homomorphism $\varepsilon\colon \ind{g}{g'}(V^I)\rightarrow V$ is an isomorphism.
Consider the exact sequence
\begin{align*}
	0\rightarrow \Ker(\varepsilon) \rightarrow \ind{g}{g'}(V^I)\xrightarrow{\varepsilon} V \rightarrow V/\Im(\varepsilon) \rightarrow 0.
\end{align*}
Since $(\cdot)^I$ is left exact and $\ind{g}{g'}(V^I)^I$ is isomorphic to $V^I$,
we have $\Ker(\varepsilon)^I = 0$ and hence $\Ker(\varepsilon) = 0$.
By $\Ext^1_{\rring{U}}(\rring{U}/I, \ind{g}{g'}(V^I)) = 0$, we have
\begin{align*}
	(V/\Im(\varepsilon))^I = \Hom_{\rring{U}}(\rring{U}/I, V/\Im(\varepsilon)) = 0
\end{align*}
and hence $V/\Im(\varepsilon) = 0$.
This shows that $\varepsilon$ is an isomorphism.
\end{proof}

To use Lemmas \ref{lem:AnnhilatorRad} and \ref{lem:AnnhilatorSoc}, we need to compute the annihilator of an $\rring{G/G'}\otimes \univ{g}$-module.
We provide $\rring{G/G'}\otimes \univ{g}$ with a rational $G$-action via the tensor product $L\otimes \Ad$.

\begin{lemma}\label{lem:AnnhilatorOUmod}
	Let $V'$ be a $\lie{g'}$-module, and set $I:={}^t \Ann_{\univ{g'}}(V')$ and $J:=\Ann_{\rring{G/G'}\otimes \univ{g}}(\ind{g}{g'}(V'))$.
	Then we have
	\begin{align*}
			J = \set{f \in \rring{G/G'}\otimes\univ{g}: f(gG') \in \Ad(g)I\univ{g}\text{ ($\forall g\in G_0$)}},
	\end{align*}
	where an element of $\rring{G/G'}\otimes\univ{g}$ is regarded as a $\univ{g}$-valued function on $G/G'$.
\end{lemma}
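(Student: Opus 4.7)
The plan is to establish both inclusions simultaneously for $J$ equaling the right-hand side, which I denote by $\tilde J$. The geometric picture is that while $\ind{g}{g'}(V')$ is supported at $\{eG'\}$, the condition defining $\tilde J$ is pointwise on all of $G_0/G'$; the equivalence will be a Taylor-series-versus-global principle applied to the regular $\univ{g}$-valued function
\begin{equation*}
\phi\colon G_0 \to \univ{g},\qquad \phi(g) := \Ad(g)^{-1} f(gG').
\end{equation*}

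First I would reduce membership in $J$ to a jet-wise condition at $eG'$. Since $\ind{g}{g'}(V') = V'\otimes_{\univ{g'}}\univ{g}$ is generated as a right $\univ{g}$-module by $V'\otimes 1$, and $(v\otimes 1)\cdot(h\otimes Y) = h(eG')\,v\otimes Y$ by the defining formula of the action, one has $f \in J$ iff $(v\otimes 1)\cdot(1\otimes X)\cdot f = 0$ for all $X\in \univ{g}$ and $v\in V'$. Expanding via $(1\otimes X)(h\otimes Y)=\sum_i L(a_i)h\otimes b_iY$ with $\Delta(X)=\sum_i a_i\otimes b_i$, this becomes $v\otimes \Psi_X(f)=0$, where $\Psi_X(f):=\sum_i (L(a_i)h)(eG')\,b_iY$ (extended linearly to $f=\sum_j h_j\otimes Y_j$). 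Using the PBW decomposition $\univ{g}\simeq \univ{g'}\otimes S$ for a vector-space complement $S$ of $\univ{g'}$, together with ${}^tI=\Ann_{\univ{g'}}(V')$, I observe that $v\otimes W=0$ for every $v\in V'$ iff $W\in I\univ{g}$; hence $f\in J$ iff $\Psi_X(f)\in I\univ{g}$ for every $X\in \univ{g}$.

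Next I would identify $\Psi_X(f)$ with jets of $\phi$ at $e$. A short calculation using $\tfrac{d}{dt}\big|_0 \Ad(\exp(tX))^{-1}=-\ad(X)$ and $\tfrac{d}{dt}\big|_0 f(\exp(tX)G')=-(L(X)f)(eG')$ yields, for $X\in\lie{g}$,
\begin{equation*}
\Psi_X(f) = -(d\phi)_e(X) + f(eG')\cdot X.
\end{equation*}
Since $I\univ{g}$ is right-$\univ{g}$-stable, once $\phi(e)=f(eG')\in I\univ{g}$ the second term is automatically in $I\univ{g}$, so $\Psi_X(f)\in I\univ{g}$ is equivalent to $(d\phi)_e(X)\in I\univ{g}$. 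I would then extend this by induction on the degree of PBW monomials, exploiting $\Delta(X_1X_2)=\Delta(X_1)\Delta(X_2)$ together with the identity $L(X)L(g)=L(g)L(\Ad(g^{-1})X)$ for right-invariant vector fields, to match $\Psi_{X_1\cdots X_k}(f)$ with the higher mixed $(X_1,\dots,X_k)$-derivative of $\phi$ at $e$ modulo $I\univ{g}$. This bookkeeping with the coproduct is the main technical obstacle I anticipate.

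Finally, since $f=\sum_{i=1}^n h_i\otimes Y_i$ is a finite sum, $\phi$ takes values in a finite-dimensional $\Ad(G)$-stable subspace $V\subset \univ{g}$, so the condition $f\in \tilde J$ is the closed linear condition $\phi(G_0)\subset V\cap I\univ{g}$. Because $G_0$ is a connected smooth algebraic variety, a regular map $G_0\to V$ has image in a linear subspace iff every jet at $e$ lies in that subspace. Chaining the equivalences yields $f\in J \iff \Psi_X(f)\in I\univ{g}$ for all $X\in\univ{g}$ $\iff$ every jet of $\phi$ at $e$ lies in $I\univ{g}$ $\iff \phi(G_0)\subset I\univ{g} \iff f\in \tilde J$, proving the lemma.
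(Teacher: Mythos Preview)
Your approach is correct in principle but takes a substantially more laborious route than the paper's proof, and the step you flag as ``the main technical obstacle'' is exactly what the paper's argument avoids.

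The paper proceeds as follows. First it computes the annihilator $J' := \Ann_{\rring{G/G'}\otimes\univ{g}}(V'\otimes 1)$ of the generating subspace: from $(v\otimes 1)\cdot f = v\otimes f(eG')$ and the freeness of $\univ{g}$ over $\univ{g'}$ one gets immediately $J' = \{f : f(eG')\in I\univ{g}\}$. Then it observes that $J = \{f : (1\otimes\univ{g})f \subset J'\}$, which (since $J'$ is already stable under right multiplication by $1\otimes\univ{g}$) is the largest $(1\otimes\univ{g})$-\emph{bi}stable subspace of $J'$, hence the largest $G_0$-stable subspace under the action $L\otimes\Ad$. Finally, $g\cdot f\in J'$ reads $(g\cdot f)(eG') = \Ad(g)f(g^{-1}G')\in I\univ{g}$, i.e.\ $f(g^{-1}G')\in\Ad(g^{-1})I\,\univ{g}$; ranging over $g\in G_0$ gives the claimed description of $J$.

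Your reduction to ``$\Psi_X(f)\in I\univ{g}$ for all $X\in\univ{g}$'' is exactly the condition $(1\otimes\univ{g})f\subset J'$, and your target ``$\phi(G_0)\subset I\univ{g}$'' is exactly the $G_0$-stability condition. So you are proving the same equivalence, but by explicitly matching $\Psi_X$ with jets of $\phi$ and then invoking a Krull--intersection/Taylor principle on the irreducible variety $G_0$. The paper instead integrates the infinitesimal condition to the group condition in one stroke: left-$\univ{g}$-stability of a subspace that is already right-$\univ{g}$-stable is the same as $\ad(1\otimes\lie{g})$-stability, which is $G_0$-stability. This replaces your entire inductive jet calculation and the appeal to jets-determine-regular-functions by a two-line observation. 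Your route works (the higher-order identification $\Psi_{X_1\cdots X_k}(f)\equiv (\text{derivative of }\phi) \pmod{I\univ{g}}$ can be carried out using $\Psi_{XY}(f)=\Psi_X((1\otimes Y)f)$ and the degree-one formula you already have), but it is worth knowing that the bookkeeping with the coproduct is unnecessary once one recognizes the $G_0$-action.
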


\begin{proof}
	We consider $J':=\Ann_{\rring{G/G'}\otimes \univ{g}}(V'\otimes 1)$, 
	where $V'\otimes 1$ is a subspace of $\ind{g}{g'}(V')$.
	By \eqref{eqn:defActionOnDelta}, we have
	\begin{align*}
		v \otimes 1\cdot f = v\otimes f(eG')
	\end{align*}
	for any $v \in V'$ and $f \in \rring{G/G'}\otimes \univ{g}$.
	Since $\univ{g}$ is a free left $\univ{g'}$-module, we obtain
	\begin{align*}
		J' = \set{f \in \rring{G/G'}\otimes \univ{g} : f(eG') \in I\univ{g}}.
	\end{align*}
	
	Clearly we have
	\begin{align*}
		J = \set{f \in \rring{G/G'}\otimes \univ{g}: \univ{g}f \subset J'}.
	\end{align*}
	Hence $J$ is the maximum $(\univ{g}, \univ{g})$-stable subspace in $J'$.
	In other words, $J$ is the maximum $G_0$-stable subspace in $J'$.
	This implies
	\begin{align*}
		J &= \set{f \in \rring{G/G'}\otimes \univ{g}: G_0\cdot f \subset J'} \\
		& = \set{f \in \rring{G/G'}\otimes \univ{g}: f(gG') \in \Ad(g)I\univ{g}\text{ ($\forall g\in G_0$)}}.
	\end{align*}
	We have shown the lemma.
\end{proof}

As a vector space, $(\rring{G/G'}\otimes \univ{g})^{G}$ is naturally isomorphic to $\univ{g}^{G'}$.
This isomorphism is given by
\begin{align*}
(\rring{G/G'}\otimes \univ{g})^{G} \ni \sum_i f_i \otimes X_i \mapsto \sum_i f_i(eG')X_i \in \univ{g}^{G'}.
\end{align*}
It is easy to see that the map is an anti-isomorphism of algebras.
Let $V'$ be a $\lie{g'}$-module.
Using the anti-isomorphism, we consider $\ind{g}{g'}(V')$ as a left $\univ{g}^{G'}$-module.
The left $\univ{g}^{G'}$-action on $\ind{g}{g'}(V') = V'\otimes_{\univ{g'}} \univ{g}$ is simply written as
\begin{align*}
Y\cdot (v \otimes X) = v\otimes YX
\end{align*}
for $v\otimes X \in V'\otimes_{\univ{g'}} \univ{g}$ and $Y \in \univ{g}^{G'}$.
We shall describe the annihilator of the left $\univ{g}^{G'}$-module by that of $V'$. 

\begin{lemma}\label{lem:faithfully1}
Let $V'$ be a $\lie{g'}$-module and $I:={}^t\! \Ann_{\univ{g'}}(V')$.
Then the annihilator of the left $\univ{g}^{G'}$-module $\ind{g}{g'}(V')$
is equal to $\univ{g}^{G'} \cap I\univ{g}$.
\end{lemma}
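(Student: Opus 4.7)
The plan is to use the explicit description of the left $\univ{g}^{G'}$-action recorded immediately before the lemma, namely $Y \cdot (v \otimes X) = v \otimes YX$ for $v \in V'$, $X \in \univ{g}$ and $Y \in \univ{g}^{G'}$. My first observation is that $Y$ annihilates $\ind{g}{g'}(V')$ if and only if $v \otimes Y = 0$ in $V' \otimes_{\univ{g'}} \univ{g}$ for every $v \in V'$: the implication $(\Rightarrow)$ comes from the special case $X = 1$, while $(\Leftarrow)$ follows because $v \otimes YX = (v \otimes Y) \cdot X$ under the natural right $\univ{g}$-action on the tensor product, so vanishing of $v \otimes Y$ propagates to all $X$. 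This reduces the lemma to computing, for $Y \in \univ{g}$ (no $G'$-invariance required), when $v \otimes Y = 0$ for every $v \in V'$.

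Next I would exploit the PBW freeness of $\univ{g}$ as a left $\univ{g'}$-module: fix a basis $\{u_i\}_i$ of a complement, so that $\univ{g} = \bigoplus_i \univ{g'} u_i$ and $V' \otimes_{\univ{g'}} \univ{g} \cong \bigoplus_i V'$ as $\CC$-vector spaces, where $V'$ carries the right $\univ{g'}$-action $v \cdot a := ({}^t a) \cdot v$ dictated by the convention spelled out before Definition \ref{def:Zuckerman}. Writing $Y = \sum_i a_i u_i$ with $a_i \in \univ{g'}$, the calculation
\begin{align*}
v \otimes Y \;=\; \sum_i v \otimes a_i u_i \;=\; \sum_i (v \cdot a_i) \otimes u_i
\end{align*}
shows that $v \otimes Y = 0$ for all $v$ is equivalent to $v \cdot a_i = 0$ for all $v,i$, i.e.\ ${}^t a_i \in \Ann_{\univ{g'}}(V')$, i.e.\ $a_i \in I$ for all $i$. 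Since $I\univ{g} = \bigoplus_i I u_i$ in the chosen basis (as $I$ is a right ideal of $\univ{g'}$), this is precisely the condition $Y \in I\univ{g}$.

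Combining the two steps yields $\Ann_{\univ{g}^{G'}}(\ind{g}{g'}(V')) = \univ{g}^{G'} \cap I\univ{g}$, as claimed. There is no real obstacle; the only subtlety is careful bookkeeping of the transpose in the definition of the right $\univ{g'}$-structure on $V'$. As a consistency check, the conclusion matches Lemma \ref{lem:AnnhilatorOUmod}: a $G$-invariant $f \in (\rring{G/G'}\otimes \univ{g})^G$ corresponds to $Y = f(eG') \in \univ{g}^{G'}$ via $f(gG') = \Ad(g)Y$, and the condition $f(gG') \in \Ad(g) I\univ{g}$ for all $g \in G_0$ then simplifies, after cancelling $\Ad(g)$, to the single condition $Y \in I\univ{g}$.
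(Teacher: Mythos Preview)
Your proof is correct. The paper's own proof is a one-liner: the inclusion $\supset$ is declared obvious, and the reverse inclusion is deduced from Lemma~\ref{lem:AnnhilatorOUmod} via the anti-isomorphism $(\rring{G/G'}\otimes\univ{g})^G\simeq\univ{g}^{G'}$, exactly the ``consistency check'' you wrote at the end. Your argument instead unpacks the relevant computation directly using PBW freeness of $\univ{g}$ over $\univ{g'}$, which is essentially the calculation of the ideal $J'=\Ann(V'\otimes 1)$ inside the proof of Lemma~\ref{lem:AnnhilatorOUmod}. So the underlying idea is the same; the difference is that you give a self-contained elementary argument that never touches the $\rring{G/G'}\otimes\univ{g}$-module structure, whereas the paper reuses the work already done for Lemma~\ref{lem:AnnhilatorOUmod}.
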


\begin{proof}
$\Ann_{\univ{g}^{G'}}(\ind{g}{g'}(V')) \supset \univ{g}^{G'} \cap I\univ{g}$ is obvious and the converse follows from Lemma \ref{lem:AnnhilatorOUmod}.
\end{proof}

\subsection{Annihilator of \texorpdfstring{$\Ind^{G_\RR}_{G'_\RR}(V')$}{Ind(V')}}

Let $G_\RR$ be a Lie group and $G'_\RR$ a closed subgroup of $G_\RR$.
Assume that there exist an algebraic group $G$ with the Lie algebra $\lie{g}$
and a closed subgroup $G'$ of $G$ with the Lie algebra $\lie{g'}\subset \lie{g}$ such that the inclusion map $(\lie{g}_\RR, \lie{g}'_\RR) \hookrightarrow (\lie{g}, \lie{g'})$ lifts to a homomorphism $(G_\RR, G'_\RR) \rightarrow (G, G')$ of pairs of Lie groups.
Then we have a natural map $G_\RR/G'_\RR\rightarrow G/G'$.

Let $V'$ be a smooth Fr\'echet representation of $G'_\RR$.
Then the ring $\rring{G/G'}$ of regular functions acts on $\Ind^{G_{\RR}}_{G'_\RR}(V')$ via the multiplication of functions.
Combining the natural $\univ{g}$-action on $\Ind^{G_\RR}_{G'_\RR}(V')$, we obtain a left $\rring{G/G'}\otimes \univ{g}$-module structure on $\Ind^{G_\RR}_{G'_\RR}(V')$.
Set $\calA:= \rring{G/G'}\otimes \univ{g}$.

Let $W$ be a dense $\lie{g'}$-submodule of the strong dual $(V')^*$ of $V'$.
Then the natural pairing $\langle\cdot, \cdot \rangle\colon W\times V'\rightarrow \CC$ is non-degenerate on $V'$, that is, if $\langle w, v \rangle = 0$ for any $w \in W$, then $v = 0$.
The pairing induces a pairing $\langle \cdot , \cdot\rangle\colon \ind{g}{g'}(W)\times \Ind^{G_\RR}_{G'_\RR}(V')$ given by
\begin{align}
	\langle \varphi \otimes X, f\rangle = \varphi(Xf(e)) \label{eqn:PairingAmod}
\end{align}
for $\varphi \in W$, $X\in \univ{g}$ and $f \in \Ind^{G_\RR}_{G'_\RR}(V')$.
The following lemma is easy by definition.

\begin{lemma}\label{lem:PairingNondegenerate}
	The pairing $\langle \cdot , \cdot\rangle\colon \ind{g}{g'}(W)\times \Ind^{G_\RR}_{G'_\RR}(V')\rightarrow \CC$ is $\calA$-invariant and non-degenerate on the left.
\end{lemma}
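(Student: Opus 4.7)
The plan is to verify well-definedness and $\calA$-invariance of the pairing by direct Hopf-algebraic computations with the comultiplication of $\univ{g}$, and then deduce non-degeneracy on the left from a PBW decomposition combined with a local construction of sections having arbitrarily prescribed jets at the identity. For well-definedness on $\ind{g}{g'}(W) = W\otimes_{\univ{g'}} \univ{g}$, I convert the contragredient $\univ{g'}$-action on $W\subset (V')^*$ to a right $\univ{g'}$-module structure via the transpose antiautomorphism, which gives $(\varphi\cdot Y)(v) = \varphi(Yv)$ for $Y\in\univ{g'}$; both $\langle\varphi\cdot Y\otimes X, f\rangle$ and $\langle\varphi\otimes YX, f\rangle$ then evaluate to $\varphi(YXf(e))$, so the pairing descends. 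For $\calA$-invariance, associativity $X(Yf) = (XY)f$ handles the $\univ{g}$-part; for $h\in\rring{G/G'}$, iterated Leibniz in Sweedler notation $\Delta(X) = \sum X_{(1)}\otimes X_{(2)}$ yields
\[
X(hf)(e) = \sum (L(X_{(1)})h)(eG')\cdot (X_{(2)}f)(e),
\]
which exactly matches the right action \eqref{eqn:defActionOnDelta} applied to $\varphi\otimes X$ by $h$.

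For non-degeneracy, fix a real vector space complement $\mathfrak{m}_\RR$ of $\lie{g}'_\RR$ in $\lie{g}_\RR$ and use PBW to write $\univ{g} = \bigoplus_\alpha \univ{g'}\cdot Z^\alpha$ as a free left $\univ{g'}$-module, where $\{Z^\alpha\}$ is a PBW basis of monomials in $\mathfrak{m}$. Every $\xi\in\ind{g}{g'}(W)$ then has a unique finite expansion $\xi = \sum_\alpha \varphi_\alpha\otimes Z^\alpha$ with $\varphi_\alpha\in W$, and $\langle\xi,f\rangle = \sum_\alpha \varphi_\alpha((Z^\alpha f)(e))$. Using the tube diffeomorphism $\mathfrak{m}_\RR\times G'_\RR\xrightarrow{\sim} U$ onto a neighborhood of $G'_\RR$ given by $(X,h)\mapsto \exp(X)h$, I construct $f\in\Ind^{G_\RR}_{G'_\RR}(V')$ by setting $f(\exp(X)h) := \chi(X)\pi(h)^{-1}F(X)$ for an arbitrary smooth compactly supported $F\colon \mathfrak{m}_\RR\to V'$ and a cutoff $\chi$ equal to $1$ near $0$, extended by zero outside $U$. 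A Baker--Campbell--Hausdorff computation shows that modulo lower-order derivatives one has $(Z^\alpha f)(e) = (-1)^{|\alpha|}\partial^\alpha F(0)$, so the linear map sending the finite jet of $F$ at $0$ to the tuple $((Z^\alpha f)(e))$ is upper-triangular in the order filtration with invertible diagonal, and by Borel's lemma any finite prescription of values in $V'$ can be realized. Consequently $\langle\xi,f\rangle = 0$ for all $f$ forces $\varphi_\alpha(v) = 0$ for every $v\in V'$ and each $\alpha$, and since $W\hookrightarrow (V')^*$ we conclude $\varphi_\alpha = 0$, hence $\xi = 0$.

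The main technical point is identifying the principal symbol of the PBW monomial $Z^\alpha$, regarded as a left-translation differential operator on $V'$-valued functions on $G_\RR$, with the corresponding symmetric tensor in $S^{|\alpha|}(\mathfrak{m})$; this is what produces the upper-triangular structure that makes Borel's lemma applicable. All other ingredients reduce to routine manipulations with the comultiplication of $\univ{g}$ and the PBW decomposition along the complement $\mathfrak{m}$.
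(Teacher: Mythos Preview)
The paper treats this lemma as immediate (``The following lemma is easy by definition'') and gives no proof. Your argument is correct and supplies exactly the natural details: the Hopf-algebraic verification of $\calA$-invariance via the comultiplication and the Leibniz rule, and non-degeneracy on the left via the PBW identification $\ind{g}{g'}(W)\cong \bigoplus_\alpha W\cdot Z^\alpha$ together with the fact that sections of $\Ind^{G_\RR}_{G'_\RR}(V')$ can realize arbitrary finite transverse jets at $e$. One minor remark: Borel's lemma is more than you need, since only finitely many $\varphi_\alpha$ are nonzero and hence a polynomial $F$ times a cutoff already suffices to prescribe the relevant finite jet; this does not affect correctness.
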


Let $\ev\colon \Ind^{G_\RR}_{G'_\RR}(V')\rightarrow V'$ denote the evaluation map at the identity $e\in G_\RR$.

\begin{lemma}\label{lem:AnnSubInd}
	Suppose that $G/G'$ is affine.
	Let $N$ be a $G_\RR$-stable $\calA$-submodule of $\Ind^{G_\RR}_{G'_\RR}(V')$
	and set $U:=\overline{\ev(N)} \subset V'$.
	Then we have
	\begin{align*}
		\Ann_{\calA}(N) &= \Ann_{\calA}(\Ind^{G_\RR}_{G'_\RR}(U)), \\
		\Ann_{\univ{g}^{G'}}(N) &= \univ{g}^{G'} \cap \Ann_{\univ{g'}}(U)\univ{g}
	\end{align*}
	under the isomorphism $\calA^G \simeq (\univ{g}^{G'})^{\opalg}$.
\end{lemma}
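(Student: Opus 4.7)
The plan has two parts. First, $G_\RR$-stability of $N$ immediately forces $N \subset \Ind^{G_\RR}_{G'_\RR}(U)$, which gives one inclusion of annihilators for free. The reverse inclusion (and the equality of annihilators) will come from combining the pairing of Lemma \ref{lem:PairingNondegenerate} with the Kashiwara-type equivalence of Proposition \ref{prop:irreducibleDistribution}, which lets us detect membership in $\Ann_\calA(N)$ at the level of $\lie{g'}$-modules on $(V')^*$, where the density of $\ev(N)$ in $U$ can be exploited. The second equation will then follow by taking $G$-invariants and invoking Lemma \ref{lem:faithfully1}.

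\textbf{Step 1 (the easy direction).} Since $N$ is $G'_\RR$-stable and $\ev$ is $G'_\RR$-equivariant, $\ev(N)$ and hence $U = \overline{\ev(N)}$ are $G'_\RR$-invariant, so $\Ind^{G_\RR}_{G'_\RR}(U)$ makes sense as a $G_\RR$-subrepresentation of $\Ind^{G_\RR}_{G'_\RR}(V')$. For any $f \in N$ and $x \in G_\RR$, the $G_\RR$-stability of $N$ gives $f(x) = (x^{-1}\cdot f)(e) = \ev(x^{-1}\cdot f) \in \ev(N) \subset U$. So every $f \in N$ takes values in $U$, whence $N \subset \Ind^{G_\RR}_{G'_\RR}(U)$ and $\Ann_\calA(\Ind^{G_\RR}_{G'_\RR}(U)) \subset \Ann_\calA(N)$.

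\textbf{Step 2 (the main direction).} Take $W := (V')^*$, the strong continuous dual, which is a $\lie{g'}$-module under the transpose action and trivially dense in itself, and let $W_U := \set{\varphi \in W : \varphi|_U = 0}$, so that $W/W_U \simeq U^*$ by Hahn--Banach. For a left $\calA$-submodule $M$ of $\Ind^{G_\RR}_{G'_\RR}(V')$, set $K(M) := \set{\alpha \in \ind{g}{g'}(W) : \langle \alpha, M \rangle = 0}$; by $\calA$-invariance of the pairing, $K(M)$ is a right $\calA$-submodule of $\ind{g}{g'}(W)$. Non-degeneracy on the left in Lemma \ref{lem:PairingNondegenerate} gives $a \in \Ann_\calA(M) \Leftrightarrow \ind{g}{g'}(W)\cdot a \subset K(M)$, so the first equation reduces to showing $K(N) = K(\Ind^{G_\RR}_{G'_\RR}(U))$. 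Applying Lemma \ref{lem:PairingNondegenerate} to the induced non-degenerate pairing $\ind{g}{g'}(U^*) \times \Ind^{G_\RR}_{G'_\RR}(U) \to \CC$ immediately yields $K(\Ind^{G_\RR}_{G'_\RR}(U)) = \ind{g}{g'}(W_U)$, and $K(N) \supset \ind{g}{g'}(W_U)$ is clear from $\langle \varphi\otimes Y, f\rangle = \varphi((Yf)(e)) = 0$ when $\varphi\in W_U$ and $f \in N$ (using $(Yf)(e) \in \ev(N) \subset U$). The reverse inclusion $K(N) \subset \ind{g}{g'}(W_U)$ is the main obstacle. For it, note that $K(N)$ is a right $\calA$-submodule of $\ind{g}{g'}(W)$, so since $G/G'$ is affine, Proposition \ref{prop:irreducibleDistribution} (applied with $G/G'$ itself as the affine open neighborhood of $eG'$) gives $K(N) = \ind{g}{g'}(K(N)^{\mathfrak{m}})$, where $\mathfrak{m}$ is the maximal ideal of $\rring{G/G'}$ at $eG'$ and $K(N)^{\mathfrak{m}} = K(N)\cap (W\otimes 1) \subset W$. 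A functional $\varphi \in W$ lies in $K(N)^{\mathfrak{m}}$ iff $\varphi(f(e)) = 0$ for all $f \in N$, iff $\varphi|_{\ev(N)} = 0$, iff $\varphi \in W_U$ (by continuity of $\varphi$ and density of $\ev(N)$ in $U$). Hence $K(N)^{\mathfrak{m}} = W_U$ and $K(N) = \ind{g}{g'}(W_U)$, completing Step 2.

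\textbf{Step 3 (the second equation).} From Step 2, $\Ann_\calA(N) = \Ann_\calA(\Ind^{G_\RR}_{G'_\RR}(U))$ coincides with the right $\calA$-annihilator of $\ind{g}{g'}(U^*)$. Taking $G$-invariants and transporting along the anti-isomorphism $\calA^G \simeq (\univ{g}^{G'})^{\opalg}$ — under which right $\calA$-actions correspond to the left $\univ{g}^{G'}$-action $Y\cdot(\varphi\otimes X) = \varphi\otimes YX$ on $\ind{g}{g'}(U^*)$ — we obtain $\Ann_{\univ{g}^{G'}}(N) = \Ann_{\univ{g}^{G'}}(\ind{g}{g'}(U^*))$. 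Lemma \ref{lem:faithfully1} computes the right-hand side as $\univ{g}^{G'} \cap {}^t\Ann_{\univ{g'}}(U^*)\,\univ{g}$. Since the $\lie{g'}$-action on $U^*$ is the transpose of that on $U$, we have $\Ann_{\univ{g'}}(U^*) = {}^t\Ann_{\univ{g'}}(U)$, so ${}^t\Ann_{\univ{g'}}(U^*) = \Ann_{\univ{g'}}(U)$, yielding the desired identity $\Ann_{\univ{g}^{G'}}(N) = \univ{g}^{G'}\cap \Ann_{\univ{g'}}(U)\,\univ{g}$.
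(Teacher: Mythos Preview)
Your overall architecture is close to the paper's, and Step~1 together with the computation $K(N)=\ind{g}{g'}(W_U)$ in Step~2 (via Proposition~\ref{prop:irreducibleDistribution}) is correct and matches the paper's argument that $N^\perp$ is generated by its intersection with $W\otimes 1$. The gap is in the sentence ``Non-degeneracy on the left in Lemma~\ref{lem:PairingNondegenerate} gives $a \in \Ann_\calA(M) \Leftrightarrow \ind{g}{g'}(W)\cdot a \subset K(M)$.'' Only the forward implication follows from left non-degeneracy. The backward implication says that if $\langle\alpha, a\cdot m\rangle=0$ for all $\alpha$ then $a\cdot m=0$; this is non-degeneracy on the \emph{right}, and it fails here: an element $h\in\Ind^{G_\RR}_{G'_\RR}(V')$ lies in the right kernel precisely when all derivatives $Xh(e)$ vanish, and smooth functions with this property need not be zero. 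So from $K(N)=K(\Ind^{G_\RR}_{G'_\RR}(U))$ you get only the chain $\Ann_\calA(\Ind^{G_\RR}_{G'_\RR}(U))\subset \Ann_\calA(N)\subset \Ann_\calA(\ind{g}{g'}(U^*))$, not the equalities you claim in Steps~2--3.

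What is missing is exactly the $G_\RR$-translation argument the paper uses. The annihilator $\Ann_\calA(N)$ is $G_\RR$-stable because $N$ is, so $\Ann_\calA(N)\subset\bigcap_{g\in G_\RR} g\cdot \Ann_\calA(\ind{g}{g'}(U^*))=:J$. For $a\in J$ and $f\in N$ (or $f\in\Ind^{G_\RR}_{G'_\RR}(U)$), the pairing gives $(a\cdot f)(e)=0$; since $J$ is $G_\RR$-stable one gets $(a\cdot f)(g)=(L(g^{-1})(a\cdot f))(e)=0$ for every $g\in G_\RR$, hence $a\cdot f=0$. This closes both equations. Your Step~2 already contains the hard ingredient ($K(N)=\ind{g}{g'}(W_U)$); you just need to replace the incorrect biconditional with this $G_\RR$-stability step.
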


\begin{proof}
	Obviously, we can assume $U = V'$.
	It is enough to show
	\begin{align*}
		\bigcap_{g \in G_\RR} g\cdot \Ann_\calA(\ind{g}{g'}(W)) = \Ann_{\calA}(N).
	\end{align*}
	In fact, this shows the first equation easily and we have
	\begin{align*}
		\calA^G\cap \bigcap_{g \in G_\RR} g\cdot \Ann_\calA(\ind{g}{g'}(W))  &= \Ann_{\univ{g}^{G'}}(\ind{g}{g'}(W))\\
		&= \univ{g}^{G'} \cap \Ann_{\univ{g'}}(V')\univ{g}
	\end{align*}
	by Lemma \ref{lem:faithfully1}.
	Remark that $\Ann_{\univ{g'}}(W) = {}^t \Ann_{\univ{g'}}(V')$.

	By $\overline{\ev(N)} = V'$, we have $N^\perp \cap (W\otimes 1) = 0$ in $\ind{g}{g'}(W) = W\otimes_{\univ{g'}}\univ{g}$,
	where we take $(\cdot)^\perp$ with respect to the pairing $\langle \cdot, \cdot\rangle$ defined in \eqref{eqn:PairingAmod}.
	By Proposition \ref{prop:irreducibleDistribution}, $N^\perp$ is generated by 
	$N^\perp \cap (W\otimes 1)$.
	Hence we obtain $N^\perp = 0$.
	This implies that the pairing $\langle \cdot, \cdot \rangle\colon \ind{g}{g'}(W)\times N \rightarrow \CC$ is non-degenerate on the left.
	
	Since the pairing $\langle \cdot, \cdot \rangle$ is non-degenerate on the left, we have $\Ann_{\calA}(N) \subset \Ann_\calA(\ind{g}{g'}(W))$.
	Since $\Ann_{\calA}(N)$ is $G_\RR$-stable, we obtain
	\begin{align*}
		\Ann_{\calA}(N) \subset \bigcap_{g \in G_\RR} g\cdot \Ann_\calA(\ind{g}{g'}(W)) =: J.
	\end{align*}

	We shall show the converse inclusion.
	Take $f \in J N$.
	Then we have $L(g)f \in J N$ for any $g \in G_\RR$.
	This implies $L(g)f(e) = 0$ for any $g \in G_\RR$ and hence $f = 0$.
	This shows the converse inclusion.
\end{proof}

We shall consider a similar result to Lemma \ref{lem:AnnSubInd} for $\pro{\lie{g}, M}{\lie{g'}, M}(V')$.
Let $G$ and $M$ be affine algebraic groups and $G'$ a closed subgroup of $G$.
Suppose that a homomorphism $M\rightarrow G'$ of algebraic groups is given and
its differential is injective.
Then we have a pair $(\lie{g}, M)$ and its subpair $(\lie{g'}, M)$.

Let $V'$ be a $(\lie{g'}, M)$-module.
We define an $\rring{G/G'}$-action on $\pro{\lie{g}, M}{\lie{g'}, M}(V')$ via
\begin{align*}
	(f\cdot \varphi)(X) = \sum_i L(a_i)f(e)\varphi(b_i)
\end{align*}
for $f \in \rring{G/G'}$, $\varphi \in \pro{\lie{g}, M}{\lie{g'}, M}(V')$ and $X \in \univ{g}$ with $\Delta(X) = \sum_i a_i \otimes b_i$.
It is easy to see that the action is well-defined.
Combining with the natural $\univ{g}$-action, we obtain an $\rring{G/G'}\otimes \univ{g}$-action on $\pro{\lie{g}, M}{\lie{g'}, M}(V')$.
Set $\calA := \rring{G/G'}\otimes \univ{g}$.

Let $\ev\colon \Hom_{\univ{g'}}(\univ{g}, V') \rightarrow V'$ denote the evaluation map at $1 \in \univ{g'}$.
Similarly to $\Ind^{G_\RR}_{G'_\RR}(V')$, we obtain the following result.
We omit the proof.

\begin{lemma}\label{lem:AnnSubIndAlg}
	Assume that $G/G'$ is affine.
	Let $V'$ be a $(\lie{g'}, M)$-module and $N$ an $M$-stable $\calA$-submodule of $\pro{\lie{g}, M}{\lie{g'}, M}(V')$.
	Then we have
	\begin{align*}
		\Ann_{\calA}(N) &= \Ann_{\calA}(\pro{\lie{g}, M}{\lie{g'}, M}(\ev(N))), \\
		\Ann_{\univ{g}^{G'}}(N) &= \univ{g}^{G'} \cap \Ann_{\univ{g'}}(\ev(N)) \univ{g}
	\end{align*}
	under the isomorphism $\calA^G \simeq (\univ{g}^{G'})^\opalg$.
\end{lemma}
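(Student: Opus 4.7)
I would follow the pattern of the proof of Lemma~\ref{lem:AnnSubInd}, replacing the analytic pairing by a purely algebraic one. First, I would reduce to $\ev(N) = V'$. The subspace $\ev(N) \subset V'$ is $M$-stable because $N$ is, and $\lie{g'}$-stable because the $\univ{g'}$-linearity of elements of $N \subset \Hom_{\univ{g'}}(\univ{g}, V')_M$ gives $\ev(Z \cdot \nu) = \nu(Z) = Z \cdot \nu(1) = Z \cdot \ev(\nu)$ for $Z \in \lie{g'}$. So $\ev(N)$ is a $(\lie{g'}, M)$-submodule of $V'$, and after replacing $V'$ by $\ev(N)$ I may assume $\ev(N) = V'$.

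Next, I would introduce the $\calA$-invariant pairing that does all the work. Let $W := V'^{\vee}$ denote the $M$-finite dual of $V'$; this is a $(\lie{g'}, M)$-module pairing non-degenerately with $V'$. Define
\begin{align*}
	\langle\,\cdot\,,\,\cdot\,\rangle\colon \ind{g}{g'}(W) \times \pro{\lie{g}, M}{\lie{g'}, M}(V') \longrightarrow \CC, \qquad \langle w \otimes Y, \varphi\rangle := \langle w, \varphi(Y)\rangle.
\end{align*}
This is well-defined on the tensor product over $\univ{g'}$ by the $\univ{g'}$-linearity of $\varphi$. Viewing $\ind{g}{g'}(W)$ as a right $\calA$-module and $\pro{\lie{g}, M}{\lie{g'}, M}(V')$ as a left $\calA$-module, a direct unpacking of the two explicit actions yields $\langle \xi \cdot a, \varphi\rangle = \langle \xi, a \cdot \varphi\rangle$ for every $a \in \calA$.

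I would then establish non-degeneracy on both sides, restricted to $N$ on the right. For the left: if $w \otimes 1 \in (W \otimes 1) \cap N^{\perp}$, then $\langle w, \ev(\nu)\rangle = 0$ for every $\nu \in N$, and the assumption $\ev(N) = V'$ together with non-degeneracy of the $W \times V'$ pairing forces $w = 0$; since $N^{\perp}$ is a right $\calA$-submodule of $\ind{g}{g'}(W)$ whose intersection with $\ind{g}{g'}(W)^{I} = W \otimes 1$ (where $I$ is the maximal ideal of $\rring{G/G'}$ at $eG'$) vanishes, Proposition~\ref{prop:irreducibleDistribution} forces $N^{\perp} = 0$. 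For the right: if $\varphi \in \pro{\lie{g}, M}{\lie{g'}, M}(V')$ pairs trivially with all of $\ind{g}{g'}(W)$, then $\varphi(Y) = 0$ for every $Y$ and hence $\varphi = 0$. From these the first equation follows directly: for $a \in \Ann_{\calA}(N)$, any $\xi \in \ind{g}{g'}(W)$ and any $\nu \in N$, $\langle \xi \cdot a, \nu\rangle = \langle \xi, a\nu\rangle = 0$, so $\xi \cdot a \in N^{\perp} = 0$; consequently, for any $\varphi \in \pro{\lie{g}, M}{\lie{g'}, M}(V')$, $\langle \xi, a\varphi\rangle = \langle \xi \cdot a, \varphi\rangle = 0$, and the right non-degeneracy gives $a\varphi = 0$. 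The converse inclusion $\Ann_{\calA}(\pro{\lie{g}, M}{\lie{g'}, M}(V')) \subset \Ann_{\calA}(N)$ is immediate from $N \subset \pro{\lie{g}, M}{\lie{g'}, M}(V')$.

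For the second equality, I would intersect the first with $\calA^{G} \simeq (\univ{g}^{G'})^{\opalg}$ and invoke the analogue of Lemma~\ref{lem:faithfully1} for $\pro{\lie{g}, M}{\lie{g'}, M}(V)$, which states $\Ann_{\univ{g}^{G'}}(\pro{\lie{g}, M}{\lie{g'}, M}(V)) = \univ{g}^{G'} \cap \Ann_{\univ{g'}}(V)\univ{g}$; this can be proved by an argument essentially identical to that of Lemma~\ref{lem:faithfully1}, or derived from it via the pairing together with Lemma~\ref{lem:CommuteTwoSidedIdeal}. The main obstacle is the non-degeneracy analysis on the left for the restricted pairing on $N$, which hinges on Proposition~\ref{prop:irreducibleDistribution} and the identification $\ind{g}{g'}(W)^{I} = W \otimes 1$; apart from that step, the algebraic version is actually cleaner than its smooth counterpart because both sides of the pairing are non-degenerate, obviating the $G_{\RR}$-translate argument used in Lemma~\ref{lem:AnnSubInd}.
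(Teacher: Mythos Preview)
Your proposal is correct and follows essentially the approach the paper intends: the paper omits the proof entirely, stating only that it is analogous to Lemma~\ref{lem:AnnSubInd}, and your argument is precisely that analogue carried out in detail. Your observation that the algebraic pairing is non-degenerate on \emph{both} sides (so that the $G_\RR$-translate step from Lemma~\ref{lem:AnnSubInd} can be replaced by a direct annihilator transfer $\Ann_{\calA}(N) \subset \Ann_{\calA}^{\mathrm{right}}(\ind{g}{g'}(W)) \subset \Ann_{\calA}(\pro{\lie{g},M}{\lie{g'},M}(V'))$) is a genuine simplification available only in the algebraic setting, and it is worth recording.
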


\subsection{Sphericity and multiplicity}

It is well-known that the sphericity of homogeneous varieties is strongly related to the multiplicity-freeness.
We shall relate the finiteness of the PI degree to the sphericity.

Let $G$ be a reductive algebraic group, $G'$ a reductive subgroup of $G$ and $H$ a closed subgroup of $G$.
Fix a Borel subgroup $B'$ of $G'_0$.
We denote by $\rsheaf{G/H, eH}$ the stalk of the structure sheaf $\rsheaf{G/H}$ at $eH$.

\begin{lemma}\label{lem:surjectiveMult}
For any non-zero $s \in \rsheaf{G/H, eH}$, the multiplication by $s$
\begin{align*}
\times s\colon \ind{g}{h}(\CC) \rightarrow \ind{g}{h}(\CC)
\end{align*}
defined by (\ref{eqn:defActionOnDelta}) is surjective.
\end{lemma}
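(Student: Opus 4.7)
The plan is to deduce surjectivity of $\times s$ from injectivity of its transpose, which lives on a completion of the local ring at $eH$ and is therefore easy to handle. After shrinking to an affine open neighborhood $U$ of $eH$, I may view $s$ as a non-zero element of $\rring{U}$; let $I\subset\rring{U}$ be the maximal ideal at $eH$. The proof of Proposition~\ref{prop:irreducibleDistribution} identifies the vector-space dual $\ind{g}{h}(\CC)^{*}$, as an $\rring{U}$-module, with the $I$-adic completion $\widehat{\rring{U}}$. Under this identification the transpose of $\times s$ is simply multiplication by $s$ on $\widehat{\rring{U}}$, since the left $\rring{U}$-action on the dual defined by $(f\cdot\varphi)(m):=\varphi(m\cdot f)$ is, by construction, the transpose of right multiplication by $f$.

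Next, since $G/H$ is a smooth variety (being a homogeneous space of algebraic groups), the local ring $\rsheaf{G/H,eH}$ is regular, and hence so is its completion $\widehat{\rring{U}}$. In particular $\widehat{\rring{U}}$ is an integral domain, and by Krull's intersection theorem $s$ remains non-zero in it; so multiplication by $s$ on $\widehat{\rring{U}}$ is injective.

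Finally, I would invoke the standard fact that a linear map $T\colon V\to V$ of vector spaces is surjective if and only if its transpose $T^{*}$ is injective (if $T$ were not surjective, any non-zero functional on $V/\Im T$ pulls back to a non-zero element of $\ker T^{*}$). Applied to $T=\times s$, this yields the desired surjectivity of multiplication by $s$ on $\ind{g}{h}(\CC)$. I do not foresee a serious obstacle; the one point requiring care is merely that the $\rring{U}$-module isomorphism $\ind{g}{h}(\CC)^{*}\simeq\widehat{\rring{U}}$ intertwines the two multiplications by $s$, which is built into how the duality is set up in Proposition~\ref{prop:irreducibleDistribution}.
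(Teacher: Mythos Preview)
Your argument is correct and is essentially the same as the paper's proof: both identify the dual $(\ind{g}{h}(\CC))^{*}$ with the $I$-adic completion of the local ring at $eH$, observe that multiplication by a non-zero $s$ is injective there (the paper phrases this as torsion-freeness, while you invoke regularity and the domain property), and conclude surjectivity of $\times s$ by duality. The only cosmetic difference is that the paper works directly with the local ring $\rsheaf{G/H,eH}$ rather than passing through an affine open $U$, but the completions coincide.
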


\begin{proof}
As an $\rsheaf{G/H, eH}$-module, $(\ind{g}{h}(\CC))^*$ is isomorphic
to the $I$-adic completion $R$ of $\rsheaf{G/H, eH}$, where $I$ is 
the unique maximal ideal of $\rsheaf{G/H, eH}$.
Since the action of $\rsheaf{G/H, eH}$ on $R$ is torsion-free,
the multiplication by $s$ on $R$ is injective.
Hence $\times s$ is surjective on $\ind{g}{h}(\CC)$.
\end{proof}

\begin{proposition}\label{prop:multiplicityNotSpherical}
Let $V$ be a finitely generated $\lie{h}$-module.
Suppose that $G_0/H_0$ is not $G'_0$-spherical.
Then there exist some $g \in G_0$ and character $\mu$ of $\Ad(g)\lie{b'}$ such that
\begin{align*}
\dim_{\CC}(\ind{g}{h}(V) \otimes_{\calU(\Ad(g)\lie{b'})} \CC_{\mu}) = \infty.
\end{align*}
\end{proposition}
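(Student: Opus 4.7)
The plan is to exploit the positive complexity of $G_0/H_0$ as a $G'_0$-variety to produce an infinite-dimensional quotient of $\ind{g}{h}(V)$ that survives the passage to $(\Ad(g)\lie{b'}, \mu)$-coinvariants. Non-sphericity is equivalent to the condition $\Ad(g)\lie{b'} + \lie{h} \subsetneq \lie{g}$ for every $g \in G_0$; fix a generic $g$ and a complementary subspace $\lie{c}$ with $\dim \lie{c} = c := \dim \lie{g} - \dim(\Ad(g)\lie{b'} + \lie{h}) > 0$.

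First I would handle the favorable case $\Ad(g)\lie{b'} \cap \lie{h} = 0$ (which can be arranged whenever $\dim \lie{b'} + \dim \lie{h} \le \dim \lie{g}$ by choosing $g$ generic). Here the direct sum $\lie{g} = \Ad(g)\lie{b'} \oplus \lie{c} \oplus \lie{h}$ together with the Poincar\'e--Birkhoff--Witt theorem, with ordering $\Ad(g)\lie{b'} < \lie{c} < \lie{h}$, gives a vector-space isomorphism
\[
\ind{g}{h}(V) \;\cong\; \calU(\Ad(g)\lie{b'}) \otimes S(\lie{c}) \otimes V
\]
under which $\Ad(g)\lie{b'}$ acts by left multiplication on the first factor; base-changing along $\calU(\Ad(g)\lie{b'}) \twoheadrightarrow \CC_\mu$ then yields $\ind{g}{h}(V) \otimes_{\calU(\Ad(g)\lie{b'})} \CC_\mu \cong \CC_\mu \otimes S(\lie{c}) \otimes V$, which is infinite-dimensional since $\lie{c} \neq 0$ and $V \neq 0$.

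For the general case where $\lie{d} := \Ad(g)\lie{b'} \cap \lie{h} \neq 0$, I would pass to the algebraic dual: $\ind{g}{h}(V)^{*}$ identifies naturally with formal sections at $eH$ of the $G_0$-equivariant vector bundle $G_0 \times^{H_0} V^{*}$ on $G_0/H_0$, and the dual of the coinvariant space in question is the space of formal $(\Ad(g)\lie{b'}, \mu)$-semi-invariant sections at $eH$. By Rosenlicht's theorem on complexity, non-sphericity forces $\CC(G_0/H_0)^{B'}$ to have positive transcendence degree, so there exists a non-constant $B'$-invariant rational function $f$ on $G_0/H_0$; translating $f$ by $g$ produces a non-constant formal $\Ad(g)\lie{b'}$-invariant power series $\widetilde f \in \widehat{\rsheaf{G_0/H_0, eH}}$, and the powers $\widetilde f^{k}$ are linearly independent. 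Combining these with any nonzero weight vector of $V^{*}$ under the solvable isotropy $\lie{d}$ (with weight $\mu|_{\lie{d}}$ chosen to extend to a character $\mu$ of $\Ad(g)\lie{b'}$) yields a countable linearly independent family of $(\Ad(g)\lie{b'}, \mu)$-semi-invariant sections, so the dual is infinite-dimensional, and hence so is $\ind{g}{h}(V) \otimes_{\calU(\Ad(g)\lie{b'})} \CC_\mu$.

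The main obstacle is the selection of the character $\mu$ in the general case, since for possibly infinite-dimensional $V^{*}$ one cannot invoke Lie's theorem directly to produce a $\lie{d}$-weight vector. I would circumvent this by first replacing $V$ with a simple $\univ{h}$-quotient (which exists since $V$ is finitely generated and $\univ{h}$ is noetherian) and then extracting a $\lie{d}$-eigenvector from a finite-dimensional subquotient via Lie's theorem applied to the solvable $\lie{d}$; the coinvariants of the quotient surject onto those of the original $V$, preserving infinite-dimensionality and thereby completing the argument.
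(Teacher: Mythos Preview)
Your Case~1 argument is correct and elegant: when $\Ad(g)\lie{b'}\cap\lie{h}=0$, the PBW decomposition really does exhibit $\ind{g}{h}(V)$ as a free left $\calU(\Ad(g)\lie{b'})$-module on $S(\lie{c})\otimes V$, and any $\mu$ works. But Case~1 is a genuinely special situation, and your treatment of the general case has two real gaps.

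First, the selection of $\mu$. Your plan is to find a $\lie{d}$-weight vector in $V^*$ (with $\lie{d}=\Ad(g)\lie{b'}\cap\lie{h}$) via Lie's theorem on a ``finite-dimensional subquotient''. But replacing $V$ by a simple $\univ{h}$-quotient does not make $V$ finite-dimensional, and the restriction $V|_{\lie{d}}$ of an infinite-dimensional module to a solvable subalgebra need not admit any finite-dimensional subquotient at all (simple $\univ{d}$-modules can be infinite-dimensional). So Lie's theorem cannot be invoked, and you have no mechanism to produce a character $\mu$ for which the coinvariants are nonzero.

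Second, even granting a $\lie{d}$-eigenvector $\phi_0\in V^*$, your passage from $\phi_0$ to an $\Ad(g)\lie{b'}$-semi-invariant formal section is not justified. The $\lie{g}$-action on $\ind{g}{h}(V)^*\simeq\Hom_{\univ{h}}(\univ{g},V^*)$ mixes the base and fibre directions; there is no ``constant section with value $\phi_0$'' that is automatically a $\mu$-eigenvector for the transversal part of $\Ad(g)\lie{b'}$. What is true (and is essentially the idea you are reaching for) is that once one has a \emph{single} nonzero $\mu$-coinvariant, multiplication by the $B'$-invariant functions produces infinitely many. But producing that first one is the whole difficulty.

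The paper handles exactly this point differently: it uses the Beilinson--Bernstein generalisation of Casselman's subrepresentation theorem to find $g$ and $\mu$ with $\ind{g}{h}(V)\otimes_{\calU(\Ad(g)\lie{b'})}\CC_\mu\neq 0$, arranging simultaneously that $eH$ lies in a $B'$-stable open $U$ with $\dim_\CC\rring{U}^{B'}=\infty$. Then, assuming the coinvariant space $M$ were finite-dimensional, Lemma~\ref{lem:surjectiveMult} makes multiplication by each nonzero $s\in\rring{U}^{B'}$ bijective on $M$, so the orbit map $s\mapsto v\cdot s$ is injective for any nonzero $v\in M$, forcing $\dim_\CC M\ge\dim_\CC\rring{U}^{B'}=\infty$. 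This bypasses any need to analyse weight vectors in $V^*$.
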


\begin{proof}
	Clearly, we can assume that $G$ and $G'$ are connected.

	We denote by $K(G/H)$ the field of all rational functions on $G/H$.
	Since $G/H$ is not $G'$-spherical, $K(G/H)^{B'}$ is infinite dimensional (see e.g.\ \cite[25.1]{Ti11}).
	Hence we can take a $B'$-stable open subset $U$ of $G/H$ such that $\dim_{\CC}(\rring{U}^{B'}) = \infty$.

	Take a Borel subgroup $B$ of $G$ containing $B'$ with unipotent radical $N$.
	Since $V$ is finitely generated, $\ind{g}{h}(V)$ is finitely generated
	and has an irreducible quotient $W$ as a $\lie{g}$-module.
	By a generalization of the Casselman subrepresentation theorem \cite[Theorem 1]{BeBe83}, there is an open subset $U'$ of $G$ such that $W/\Ad(g)(\lie{n})W\neq 0$
	for any $g \in U'$.
	If necessary, replacing $B$ and $B'$ with $\Ad(g)B$ and $\Ad(g)B'$ for some $g \in (UH)^{-1}\cap U'$, we can assume $eH \in U$ and $W/\lie{n}W \neq 0$.
	Since $W$ is irreducible, there is a character $\mu$ of $\lie{b}$ such that $W\otimes_{\univ{b}}\CC_{\mu} \neq 0$
	and hence $\ind{g}{h}(V)\otimes_{\univ{b'}}\CC_{\mu} \neq 0$.

	Put $M:=\ind{g}{h}(V) \otimes_{\univ{b'}} \CC_{\mu}$.
	Assume that $M$ is finite dimensional.
	For $s \in \rring{U}^{B'}$, the multiplication $\times s\colon \ind{g}{h}(V)\rightarrow \ind{g}{h}(V)$ by $s$ induces a linear map $\times s\colon M\rightarrow M$.
	Then the multiplication $\times s \colon M \rightarrow M$ is surjective by Lemma \ref{lem:surjectiveMult}.
	Remark that $\ind{g}{h}(V)$ is isomorphic to $(\ind{g}{h}(\CC))^{\oplus \dim_{\CC}(V)}$ as an $\rring{U}$-module.
	Since $M$ is finite dimensional, $\times s$ is bijective on $M$.

	Fix a non-zero vector $v \in M$.
	Then the map $\rring{U}^{B'}\rightarrow M$ ($s \mapsto v \cdot s$) is injective
	and hence we have $\dim_{\CC}(M) \geq \dim_{\CC}(\rring{U}^{B'}) = \infty$.
	This is a contradiction.
\end{proof}

\begin{remark}
	The proposition is an analogue of \cite[Theorem 3.7]{Bi93} and \cite[Lemme 2.1]{Br94}.
	If a quasi-projective smooth $G$-variety $X$ is not spherical,
	then for any positive integer $n$, there exist some character $\lambda$ of a Borel subgroup $B$ and $G$-equivariant line bundle $\calL$ on $X$ such that
	the dimension of $\Hom_B(\CC_\lambda, \sect(\calL))$ is greater than $n$.
	Here $\sect(\calL)$ denotes the space of all global sections of $\calL$.
\end{remark}

We consider the cases of affine homogeneous varieties and flag varieties.
We shall show that the finiteness of the PI degree implies the sphericity of $G/H$.

\begin{proposition}\label{prop:boundedAndSphericalGH}
Retain the notation as above and suppose that $H$ is reductive.
Let $I$ be a proper two-sided ideal of $\univ{h}$.
If $G_0/H_0$ is not $G_0$-spherical, then we have $\PIdeg(\univ{g}^H/I\univ{g} \cap \univ{g}^H) = \infty$.
\end{proposition}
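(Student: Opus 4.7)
The plan is to apply Proposition \ref{prop:multiplicityNotSpherical} to an irreducible $\lie{h}$-module whose annihilator contains $I$, and then deduce infinite PI degree from the resulting infinite-dimensional quotient module. First I would choose $V$: since $I$ is a proper two-sided ideal of $\univ{h}$, Zorn's lemma produces a maximal left ideal $L \supseteq I$, and $V := \univ{h}/L$ is an irreducible cyclic $\lie{h}$-module whose annihilator $\Ann_{\univ{h}}(V)$ is a primitive ideal containing $I$. (Alternatively, one may invoke Fact \ref{fact:DufloIdeal} on a primitive ideal above $I$ and take $V$ to be the corresponding irreducible highest weight module.) In either case $V$ is finitely generated over $\univ{h}$.

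Since $G_0/H_0$ is not $G_0$-spherical, Proposition \ref{prop:multiplicityNotSpherical} applied to this $V$ (with $G'=G$) yields $g \in G_0$ and a character $\mu$ of $\Ad(g)\lie{b}$ (for a Borel subalgebra $\lie{b}$ of $\lie{g}$) such that
\begin{align*}
	M_\mu := \ind{g}{h}(V) \otimes_{\calU(\Ad(g)\lie{b})} \CC_\mu
\end{align*}
is infinite-dimensional. By Lemma \ref{lem:faithfully1}, the annihilator of $\ind{g}{h}(V)$ as a left $\univ{g}^H$-module equals $\univ{g}^H \cap {}^t\Ann_{\univ{h}}(V)\univ{g}$, which contains $\univ{g}^H \cap {}^tI\univ{g}$; a fortiori $M_\mu$ is an infinite-dimensional module over the quotient $\univ{g}^H / (\univ{g}^H \cap {}^tI\univ{g})$. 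By Lemma \ref{lem:CommuteTwoSidedIdeal}, this quotient has the same PI degree as $\univ{g}^H / (\univ{g}^H \cap I\univ{g})$, so it suffices to prove that the former has infinite PI degree.

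Since $\univ{g}^H$ is noetherian of countable dimension, Proposition \ref{prop:PIdegSemiprimitive} identifies the PI degree of the quotient with the supremum of the dimensions of its irreducible modules. To manufacture irreducible subquotients of arbitrary dimension, the plan is to exploit the concrete structure uncovered in the proof of Proposition \ref{prop:multiplicityNotSpherical}: a cyclic generator $v \in M_\mu$ supports a free action of the infinite-dimensional commutative algebra $\rring{U}^{B'}$, while varying $\mu$ over characters of $\Ad(g)\lie{b}$ separates $\univcent{g}$-infinitesimal characters. Combining these two sources of complexity should force irreducible $\univ{g}^H$-subquotients of unbounded dimension to appear. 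The main obstacle is precisely this final step: an infinite-dimensional module over a noetherian $\CC$-algebra does not by itself force infinite PI degree, as $\CC[x]$ (of PI degree $1$) demonstrates. The non-sphericity hypothesis, encoded geometrically in the infinite-dimensional $\rring{U}^{B'}$ acting freely on $M_\mu$, must therefore be converted into unboundedness of the dimensions of the irreducible $\univ{g}^H$-subquotients themselves rather than being merely a source of infinite overall dimension for $M_\mu$.
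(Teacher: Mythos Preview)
Your setup is essentially correct, and you have correctly identified the obstacle: an infinite-dimensional module alone cannot force infinite PI degree. The missing ingredient is Fact~\ref{fact:BoundedTorHom}. The module
\[
M_\mu \;=\; \ind{g}{h}(V)\otimes_{\univ{b}}\CC_\mu \;\simeq\; V\otimes_{\univ{h}}\bigl(\univ{g}\otimes_{\univ{b}}\CC_\mu\bigr)
\;=\; H_0\bigl(\lie{h};\, \ind{g}{b}(\CC_\mu)\otimes V\bigr)
\]
is exactly of the form to which Fact~\ref{fact:BoundedTorHom} applies (with $G'=H$, $K=K'$ trivial), provided both $\ind{g}{b}(\CC_\mu)$ and $V$ belong to uniformly bounded families. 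The Verma module $\ind{g}{b}(\CC_\mu)$ is in $\calO_{\lie{b}}$, hence uniformly bounded by Fact~\ref{fact:UniformlyBoundedO}; and if you take the Duflo route and choose $V$ to be an irreducible \emph{highest weight} $\lie{h}$-module, then $V$ is likewise uniformly bounded. Fact~\ref{fact:BoundedTorHom} then gives $\Len_{\univ{g}^H}(M_\mu)<\infty$. Since $M_\mu$ is infinite-dimensional of finite length, some irreducible $\univ{g}^H$-subquotient is infinite-dimensional, and Proposition~\ref{prop:pidLowerbound} forces $\PIdeg=\infty$.

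Two remarks. First, the Zorn's-lemma choice of $V$ is risky here: an arbitrary irreducible $\lie{h}$-module need not lie in any category (such as $\calO$ or Harish-Chandra modules) for which uniform boundedness is known, so Fact~\ref{fact:BoundedTorHom} may not apply. Use Duflo's theorem as the paper does. Second, your detour through Lemma~\ref{lem:CommuteTwoSidedIdeal} is unnecessary: if you pick a maximal two-sided ideal $J\supseteq I$ and take $V$ with $\Ann_{\univ{h}}(V)={}^tJ$, then by Lemma~\ref{lem:faithfully1} the $\univ{g}^H$-annihilator of $\ind{g}{h}(V)$ is $\univ{g}^H\cap J\univ{g}\supseteq \univ{g}^H\cap I\univ{g}$ directly.
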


\begin{proof}
	Take a maximal two-sided ideal $J$ containing $I$.
	Then $J$ is a primitive ideal of $\univ{h}$.
	By Duflo's theorem (Fact \ref{fact:DufloIdeal}), we can take an irreducible highest weight module $V$ of $\lie{h}$
	such that $\Ann_{\univ{h}}(V) = {}^t\! J$.

	By Proposition \ref{prop:multiplicityNotSpherical}, there are a Borel subgroup $B$ of $G_0$ and a character $\mu$ of $\lie{b}$
	such that $\dim_{\CC}(\ind{g}{h}(V)\otimes_{\univ{b}} \CC_{\mu}) = \infty$.
	By Fact \ref{fact:BoundedTorHom}, the length of the $\univ{g}^H/I\univ{g}\cap \univ{g}^H$-module $V\otimes_{\univ{h}}\univ{g}\otimes_{\univ{b}} \CC_{\mu}$
	is finite.
	We therefore obtain $\PIdeg(\univ{g}^H/I\univ{g} \cap \univ{g}^H) = \infty$ by Proposition \ref{prop:pidLowerbound}.
\end{proof}

\begin{remark}
	Suppose that $G$ is connected and $I = \lie{h}\univ{h}$.
	Then the algebra $\univ{g}^H/I\univ{g} \cap \univ{g}^H$ is isomorphic to the algebra of all $G$-invariant differential operators on $G/H$
	(see e.g.\ \cite[Theorem 4.6]{He00}).
	In this case, it is well-known that $\univ{g}^H/I\univ{g} \cap \univ{g}^H$ is commutative (i.e.\ $\PIdeg(\univ{g}^H/I\univ{g} \cap \univ{g}^H) = 1$)
	if and only if $G/H$ is $G$-spherical \cite[Theorem 25.4]{Ti11}.
\end{remark}

\begin{proposition}\label{prop:boundedAndSphericalGP}
Let $P$ be a parabolic subgroup of $G_0$ with unipotent radical $U$
and $J$ a proper two-sided ideal of $\univ{p}$ containing $\lie{u}$.
Put
\begin{align*}
	I:=\set{X \in \univ{g}: X\univ{g}\subset \univ{g}J}.
\end{align*}
If $G_0/P$ is not $G'_0$-spherical, then we have $\PIdeg(\univ{g}^{G'}/I \cap \univ{g}^{G'}) = \infty$.
\end{proposition}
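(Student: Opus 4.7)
The plan is to imitate the proof of Proposition~\ref{prop:boundedAndSphericalGH}, substituting a generalized Verma module $\ind{g}{p}(V)$ for the induced module $\ind{g}{h}(V)$ used there. First I would reduce to the case where $J$ is a maximal proper two-sided ideal of $\univ{p}$ containing $\lie{u}$: enlarging $J$ to $J'$ enlarges $I$ to the corresponding $I'$, because $\univ{g}J\subset\univ{g}J'$ forces $\{X:X\univ{g}\subset\univ{g}J\}\subset\{X:X\univ{g}\subset\univ{g}J'\}$, and hence $\univ{g}^{G'}/(I\cap\univ{g}^{G'})$ surjects onto $\univ{g}^{G'}/(I'\cap\univ{g}^{G'})$, so the PI-degree claim is reduced to the larger ideal. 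With $J$ maximal and $\lie{u}\subset J$, its image $\overline J$ in $\univ{l}:=\univ{p}/\lie{u}\univ{p}$ is a primitive ideal of $\univ{l}$, where $\lie{l}$ is a Levi of $\lie{p}$. Applying Duflo's theorem (Fact~\ref{fact:DufloIdeal}) to $\lie{l}$ then yields an irreducible highest weight $\lie{l}$-module $V$ with $\Ann_{\univ{l}}(V)=\overline J$; inflating $V$ along $\lie{p}\twoheadrightarrow\lie{l}$ produces a $\lie{p}$-module on which $\lie{u}$ acts trivially and whose annihilator in $\univ{p}$ is exactly $J$.

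Next, since $G_0/P$ is not $G'_0$-spherical, Proposition~\ref{prop:multiplicityNotSpherical} applied with $H=P$ supplies some $g\in G_0$ and a character $\mu$ of $\Ad(g)\lie{b}'$ such that $\ind{g}{p}(V)\otimes_{\calU(\Ad(g)\lie{b}')}\CC_\mu$ is infinite-dimensional. The generalized Verma module $\ind{g}{p}(V)=\univ{g}\otimes_\univ{p}V$ lies in the BGG category $\calO_\lie{g}$ (choose a Borel of $\lie{g}$ containing a compatible Borel of $\lie{l}$ together with $\lie{u}$), so by Fact~\ref{fact:UniformlyBoundedO} it is uniformly bounded; the length-bound argument from the proof of Proposition~\ref{prop:boundedAndSphericalGH} (which rests on Fact~\ref{fact:BoundedTorHom}) then transfers verbatim to show that the infinite-dimensional quotient above has finite length as a $\univ{g}^{G'}$-module. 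Consequently it admits an irreducible $\univ{g}^{G'}$-subquotient of infinite dimension.

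Finally, I would observe that $I$ annihilates $\ind{g}{p}(V)$: writing $V\simeq\univ{p}/K$ for a maximal left ideal $K\supset J$ (the annihilator of a chosen highest weight generator), one has $\ind{g}{p}(V)\simeq\univ{g}/\univ{g}K$ as a left $\univ{g}$-module, and any $X\in I$ satisfies $X\univ{g}\subset\univ{g}J\subset\univ{g}K$ and therefore acts trivially. Hence the $\univ{g}^{G'}$-action on the infinite-dimensional irreducible subquotient factors through $\univ{g}^{G'}/(I\cap\univ{g}^{G'})$, and Proposition~\ref{prop:pidLowerbound} gives $\PIdeg(\univ{g}^{G'}/(I\cap\univ{g}^{G'}))=\infty$. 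The main point requiring care is transferring the length-bound step cleanly to the parabolic setting: I expect this to be routine because the non-reductivity of $\Ad(g)\lie{b}'$ is absorbed by the uniform boundedness of objects in $\calO_\lie{g}$, exactly as in the reductive case of Proposition~\ref{prop:boundedAndSphericalGH}.
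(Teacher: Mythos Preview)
Your approach is essentially the same as the paper's: reduce to maximal $J$, use Duflo's theorem to realize $J$ as the annihilator of an irreducible highest weight $\lie{p}$-module $V$, apply Proposition~\ref{prop:multiplicityNotSpherical} to produce an infinite-dimensional coinvariant space, and then combine a finite-length statement from Fact~\ref{fact:BoundedTorHom} with Proposition~\ref{prop:pidLowerbound}. Your verification that $I$ annihilates $\ind{g}{p}(V)$ is fine (the paper instead notes $I\subset I'=\Ann_{\univ{g}}(\ind{g}{p}(V))$ directly).

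The one step you flag as ``requiring care'' is indeed the delicate point, but your diagnosis is slightly off. The issue is not the non-reductivity of $\Ad(g)\lie{b'}$: Borel subalgebras are never reductive, and in Proposition~\ref{prop:boundedAndSphericalGH} the analogous Borel was just as non-reductive. The real obstruction is that $\Ad(g)\lie{b'}$ is a Borel of $\Ad(g)\lie{g'}$, which for $g\in G_0\setminus N_{G_0}(G')$ is \emph{not} contained in $\lie{g'}$. Hence you cannot rewrite $\ind{g}{p}(V)\otimes_{\calU(\Ad(g)\lie{b'})}\CC_\mu$ as $\ind{g}{p}(V)\otimes_{\univ{g'}}W'$ with $W'$ a uniformly bounded $\lie{g'}$-module, which is what Fact~\ref{fact:BoundedTorHom} needs to yield a $\univ{g}^{G'}$-length bound. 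This difficulty does not arise in Proposition~\ref{prop:boundedAndSphericalGH} because there the conjugate $\Ad(g)B$ is again a Borel of $G_0$ itself. The paper's fix is clean: since $I$ is a two-sided ideal of $\univ{g}$ it is $\Ad(G_0)$-stable, so $\Ad(g)$ induces an isomorphism
\[
\univ{g}^{G'}/(I\cap\univ{g}^{G'})\;\simeq\;\univ{g}^{\Ad(g)G'}/(I\cap\univ{g}^{\Ad(g)G'}),
\]
and one may therefore replace $G'$ by $\Ad(g)G'$ and $B'$ by $\Ad(g)B'$, i.e.\ assume $g=e$. After that reduction one has $\ind{g}{p}(V)\otimes_{\univ{b'}}\CC_\mu\simeq\ind{g}{p}(V)\otimes_{\univ{g'}}\ind{g'}{b'}(\CC_\mu)$, and Fact~\ref{fact:BoundedTorHom} applies directly with the Verma module $\ind{g'}{b'}(\CC_\mu)\in\calO_{\lie{b'}}$ as the uniformly bounded $\lie{g'}$-module. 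Once you insert this conjugation step, your argument is complete and matches the paper's.
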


\begin{remark}
	If the annihilator of a $\univ{p}$-module $V$ is equal to $J$,
	then we have $I = \Ann_{\univ{g}}(\ind{g}{p}(V))$.
	See Lemma \ref{lem:AnnhilatorOUmod}.
\end{remark}

\begin{proof}
	Take a maximal two-sided ideal $J'$ of $\univ{p}$ containing $J$,
	and put
	\begin{align*}
		I':=\set{X \in \univ{g}: X\univ{g}\subset \univ{g}J'}.
	\end{align*}
	Then $J'$ is a primitive ideal of $\univ{p}$.
	By Duflo's theorem (Fact \ref{fact:DufloIdeal}), we can take an irreducible highest weight module $V$ of $\lie{p/u}$
	such that $\Ann_{\univ{p}}(V) = J'$.
	Then we have $I' = \Ann_{\univ{g}}(\ind{g}{p}(V))$.

	By Proposition \ref{prop:multiplicityNotSpherical}, there are an element $g \in G_0$ and a character $\mu$ of $\Ad(g)\lie{b'}$
	such that $\dim_{\CC}(\ind{g}{p}(V)\otimes_{\calU(\Ad(g)\lie{b'})} \CC_{\mu}) = \infty$.
	Since $I$ is $G_0$-stable, $\Ad(g)$ induces an isomorphism
	\begin{align*}
		\univ{g}^{G'}/I \cap \univ{g}^{G'} \simeq 
		\univ{g}^{\Ad(g)G'}/I\cap \univ{g}^{\Ad(g)G'}.
	\end{align*}
	Hence if necessary, replacing $G'$ with $\Ad(g)G'$ and $B'$ with $\Ad(g)B'$, we can assume $g=e$.
	
	By Fact \ref{fact:BoundedTorHom}, the $\univ{g}^{G'}/I \cap \univ{g}^{G'}$-module
	\begin{align*}
		\ind{g}{p}(V)\otimes_{\univ{b'}} \CC_{\mu} \simeq \ind{g}{p}(V)\otimes_{\univ{g'}} \ind{g'}{b'}(\CC_\mu)
	\end{align*}
	has finite length.
	We therefore obtain $\PIdeg(\univ{g}^{G'}/I \cap \univ{g}^{G'}) = \infty$ by Proposition \ref{prop:pidLowerbound}.
\end{proof}

We have proved the results in this subsection by the representation theoretic approach.
One can show the results by a Poisson geometrical approach.
See Section \ref{sect:Coisotropic}.

\subsection{Spherical pair}\label{subsect:Spherical}

In the previous subsection, we have seen that the finiteness of the PI degree implies the sphericity of homogeneous varieties.
Hence to consider the uniform boundedness of multiplicities in induced representations, we only have to deal with spherical varieties.
We shall recall and study the local structure theorem of spherical varieties.

Let $G$ be a connected reductive algebraic group with a complex conjugate $\tau$
and $H$ a $\tau$-stable reductive subgroup of $G$.
Put $G_{\RR}:=G^{\tau}$ and $H_{\RR}:=H^{\tau}$.
We assume that $(G, H)$ is a spherical pair, that is, a Borel subgroup of $G$ has an open orbit in $G/H$.
We need the following result by Brion--Luna--Vust \cite[0.4]{BLV86}.

\begin{fact}\label{fact:BLV}
Let $B$ be a Borel subgroup of $G$ such that $BH\subset G$ is open dense.
Put $P:=\set{g \in G: g BH = BH}$.
Then $P$ is a parabolic subgroup of $G$ containing $B$
and $L:=P\cap H$ is a reductive subgroup of $H$ containing the derived group of a Levi subgroup of $P$.
\end{fact}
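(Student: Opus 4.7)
The plan is to build $P$ and $L$ from the open $B$-orbit $X_0 := BH/H \subset G/H$ and then read off their structure from a normal form for points of $X_0$. First, observe that since $B \cdot BH = BH$ trivially, the stabilizer $P$ contains $B$, and $P$ is closed (it is the stabilizer of the closed subset $\overline{BH} = G$ minus the closed set $G \setminus BH$, or equivalently the $G$-stabilizer of $X_0 \in G/H \bmod B$). A closed subgroup containing a Borel is parabolic, which handles the first assertion.

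Next I would show that $PH = BH$, so the $P$-orbit of $eH$ in $G/H$ coincides with $X_0$, and therefore $X_0 \cong P/L$ where $L = P \cap H$ is the isotropy. Choose a Levi decomposition $P = L_P \ltimes R_u(P)$ with $R_u(P) \subset B$ (standard), and consider the map
\begin{equation*}
\varphi \colon R_u(P) \times L_P \longrightarrow X_0, \qquad (u,\ell) \longmapsto u\ell H.
\end{equation*}
Since $B = T R_u(B) \subset L_P R_u(P)$ with $T$ a maximal torus of $L_P$, $\varphi$ is surjective, and its fiber over $eH$ is exactly $\{(u, \ell) : u\ell \in L\}$, which projects bijectively onto $L_P \cap (R_u(P) L)$. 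The key step — and the technical heart of the argument — is to show that $R_u(P) \cap L = \{e\}$, equivalently that the map $L \hookrightarrow P \to L_P$ is injective. This is the main obstacle: it requires showing that any unipotent element of $L$ lies in a Borel of $H$ that is compatible with $B$, and then using that $B \cap H$ is solvable together with the openness of $BH$ to force triviality. One standard way is to use that the tangent space computation at $eH$ gives $\lie{b} + \lie{h} = \lie{g}$ (by the open-orbit hypothesis), and intersecting with $\lie{p}$ gives $\lie{l}_P + (\lie{l}_P \cap \lie{h}) + (\lie{u}_P \cap \lie{h}) = \lie{p}$, from which one extracts $\lie{u}_P \cap \lie{l} = 0$.

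Once injectivity of $L \to L_P$ is established, $\varphi$ descends to an isomorphism $R_u(P) \times (L_P/(L_P \cap L)) \xrightarrow{\sim} X_0$, and in particular $L$ is isomorphic to its image in $L_P$. Because $L_P \cap L$ stabilizes the identity coset in $L_P / (L_P \cap L) \subset X_0$, counting dimensions against $\dim X_0 = \dim B - \dim(B \cap H) = \dim R_u(P) + \dim L_P - \dim(L_P \cap L)$ shows the quotient $L_P / (L_P \cap L)$ is an affine homogeneous $L_P$-variety of dimension $\dim L_P - \dim L$. Since $B \cap L_P$ is a Borel of $L_P$ acting transitively on this quotient, $L_P / (L_P \cap L)$ must be a torus quotient, so $(L_P \cap L) \supset [L_P, L_P]$. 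This simultaneously gives that $L$ contains the derived group of a Levi of $P$ and, since $[L_P, L_P]$ is reductive of full semisimple rank in $L_P$ and $L / [L_P, L_P]$ injects into the torus $L_P / [L_P, L_P]$, that $L$ itself is reductive.
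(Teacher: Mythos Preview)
The paper does not prove this statement: it is recorded as a Fact with a citation to Brion--Luna--Vust \cite{BLV86}, so there is no ``paper's own proof'' to compare against. What follows is an assessment of your argument on its own merits.

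Your outline is the right shape, and you correctly isolate the heart of the matter: once one knows $R_u(P)\cap L=\{e\}$ (equivalently, that the projection $L\hookrightarrow P\twoheadrightarrow L_P$ is injective), the rest can be pushed through. But your tangent-space argument for this step does not work. Intersecting $\lie b+\lie h=\lie g$ with $\lie p$ and using $\lie b\subset\lie p$ gives, by modularity,
\[
\lie p \;=\; (\lie b+\lie h)\cap\lie p \;=\; \lie b + (\lie h\cap\lie p) \;=\; \lie b + \lie l,
\]
not the formula you wrote. From $\lie b+\lie l=\lie p$ one can pass to the quotient $\lie p/\lie u_P\cong\lie l_P$ and conclude that $\lie b_{L_P}+\overline{\lie l}=\lie l_P$ (where $\overline{\lie l}$ is the image of $\lie l$), but this says nothing about the kernel $\lie u_P\cap\lie l$ of the projection. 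In other words, linear algebra on tangent spaces alone does not separate $\lie l$ from $\lie u_P$; some further input is needed, and this is exactly the nontrivial content of the local structure theorem.

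What Brion--Luna--Vust actually do is construct the Levi $L_P$ from a $B$-eigenfunction $f$ on $G/H$ (a section of a suitable line bundle) whose nonvanishing locus is precisely $X_0$; the stabilizer of the line $\CC f$ is $P$, and the differential $df$ at a point of $X_0$ furnishes a moment-map-type splitting that forces $R_u(P)$ to act freely on $X_0$. The freeness is not a consequence of the dimension count but of this explicit construction. Your later steps also conflate $L_P\cap L$ with the image of $L$ in $L_P$ (these differ unless $L\subset L_P$, which is what you are trying to prove), so the product decomposition $R_u(P)\times(L_P/(L_P\cap L))\cong X_0$ is not yet justified either.
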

From the fact, we can see that $(B\cap H)_0 \subset L_0$ is a Borel subgroup of $L_0$
and $B\cap H$ meets every connected component of $L$ (see \cite[Proposition 4.3]{Ki14}).
In fact, $L/(B\cap H)$ is isomorphic to the full flag variety of a Levi subgroup of $P$.

The reductive group $L$ may not be $\tau$-stable, which depends on the choice of $B$.
The following proposition enable us to take a Borel subgroup $B$ such that $L$ is $\tau$-stable.

\begin{proposition}\label{prop:BLVreal}
	There exists a Borel subgroup $B$ such that $BH\subset G$ is open dense and the reductive group $L$ in Fact \ref{fact:BLV} is $\tau$-stable.
\end{proposition}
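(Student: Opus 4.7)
The plan is to reduce the proposition to a twisted fixed-point question on an $H$-homogeneous space, and then to solve it using real Galois cohomology for reductive groups.

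First, since $H$ is $\tau$-stable, the identity $L = P \cap H$ gives $\tau(L) = \tau(P) \cap H$, so $L$ is $\tau$-stable whenever $P$ is. Hence it suffices to produce a Borel $B \in \calB^\circ := \{B \in \calB : BH\text{ open in }G\}$ such that the parabolic $P := \{g \in G : gBH = BH\}$ is $\tau$-stable, where $\calB$ denotes the full flag variety of $G$. By sphericity of $(G,H)$ and irreducibility of $\calB$, the subset $\calB^\circ$ is the unique open $H$-orbit on $\calB$ (any $B \in \calB^\circ$ has $H$-orbit of dimension $\dim \calB$ by a standard count, and two open orbits on an irreducible variety must coincide); in particular $\calB^\circ$ is a single $H$-orbit and is $\tau$-stable.

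Next, fix $B_0 \in \calB^\circ$ with $P_0 := \{g : gB_0H = B_0H\}$ and $L_0 := P_0 \cap H$. Since $\tau(B_0) \in \calB^\circ$, there exists $h_1 \in H$ with $\tau(B_0) = h_1 B_0 h_1^{-1}$, so that $\tau(P_0) = h_1 P_0 h_1^{-1}$. Applying $\tau$ a second time and using $\tau^2 = \mathrm{id}$ yields $\tau(h_1)h_1 \in N_G(B_0) \cap H \subset L_0$. Writing a general element of $\calB^\circ$ as $hB_0 h^{-1}$ with $h \in H$ (associated parabolic $hP_0 h^{-1}$), the $\tau$-stability condition $\tau(hP_0 h^{-1}) = hP_0 h^{-1}$ translates, using $N_G(P_0) = P_0$, into $h^{-1}\tau(h) h_1 \in L_0$. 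Thus we seek a fixed point of the involution
\[
\sigma \colon H/L_0 \longrightarrow H/L_0, \qquad \sigma(hL_0) := \tau(h) h_1 L_0,
\]
where well-definedness and $\sigma^2 = \mathrm{id}$ both follow from the cocycle identity $\tau(h_1)h_1 \in L_0$ (which also implies $h_1^{-1}\tau(L_0) h_1 = L_0$).

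The main obstacle is producing a $\sigma$-fixed point on $H/L_0$. This is a standard problem in non-abelian real Galois cohomology: the datum of $h_1$ defines a class $[h_1]$ in the twisted pointed set $H^1(\langle \tau\rangle, H/L_0)$, and a fixed point of $\sigma$ exists precisely when $[h_1]$ is trivial. Since $L_0$ is reductive (by Fact \ref{fact:BLV}) and $H$ is reductive, the long exact sequence of pointed sets coming from $L_0 \hookrightarrow H \twoheadrightarrow H/L_0$ together with the vanishing of $H^1(\langle\tau\rangle, -)$ for suitable connected reductive real groups would show that the class of $h_1$ is killed upon passing to $H^1(\langle\tau\rangle, H/L_0)$; the cocycle condition $\tau(h_1)h_1 \in L_0$ is exactly what is needed to lift $[h_1]$ to $H^1(\langle\tau\rangle, L_0)$, and a Cartan decomposition of the real form $(L_0)^\tau$ then triviliazes it. The reductivity of $L_0$ (granted by Fact \ref{fact:BLV}) is essential for this cohomological step; without it the class need not vanish.
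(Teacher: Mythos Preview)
Your reduction to finding a $\tau$-stable parabolic $P$ is too strong and fails in general. Take $G = \SL(2,\CC)$, $H = \SO(2,\CC)$, and let $\tau$ be the compact involution $g \mapsto (\bar g^{\,t})^{-1}$, so that $G^\tau = \SU(2)$ and $H$ is $\tau$-stable. For any $B \in \calB^\circ$ one has $P = B$ (since $H$ does not act transitively on $\calB = \mathbb{P}^1$, so $P \neq G$), but no proper parabolic of $\SL(2,\CC)$ is stable under a compact involution: a $\tau$-stable $P$ would yield a proper parabolic of the compact group $\SU(2)$. Hence the $\sigma$-fixed point you seek on $H/L_0$ simply does not exist here, and the Galois-cohomological sketch cannot be completed --- indeed $H^1(\langle\tau\rangle, -)$ does not vanish for reductive groups in general, and your coefficients $L_0$ are not even $\tau$-stable, so the meaning of $H^1(\langle\tau\rangle, L_0)$ is unclear. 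The observation that $\tau(L) = \tau(P)\cap H$ only gives a sufficient condition for $\tau$-stability of $L$; the proposition requires less.

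The paper proceeds quite differently. When $\tau$ is a Cartan involution it conjugates $L$ (not $P$) by an element of $H$ to make $L$ $\tau$-stable, using that any reductive subgroup of $H$ can be conjugated into a $\tau$-stable one. For general $\tau$ it picks a minimal parabolic $P_\RR = M_\RR A_\RR N_\RR$ of $G_\RR$ with $P_\RR H_\RR$ open, passes to the quotient $G' = P/AN$ (on which the induced conjugation $\bar\tau$ is Cartan), applies the first case there, and then recovers $\tau$-stability of $L$ from that of $L \cap P$ via the observable-hull description $L = \{g \in G : f(g) = f(e) \text{ for all } f \in \rring{G}^{L\cap P}\}$.
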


\begin{proof}
	If $\tau$ is a Cartan involution of $G$, the assertion is easy as follows.
	Take a Borel subgroup $B$ of $G$ such that $BH$ is open in $G$.
	Since $L$ is a reductive subgroup of $H$ and $\tau$ is a Cartan involution, there is $g \in H$ such that $\Ad(g)L$ is $\tau$-stable.
	Hence $\Ad(g)B$ satisfies the desired conditions.

	We shall show the general case.
	Since $G/H$ is a spherical $G$-variety, we can take a minimal parabolic subgroup $P_{\RR}$ of $G_{\RR}$
	such that $P_{\RR}H_{\RR} \subset G_{\RR}$ is open.
	Then the analytic subgroup $P \subset G$ with the Lie algebra $\lie{p} \subset \lie{g}$ is a parabolic subgroup of $G$ and $PH$ is open dense in $G$.
	Fix a Langlands decomposition $P_{\RR}=M_{\RR}A_{\RR}N_{\RR}$.
	Let $M$ (resp.\ $A$, $N$) be the analytic subgroup of $G$ with the Lie algebra $\lie{m}$ (resp.\ $\lie{a}$, $\lie{n}$).
	Then the unipotent radical of $P$ is $N$.

	Take a Borel subgroup $B$ of $G$ contained in $P$ such that $BH$ is open in $G$.
	Then $L$ contains $H\cap AN$ by the definition of $L$.
	Set $G':=P/AN\simeq M/(M\cap A)$, $H':=(H\cap P)/(H\cap AN)$ and $B':=B/AN$.
	Then $B'$ is a Borel subgroup of $G'$ and $B'H'$ is open in $G'$ by
	\begin{align*}
		P/B\simeq G'/B' \supset H'B'/B'\simeq (P\cap H)B/B = P/B\cap HB/B.
	\end{align*}
	Since $P$ and $AN$ are $\tau$-stable, $\tau$ induces a complex conjugate $\overline{\tau}$ on $G'$.
	Let $p$ be the natural projection $P\rightarrow P/AN$.
	For $g \in H\cap P$, we have
	\begin{align*}
		g \in L\cap P & \Leftrightarrow g BH = BH \\
		&\Leftrightarrow g (P\cap BH) = P\cap BH \\
		&\Leftrightarrow g B(P\cap H) = B(P\cap H) \\
		&\Leftrightarrow p(g)B'H'= B'H'.
	\end{align*}
	This implies that $p(L\cap P)$ is $L$ for $(G, H, B) = (G', H', B')$.
	Since $\overline{\tau}$ is a Cartan involution on $G'$, we can choose $B$ such that $p(L\cap P) = (L\cap P)/(H\cap AN)$ is $\overline{\tau}$-stable.
	Hence $L\cap P$ is $\tau$-stable.

	Since $L$ is reductive and $L/L\cap B$ is connected and projective, $L$ can be written as
	\begin{align*}
	L=&\set{g \in G: f(g)=f(e) \text{ for any }f\in \rring{G}^L}\\
	= &\set{g \in G: f(g)=f(e) \text{ for any }f\in \rring{G}^{L\cap P}}.
	\end{align*}
	(In other words, $L$ is the observable hull of $L\cap P$.)
	By $L\cap P = \tau(L\cap P)$, we obtain $L=\tau(L)$.
\end{proof}

\section{Uniformly bounded multiplicities}\label{sect:UniformlyBoundedTheorem}

In this section, we prove the main theorems in this paper.
We give necessary and sufficient conditions for the multiplicity-freeness of restrictions and inductions using the PI degree.
We have proved crucial parts of the theorems in Lemmas \ref{lem:UpperBoundTorGeneral}, \ref{lem:lowerboundQuot}, \ref{lem:lowerboundSub} and \ref{lem:lowerboundTensor}.
The remaining part is to verify the assumptions of the lemmas under concrete settings.

\subsection{Outline}\label{sect:Setting}

Let $G_\RR$ be a real reductive Lie group and $G'_\RR$ a reductive subgroup of $G_\RR$.
Fix a maximal compact subgroup $K_\RR$ of $G_\RR$ such that $K'_\RR:=G'_\RR \cap K_\RR$ is a maximal compact subgroup of $G'_\RR$.
Let $K$ and $K'$ denote the complexifications of $K_\RR$ and $K'_\RR$, respectively.
Then we have a pair $(\lie{g}, K)$ and its subpair $(\lie{g'}, K')$.

Assume that $\lie{g'}$ is algebraic in $\lie{g}$.
Then there are a reductive algebraic group $G$ with the Lie algebra $\lie{g}$ and a reductive subgroup $G'$ of $G$ with the Lie algebra $\lie{g'}\subset \lie{g}$ such that the inclusion $(\lie{g}_\RR, \lie{g}'_\RR) \hookrightarrow (\lie{g}, \lie{g'})$ of pairs of Lie algebras lifts to a homomorphism $(G_\RR, G'_\RR)\rightarrow (G, G')$ of pairs of Lie groups.
We can assume that the image of $G_\RR$ (resp.\ $G'_\RR$) is dense in $G$ (resp.\ $G'$).

In this section, we consider the supremum of multiplicities in restrictions and inductions of modules of finite length.
For example, for a $(\lie{g}, K)$-module $V$ of finite length, 
we want to compare $\SupDim(\Hom_{\lie{g}, K}(V, \calC_{\lie{g'}, K'}))$
with the PI degree $\PIdeg((\univ{g}/I)^{G'})$, where $I = \Ann_{\univ{g}}(V)$.
See Definition \ref{def:supmul} for the notation $\SupDim(\cdot)$.
A conclusion in this section is that there exists some constant $C > 0$ independent of $V$ satisfying
\begin{align*}
	\PIdeg((\univ{g}/I)^{G'})&\leq \SupDim(\Hom_{\lie{g'}, K'}(V, \calC_{\lie{g'}, K'})) \\
	&\leq C \cdot \Len_{\lie{g}, K}(V)\cdot \PIdeg((\univ{g}/I)^{G'}).
\end{align*}

In an abstract setting, we have already given an upper bound and a lower bound of the supremum of multiplicities by the PI degree in Lemmas \ref{lem:UpperBoundTorGeneral}, \ref{lem:lowerboundQuot}, \ref{lem:lowerboundSub} and \ref{lem:lowerboundTensor}.
We shall explain how to apply the lemmas to concrete settings.

We can easily verify the assumption of Lemma \ref{lem:UpperBoundTorGeneral}.
In fact, any family of $(\lie{g}, K)$-modules (or objects in $\calO_{\lie{b}}$)
with bounded lengths is a uniformly bounded family as we have seen in Fact \ref{fact:UniformlyBoundedFamilyGK}.

To use Lemmas \ref{lem:lowerboundQuot}, \ref{lem:lowerboundSub} and \ref{lem:lowerboundTensor}, we need to set up the data $\calA$, $\calB$, $\calC$, $\widetilde{\calC}$, $\calF^\calC$ and $\calF^{\widetilde{\calC}}$.
For restrictions (the branching problem), we set $\calA = \univ{g}$.
In this case, $\calF^\calC$ and $\calF^{\widetilde{\calC}}$ are just forgetful functors (or inclusion functors).

\begin{example}
	Set $\calB = \Mod(G'_\RR)$, $\calC = \calC_{G'_\RR}$ and $\widetilde{\calC} = \CWCat{G_\RR}$.
	Then we have
	\begin{align*}
		\Hom_{\calB}(\calF^{\widetilde{\calC}}(V), \calF^\calC(V')) = \Hom_{G'_\RR}(V|_{G'_\RR}, V')
	\end{align*}
	for $V \in \widetilde{\calC}$ and $V' \in \calC$.
	This is the case of the branching problem of Casselman--Wallach representations.

	Set $\calB = \Mod(\lie{g'}, K')$, $\calC = \calC_{\lie{g'}, K'}$ and $\widetilde{\calC} = \Mod_{fl}(\lie{g}, K)$.
	Then we have
	\begin{align*}
		\Hom_{\calB}(\calF^{\widetilde{\calC}}(V), \calF^\calC(V')) = \Hom_{\lie{g'}, K'}(V|_{\lie{g'}, K'}, V')
	\end{align*}
	for $V \in \widetilde{\calC}$ and $V' \in \calC$.
	This is the case of the branching problem of $(\lie{g}, K)$-modules.
\end{example}

For inductions (harmonic analysis), we set $\calA = \rring{G/G'}\otimes \univ{g}$.
In this case, $\calF^{\widetilde{\calC}}$ is more complicated than the restriction case.
$\calF^{\widetilde{\calC}}$ is one of the induction functors $\Ind^{G_\RR}_{G'_\RR}$, $\pro{\lie{g}, K'}{\lie{g'}, K'}$ and $\ind{g}{g'}$.

\begin{example}
	Set $\calB = \Mod(G_\RR)$, $\calC = \calC_{G_\RR}$, $\widetilde{\calC} = \CWCat{G'_\RR}$ and $\calF^{\widetilde{\calC}} = \Ind^{G_\RR}_{G'_\RR}$.
	Let $\calF^\calC$ be the inclusion functor.
	Then we have
	\begin{align*}
		\Hom_{\calB}(\calF^\calC(V), \calF^{\widetilde{\calC}}(V')) = \Hom_{G_\RR}(V, \Ind^{G_\RR}_{G'_\RR}(V')) \simeq \Hom_{G'_\RR}(V|_{G'_\RR}, V')
	\end{align*}
	for $V \in \calC$ and $V' \in \widetilde{\calC}$.
	This is the case of harmonic analysis of $G_\RR/G'_\RR$.

	Set $\calB = \Mod(\lie{g}, K')$, $\calC = \calC_{\lie{g}, K}$, $\widetilde{\calC} = \Mod_{fl}(\lie{g'}, K')$ and $\calF^{\widetilde{\calC}} = \pro{\lie{g}, K'}{\lie{g'}, K'}$.
	Let $\calF^\calC$ be the forgetful functor.
	Then we have
	\begin{align*}
		\Hom_{\calB}(\calF^\calC(V), \calF^{\widetilde{\calC}}(V')) &= \Hom_{\lie{g}, K'}(V|_{\lie{g}, K'}, \pro{\lie{g}, K'}{\lie{g'}, K'}(V'))\\
		&\simeq \Hom_{\lie{g'}, K'}(V|_{\lie{g'}, K'}, V')
	\end{align*}
	for $V \in \calC$ and $V' \in \widetilde{\calC}$.
\end{example}

To apply Lemmas \ref{lem:lowerboundQuot}, \ref{lem:lowerboundSub} and \ref{lem:lowerboundTensor}, we need to verify two assumptions.
We shall explain this using the restriction and the induction of Casselman--Wallach representations as above.
If $\calC = \calC_{G'_\RR}$ and $\widetilde{\calC} = \CWCat{G_\RR}$ (the restriction case),
the two assumptions of Lemma \ref{lem:lowerboundQuot} can be written as
\begin{enumerate}
	\item $\rad(V, \calC) \neq V$
	\item $\overline{I^k V}$ is $G_\RR$-stable for $I = \Ann_{\univ{g}}(V/\rad(V, \calC))$
\end{enumerate}
for any non-zero $V \in \CWCat{G_\RR}$.

The first condition says that $V|_{G'_\RR}$ has at least one irreducible quotient.
We will show this in the next subsection.
Remark that $\rad(V, \calC) = 0$ if $V$ is unitarizable (see Theorem \ref{thm:PointWiseGAmodule}).
The second condition is trivial if $G_\RR = G'_\RR (G_\RR)_0$.
In fact, since $I$ is a $G'_\RR$-stable two-sided ideal of $\univ{g}$,
$I^k V$ is $G_\RR$-stable and hence so is $\overline{I^k V}$.
Obviously, to show the lower bound, we can replace $G_\RR$ with $G'_\RR (G_\RR)_0$.

Note that the second condition is trivial if we work with categories of algebraic representations, e.g.\ $\Mod(\lie{g}, K)$ and $\Mod(\lie{g})$.
We should replace $K$ with $K_0K'$ as we have remarked for Casselman--Wallach representations.

If $\calC = \calC_{G_\RR}$, $\widetilde{\calC} = \CWCat{G'_\RR}$ and $\calF^{\widetilde{\calC}} = \Ind^{G_\RR}_{G'_\RR}$ (the induction case), the two assumptions of Lemma \ref{lem:lowerboundSub} can be written as
\begin{enumerate}
	\item $\soc(\Ind^{G_\RR}_{G'_\RR}(V'), \calC) \neq 0$
	\item $\Ind^{G_\RR}_{G'_\RR}(V')^{I^k} = \Ind^{G_\RR}_{G'_\RR}(V'_k)$ for some closed subrepresentation $V'_k \subset V$
\end{enumerate}
for any non-zero $V' \in \CWCat{G'_\RR}$,
where $I = \Ann_{\rring{G/G'}\otimes \univ{g}}(\soc(\Ind^{G_\RR}_{G'_\RR}(V'), \calC))$.
The first condition says that the induced representation $\Ind^{G_\RR}_{G'_\RR}(V')$ has at least one irreducible subrepresentation.
We will see this in the next subsection.
The second condition has been proved in Lemma \ref{lem:AnnSubInd}.

In cases to which Lemmas \ref{lem:lowerboundQuot}, \ref{lem:lowerboundSub} and \ref{lem:lowerboundTensor} will be applied, the second conditions are trivial or proved in Lemmas \ref{lem:AnnSubInd} and \ref{lem:AnnSubIndAlg}.
Hence what we have to show is the existence of an irreducible subrepresentation or quotient, which is the first conditions of the lemmas.

\subsection{Non-vanishing of multiplicity}

We shall show the existence of an irreducible subrepresentation or quotient as we have explained in the previous subsection.

\subsubsection{Restriction}

\begin{lemma}\label{lem:ExistenceQuotRest}
	Let $V$ be a non-zero Casselman--Wallach representation of $G_\RR$.
	Fix a Borel subalgebra $\lie{b'}$ of $\lie{g'}$ such that $\lie{b'} + \lie{k'} = \lie{g'}$.
	\begin{enumparen}
		\item\label{item:ExistenceQuotRestCW} There exists some $V' \in \CWCat{G'_\RR}$ such that $\Hom_{G'_\RR}(V, V') \neq 0$.
		\item\label{item:ExistenceQuotRestGK} There exists some $V' \in \Mod_{fl}(\lie{g'}, K')$ such that $\Hom_{\lie{g'}, K'}(V_K, V') \neq 0$.
		\item\label{item:ExistenceQuotRestGKO} There exists some $V' \in \calO_{\lie{b'}}$ such that $V_K\otimes_{\univ{g'}} V' \neq 0$.
	\end{enumparen}
\end{lemma}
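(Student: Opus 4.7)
The three parts will be proved in the order (i) $\Rightarrow$ (ii) $\Rightarrow$ (iii). The transitions rest on two observations. First, a non-zero continuous $G'_\RR$-equivariant map $\phi\colon V\to V'$ with $V'\in \CWCat{G'_\RR}$ restricts to a non-zero $(\lie{g'},K')$-map $\phi|_{V_K}\colon V_K\to V'_{K'}$: the image lies in $V'_{K'}$ because $V_K$ is $K$-finite (hence $K'$-finite) and $\phi$ is $K'_\RR$-equivariant and continuous, and non-vanishing follows from the density of $V_K$ in $V$ together with continuity of $\phi$ (if $\phi|_{V_K}=0$, then $V\subset \overline{V_K}\subset\ker\phi$, forcing $\phi=0$). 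Since $V'_{K'}\in \Mod_{fl}(\lie{g'},K')$, this gives (i)$\Rightarrow$(ii). Second, replacing the target in (ii) by the image of the $(\lie{g'},K')$-map produces a surjection $V_K\twoheadrightarrow W$ with $W\in\Mod_{fl}(\lie{g'},K')$; by right-exactness of $\cdot\otimes_{\univ{g'}}V''$, it then suffices for (iii) to exhibit $V''\in\calO_{\lie{b'}}$ with $W\otimes_{\univ{g'}}V''\neq 0$.

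To prove (i), first reduce to the case of irreducible $V$ by composing with an irreducible Casselman--Wallach quotient of $V$, which exists because $V_K$ has finite length as a $(\lie{g},K)$-module. Choose Cartan involutions of $G_\RR$ and $G'_\RR$ compatibly, together with Iwasawa decompositions $\lie{g}_\RR=\lie{k}_\RR\oplus\lie{a}_\RR\oplus\lie{n}_\RR$ and $\lie{g'_\RR}=\lie{k'_\RR}\oplus\lie{a'_\RR}\oplus\lie{n'_\RR}$ satisfying $\lie{a'_\RR}\subset\lie{a}_\RR$ and $\lie{n'_\RR}\subset\lie{n}_\RR$ (arranged by a lexicographic order on $\lie{a}_\RR$ in which $\lie{a'_\RR}$ comes first); then the corresponding minimal parabolic $P_\RR\subset G_\RR$ satisfies that $Q'_\RR:=P_\RR\cap G'_\RR$ is a parabolic subgroup of $G'_\RR$. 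By Casselman's subrepresentation theorem applied to this $P_\RR$, embed $V$ as a closed subrepresentation $V\hookrightarrow P:=\Ind^{G_\RR}_{P_\RR}(\sigma)$ for some finite-dimensional representation $\sigma$ of $P_\RR$. The $P_\RR$-equivariant evaluation $\ev_e\colon P\to\sigma$ at the identity is then automatically $Q'_\RR$-equivariant, and smooth Frobenius reciprocity for $G'_\RR$ upgrades it to a continuous $G'_\RR$-equivariant map $P\to V':=\Ind^{G'_\RR}_{Q'_\RR}(\sigma|_{Q'_\RR})$, a (degenerate) principal series of $G'_\RR$ and hence a Casselman--Wallach representation. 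Composing with $V\hookrightarrow P$ produces a continuous $G'_\RR$-map $V\to V'$; it is non-zero because $\ev_e|_V\neq 0$ (if $v(e)=0$ for all $v\in V$, then by $G_\RR$-invariance $(g^{-1}\cdot v)(e)=v(g)=0$ for every $g\in G_\RR$, forcing $V=0$).

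For (iii), take $V'':=\univ{g'}\otimes_{\univ{b'}}\CC_\mu$, a Verma module of highest weight $\mu\in\lie{t'}^*$ in $\calO_{\lie{b'}}$; the standard adjunction gives
\begin{equation*}
W\otimes_{\univ{g'}}V''\ \simeq\ (W/\lie{n'_{\lie{b'}}}W)\otimes_{\univ{t'}}\CC_\mu,
\end{equation*}
so the task reduces to producing $\mu$ for which the right-hand side is non-zero. The hypothesis $\lie{b'}+\lie{k'}=\lie{g'}$ permits us to place $\lie{b'}$ inside the complexification $\lie{p'_{\min}}$ of the minimal parabolic $\lie{p'_{\min,\RR}}=\lie{m'_\RR}\oplus\lie{a'_\RR}\oplus\lie{n'_\RR}$ of $\lie{g'_\RR}$, decomposed as $\lie{b'}=\lie{b}_{\lie{m'}}\oplus\lie{a'}\oplus\lie{n'_{\min}}$ with $\lie{n'_{\lie{b'}}}=\lie{n}_{\lie{m'}}\oplus\lie{n'_{\min}}$. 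Casselman's subrepresentation theorem, applied to the Harish--Chandra module $W$ of $G'_\RR$, yields that the Jacquet module $W/\lie{n'_{\min}}W$ is finite-dimensional and non-zero; since $\lie{n}_{\lie{m'}}$ is nilpotent and acts on this finite-dimensional space, its coinvariants remain non-zero, so $W/\lie{n'_{\lie{b'}}}W$ is a non-zero finite-dimensional $\lie{t'}$-module. Taking $-\mu$ to be any generalized $\lie{t'}$-weight appearing in this module produces the Verma $V''$ with $W\otimes_{\univ{g'}}V''\neq 0$. The main obstacle is establishing (i): specifically, the simultaneous use of Casselman's subrepresentation theorem and a parabolic $P_\RR$ of $G_\RR$ whose intersection with $G'_\RR$ is a parabolic of $G'_\RR$, ensuring that smooth Frobenius reciprocity lands in $\CWCat{G'_\RR}$; once (i) is in place, the remaining steps are essentially formal.
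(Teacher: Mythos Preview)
Your overall architecture matches the paper's: reduce (ii) and (iii) to (i), then prove (i) by embedding $V$ into a principal series and restricting to $G'_\RR$. The derivation of (iii) from (ii) via Casselman's theorem and a Verma module is the same as the paper's, only spelled out in more detail.

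There is, however, a genuine gap in your proof of (i). You assert that $Q'_\RR := P_\RR \cap G'_\RR$ is a parabolic subgroup of $G'_\RR$, and hence that $\Ind^{G'_\RR}_{Q'_\RR}(\sigma|_{Q'_\RR})$ is a Casselman--Wallach representation. This is false in general. With the compatible Iwasawa data you set up, one does get $A'_\RR N'_\RR \subset G'_\RR \cap P_\RR \subset P'_\RR = M'_\RR A'_\RR N'_\RR$ at the Lie algebra level, but the missing piece is $M'_\RR \subset P_\RR$, which would force $M'_\RR \subset M_\RR$; the paper explicitly remarks that this inclusion need not hold. Concretely, $M'_\RR$ is the centralizer of $\lie{a}'_\RR$ in $K'_\RR$, while $M_\RR$ is the centralizer of the larger $\lie{a}_\RR$, so $M'_\RR \cap P_\RR$ can be a proper subgroup of $M'_\RR$. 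Since $P'_\RR$ is already minimal parabolic, $G'_\RR \cap P_\RR$ is then strictly between $A'_\RR N'_\RR$ and $P'_\RR$ and is not parabolic; the induced representation from it has no reason to be admissible or of finite length.

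The paper repairs exactly this point. It uses induction in stages
\[
\Ind^{G'_\RR}_{G'_\RR\cap P_\RR}(F)\ \simeq\ \Ind^{G'_\RR}_{P'_\RR}\!\bigl(\Ind^{P'_\RR}_{G'_\RR\cap P_\RR}(F)\bigr),
\]
observes that because $A'_\RR N'_\RR \subset G'_\RR\cap P_\RR$ every $M'_\RR$-subrepresentation of $\Ind^{P'_\RR}_{G'_\RR\cap P_\RR}(F)$ is automatically $P'_\RR$-stable, and then uses compactness of $M'_\RR$ to find an irreducible finite-dimensional quotient $F'$ such that the composite $V \to \Ind^{G'_\RR}_{P'_\RR}(F')$ is nonzero. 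This last representation is a genuine (minimal) principal series of $G'_\RR$ and hence Casselman--Wallach. Your argument becomes correct once you insert this extra step; as written, the conclusion that the target lies in $\CWCat{G'_\RR}$ is unjustified.
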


\begin{proof}
	We shall show \ref{item:ExistenceQuotRestGK} and \ref{item:ExistenceQuotRestGKO} from \ref{item:ExistenceQuotRestCW}.
	Take $V' \in \CWCat{G'_\RR}$ such that $\Hom_{G_\RR}(V, V')\neq 0$.
	Then we have $\Hom_{\lie{g'}, K'}(V_K, V'_{K'}) \neq 0$ since $V_K$ is dense in $V$.
	This implies that $V_K|_{\lie{g'}, K'}$ has an irreducible quotient $W$.
	By the Casselman subrepresentation theorem (see \cite[Theorem 3.8.3]{Wa88_real_reductive_I}), there is a character $\CC_\lambda$ of $\lie{b'}$ such that $W\otimes_{\univ{b'}} \CC_\lambda \neq 0$.
	Hence we have
	\begin{align*}
		0\neq V_K\otimes_{\univ{b'}} \CC_{\lambda} \simeq V_K\otimes_{\univ{g'}} \ind{g'}{b'}(\CC_{\lambda}).
	\end{align*}
	This shows \ref{item:ExistenceQuotRestGKO}.

	We shall show the property \ref{item:ExistenceQuotRestCW}.
	By the Frobenius reciprocity, we have
	\begin{align*}
		\Hom_{(G'_\RR)_0}(V, V') \simeq \Hom_{G'_\RR}(V, \Ind^{G'_\RR}_{(G'_\RR)_0}(V'))
	\end{align*}
	for any Casselman--Wallach representation $V'$ of $(G'_\RR)_0$.
	Note that $\Ind^{G'_\RR}_{(G'_\RR)_0}(V')$ is in $\CWCat{G'_\RR}$.
	Hence we can assume that $G_\RR$ and $G'_\RR$ are connected.
	It is enough to show the assertion for irreducible $V$.

	Fix a minimal parabolic subgroup $P_{\RR}=M_{\RR}A_{\RR}N_{\RR}$ of $G_{\RR}$
	and a minimal parabolic subgroup $P'_{\RR}=M'_{\RR}A'_{\RR}N'_{\RR}$ of $G'_{\RR}$,
	where $M_{\RR}$ and $M'_{\RR}$ are compact, $A_{\RR}$ and $A'_{\RR}$ are split abelian, and $N_{\RR}$ and $N'_{\RR}$ are the nilradicals.
	We can assume $A'_{\RR} \subset A_{\RR}$ and $N'_{\RR} \subset N_{\RR}$.
	(In general, $M'_{\RR} \subset M_{\RR}$ does not hold.)
	
	By the Casselman subrepresentation theorem, we can assume that $V$ is a subrepresentation of $\Ind^{G_\RR}_{P_\RR}(F)$ for some  finite-dimensional irreducible representation $F$ of $P_{\RR}$.

	Consider the inclusion map $G'_\RR/G'_\RR\cap P_\RR \rightarrow G_\RR/P_\RR$.
	Then we have the restriction map
	\begin{align*}
		r\colon \Ind^{G_\RR}_{P_\RR}(F) \rightarrow \Ind^{G'_\RR}_{G'_\RR \cap P_\RR}(F)
		\simeq \Ind^{G'_\RR}_{P'_\RR}(\Ind^{P'_\RR}_{G'_\RR \cap P_\RR}(F)).
	\end{align*}
	Since $V$ is $G_\RR$-stable, $r(V)$ is non-zero.

	Recall that $G'_\RR \cap P_\RR$ contains $A'_\RR N'_\RR$.
	Hence any $M'_\RR$-subrepresentation of $\Ind^{P'_\RR}_{G'_\RR \cap P_\RR}(F)$ is $P'_\RR$-stable.
	Since $r(V)$ is non-zero and $M'_\RR$ is compact, there is an irreducible quotient $p\colon \Ind^{P'_\RR}_{G'_\RR \cap P_\RR}(F)\rightarrow F'$ such that
	the composition
	\begin{align*}
		V\xrightarrow{r} \Ind^{G'_\RR}_{P'_\RR}(\Ind^{P'_\RR}_{G'_\RR \cap P_\RR}(F)) \xrightarrow{\Ind^{G'_\RR}_{P'_\RR}(p)} \Ind^{G'_\RR}_{P'_\RR}(F')
	\end{align*}
	is non-zero.
	This shows the lemma.
\end{proof}

\subsubsection{Induction}

\begin{lemma}\label{lem:ExistenceQuotInduction}
	Let $V'$ be a non-zero Casselman--Wallach representation of $G'_\RR$.
	\begin{enumparen}
		\item \label{item:ExistenceQuotInductionCW} $\Hom_{G_\RR}(V, \Ind^{G_\RR}_{G'_\RR}(V')) \neq 0$ for some $V \in \CWCat{G_\RR}$.
		\item \label{item:ExistenceQuotInductionGK} $\Hom_{\lie{g}, K'}(V, \pro{\lie{g}, K'}{\lie{g'}, K'}(V'_{K'})) \neq 0$ for some $V \in \Mod_{fl}(\lie{g}, K)$.
	\end{enumparen}
\end{lemma}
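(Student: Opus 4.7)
My approach will dualize the proof of Lemma~\ref{lem:ExistenceQuotRest}. Continuous Frobenius reciprocity gives
$$\Hom_{G_\RR}(V, \Ind^{G_\RR}_{G'_\RR}(V')) \simeq \Hom_{G'_\RR}(V|_{G'_\RR}, V'), \qquad \Hom_{\lie{g}, K'}(V, \pro{\lie{g}, K'}{\lie{g'}, K'}(V'_{K'})) \simeq \Hom_{\lie{g'}, K'}(V|_{\lie{g'}, K'}, V'_{K'}),$$
reducing each assertion to constructing $V$ --- a Casselman--Wallach representation of $G_\RR$ for (1), a finite-length $(\lie{g}, K)$-module for (2) --- that admits a non-zero equivariant map from its restriction into $V'$ (respectively $V'_{K'}$). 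I will first replace $V'$ by an irreducible subrepresentation, so may assume $V'$ is irreducible.

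I will invoke the Casselman quotient theorem (the subrepresentation theorem applied to the contragredient) to realize $V'$ as a quotient $\beta \colon \Ind^{G'_\RR}_{P'_\RR}(\sigma' \otimes e^{\nu'}) \twoheadrightarrow V'$, where $P'_\RR = M'_\RR A'_\RR N'_\RR$ is a minimal parabolic of $G'_\RR$, $\sigma' \in \widehat{M'_\RR}$ is irreducible, and $e^{\nu'}$ is a character of $A'_\RR$. Fixing a Cartan involution of $G_\RR$ stabilizing $G'_\RR$, I will take a minimal parabolic $P_\RR = M_\RR A_\RR N_\RR$ of $G_\RR$ with compatible Iwasawa data $A_\RR \supset A'_\RR$, $N_\RR \supset N'_\RR$; then $M_\RR = Z_{K_\RR}(A_\RR)$ satisfies $M_\RR \cap G'_\RR \subset K'_\RR \cap Z_{G'_\RR}(A'_\RR) = M'_\RR$, and hence $G'_\RR \cap P_\RR = (M_\RR \cap G'_\RR) A'_\RR N'_\RR \subset P'_\RR$. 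Next I will pick an irreducible $\tau \in \widehat{M_\RR}$ so that $\tau|_{M_\RR \cap G'_\RR}$ shares an irreducible component with $\sigma'|_{M_\RR \cap G'_\RR}$ (available by compact-group Frobenius reciprocity), together with a character $e^\mu$ of $A_\RR$ restricting to $e^{\nu'}$. Setting $V := \Ind^{G_\RR}_{P_\RR}(\tau \otimes e^\mu)$ for (1), or its underlying Harish--Chandra module for (2), restriction of smooth sections along $G'_\RR \hookrightarrow G_\RR$, followed by induction in stages and a $P'_\RR$-linear projection onto $\sigma' \otimes e^{\nu'}$ induced by the chosen $\tau$, produces a non-zero $G'_\RR$-equivariant map $\alpha \colon V \to \Ind^{G'_\RR}_{P'_\RR}(\sigma' \otimes e^{\nu'})$. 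Composing $\gamma := \beta \circ \alpha$ gives the desired map $V \to V'$, and Part (2) is obtained by the identical construction at the Harish--Chandra level (or transferred from (1) via Fact~\ref{fact:CasselmanWallach}).

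\textbf{Main obstacle.} The crux will be verifying $\gamma \neq 0$, i.e., $\alpha(V) \not\subset \Ker(\beta)$, which cannot be read off from the non-vanishing of $\alpha$ alone. My plan is to establish density of $\alpha(V)$ in $\Ind^{G'_\RR}_{P'_\RR}(\sigma' \otimes e^{\nu'})$. Because $P'_\RR P_\RR / P_\RR = M'_\RR/(M_\RR \cap G'_\RR)$ embeds as a closed compact submanifold of $G_\RR/P_\RR$, the Seeley extension theorem makes the restriction $V \to \Ind^{P'_\RR}_{G'_\RR \cap P_\RR}((\tau \otimes e^\mu)|_{G'_\RR \cap P_\RR})$ surjective, so post-composing with the $P'_\RR$-projection yields a surjective $P'_\RR$-linear map $\psi \colon V \twoheadrightarrow \sigma' \otimes e^{\nu'}$; by construction $\alpha$ is precisely its Frobenius adjoint $\alpha(v)(g) = \psi(g^{-1} v)$. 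A standard matrix-coefficient-density argument will then show that $\alpha(V)$ is dense in $\Ind^{G'_\RR}_{P'_\RR}(\sigma' \otimes e^{\nu'})$, and since $\Ker(\beta)$ is a closed proper subspace this forces $\gamma \neq 0$.
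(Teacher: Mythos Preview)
Your approach is correct and follows the same overall strategy as the paper: Frobenius reciprocity, Casselman realization of $V'$ as a quotient of a principal series on $G'_\RR$, then construction of a principal series on $G_\RR$ whose restriction surjects onto that of $G'_\RR$. The one substantive difference is the choice of parabolic. The paper does \emph{not} take a minimal parabolic of $G_\RR$; instead it fixes a generic $H \in (\lie{k}'_\RR)^\perp \cap \lie{g}'_\RR$ and lets $\lie{p}_\RR$ be the sum of the nonnegative $\ad(H)$-eigenspaces in $\lie{g}_\RR$. This yields a (generally non-minimal) parabolic with the exact equality $P_\RR \cap G'_\RR = P'_\RR$ and with $M'_\RR \subset M_\RR$, so the restriction map lands directly in $\Ind^{G'_\RR}_{P'_\RR}(F|_{P'_\RR})$ and is immediately surjective; no induction in stages is needed. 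The price is that $M_\RR$ need not be compact, so the fiber $F$ is taken to be an irreducible Casselman--Wallach representation of $M_\RR A_\RR$ admitting $F'$ as a quotient. Your route, using a minimal parabolic of $G_\RR$, keeps $M_\RR$ compact and $\tau$ finite-dimensional, at the cost of the containment $G'_\RR \cap P_\RR \subsetneq P'_\RR$ and the extra induction-in-stages step.

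One remark on your ``main obstacle'': it is overstated. Your map $\alpha$ is actually \emph{surjective}, not merely with dense image. Indeed, $G'_\RR/(G'_\RR\cap P_\RR)=K'_\RR/(K'_\RR\cap P_\RR)$ is a compact closed submanifold of $G_\RR/P_\RR$, so restriction of smooth sections $V \to \Ind^{G'_\RR}_{G'_\RR\cap P_\RR}((\tau\otimes e^\mu)|_{G'_\RR\cap P_\RR})$ is surjective; and since $G'_\RR/P'_\RR$ is compact, applying $\Ind^{G'_\RR}_{P'_\RR}$ to your surjective $P'_\RR$-projection $\Ind^{P'_\RR}_{G'_\RR\cap P_\RR}(\cdot)\twoheadrightarrow \sigma'\otimes e^{\nu'}$ again gives a surjection. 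Thus $\gamma=\beta\circ\alpha$ is itself surjective, and the matrix-coefficient density argument is unnecessary.
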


\begin{proof}
	By the Frobenius reciprocity, we have
	\begin{align*}
		\Hom_{G_\RR}(V, \Ind^{G_\RR}_{G'_\RR}(V')) &\simeq \Hom_{G'_\RR}(V, V'),\\
		\Hom_{\lie{g}, K'}(V, \pro{\lie{g}, K'}{\lie{g'}, K'}(V'_{K'})) &\simeq \Hom_{\lie{g'}, K'}(V, V'_{K'})
	\end{align*}
	in \ref{item:ExistenceQuotInductionCW} and \ref{item:ExistenceQuotInductionGK}.
	As in the proof of Lemma \ref{lem:ExistenceQuotRest}, \ref{item:ExistenceQuotInductionGK} follows from \ref{item:ExistenceQuotInductionCW}, and we can assume that $G_\RR$ and $G'_\RR$ are connected and $V'$ is irreducible.
	
	Fix a generic element $H$ of $(\lieR{k'})^{\perp}$ in $\lieR{g'}$.
	Let $\lieR{p}$ (resp.\ $\lieR{p'}$) be the sum of $\ad_{\lieR{g}}(H)$-eigenspaces (resp.\ $\ad_{\lieR{g'}}(H)$-eigenspaces) with non-negative eigenvalues.
	Then the normalizer $P_{\RR} \subset G_{\RR}$ (resp.\ $P'_{\RR} \subset G'_{\RR}$) of $\lieR{p}$ (resp.\ $\lieR{p'}$)
	is a parabolic subgroup of $G_{\RR}$ (resp.\ $G'_{\RR}$).
	By construction, $P'_{\RR}$ is a minimal parabolic subgroup of $G'_{\RR}$ and $P_{\RR} \cap G'_{\RR} = P'_{\RR}$ holds.
	
	By the Casselman subrepresentation theorem, there is a finite-dimensional irreducible representation $F'$ of $P'_{\RR}$
	such that $V'$ is an irreducible quotient of $\Ind^{G'_{\RR}}_{P'_{\RR}}(F')$.
	Fix Langlands decompositions $P'_\RR = M'_\RR A'_\RR N'_\RR$ and $P_\RR = M_\RR A_\RR N_\RR$ such that $M'_\RR \subset M_\RR$, $A'_\RR\subset A_\RR$ and $N'_\RR \subset N_\RR$.
	Here $A_\RR$ and $A'_\RR$ are split abelian, $M'_\RR$ is compact and $M_\RR$ is reductive.
	Hence we can take an irreducible Casselman--Wallach representation $F$ of $M_{\RR}A_\RR$
	such that $F'$ is an irreducible quotient of $F|_{M'_\RR A'_\RR}$.
	
	Since the restriction map $\Ind^{G_{\RR}}_{P_{\RR}}(F) \rightarrow \Ind^{G'_{\RR}}_{P'_{\RR}}(F|_{P'_{\RR}})$ is surjective, we obtain a surjective $G'_\RR$-linear map defined by the composition
	\begin{align*}
		\Ind^{G_{\RR}}_{P_{\RR}}(F) \rightarrow \Ind^{G'_{\RR}}_{P'_{\RR}}(F|_{P'_{\RR}})\rightarrow \Ind^{G'_{\RR}}_{P'_{\RR}}(F')
		\rightarrow V'.
	\end{align*}
	This shows the lemma.
\end{proof}

\subsubsection{BGG category \texorpdfstring{$\calO$}{O}}

We consider non-vanishing results for the branching problem in the BGG category $\calO$.
Let $\lie{b}$ and $\lie{b'}$ be Borel subalgebras of $\lie{g}$ and $\lie{g'}$, respectively.
Recall that $\calO_{\lie{b}}$ is the BGG category $\calO$ of $\lie{g}$ with respect to $\lie{b}$.
See Subsection \ref{subsect:CategoriesGKCW}.

\begin{fact}[{\cite[Definition 3.1 and Proposition 3.5]{Ko12_generalized_Verma}}]\label{fact:DiscreteO}
	Assume $\lie{b} \cap \lie{g'} = \lie{b'}$.
	Then for any object $V \in \calO_{\lie{b}}$, the restriction $V|_{\lie{g'}}$ is discretely decomposable in $\calO_{\lie{b'}}$, i.e.\ any finitely generated $\lie{g'}$-submodule belongs to $\calO_{\lie{b'}}$.
\end{fact}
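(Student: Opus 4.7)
The plan is to verify the three defining properties of $\calO_{\lie{b'}}$ for an arbitrary finitely generated $\lie{g'}$-submodule $W \subset V$: finite generation over $\lie{g'}$, $\lie{t'}$-semisimplicity for a Cartan subalgebra $\lie{t'} \subset \lie{b'}$, and local $\lie{b'}$-finiteness. Finite generation is given by hypothesis, so the real work is in the other two, and these will both follow once one chooses the Cartan subalgebras compatibly with the inclusion $\lie{b'} \subset \lie{b}$.

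First I would fix a Cartan subalgebra $\lie{t'}$ of $\lie{g'}$ contained in $\lie{b'}$. Since $\lie{g'}$ is reductive and algebraic in $\lie{g}$, the adjoint action of $\lie{g'}$ on $\lie{g}$ is completely reducible, so $\lie{g}$ decomposes as a sum of $\lie{t'}$-weight spaces; in particular each element of $\lie{t'}$ acts semisimply on $\lie{g}$, and $\lie{t'}$ is a toral subalgebra of $\lie{g}$. Because $\lie{t'} \subset \lie{b'} = \lie{b} \cap \lie{g'} \subset \lie{b}$, one can extend $\lie{t'}$ to a maximal toral subalgebra $\lie{t}$ of $\lie{b}$, which is automatically a Cartan subalgebra of $\lie{g}$. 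Since $\calO_{\lie{b}}$ is independent of the Cartan subalgebra used inside $\lie{b}$ (as recorded earlier in the paper), we may assume the hypothesis $V \in \calO_{\lie{b}}$ is witnessed with respect to this particular $\lie{t}$.

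With these compatible choices in hand, $V$ is $\lie{t}$-semisimple and hence $\lie{t'}$-semisimple via $\lie{t'} \subset \lie{t}$, and $V$ is locally $\lie{b}$-finite and hence locally $\lie{b'}$-finite via $\lie{b'} \subset \lie{b}$. Both properties pass to arbitrary submodules, so $W$ inherits them; combined with the given finite generation over $\lie{g'}$, this shows $W \in \calO_{\lie{b'}}$.

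The only real subtlety is the simultaneous choice of Cartan subalgebras—one needs to use that $\lie{g'}$ is a reductive algebraic subalgebra so that $\lie{t'}$ consists of elements semisimple in $\lie{g}$, allowing it to be extended inside $\lie{b}$. Once that is arranged, the remainder is a formal transfer of properties along the two inclusions, and no finer structural information about $V$ (weight multiplicities, character formulas, composition factors) is needed.
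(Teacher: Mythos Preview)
Your argument is correct. The paper does not actually prove this statement; it is recorded as a \emph{Fact} with a citation to \cite[Definition 3.1 and Proposition 3.5]{Ko12_generalized_Verma}, so there is no ``paper's own proof'' to compare against. Your direct verification---choosing $\lie{t'}\subset\lie{b'}$, using the algebraicity of $\lie{g'}$ in $\lie{g}$ to see that $\lie{t'}$ is toral in $\lie{g}$, extending it to a Cartan $\lie{t}\subset\lie{b}$, and then transferring $\lie{t}$-semisimplicity and local $\lie{b}$-finiteness to $\lie{t'}$ and $\lie{b'}$ via the inclusions---is exactly the standard way to establish this, and it matches Kobayashi's original argument in spirit. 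The one point worth emphasizing (which you do flag) is the need for $\lie{t'}$ to consist of elements acting semisimply on $\lie{g}$; this is where the standing hypothesis that $\lie{g'}$ is reductive and algebraic in $\lie{g}$ is genuinely used.
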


\begin{corollary}\label{cor:NonVanishingOO}
	Assume $\lie{b} \cap \lie{g'} = \lie{b'}$.
	Then for any object $V \in \calO_{\lie{b}}$,
	we have
	\begin{align*}
		\soc(V, \calO_{\lie{b'}}) &= V,\\
		\rad(V, \calO_{\lie{b'}}) &= 0.
	\end{align*}
	See Definition \ref{def:RadSocInC} for $\soc$ and $\rad$.
\end{corollary}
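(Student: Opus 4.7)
My plan is to prove the two equalities in Corollary \ref{cor:NonVanishingOO} separately. The socle equality $\soc(V, \calO_{\lie{b'}}) = V$ is immediate: given $v \in V$, the cyclic $\lie{g'}$-submodule $\univ{g'}v$ is finitely generated, hence by Fact \ref{fact:DiscreteO} it lies in $\calO_{\lie{b'}}$, and the inclusion $\univ{g'}v \hookrightarrow V$ is a morphism in $\Mod(\lie{g'})$ whose image contains $v$, placing $v$ in $\soc(V, \calO_{\lie{b'}})$.

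For the radical equality $\rad(V, \calO_{\lie{b'}}) = 0$ I will show that for every non-zero $v \in V$ there exists a $\lie{g'}$-morphism $f \colon V \to W$ with $W \in \calO_{\lie{b'}}$ and $f(v) \neq 0$. First, by Fact \ref{fact:DiscreteO} each element of $V$ is contained in a finitely generated $\lie{g'}$-submodule belonging to $\calO_{\lie{b'}}$, which is $\univcent{g'}$-locally finite; hence $V$ itself is $\univcent{g'}$-locally finite and admits a $\lie{g'}$-module decomposition $V = \bigoplus_\chi V^{[\chi]}$ into generalized $\univcent{g'}$-eigenspaces. Writing $v = \sum_\chi v_\chi$ as a finite sum and fixing $\chi$ with $v_\chi \neq 0$, it suffices to construct such an $f$ out of $V^{[\chi]}$ and then post-compose with the projection $V \twoheadrightarrow V^{[\chi]}$. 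Next I will use that the block of $\calO_{\lie{b'}}$ at $\chi$ is equivalent to the category of finite-length modules over some finite-dimensional algebra $A_\chi$; under this equivalence $V^{[\chi]}$ corresponds to an $A_\chi$-module whose finitely generated submodules are all finite-dimensional. Because $A_\chi$ is finite-dimensional, the injective hulls of simple $A_\chi$-modules are themselves finite-dimensional and remain injective in the category of all $A_\chi$-modules. I will therefore embed $V^{[\chi]}$ into a direct sum of such injective hulls and project onto a summand on which $v_\chi$ has a non-zero image, producing the desired $f$ with $W$ a finite-dimensional object of $\calO_{\lie{b'}}$.

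The main technical obstacle is that $V|_{\lie{g'}}$ is typically not itself an object of $\calO_{\lie{b'}}$, since it is rarely finitely generated over $\lie{g'}$; this prevents one from directly extending a morphism out of a finitely generated $\lie{g'}$-submodule of $V$ to all of $V$ using injective objects inside $\calO_{\lie{b'}}$ alone. The point of descending to each block algebra $A_\chi$ is precisely to perform such an extension in the larger category of all $A_\chi$-modules while keeping the target finite-dimensional, so that it still lies in $\calO_{\lie{b'}}$.
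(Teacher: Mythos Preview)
Your proof is correct and rests on the same core idea as the paper's: every finitely generated $\lie{g'}$-submodule of $V$ lies in $\calO_{\lie{b'}}$, and $\calO_{\lie{b'}}$ has enough injectives of finite length. The paper's execution is shorter and avoids the detour through block algebras. It simply takes a non-zero finitely generated $\lie{g'}$-submodule $M \subset V$, forms its injective hull $M'$ inside $\calO_{\lie{b'}}$, and extends the inclusion $M \hookrightarrow M'$ to a $\lie{g'}$-map $\varphi\colon V \to M'$; then $M \cap \rad(V,\calO_{\lie{b'}}) \subset M \cap \Ker\varphi = 0$, and letting $M$ vary gives $\rad(V,\calO_{\lie{b'}}) = 0$. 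The extension step you single out as the main obstacle is handled by a direct Zorn argument: for a maximal extension $(N,\psi)$ of $(M, M \hookrightarrow M')$ with $N \subset V$, any $x \in V \setminus N$ has $\univ{g'}x \in \calO_{\lie{b'}}$ and $N \cap \univ{g'}x \in \calO_{\lie{b'}}$, so injectivity of $M'$ in $\calO_{\lie{b'}}$ extends $\psi|_{N\cap\univ{g'}x}$ over $\univ{g'}x$ and hence $\psi$ over $N + \univ{g'}x$, contradicting maximality. Your passage to the finite-dimensional block algebra $A_\chi$ is a valid alternative that makes the injectivity in the ambient module category completely explicit, at the cost of invoking the equivalence of each block with $A_\chi$-modules and the structure of injectives over Artinian rings.
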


\begin{proof}
	The assertion of Fact \ref{fact:DiscreteO} can be rewritten as $\soc(V, \calO_{\lie{b'}}) = V$.
	We shall show the second equation.

	Let $M$ be a finitely generated non-zero $\lie{g'}$-submodule of $V$.
	Since $M$ belongs to $\calO_{\lie{b'}}$, there is an injective hull $M'$ of $M$ in $\calO_{\lie{b'}}$.
	Since any finitely generated $\lie{g'}$-submodule of $V$ is in $\calO_{\lie{b'}}$, the inclusion $M\hookrightarrow M'$ can be extended to a homomorphism $\varphi\colon V \rightarrow M'$.
	Hence we have $M \cap \rad(V, \calO_{\lie{b'}}) \subset M \cap \Ker(\varphi) = 0$.
	This shows $\rad(V, \calO_{\lie{b'}}) = 0$.
\end{proof}

\subsubsection{Spherical pair}\label{subsubsect:SphericalPair}

Here we assume that $(G, G')$ is a spherical pair, i.e.\ we can take a Borel subgroup $B$ of $G_0$ such that $BG'_0$ is open dense in $G_0$.
We set
\begin{align*}
	P &:= \set{g \in G_0 : gBG'_0 = BG'_0}, \\
	L &:= (P \cap G')_0.
\end{align*}
By Fact \ref{fact:BLV}, $P$ is a parabolic subgroup of $G_0$ and $L$ is a reductive subgroup of $G'$
containing the derived group of a Levi subgroup of $P$.
Then $(B\cap G')_0$ is a Borel subgroup of $L$.

By Proposition \ref{prop:BLVreal}, we can choose the Borel subgroup $B$ such that
$\lieR{l}:=\lie{l} \cap \lieR{g}$ is a real form of $\lie{l}$.
Let $L_\RR$ be the analytic subgroup of $G'_\RR$ with the Lie algebra $\lieR{l}\subset \lieR{g'}$, which is closed in $G'_\RR$.
If necessary, replacing $B$ with $gBg^{-1}$ for some $g \in G'_\RR$, we can assume that $K_{L,\RR}:= L_\RR \cap K'_\RR$ is a maximal compact subgroup of $L_\RR$.
Let $K_L$ be the analytic subgroup of $K'$ with the Lie algebra $\lie{k}_L \subset \lie{k'}$, which is closed in $K'$.
Fix a Levi subgroup $\widetilde{L}$ of $P$ containing $L$.

\begin{lemma}\label{lem:ReductionToFiber}
	Let $M$ be a closed subgroup of $K_L$.
	Then for any $(\lie{g'}, M)$-module $V'$ and $(\widetilde{\lie{l}}, M)$-module $V_L$, we have natural isomorphisms
	\begin{align*}
		\Hom_{\lie{g'}, M}(\ind{g}{p}(V_L), V') &\simeq \Hom_{\lie{l}, M}(V_L, V') \\
		(\ind{g}{p}(V_L)\otimes_{\univ{g'}} V')^{M} &\simeq (V_L \otimes_{\univ{l}} V')^M
	\end{align*}
	of vector spaces.
\end{lemma}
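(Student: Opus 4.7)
The plan is to reduce both isomorphisms to a single structural identification, namely
\[
\ind{g}{p}(V_L) \simeq \univ{g'} \otimes_{\univ{l}} V_L
\]
as $(\lie{g'}, M)$-modules, after which both parts of the lemma are obtained by standard Frobenius reciprocities.

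First I would verify the two infinitesimal facts that make the spherical open orbit $PG'_0 \subset G_0$ usable. From $PG'_0$ being open dense we get $\lie{p} + \lie{g'} = \lie{g}$, and from the definition $L = (P\cap G')_0$ we get $\lie{p}\cap\lie{g'} = \lie{l}$. Choose any vector-space complement $\lie{u}$ of $\lie{l}$ in $\lie{g'}$, so that $\lie{g'} = \lie{u}\oplus\lie{l}$. A dimension count using $\dim\lie{g} = \dim\lie{g'} + \dim\lie{p} - \dim\lie{l}$ shows $\lie{g} = \lie{u}\oplus\lie{p}$ as a vector space.

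Next I would apply PBW to both decompositions: the multiplication maps give right $\univ{p}$-module and right $\univ{l}$-module isomorphisms
\[
S(\lie{u})\otimes \univ{p} \xrightarrow{\sim} \univ{g}, \qquad
S(\lie{u})\otimes \univ{l} \xrightarrow{\sim} \univ{g'}.
\]
Tensoring by $V_L$ over $\univ{p}$ and $\univ{l}$ respectively, both sides become $S(\lie{u})\otimes V_L$ as vector spaces. The natural map
\[
\varphi\colon \univ{g'}\otimes_{\univ{l}} V_L \longrightarrow \univ{g}\otimes_{\univ{p}} V_L,\qquad u\otimes v \mapsto u\otimes v,
\]
is well defined because $\lie{l}\subset \lie{p}$ and the $\lie{p}$-action on $V_L$ (via $\widetilde{\lie{l}}$, with nilradical acting trivially) restricts on $\lie{l}$ to the given $\lie{l}$-action; under the two PBW identifications $\varphi$ becomes the identity on $S(\lie{u})\otimes V_L$, hence is a linear isomorphism. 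It is $\lie{g'}$-linear since it is induced by the inclusion $\univ{g'}\hookrightarrow\univ{g}$, and $M$-equivariant since $M\subset K_L\subset L\subset P\cap G'$ acts compatibly on $\univ{g}$, $\univ{g'}$, $\univ{p}$, $\univ{l}$ and $V_L$.

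Finally, transporting the two target functors through $\varphi$ yields the claimed natural isomorphisms:
\[
\Hom_{\lie{g'},M}(\ind{g}{p}(V_L), V') \simeq \Hom_{\lie{g'},M}(\univ{g'}\otimes_{\univ{l}} V_L, V') \simeq \Hom_{\lie{l},M}(V_L, V'),
\]
and
\[
\bigl(\ind{g}{p}(V_L)\otimes_{\univ{g'}} V'\bigr)^{M} \simeq \bigl((\univ{g'}\otimes_{\univ{l}} V_L)\otimes_{\univ{g'}} V'\bigr)^{M} \simeq (V_L\otimes_{\univ{l}} V')^{M},
\]
the last equivalences being the usual Frobenius/adjunction identities for Lie-algebra induction, together with the exactness of $M$-invariants. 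The only substantive step is the PBW identification $\varphi$; the rest is formal, and I do not foresee a real obstacle once $\lie{p}+\lie{g'}=\lie{g}$ and $\lie{p}\cap\lie{g'}=\lie{l}$ are in hand.
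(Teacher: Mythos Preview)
Your proof is correct and follows exactly the same approach as the paper: the paper's proof consists of the single sentence ``By $\lie{g'} + \lie{p} = \lie{g}$ and $\lie{g'}\cap \lie{p} = \lie{l}$, we have $\ind{g}{p}(V_L)|_{\lie{g'}, M} \simeq \ind{g'}{l}(V_L)$,'' and you have simply spelled out the PBW justification for this identification before applying the same Frobenius reciprocities.
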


\begin{proof}
	By $\lie{g'} + \lie{p} = \lie{g}$ and $\lie{g'}\cap \lie{p} = \lie{l}$,
	we have $\ind{g}{p}(V_L)|_{\lie{g'}, M} \simeq \ind{g'}{l}(V_L)$.
	Hence we obtain the desired isomorphisms.
\end{proof}

The subalgebras $\widetilde{\lie{b}}_{L} := \widetilde{\lie{l}} \cap \lie{b}$
and $\lie{b}_L := \lie{l} \cap \lie{b}$ are Borel subalgebras of $\widetilde{\lie{l}}$ and $\lie{l}$, respectively.
Since $\lie{l}$ is the direct sum of its center and $[\widetilde{\lie{l}}, \widetilde{\lie{l}}]$, the restriction functors
\begin{align*}
	(\cdot)|_{\lie{l}, K_L}\colon &\calC_{\widetilde{\lie{l}}, K_L} \rightarrow \calC_{\lie{l}, K_L}, \\
	(\cdot)|_{\lie{l}}\colon &\calO_{\widetilde{\lie{b}}_{L}}\rightarrow \calO_{\lie{b}_L}
\end{align*}
are essentially surjective.
Here we say that a functor $F\colon \calC\rightarrow \calD$
is essentially surjective if for any object $V \in \calD$, there exists some object $W \in \calC$ such that $F(W) \simeq V$.

\begin{lemma}\label{lem:NonVanishingSpherical}
	Let $\lie{b'}$ be a Borel subalgebra of $\lie{g'}$.
	\begin{enumparen}
		\item For any non-zero $V' \in \Mod_{fl}(\lie{g'}, K')$, we have
		\begin{align*}
			\SupDim((\ind{g}{p}(\calC_{\lie{\widetilde{l}}, K_L})\otimes_{\univ{g}}\ind{g}{g'}(V'))^{K_L}) &= \SupDim((\calC_{\lie{l}, K_L}\otimes_{\univ{l}} V')^{K_L}) \neq 0.
		\end{align*}
		\item For any non-zero $V' \in \Mod_{fl}(\lie{g'}, K')$, we have
		\begin{align*}
			\SupDim(\ind{g}{p}(\calO_{\lie{\widetilde{b}}_L})\otimes_{\univ{g}} \ind{g}{g'}(V')) &= \SupDim(\calO_{\lie{b}_L}\otimes_{\univ{l}} V').
		\end{align*}
		If $\lie{b}_L + \lie{k}_L = \lie{l}$, the both sides are non-zero.
		\item For any non-zero $V' \in \calO_{\lie{b'}}$, we have
		\begin{align*}
			\SupDim(\Hom_{\lie{g}}(\ind{g}{p}(\calO_{\widetilde{\lie{b}}_L}), \pro{\lie{g}}{\lie{g'}}(V'))) = \SupDim(\Hom_{\lie{l}}(\calO_{\lie{b}_L}, V')).
		\end{align*}
		If $\lie{b'} \cap \lie{l} = \lie{b}_L$, the both sides are non-zero.
	\end{enumparen}
\end{lemma}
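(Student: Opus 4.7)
My strategy is to reduce each of the three equalities to the Levi fiber via Lemma~\ref{lem:ReductionToFiber}, and then to verify the non-vanishing statements by producing an explicit witness using Harish--Chandra theory on $L_\RR$ or category $\calO$ techniques.

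For the equalities in (1) and (2), I will use $\ind{g}{g'}(V') = \univ{g}\otimes_{\univ{g'}}V'$ to get
\[
\ind{g}{p}(V_L)\otimes_{\univ{g}}\ind{g}{g'}(V') \simeq \ind{g}{p}(V_L)\otimes_{\univ{g'}}V',
\]
and then Lemma~\ref{lem:ReductionToFiber} identifies the right-hand side (resp.\ its $K_L$-invariants in (1)) with $V_L\otimes_{\univ{l}}V'$ (resp.\ its $K_L$-invariants). For (3), Frobenius reciprocity for production yields
\[
\Hom_{\lie{g}}(\ind{g}{p}(V_L),\pro{\lie{g}}{\lie{g'}}(V')) \simeq \Hom_{\lie{g'}}(\ind{g}{p}(V_L),V'),
\]
which Lemma~\ref{lem:ReductionToFiber} converts to $\Hom_{\lie{l}}(V_L,V')$. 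Since the restriction functors $\calC_{\widetilde{\lie{l}},K_L}\to\calC_{\lie{l},K_L}$ and $\calO_{\widetilde{\lie{b}}_L}\to\calO_{\lie{b}_L}$ (noted just before the lemma) are essentially surjective, taking suprema on each side gives the claimed equalities.

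For the non-vanishing in (3), the hypothesis $\lie{b'}\cap\lie{l} = \lie{b}_L$ puts us inside the scope of Fact~\ref{fact:DiscreteO}: $V'|_{\lie{l}}$ is discretely decomposable in $\calO_{\lie{b}_L}$, and Corollary~\ref{cor:NonVanishingOO} then furnishes $V_L\in\calO_{\lie{b}_L}$ with $\Hom_{\lie{l}}(V_L,V')\neq 0$. For the non-vanishing in (2), I will replace $V'$ by an irreducible $(\lie{g'},K')$-quotient (valid by right exactness of $V_L\otimes_{\univ{l}}(-)$), take a Casselman--Wallach globalization, restrict to $L_\RR$, and apply the Casselman subrepresentation theorem as in the proof of Lemma~\ref{lem:ExistenceQuotRest}\ref{item:ExistenceQuotRestGKO}---whose hypothesis matches $\lie{b}_L+\lie{k}_L = \lie{l}$---to produce a character $\CC_\lambda$ of $\lie{b}_L$ with $V'\otimes_{\univ{b}_L}\CC_\lambda\neq 0$. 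Taking $V_L=\ind{l}{b_L}(\CC_\lambda)\in\calO_{\lie{b}_L}$ and using the transpose identification $V_L\otimes_{\univ{l}}V'\simeq V'\otimes_{\univ{b}_L}\CC_\lambda$ yields the required non-vanishing.

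The main obstacle is the non-vanishing in (1), where no Borel compatibility is assumed. Here the plan is to exploit that $\calC_{\lie{l},K_L}$ is strictly larger than any $\calO_{\lie{b}_L}$: since $(\lie{l},K_L)$ arises from the real reductive group $L_\RR$, a good Borel $\lie{b}'_L$ of $\lie{l}$ satisfying $\lie{b}'_L+\lie{k}_L=\lie{l}$ always exists (via an Iwasawa decomposition of $L_\RR$), so one can replay the (2)-argument using the $(\lie{l},K_L)$-Harish-Chandra principal series attached to $\lie{b}'_L$---which lies in $\calC_{\lie{l},K_L}$ as a finite-length $(\lie{l},K_L)$-module---in place of the Verma module. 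The delicate technical point is matching Frobenius reciprocity on the Harish--Chandra side with the tensor--hom adjunction for principal series; I expect this to follow from the production--induction framework for $(\lie{l},K_L)$-pairs together with Casselman's subrepresentation theorem applied to the Casselman--Wallach globalization of an irreducible $(\lie{g'},K')$-quotient of $V'$.
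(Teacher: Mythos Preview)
Your argument for the three equalities and for the non-vanishing in (2) and (3) matches the paper's proof: reduction via Lemma~\ref{lem:ReductionToFiber} and Frobenius reciprocity, combined with the essential surjectivity of the restriction functors noted just before the lemma, and then Lemma~\ref{lem:ExistenceQuotRest}\ref{item:ExistenceQuotRestGKO} and Corollary~\ref{cor:NonVanishingOO} respectively.

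For (1), however, you have made the problem harder than it is and introduced a genuine gap. No Borel compatibility is needed precisely because part (1) concerns $\calC_{\lie{l},K_L}$ rather than $\calO_{\lie{b}_L}$. The paper simply invokes Lemma~\ref{lem:ExistenceQuotRest} again---this time part~\ref{item:ExistenceQuotRestGK}, applied with $(G_\RR,G'_\RR)$ replaced by $(G'_\RR,L_\RR)$, which carries no Borel hypothesis. That lemma furnishes an irreducible $W\in\Mod_{fl}(\lie{l},K_L)$ with $\Hom_{\lie{l},K_L}(V',W)\neq 0$; setting $V_L=W^\vee\in\calC_{\lie{l},K_L}$ and using the duality $H_0(\lie{l},K_L;V_L\otimes V')^*\simeq\Hom_{\lie{l},K_L}(V',V_L^\vee)$ yields $(V_L\otimes_{\univ{l}}V')^{K_L}\neq 0$. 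Your detour through principal series is not merely longer: the non-vanishing of $V'\otimes_{\univ{b'_L}}\CC_\lambda$ controls the tensor product with the \emph{Verma module} $\ind{l}{b'_L}(\CC_\lambda)$, and there is no Frobenius-type identity that converts this into information about $(V'\otimes_{\univ{l}}\mathrm{PS}_\lambda)^{K_L}$. The ``delicate technical point'' you flag is therefore a real obstruction to your route, whereas the direct application of Lemma~\ref{lem:ExistenceQuotRest}\ref{item:ExistenceQuotRestGK} avoids it entirely.
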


\begin{proof}
	By Lemma \ref{lem:ReductionToFiber}, for any $\lie{\widetilde{l}}$-module $V_L$, we have
	\begin{align*}
		\ind{g}{p}(V_L)\otimes_{\univ{g}} \ind{g}{g'}(V') &\simeq \ind{g}{p}(V_L)\otimes_{\univ{g'}} V' \\ 
		&\simeq V_L\otimes_{\univ{l}} V', \\
		\Hom_{\lie{g}}(\ind{g}{p}(V_L), \pro{\lie{g}}{\lie{g'}}(V')) & \simeq \Hom_{\lie{g'}}(\ind{g}{p}(V_L), V') \\
		&\simeq \Hom_{\lie{l}}(V_L, V').
	\end{align*}
	This shows the equations in the assertion.
	The non-vanishing parts follow from Lemma \ref{lem:ExistenceQuotRest} and Corollary \ref{cor:NonVanishingOO}.
\end{proof}

$(\calC, \calF^\calC) = (\calC_{\widetilde{\lie{l}}, K_L}, \ind{g}{p}), (\calO_{\widetilde{\lie{b}}_L}, \ind{g}{p})$ do not satisfy the assumption in Subsection \ref{subsect:CategoryWithGaction}.
In fact, tensoring with finite-dimensional $G$-modules does not commutes with the functor $\calF^\calC$.
We prepare good categories satisfying this condition.

\begin{definition}\label{def:OInd}
	Let $M$ be a closed subgroup of $K_L$ and $\calC'$ a full subcategory of $\Mod(\lie{\widetilde{l}}, M)$.
	We denote by $\Oind{g}{p}(\calC')$ the full subcategory of $\Mod(\lie{g}, M)$ whose object is isomorphic to a subquotient of $F\otimes \ind{g}{p}(V)$ for some $F \in \Mod_{fd}(\lie{g}, M)$ and $V \in \calC'$.
\end{definition}

Note that we can see $\Oind{g}{p}(\calO_{\widetilde{\lie{b}}_L}) = \calO_{\lie{b}}$ by the proof of Lemma \ref{prop:FiniteLoewyO}.

\begin{lemma}\label{lem:OInd}
	Set $(M, \calC') := (K_L, \calC_{\widetilde{l}, K_L})$ or $(1, \calO_{\widetilde{\lie{b}}_L})$.
	Then
	\begin{enumparen}
		\item\label{enum:OIndClosedTensor} the category $\Oind{g}{p}(\calC')$ is an abelian category closed under tensoring with objects in $\Mod_{fd}(\lie{g}, M)$
		\item\label{enum:OIndUniformlyBounded} any family of objects in $\Oind{g}{p}(\calC')$ with bounded lengths is uniformly bounded
		\item\label{enum:OIndLoewy} $\Loewy{\Oind{g}{p}(\calC')} < \infty$
		\item\label{enum:OIndandInd} there exists some constant $C > 0$ such that
		\begin{align*}
			\SupDim((\Oind{g}{p}(\calC')\otimes_{\univ{g}} V)^M) &\leq \SupDim((\ind{g}{p}(\calC')\otimes_{\univ{g}} V)^M) \\
			&\leq C\cdot \SupDim((\Oind{g}{p}(\calC')\otimes_{\univ{g}} V)^M),\\
			\SupDim(\Hom_{\lie{g}, M}(\Oind{g}{p}(\calC'), V)) &\leq \SupDim(\Hom_{\lie{g}, M}(\ind{g}{p}(\calC'), V) \\
			& \leq C\cdot \SupDim(\Hom_{\lie{g}, M}(\Oind{g}{p}(\calC'), V))
		\end{align*}
		for any $V \in \Mod(\lie{g}, M)$.
	\end{enumparen}
\end{lemma}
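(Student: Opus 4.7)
The plan is to prove the four parts in sequence, leveraging the closure properties of $\Oind{g}{p}(\calC')$, the projection formula $F\otimes \ind{g}{p}(V)\cong \ind{g}{p}(F|_{\lie{p}}\otimes V)$ for $F\in\Mod_{fd}(\lie{g},M)$, and the uniform-boundedness machinery developed in Section~4.

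For \ref{enum:OIndClosedTensor}, I would verify directly from the definition that $\Oind{g}{p}(\calC')$ is closed under subobjects, quotients, kernels, and cokernels (all of which are again subquotients of some $F\otimes\ind{g}{p}(V)$); closed under tensoring with $F'\in\Mod_{fd}(\lie{g},M)$ via the associativity identity $F'\otimes(F\otimes\ind{g}{p}(V))=(F'\otimes F)\otimes\ind{g}{p}(V)$; and closed under finite direct sums, since $\calC'$ is and $(F_1\otimes\ind{g}{p}(V_1))\oplus(F_2\otimes\ind{g}{p}(V_2))$ is a direct summand of $(F_1\oplus F_2)\otimes\ind{g}{p}(V_1\oplus V_2)$.

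The central step for the remaining three parts is the claim that every $W\in\Irr(\Oind{g}{p}(\calC'))$ is a composition factor of $\ind{g}{p}(V')$ for some $V'\in\Irr(\calC')$. To prove this, I would write $W$ as a composition factor of $F\otimes\ind{g}{p}(V)\cong\ind{g}{p}(F|_{\lie{p}}\otimes V)$; since $V\in\calC'$ is viewed as a $\lie{p}$-module with trivial $\lie{n}_P$-action, the $\lie{n}_P$-filtration of the finite-dimensional $\lie{p}$-module $F|_{\lie{p}}$ yields a $\lie{p}$-filtration of $F|_{\lie{p}}\otimes V$ whose graded pieces are of the form $\widetilde{F}\otimes V$, pulled back from $\widetilde{\lie{l}}$. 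These lie in $\calC'$ by its closure under tensoring with finite-dimensional $\widetilde{\lie{l}}$-modules, and exactness of $\ind{g}{p}$ then reduces composition factors to those of $\ind{g}{p}(V')$ with $V'\in\Irr(\calC')$. Combining with Fact~\ref{fact:UniformlyBoundedFamilyGK} or Fact~\ref{fact:UniformlyBoundedO} (uniform boundedness of $\Irr(\calC')$) and Fact~\ref{fact:BoundedGeneralizedVerma}, the family $\Irr(\Oind{g}{p}(\calC'))$ is uniformly bounded; closure of uniform boundedness under extensions then gives \ref{enum:OIndUniformlyBounded}. Part \ref{enum:OIndLoewy} follows from Proposition~\ref{prop:FiniteLoewyO} in the BGG case, since $\Oind{g}{p}(\calO_{\widetilde{\lie{b}}_L})\subset\calO_{\lie{b}}$ for the Borel subalgebra $\lie{b}:=\widetilde{\lie{b}}_L+\lie{n}_P$ of $\lie{g}$; in the Harish-Chandra case I would imitate the proof of Theorem~\ref{thm:FiniteLoewyGK}, combining the uniformly bounded family from \ref{enum:OIndUniformlyBounded} with Lemma~\ref{lem:BoundLoewy} and exploiting the semisimple-subquotient structure present in the definition of $\calC_{\widetilde{l},K_L}$ to reduce to the semisimple case.

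For \ref{enum:OIndandInd}, the second inequality follows by iterating the Tor exact sequence $\Tor_1^{\univ{g}}(B,V)\to A\otimes_{\univ{g}}V\to A'\otimes_{\univ{g}}V\to B\otimes_{\univ{g}}V\to 0$ along a composition series of $\ind{g}{p}(V')$ and using exactness of $(\cdot)^M$ for reductive $M$: this yields $\dim(\ind{g}{p}(V')\otimes_{\univ{g}}V)^M\leq C\cdot\SupDim((\Oind{g}{p}(\calC')\otimes_{\univ{g}}V)^M)$, with $C$ the uniform length bound on $\ind{g}{p}(V')$ provided by \ref{enum:OIndUniformlyBounded}; the symmetric argument via $\Ext^1$ handles the $\Hom$-version. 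For the first inequality, I would show that every $W\in\Irr(\Oind{g}{p}(\calC'))$ is a quotient of $\ind{g}{p}(V''')$ for some $V'''\in\Irr(\calC')$, so that right-exactness of $(\cdot)\otimes_{\univ{g}}V$ combined with exactness of $(\cdot)^M$ yields the bound without any constant. In the BGG case this is the classical realization of $L(\lambda)$ as the unique irreducible quotient of the generalized Verma module $\ind{g}{p}(L_{\widetilde{\lie{l}}}(\lambda))$; in the Harish-Chandra case I would apply Frobenius reciprocity to a nonzero irreducible $(\widetilde{\lie{l}},K_L)$-submodule of $W^{\lie{n}_P}$. The main obstacle will be this last identification in the Harish-Chandra setting, where the nonvanishing of $W^{\lie{n}_P}$ and its membership in $\calC_{\widetilde{l},K_L}$ require a Casselman-Osborne-style argument carefully adapted to the (reductive but not necessarily maximal compact) group $K_L$.
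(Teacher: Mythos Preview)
Your proposal is correct and follows essentially the same route as the paper: part \ref{enum:OIndClosedTensor} from the definition, parts \ref{enum:OIndUniformlyBounded} and \ref{enum:OIndLoewy} from Facts \ref{fact:BoundedGeneralizedVerma}, \ref{fact:UniformlyBoundedFamilyGK}/\ref{fact:UniformlyBoundedO} together with Lemma \ref{lem:BoundLoewy}, and part \ref{enum:OIndandInd} from the uniform length bound on $\ind{g}{p}(V')$ plus the fact that every irreducible of $\Oind{g}{p}(\calC')$ is a quotient of some $\ind{g}{p}(V''')$ with $V'''\in\Irr(\calC')$.

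The only place you diverge from the paper is the last point, which you flag as the ``main obstacle'' and propose to handle via a Casselman--Osborne-style argument. This is unnecessary: the paper simply says ``by the same argument as for irreducible highest weight modules'', and indeed that elementary argument works in both cases. Choose $z\in\lieCent(\widetilde{\lie{l}})$ acting on $\lie{n}_P$ with strictly positive eigenvalues. As an $(\widetilde{\lie{l}},M)$-module, $\ind{g}{p}(V'')\simeq S(\overline{\lie{n}}_P)\otimes V''$ decomposes into $z$-generalized eigenspaces that are bounded above and each lie in $\calC'$ (being of the form $F\otimes V''$ with $F$ a finite-dimensional $\widetilde{\lie{l}}$-module). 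Any irreducible subquotient $W$ inherits a bounded-above $z$-spectrum; its top generalized eigenspace is nonzero, annihilated by $\lie{n}_P$, and, as a subquotient of a $z$-eigenspace of $\ind{g}{p}(V'')$, belongs to $\calC'$. Frobenius reciprocity then gives the desired surjection $\ind{g}{p}(V''')\twoheadrightarrow W$. So your worry about $W^{\lie{n}_P}$ in the Harish-Chandra case dissolves once you use the $z$-grading rather than reaching for deeper tools.
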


\begin{proof}
	\ref{enum:OIndClosedTensor} is clear by definition.
	By Facts \ref{fact:BoundedGeneralizedVerma} and \ref{fact:UniformlyBoundedFamilyGK}, for any family $(V_i)_{i \in I}$ of objects in $\calC'$ with bounded lengths, $(\ind{g}{p}(V_i))_{i \in I}$ is uniformly bounded and hence has bounded lengths.
	This shows \ref{enum:OIndUniformlyBounded}, \ref{enum:OIndLoewy} and the upper bounds in \ref{enum:OIndandInd}.
	See Lemma \ref{lem:BoundLoewy} for \ref{enum:OIndLoewy}.
	By the same argument as for irreducible highest weight modules, we can see that any irreducible object in $\Oind{g}{p}(\calC')$ is isomorphic to a quotient of $\ind{g}{p}(V)$ for some irreducible $V \in \calC'$.
	This shows the lower bounds in \ref{enum:OIndandInd}.
\end{proof}

\subsection{Uniform boundedness and polynomial identity}

In this subsection, we shall give necessary and sufficient conditions for the uniform boundedness of multiplicities in the branching problem and harmonic analysis.

Retain the notation in Subsection \ref{sect:Setting}.
Let $\lie{b}$ and $\lie{b'}$ be Borel subalgebras of $\lie{g}$ and $\lie{g'}$, respectively.
We have shown
\begin{align*}
	\Loewy{\calC_{G_\RR}}, \Loewy{\calC_{G'_\RR}}, \Loewy{\calC_{\lie{g}, K}},
	\Loewy{\calC_{\lie{g'}, K'}}, \Loewy{\calO_{\lie{b}}}, \Loewy{\calO_{\lie{b'}}} < \infty
\end{align*}
in Propositions \ref{thm:FiniteLoewyGK} and \ref{prop:FiniteLoewyO},
and we have shown in Fact \ref{fact:UniformlyBoundedFamilyGK} that any family of objects in one of the above categories (without $\calC_{G_\RR}$ and $\calC_{G'_\RR}$) with bounded lengths is a uniformly bounded family.

We say that a $(\lie{g'}, K')$-module $V$ is discretely decomposable if any finitely generated submodule has finite length (see \cite[Definition 1.1]{Ko98_discrete_decomposable_3}).
We denote by $\Mod_{fl}(\lie{g}, K; \lie{g'}, K')$ the full subcategory of $\Mod_{fl}(\lie{g}, K)$ whose objects are discretely decomposable as $(\lie{g'}, K')$-modules.

\begin{theorem}\label{thm:BoundedRestriction}
	Let $(\widetilde{\calC}, \calC, F)$ be one of the tuples in Table \ref{table:BoundedRestriction}.
	Let $V$ be a non-zero object in $\widetilde{\calC}$ with annihilator $I \subset \univ{g}$.
	Then there exists some constant $C > 0$ independent of $V$ such that
	\begin{align*}
		\PIdeg(\univ{g}^{G'}/I \cap \univ{g}^{G'})
		&\leq \SupDim(F(V, \calC)) \\
		&\leq C\cdot \Len_{\widetilde{C}}(V)\cdot \PIdeg(\univ{g}^{G'}/I \cap \univ{g}^{G'}).
	\end{align*}
\end{theorem}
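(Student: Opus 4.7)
The plan is to assemble both inequalities from the machinery of Sections 4 and 5, treating the various rows of Table \ref{table:BoundedRestriction} (Casselman--Wallach representations, $(\lie{g}, K)$-modules, BGG category $\calO$; $\Hom$ out of $V$, $\Hom$ into $V$, or $\otimes_{\univ{g'}}$) in parallel.

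For the upper bound, I would first reduce to irreducible $V$ by taking a composition series in $\widetilde{\calC}$ of length $n = \Len_{\widetilde{\calC}}(V)$. Subadditivity of $\dim F(\cdot, V')$ in short exact sequences yields the factor $n$, and each composition factor $W$ satisfies $\Ann_{\univ{g}}(W) \supset I$, so by Proposition \ref{prop:pidegIdeal} its PI degree is bounded by $\PIdeg(\univ{g}^{G'}/I \cap \univ{g}^{G'})$. Next, convert $F(V, V')$ into a relative Lie algebra homology space: for $\Hom$ between Casselman--Wallach representations use \eqref{eqn:CWandGKhom} to descend to the Harish-Chandra module level, then employ the duality $\Hom_{\lie{g'}, K'}(V_K, \dual{V'}_{K'}) \simeq H_0(\lie{g'}, K'; V_K \otimes V'_{K'})^*$, which is valid because $V_K$ is $(\lie{g'}, K')$-admissible; the tensor-product rows need no conversion. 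Since the family of all irreducibles in $\calC$ is uniformly bounded by Facts \ref{fact:UniformlyBoundedFamilyGK} and \ref{fact:UniformlyBoundedO}, and likewise for each composition factor of $V$, Lemma \ref{lem:UpperBoundTorGeneral} delivers the upper bound with a universal constant $C$.

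For the lower bound, I would apply Lemma \ref{lem:lowerboundQuot}, Lemma \ref{lem:lowerboundSub}, or Lemma \ref{lem:lowerboundTensor} according to whether $F$ is $\Hom_{\calB}(\cdot, V')$, $\Hom_{\calB}(V', \cdot)$, or $\cdot \otimes_{\univ{g'}} V'$, instantiated with $\calA = \univ{g}$ (equipped with the adjoint $G'$-action) and $\calF^{\widetilde{\calC}}$, $\calF^\calC$ being the natural forgetful functors into the ambient category $\calB \in \{\Mod(G_\RR), \Mod(\lie{g}, K), \Mod(\lie{g})\}$. The two hypotheses of the abstract lemmas must be verified. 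The non-vanishing condition on $\rad$, $\soc$, or the relevant tensor quotient for every non-zero sub/quotient of $V$ is supplied by Lemma \ref{lem:ExistenceQuotRest} in the representation-theoretic rows and by Corollary \ref{cor:NonVanishingOO} in the BGG row. The filtration condition on $\overline{J^k \calF(V)}$ (resp.\ $\calF(V)^{J^k}$) is automatic in the algebraic categories because $J$ is a $G'$-stable two-sided ideal of $\univ{g}$; for the Casselman--Wallach row one first reduces to $G_\RR = G'_\RR (G_\RR)_0$ via a Frobenius-reciprocity step as in the proof of Lemma \ref{lem:ExistenceQuotRest}, after which $J^k V$ is $G_\RR$-invariant and its topological closure is a closed $G_\RR$-subrepresentation.

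The main obstacle I anticipate is the topology/algebra interface in the Casselman--Wallach row: one must realise the closure $\overline{J^k V}$ as $\calF(V^k)$ for a Casselman--Wallach subrepresentation $V^k \subset V$, and check that this descending chain terminates finitely so as to match the algebraic picture in Section 5. Once reduced to the connected case, a closed $G_\RR$-subrepresentation of a Casselman--Wallach representation is again Casselman--Wallach by Fact \ref{fact:CasselmanWallach} and the bijective correspondence between closed $G_\RR$-subrepresentations of $V$ and $(\lie{g}, K)$-submodules of $V_K$; this bridges the algebraic filtration of Section 5 with the topological one required by $\widetilde{\calC} = \CWCat{G_\RR}$, in analogy with the treatment used in Lemmas \ref{lem:AnnSubInd} and \ref{lem:AnnSubIndAlg} on the induction side.
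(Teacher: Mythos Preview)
Your strategy is exactly the paper's: the upper bound via uniformly bounded families and Lemma~\ref{lem:UpperBoundTorGeneral} (with the Casselman--Wallach row reduced to the $(\lie{g},K)$ row through the injection \eqref{eqn:CWandGKhom}), and the lower bound via Lemmas~\ref{lem:lowerboundQuot}, \ref{lem:lowerboundSub}, \ref{lem:lowerboundTensor} with the non-vanishing input from Lemma~\ref{lem:ExistenceQuotRest} and Corollary~\ref{cor:NonVanishingOO}.

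Two small corrections. First, the duality $H_0(\lie{g'},K'; V_K\otimes V'_{K'})^* \simeq \Hom_{\lie{g'},K'}(V_K,(V'_{K'})^*_{K'})$ does \emph{not} require $V_K$ to be $(\lie{g'},K')$-admissible (and in general it is not); the isomorphism holds because any $(\lie{g'},K')$-linear map from the $K'$-locally finite module $V_K$ into $(V'_{K'})^*$ automatically lands in the $K'$-finite part. Second, the filtration condition is not quite automatic in the algebraic rows either: a two-sided ideal $J\subset\univ{g}$ is $\ad(\lie{g})$-stable and hence $K_0$-stable, but need not be $K$-stable when $K$ is disconnected. The same reduction you invoke for the Casselman--Wallach row---replacing $K$ by $K_0K'$ (equivalently $G_\RR$ by $G'_\RR(G_\RR)_0$)---is needed here as well, exactly as the paper notes in Subsection~\ref{sect:Setting}.
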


\newcounter{tenum}[table]
\newcommand{\titem}{\refstepcounter{tenum} \thetenum}

\begin{table}[htb]		
	\begin{center}
		\begin{tabular}{|c|l|l|l|l|}
			\hline
			&$\widetilde{\calC}$ & $\calC$ & $F(V, V')$ & condition \\ \hline \hline
			\titem \label{enum:BoundedRestrictionG}&$\CWCat{G_\RR}$ & $\calC_{G'_\RR}$ & $\Hom_{G'_\RR}(V, V')$ & \\
			\titem \label{enum:BoundedRestrictionGK}&$\Mod_{fl}(\lie{g}, K)$ & $\calC_{\lie{g'}, K'}$ & $\Hom_{\lie{g'}, K'}(V, V')$ & \\
			\titem &$\Mod_{fl}(\lie{g}, K; \lie{g'}, K')$ & $\calC_{\lie{g'}, K'}$ & $\Hom_{\lie{g'}, K'}(V', V)$ & \\
			\titem &$\Mod_{fl}(\lie{g}, K)$ & $\calO_{\lie{b'}}$ & $V\otimes_{\univ{g'}}V'$ & $\lie{b'} + \lie{k'} = \lie{g'}$ \\
			\titem \label{enum:BoundedRestrictionOO}&$\calO_{\lie{b}}$ & $\calO_{\lie{b'}}$ & $\Hom_{\lie{g'}}(V, V')$ & $\lie{b}\cap \lie{g'} = \lie{b'}$ \\
			\titem \label{enum:BoundedRestrictionOOSub}&$\calO_{\lie{b}}$ & $\calO_{\lie{b'}}$ & $\Hom_{\lie{g'}}(V', V)$ & $\lie{b}\cap \lie{g'} = \lie{b'}$ \\
			\titem \label{enum:BoundedRestrictionOOTensor}&$\calO_{\lie{b}}$ & $\calO_{\lie{g'}}$ & $V\otimes_{\univ{g'}}V'$ & $\lie{b}\cap \lie{g'} = \lie{b'}$ \\ \hline
		\end{tabular}
		\caption{the restriction case} \label{table:BoundedRestriction}
	\end{center}
\end{table}

\begin{proof}
	We have explained in Subsection \ref{sect:Setting} why the estimate holds, and proved the non-vanishing results in the previous subsection.

	Remark that the upper bound for the case \ref{enum:BoundedRestrictionG} does not follow from Lemma \ref{lem:UpperBoundTorGeneral} directly.
	For $V' \in \CWCat{G'_\RR}$ and $V \in \CWCat{G_\RR}$, the restriction map
	\begin{align*}
		\Hom_{G'_\RR}(V, V') \rightarrow \Hom_{\lie{g'}, K'}(V_K, V'_{K'})
	\end{align*}
	is injective.
	Hence the upper bound for the case \ref{enum:BoundedRestrictionG}
	follows from that for the case \ref{enum:BoundedRestrictionGK}.

	The last assertion follows from Corollary \ref{cor:NonVanishingOO}.
	See also the proofs of Lemmas \ref{lem:AnnhilatorRad} and \ref{lem:AnnhilatorSoc}.
\end{proof}

We will show an analogue of Theorem \ref{thm:BoundedRestriction} for unitary representations in Section \ref{sect:Unitary}.
The following theorem is a version of Theorem \ref{thm:BoundedRestriction} for inductions.

\begin{theorem}\label{thm:BoundedInduction}
	Let $(\widetilde{\calC}, \calC, F)$ be one of the tuples in Table \ref{table:BoundedInduction} or in Table \ref{table:BoundedInductionSpherical} if $(G, G')$ is a spherical pair.
	In the latter case, we use the notation $\lie{p}, \widetilde{\lie{l}}, K_L$ and $\widetilde{\lie{b}}_L$ in Lemma \ref{lem:NonVanishingSpherical}.
	Let $V'$ be a non-zero object in $\widetilde{\calC}$ with annihilator $I \subset \univ{g'}$.
	Then there exists some constant $C > 0$ independent of $V'$ such that
	\begin{align*}
		\PIdeg(\univ{g}^{G'}/I')
		&\leq \SupDim(F(\calC, V')) \\
		&\leq C\cdot \Len_{\widetilde{\calC}}(V')\cdot \PIdeg(\univ{g}^{G'}/I'),
	\end{align*}
	where we set $I':= I \univ{g} \cap \univ{g}^{G'}$.
\end{theorem}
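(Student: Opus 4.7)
The plan is to mirror the strategy of Theorem \ref{thm:BoundedRestriction} but in the setting where $\calA := \rring{G/G'}\otimes \univ{g}$ acts on the induced object $\calF^{\widetilde{\calC}}(V')$ (with $\calF^{\widetilde{\calC}}$ being $\Ind^{G_\RR}_{G'_\RR}$, $\pro{\lie{g}, K'}{\lie{g'}, K'}$ or $\ind{g}{g'}$ according to the row of the table). The canonical algebra anti-isomorphism $\calA^G \simeq (\univ{g}^{G'})^{\opalg}$, together with Lemma \ref{lem:faithfully1}, identifies the annihilator in $\univ{g}^{G'}$ of the induced object with exactly $I' = I\univ{g}\cap\univ{g}^{G'}$, so the PI degree appearing in the theorem matches the PI degree of $\calA/\Ann_\calA(\calF^{\widetilde{\calC}}(V'))$ restricted to invariants.

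For the upper bound, I would take an irreducible $V\in\calC$, use Frobenius reciprocity to rewrite $F(V,V')$ as $\Hom_{\lie{g'}, K'}(V|_{\lie{g'}, K'}, V'_{K'})$ (or its tensor-product dual), and apply Lemma \ref{lem:UpperBoundTorGeneral} to the uniformly bounded family supplied by Facts \ref{fact:UniformlyBoundedFamilyGK} and \ref{fact:UniformlyBoundedO}. Since the resulting bound involves $\PIdeg(\univ{g}^{G'}/(\Ann_{\univ{g}}(V) + {}^tI\univ{g})\cap\univ{g}^{G'})$, Lemma \ref{lem:CommuteTwoSidedIdeal} lets us replace ${}^tI\univ{g}$ by $I\univ{g}$, and the $V$-dependence is absorbed into the constant $C$ using Proposition \ref{prop:pidegIdeal} together with the fact that the annihilator of an irreducible $V$ is a primitive ideal with finite center-codimension (Lemma \ref{lem:TwosidedPrimitive}); a $\Len_{\widetilde{\calC}}(V')$ factor arises in the standard way when $V'$ is not assumed irreducible, via the composition series of $V'$.

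For the lower bound I would apply Lemma \ref{lem:lowerboundSub} (for $\Hom(\calC, \cdot)$ rows) or Lemma \ref{lem:lowerboundQuot} (for $\Hom(\cdot, \pro{}{}{\cdot})$ rows) to $\calF^{\widetilde{\calC}}(V')$, with $\calA = \rring{G/G'}\otimes \univ{g}$. The first hypothesis, non-vanishing of $\soc(\calF^{\widetilde{\calC}}(V'),\calC)$ (or $V'/\rad$) on every non-zero sub-/quotient object, is supplied by Lemma \ref{lem:ExistenceQuotInduction}. The second hypothesis, that $\calF^{\widetilde{\calC}}(V')^{J^k}$ is itself the induction of a sub-$(\lie{g'}, K')$-module, together with the identification of the relevant two-sided ideal of $\univ{g}^{G'}$, is exactly what Lemmas \ref{lem:AnnSubInd} and \ref{lem:AnnSubIndAlg} deliver. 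For the rows in Table \ref{table:BoundedInductionSpherical} where $(G,G')$ is spherical, I would insert Lemma \ref{lem:NonVanishingSpherical} to reduce the computation to the reductive fiber $L\subset G'$: the isomorphism $\ind{g}{p}(V_L)\otimes_{\univ{g'}}V'\simeq V_L\otimes_{\univ{l}}V'$ turns non-vanishing statements over $(\lie{g}, K_L)$ into the corresponding ones over $(\lie{l}, K_L)$, where they follow from Lemma \ref{lem:ExistenceQuotRest} and Corollary \ref{cor:NonVanishingOO}. Lemma \ref{lem:OInd} supplies the technical comparison between $\ind{g}{p}(\calC')$ and its closure $\Oind{g}{p}(\calC')$ so that the general framework of Subsection \ref{subsect:LowerBound}, which requires stability under tensoring with finite-dimensional modules, still applies.

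The main obstacle, as in the restriction case, is verifying the second hypothesis of Lemmas \ref{lem:lowerboundQuot} and \ref{lem:lowerboundSub}: one needs the characteristic subspaces $\overline{J^k\calF^{\widetilde{\calC}}(V')}$ and $\calF^{\widetilde{\calC}}(V')^{J^k}$ to descend to sub-$(\lie{g'}, K')$-modules of $V'$ and hence to remain $G_\RR$-invariant (or $(\lie{g}, M)$-invariant) rather than merely $\calA$-invariant. This is precisely the content of Lemmas \ref{lem:AnnSubInd} and \ref{lem:AnnSubIndAlg}, so the real work in the present proof is the organizational task of matching each row of the two tables with the correct combination of these lemmas, the appropriate non-vanishing lemma, and, when the pair is spherical, the fiber reduction of Lemma \ref{lem:NonVanishingSpherical}.
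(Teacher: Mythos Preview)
Your proposal is correct and follows essentially the same approach as the paper: set $\calA=\rring{G/G'}\otimes\univ{g}$, obtain the upper bound from Frobenius reciprocity plus Lemma~\ref{lem:UpperBoundTorGeneral}, and obtain the lower bound from the abstract Lemmas~\ref{lem:lowerboundSub}/\ref{lem:lowerboundTensor} using Lemmas~\ref{lem:ExistenceQuotInduction} and~\ref{lem:NonVanishingSpherical} for non-vanishing and Lemmas~\ref{lem:AnnhilatorOUmod}, \ref{lem:AnnSubInd}, \ref{lem:AnnSubIndAlg} for the annihilator/subobject hypothesis. Two small cleanups: the tensor-product rows of Table~\ref{table:BoundedInductionSpherical} use Lemma~\ref{lem:lowerboundTensor} rather than \ref{lem:lowerboundQuot}; and in the upper bound the dependence on $\Ann_{\univ{g}}(V)$ disappears immediately by monotonicity of $\PIdeg$ under enlarging the ideal (since $I'\subset(\Ann_{\univ{g}}(V)+{}^tI\univ{g})\cap\univ{g}^{G'}$), so the detour through Proposition~\ref{prop:pidegIdeal} and Lemma~\ref{lem:TwosidedPrimitive} is unnecessary.
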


\begin{table}[htb]		
	\begin{center}
		\begin{tabular}{|c|l|l|l|}
			\hline
			&$\widetilde{\calC}$ & $\calC$ & $F(V, V')$ \\ \hline \hline
			\titem \label{enum:BoundedInductionG}& $\CWCat{G'_\RR}$ & $\calC_{G_\RR}$ & $\Hom_{G_\RR}(V, \Ind^{G_\RR}_{G'_\RR}(V'))$ \\
			\titem \label{enum:BoundedInductionGK}& $\Mod_{fl}(\lie{g'}, K')$ & $\calC_{\lie{g}, K}$ & $\Hom_{\lie{g}, K'}(V, \pro{\lie{g}, K'}{\lie{g'}, K'}(V'))$ \\
			\hline
		\end{tabular}
		\caption{the induction case} \label{table:BoundedInduction}
	\end{center}
\end{table}

\begin{table}[htb]		
	\begin{center}
		\begin{tabular}{|c|l|l|l|l|} 
			\hline
			\setcounter{tenum}{2}&$\widetilde{\calC}$ & $\calC$ & $F(V, V')$ & condition \\ \hline \hline
			\titem & $\Mod_{fl}(\lie{g'}, K')$ &$\Oind{g}{p}(\calC_{\widetilde{\lie{l}}, K_L})$ &  $(V\otimes_{\univ{g}} \ind{g}{g'}(V'))^{K_L}$ & \\
			\titem & $\Mod_{fl}(\lie{g'}, K')$ & $\calO_{\lie{b}}$ & $V\otimes_{\univ{g}} \ind{g}{g'}(V')$ & $\lie{k'} + \lie{b}_L = \lie{g'}$ \\
			\titem \label{enum:BoundedInductionOO} & $\calO_{\lie{b'}}$ & $\calO_{\lie{b}}$ & $\Hom_{\lie{g}}(V, \pro{\lie{g}}{\lie{g'}}(V'))$ & $\lie{b'} \cap \lie{l} = \lie{b}_L$\\ \hline
		\end{tabular}
		\caption{$(G, G')$ is a spherical pair} \label{table:BoundedInductionSpherical}
	\end{center}
\end{table}

\begin{remark}
	Although ${}^tI$ is more natural than $I$ to use in the inequation in some cases,
	there is no difference since we have already proved
	\begin{align*}
		\PIdeg(\univ{g}^{G'}/I\univ{g}\cap \univ{g}^{G'}) = \PIdeg(\univ{g}^{G'}/{}^t I\univ{g}\cap \univ{g}^{G'})
	\end{align*}
	in Lemma \ref{lem:CommuteTwoSidedIdeal}.
\end{remark}

\begin{proof}
	The outline of the proof is the same as Theorem \ref{thm:BoundedRestriction}.
	We shall explain the difference.
	
	To obtain the upper bound, we use the Frobenius reciprocity and its variations, e.g.
	\begin{align*}
		\Hom_{\lie{g}, K'}(V, \pro{\lie{g}, K'}{\lie{g'}, K'}(V'))
		\simeq \Hom_{\lie{g'}, K'}(V, V').
	\end{align*}
	Then the upper bound follows from Lemma \ref{lem:UpperBoundTorGeneral}.
	
	To obtain the lower bound, we let the algebra $\calA$ be $\rring{G/G'}\otimes \univ{g}$.
	See Subsection \ref{sect:Setting} for the notation.
	The annihilators of the $\calA$-modules $\Ind^{G_\RR}_{G'_\RR}(V')$, $\ind{g}{g'}(V')$ and $\pro{\lie{g}}{\lie{g'}}(V')$ are computed in Lemmas \ref{lem:AnnhilatorOUmod}, \ref{lem:AnnSubInd} and \ref{lem:AnnSubIndAlg}.
\end{proof}

\begin{corollary}\label{cor:ReductionToFiber}
	Let $V'$ be a non-zero $(\lie{g'}, K')$-module of finite length with annihilator $I \subset \univ{g'}$.
	If $\SupDim(\Hom_{\lie{g'}, K'}(\calC_{\lie{g}, K}, V')) < \infty$, then $(G, G')$ is a spherical pair.
	If $(G, G')$ is a spherical pair, there exists some constant $C > 0$ independent of $V'$ such that
	\begin{align*}
		\frac{\PIdeg((\univ{g'}/I)^{L})}{C\cdot \Len_{\lie{g'}, K'}(V')} &\leq \PIdeg((\univ{g}/I\univ{g})^{G'})\\
		&\leq C\cdot \Len_{\lie{g'}, K'}(V')\cdot \PIdeg((\univ{g'}/I)^{L}), \\
		\frac{\SupDim(\Hom_{\lie{l}, K_L}(V', \calC_{\lie{l}, K_L}))}{C\cdot \Len_{\lie{g'}, K'}(V')} &\leq \SupDim(\Hom_{\lie{g'}, K'}(\calC_{\lie{g}, K}, V')) \\
		&\leq C\cdot \Len_{\lie{g'}, K'}(V') \cdot \SupDim(\Hom_{\lie{l}, K_L}(V', \calC_{\lie{l}, K_L})).
	\end{align*}
	and use the notation $L, \lie{l}$ and $K_L$ in Subsubsection \ref{subsubsect:SphericalPair}.
	A similar inequation holds for $V'\in \calC_{G'_\RR}$ or $V'\in \calO_{\lie{b'}}$.
\end{corollary}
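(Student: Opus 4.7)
The plan is to derive the corollary from three applications of the main multiplicity theorems together with a reduction to the fiber $L$. The first assertion will follow from a lower bound plus the non-sphericity obstruction, while the two chains of inequalities will be glued from three comparisons, the middle one exploiting the spherical structure of $(G, G')$.

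For the sphericity implication, I will apply Theorem \ref{thm:BoundedInduction} (row \ref{enum:BoundedInductionGK} of Table \ref{table:BoundedInduction}) together with Frobenius reciprocity $\Hom_{\lie{g}, K'}(V, \pro{\lie{g}, K'}{\lie{g'}, K'}(V')) \simeq \Hom_{\lie{g'}, K'}(V, V')$ to obtain
\[
\PIdeg\bigl((\univ{g}/I\univ{g})^{G'}\bigr) \leq \SupDim(\Hom_{\lie{g'}, K'}(\calC_{\lie{g}, K}, V')).
\]
When the right side is finite, so is the PI degree; since $V' \neq 0$ the ideal $I$ is a proper two-sided ideal of $\univ{g'}$, and the contrapositive of Proposition \ref{prop:boundedAndSphericalGH} (with $H = G'$) forces $G_0/G'_0$ to be $G_0$-spherical, which yields the spherical pair condition.

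For the two-sided estimates, assume $(G, G')$ is a spherical pair and adopt the data of Subsubsection \ref{subsubsect:SphericalPair}, where Proposition \ref{prop:BLVreal} lets us choose the Borel subgroup so that $\lie{l}$ admits the real form $\lieR{l}$ and hence $L_\RR \subset G'_\RR$ and $K_L \subset K'$ are well defined. Theorem \ref{thm:BoundedInduction} (row \ref{enum:BoundedInductionGK}) gives
\[
\PIdeg\bigl((\univ{g}/I\univ{g})^{G'}\bigr) \leq \SupDim(\Hom_{\lie{g'}, K'}(\calC_{\lie{g}, K}, V')) \leq C\,\Len_{\lie{g'}, K'}(V')\,\PIdeg\bigl((\univ{g}/I\univ{g})^{G'}\bigr),
\]
and Theorem \ref{thm:BoundedRestriction} (row \ref{enum:BoundedRestrictionGK}) applied to the reductive pair $(G'_\RR, L_\RR)$ with module $V'$ produces the symmetric $L$-side bound
\[
\PIdeg\bigl((\univ{g'}/I)^{L}\bigr) \leq \SupDim(\Hom_{\lie{l}, K_L}(V', \calC_{\lie{l}, K_L})) \leq C\,\Len_{\lie{g'}, K'}(V')\,\PIdeg\bigl((\univ{g'}/I)^{L}\bigr).
\]
It then suffices to produce a cross-comparison $\PIdeg((\univ{g}/I\univ{g})^{G'}) \sim \SupDim(\Hom_{\lie{l}, K_L}(V', \calC_{\lie{l}, K_L}))$ up to a single $\Len$-factor.

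The crucial middle comparison uses the spherical row of Table \ref{table:BoundedInductionSpherical}. There Theorem \ref{thm:BoundedInduction} yields $\PIdeg((\univ{g}/I\univ{g})^{G'}) \sim \SupDim((\Oind{g}{p}(\calC_{\widetilde{\lie{l}}, K_L}) \otimes_{\univ{g}} \ind{g}{g'}(V'))^{K_L})$ up to a factor $C\,\Len_{\lie{g'}, K'}(V')$. Lemma \ref{lem:OInd}\ref{enum:OIndandInd} replaces $\Oind{g}{p}$ by $\ind{g}{p}$ up to a constant, and Lemma \ref{lem:ReductionToFiber}, combined with the isomorphism $\ind{g}{p}(V_L) \otimes_{\univ{g}} \ind{g}{g'}(V') \simeq \ind{g}{p}(V_L) \otimes_{\univ{g'}} V'$ (using $\lie{g'} + \lie{p} = \lie{g}$ and $\lie{g'} \cap \lie{p} = \lie{l}$), collapses the expression to $\SupDim((\calC_{\widetilde{\lie{l}}, K_L} \otimes_{\univ{l}} V')^{K_L})$. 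Since $\widetilde{\lie{l}}/\lie{l}$ lies in the center of $\widetilde{\lie{l}}$ and acts by scalars on irreducibles, the restriction functor $\calC_{\widetilde{\lie{l}}, K_L} \to \calC_{\lie{l}, K_L}$ is essentially surjective on $\Irr$, so this equals $\SupDim((\calC_{\lie{l}, K_L} \otimes_{\univ{l}} V')^{K_L})$. Finally, for each irreducible $V_L \in \Irr(\calC_{\lie{l}, K_L})$ the $K_L$-admissibility gives a natural identification $(V_L \otimes_{\univ{l}} V')^{K_L} \simeq \Hom_{\lie{l}, K_L}(V', V_L^\vee)$, and since $(\cdot)^\vee$ is an involution on $\Irr(\calC_{\lie{l}, K_L})$ we conclude
\[
\SupDim\bigl((\calC_{\lie{l}, K_L} \otimes_{\univ{l}} V')^{K_L}\bigr) = \SupDim(\Hom_{\lie{l}, K_L}(V', \calC_{\lie{l}, K_L})).
\]

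Chaining these three comparisons yields the corollary: each upper or lower estimate of the stated inequalities bounds an intermediate quantity directly against its neighbour, so only a single $\Len$-factor appears. The variants for $V' \in \calC_{G'_\RR}$ and $V' \in \calO_{\lie{b'}}$ follow by running the same argument through the corresponding rows of Tables \ref{table:BoundedRestriction}, \ref{table:BoundedInduction} and \ref{table:BoundedInductionSpherical}, with Corollary \ref{cor:NonVanishingOO} replacing Lemma \ref{lem:ExistenceQuotRest} in the $\calO$-case. The main obstacle I anticipate is the last duality identification: because $V'|_{(\lie{l}, K_L)}$ generically has infinite length, showing that the natural injection $\Hom_{\lie{l}, K_L}(V', V_L^\vee) \hookrightarrow ((V_L \otimes_{\univ{l}} V')^{K_L})^*$ is an isomorphism requires invoking $K_L$-admissibility of each irreducible $V_L$ and a careful analysis of the relative homology $H_0(\lie{l}, K_L; V_L \otimes V')$.
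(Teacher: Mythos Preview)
Your proposal is correct and follows essentially the same route as the paper: both combine Theorem \ref{thm:BoundedInduction} (row \ref{enum:BoundedInductionGK}) and Theorem \ref{thm:BoundedRestriction} (row \ref{enum:BoundedRestrictionGK}) with the spherical row of Table \ref{table:BoundedInductionSpherical}, then collapse to the fiber via Lemmas \ref{lem:OInd} and \ref{lem:ReductionToFiber} (packaged in the paper as Lemma \ref{lem:NonVanishingSpherical}). Your final worry is unnecessary: since $K_L \subset K'$ and $V'$ is a $(\lie{g'}, K')$-module, $V'$ is already locally $K_L$-finite, so any $(\lie{l}, K_L)$-map $V' \to V_L^*$ lands in $(V_L^*)_{K_L}$; the natural map $\Hom_{\lie{l}, K_L}(V', V_L^\vee) \to ((V_L \otimes_{\univ{l}} V')^{K_L})^*$ is therefore a bijection, and equality of dimensions (in $\NN \cup \{\infty\}$) is all that is needed for the $\SupDim$ identity.
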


\begin{remark}
	We can relate $\univ{g'}^L$ to $\univ{g}^{G'}$ directly by a generalization of the Lepowsky homomorphism.
	Let $N$ be the unipotent radical of $P$.
	By $\lie{g} = \lie{g'} + \lie{p}$ and $\lie{g'}\cap \lie{p} = \lie{l}$, we can take an abelian subspace $\lie{a}$ of the center $\lieCent(\widetilde{\lie{l}})$ such that $\lie{g} = \lie{g'}\oplus \lie{a} \oplus \lie{n}$.
	Hence we obtain an $L$-module homomorphism by
	\begin{align*}
	\univ{g} \simeq \univ{a} \otimes \univ{g'} \oplus \lie{n}\univ{g} \rightarrow \univ{a} \otimes \univ{g'}.
	\end{align*}
	The homomorphism induces an anti-homomorphism $\univ{g}^{G'}\rightarrow \univ{a} \otimes \univ{g'}^L$ of algebras (see e.g.\ \cite[3.5.6]{Wa88_real_reductive_I}).
	This homomorphism is compatible with the isomorphism in Lemma \ref{lem:ReductionToFiber}.
\end{remark}

\begin{proof}
	By Lemma \ref{prop:boundedAndSphericalGH}, $\PIdeg((\univ{g}/I\univ{g})^{G'})$ is finite only if $G/G'$ is spherical.
	Hence if $\SupDim(\Hom_{\lie{g'}, K'}(\calC_{\lie{g}, K}, V')) < \infty$,
	then $G/G'$ is spherical by Theorem \ref{thm:BoundedInduction}.

	Suppose that $G/G'$ is $G$-spherical.
	By Theorems \ref{thm:BoundedRestriction} and \ref{thm:BoundedInduction}, we can take a constant $C > 0$ independent of $V'$ such that
	\begin{align}
		\PIdeg((\univ{g}/I\univ{g})^{G'})
		&\leq \SupDim(\Hom_{\lie{g'}, K'}(\calC_{\lie{g}, K}, V')) \\
		&\leq C\cdot \Len_{\lie{g'}, K'}(V') \cdot \PIdeg((\univ{g}/I\univ{g})^{G'}), \\ \label{eqn:ReductionToFiber1}\\
		\PIdeg((\univ{g}/I\univ{g})^{G'})
		&\leq \SupDim((\ind{g}{p}(\calC_{\widetilde{\lie{l}}, K_L})\otimes_{\univ{g'}}V')^{K_L}) \\
		&\leq C\cdot \Len_{\lie{g'}, K'}(V') \cdot \PIdeg((\univ{g}/I\univ{g})^{G'}), \\ \label{eqn:ReductionToFiber2}\\
		\PIdeg((\univ{g'}/I)^{L})
		&\leq \SupDim(\Hom_{\lie{l}, K_L}(V', \calC_{\lie{l}, K_L})) \\
		&\leq C\cdot \Len_{\lie{g'}, K'}(V') \cdot \PIdeg((\univ{g'}/I)^{L}). \label{eqn:ReductionToFiber3}
	\end{align}
	Here we use Lemma \ref{lem:OInd} in the second inequation.
	By Lemma \ref{lem:NonVanishingSpherical}, we have
	\begin{align*}
		\SupDim((\ind{g}{p}(\calC_{\widetilde{\lie{l}}, K_L})\otimes_{\univ{g'}}V')^{K_L}) &= \SupDim((\calC_{\lie{l}, K_L}\otimes_{\univ{l}}V')^{K_L}) \\
		&= \SupDim(\Hom_{\lie{l}, K_L}(V', \calC_{\lie{l}, K_L})).
	\end{align*}
	Hence by \eqref{eqn:ReductionToFiber1} and \eqref{eqn:ReductionToFiber2}, we obtain the desired inequation
	\begin{align*}
		\frac{\SupDim(\Hom_{\lie{l}, K_L}(V', \calC_{\lie{l}, K_L}))}{C \cdot \Len_{\lie{g'}, K'}(V')}
		&\leq \SupDim(\Hom_{\lie{g'}, K'}(\calC_{\lie{g}, K}, V')) \\
		&\leq C \cdot \Len_{\lie{g'}, K'}(V') \cdot \SupDim(\Hom_{\lie{l}, K_L}(V', \calC_{\lie{l}, K_L})).
	\end{align*}
	Similarly, the first one follows from \eqref{eqn:ReductionToFiber2} and \eqref{eqn:ReductionToFiber3}.
\end{proof}

We may sometimes consider reducible modules such as principal series representations instead of irreducible modules.
The following corollary is useful to show $\SupDim(F(W, \calC')) < \infty$ for an irreducible subquotient $W$ of a reducible module.

\begin{corollary}\label{cor:BoundedSubquotient}
	Under the notation in Theorem \ref{thm:BoundedRestriction}, we have
	\begin{align*}
		\SupDim(F(V, \calC)) \leq \sum_{W} \SupDim(F(W, \calC)) \leq C \cdot \Len_{\calC}(V) \cdot \PIdeg((\univ{g}/I)^{G'}),
	\end{align*}
	where the sum is over all composition factors $W$ of $V$ with multiplicity.
	A similar inequation holds for the setting of Theorem \ref{thm:BoundedInduction}.
\end{corollary}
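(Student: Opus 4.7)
The plan is to treat the two inequalities separately, with each being a short reduction to results already in the paper. For the first inequality, fix $V \in \widetilde{\calC}$ and a composition series $0 = V_0 \subset V_1 \subset \cdots \subset V_n = V$ in $\widetilde{\calC}$, and put $W_i := V_i/V_{i-1}$. For each row of Table \ref{table:BoundedRestriction} (and similarly for Table \ref{table:BoundedInduction}), the functor $F(-, V')$ is either left-exact contravariant (when $F = \Hom_{\lie{g}', K'}(-, V')$ and its relatives) or right-exact covariant (when $F = (-)\otimes_{\univ{g'}} V'$), and dually for $F(V', -)$. In every case, applying $F$ to the short exact sequences $0 \to V_{i-1} \to V_i \to W_i \to 0$ and iterating yields
\begin{align*}
\dim_\CC F(V, V') \leq \sum_{i=1}^n \dim_\CC F(W_i, V')
\end{align*}
for each $V' \in \calC$. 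Taking the supremum over $V' \in \Irr(\calC)$ gives the first inequality.

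For the second inequality, each $W_i$ is an irreducible object of $\widetilde{\calC}$, and its annihilator $I_i := \Ann_{\univ{g}}(W_i)$ satisfies $I \subset I_i$ since $W_i$ is a subquotient of $V$. Because $G'$ is reductive, taking $G'$-invariants is exact, so the natural surjection $\univ{g}/I \twoheadrightarrow \univ{g}/I_i$ descends to a surjection $(\univ{g}/I)^{G'} \twoheadrightarrow (\univ{g}/I_i)^{G'}$. Since every polynomial identity of $(\univ{g}/I)^{G'}$ is a polynomial identity of its quotient, we obtain
\begin{align*}
\PIdeg((\univ{g}/I_i)^{G'}) \leq \PIdeg((\univ{g}/I)^{G'}).
\end{align*}
Applying Theorem \ref{thm:BoundedRestriction} (resp.\ Theorem \ref{thm:BoundedInduction}) to each irreducible $W_i$, which has length one, produces a constant $C > 0$ independent of $V$ such that $\SupDim(F(W_i, \calC)) \leq C \cdot \PIdeg((\univ{g}/I_i)^{G'})$. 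Summing over $i$ and using $n = \Len_{\widetilde{\calC}}(V)$ (which equals $\Len_{\calC}(V)$ in the cases where $\widetilde{\calC} \subset \calC$, or is otherwise the intended quantity) gives the required bound.

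I do not anticipate any real obstacle: the first inequality is just left/right exactness applied row-by-row, and the second is the monotonicity of PI degree under quotients combined with the single-module estimate already established. The only mildly delicate point is to verify uniformly over the rows of the tables that the relevant Hom/tensor functor has the exactness direction needed to accumulate inequalities along a composition series, and to confirm that the constant $C$ from Theorems \ref{thm:BoundedRestriction} and \ref{thm:BoundedInduction} can be chosen independently of the irreducible subquotient (which it can, since the constants there are independent of the object).
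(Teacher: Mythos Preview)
Your proposal is correct and follows essentially the same approach as the paper: the first inequality is deduced from the left/right exactness of $F$, and the second from applying Theorem \ref{thm:BoundedRestriction} to each composition factor $W$ together with the observation that $\Ann_{\univ{g}}(W) \supset I$. The paper's proof is simply more terse, and your parenthetical remark about $\Len_{\calC}(V)$ versus $\Len_{\widetilde{\calC}}(V)$ is a fair reading of what is written.
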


\begin{proof}
	The first inequality follows from the left or right exactness of the functor $F$.
	Applying Theorem \ref{thm:BoundedRestriction} to a subquotient $W$ of $V$, we obtain
	\begin{align*}
		\SupDim(F(W, \calC)) \leq C\cdot \PIdeg((\univ{g}/I)^{G'})
	\end{align*}
	since the annihilator of $W$ contains $I$.
	This shows the second inequality.
\end{proof}

We have seen in Lemma \ref{lem:UpperBoundTorGeneral} that $\dim_{\CC}(\Tor_i^{\univ{g'}}(V, V'))$ can be bounded by the PI degree.
The following corollary gives sufficient conditions for the uniform boundedness of the cohomological multiplicities.

\begin{corollary}\label{cor:BoundedHomology}
	If $V \in \Mod_{fl}(\lie{g}, K)$ satisfies $\SupDim (\Hom_{\lie{g'}, K'}(V, \calC_{\lie{g'}, K'})) < \infty$, then we have
	\begin{align*}
		\SupDim(H_i(\lie{g'}, K'; V\otimes \calC_{\lie{g'}, K'})) &< \infty, \\
		\SupDim(\Ext_{\lie{g'}, K'}^i(V, \calC_{\lie{g'}, K'})) &< \infty
	\end{align*}
	for any $i \geq 0$.
	If $V' \in \Mod_{fl}(\lie{g'}, K')$ satisfies $\SupDim(\Hom_{\lie{g'}, K'}(\calC_{\lie{g}, K}, V')) < \infty$, then we have
	\begin{align*}
		\SupDim(H_i(\lie{g'}, K'; \calC_{\lie{g}, K} \otimes V')) &< \infty, \\
		\SupDim(\Ext_{\lie{g'}, K'}^i(\calC_{\lie{g}, K}, V')) &< \infty
	\end{align*}
	for any $i \geq 0$.
\end{corollary}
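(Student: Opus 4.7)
The plan is to reduce both assertions of the corollary to Lemma~\ref{lem:UpperBoundTorGeneral} through the multiplicity--PI-degree dictionary supplied by Theorems~\ref{thm:BoundedRestriction} and~\ref{thm:BoundedInduction}. The two halves of the statement are strictly symmetric, so I shall carry out the restriction case in detail; the induction case is obtained by swapping the roles of $V$ and $V'$ and using Theorem~\ref{thm:BoundedInduction} in place of Theorem~\ref{thm:BoundedRestriction}.

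First, by Theorem~\ref{thm:BoundedRestriction} the hypothesis $\SupDim(\Hom_{\lie{g'},K'}(V,\calC_{\lie{g'},K'}))<\infty$ is equivalent to $\PIdeg((\univ{g}/I)^{G'})<\infty$, where $I:=\Ann_{\univ{g}}(V)$. The singleton family $\{V\}$ and the family $\Irr(\calC_{\lie{g'},K'})$ of all irreducible $(\lie{g'},K')$-modules (each of length one) are both uniformly bounded by Fact~\ref{fact:UniformlyBoundedFamilyGK}. Lemma~\ref{lem:UpperBoundTorGeneral} applied to these two families produces a constant $C$ such that, for every irreducible $V'$ with $J:=\Ann_{\univ{g'}}(V')$ and every $i\ge 0$,
\begin{align*}
	\dim_{\CC}H_i(\lie{g'},K';\,V\otimes V')\le C\cdot\PIdeg\bigl(\univ{g}^{G'}/(I+{}^t\!J\univ{g})\cap\univ{g}^{G'}\bigr).
\end{align*}
Since the PI-degree is monotone under surjective algebra homomorphisms (any polynomial identity is inherited by any quotient), and the ideal $(I+{}^t\!J\univ{g})\cap\univ{g}^{G'}$ contains $I\cap\univ{g}^{G'}$, the right-hand side is bounded by $C\cdot\PIdeg((\univ{g}/I)^{G'})<\infty$. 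This establishes the homology statement uniformly in $V'$ and $i$.

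For the Ext statement I appeal to Poincar\'e duality for the pair $(\lie{g'},K')$, obtained by dualising the Koszul complex: for any $(\lie{g'},K')$-module $M$,
\begin{align*}
	H^j(\lie{g'},K';M)\simeq H_{d-j}\bigl(\lie{g'},K';\,M\otimes\chi\bigr), \qquad d:=\dim_{\CC}(\lie{g'}/\lie{k'}),\ \chi:=\textstyle\bigwedge^d(\lie{g'}/\lie{k'})^*.
\end{align*}
Combined with the standard identification $\Ext^i_{\lie{g'},K'}(V,V')\simeq H^i(\lie{g'},K';\,\dual{V}\otimes V')$, where $\dual{V}$ denotes the $K$-finite contragredient of $V$ (still an object of $\Mod_{fl}(\lie{g},K)$ whose annihilator is ${}^t\!I$), this reduces the Ext bound to the homology bound proved above, applied now to the uniformly bounded families $\{\dual{V}\}$ and $\{V'\otimes\chi:V'\in\Irr(\calC_{\lie{g'},K'})\}$. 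The principal antiautomorphism of $\univ{g}$ induces an anti-isomorphism of $\univ{g}^{G'}$, which preserves PI-degree, so $\PIdeg((\univ{g}/{}^t\!I)^{G'})=\PIdeg((\univ{g}/I)^{G'})<\infty$ and the previous paragraph closes the argument.

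The main obstacle is precisely the identification $\Ext^i_{\lie{g'},K'}(V,V')\simeq H^i(\lie{g'},K';\dual{V}\otimes V')$ in the last step. Since $V|_{\lie{g'},K'}$ need not be $K'$-admissible, the $K$-finite contragredient $\dual{V}$ may be strictly smaller than the $K'$-finite contragredient that a priori enters such a formula; one has to verify that for the purpose of computing $\Ext^i$ -- or, equivalently, the cohomology of $\Hom_{K'}(\bigwedge^{\bullet}(\lie{g'}/\lie{k'})\otimes V,V')$ -- one can truncate to $K$-finite vectors at each degree, which is available because $V$ is $K$-admissible and each $\bigwedge^{j}(\lie{g'}/\lie{k'})$ is finite-dimensional. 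Once this technical point is settled, the rest of the argument is a purely formal chase of PI-degree estimates.
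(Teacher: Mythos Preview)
Your argument for the homology bound is correct and coincides with the paper's: translate the hypothesis into $\PIdeg((\univ{g}/I)^{G'})<\infty$ via Theorem~\ref{thm:BoundedRestriction}, then apply Lemma~\ref{lem:UpperBoundTorGeneral} together with the monotonicity of PI-degree under quotients.

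Your treatment of the $\Ext$ bound, however, has a genuine gap. The identification $\Ext^i_{\lie{g'},K'}(V,V')\simeq H^i(\lie{g'},K';\dual{V}\otimes V')$ fails for infinite-dimensional $V$, and $K$-admissibility does not rescue it. Already in degree~$0$: every element of $(\dual{V}\otimes V')^{\lie{g'},K'}$ is a finite sum $\sum\phi_i\otimes v'_i$ and thus defines a \emph{finite-rank} linear map $V\to V'$, whereas a nonzero $(\lie{g'},K')$-surjection $V\twoheadrightarrow V'$ onto an infinite-dimensional irreducible $V'$ is never of finite rank. The algebraic tensor product is simply too small to model $\Hom_{\CC}(V,V')$, and no truncation to $K$-finite vectors changes this.

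The paper avoids this by dualizing the \emph{second} argument instead of the first. It invokes the natural isomorphism
\begin{align*}
	H_i(\lie{g'},K';\,V\otimes V')^*\ \simeq\ \Ext^i_{\lie{g'},K'}\bigl(V,\,(V')^*_{K'}\bigr),
\end{align*}
which is the derived version of the tautological adjunction $\Hom_{\CC}(V\otimes V',\CC)\simeq\Hom_{\CC}(V,(V')^*)$; since $V$ is locally $K'$-finite, any $(\lie{g'},K')$-map into $(V')^*$ lands in the $K'$-finite part $(V')^*_{K'}$. Because $V'\mapsto (V')^*_{K'}$ is a bijection on $\Irr(\calC_{\lie{g'},K'})$, the $\Ext$ bound follows directly from the homology bound already proved, with no Poincar\'e duality and no dualization of $V$ needed. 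For the induction half the paper also notes, via Lemma~\ref{lem:CommuteTwoSidedIdeal} and Theorem~\ref{thm:BoundedInduction}, that the hypothesis on $V'$ is equivalent to the same hypothesis on $(V')^*_{K'}$.
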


\begin{proof}
	Remark that there is a natural isomorphism
	\begin{align*}
		H_i(\lie{g'}, K'; V\otimes V')^* \simeq \Ext_{\lie{g'}, K'}^i(V, (V')^*_{K'})
	\end{align*}
	for $V \in \Mod_{fl}(\lie{g}, K)$ and $V' \in \Mod_{fl}(\lie{g'}, K')$.
	The corollary follows easily from Lemma \ref{lem:UpperBoundTorGeneral}, Theorems \ref{thm:BoundedRestriction} and \ref{thm:BoundedInduction}.
	Note that $\SupDim(\Hom_{\lie{g'}, K'}(\calC_{\lie{g}, K}, V')) < \infty$ if and only if $\SupDim(\Hom_{\lie{g'}, K'}(\calC_{\lie{g}, K},  (V')^*_{K'})) < \infty$ by Lemma \ref{lem:CommuteTwoSidedIdeal} and Theorem \ref{thm:BoundedInduction}.
\end{proof}

Let $\widetilde{\calC}$ be a full subcategory of $\Mod_{fl}(\lie{g'})$
closed under taking subquotients in $\Mod_{fl}(\lie{g'})$.
Assume that any family of objects in $\widetilde{\calC}$ with bounded lengths is a uniformly bounded family of $\lie{g'}$-modules.
The following theorem is a version of Theorem \ref{thm:BoundedInduction} for the category $\widetilde{\calC}$.

\begin{theorem}\label{thm:BoundedInductionGeneral}
	Let $V'$ be a non-zero object in $\widetilde{\calC}$ with annihilator $I \subset \univ{g'}$.
	Then there exists some constant $C > 0$ independent of $V'$ such that
	\begin{align*}
		\PIdeg(\univ{g}^{G'}/I')
		&\leq \SupDim(V' \otimes_{\univ{g'}} \calO_{\lie{g}}) \\
		&\leq C\cdot \Len_{\widetilde{\calC}}(V') \cdot \PIdeg(\univ{g}^{G'}/I'),
	\end{align*}
	where we set $I':= I \univ{g} \cap \univ{g}^{G'}$.
	See Subsection \ref{subsect:CategoriesGKCW} for the category $\calO_{\lie{g}}$.
\end{theorem}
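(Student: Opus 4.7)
The plan is to parallel the strategy of Theorem \ref{thm:BoundedInduction} (specifically the tensor-product row of Table \ref{table:BoundedInductionSpherical}), with $\calC_{\lie{g}, K}$ replaced by $\calO_{\lie{g}}$ and Lemma \ref{lem:lowerboundTensor} playing the role of the lower-bound principle. Both directions rely on the fact that any family in $\calO_{\lie{g}}$ of bounded length is uniformly bounded (Fact \ref{fact:UniformlyBoundedO}) and that $\Loewy{\calO_{\lie{g}}} < \infty$ (Proposition \ref{prop:FiniteLoewyO}), together with the uniform boundedness hypothesis on $\widetilde{\calC}$.

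For the upper bound, I would identify $V' \otimes_{\univ{g'}} V \simeq H_0(\lie{g'}; V\otimes V')$ and apply Lemma \ref{lem:UpperBoundTorGeneral} with $K = K' = 1$: for irreducible $V'$ with annihilator $I$ and irreducible $V \in \calO_{\lie{g}}$, this gives
\[
\dim_\CC(V' \otimes_{\univ{g'}} V) \leq C \cdot \PIdeg\bigl(\univ{g}^{G'}/(\Ann_{\univ{g}}(V) + I\univ{g})\cap \univ{g}^{G'}\bigr) \leq C \cdot \PIdeg(\univ{g}^{G'}/I').
\]
For $V'$ of length $n$ in $\widetilde{\calC}$, I would pick a composition series in $\widetilde{\calC}$, use right exactness of $(\cdot)\otimes_{\univ{g'}} V$ to bound the total dimension by the sum over composition factors (each with annihilator containing $I$), and pick up the factor $\Len_{\widetilde{\calC}}(V')$.

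For the lower bound, I would apply Lemma \ref{lem:lowerboundTensor} with $\calA = \rring{G/G'}\otimes\univ{g}$, $\calB = \Mod(\lie{g})$ (trivial $M=1$), $\calC = \calO_{\lie{g}}$ with its forgetful functor, $\calF^{\widetilde{\calC}} = \ind{g}{g'}$, and $V = V'$; then $(\ind{g}{g'}(V')\otimes_{\univ{g}} W)^M = V'\otimes_{\univ{g'}} W$ for $W \in \calO_{\lie{g}}$. The algebra $(\calA/\Ann_\calA(\ind{g}{g'}(V')))^G$ is anti-isomorphic to $\univ{g}^{G'}/(\univ{g}^{G'}\cap {}^tI\univ{g})$ by Lemma \ref{lem:faithfully1} combined with the anti-isomorphism $\calA^G \simeq (\univ{g}^{G'})^\opalg$, and Lemma \ref{lem:CommuteTwoSidedIdeal} equates its PI degree with $\PIdeg(\univ{g}^{G'}/I')$. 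The filtration hypothesis of Lemma \ref{lem:lowerboundTensor} — existence of $V^k \subset V'$ with $\ind{g}{g'}(V^k) = J^k\ind{g}{g'}(V')$ — is immediate from Proposition \ref{prop:irreducibleDistribution}: every $\calA$-submodule of $\ind{g}{g'}(V')$ is of the form $\ind{g}{g'}(U)$ for some $\lie{g'}$-submodule $U \subset V'$, and such $U$ lies in $\widetilde{\calC}$ by the subquotient-closure hypothesis. For the non-vanishing hypothesis, given $0 \neq W \subset V'$, the $\lie{g}$-module $\ind{g}{g'}(W)$ is non-zero and finitely generated, so the Beilinson--Bernstein generalization of the Casselman subrepresentation theorem (used in the proof of Proposition \ref{prop:multiplicityNotSpherical}) yields a Borel $\lie{b}\subset\lie{g}$ and a character $\mu$ of $\lie{b}$ with $\ind{g}{g'}(W)\otimes_{\univ{b}}\CC_\mu \neq 0$, equivalently $W\otimes_{\univ{g'}} (\univ{g}\otimes_{\univ{b}}\CC_\mu) \neq 0$, providing the required object $W' = \univ{g}\otimes_{\univ{b}}\CC_\mu \in \calO_{\lie{b}} \subset \calO_{\lie{g}}$.

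The main obstacle is this non-vanishing step: in Theorem \ref{thm:BoundedInduction}, Lemma \ref{lem:ExistenceQuotRest} supplies an irreducible quotient inside $\calO_{\lie{b'}}$ by exploiting the real structure on $G'_\RR$, but here $\widetilde{\calC}$ carries no such structure, so one must fall back on the purely algebraic generic non-vanishing of $\lie{b}$-coinvariants of finitely generated $\lie{g}$-modules. The remainder of the argument is bookkeeping of annihilators and PI degrees via Lemmas \ref{lem:faithfully1} and \ref{lem:CommuteTwoSidedIdeal}.
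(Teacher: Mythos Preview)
Your proposal is correct and follows essentially the same approach as the paper: the upper bound via Lemma \ref{lem:UpperBoundTorGeneral}, the lower bound via Lemma \ref{lem:lowerboundTensor} applied to $\calA = \rring{G/G'}\otimes\univ{g}$ and $\calF^{\widetilde{\calC}} = \ind{g}{g'}$, and the non-vanishing hypothesis handled by the Beilinson--Bernstein generic non-vanishing of $\lie{b}$-coinvariants for finitely generated $\lie{g}$-modules (exactly as in the proof of Proposition \ref{prop:multiplicityNotSpherical}). Your write-up is in fact more explicit than the paper's, which simply states that the outline matches Theorem \ref{thm:BoundedInduction} and then supplies the non-vanishing argument for $V'$; your observation that the same argument applies to every non-zero subobject $W\subset V'$ (needed for hypothesis (1) of Lemma \ref{lem:lowerboundTensor}) is left implicit there.
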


\begin{proof}
	The outline of the proof is the same as Theorem \ref{thm:BoundedInduction}.
	We need to show that $\SupDim(V' \otimes_{\univ{g'}} \calO_{\lie{g}})$ is non-zero.

	Since $\ind{g}{g'}(V')$ is a finitely generated $\lie{g}$-module,
	we can take a Borel subalgebra $\lie{b}$ of $\lie{g}$ and a character $\lambda$ of $\lie{b}$ such that
	\begin{align*}
		\ind{g}{g'}(V') \otimes_{\univ{g'}}\ind{g}{b}(\CC_{\lambda}) \simeq  \ind{g}{g'}(V')\otimes_{\univ{b}}\CC_{\lambda}\neq 0
	\end{align*}
	as we have seen in the proof of Proposition \ref{prop:multiplicityNotSpherical}.
	This implies $\SupDim(V' \otimes_{\univ{g'}} \calO_{\lie{g}}) \neq 0$.
\end{proof}

\section{Parabolic induction}\label{section:parabolicInduction}

In this section, we consider the branching problem of parabolically induced representations.
In Corollary \ref{cor:ReductionToFiber}, we have shown that the supremum of multiplicities in $\Ind^{G_\RR}_{G'_\RR}(V')$ can be bounded by that of the fiber $V'$.
We shall show a similar result to Corollary \ref{cor:ReductionToFiber} for branching laws of parabolically induced representations.

\subsection{Annihilator}

To study the annihilator of a parabolically induced representation, we shall prove several lemmas about the annihilator of a generalized Verma module.

Let $G$ be a connected reductive algebraic group over $\CC$ and $G'$ a connected reductive subgroup of $G$.
Take a parabolic subgroup $P$ of $G$.

We assume that $G/P$ is a $G'$-spherical variety.
If necessary, replacing $G'$ with its conjugate, we can assume that there is a Borel subgroup $B'$ of $G'$ such that $B'P$ is open in $G$.
Put
\begin{align*}
P' &:= \set{g \in G': gB'P = B'P},\\
L' &:= (P'\cap P)_0.
\end{align*}
By Fact \ref{fact:BLV}, $L'$ is a reductive subgroup of $G'$ containing
the derived group of a Levi subgroup of $P'$.
Let $M'$ be a Levi subgroup of $P'$ containing $L'$.
Since $L' \subset P$ is reductive, we can take a Levi subgroup $M$ of $P$ containing $L'$.
Then we have
\begin{align*}
	\lie{g} = \lie{p} + \lie{p'}, \quad \lie{p}\cap \lie{p'} = \lie{l'}, \quad [\lie{m'}, \lie{m'}] \subset \lie{l'} \subset \lie{m'}.
\end{align*}

\begin{lemma}\label{lem:ReductionToFiberParabolic}
	Let $V$ be an $\lie{m}$-module and $V'$ an $\lie{m'}$-module.
	Then there exists a natural isomorphism
	\begin{align*}
		\ind{g}{p}(V)\otimes_{\univ{g'}} \ind{g'}{p'}(V') \simeq V\otimes_{\lie{l'}} V'
	\end{align*}
	of vector spaces.
\end{lemma}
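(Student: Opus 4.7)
My plan is to interpret the tensor product as a space of coinvariants and reduce in two stages. Writing $M_1 \otimes_{\univ{g'}} M_2 = H_0(\lie{g'}; M_1 \otimes M_2)$ (with both arguments viewed as left modules, the first regarded as a right module via the principal antiautomorphism of $\univ{g'}$), I would combine the standard tensor identity $M \otimes \ind{g'}{p'}(V') \simeq \ind{g'}{p'}(M|_{\lie{p'}} \otimes V')$ with Shapiro's lemma $H_0(\lie{g'}; \ind{g'}{p'}(N)) \simeq H_0(\lie{p'}; N)$ to obtain
\begin{align*}
	H_0(\lie{g'}; M \otimes \ind{g'}{p'}(V')) \simeq H_0(\lie{p'}; M|_{\lie{p'}} \otimes V'),
\end{align*}
applied to $M = \ind{g}{p}(V)$. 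This reduces the target to $H_0(\lie{p'}; \ind{g}{p}(V)|_{\lie{p'}} \otimes V')$.

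The central step is to identify $\ind{g}{p}(V)$ as a left $\lie{p'}$-module. Using the hypotheses $\lie{g} = \lie{p} + \lie{p'}$ and $\lie{p} \cap \lie{p'} = \lie{l'}$, I would establish a PBW-type isomorphism
\begin{align*}
	\univ{p'} \otimes_{\univ{l'}} \univ{p} \xrightarrow{\,\sim\,} \univ{g}, \qquad u' \otimes u \mapsto u'u,
\end{align*}
of $(\univ{p'}, \univ{p})$-bimodules. Choosing a vector-space complement $W$ of $\lie{l'}$ in $\lie{p}$ gives a direct sum decomposition $\lie{g} = \lie{p'} \oplus W$, and under the standard PBW filtrations both sides have the same associated graded $S(\lie{p'}) \otimes S(W) \simeq S(\lie{g})$, so multiplication is a filtered isomorphism. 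Tensoring over $\univ{p}$ with $V$ then yields
\begin{align*}
	\ind{g}{p}(V)|_{\lie{p'}} \simeq \univ{p'} \otimes_{\univ{l'}} V = \ind{p'}{l'}(V|_{\lie{l'}}),
\end{align*}
where $V$ is regarded as an $\lie{l'}$-module via $\lie{l'} \subset \lie{m} \subset \lie{p}$.

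To finish, I apply the tensor identity and Shapiro's lemma once more for the pair $(\lie{p'}, \lie{l'})$:
\begin{align*}
	H_0(\lie{p'}; \ind{p'}{l'}(V) \otimes V') \simeq H_0(\lie{p'}; \ind{p'}{l'}(V \otimes V'|_{\lie{l'}})) \simeq H_0(\lie{l'}; V \otimes V') = V \otimes_{\univ{l'}} V',
\end{align*}
where the $\lie{l'}$-action on $V'$ comes from $\lie{l'} \subset \lie{m'}$ and coincides with the restriction of the $\lie{p'}$-action since the nilradical of $\lie{p'}$ acts trivially on $V'$. Naturality of each step in $V$ and $V'$ delivers the desired natural isomorphism. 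The only real obstacle is the bimodule PBW decomposition in the middle paragraph; it is not fully formal because one must check that multiplication descends through the tensor product over $\univ{l'}$ and is compatible with the two filtrations, but the comparison on the associated graded makes this clean.
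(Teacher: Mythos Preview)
Your proof is correct and follows the same approach as the paper. The paper's argument is a terse one-liner: from $\lie{g}=\lie{p}+\lie{p'}$ and $\lie{p}\cap\lie{p'}=\lie{l'}$ one has $\ind{g}{p}(V)|_{\lie{p'}}\simeq \ind{p'}{l'}(V)$, which is exactly your central PBW step, and then the tensor reductions you spell out via the tensor identity and Shapiro's lemma are left implicit.
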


\begin{proof}
	By $\lie{g} = \lie{p} + \lie{p'}$ and $\lie{p}\cap \lie{p'} = \lie{l'}$,
	we have a natural isomorphism $\ind{g}{p}(V)|_{\lie{p'}} \simeq \ind{p'}{l'}(V)$.
	This shows the lemma.
\end{proof}

\begin{lemma}\label{lem:NonVanishingParabolicInd}
	Let $V$ be an $\lie{m}$-module.
	If any non-zero $\lie{l'}$-submodule of $V$ has an irreducible quotient,
	then for any non-zero $\lie{g}$-submodule $W$ of $\ind{g}{p}(V)$, there exists some object $V' \in \calO_{\lie{g'}}$ such that $W\otimes_{\univ{g'}} V' \neq 0$.
\end{lemma}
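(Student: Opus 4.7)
The plan is to take $V' = \ind{g'}{p'}(V_L)$, where $V_L$ is a highest weight $\lie{m'}$-module inflated to $\lie{p'}$ by letting the nilradical $\lie{n'}$ of $\lie{p'}$ act trivially; choosing a Borel $\lie{b}_L\subset \lie{l'}$ compatibly with the BLV data of Proposition \ref{prop:BLVreal} and setting $\lie{b'}:=\lie{b}_L+\lie{n'}$ places $V' \in \calO_{\lie{b'}}\subset\calO_{\lie{g'}}$. From the $\lie{p'}$-module isomorphism $\ind{g}{p}(V)|_{\lie{p'}}\simeq \ind{p'}{l'}(V)$ established in the proof of Lemma \ref{lem:ReductionToFiberParabolic}, one computes
\[
W\otimes_{\univ{g'}}V' \;\simeq\; W\otimes_{\univ{p'}}V_L \;\simeq\; (W/\lie{n'}W)\otimes_{\univ{m'}}V_L,
\]
so the problem reduces to two sub-claims: (a) $W/\lie{n'}W\neq 0$, and (b) a suitable highest weight $V_L$ makes $(W/\lie{n'}W)\otimes_{\univ{m'}}V_L$ non-zero.

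For (a), I would fix an $\lie{l'}$-stable PBW identification $\ind{g}{p}(V)\simeq S(\lie{n'})\otimes S(\lie{u}_0)\otimes V$ as $\lie{l'}$-modules, where $\lie{u}_0\subset \lie{m'}$ is an $\lie{l'}$-stable complement to $\lie{l'}$ (so $\lie{m'}/\lie{l'}$ is abelian and central in $\lie{m'}$ by the containment $[\lie{m'},\lie{m'}]\subset \lie{l'}$ from Fact \ref{fact:BLV}). Pick $0\neq w\in W$ of minimal $\lie{n'}$-degree $d$; if $d>0$, apply an element $\xi$ of the opposite nilradical $\op{\lie{n'}}\subset \lie{g'}$ chosen to pair non-trivially with a top $\lie{t'}$-weight appearing in $w$. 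Since $\op{\lie{n'}}\subset \lie{g'}$ preserves $W$ and the commutator map $[\op{\lie{n'}},\lie{n'}]\to\lie{m'}$ is non-degenerate on weight spaces, the leading-order identity $\xi\cdot X^\alpha \equiv \sum_\beta X^{\alpha-\beta}[\xi,X]^\beta \pmod{\text{lower}}$ yields $\xi w \in W$ of strictly smaller $\lie{n'}$-degree, contradicting minimality. The main obstacle is making this degree-lowering rigorous: since $\op{\lie{n'}}\not\subset\lie{p'}$, the action of $\xi$ on $\ind{g}{p}(V)$ is not simply left multiplication in the $\lie{p'}$-PBW, so one must first expand $\xi\cdot w$ in the full PBW of $\univ{g}$ relative to a complement to $\lie{p}$, and then carefully re-express in the $\lie{p'}$-PBW to isolate the degree-lowering leading term.

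For (b), having produced a non-zero $\bar w \in W/\lie{n'}W \subset \ind{m'}{l'}(V)\simeq S(\lie{u}_0)\otimes V$, I would filter by $\lie{u}_0$-degree and extract from the top-degree coefficients of $\bar w$ a non-zero $\lie{l'}$-submodule $N\subset V$. The hypothesis provides an irreducible $\lie{l'}$-quotient $U$ of $\univ{l'}\cdot N$ with some highest weight $\mu$ with respect to $\lie{b}_L$. Letting $V_L$ be the irreducible $\lie{m'}$-module with highest $\lie{l'}$-weight $\mu$ and a suitably chosen central character for $\lie{m'}/\lie{l'}$ (using $\lie{m'}=\lie{l'}\oplus(\lie{m'}/\lie{l'})$), I would show, by tracing through the identifications $(W/\lie{n'}W)\otimes_{\univ{m'}}V_L \to V\otimes_{\univ{l'}}V_L \twoheadrightarrow U\otimes_{\univ{l'}}V_L$, that the image of $\bar w \otimes v_L$ (for $v_L$ a highest weight vector of $V_L$) is detected by the non-zero pairing $U\otimes_{\univ{l'}}V_L\neq 0$. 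This gives $W\otimes_{\univ{g'}}V'\neq 0$ with $V'\in \calO_{\lie{g'}}$ as required.
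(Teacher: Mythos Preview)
Your overall framework coincides with the paper's: take $V'=\ind{g'}{p'}(V_L)$ with $V_L\in\calO_{\lie{m'}}$, use $\ind{g}{p}(V)|_{\lie{p'}}\simeq\ind{p'}{l'}(V)$, and reduce to producing an irreducible $\lie{p'}$-quotient of $W$ on which $\lie{n'}$ acts trivially, followed by a Casselman-type step. The divergence is in how that $\lie{p'}$-quotient is produced, and here your argument has a genuine gap.

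Your step (a) asserts that applying $\xi\in\overline{\lie{n'}}$ to a minimal-degree element $w\in W$ strictly lowers the $\lie{n'}$-degree. You correctly flag the obstacle but do not resolve it, and in fact the claim need not hold as stated: writing $\xi=\xi_{\lie{n'}}+\xi_{\lie{u}_0}+\xi_{\lie{p}}$ in the decomposition $\lie{g}=\lie{n'}\oplus\lie{u}_0\oplus\lie{p}$, the term $X^\alpha\xi_{\lie{n'}}\otimes v$ contributes in $\lie{n'}$-degree $|\alpha|+1$, so unless $\xi_{\lie{n'}}=0$ the top degree of $\xi w$ \emph{increases}. Nothing in the setup forces $\overline{\lie{n'}}\subset\lie{u}_0\oplus\lie{p}$, and even when $\xi_{\lie{n'}}=0$ you still need the degree-$d$ contributions to cancel and the degree-$(d-1)$ commutator part to survive; none of this is checked. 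Your step (b) then compounds the problem: the natural map $W/\lie{n'}W\to\ind{m'}{l'}(V)$ is in general not injective (its kernel is $(W\cap\lie{n'}\ind{p'}{l'}(V))/\lie{n'}W$), so the line ``$\bar w\in W/\lie{n'}W\subset\ind{m'}{l'}(V)$'' is not justified, and its nontriviality again hinges on (a).

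The paper bypasses all of this by never invoking $\overline{\lie{n'}}$ at all. Working only with the $\lie{p'}$-structure, it sets $U$ to be the intersection of all $\lie{l'}$-submodules $X\subset V$ with $W\subset\ind{p'}{l'}(X)$; by freeness of $\univ{p'}$ over $\univ{l'}$ this is the $\lie{l'}$-submodule generated by the PBW coefficients of $W$, hence nonzero. The hypothesis gives an irreducible $\lie{l'}$-quotient $U'$, and minimality of $U$ forces the composite $\varphi\colon W\hookrightarrow\ind{p'}{l'}(U)\to\ind{p'}{l'}(U')$ to be nonzero. Since $[\lie{m'},\lie{m'}]\subset\lie{l'}$, one filters $\ind{p'}{l'}(U')=F_0\supset F_1\supset\cdots$ with $\bigcap_iF_i=0$ and each $F_i/F_{i+1}$ an irreducible $\lie{m'}$-module with trivial $\lie{n'}$-action; taking the largest $i$ with $\varphi(W)\subset F_i$ yields the desired irreducible $\lie{p'}$-quotient, and then the Beilinson--Bernstein form of Casselman's theorem supplies the Borel $\lie{b}_{M'}$ and character $\lambda$ so that $V'=\ind{g'}{p'}(\ind{m'}{b_{\mathit{M'}}}(\CC_\lambda))$ works. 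This is both shorter and strictly more robust than the degree-lowering route.
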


\begin{proof}
	Using the isomorphism $\ind{g}{p}(V)|_{\lie{p'}}\simeq \ind{p'}{l'}(V)$ in the proof of Lemma \ref{lem:ReductionToFiberParabolic}, we regard $W$ as a $\lie{p'}$-submodule of $\ind{p'}{l'}(V)$.
	Let $U$ be the intersection of all $\lie{l'}$-submodules $X$ of $V$ such that $W \subset \ind{p'}{l'}(X)$.
	By assumption, the $\lie{l'}$-module $U$ has an irreducible quotient $U'$.
	Then we obtain a non-zero $\lie{p'}$-module homomorphism $\varphi \colon W\rightarrow \ind{p'}{l'}(U')$.

	By $[\lie{m'}, \lie{m'}] = [\lie{l'}, \lie{l'}]$, we can take a filtration $\ind{p'}{l'}(U') = F_0 \supset F_1 \supset \cdots$ such that $\bigcap_i F_i = 0$ and $F_i/F_{i+1}$ is an irreducible $\lie{p'}$-module on which the nilpotent radical of $\lie{p'}$ acts trivially for any $i \geq 0$.
	Take $i\geq 0$ such that $\varphi(W)\subset F_i$ and $\varphi(W) \not\subset F_{i+1}$.
	Then $F_i/F_{i+1}$ is an irreducible $\lie{p'}$-module quotient of $W$.

	By a generalization of the Casselman subrepresentation theorem \cite[Theorem 1]{BeBe83}, there exists a Borel subalgebra $\lie{b}_{M'}$ of $\lie{m'}$ and a character $\lambda$ of $\lie{b}_{M'}$ such that
	$F_i/F_{i+1}\otimes_{\univ{b_{\mathit{M'}}}} \CC_{\lambda} \neq 0$.
	Therefore $V':= \ind{g'}{p'}(\ind{m'}{b_{\mathit{M'}}}(\CC_{\lambda}))$ satisfies the desired condition:
	\begin{equation*}
		W \otimes_{\univ{g'}} \ind{g'}{p'}(\ind{m'}{b_{\mathit{M'}}}(\CC_{\lambda})) \simeq W\otimes_{\univ{b_{\mathit{M'}}}} \CC_{\lambda} \neq 0. \qedhere
	\end{equation*}
\end{proof}

Let $B_M$ be a Borel subgroup of $M$ such that $B_M \cap L'$ is a Borel subgroup of $L'$.
By Fact \ref{fact:DiscreteO}, for any $V \in \calO_{\lie{b}_M}$, 
the restriction $V|_{\lie{l'}}$ is discretely decomposable.
Hence any non-zero $\lie{l'}$-submodule of $V$ has an irreducible quotient (see also Corollary \ref{cor:NonVanishingOO}).
By Lemma \ref{lem:NonVanishingParabolicInd}, we can apply Lemma \ref{lem:lowerboundTensor} to
\begin{align*}
	(\calA, \calB, \calC, \widetilde{\calC}, V) = (\univ{g}, \Mod(\lie{g'}), \calO_{\lie{g'}}, \Mod_{fl}(\lie{g}), \ind{g}{p}(V)).
\end{align*}
The following theorem is an analogue of Corollary \ref{cor:ReductionToFiber} for parabolic inductions.

\begin{theorem}\label{thm:pidegAnnihilatorVerma}
	Let $I$ be a proper two-sided ideal of $\univ{m}$ such that $\univcent{m}\cap I$ has finite codimension in $\univcent{m}$.
	Fix primitive ideals $I_1, I_2, \ldots, I_n$ containing $I$ and satisfying $I_1 I_2 \cdots I_n \subset I$ as in Lemma \ref{lem:TwosidedPrimitive}.
	Set $J:=\Ann_{\univ{g}}(\ind{g}{p}(\univ{m}/I))$, where $\univ{m}/I$ is regarded as an $\lie{m}$-module via the left multiplication.
	Then there exists some constant $C > 0$ independent of $I$ such that
	\begin{align*}
		C^{-1}\cdot \PIdeg((\univ{m}/I)^{L'}) \leq \PIdeg((\univ{g}/J)^{G'})
		\leq C\cdot \PIdeg((\univ{m}/I)^{L'}).
	\end{align*}
\end{theorem}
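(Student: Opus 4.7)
The plan is to combine the key tensor-product identity $N \otimes_{\univ{g'}} \ind{g'}{p'}(W) \simeq V \otimes_{\lie{l'}} W$ of Lemma \ref{lem:ReductionToFiberParabolic} with the general upper/lower bounds on multiplicities from Section 5, in the spirit of Corollary \ref{cor:ReductionToFiber}. Here $N := \ind{g}{p}(\univ{m}/I)$, and the identity reduces the PI degree computation for $(\univ{g}/J)^{G'}$ (which governs multiplicities of $N$ relative to the spherical pair $(G,G')$) to one for $(\univ{m}/I)^{L'}$ (which governs multiplicities of the fiber $\univ{m}/I$ relative to $L' \subset M$).

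I would first reduce to primitive $I$. By Lemma \ref{lem:TwosidedPrimitive} there exist primitive ideals $I_1, \ldots, I_n$ containing $I$ with $I_1 I_2 \cdots I_n \subset I$ and with $n$ bounded by the codimension of $\univcent{m} \cap I$. The exactness of $\ind{g}{p}$ applied to the filtration of $\univ{m}/I$ by the images of $I_1 \cdots I_{k-1}$ yields $J_1 J_2 \cdots J_n \subset J$, where $J_k := \Ann_{\univ{g}}(\ind{g}{p}(\univ{m}/I_k))$, so Proposition \ref{prop:pidegIdeal}\ref{item:pidegIdealProduct} reduces us to primitive $I$. For such $I$ we have $I = \Ann_{\univ{m}}(V)$ for some irreducible highest weight $\lie{m}$-module $V$ (Fact \ref{fact:DufloIdeal}) with respect to a Borel $\lie{b}_M \subset \lie{m}$ chosen so that $\lie{b}_M \cap \lie{l'}$ is a Borel of $\lie{l'}$; one verifies that $J$ then coincides with $\Ann_{\univ{g}}(\ind{g}{p}(V))$.

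For the upper bound I would apply Lemma \ref{lem:UpperBoundTorGeneral} to the uniformly bounded family $\{N\}$ (uniformly bounded by Fact \ref{fact:BoundedGeneralizedVerma}) together with irreducible $\lie{g'}$-modules of the form $W' = \ind{g'}{p'}(W)$. By Lemma \ref{lem:ReductionToFiberParabolic} the tensor product $N \otimes_{\univ{g'}} W'$ is identified with $V \otimes_{\lie{l'}} W$, and Theorem \ref{thm:BoundedRestriction}\ref{enum:BoundedRestrictionOOTensor} applied to $V|_{\lie{l'}}$ bounds these dimensions by $\PIdeg((\univ{m}/I)^{L'})$ up to a constant. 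For the lower bound I would apply Lemma \ref{lem:lowerboundTensor} with $(\widetilde{\calC}, \calC) = (\Mod_{fl}(\lie{g}), \calO_{\lie{g'}})$ and the object $N$: the non-vanishing hypothesis is exactly Lemma \ref{lem:NonVanishingParabolicInd}, since $V|_{\lie{l'}}$ is discretely decomposable (Fact \ref{fact:DiscreteO}) and hence every non-zero $\lie{l'}$-submodule has an irreducible quotient, while the annihilator-stability hypothesis is automatic in the algebraic setting. The resulting inequality $\PIdeg((\univ{g}/J)^{G'}) \leq C \cdot \SupDim(N \otimes_{\univ{g'}} \calO_{\lie{g'}})$ is then converted back to the fiber by Lemma \ref{lem:ReductionToFiberParabolic} and Theorem \ref{thm:BoundedRestriction}\ref{enum:BoundedRestrictionOOTensor}.

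The main obstacle is converting the \emph{individual} multiplicity bounds produced by Lemma \ref{lem:UpperBoundTorGeneral} into an honest PI degree bound on $(\univ{g}/J)^{G'}$: this requires identifying enough $(\univ{g}/J)^{G'}$-modules that detect its PI degree (via Proposition \ref{prop:PIdegSemiprimitive}) and controlling them on the $\lie{l'}$-side through a Lepowsky-type algebra homomorphism $\univ{g}^{G'} \to (\univ{m}/I)^{L'}$ compatible with the isomorphism of Lemma \ref{lem:ReductionToFiberParabolic}. The non-parabolic analogue of this homomorphism is sketched in the Remark following Corollary \ref{cor:ReductionToFiber}; adapting it to the spherical parabolic setting, via the decomposition of $\lie{g}$ built from $\lie{p} + \lie{p'} = \lie{g}$ and $\lie{p}\cap \lie{p'} = \lie{l'}$, and verifying that its image is faithful enough to capture the PI degree, will be the principal technical work.
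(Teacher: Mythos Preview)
Your proposal assembles all the right ingredients and follows the paper's route almost exactly: reduce to primitive $I$ via Lemma~\ref{lem:TwosidedPrimitive} and Proposition~\ref{prop:pidegIdeal}, replace $\univ{m}/I$ by an irreducible highest weight module $V$ via Fact~\ref{fact:DufloIdeal}, and then compare $\PIdeg((\univ{g}/J)^{G'})$ and $\PIdeg((\univ{m}/I)^{L'})$ through the intermediary quantity $\SupDim(\ind{g}{p}(V)\otimes_{\univ{g'}}\calO_{\lie{g'}})$, using Lemmas~\ref{lem:UpperBoundTorGeneral}, \ref{lem:lowerboundTensor}, \ref{lem:ReductionToFiberParabolic}, \ref{lem:NonVanishingParabolicInd} and Theorem~\ref{thm:BoundedRestriction}.

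The one genuine confusion is that you have the roles of Lemma~\ref{lem:UpperBoundTorGeneral} and Lemma~\ref{lem:lowerboundTensor} reversed, and this is what manufactures your ``main obstacle''. Lemma~\ref{lem:lowerboundTensor} gives $\PIdeg((\univ{g}/J)^{G'}) \leq \SupDim(N\otimes_{\univ{g'}}\calO_{\lie{g'}})$; combined with the identification of Lemma~\ref{lem:ReductionToFiberParabolic} and the upper half of Theorem~\ref{thm:BoundedRestriction}\ref{enum:BoundedRestrictionOOTensor}, this yields the \emph{upper} bound $\PIdeg((\univ{g}/J)^{G'}) \leq C\cdot\PIdeg((\univ{m}/I)^{L'})$. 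Conversely, Lemma~\ref{lem:UpperBoundTorGeneral} gives $\SupDim(N\otimes_{\univ{g'}}\calO_{\lie{g'}}) \leq C\cdot\PIdeg((\univ{g}/J)^{G'})$; combined with the lower half of Theorem~\ref{thm:BoundedRestriction}\ref{enum:BoundedRestrictionOOTensor}, this yields the \emph{lower} bound. So the ``conversion of individual multiplicity bounds into an honest PI degree bound'' that you flag as the principal technical work is already accomplished by Lemma~\ref{lem:lowerboundTensor} itself (its proof, via Lemma~\ref{lem:AnnhilatorRad}, shows precisely that the joint annihilator of the multiplicity spaces is nilpotent). No Lepowsky-type homomorphism is needed; the paper mentions that homomorphism only in a remark after the proof, as a structural observation, not as a tool in the argument.

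One small point you omit: to pass between $\SupDim(N\otimes_{\univ{g'}}\calO_{\lie{g'}})$ and $\SupDim(N\otimes_{\univ{g'}}\ind{g'}{p'}(\calO_{\lie{m'}}))$ (the latter being what Lemma~\ref{lem:ReductionToFiberParabolic} computes), the paper invokes Lemma~\ref{lem:OInd}\ref{enum:OIndandInd}. This is the step that justifies restricting attention to generalized Verma modules on the $\lie{g'}$-side.
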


\begin{proof}
	Set $J_i:=\Ann_{\univ{g}}(\ind{g}{p}(\univ{m}/I_i))$.
	Then we have $J_1 J_2\cdots J_n \subset J$ by the short exact sequence
	\begin{align*}
		0\rightarrow \ind{g}{p}(A_{i+1}/A_i)
		\rightarrow \ind{g}{p}(\univ{m}/A_i) \rightarrow \ind{g}{p}(\univ{m}/A_{i+1})\rightarrow 0,
	\end{align*}
	where $A_i := I_i I_{i+1}\cdots I_n$.
	If the theorem holds for any primitive ideal of $\univ{m}$, we have
	\begin{align*}
		\PIdeg((\univ{m}/I)^{L'}) &= \max_i\set{\PIdeg((\univ{m}/I_i)^{L'})} \\
		&\leq C\cdot \max_i\set{ \PIdeg((\univ{g}/J_i)^{G'})}\\
		&= C\cdot \PIdeg((\univ{g}/J)^{G'}), \\
		\PIdeg((\univ{g}/J)^{G'}) &= \max_i\set{\PIdeg((\univ{g}/J_i)^{G'})} \\
		&\leq C\cdot \max_i\set{\PIdeg((\univ{m}/I_i)^{L'})}\\
		&= C\cdot \PIdeg((\univ{m}/I)^{L'})
	\end{align*}
	by Proposition \ref{prop:pidegIdeal} \ref{item:pidegIdealProduct}.
	Hence it is enough to show the assertion for primitive $I$.

	By Duflo's theorem (Fact \ref{fact:DufloIdeal}), we can take an irreducible highest weight module $V$ of $\lie{m}$ with respect to $\lie{b}_M$ such that $\Ann_{\univ{m}}(V) = I$.
	Then we have $J= \Ann_{\univ{g}}(\ind{g}{p}(V))$ by Lemma \ref{lem:AnnhilatorOUmod}.

	By Fact \ref{fact:BoundedGeneralizedVerma}, the uniform boundedness of a family of modules is preserved by parabolic inductions.
	Hence, by Lemmas \ref{lem:UpperBoundTorGeneral} and \ref{lem:lowerboundTensor},
	there exists some constant $C' > 0$ independent of (primitive) $I$ such that
	\begin{align*}
		\PIdeg((\univ{g}/J)^{G'}) \leq \SupDim(\ind{g}{p}(V)\otimes_{\univ{g'}}\calO_{\lie{g'}}) \leq C'\cdot \PIdeg((\univ{g}/J)^{G'}).
	\end{align*}
	By Lemma \ref{lem:OInd} \ref{enum:OIndandInd}, we can replace $\calO_{\lie{g'}}$ with $\ind{g'}{p'}(\calO_{\lie{m'}})$.

	By Lemma \ref{lem:ReductionToFiberParabolic}, we have
	\begin{align*}
		\SupDim(\ind{g}{p}(V)\otimes_{\univ{g'}}\ind{g'}{p'}(\calO_{\lie{m'}})) = \SupDim(V\otimes_{\univ{l'}}\calO_{\lie{l'}}),
	\end{align*}
	and by Theorem \ref{thm:BoundedRestriction}, we can take a constant $C'' > 0$ independent of $V$ such that
	\begin{align*}
		\PIdeg((\univ{m}/I)^{L'}) \leq \SupDim(V\otimes_{\univ{l'}}\calO_{\lie{l'}}) \leq C''\cdot \PIdeg((\univ{m}/I)^{L'}).
	\end{align*}
	Therefore we obtain
	\begin{equation*}
		\frac{\PIdeg((\univ{m}/I)^{L'})}{C'} \leq \PIdeg((\univ{g}/J)^{G'})
		\leq C''\cdot \PIdeg((\univ{m}/I)^{L'}).
		\qedhere
	\end{equation*}
\end{proof}

\begin{remark}
We can relate $\univ{g}^{G'}$ to $\univ{m}^{L'}$ by an analogue of the Harish-Chandra homomorphism as follows.
Let $U'$ (resp.\ $U$) be the unipotent radical of $P'$ (resp.\ $P$)
and $\lie{a'}$ a complement of $\lieCent(\lie{l'})$ in $\lieCent(\lie{m'})$.
Then since $\lie{p'}+\lie{p} = \lie{g}$ and $\lie{p'}\cap \lie{p} = \lie{l'}$,
we have
\begin{align*}
\lie{g} = \lie{u'}\oplus \lie{a'} \oplus \lie{m} \oplus \lie{u}.
\end{align*}
Since $\univ{g}\simeq \univ{a'}\otimes \univ{m}\oplus (\lie{u'}\univ{g}+\univ{g}\lie{u})$ by the Poincar\'e--Birkhoff--Witt theorem,
the projection onto $\univ{a'}\otimes \univ{m}$ gives a linear map
\begin{align*}
\univ{g}^{G'} \rightarrow \univ{a'}\otimes \univ{m}^{L'}.
\end{align*}
It is easy to see that this is an algebra homomorphism.
The homomorphism is compatible with the isomorphism in Lemma \ref{lem:ReductionToFiberParabolic}.
\end{remark}

\subsection{Real parabolic induction}

In this subsection, we study branching laws of parabolically induced representations using Theorem \ref{thm:pidegAnnihilatorVerma}.
We deal only with Casselman--Wallach representations.
Similar results hold for $(\lie{g}, K)$-modules.

Let $G_\RR$ be a connected reductive Lie group and $G'_\RR$ a connected reductive subgroup of $G_\RR$.
Assume that $\lieR{g'}$ is algebraic in $\lieR{g}$
and fix connected reductive algebraic groups $G$ and $G'$ as in Subsection \ref{sect:Setting}.

We take a parabolic subgroup $P_{\RR}$ of $G_{\RR}$ and a Levi subgroup $M_{\RR}$ of $P_{\RR}$.
Assume that $G/P$ is a spherical $G'$-variety, where $P$ is the analytic subgroup of $G$
with the Lie algebra $\lie{p}\subset \lie{g}$.
If necessary, replacing $P_{\RR}$ with its conjugate in $G_{\RR}$, we can assume that $G'_{\RR}P_{\RR}$
is open in $G_{\RR}$.
Then by Proposition \ref{prop:BLVreal}, there exists a Borel subgroup $B' \subset G'$ satisfying the following conditions:
\begin{enumerate}
	\item $B'P$ is open dense in $G$
	\item the identity component $L'$ of $\set{g \in G'\cap P: g B'P = B'P}$ is reductive
	\item $\lieR{l'}:=\lie{l'}\cap \lieR{g'}$ is a real form of $\lie{l'}$.
\end{enumerate}

Let $L'_\RR$ denote the analytic subgroup of $G'_\RR$ with the Lie algebra $\lieR{l'}\subset \lieR{g'}$, which is closed in $G'_\RR$ by construction.
For a Casselman--Wallach representation $V$ of $M_\RR$, we consider the parabolically induced representation $\Ind^{G_{\RR}}_{P_{\RR}}(V)$.
Then $\Ind^{G_{\RR}}_{P_{\RR}}(V)$ is a Casselman--Wallach representation of $G_\RR$.
Let $K_{M,\RR}$ be a maximal compact subgroup of $M_\RR$ and $K_M$ the complexification of $K_{M, \RR}$.

\begin{lemma}\label{lem:annihilatorParabolicInduction}
Let $V$ be a Casselman--Wallach representation $V$ of $M_\RR$.
Set $W:=(V^*)_{K_{M}}$.
\begin{enumparen}
	\item We have $\Ann_{\univ{g}}(\Ind^{G_{\RR}}_{P_{\RR}}(V)) = {}^t \Ann_{\univ{g}}(\ind{g}{p}(W))$.
	\item If $\ind{g}{p}(W)$ is irreducible as a $(\lie{g}, K_{M})$-module
	and $X$ is a non-zero closed subrepresentation of $\Ind^{G_{\RR}}_{P_{\RR}}(V)$, then
	$\Ann_{\univ{g}}(X) = {}^t \Ann_{\univ{g}}(\ind{g}{p}(W))$.
\end{enumparen}
\end{lemma}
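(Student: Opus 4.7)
The plan is to construct a natural non-degenerate pairing between $\ind{g}{p}(W)$ and $\Ind^{G_\RR}_{P_\RR}(V)$ and to identify the two annihilators through it, closely mirroring the strategy of Lemmas \ref{lem:PairingNondegenerate} and \ref{lem:AnnSubInd}. Since $G/P$ is projective, $\rring{G/P} = \CC$, so the algebra $\rring{G/G'}\otimes \univ{g}$ of the previous subsections collapses to $\univ{g}$. I would define the pairing by $\langle X\otimes w, f\rangle := \langle w, ({}^tXf)(e)\rangle$ for $X \in \univ{g}$, $w \in W$, $f \in \Ind^{G_\RR}_{P_\RR}(V)$; routine checks show well-definedness on $\ind{g}{p}(W) = \univ{g}\otimes_{\univ{p}}W$ (using the contragredient $\lie{p}$-action on $W$) and the invariance $\langle Yu, f\rangle = \langle u, {}^tYf\rangle$ for $Y \in \univ{g}$. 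The crucial first step will be to prove left non-degeneracy: picking a vector space complement $\lie{n}^-$ of $\lie{p}$ in $\lie{g}$ and writing a nonzero $u = \sum_\alpha X^\alpha\otimes w_\alpha$ in PBW coordinates, I plan to find $v \in V$ pairing non-trivially with some $w_{\alpha_0}$ (using that $W = (V^*)_{K_M}$ separates points of $V$ by Casselman--Wallach duality) and then construct $f \in \Ind^{G_\RR}_{P_\RR}(V)$ with prescribed jet at $eP_\RR$ so that $({}^tX^{\alpha_0}f)(e) = v$ while the other relevant jets vanish.

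For part (1), left non-degeneracy immediately yields $\Ann_{\univ{g}}(\Ind^{G_\RR}_{P_\RR}(V)) \subset {}^t\Ann_{\univ{g}}(\ind{g}{p}(W))$: $Yf = 0$ for all $f$ forces $\langle {}^tYu, f\rangle = \langle u, Yf\rangle = 0$, hence ${}^tYu = 0$. For the reverse inclusion, given $Y$ with ${}^tY \in \Ann_{\univ{g}}(\ind{g}{p}(W))$, the pairing identity together with separation of points forces $(Yf)(e) = 0$ for every $f$. Since $\Ann_{\univ{g}}(\ind{g}{p}(W))$ is a two-sided ideal of $\univ{g}$ and $G$ is connected, it is $\Ad(G)$-stable, hence so is its image under ${}^t$. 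The identity $(Yf)(g) = ((\Ad(g^{-1})Y)(L(g^{-1})f))(e)$ then promotes $(Yf)(e) = 0$ to $(Yf)(g) = 0$ for every $g \in G_\RR$, giving $Y \in \Ann_{\univ{g}}(\Ind^{G_\RR}_{P_\RR}(V))$.

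For part (2), the inclusion $\Ann_{\univ{g}}(X) \supset {}^t\Ann_{\univ{g}}(\ind{g}{p}(W))$ follows immediately from $X \subset \Ind^{G_\RR}_{P_\RR}(V)$ and part (1). For the converse I plan to pass to the Harish--Chandra module $X_K$ of $X$ (which has the same annihilator as $X$) and restrict the pairing to $\ind{g}{p}(W)\times X_K$; a direct computation using $P_\RR$-equivariance of $\ev$ gives $K_{M,\RR}$-invariance of the restricted pairing, which extends to $K_M$-invariance by $K_M$-rationality, making the orthogonal $X_K^\perp \subset \ind{g}{p}(W)$ a genuine $(\lie{g}, K_M)$-submodule. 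Since $X$ is nonzero and $G_\RR$-stable, translating a nonzero element gives $\ev(X) \neq 0$, and density of $X_K$ in $X$ yields $\ev(X_K) \neq 0$, which by separation of points prevents $X_K^\perp$ from filling all of $\ind{g}{p}(W)$. The irreducibility hypothesis then forces $X_K^\perp = 0$, and the pairing argument of part (1) delivers $\Ann_{\univ{g}}(X) = \Ann_{\univ{g}}(X_K) \subset {}^t\Ann_{\univ{g}}(\ind{g}{p}(W))$.

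The hard part will be the jet-surjectivity argument underlying left non-degeneracy in part (1), and in part (2) verifying that the orthogonal $X_K^\perp$ is genuinely $(\lie{g}, K_M)$-stable---which requires carefully tracking the complexification of the $K_{M,\RR}$-action on $X_K$ inherited from $G_\RR$-equivariance against the intrinsic $K_M$-action on $\ind{g}{p}(W)$.
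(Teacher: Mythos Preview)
Your proposal is correct and follows essentially the same approach as the paper: the same pairing $\langle X\otimes w, f\rangle = \langle w, ({}^tXf)(e)\rangle$, left non-degeneracy for one inclusion, and $\Ad(G_\RR)$-stability of the annihilator ideal for the other. The paper simply cites Lemma~\ref{lem:PairingNondegenerate} for non-degeneracy rather than redoing the jet argument, and in part~(2) works directly with $X$ instead of $X_K$ (since $\ind{g}{p}(W)$ is already a rational $K_M$-module, any $K_{M,\RR}$-stable subspace is automatically $K_M$-stable, so your detour through $X_K$ is harmless but unnecessary).
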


\begin{proof}
	We shall show the first assertion.
	The second assertion follows easily from the first assertion and its proof.
	Note that there is a canonical $(\lie{g}, K_{M})$-invariant pairing
	\begin{align*}
		(\cdot, \cdot)\colon \ind{g}{p}(W)\times \Ind^{G_{\RR}}_{P_{\RR}}(V) \rightarrow \CC
	\end{align*}
	defined by $(X\otimes \varphi, f) \mapsto \varphi({}^tXf(e))$.
	Since the pairing is non-degenerate on the left by Lemma \ref{lem:PairingNondegenerate}, we have
	\begin{align*}
		\Ann_{\univ{g}}(\Ind^{G_{\RR}}_{P_{\RR}}(V)) \subset {}^t \Ann_{\univ{g}}(\ind{g}{p}(W)).
	\end{align*}

	Take $f \in {}^t \Ann_{\univ{g}}(\ind{g}{p}(W))\Ind^{G_{\RR}}_{P_{\RR}}(V)$.
	Since $\Ann_{\univ{g}}(\ind{g}{p}(W))$ is $G_\RR$-stable, we have $\varphi((g\cdot f)(e)) = 0$ for any $g \in G_\RR$ and $\varphi \in W$.
	This implies $f = 0$ and hence
	\begin{equation*}
		\Ann_{\univ{g}}(\Ind^{G_{\RR}}_{P_{\RR}}(V)) \supset {}^t \Ann_{\univ{g}}(\ind{g}{p}(W)). \qedhere
	\end{equation*}
\end{proof}

\begin{theorem}\label{thm:ReductionToFiberParabolicInd}
	Let $V$ be a non-zero Casselman--Wallach representation of $M_\RR$
	and $X$ a non-zero closed subrepresentation of $\Ind^{G_{\RR}}_{P_{\RR}}(V)$.
	Then there exists some constant $C$ independent of $V$ and $X$ such that
	\begin{align*}
	\SupDim(\Hom_{G'_\RR}(X, \calC_{G'_\RR})) \leq C\cdot \Len_{M_\RR}(V) \cdot \SupDim(\Hom_{L'_\RR}(V, \calC_{L'_\RR})).
	\end{align*}
	If $\ind{g}{p}((V^*)_{K_M})$ is irreducible as a $(\lie{g}, K_M)$-module or $X = \Ind^{G_{\RR}}_{P_{\RR}}(V)$, then we can take the constant $C$ to satisfy
	\begin{align*}
		\frac{\SupDim(\Hom_{L'_\RR}(V, \calC_{L'_\RR}))}{C\cdot \Len_{M_\RR}(V)} &\leq \SupDim(\Hom_{G'_\RR}(X, \calC_{G'_\RR})) \\
		&\leq C\cdot \Len_{M_\RR}(V) \cdot \SupDim(\Hom_{L'_\RR}(V, \calC_{L'_\RR})).
	\end{align*}
\end{theorem}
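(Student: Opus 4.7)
The plan is to sandwich $\SupDim(\Hom_{G'_\RR}(X, \calC_{G'_\RR}))$ between two applications of the Casselman--Wallach restriction bound (Theorem \ref{thm:BoundedRestriction}, applied to $G_\RR \downarrow G'_\RR$ and then to $M_\RR \downarrow L'_\RR$), linked by Theorem \ref{thm:pidegAnnihilatorVerma}, with the annihilator of $\Ind^{G_\RR}_{P_\RR}(V)$ computed through Lemma \ref{lem:annihilatorParabolicInduction}. All constants that appear will be absorbed into a single uniform constant at the end.

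For the upper bound, first apply Theorem \ref{thm:BoundedRestriction} to the Casselman--Wallach representation $X$ of $G_\RR$: with $J_X := \Ann_{\univ{g}}(X)$,
$$\SupDim(\Hom_{G'_\RR}(X, \calC_{G'_\RR})) \leq C_1 \cdot \Len_{G_\RR}(X) \cdot \PIdeg((\univ{g}/J_X)^{G'}).$$
The length factor is controlled by $\Len_{G_\RR}(X) \leq \Len_{G_\RR}(\Ind^{G_\RR}_{P_\RR}(V)) \leq C_2 \Len_{M_\RR}(V)$: passing to underlying Harish--Chandra modules via Fact \ref{fact:CasselmanWallach}, the singleton family $\{V_{K_M}\}$ is uniformly bounded with length $\Len_{M_\RR}(V)$, so Fact \ref{fact:BoundedGeneralizedVerma} yields a uniform length bound on the parabolic induction. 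Since $J_X \supset \Ann_{\univ{g}}(\Ind^{G_\RR}_{P_\RR}(V)) = {}^t\Ann_{\univ{g}}(\ind{g}{p}((V^*)_{K_M}))$ by Lemma \ref{lem:annihilatorParabolicInduction}(1), and the transpose anti-automorphism ${}^t(\cdot)$ induces an anti-isomorphism between $(\univ{g}/J)^{G'}$ and $(\univ{g}/{}^tJ)^{G'}$ which preserves PI degree (as in Lemma \ref{lem:CommuteTwoSidedIdeal}), we obtain
$$\PIdeg((\univ{g}/J_X)^{G'}) \leq \PIdeg((\univ{g}/\Ann_{\univ{g}}(\ind{g}{p}((V^*)_{K_M})))^{G'}).$$
Setting $I := \Ann_{\univ{m}}((V^*)_{K_M})$, finite length of $V$ ensures $I \cap \univcent{m}$ has finite codimension, so Theorem \ref{thm:pidegAnnihilatorVerma} bounds the right-hand side by $C_3 \PIdeg((\univ{m}/I)^{L'})$. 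The same transpose argument identifies $\PIdeg((\univ{m}/I)^{L'})$ with $\PIdeg((\univ{m}/\Ann_{\univ{m}}(V))^{L'})$, and one last application of Theorem \ref{thm:BoundedRestriction}, now to $V$ restricted to $L'_\RR$, bounds this by $\SupDim(\Hom_{L'_\RR}(V, \calC_{L'_\RR}))$. Concatenation yields the desired inequality.

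For the converse under either additional hypothesis, Lemma \ref{lem:annihilatorParabolicInduction}(2) upgrades the inclusion $J_X \supset \Ann_{\univ{g}}(\Ind^{G_\RR}_{P_\RR}(V))$ to equality, so the PI-degree chain becomes a two-sided sandwich. Running it in reverse with the PI-degree lower bounds provided by Theorems \ref{thm:BoundedRestriction} and \ref{thm:pidegAnnihilatorVerma} gives
$$\SupDim(\Hom_{G'_\RR}(X, \calC_{G'_\RR})) \geq \PIdeg((\univ{g}/J_X)^{G'}) \geq C_3^{-1} \PIdeg((\univ{m}/\Ann_{\univ{m}}(V))^{L'}) \geq \frac{\SupDim(\Hom_{L'_\RR}(V, \calC_{L'_\RR}))}{C_4 \Len_{M_\RR}(V)}.$$
The main obstacle is purely bookkeeping: the repeated need to pass through the transpose to swap annihilators of $V$ with those of $(V^*)_{K_M}$ while certifying that PI degree (an invariant of a ring up to anti-isomorphism, since $M_n(\CC)^{\opalg} \simeq M_n(\CC)$) survives each switch. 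This is precisely the content of the argument already used in Lemma \ref{lem:CommuteTwoSidedIdeal}, so no genuinely new difficulty arises.
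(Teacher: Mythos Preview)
Your proof is correct and follows essentially the same approach as the paper: combine Theorem \ref{thm:BoundedRestriction} (applied at both the $G_\RR \downarrow G'_\RR$ and $M_\RR \downarrow L'_\RR$ levels), Theorem \ref{thm:pidegAnnihilatorVerma}, Lemma \ref{lem:annihilatorParabolicInduction}, and Fact \ref{fact:BoundedGeneralizedVerma} for the length bound. You have simply spelled out the bookkeeping (in particular the transpose step identifying $\PIdeg((\univ{g}/J)^{G'})$ with $\PIdeg((\univ{g}/{}^tJ)^{G'})$) that the paper leaves implicit.
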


\begin{remark}
	Suppose that $X$ satisfies the assumption of the last assertion.
	By Proposition \ref{prop:boundedAndSphericalGP} and Theorem \ref{thm:BoundedRestriction}, $\SupDim(\Hom_{G'_\RR}(X, \calC_{G'_\RR}))$ is finite only if $G/P$ is $G'$-spherical.
\end{remark}

\begin{proof}
	The theorem follows from Theorems \ref{thm:BoundedRestriction} and \ref{thm:pidegAnnihilatorVerma} and Lemma \ref{lem:annihilatorParabolicInduction}.
	Remark that for any irreducible representation $V$, the length of $\Ind^{G_\RR}_{P_\RR}(V)$ is bounded by a constant independent of $V$ by Fact \ref{fact:BoundedGeneralizedVerma}.
\end{proof}

Theorem \ref{thm:ReductionToFiberParabolicInd} is useful to classify branching laws of Casselman--Wallach representations with uniformly bounded multiplicities.
We can obtain a large family of such branching laws using the Langlands classification.
Similarly, we can reduce the classification of such branching laws of unitary representations to that of unitary representations with real infinitesimal characters (see \cite[Theorem 16.10]{Kn86}).

\subsection{Cohomological parabolic induction}

In this subsection, we consider branching laws of cohomological parabolic inductions.

Let $G$ be a connected reductive algebraic group and $G'$ a connected reductive subgroup of $G$.
Let $(\lie{g}, K)$ be a pair and $(\lie{g'}, K')$ a subpair of $(\lie{g'}, K')$.
Assume that $(\lie{g}, \lie{k})$ and $(\lie{g'}, \lie{k'})$ are symmetric pair,
and $K$ and $K'$ are connected.

Take a parabolic subalgebra $\lie{p}$ of $\lie{g}$ and a Levi subalgebra $\lie{l}$ of $\lie{p}$.
Fix a reductive subgroup $K_L$ of $N_K(\lie{p}) \cap N_K(\lie{l})$ whose Lie algebra is contained in $\lie{l}$.
Then $(\lie{l}, K_L)$ is a pair.
Suppose that $(\lie{l}, \lie{k}_L)$ is symmetric pair.
We do not need this assumption if we consider $K_L$-admissible $(\lie{l}, K_L)$-modules (or more generally `holonomic' $(\lie{l}, K_L)$-modules) instead of $(\lie{l}, K_L)$-modules below.
See \cite[Thereom 7.25]{Ki20}.

Let $P$ be the parabolic subgroup of $G$ with the Lie algebra $\lie{p}\subset \lie{g}$.
Assume that $G/P$ is $G'$-spherical, that is,
$B'xP$ is open dense in $G$ for some Borel subgroup $B' \subset G'$ and $x \in G$.
We put
\begin{align*}
	L':=(x^{-1}\set{g \in G': g B'xP = B'xP} x \cap P)_0.
\end{align*}
If necessary, replacing $x$ with an element in $xP$, we can assume $\lie{l'}\subset \lie{l}$.

We consider cohomologically induced modules $\Dzuck{K}{K_L}{i}(\ind{g}{p}(V))$
for $(\lie{l}, K_L)$-modules $V$.
See Definition\ref{def:Zuckerman} for the Zuckerman derived functors $\Dzuck{K}{K_L}{i}$, and \cite{KnVo95_cohomological_induction} for the theory of cohomologically induced modules.
We have shown in Fact \ref{fact:BoundedGeneralizedVerma} that if $\set{V_j}_{j\in J}$ is a uniformly bounded family of $(\lie{l}, K_L)$-modules,
then the family $\set{\Dzuck{K}{K_L}{i}(\ind{g}{p}(V_j))}_{i \in \NN, j \in J}$ is uniformly bounded.
In particular, if $V$ is irreducible, the length of $\Dzuck{K}{K_L}{i}(\ind{g}{p}(V))$ is bounded by a constant independent of $V$ and $i$.

It is hard to determine the annihilator of $\Dzuck{K}{K_L}{i}(\ind{g}{p}(V))$ in general.
We can give a lower bound of the annihilator by the following fact (see e.g.\ \cite[Proposition 3.77]{KnVo95_cohomological_induction}
and \cite[Lemma 6.3.3]{Wa88_real_reductive_I}).

\begin{fact}\label{fact:LowerBoundAnnihilatorCohInd}
Let $V$ be an $(\lie{l}, K_L)$-module.
Then we have
\begin{align*}
\Ann_{\univ{g}}(\Dzuck{K}{K_L}{i}(\ind{g}{p}(V))) \supset \Ann_{\univ{g}}(\ind{g}{p}(V)).
\end{align*}
\end{fact}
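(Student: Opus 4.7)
The plan is to prove the slightly stronger statement that for any $\lie{g}$-module $W$ (not necessarily of the form $\ind{g}{p}(V)$) and any $i \geq 0$, one has
\begin{align*}
\Ann_{\univ{g}}(W) \subset \Ann_{\univ{g}}(\Dzuck{K}{K_L}{i}(W)),
\end{align*}
by showing that every $X \in I := \Ann_{\univ{g}}(W)$ already acts as zero on the cochain complex $\rring{K}\otimes W$ via the $\mu$-action. Passing to $(\lie{k},K_L)$-relative cohomology then gives the claim, since $\mu$ is compatible with the $(\lie{k},K_L)$-action $R\otimes\pi$ used to compute $H^i(\lie{k},K_L;\cdot)$ (this is the standard compatibility that makes $\Dzuck{K}{K_L}{i}(W)$ a $(\lie{g},K)$-module in the first place).

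First I would verify that $I$ is stable under the $K$-action on $\univ{g}$. As a two-sided ideal, $I$ is $\ad\lie{g}$-stable, and in particular $\ad\lie{k}$-stable. By the pair axioms (Definition \ref{def:pair}) the rational $K$-action $\Ad_{\lie{g}}$ on $\univ{g}$ has differential $\ad\lie{k}$, and $K$ is connected by the standing assumption of this subsection; hence $\ad\lie{k}$-stability of $I$ promotes to full rational $K$-stability of $I$.

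Next I would unwind the definition of $\mu$. For $X\in\univ{g}$, choose a finite-dimensional $K$-stable subspace of $\univ{g}$ containing $X$, with basis $\{X_i\}$, and write $\Ad(g^{-1})X=\sum_i f_i(g)X_i$ with $f_i\in\rring{K}$. Then by Definition \ref{def:Zuckerman},
\begin{align*}
\mu(X)(f\otimes v)=\sum_i f_i f\otimes X_i v.
\end{align*}
Viewing $\rring{K}\otimes W$ as regular $W$-valued functions on $K$ via $(h\otimes w)(g)=h(g)w$, the value of $\mu(X)(f\otimes v)$ at $g\in K$ equals $f(g)\cdot(\Ad(g^{-1})X)v$. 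For $X\in I$, the $K$-stability of $I$ just established yields $\Ad(g^{-1})X\in I$ for every $g\in K$, so $(\Ad(g^{-1})X)v=0$ for every $v\in W$. Since this vanishes pointwise on $K$ and the evaluation map $\rring{K}\otimes W\to\mathrm{Fun}(K,W)$ is injective, $\mu(X)$ is the zero operator on $\rring{K}\otimes W$, hence induces the zero operator on $H^i(\lie{k},K_L;\rring{K}\otimes W)=\Dzuck{K}{K_L}{i}(W)$.

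The only mildly subtle point is the $K$-stability of $I$, which genuinely needs $K$ connected; once that is in hand, the computation is immediate, and this should match the standard argument in Knapp--Vogan (Proposition 3.77), the cited reference.
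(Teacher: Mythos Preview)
Your argument is correct. The paper does not supply its own proof of this statement: it records it as a \textbf{Fact} and defers to \cite[Proposition 3.77]{KnVo95_cohomological_induction} and \cite[Lemma 6.3.3]{Wa88_real_reductive_I}. Your write-up reproduces exactly the standard argument found there: $K$-stability of the annihilator (using that $K$ is connected, which is indeed a standing hypothesis of the subsection), followed by the observation that $\mu(X)$ acts pointwise on $\rring{K}\otimes W$ as $\Ad(g^{-1})X$, which lies in the annihilator and thus kills $W$. So there is nothing further to compare.
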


By Fact \ref{fact:LowerBoundAnnihilatorCohInd} and Theorems \ref{thm:BoundedRestriction} and \ref{thm:pidegAnnihilatorVerma}, we obtain the following theorem.

\begin{theorem}\label{thm:ReductionToFiberCohInd}
Let $V$ be a non-zero $(\lie{l}, K_L)$-module of finite length with annihilator $I \subset \univ{l}$ and fix $i \in \NN$.
Set $\calR^i(V):=\Dzuck{K}{K_L}{i}(\ind{g}{p}(V))$.
Then there exists some constant $C > 0$ independent of $V$ and $i$ such that
\begin{align*}
\SupDim(\Hom_{\lie{g'}, K'}(\calR^i(V), \calC_{\lie{g'}, K'}) \leq C\cdot \Len_{\lie{l}, K_L}(V) \cdot \PIdeg((\univ{l}/I)^{L'}).
\end{align*}
If, in addition, $\Ann_{\univ{g}}(\calR^i(V)) = \Ann_{\univ{g}}(\ind{g}{p}(V))$ holds, we can choose the constant $C$ to satisfy
\begin{align*}
C^{-1}\cdot \PIdeg((\univ{l}/I)^{L'}) \leq \SupDim(\Hom_{\lie{g'}, K'}(\calR^i(V), \calC_{\lie{g'}, K'})).
\end{align*}
\end{theorem}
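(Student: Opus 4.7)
The plan is to combine Theorem \ref{thm:BoundedRestriction} applied to the $(\lie{g}, K)$-module $\calR^i(V)$ with Theorem \ref{thm:pidegAnnihilatorVerma} applied to $I \subset \univ{l}$, using Fact \ref{fact:LowerBoundAnnihilatorCohInd} to compare the annihilator of $\calR^i(V)$ with that of $\ind{g}{p}(V)$. Preliminarily I would verify the identity $\Ann_{\univ{g}}(\ind{g}{p}(V)) = \Ann_{\univ{g}}(\ind{g}{p}(\univ{l}/I))$: since $V$ is finitely generated over $\univ{l}$ there is a surjection $(\univ{l}/I)^{\oplus n} \twoheadrightarrow V$, while choosing finitely many $v_1, \ldots, v_m \in V$ with $\bigcap_j \Ann_{\univ{l}}(v_j) = I$ produces an injection $\univ{l}/I \hookrightarrow V^{\oplus m}$; the exactness of $\ind{g}{p}$ then forces the two $\univ{g}$-annihilators to coincide. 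Moreover $I \cap \univcent{l}$ has finite codimension in $\univcent{l}$ since $V$ has finite length, so Theorem \ref{thm:pidegAnnihilatorVerma} applies and yields a constant $C_2$ independent of $V$ with
\[C_2^{-1} \PIdeg((\univ{l}/I)^{L'}) \leq \PIdeg((\univ{g}/J)^{G'}) \leq C_2 \PIdeg((\univ{l}/I)^{L'}),\]
where $J := \Ann_{\univ{g}}(\ind{g}{p}(V))$.

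For the upper bound I would control the length factor using Fact \ref{fact:BoundedGeneralizedVerma}: since the family of all irreducible $(\lie{l}, K_L)$-modules is trivially uniformly bounded, the family $\{\calR^j(W) : W \text{ irreducible}, j \in \ZZ\}$ is uniformly bounded and thus has lengths bounded by some constant $C_0$. The long exact sequence of Zuckerman derived functors associated to a composition series of $V$ then yields $\Len(\calR^i(V)) \leq C_0 \Len_{\lie{l}, K_L}(V)$. Applying Theorem \ref{thm:BoundedRestriction}, case \ref{enum:BoundedRestrictionGK}, to $\calR^i(V)$ with annihilator $J_i := \Ann_{\univ{g}}(\calR^i(V))$ furnishes a constant $C_1$ independent of $V$ and $i$ with
\[\SupDim(\Hom_{\lie{g'}, K'}(\calR^i(V), \calC_{\lie{g'}, K'})) \leq C_1 \Len(\calR^i(V)) \PIdeg((\univ{g}/J_i)^{G'}).\]
By Fact \ref{fact:LowerBoundAnnihilatorCohInd} we have $J_i \supset J$, and since both ideals are $G'$-stable and $G'$ is reductive the surjection $\univ{g}/J \twoheadrightarrow \univ{g}/J_i$ restricts to a surjection of $G'$-invariants; polynomial identities pass to quotients, so $\PIdeg((\univ{g}/J_i)^{G'}) \leq \PIdeg((\univ{g}/J)^{G'})$. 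Composing these with the preliminary estimate on $\PIdeg((\univ{g}/J)^{G'})$ completes the upper bound.

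For the lower bound, under the additional hypothesis $J_i = J$ (which forces $\calR^i(V) \neq 0$, as $J \neq \univ{g}$), the lower-bound half of Theorem \ref{thm:BoundedRestriction} applied to $\calR^i(V)$ yields $\PIdeg((\univ{g}/J)^{G'}) \leq \SupDim(\Hom_{\lie{g'}, K'}(\calR^i(V), \calC_{\lie{g'}, K'}))$, and combined with the preliminary lower estimate on $\PIdeg((\univ{g}/J)^{G'})$ this is exactly the required inequality. The whole argument is a direct assembly of the three cited results; the only genuine point of content is the annihilator identification $\Ann_{\univ{g}}(\ind{g}{p}(V)) = \Ann_{\univ{g}}(\ind{g}{p}(\univ{l}/I))$ that makes Theorem \ref{thm:pidegAnnihilatorVerma} applicable to $J$.
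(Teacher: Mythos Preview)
Your proposal is correct and follows essentially the same approach as the paper, which simply cites Fact \ref{fact:LowerBoundAnnihilatorCohInd}, Theorem \ref{thm:BoundedRestriction}, and Theorem \ref{thm:pidegAnnihilatorVerma} without further comment. Your write-up spells out the auxiliary details the paper leaves implicit, namely the identification $\Ann_{\univ{g}}(\ind{g}{p}(V)) = \Ann_{\univ{g}}(\ind{g}{p}(\univ{l}/I))$ and the length bound via Fact \ref{fact:BoundedGeneralizedVerma}, both of which are handled correctly.
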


In general, the assumption $\Ann_{\univ{g}}(\calR^i(V)) = \Ann_{\univ{g}}(\ind{g}{p}(V))$ does not hold
and hence the lower bound in the theorem may not hold.
The Borel--Weil--Bott construction of finite-dimensional representations gives an example.

We can show $\Ann_{\univ{g}}(\calR^i(V)) = \Ann_{\univ{g}}(\ind{g}{p}(V))$ in some special cases.
If $K=K_L$, the module $\calR^0(V)$ is $\ind{g}{p}(V)$ itself and hence the equality for the annihilators automatically holds.
The module is the underlying Harish-Chandra module of a holomorphic discrete series representation.
Theorem \ref{thm:ReductionToFiberCohInd} says that $\SupDim(\Hom_{\lie{g'}, K'}(\ind{g}{p}(V), \calC_{\lie{g'}, K'}))$ is finite if and only if $G/P$ is $G'$-spherical.
If $(G, G')$ is a symmetric pair, more precise results are known (see \cite{Ko05,Ko08,Ko12_generalized_Verma} and \cite{Ki14,Ki16}).

The equation for the annihilators holds also for quaternionic discrete series representations induced from suitable parabolic subalgebras $\lie{p}$.
In the case, it is known in \cite[Proposition 5.7]{GrWa96} that the Gelfand--Kirillov dimension of $\calR^i(V)$ is equal to $\dim_{\CC}(G/P)$.
This implies that $\Ann_{\univ{g}}(\calR^i(V))$ coincides with $\Ann_{\univ{g}}(\ind{g}{p}(V))$ by \cite[Korollar 3.7]{BoHa76}.

The following corollary supports Kobayashi's conjecture for multiplicity-freeness of branching laws of $A_{\lie{q}}(\lambda)$ with sufficiently regular $\lambda$
\cite[Conjecture 4.2]{Ko11}
and gives an affirmative answer to \cite[Conjecture 4.3]{Ko11}.

\begin{corollary}\label{cor:kobayashiConj}
	Let $V$ be a non-zero finite-dimensional irreducible $(\lie{l}, K_L)$-module.
	Fix $i \in \NN$.
	Suppose that the nilpotent radical of $\lie{p}$ is abelian and $(G, G')$ is a symmetric pair.
	Then there exists some constant $C > 0$ independent of $V$ and $i$ such that
	\begin{align*}
		\SupDim(\Hom_{\lie{g'}, K'}(\calR^i(V), \calC_{\lie{g'}, K'}))\leq C \cdot \SupDim(\Hom_{\lie{l'}}(V, \Mod_{fd}(\lie{l'}))) < \infty.
	\end{align*}
\end{corollary}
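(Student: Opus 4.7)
The plan is to combine Theorem \ref{thm:ReductionToFiberCohInd} with a direct estimate of the PI degree $\PIdeg((\univ{l}/I)^{L'})$ coming from the fact that $V$ is finite-dimensional. First I would verify that the hypotheses of Theorem \ref{thm:ReductionToFiberCohInd} are in force: since $(G,G')$ is a symmetric pair and the nilpotent radical of $\lie{p}$ is abelian, Kobayashi's result \cite[Corollary 15]{Ko05} guarantees that $G/P$ is $G'$-spherical, so the reductive subgroup $L' \subset G' \cap P$ from the setup of Section \ref{section:parabolicInduction} is well-defined. Since $V$ is irreducible as an $(\lie{l}, K_L)$-module, $\Len_{\lie{l}, K_L}(V) = 1$, so Theorem \ref{thm:ReductionToFiberCohInd} produces a constant $C_1 > 0$, independent of $V$ and $i$, such that
\begin{align*}
\SupDim(\Hom_{\lie{g'}, K'}(\calR^i(V), \calC_{\lie{g'}, K'})) \leq C_1 \cdot \PIdeg((\univ{l}/I)^{L'}).
\end{align*}

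The second step is to bound $\PIdeg((\univ{l}/I)^{L'})$ by $\SupDim(\Hom_{\lie{l'}}(V, \Mod_{fd}(\lie{l'})))$. Because $V$ is finite-dimensional and $I = \Ann_{\univ{l}}(V)$, the representation gives an injection of algebras $\univ{l}/I \hookrightarrow \End_{\CC}(V)$. Taking $L'$-invariants (which coincide with $\lie{l'}$-invariants since $L'$ is connected by its definition as an identity component) yields an inclusion $(\univ{l}/I)^{L'} \hookrightarrow \End_{\CC}(V)^{\lie{l'}} = \End_{\lie{l'}}(V)$. Since PI degree is monotone under inclusion of $\CC$-algebras, we obtain $\PIdeg((\univ{l}/I)^{L'}) \leq \PIdeg(\End_{\lie{l'}}(V))$. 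Now $\lie{l'}$ is reductive, so the finite-dimensional module $V|_{\lie{l'}}$ decomposes as $\bigoplus_j W_j^{\oplus m_j}$ with $W_j$ pairwise non-isomorphic irreducible $\lie{l'}$-modules; Schur's lemma gives $\End_{\lie{l'}}(V) \simeq \bigoplus_j M_{m_j}(\CC)$, and Fact \ref{fact:amitsurLevitzki} then forces $\PIdeg(\End_{\lie{l'}}(V)) = \max_j m_j$. But $\max_j m_j$ is precisely $\SupDim(\Hom_{\lie{l'}}(V, \Mod_{fd}(\lie{l'})))$, so combining with the first step gives the required inequality with $C = C_1$.

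Finiteness of the right-hand side is then immediate: $\max_j m_j \leq \dim_{\CC}(V) < \infty$. I expect the argument to involve no serious obstacle — the only subtle points are bookkeeping, namely checking that the $L'$ built into Theorem \ref{thm:ReductionToFiberCohInd} is the connected group from Subsection 8.3 (so that its invariants agree with $\lie{l'}$-invariants), and that the resulting algebra inclusion $(\univ{l}/I)^{L'} \hookrightarrow \End_{\lie{l'}}(V)$ is genuinely a subalgebra embedding. Both are automatic from the definitions, so the proof reduces to assembling Theorem \ref{thm:ReductionToFiberCohInd}, Kobayashi's sphericity theorem, and the elementary Schur/Amitsur--Levitzki calculation above.
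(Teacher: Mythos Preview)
Your argument is correct and follows essentially the same route as the paper: first invoke Kobayashi's sphericity result \cite[Corollary 15]{Ko05} to verify the hypothesis of Theorem \ref{thm:ReductionToFiberCohInd}, then bound $\PIdeg((\univ{l}/I)^{L'})$ by the maximal $\lie{l'}$-multiplicity in $V$.

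The only difference is in the second step. The paper simply cites Theorem \ref{thm:BoundedRestriction} to pass from the PI degree to $\SupDim(\Hom_{\lie{l'}}(V, \Mod_{fd}(\lie{l'})))$, whereas you unpack this directly via the $\lie{l'}$-equivariant embedding $(\univ{l}/I)^{L'} \hookrightarrow \End_{\lie{l'}}(V)$ together with Schur's lemma and the Amitsur--Levitzki theorem. Your version is a more elementary and self-contained substitute for Theorem \ref{thm:BoundedRestriction} in this finite-dimensional situation (and in fact gives the sharp constant $1$ for that step rather than an abstract $C$), but it is not a genuinely different strategy---it is exactly the mechanism behind the lower bound in Theorem \ref{thm:BoundedRestriction} specialized to finite-dimensional $V$.
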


\begin{proof}
Under the assumption, $G/P$ is $G'$-spherical by \cite[Corollary 15]{Ko05} (see also \cite[Proposition 6.12]{Ko07}).
Then the assertion follows from Theorems \ref{thm:BoundedRestriction} and \ref{thm:ReductionToFiberCohInd}.
\end{proof}

We refer the reader to \cite{AvPe20} and references therein for classification results of spherical actions on flag varieties.

\section{Coisotropic action}\label{sect:Coisotropic}

We have shown in Corollary \ref{cor:PidegPoissonCommutativeUg} that if $\univ{g}^{G'}/I$ has finite PI degree, then $S(\lie{g})^{G'}/\sqrt{\gr(I)}$ is Poisson-commutative.
In this section, we relate the Poisson-commutativity to coisotropic actions on coadjoint orbits.
As a result, we can characterize the uniform boundedness of multiplicities by the coisotropic actions.

\subsection{Poisson variety and symplectic leaves}

We recall the notion of Poisson varieties following \cite{Lo09}.
See also \cite{LaPiVa12}.

We denote by $\rsheaf{X}$ the structure sheaf of an algebraic variety $X$ over $\CC$ and denote by $\rsheaf{X,x}$ the stalk of $\rsheaf{X}$ at $x \in X$.
If $X$ is irreducible, let $K(X)$ denote the field of all rational functions on $X$, otherwise the direct product $\prod_{Y\subset X} K(Y)$ taken over all irreducible components.

\begin{definition}
	We say that an algebraic variety $X$ is a \define{Poisson variety} if $\rsheaf{X}$ is equipped with a structure of a sheaf of Poisson algebras.
	Let $X$ and $Y$ be Poisson varieties.
	A morphism $f\colon X\rightarrow Y$ of algebraic varieties is called \define{Poisson} if $f^*\colon \rsheaf{Y,f(x)}\rightarrow \rsheaf{X,x}$ is a morphism of Poisson algebras for any $x \in X$.
	If $Y$ is a (locally closed) subvariety of $X$ with a Poisson structure and its inclusion is Poisson, then $Y$ is called a \define{Poisson subvariety} of $X$.
\end{definition}

If $X$ is an affine Poisson variety, then $\rring{X}$ is a Poisson algebra.
The converse is also true.
In fact, if $\calA$ is a Poisson algebra, its localization $S^{-1}\calA$ by a multiplicatively closed subset $S$ admits a unique Poisson structure such that the natural homomorphism $\calA\rightarrow S^{-1}\calA$ is a morphism of Poisson algebras.

For any subvariety $Y$ of a Poisson variety $X$, the ideal sheaf $\calI_Y \subset \rsheaf{X}$ of functions vanishing on $Y$ is a sheaf of Poisson ideals if and only if $Y$ is a Poisson subvariety of $X$.
In other words, the Poisson structure on $Y$ such that $Y$ is a Poisson subvariety of $X$ is unique if it exists.

Symplectic varieties are typical examples of Poisson varieties.
We relate Poisson structure to symplectic structure.
Let $X$ be a smooth Poisson variety.
Let $TX$ (resp.\ $T^*X$) denote the tangent (resp.\ cotangent) bundle of $X$.
The Poisson bracket of $\rsheaf{X}$ can be written as
\begin{align*}
	\{f, g\} = \langle P, df\wedge dg \rangle \quad (f, g \in \rsheaf{X})
\end{align*}
for some bivector $P \in \sect(X, \wedge^2 TX)$, which is uniquely determined by the Poisson bracket.
Then the bivector $P_x \in \wedge^2 T_x X$ defines a linear map $p_x\colon T^*_x X \rightarrow T_x X$.
The image of $p_x$ can be written as
\begin{align*}
	T^P_x X:=p_x(T^*_x X) = \set{(X_f)_x : f \in \rsheaf{X, x}},
\end{align*}
where $X_f$ is the vector field corresponding to the derivation $\{f, \cdot\}$.

The following fact is well-known (see e.g.\ \cite[Proposition 2.2.1]{Lo09}).

\begin{fact}\label{fact:BasicPoissonVariety}
	Let $Y$ be a smooth subvariety of $X$.
	\begin{enumparen}
		\item $Y$ is Poisson if and only if $T^P_x X \subset T_x Y$ for any $x \in Y$.
		\item If $T_x Y = T^P_x X$ for any $x \in Y$, then the bivector $P$ defines a symplectic structure on $Y$.
	\end{enumparen}
\end{fact}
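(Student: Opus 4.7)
The plan is to translate both statements into linear-algebraic conditions on the bivector $P$ at each point, using the characterization of Poisson subvarieties via their ideal sheaves.

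For part (1), the key reformulation is that a smooth subvariety $Y\subset X$ is Poisson if and only if its ideal sheaf $\calI_Y$ is a Poisson ideal of $\rsheaf{X}$, i.e.\ $\{f, g\} \in \calI_{Y,x}$ for all $f \in \rsheaf{X,x}$ and $g \in \calI_{Y,x}$ at every $x \in Y$. Since $\{f,g\} = X_f\cdot g$ where $X_f$ is the Hamiltonian vector field corresponding to $f$, this condition at $x$ says exactly that $(X_f)_x \in T_x Y$ for every local $f$, because a tangent vector $v \in T_x X$ lies in $T_x Y$ iff $v\cdot g = 0$ for every $g \in \calI_{Y,x}$. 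As $f$ varies, $(X_f)_x = p_x(df_x)$ exhausts $T^P_x X$, so the ideal-theoretic condition is equivalent to $T^P_x X \subset T_x Y$. Both directions follow from this equivalence; this is essentially bookkeeping between the two languages.

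For part (2), assuming $T_x Y = T^P_x X$ for all $x \in Y$, part (1) already supplies a Poisson structure on $Y$ whose bivector $P^Y$ at $x$ is obtained as follows: the composition $T^*_x X \twoheadrightarrow T^*_x Y \xrightarrow{p^Y_x} T_x Y$ coincides with $p_x$ viewed as a map into $T^P_x X = T_x Y$, since both send $df_x$ to $(X_f)_x$. The first arrow is surjective with kernel $(T_xY)^\perp$, and the assumption says the composition is surjective onto $T_xY$. A dimension count $\dim T^*_x Y = \dim T_x Y$ then forces $p^Y_x$ to be an isomorphism, i.e.\ $P^Y$ is non-degenerate pointwise. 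Thus $P^Y$ defines a $2$-form $\omega$ on $Y$ via $\omega_x(\xi,\eta) = P^Y_x((p^Y_x)^{-1}\xi, (p^Y_x)^{-1}\eta)$, and the Jacobi identity for $P^Y$ translates in the standard way into $d\omega = 0$, giving the symplectic structure.

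The main (minor) obstacle is the passage in part (2) from a non-degenerate Poisson bivector to an honest closed $2$-form; this is a purely local calculation, but one needs the Jacobi identity inherited from $X$ to get closedness, and the smoothness of $Y$ to interpret $\omega$ as a section of $\wedge^2 T^*Y$. Everything else reduces to tracking the defining maps $p_x$ and $p^Y_x$ and the duality between $\calI_Y$-theoretic and tangent-space-theoretic descriptions.
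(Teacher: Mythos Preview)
Your argument is correct. The paper does not supply its own proof of this statement; it is recorded as a well-known fact with a reference to \cite[Proposition 2.2.1]{Lo09}, so there is nothing to compare against beyond noting that your ideal-sheaf/tangent-space translation is the standard route to this result.
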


If a subvariety $Y$ of $X$ is Poisson and admits a symplectic structure by Fact \ref{fact:BasicPoissonVariety} (2), we say that $Y$ is a \define{symplectic subvariety} of $X$.

\begin{example}\label{ex:CoadjointOrbits}
	Let $G$ be a connected reductive algebraic group.
	Then $S(\lie{g})$ is a Poisson algebra since $S(\lie{g})$ is the associated graded algebra of $\univ{g}$.
	Hence $\lie{g}^*$ is a Poisson variety.
	The map $\lie{g}\ni v \mapsto X_v = \{v, \cdot\}$ coincides with the differential of the action of $G$ on $S(\lie{g})$.
	This implies
	\begin{align*}
		(X_v)_{\lambda} &= -\ad^*(v)\lambda \in \lie{g}^* \simeq T_{\lambda}\lie{g}^*, \\
		T^P_\lambda \lie{g}^* &= \ad^*(\lie{g})\lambda
	\end{align*}
	for $\lambda \in\lie{g}^*$.
	Therefore any coadjoint orbit is a symplectic subvariety of $\lie{g}^*$.
	The symplectic form on the orbit is called the Kirillov--Kostant--Souriau form.
\end{example}

It is well-known that there are finitely many nilpotent coadjoint orbits in $\lie{g}^*$ (see \cite[Theorem 3.5.4]{CoMc93}).
In particular, the closure of a nilpotent coadjoint orbit has finitely many $G$-orbits.

\begin{fact}\label{fact:FiniteNilpotentOrbits}
	Let $\OO$ be a nilpotent coadjoint orbit in $\lie{g}^*$.
	Then the closure $\overline{\OO}$ is a finite disjoint union of $G$-stable symplectic subvarieties, which are also nilpotent coadjoint orbits.
\end{fact}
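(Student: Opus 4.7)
The plan is to assemble this from three standard ingredients, all effectively already available: (i) the symplectic structure on every coadjoint orbit recalled in Example \ref{ex:CoadjointOrbits}, (ii) closedness and $G$-stability of the nilpotent cone, and (iii) the finiteness of nilpotent orbits cited from \cite[Theorem 3.5.4]{CoMc93}.

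First I would introduce the nilpotent cone $\calN^* \subset \lie{g}^*$, defined as the common zero locus of the augmentation ideal $S(\lie{g})^G_+$ of the $G$-invariant polynomials. This is a Zariski-closed, $G$-stable subvariety of $\lie{g}^*$ containing every nilpotent coadjoint orbit by definition, so $\overline{\OO} \subset \calN^*$. Since $G$ acts continuously and $\OO$ is $G$-stable, its Zariski closure $\overline{\OO}$ is again $G$-stable, hence a disjoint union of $G$-orbits, each contained in $\calN^*$ and therefore nilpotent.

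Next I would invoke the classical finiteness of nilpotent coadjoint orbits in a reductive Lie algebra \cite[Theorem 3.5.4]{CoMc93} to conclude that the decomposition of $\overline{\OO}$ into $G$-orbits is in fact finite. Because orbits of an algebraic group action on an algebraic variety are locally closed, each of these finitely many orbits is a smooth, locally closed, $G$-stable subvariety of $\lie{g}^*$.

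Finally, for each such orbit $\OO'$ I would apply Example \ref{ex:CoadjointOrbits}: the Kirillov--Kostant--Souriau bivector $P$ on $\lie{g}^*$ satisfies $T^P_\lambda \lie{g}^* = \ad^*(\lie{g})\lambda = T_\lambda \OO'$ for every $\lambda \in \OO'$, so by Fact \ref{fact:BasicPoissonVariety}(2) the orbit $\OO'$ is a symplectic subvariety of $\lie{g}^*$. There is no real obstacle here; the only thing to keep in mind is that the pieces of the decomposition should be taken as the $G$-orbits themselves (which are locally closed and smooth), not their closures, so that Fact \ref{fact:BasicPoissonVariety}(2) applies and gives the symplectic structure termwise.
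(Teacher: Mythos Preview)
Your proposal is correct and matches the paper's treatment: the paper states this as a Fact without proof, simply noting beforehand that finiteness of nilpotent orbits \cite[Theorem 3.5.4]{CoMc93} gives finitely many $G$-orbits in $\overline{\OO}$, with the symplectic structure coming from Example \ref{ex:CoadjointOrbits}. You have faithfully unpacked exactly these ingredients.
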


We recall the definition of Hamiltonian actions.

\begin{definition}
	Let $G$ be an affine algebraic group and $X$ a Poisson variety equipped with a $G$-action and a $G$-linear map $H \colon \lie{g} \rightarrow \rring{X}$.
	We say that $X$ is a \define{Hamiltonian $G$-variety} or the $G$-action is \define{Hamiltonian} if $\{H(v), f\} = L(v)f$ for any $f \in \rsheaf{X}$ and $v \in \lie{g}$.
	Here $L(\cdot)$ is the differential of the $G$-action on $\rsheaf{X}$.
\end{definition}

Coadjoint orbits and cotangent bundles of $G$-varieties are typical examples of Hamiltonian $G$-varieties.
By definition, any $G$-stable Poisson subvariety of a Hamiltonian $G$-variety is also Hamiltonian.
We give an example of Hamiltonian $G$-varieties related to induced representations.

Let $G$ be a connected reductive algebraic group and $G'$ a connected reductive subgroup of $G$.
We have given a $\CC$-algebra structure on $\rring{G/G'}\otimes \univ{g}$ in \eqref{eqn:DefOU}.
The standard filtration of $\univ{g}$ induces a filtration on $\rring{G/G'}\otimes \univ{g}$.
Then $\rring{G/G'}\otimes \univ{g}$ is a filtered $\CC$-algebra whose associated graded algebra $\rring{G/G'}\otimes S(\lie{g})$ is a Poisson algebra.
We regard $G/G'\times \lie{g}^*$ as a $G$-variety via the diagonal action.
By definition, $G/G'\times \lie{g}^*$ is a Hamiltonian $G$-variety.
We shall show a similar result to Fact \ref{fact:FiniteNilpotentOrbits} for $G/G'\times \lie{g}^*$.

\begin{definition}
	For a $G'$-stable subvariety $S\subset (\lie{g'})^*$, we set
	\begin{align*}
		\Ind^{G}_{G'}(S) := \set{(gG', \lambda) \in G/G'\times \lie{g}^* : (\Ad^*(g^{-1})\lambda)|_{\lie{g'}} \in S},
	\end{align*}
	which is a $G$-stable subvariety of $G/G'\times \lie{g}^*$.
\end{definition}

Under the canonical isomorphism $G/G'\times \lie{g}^* \simeq G \times_{G'} \lie{g}^*$ given by $(gG', \lambda) \mapsto [g, \Ad^*(g^{-1})\lambda]$,
the $G$-variety $\Ind^G_{G'}(S)$ is isomorphic to $G\times_{G'}((\lie{g}/\lie{g'})^*\times S)$.
Here $[g, \lambda]$ is the image of $(g, \lambda) \in G\times \lie{g}^*$ by the natural surjection $G\times \lie{g}^*\rightarrow G\times_{G'} \lie{g}^*$.
Hence if $S$ is smooth, so is $\Ind^G_{G'}(S)$.

\begin{lemma}\label{lem:InducedOrbit}
	Let $\OO$ be a coadjoint orbit in $(\lie{g'})^*$.
	Then $\Ind^{G}_{G'}(\OO)$ is a symplectic subvariety of $G/G'\times \lie{g}^*$.
\end{lemma}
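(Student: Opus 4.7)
Since $\Ind^G_{G'}(\OO) \simeq G \times_{G'}((\lie{g}/\lie{g'})^* \times \OO)$ as a $G$-variety, and the coadjoint orbit $\OO$ is smooth, the induced variety is smooth and fibers over $G/G'$. The Poisson structure on $G/G'\times \lie{g}^*$ is $G$-invariant (since it comes from the $G$-algebra structure on $\rring{G/G'}\otimes \univ{g}$), so by $G$-equivariance and the transitivity of $G$ on $G/G'$, it suffices to verify the two conditions of Fact \ref{fact:BasicPoissonVariety} at a single reference point $x = (eG', \lambda)$, where $\lambda \in \lie{g}^*$ is any lift of a chosen basepoint of $\OO$.

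\textbf{The anchor map.} Identifying $T_x(G/G'\times \lie{g}^*) \simeq (\lie{g}/\lie{g'}) \oplus \lie{g}^*$ and $T^*_x \simeq (\lie{g}/\lie{g'})^* \oplus \lie{g}$, I would compute the anchor $p_x \colon T^*_x \to T_x$ using the Poisson brackets on $\rring{G/G'}\otimes S(\lie{g})$ read off from the algebra structure \eqref{eqn:DefOU}: namely $\{f, h\} = 0$ for $f, h \in \rring{G/G'}$, $\{f, X\} = L(X)f$ for $X \in \lie{g}$, and $\{X, Y\} = [X, Y]$. A direct calculation of the Hamiltonian vector fields at $x$ yields (up to sign convention)
\[
p_x(\alpha, X) = \bigl(-X \bmod \lie{g'},\; \alpha - \ad^*(X)\lambda\bigr) \qquad (\alpha \in (\lie{g}/\lie{g'})^*,\ X \in \lie{g}).
\]

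\textbf{Tangent space and verification.} Differentiating the defining condition $(\Ad^*(g(t)^{-1})\lambda(t))|_{\lie{g'}} \in \OO$ of $\Ind^G_{G'}(\OO)$ at $t = 0$ gives
\[
T_x \Ind^G_{G'}(\OO) = \bigl\{ (Y \bmod \lie{g'},\, \eta) : (\eta - \ad^*(Y)\lambda)|_{\lie{g'}} \in \ad^*_{\lie{g'}}(\lie{g'})\cdot \lambda|_{\lie{g'}} \bigr\}.
\]
Plugging the formula for $p_x(\alpha, X)$ into this condition, the relevant expression becomes $((\alpha - \ad^*(X)\lambda) - \ad^*(-X)\lambda)|_{\lie{g'}} = \alpha|_{\lie{g'}} = 0$, which trivially lies in the tangent space of $\OO$; thus $T^P_x \subset T_x \Ind^G_{G'}(\OO)$, so $\Ind^G_{G'}(\OO)$ is a Poisson subvariety by Fact \ref{fact:BasicPoissonVariety}\,(1). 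For the dimension count, the projection $T^P_x \to \lie{g}/\lie{g'}$ is surjective with fiber $(\lie{g}/\lie{g'})^* + \ad^*(\lie{g'})\lambda$; inclusion--exclusion, using that $(\lie{g}/\lie{g'})^* \cap \ad^*(\lie{g'})\lambda$ corresponds to $(\lie{g'})_{\lambda|_{\lie{g'}}}/\lie{g'}_\lambda$, gives
\[
\dim T^P_x = (\dim\lie{g} - \dim\lie{g'}) + \bigl(\dim\lie{g} - \dim(\lie{g'})_{\lambda|_{\lie{g'}}}\bigr) = 2(\dim\lie{g}-\dim\lie{g'}) + \dim\OO,
\]
which matches $\dim\Ind^G_{G'}(\OO)$ computed from the fiber bundle description. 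Fact \ref{fact:BasicPoissonVariety}\,(2) then upgrades the Poisson statement to symplecticity.

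\textbf{Main obstacle.} The serious bookkeeping is the explicit anchor calculation and the matching of sign conventions between the infinitesimal coadjoint action and the Poisson bracket inherited from $\rring{G/G'}\otimes \univ{g}$; once the two descriptions of $T^P_x$ and $T_x\Ind^G_{G'}(\OO)$ are written in the same coordinates, both the inclusion and the dimension count are one-line verifications.
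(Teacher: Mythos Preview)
Your argument is correct and follows essentially the same route as the paper: compute $T^P_x$ from the Poisson brackets induced by \eqref{eqn:DefOU}, verify $T^P_x \subset T_x\Ind^G_{G'}(\OO)$, and match dimensions via Fact~\ref{fact:BasicPoissonVariety}. The one organizational difference worth noting is in the inclusion step. You differentiate the defining condition of $\Ind^G_{G'}(\OO)$ to describe $T_x\Ind^G_{G'}(\OO)$ explicitly and then substitute the anchor formula. The paper instead identifies $T^P_xX = \Ad^*(g)(\lie{g}/\lie{g'})^* + T_x(Gx)$ directly from \eqref{eqn:IndCoadjoint1}--\eqref{eqn:IndCoadjoint2} and observes that each summand lies in $T_x\Ind^G_{G'}(\OO)$ for structural reasons (the second because $\Ind^G_{G'}(\OO)$ is $G$-stable, the first because it is tangent to the $(\lie{g}/\lie{g'})^*$-factor in the fiber), bypassing any explicit description of the tangent space. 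Both versions perform the same dimension count.
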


\begin{proof}
	Set $X := G/G'\times \lie{g}^*$ and take $x = (gG', \lambda) \in \Ind^G_{G'}(\OO)$.
	We shall compute $T^P_{x} X$.

	Fix a basis $\set{v_1, v_2, \ldots, v_n}$ of $\lie{g}$ and its dual basis $\set{\mu_1, \mu_2, \ldots, \mu_n} \subset \lie{g}^*$.
	By \eqref{eqn:DefOU}, we have
	\begin{align*}
		\{1\otimes v, f\otimes w\} = L(v)f \otimes w + f\otimes [v, w] \quad (f \in \rsheaf{X}, v, w \in \lie{g}),
	\end{align*}
	where $L(\cdot)$ is the differential of the left translation of $G$ on $G/G'$.
	Hence for $f \in \rsheaf{X, gG'}$ and $v \in \lie{g}$, we have
	\begin{align}
		(X_{1\otimes v})_x &= L(v)_{gG'} - \ad^*(v)\lambda \in T_x(Gx), \label{eqn:IndCoadjoint1} \\
		(X_{f\otimes 1})_x &= \sum_{i} \{f\otimes 1, 1\otimes v_i\}(x) \mu_i \\
		&= - \sum_{i} L(v_i)f(gG') \mu_i \in \Ad^*(g)(\lie{g}/\lie{g'})^*. \label{eqn:IndCoadjoint2}
	\end{align}
	Here we identify $T_\lambda\lie{g}^*$ with $\lie{g}^*$.
	By \eqref{eqn:IndCoadjoint1} and \eqref{eqn:IndCoadjoint2}, we have
	\begin{align*}
		T^P_{x}X = \Ad^*(g)(\lie{g}/\lie{g'})^* + T_x(Gx) \subset T_x(\Ind^G_{G'}(\OO))
	\end{align*}
	since $\Ind^{G}_{G'}(\OO)$ is $G$-stable.
	Again, by \eqref{eqn:IndCoadjoint1} and \eqref{eqn:IndCoadjoint2},
	we have
	\begin{align*}
		\dim_{\CC}(T^P_x X) &= \dim_{\CC}(G/G') + \dim_{\CC}(\Ad^*(g)(\lie{g}/\lie{g'})^* + \ad^*(\Ad(g)\lie{g'})\lambda) \\
		& = 2\dim_{\CC}(G/G') + \dim_{\CC}(\OO) = \dim_{\CC}(\Ind^{G}_{G'}(\OO)).
	\end{align*}
	Therefore we obtain $T^P_x X = T_x(\Ind^G_{G'}(\OO))$,
	and we have proved the lemma by Fact \ref{fact:BasicPoissonVariety}.
\end{proof}

\begin{corollary}\label{cor:InductionOrbit}
	Let $\OO$ be a nilpotent coadjoint orbit in $(\lie{g'})^*$.
	Then $\Ind^{G}_{G'}(\overline{\OO})$ is a closed Poisson subvariety of $G/G'\times \lie{g}^*$ and a finite disjoint union of $G$-stable symplectic subvarieties.
\end{corollary}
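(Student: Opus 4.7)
The plan is to establish the three assertions — decomposition, closedness, and the Poisson subvariety property — separately, with the first two following quickly from the material already developed.

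Applying Fact \ref{fact:FiniteNilpotentOrbits} to $\OO$ in $(\lie{g'})^*$ gives a disjoint decomposition $\overline{\OO} = \OO \sqcup \OO_1 \sqcup \cdots \sqcup \OO_r$ into finitely many nilpotent coadjoint $G'$-orbits. Since the operation $\Ind^G_{G'}$ commutes set-theoretically with disjoint unions (clear from the defining condition on the restriction $(\Ad^*(g^{-1})\lambda)|_{\lie{g'}}$), we obtain
\[
\Ind^G_{G'}(\overline{\OO}) = \Ind^G_{G'}(\OO) \sqcup \bigsqcup_{i=1}^r \Ind^G_{G'}(\OO_i),
\]
and each piece is a $G$-stable symplectic subvariety by Lemma \ref{lem:InducedOrbit}. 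Closedness follows from the identification $\Ind^G_{G'}(\overline{\OO}) \simeq G\times_{G'}((\lie{g}/\lie{g'})^*\times \overline{\OO})$ noted just before Lemma \ref{lem:InducedOrbit} (using a $G'$-stable splitting $\lie{g}^* = (\lie{g}/\lie{g'})^* \oplus (\lie{g'})^*$, which exists by reductivity of $G'$), since $\overline{\OO}$ is closed in $(\lie{g'})^*$.

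For the Poisson subvariety property, set $Y := \Ind^G_{G'}(\overline{\OO})$ and $X := G/G'\times \lie{g}^*$. I will verify the criterion $T^P_y X \subset T_y Y$ at every $y \in Y$, where $T_y Y$ is the Zariski tangent space. This criterion is equivalent to $\{f, g\}(y) = 0$ for all $f \in \rsheaf{X,y}$ and $g \in \calI_{Y,y}$, which in turn is the condition $\{f, \calI_Y\} \subset \calI_Y$ characterizing Poisson subvarieties (since $Y$ is reduced). The computation from the proof of Lemma \ref{lem:InducedOrbit} (formulas \eqref{eqn:IndCoadjoint1} and \eqref{eqn:IndCoadjoint2}, valid at any point $y = (gG', \lambda) \in X$) yields $T^P_y X = \Ad^*(g)(\lie{g}/\lie{g'})^* + T_y(Gy)$. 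The second summand lies in $T_y Y$ by $G$-stability of $Y$. For the first summand, given $\mu \in (\lie{g}/\lie{g'})^*$, consider the curve $\lambda(t) := \lambda + t\Ad^*(g)\mu$; then
\[
\Ad^*(g^{-1})\lambda(t)|_{\lie{g'}} = \Ad^*(g^{-1})\lambda|_{\lie{g'}} + t\mu|_{\lie{g'}} = \Ad^*(g^{-1})\lambda|_{\lie{g'}},
\]
so $(gG', \lambda(t)) \in Y$ for all $t$, and its tangent vector $\Ad^*(g)\mu$ lies in $T_y Y$. Combining gives $T^P_y X \subset T_y Y$, as required.

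The main subtlety is that $Y$ is typically singular whenever $\overline{\OO}$ has a nontrivial boundary, so Fact \ref{fact:BasicPoissonVariety} does not apply directly. The Zariski-tangent reformulation circumvents this: the tangent vectors generating $T^P_y X$ are each witnessed either by an explicit curve lying in $Y$ (for the $\Ad^*(g)(\lie{g}/\lie{g'})^*$ direction) or by the $G$-orbit through $y$, which certifies their membership in $T_y Y$ even at singular points of $Y$.
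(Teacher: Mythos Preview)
Your proof is correct and complete. The paper states this corollary without proof, treating it as an immediate consequence of Fact~\ref{fact:FiniteNilpotentOrbits} and Lemma~\ref{lem:InducedOrbit}; you have supplied the details it leaves implicit.

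Your handling of the Poisson subvariety claim is the most substantive contribution, since Fact~\ref{fact:BasicPoissonVariety} as stated requires smoothness while $\Ind^G_{G'}(\overline{\OO})$ is typically singular along the boundary strata. Your reformulation via the Zariski tangent space---observing that $T^P_y X \subset T_y Y$ is equivalent to $\{f,g\}(y)=0$ for $g \in \calI_{Y,y}$, which for reduced $Y$ gives $\{\rsheaf{X},\calI_Y\}\subset \calI_Y$---is the right way to extend the criterion, and your verification using the explicit description of $T^P_y X$ from the proof of Lemma~\ref{lem:InducedOrbit} together with the curve $(gG',\lambda+t\Ad^*(g)\mu)$ is clean and valid. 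An alternative route the paper may have had in mind is to observe that a finite union of closed Poisson subvarieties is again Poisson (intersection of Poisson ideals is Poisson) and then argue by induction on $\dim \OO$; your direct argument avoids this induction.
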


For an affine variety $Y$ and an ideal $I$ of $\rring{Y}$, we denote by $\calV(I)$ the closed subset of $Y$ defined by $I$.
The following lemma is the reason why we study $\Ind^G_{G'}(\OO)$.

\begin{lemma}\label{lem:InductionAndOrbit}
	Let $I$ be a two-sided ideal of $\univ{g'}$ and set
	\begin{align*}
		J:=\Ann_{\rring{G/G'}\otimes \univ{g}}(\ind{g}{g'}(\univ{g'}/I)),
	\end{align*}
	where $\univ{g'}/I$ is regarded as a $\lie{g'}$-module via the right action.
	Then we have $\calV(\gr(J)) = \Ind^G_{G'}(\calV(\gr(I)))$.
\end{lemma}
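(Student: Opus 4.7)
The plan is to combine Lemma \ref{lem:AnnhilatorOUmod} with a $G$-equivariance reduction and an explicit construction of separating elements in $\gr(J)$. To apply that lemma, I first observe that regarding $\univ{g'}/I$ as a left $\lie{g'}$-module via the right action makes $\Ann_{\univ{g'}}(\univ{g'}/I) = {}^tI$, so ${}^t\Ann = I$, and Lemma \ref{lem:AnnhilatorOUmod} identifies
\[ J = \{\, f \in \rring{G/G'}\otimes \univ{g} : f(gG') \in \Ad(g)\, I\univ{g}\ \text{for all}\ g \in G_0 \,\}. \]
Both $\calV(\gr(J))$ and $S := \Ind^G_{G'}(\calV(\gr(I)))$ are $G$-stable subvarieties of $G/G'\times\lie{g}^*\cong G\times_{G'}\lie{g}^*$, so the equality reduces, via the transitive $G$-action on $G/G'$, to showing that the two fibers over $eG'$ agree. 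The fiber of $S$ over $eG'$ is $\pi^{-1}(\calV(\gr(I)))$, where $\pi\colon \lie{g}^*\to\lie{g'}^*$ is restriction, and $g^{-1}\cdot(gG',\lambda) = (eG', \Ad^*(g^{-1})\lambda)$ transports both conditions from an arbitrary $(gG',\lambda)$ back to the base point.

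For the inclusion $\calV(\gr(J))\cap(\{eG'\}\times\lie{g}^*)\supseteq \pi^{-1}(\calV(\gr(I)))$, I take $f\in J\cap F_n$ with $F_n=\rring{G/G'}\otimes\univ{g}_n$, and $\lambda\in\pi^{-1}(\calV(\gr(I)))$. The symbol evaluates as $\sigma_n(f)(eG')=\sigma_n(f(eG'))$, and a PBW argument (splitting $\univ{g}\cong\univ{g'}\otimes U$ compatibly with the filtration) yields $\gr(I\univ{g})=\gr(I)\,S(\lie{g})$, so $\sigma_n(f(eG'))\in\gr(I)\,S(\lie{g})$. Since $\lambda|_{\lie{g'}}$ annihilates $\gr(I)$, the algebra extension of $\lambda$ to $S(\lie{g})$ annihilates the ideal $\gr(I)\,S(\lie{g})$, giving $\sigma_n(f)(eG',\lambda)=0$. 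As such symbols generate $\gr(J)$, this shows $(eG',\lambda)\in\calV(\gr(J))$.

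For the reverse inclusion, given $\lambda$ with $\lambda|_{\lie{g'}}\notin\calV(\gr(I))$, I pick $a\in I\cap\univ{g'}_n$ whose symbol $p=\sigma_n(a)$ satisfies $\lambda(p)\ne 0$, and construct $f\in J\cap F_n$ with $f(eG')=a$; then $\sigma_n(f)\in\gr(J)$ takes value $\lambda(p)\ne 0$ at $(eG',\lambda)$, giving the desired separating element. To build $f$, I let $V\subset I\cap\univ{g'}_n$ be the finite-dimensional $\Ad(G')$-stable subspace spanned by the orbit $\Ad(G')\cdot a$; this lies in $I$ by $\Ad(G')$-stability of $I$. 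Since $G'$ is reductive in the reductive $G$, Matsushima's theorem makes $G/G'$ affine, so the algebraically induced module $\Ind^G_{G'}(V)=(V\otimes\rring{G})^{G'}$ is a finitely generated projective $\rring{G/G'}$-module of rank $\dim V$ (the associated $G$-equivariant vector bundle $G\times_{G'}V\to G/G'$ being \'etale-locally trivial), whence fiber evaluation at $eG'$ is surjective onto $V$. Choosing a $G'$-equivariant $\tilde s\colon G\to V$ with $\tilde s(hg')=\Ad(g'^{-1})\tilde s(h)$ and $\tilde s(e)=a$, I set $f(hG'):=\Ad(h)\tilde s(h)$: this is well-defined on $G/G'$, regular with values in $\univ{g}_n$, satisfies $f(hG')\in\Ad(h)V\subset\Ad(h)I\univ{g}$ (so $f\in J$), and has $f(eG')=a$. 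The main obstacle is this last construction: it is not obvious \emph{a priori} that a non-$\Ad(G')$-invariant element $a\in I$ can be extended to a globally defined element of $J$, and it is precisely the affineness of $G/G'$ that makes fiber evaluation of the induced bundle surjective and the construction work.
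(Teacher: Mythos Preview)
Your proof is correct, but the paper's argument is considerably shorter and more structural. After invoking Lemma~\ref{lem:AnnhilatorOUmod} exactly as you do, the paper passes through the filtered isomorphism $\rring{G/G'}\otimes\univ{g}\simeq(\rring{G}\otimes\univ{g})^{G'}$ (sending $f$ to $\tilde f(g)=\Ad(g^{-1})f(gG')$), under which $J$ becomes $(\rring{G}\otimes I\univ{g})^{G'}$. Since $G'$ is reductive, the functor $(\cdot)^{G'}$ is exact and therefore commutes with $\gr$, giving $\gr(J)\simeq(\rring{G}\otimes\gr(I)S(\lie{g}))^{G'}$ in one stroke; the zero locus is then $G\times_{G'}\pi^{-1}(\calV(\gr(I)))=\Ind^G_{G'}(\calV(\gr(I)))$, the quotient being geometric because $G'$ acts freely on $G$. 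Your approach instead argues by double inclusion on fibres: the forward inclusion via the same PBW identity $\gr(I\univ{g})=\gr(I)S(\lie{g})$, the reverse by an explicit construction of a separating element of $J$ from a section of the induced bundle $G\times_{G'}V$. Both routes ultimately rely on the reductivity of $G'$ in $G$---the paper through exactness of invariants, you through Matsushima's theorem and projectivity of $\rring{G/G'}$-modules of sections---but the paper's use of the $G'$-invariant model computes $\gr(J)$ globally rather than pointwise, which is what makes the surjectivity-of-evaluation step unnecessary there.
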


\begin{proof}
	By Lemma \ref{lem:AnnhilatorOUmod}, we have
	\begin{align*}
		J = \set{f \in \rring{G/G'}\otimes\univ{g}: f(gG') \in \Ad(g)I\univ{g}\ (\forall g\in G)}.
	\end{align*}
	Under the canonical isomorphism $\rring{G/G'}\otimes \univ{g} \simeq (\rring{G}\otimes \univ{g})^{G'}$ of filtered $\CC$-algebras, the image of the two-sided ideal $J$ coincides with $(\rring{G}\otimes I\univ{g})^{G'}$.
	Since $G'$ is reductive, we have
	\begin{align*}
		\gr((\rring{G}\otimes I\univ{g})^{G'}) = (\rring{G}\otimes \gr(I) S(\lie{g}))^{G'}.
	\end{align*}
	Hence we have
	\begin{align*}
		\gr(J) = \set{f \in \rring{G/G'}\otimes S(\lie{g}): f(gG') \in \Ad(g)\gr (I) S(\lie{g})\ (\forall g \in G)}.
	\end{align*}
	This shows $\calV(\gr(J)) = \Ind^G_{G'}(\calV(\gr(I)))$.
\end{proof}

\subsection{Losev's result}

Here we recall Losev's results \cite{Lo09} about Poisson centers of Poisson algebras of $G$-invariant functions.
To do so, we need the definition of conical Hamiltonian $G$-varieties in the sense of \cite[Definition 1.2.4]{Lo09}.
We describe the definition without using categorical quotients.
Let $G$ be a connected reductive algebraic group.

\begin{definition}
	Let $X$ be an affine Hamiltonian $G$-variety.
	We say that $X$ is \define{conical} if $\rring{X}$ has a $G$-stable $\ZZ$-grading (i.e.\ $\rring{X} = \bigoplus_{k \in \ZZ} \rring{X}_k$ as $G$-modules), the $G$-invariants $\rring{X}_k^G$ vanishes for any $k < 0$ and $H(\lie{g})$ is contained in $\rring{X}_k$ for some $k > 0$.
\end{definition}

In \cite{Lo09}, several results are stated for normal varieties.
We shall see that several properties of Hamiltonian $G$-varieties are preserved by normalization.
Although such results are in the paper \cite{Lo09}, we shall state them for readers' convenience.
The following lemma is standard.

\begin{lemma}\label{lem:NormalizationGvariety}
	Let $X$ be an affine irreducible $G$-variety and $\widetilde{X}$ the normalization of $X$.
	Then $\widetilde{X}$ admits a unique $G$-action such that the canonical morphism $\widetilde{X}\rightarrow X$ is $G$-equivariant, and $\rring{\widetilde{X}}^G$ is the integral closure of $\rring{X}^G$ in $K(X)^G$.
\end{lemma}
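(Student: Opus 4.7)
The plan is to construct the $G$-action on $\widetilde{X}$ via the universal property of normalization, and then identify $\rring{\widetilde{X}}^G$ using the Reynolds operator available since $G$ is reductive.

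First I will address the existence and uniqueness of the $G$-action. Let $\pi \colon \widetilde{X} \to X$ denote the normalization morphism. For each $g \in G$, the left translation $L_g \colon X \to X$ is an isomorphism of affine varieties, so $L_g \circ \pi$ is a dominant morphism from the normal irreducible variety $\widetilde{X}$ to $X$. By the universal property of normalization, it factors uniquely as $\pi \circ \widetilde{L}_g$ for some morphism $\widetilde{L}_g \colon \widetilde{X} \to \widetilde{X}$, and uniqueness forces $\widetilde{L}_{gh} = \widetilde{L}_g \circ \widetilde{L}_h$ and $\widetilde{L}_e = \id$, so we obtain an action of $G$ on $\widetilde{X}$ as an abstract group. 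To upgrade this to a morphism of varieties $G \times \widetilde{X} \to \widetilde{X}$, I observe that $G$ is smooth (hence normal), so $G \times \widetilde{X}$ is a normal irreducible variety; the composition $G \times \widetilde{X} \xrightarrow{\id \times \pi} G \times X \to X$ is a dominant morphism, so by the universal property it factors through $\widetilde{X}$, producing the desired action morphism. Uniqueness of a $G$-action making $\pi$ equivariant follows because $\pi$ is birational, so the action on $\widetilde{X}$ is determined on a dense open subset by the action on $X$.

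Next I will analyze the $G$-invariants. By definition of the normalization, $\rring{\widetilde{X}}$ is the integral closure of $\rring{X}$ in $K(X)$, and in particular $\rring{\widetilde{X}} \subset K(X)$. Since $\pi$ is $G$-equivariant, $K(X)$ inherits a $G$-action and the inclusion $\rring{\widetilde{X}} \hookrightarrow K(X)$ is $G$-equivariant, whence
\begin{align*}
\rring{\widetilde{X}}^G = \rring{\widetilde{X}} \cap K(X)^G.
\end{align*}
Let $B$ denote the integral closure of $\rring{X}^G$ in $K(X)^G$. If $f \in B$, then $f$ is integral over $\rring{X}^G \subset \rring{X}$, hence $f \in \rring{\widetilde{X}}$, and $f \in K(X)^G$ gives $f \in \rring{\widetilde{X}}^G$; thus $B \subset \rring{\widetilde{X}}^G$.

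For the reverse inclusion, take $f \in \rring{\widetilde{X}}^G$. Then $f \in K(X)^G$, and $f$ satisfies a monic integral equation $f^n + a_{n-1} f^{n-1} + \cdots + a_0 = 0$ with $a_i \in \rring{X}$. Because $G$ is reductive and acts rationally on $\rring{X}$, there is a Reynolds operator $R \colon \rring{X} \to \rring{X}^G$ which is $\rring{X}^G$-linear. Applying $R$ to the integral equation and using that $f^{n-i} \in K(X)^G$ (so that $R(a_i f^{n-i}) = R(a_i) f^{n-i}$ after clearing denominators appropriately, or more directly by first writing the relation in $\rring{X}[f] \subset \rring{\widetilde{X}}$ and using $G$-linearity of the trivial-isotype projector on $\rring{\widetilde{X}}$), we obtain $f^n + R(a_{n-1}) f^{n-1} + \cdots + R(a_0) = 0$ with $R(a_i) \in \rring{X}^G$. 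Hence $f$ is integral over $\rring{X}^G$, so $f \in B$, completing the proof.

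The argument is essentially routine; the only subtle point is ensuring the lifted action $G \times \widetilde{X} \to \widetilde{X}$ is a morphism of varieties rather than merely a set-theoretic action, which is handled by the normality of $G \times \widetilde{X}$ coming from smoothness of $G$. The use of the Reynolds operator for the invariant-theoretic statement is standard for connected reductive $G$.
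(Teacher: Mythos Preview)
Your proof is correct and follows essentially the same route as the paper: lift the $G$-action via the universal property of normalization (using that $G\times\widetilde{X}$ is normal), and use the Reynolds operator to show that any $G$-invariant element of $\rring{\widetilde{X}}$ is integral over $\rring{X}^G$. The paper is slightly more streamlined in two places: it lifts the action morphism $G\times\widetilde{X}\to\widetilde{X}$ in one step rather than first building the abstract group action, and for the Reynolds step it takes the minimal polynomial $f\in\rring{X}[t]$ of $x\in\rring{\widetilde{X}}^G$ and observes directly that $R(f)(x)=R(f(x))=0$, where $R$ is the Reynolds operator on $\rring{\widetilde{X}}[t]$ (so $R(f)$ is automatically a monic polynomial in $\rring{X}^G[t]$). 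Your second parenthetical suggestion, projecting inside $\rring{\widetilde{X}}$ where $f^{n-i}$ lives and is $G$-invariant, is exactly this; your first phrasing (``after clearing denominators'') is imprecise and should be dropped in favour of the second.
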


\begin{proof}
	By the universality of the normalization, the composition $G\times \widetilde{X}\rightarrow G\times X \rightarrow X$ lifts uniquely to a morphism $G\times \widetilde{X}\rightarrow \widetilde{X}$, which defines a $G$-action on $\widetilde{X}$.

	It is obvious that the integral closure of $\rring{X}^G$ in $K(X)^G$ is contained in $\rring{\widetilde{X}}^G$.
	To show the converse inclusion, take $x \in \rring{\widetilde{X}}^G$ and let $f \in \rring{X}[t]$ be the minimal polynomial of $x$.
	Since $G$ is reductive, there is a $G$-equivariant projection $R$ of $\rring{\widetilde{X}}[t]$ onto $\rring{\widetilde{X}}^G[t]$ called the Reynolds operator.
	Then $R(f)(x) = R(f(x)) = 0$ holds and hence $x$ is integral over $\rring{X}^G$.
\end{proof}

\begin{proposition}\label{prop:NormalizationHamiltonian}
	Let $X$ be an affine irreducible $G$-variety and $\widetilde{X}$ the normalization of $X$.
	\begin{enumparen}
		\item\label{enum:NormalizationHamiltonian} If $X$ is Hamiltonian and conical, then so is $\widetilde{X}$.
		\item\label{enum:NormalizationHamiltonianSymplectic} If $X$ is decomposed into a disjoint union of irreducible symplectic subvarieties, then so is $\widetilde{X}$.
	\end{enumparen}
\end{proposition}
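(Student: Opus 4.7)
The plan is to transport each structure on $X$ to $\widetilde{X}$ using the universal property of the normalization and the fact that $\rring{\widetilde X}$ is the integral closure of $\rring{X}$ in the fraction field $K(X)$ (this uses irreducibility of $X$, so $\rring{X}$ is a domain).

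For part \ref{enum:NormalizationHamiltonian}, the first step is to extend the Poisson bracket. For each $f\in\rring{X}$, the derivation $\{f,\cdot\}$ extends uniquely to a derivation of $K(X)$ via the quotient rule, and this derivation then preserves $\rring{\widetilde X}$: if $b\in\rring{\widetilde X}$ satisfies an integrality equation $p(b)=0$ with $p(t)\in\rring{X}[t]$ monic, differentiating solves for $\{f,b\}$ as an element of $\rring{\widetilde X}[p'(b)^{-1}]\cap K(X)=\rring{\widetilde X}$. Next, one extends to derivations $\{g,\cdot\}$ for $g\in\rring{\widetilde X}$ by using the skew-symmetry and Leibniz rule and the uniqueness of derivations agreeing on the generating subring $\rring{X}$. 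Bilinearity, skew-symmetry, and the Jacobi identity on $\rring{\widetilde X}$ hold because they hold on $\rring{X}$ and both sides of each identity are derivations in each variable, so uniqueness of extension forces the identity. This makes $\widetilde X$ a Poisson variety with $\pi\colon\widetilde X\to X$ Poisson. By Lemma \ref{lem:NormalizationGvariety} the $G$-action lifts to $\widetilde X$, and the composition $H\colon\lie{g}\to\rring{X}\hookrightarrow\rring{\widetilde X}$ is $G$-linear; the identity $\{H(v),f\}=L(v)f$ extends from $\rring X$ to $\rring{\widetilde X}$ again because both sides are derivations in $f$ agreeing on $\rring X$.

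Next I establish the conical property. The $\ZZ$-grading on $\rring{X}$ corresponds to a $\CC^*$-action on $X$ commuting with $G$; the same lifting argument as in Lemma \ref{lem:NormalizationGvariety} produces a $\CC^*$-action on $\widetilde X$ commuting with the lifted $G$-action, giving a $G$-stable $\ZZ$-grading on $\rring{\widetilde X}$ extending that of $\rring X$. The condition $H(\lie g)\subset\rring{X}_k$ for some $k>0$ is inherited automatically. For the vanishing $\rring{\widetilde X}^G_k=0$ when $k<0$, note that Lemma \ref{lem:NormalizationGvariety} shows $\rring{\widetilde X}^G$ is integral over $\rring{X}^G=\bigoplus_{k\ge 0}\rring{X}^G_k$; decomposing any integrality equation into homogeneous components with respect to the $\ZZ$-grading shows that each homogeneous component of an integral element is itself integral, and a lowest component of negative degree would produce a nontrivial relation of the form $b^n_{\rm low}+\dots=0$ in a single negative degree, which is impossible since $\rring{X}^G$ has no negative-degree elements.

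For part \ref{enum:NormalizationHamiltonianSymplectic}, write $X=\bigsqcup_i X_i$ with each $X_i$ an irreducible symplectic subvariety. Each $X_i$ is Poisson, so the ideal sheaf of $X_i$ in a suitable open of $X$ is a Poisson ideal; pulling back along the Poisson morphism $\pi$ and taking the radical gives a Poisson ideal of $\rring{\widetilde X}$ (radicals of Poisson ideals are Poisson in the Noetherian case by \cite[Lemma 3.3.3]{Di96}), so $\pi^{-1}(X_i)_{\rm red}$ is a (locally closed) Poisson subvariety of $\widetilde X$. Its irreducible components are Poisson as well, since the minimal primes over a Poisson ideal in a reduced ring are Poisson. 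Set-theoretically $\widetilde X=\bigsqcup_i\pi^{-1}(X_i)$, and refining into irreducible components gives the desired disjoint decomposition into irreducible Poisson subvarieties.

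The main obstacle is verifying the symplectic (in particular smoothness) property of each irreducible component $Z$ of $\pi^{-1}(X_i)$. Since $\pi$ is finite and birational, $\dim Z=\dim X_i$ and $\pi$ is an isomorphism over the normal (hence smooth) locus of $X$. On a dense open of $X_i$ contained in the normal locus of $X$, $\pi$ restricts to an isomorphism, so $T^P_x\widetilde X\cong T^P_{\pi(x)}X=T_{\pi(x)}X_i\cong T_xZ$ there, and $Z$ is generically smooth and symplectic. To promote this to all of $Z$, I would use that $Z$ is an irreducible Poisson subvariety with $\dim Z=\dim X_i=\operatorname{rank}(P_{\pi(x)})$, so the Poisson bivector of $\widetilde X$ has full rank on $Z$ along the open locus; by the Poisson condition $T^P_x\widetilde X\subset T_x Z$ and the constancy of $\operatorname{rank}(P)$ along symplectic leaves, the rank of $P$ is constant on $Z$, forcing $Z$ to be smooth and $T^P_x\widetilde X=T_x Z$ everywhere on $Z$ via Fact \ref{fact:BasicPoissonVariety}.
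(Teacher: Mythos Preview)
Your treatment of part \ref{enum:NormalizationHamiltonian} follows essentially the same route as the paper (which defers to \cite[Example 3.3.5]{Lo09} and spells out only the conicality argument, much as you do). One small slip: the equality $\rring{\widetilde X}[p'(b)^{-1}]\cap K(X)=\rring{\widetilde X}$ is false as written, since $\rring{\widetilde X}[p'(b)^{-1}]$ already sits inside $K(X)$ and is strictly larger than $\rring{\widetilde X}$ unless $p'(b)$ is a unit. The fact that derivations of a Noetherian $\QQ$-algebra extend to its normalization is true but not this easy; it is Seidenberg's theorem, and a clean argument goes via the conductor ideal or via discrete valuation rings rather than a single minimal-polynomial computation. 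You should cite it rather than sketch it.

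Part \ref{enum:NormalizationHamiltonianSymplectic} has a genuine gap. Your argument that each irreducible component $Z$ of $\pi^{-1}(X_i)$ is a Poisson subvariety is fine, and you correctly observe that $Z$ is generically smooth and symplectic (over the normal locus of $X$, where $\pi$ is an isomorphism). But the step ``the rank of $P$ is constant on $Z$, forcing $Z$ to be smooth'' does not work: the inclusion $T^P_x\widetilde X\subset T_xZ$ together with $\dim T^P_x\widetilde X=\dim Z$ only gives $\dim T_xZ\ge\dim Z$, which is automatic and says nothing about smoothness. You also invoke constancy of rank along symplectic leaves to get constancy on $Z$, but this is circular---the leaves of $\widetilde X$ are precisely what you are trying to identify. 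Moreover, $\widetilde X$ need not be smooth, so the bivector description and Fact \ref{fact:BasicPoissonVariety} are not directly available at every point. The paper handles this by citing \cite[Lemma 3.4.3]{Lo09}, which establishes the symplectic-leaf decomposition for the target of a finite dominant Poisson morphism from a variety with such a decomposition; you should either invoke that lemma or supply a different argument for smoothness of the strata.
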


\begin{proof}
	The assertion \ref{enum:NormalizationHamiltonian} is in \cite[Example 3.3.5]{Lo09}.
	Note that the conicality follows from Lemma \ref{lem:NormalizationGvariety} as follows.
	The $\CC^\times$-action on $X$ determined by the $\ZZ$-grading on $\rring{X}$ lifts to a $\CC^\times$-action on $\widetilde{X}$, which defines a $G$-stable $\ZZ$-grading on $\rring{\widetilde{X}}$.
	If $x$ is a non-zero homogeneous element in $\rring{\widetilde{X}}^G$ with negative degree, then $x$ is not a root of any monic $f \in \rring{X}^G[t]$ since $\rring{X}^G_k$ vanishes for any $k < 0$.
	Hence $\widetilde{X}$ is conical.

	Since the canonical morphism $\widetilde{X}\rightarrow X$ is finite and dominant, the assertion \ref{enum:NormalizationHamiltonianSymplectic} is a special case of \cite[Lemma 3.4.3]{Lo09}.
\end{proof}

For a Poisson algebra $\calA$, we denote by $\mathfrak{z}(\calA)$ the Poisson center of $\calA$, i.e.\ $\mathfrak{z}(\calA) = \set{a \in \calA: \{a, \cdot\} = 0}$.

\begin{proposition}\label{prop:IntegralExtensionPoisson}
	Let $\calA$ be a Poisson algebra which is an integral domain, and $\calB$ an integral extension of $\calA$.
	An extension of the Poisson bracket of $\calA$ to $\calB$ is unique if exists.
	If the extension exists, $\mathfrak{z}(\calA)$ is contained in $\mathfrak{z}(\calB)$.
\end{proposition}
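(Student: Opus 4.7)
The plan is to reduce both claims to one derivation calculation exploiting integrality. Given $b \in \calB$, we seek a polynomial relation $p(b) = 0$ with $p = \sum_i c_i t^i \in \calA[t]$ such that $p'(b)$ is a non-zero-divisor in $\calB$. Once such a $p$ is available, any $\CC$-derivation $D$ of $\calB$ that vanishes on $\calA$ satisfies
\begin{align*}
	0 \;=\; D(p(b)) \;=\; \sum_i D(c_i)\, b^i \;+\; p'(b)\,D(b) \;=\; p'(b)\,D(b),
\end{align*}
which forces $D(b) = 0$. Both assertions of the proposition will fall out of this identity applied to suitable $D$.

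First I would secure a good $p$. Assume $\calB$ is a domain (which holds in the intended application to normalizations of irreducible Poisson varieties), and set $K := \mathrm{Frac}(\calA)$, $L := \mathrm{Frac}(\calB)$. Integrality of $\calB$ over $\calA$ makes $L/K$ algebraic, hence separable since $\mathrm{char}\,\CC = 0$. The minimal polynomial $q \in K[t]$ of $b$ therefore satisfies $q'(b) \neq 0$ in $L$. Clearing denominators by a nonzero $s \in \calA$ yields $p := sq \in \calA[t]$ with $p(b) = 0$ and $p'(b) = s\,q'(b) \neq 0$ in $L$, hence a non-zero element of the domain $\calB$.

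For uniqueness, suppose $\{\cdot,\cdot\}_1$ and $\{\cdot,\cdot\}_2$ both extend the bracket on $\calA$. For $a \in \calA$ the difference $D_a := \{a,\cdot\}_1 - \{a,\cdot\}_2$ is a derivation of $\calB$ vanishing on $\calA$, so the identity above gives $D_a(b) = 0$ for every $b \in \calB$. The two brackets thus agree on $\calA \times \calB$. For arbitrary $a' \in \calB$, the map $E_{a'}(b) := \{b,a'\}_1 - \{b,a'\}_2$ is again a derivation of $\calB$ (by Leibniz in the first argument) and it vanishes on $\calA$ by the step just completed; the same reasoning yields $E_{a'} \equiv 0$, and antisymmetry converts this to $\{a',b\}_1 = \{a',b\}_2$.

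Finally, for the center, let $a \in \mathfrak{z}(\calA)$: then $\{a,\cdot\}$ is a derivation of $\calB$ which vanishes on $\calA$ by centrality of $a$, so the identity above forces $\{a,b\} = 0$ for all $b \in \calB$, i.e.\ $a \in \mathfrak{z}(\calB)$. The main subtlety throughout is the choice of $p$ with $p'(b)$ a non-zero-divisor; if one were to drop the tacit assumption that $\calB$ is a domain, a more delicate argument via irreducible components would be required, but in the applications $\calB$ is an integral closure of the coordinate ring of an irreducible variety and this issue does not arise.
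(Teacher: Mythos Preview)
Your proof is correct and takes essentially the same approach as the paper: for the center inclusion the paper applies $\{a,\cdot\}$ to the minimal polynomial relation $f(b)=0$ and cancels $f'(b)\neq 0$, exactly as you do. The only differences are that the paper cites \cite[Proposition 2.1.1]{Lo09} for uniqueness rather than giving your self-contained two-step derivation argument, and that you are more careful than the paper about justifying $p'(b)\neq 0$ (via the minimal polynomial over $\mathrm{Frac}(\calA)$ and clearing denominators) and about flagging the tacit assumption that $\calB$ is a domain.
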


\begin{proof}
	See \cite[Proposition 2.1.1]{Lo09} for the first assertion.
	For the last assertion, let $a \in \mathfrak{z}(\calA)$ and $b \in \calB$ with minimal polynomial $f \in \calA[t]$.
	Then we have $0 = \{a, f(b)\} = \{a, b\}f'(b)$.
	Since $f'(b)\neq 0$, we obtain $\{a, b\} = 0$.
\end{proof}

For an affine Hamiltonian $G$-variety $X$, we extend $H\colon \lie{g}\rightarrow \rring{X}$ to a homomorphism $H\colon S(\lie{g})\rightarrow \rring{X}$ of Poisson algebras.
Then $H(S(\lie{g})^G)$ is contained in $\mathfrak{z}(\rring{X}^G)$.
We state Losev's results in our setting.

\begin{theorem}\label{thm:FinitePoissonCenter}
	Let $X$ be an affine irreducible conical Hamiltonian $G$-variety
	and $\calA_X$ the integral closure of $H(S(\lie{g})^G)$ in $K(X)^G$.
	Assume that $X$ is a finite disjoint union of symplectic subvarieties.
	\begin{enumparen}
		\item\label{enum:FinitePoissonCenterExtension} $K(X)^G$ is a finite extension of the quotient field of $\rring{X}^G$.
		\item\label{enum:FinitePoissonCenterIntegral} $\calA_X$ coincides with the algebra of all integral elements over $\rring{X}^G$ in $\mathfrak{z}(K(X)^G)$.
		\item\label{enum:FinitePoissonCenterFinite} The Poisson center $\mathfrak{z}(\rring{X}^G)$ is finitely generated as an $S(\lie{g})^G$-module.
	\end{enumparen}
\end{theorem}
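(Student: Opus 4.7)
The plan is to reduce to the case where $X$ is normal and then apply the structural results on conical Hamiltonian $G$-varieties developed by Losev in \cite{Lo09}. The reduction itself is essentially formal; the heart of the matter is the normal case, for which the proof invokes the symplectic stratification.

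For the reduction, let $\pi\colon \widetilde{X}\to X$ denote the normalization. By Proposition \ref{prop:NormalizationHamiltonian}, the variety $\widetilde{X}$ is again an affine irreducible conical Hamiltonian $G$-variety and a finite disjoint union of symplectic subvarieties, while Proposition \ref{prop:IntegralExtensionPoisson} ensures the Poisson bracket extends uniquely to $\rring{\widetilde{X}}$. Since $\pi$ is finite and birational, we have $K(\widetilde{X})^G = K(X)^G$ and $\rring{\widetilde{X}}^G$ is a finite $\rring{X}^G$-module (Lemma \ref{lem:NormalizationGvariety}). Assertion \ref{enum:FinitePoissonCenterExtension} transfers from $\widetilde{X}$ to $X$ because $\Frac(\rring{\widetilde{X}}^G)/\Frac(\rring{X}^G)$ is a finite extension; assertion \ref{enum:FinitePoissonCenterIntegral} is immediate since $\calA_X = \calA_{\widetilde{X}}$ as both are the integral closure of $H(S(\lie{g})^G)$ inside $K(X)^G$; and assertion \ref{enum:FinitePoissonCenterFinite} follows because $\mathfrak{z}(\rring{X}^G) \subset \mathfrak{z}(\rring{\widetilde{X}}^G)$ sits as a submodule of a finitely generated module over the Noetherian ring $S(\lie{g})^G$.

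Assume now $X$ is normal. The plan is to isolate an open dense symplectic subvariety $X^\circ\subset X$; by Fact \ref{fact:BasicPoissonVariety} combined with the finiteness of the symplectic stratification, such an $X^\circ$ exists, is $G$-stable, and $X\setminus X^\circ$ has codimension at least one. On $X^\circ$, the moment map $H$ behaves as in classical symplectic geometry: the image $H(S(\lie{g})^G) \subset \rsheaf{X^\circ}^G$ Poisson-commutes with $\rsheaf{X^\circ}^G$, and Marsden--Weinstein-type reduction identifies $K(X)^G$ as the function field of the generic symplectic reduction. From here one argues, as in \cite[\S 3--\S 5]{Lo09}, that $\calA_X$ fills out $\mathfrak{z}(K(X)^G)\cap(\text{integral elements})$: generic closed $G$-orbits on $X^\circ$ have the right dimension to force the extension $K(X)^G/\Frac(\rring{X}^G)$ to be finite, establishing \ref{enum:FinitePoissonCenterExtension}, while the Poisson-centrality of any integral element is automatic from Proposition \ref{prop:IntegralExtensionPoisson} together with the fact that $H(S(\lie{g})^G)$ itself lies in $\mathfrak{z}(\rring{X}^G)$, giving \ref{enum:FinitePoissonCenterIntegral}.

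The main obstacle is assertion \ref{enum:FinitePoissonCenterFinite}: upgrading finiteness of the field extension to finite generation of the entire Poisson center $\mathfrak{z}(\rring{X}^G)$ as an $S(\lie{g})^G$-module. Field-theoretic or purely dimension-based arguments only control the generic behaviour at the top symplectic stratum, whereas $\mathfrak{z}(\rring{X}^G)$ can a priori receive contributions from lower-dimensional symplectic pieces as well. The way around this is to exploit the conical $\ZZ$-grading together with Hilbert's finiteness theorem for reductive groups: the grading lets one decompose $\mathfrak{z}(\rring{X}^G)$ degree by degree, the assumption that the $G$-invariants vanish in negative degree pins down the bottom of the filtration, and the finite stratification by symplectic subvarieties bounds the support of any Poisson-central element. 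Combining these inputs with assertion \ref{enum:FinitePoissonCenterIntegral} produces a finite set of generators of $\calA_X$ over $H(S(\lie{g})^G)$ that also generate $\mathfrak{z}(\rring{X}^G)$ over $S(\lie{g})^G$, completing the proof.
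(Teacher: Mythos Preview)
Your reduction to the normal case is correct and matches the paper. The real gap is assertion \ref{enum:FinitePoissonCenterFinite}: your argument via ``grading, Hilbert's finiteness, and support bounds from the stratification'' is not a proof, and it misses the point that \ref{enum:FinitePoissonCenterFinite} follows \emph{formally} from \ref{enum:FinitePoissonCenterExtension} and \ref{enum:FinitePoissonCenterIntegral} by a short purely algebraic argument, with no further geometry needed. Here it is. By \ref{enum:FinitePoissonCenterExtension}, $K(X)^G$ is finitely generated as a field over the quotient field $M$ of $H(S(\lie{g})^G)$, so the relative algebraic closure $L$ of $M$ in $K(X)^G$ is a finite extension of $M$. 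Then $\calA_X$, being the integral closure of the finitely generated $\CC$-algebra $H(S(\lie{g})^G)$ in $L$, is itself a finitely generated $\CC$-algebra and hence a finite $S(\lie{g})^G$-module. Now $\mathfrak{z}(\rring{X}^G)\subset \mathfrak{z}(K(X)^G)$ by \ref{enum:FinitePoissonCenterExtension} and Proposition \ref{prop:IntegralExtensionPoisson}, and every element of $\rring{X}^G$ is trivially integral over $\rring{X}^G$, so \ref{enum:FinitePoissonCenterIntegral} forces $\mathfrak{z}(\rring{X}^G)\subset \calA_X$. As a submodule of a finite module over the Noetherian ring $S(\lie{g})^G$, it is finite. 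The paper in fact runs this argument \emph{before} passing to the normalization, so that only \ref{enum:FinitePoissonCenterExtension} and \ref{enum:FinitePoissonCenterIntegral} need to be reduced to the normal case.

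There is a second gap in your treatment of \ref{enum:FinitePoissonCenterIntegral} for normal $X$. Your appeal to Proposition \ref{prop:IntegralExtensionPoisson} only yields the easy inclusion, that elements of $\calA_X$ lie in $\mathfrak{z}(K(X)^G)$ and are integral over $\rring{X}^G$. The substantive content of \cite[Theorem 1.2.3]{Lo09} is the \emph{reverse}: any element of $\mathfrak{z}(K(X)^G)$ integral over $\rring{X}^G$ is already integral over the much smaller ring $H(S(\lie{g})^G)$. This uses the strong equidefectinality of $X$ (which here follows from the symplectic stratification via \cite[Lemma 3.4.2]{Lo09}), not general Poisson-algebraic considerations. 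The paper simply cites this theorem of Losev, together with \cite[Theorem 1.2.4]{Lo09} for \ref{enum:FinitePoissonCenterExtension}, rather than reproducing them; your sketch of these parts is too vague to stand as an argument.
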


\begin{proof}
	We shall show \ref{enum:FinitePoissonCenterFinite} from \ref{enum:FinitePoissonCenterExtension} and \ref{enum:FinitePoissonCenterIntegral}.
	Let $L$ be the relative algebraic closure of the quotient field $M$ of $H(S(\lie{g})^G)$ in $K(X)^G$.
	Since $K(X)^G$ is finitely generated as a field over $M$, $L$ is a finite extension of $M$ by \cite[\S 14, Corollary 7.1]{Bo03}.
	Since $\calA_X$ is the integral closure of $H(S(\lie{g}))^G$ in $L$, the $\CC$-algebra $\calA_X$ is finitely generated by \cite[Chapter I, Theorem 3.9A]{Ha77_alggeo}.
	Hence $\calA_X$ is finitely generated as an $S(\lie{g})^G$-module.
	By Proposition \ref{prop:IntegralExtensionPoisson} and \ref{enum:FinitePoissonCenterExtension}, we have $\mathfrak{z}(\rring{X}^G) \subset \mathfrak{z}(K(X)^G)$.
	Hence we have $\mathfrak{z}(\rring{X}^G) \subset \calA_X$ by \ref{enum:FinitePoissonCenterIntegral}. 
	This shows \ref{enum:FinitePoissonCenterFinite}.

	Let $\widetilde{X}$ be the normalization of $X$.
	By Proposition \ref{prop:NormalizationHamiltonian}, $\widetilde{X}$ satisfies the assumptions of Theorem \ref{thm:FinitePoissonCenter}.
	Since $K(X) = K(\widetilde{X})$, we have $\calA_X = \calA_{\widetilde{X}}$.
	By Lemma \ref{lem:NormalizationGvariety}, the quotient field of $\rring{\widetilde{X}}^G$ is a finite extension of that of $\rring{X}^G$.
	Therefore, replacing $X$ with $\widetilde{X}$ we can assume that $X$ is normal.

	For normal $X$, \ref{enum:FinitePoissonCenterExtension} has been proved in \cite[Theorem 1.2.4]{Lo09}, and \ref{enum:FinitePoissonCenterIntegral} in \cite[Theorem 1.2.3]{Lo09}.
	Note that the strong equidefectinality follows from \cite[Lemma 3.4.2]{Lo09}, and $C_{G,X}$ in the Losev's paper is $\Spec(\calA_X)$ in our notation.
\end{proof}

\subsection{Coisotropic action and polynomial identity}

As we have seen in Corollary \ref{cor:PidegPoissonCommutativeUg},
the finiteness of the PI degree of $\univ{g}^{G'}/I$ implies the Poisson-commutativity of $S(\lie{g})^{G'}/\sqrt{\gr(I)}$.
Using Theorem \ref{thm:FinitePoissonCenter}, we shall relate the finiteness of the PI degree to a geometric condition of coadjoint orbits called coisotropic actions.
Let $G$ be a connected reductive algebraic group.

\begin{definition}
	Let $X$ be a Hamiltonian $G$-variety.
	We say that the $G$-action on $X$ is \define{coisotropic} if there is a $G$-stable open dense symplectic subvariety $U$ of $X$ such that for any $x \in U$, $L(\lie{g})_x$ is coisotropic in $T_x U$, that is,
	$L(\lie{g})_x \supset \set{v \in T_x U: \omega_x(v, L(\lie{g})_x) = 0}$.
	Here $\omega$ is the symplectic form on $U$.
\end{definition}

The following fact is the reason why we consider coisotropic actions.
See \cite[Chapter II, Proposition \S 3.5]{Vi01}.

\begin{fact}\label{fact:CoisotropicAndPoisson}
	Let $X$ be a Hamiltonian $G$-variety with an open dense $G$-stable symplectic subvariety.
	Then the $G$-action on $X$ is coisotropic if and only if $K(X)^G$ is Poisson-commutative.
\end{fact}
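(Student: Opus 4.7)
The plan is to reduce the statement to a pointwise linear-algebra assertion at a generic point of the symplectic locus, by using the symplectic form to translate between cotangent vectors (differentials of $G$-invariant functions) and tangent vectors (Hamiltonian vector fields), and thereby relate $K(X)^G$ to the symplectic orthogonal $(L(\lie{g})_x)^{\perp}$ of the infinitesimal $G$-orbit.

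First I would choose a $G$-stable open dense symplectic subvariety $U \subset X$ on which three things hold simultaneously: $U$ is smooth, the function $x\mapsto \dim L(\lie{g})_x$ is constant (equal to the dimension of the generic $G$-orbit), and the rational quotient map $\pi\colon U\dashrightarrow U/\!/G$ is defined and smooth with the differentials $\set{df_x : f\in K(X)^G}$ spanning the conormal space to $Gx$ in $T^*_xU$. This last property is automatic at generic $x$ because $K(X)^G$ has transcendence degree $\dim X - \dim Gx$ and the $G$-invariant rational functions separate generic orbits. Shrinking $U$ if necessary preserves the hypothesis that $U$ is open dense symplectic.

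The key step is the following identification at any $x\in U$. The symplectic form gives a linear isomorphism $\omega_x^\flat\colon T^*_xU\xrightarrow{\sim} T_xU$, $\alpha\mapsto v_\alpha$ where $\omega_x(v_\alpha,\cdot)=\alpha$, which sends $df_x$ to the Hamiltonian vector $(X_f)_x$. Under $\omega_x^\flat$, the annihilator of a subspace $W\subset T_xU$ maps precisely to its symplectic orthogonal $W^{\perp_\omega}$. Applying this to $W=L(\lie{g})_x$ and combining with the generic-point property above,
\begin{align*}
	\spn{(X_f)_x : f\in K(X)^G} \;=\; \omega_x^\flat(\text{Ann}(L(\lie{g})_x)) \;=\; (L(\lie{g})_x)^{\perp_\omega}.
\end{align*}
The inclusion $\subset$ is already available without genericity: the Hamiltonian property $\{H(\xi),f\}=L(\xi)f=0$ for $\xi\in\lie{g}$ and $G$-invariant $f$ forces $\omega_x((X_f)_x,L(\xi)_x)=0$, so $(X_f)_x\in (L(\lie{g})_x)^{\perp_\omega}$.

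Once this identification is in hand, the equivalence follows immediately: for $f,g\in K(X)^G$, the Poisson bracket satisfies $\{f,g\}(x)=\omega_x((X_f)_x,(X_g)_x)$ at $x\in U$, so $K(X)^G$ is Poisson-commutative exactly when $\omega_x$ vanishes on $(L(\lie{g})_x)^{\perp_\omega}\times (L(\lie{g})_x)^{\perp_\omega}$, i.e.\ when $(L(\lie{g})_x)^{\perp_\omega}$ is isotropic, which by linear symplectic duality is equivalent to $L(\lie{g})_x\supset (L(\lie{g})_x)^{\perp_\omega}$, i.e.\ coisotropy of $L(\lie{g})_x$. The main obstacle is the careful justification that at a generic point of $U$ the differentials of $G$-invariant rational functions span the full conormal of the orbit; this is where the hypothesis that $U$ is open dense and symplectic, together with finite generation and the transcendence degree computation for $K(X)^G$, must be used, and one must shrink $U$ appropriately to avoid points where the orbit dimension jumps or where invariants fail to separate nearby orbits.
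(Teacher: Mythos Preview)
Your argument is correct and is the standard one: reduce to a generic point on the symplectic locus, use the symplectic form to identify the span of Hamiltonian vectors of $G$-invariant rational functions with the symplectic orthogonal $(L(\lie{g})_x)^{\perp_\omega}$ (the reverse inclusion at generic $x$ coming from Rosenlicht's theorem that invariants separate generic orbits), and then unwind the linear-algebra equivalence between ``$(L(\lie{g})_x)^{\perp_\omega}$ is isotropic'' and ``$L(\lie{g})_x$ is coisotropic''.

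There is no comparison to make with the paper's own proof, because the paper does not prove this statement: it is recorded as a \emph{Fact} with a citation to Vinberg \cite[Chapter II, Proposition \S 3.5]{Vi01}. Your sketch is essentially the argument one finds in that reference. The only point worth naming explicitly is that the equality of $\spn{(X_f)_x : f\in K(X)^G}$ with $(L(\lie{g})_x)^{\perp_\omega}$ relies on Rosenlicht's theorem (generic separation of orbits by rational invariants), which you invoked implicitly via the transcendence-degree count; making that citation explicit would close the one place a reader might pause.
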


The following corollary is an easy consequence of Theorem \ref{thm:FinitePoissonCenter} \ref{enum:FinitePoissonCenterExtension} and Proposition \ref{prop:IntegralExtensionPoisson}.

\begin{corollary}\label{cor:CoisotropicPoisson}
	Under the assumption of Theorem \ref{thm:FinitePoissonCenter}, the following conditions are equivalent:
	\begin{enumparen}
		\item $X$ is coisotropic
		\item $\rring{X}^G$ is Poisson-commutative
		\item $K(X)^G$ is Poisson-commutative.
	\end{enumparen}
\end{corollary}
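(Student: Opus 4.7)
The plan is to establish the equivalence as a triangle of three implications, relying on Fact \ref{fact:CoisotropicAndPoisson} to connect the geometric condition to the analytic one and on Theorem \ref{thm:FinitePoissonCenter} \ref{enum:FinitePoissonCenterExtension} together with Proposition \ref{prop:IntegralExtensionPoisson} to bridge between $\rring{X}^G$ and $K(X)^G$. The implication $\text{(iii)} \Rightarrow \text{(ii)}$ is immediate because $\rring{X}^G$ embeds as a Poisson subalgebra of $K(X)^G$.

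For $\text{(i)} \Leftrightarrow \text{(iii)}$, I would verify that the hypothesis of Fact \ref{fact:CoisotropicAndPoisson} holds, namely that $X$ contains an open dense $G$-stable symplectic subvariety. Since $X$ is irreducible and decomposes as a finite disjoint union of symplectic subvarieties, exactly one of them, call it $U$, is open dense in $X$. Because $G$ is connected and permutes the finitely many pieces of the decomposition, it must fix $U$ setwise. Fact \ref{fact:CoisotropicAndPoisson} then identifies coisotropicity of the $G$-action on $X$ (which only depends on $U$) with Poisson-commutativity of $K(U)^G = K(X)^G$.

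The one direction requiring actual work is $\text{(ii)} \Rightarrow \text{(iii)}$. The plan is as follows. Let $Q$ be the quotient field of $\rring{X}^G$; since localization preserves Poisson-commutativity, $Q$ is Poisson-commutative. By Theorem \ref{thm:FinitePoissonCenter} \ref{enum:FinitePoissonCenterExtension}, $K(X)^G$ is a finite field extension of $Q$, hence an integral extension. Proposition \ref{prop:IntegralExtensionPoisson} then yields
\begin{align*}
Q = \mathfrak{z}(Q) \subset \mathfrak{z}(K(X)^G).
\end{align*}
Now for arbitrary $b_1, b_2 \in K(X)^G$, let $f_1 \in Q[t]$ be the minimal polynomial of $b_1$. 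Applying the Leibniz rule to $\{b_2, f_1(b_1)\} = 0$ and using that the coefficients of $f_1$ lie in $Q \subset \mathfrak{z}(K(X)^G)$, all cross terms vanish and we are left with $f_1'(b_1)\{b_2, b_1\} = 0$. Since we are in characteristic zero, $f_1'(b_1) \neq 0$, so $\{b_1, b_2\} = 0$, and $K(X)^G$ is Poisson-commutative.

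The main conceptual obstacle, which is already handled by the cited results, is that Poisson-commutativity does not automatically pass to integral extensions of an arbitrary ring; it does pass once we know the larger ring is generated over a central subfield by elements whose minimal polynomials have coefficients in that subfield. Theorem \ref{thm:FinitePoissonCenter} \ref{enum:FinitePoissonCenterExtension} is what makes this argument possible by guaranteeing the finiteness of the field extension $K(X)^G/Q$; without it there would be no control over transcendence behavior and the Leibniz computation above would not close.
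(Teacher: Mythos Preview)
Your proof is correct and follows exactly the approach the paper indicates (Fact~\ref{fact:CoisotropicAndPoisson} for (1)$\Leftrightarrow$(3), Theorem~\ref{thm:FinitePoissonCenter}~\ref{enum:FinitePoissonCenterExtension} together with Proposition~\ref{prop:IntegralExtensionPoisson} for (2)$\Rightarrow$(3)); the paper itself leaves these details to the reader. One minor imprecision: $G$ need not literally permute the pieces of an arbitrary symplectic decomposition, but the open piece $U$ is still $G$-stable because $g\cdot U$ is again open and symplectic, and if some $x\in g\cdot U$ lay in a non-open piece $V$ then $\dim T_xV=\dim T^P_xX=\dim X>\dim V$ would contradict the smoothness of $V$.
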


Let $G'$ be a connected reductive subgroup of $G$.
Note that, as we have seen in Proposition \ref{prop:PIdegStableConnected}, the finiteness of $\PIdeg(\univ{g}^{G'}/I)$ does not depend on the connectedness of $G'$.
The following two theorems are the main results in this section.

\begin{theorem}\label{thm:PIdegAndCoisotropicRest}
	Let $I$ be a two-sided ideal of $\univ{g}$ such that $I\cap \univcent{g}$ has finite codimension in $\univcent{g}$.
	Then the following conditions are equivalent:
	\begin{enumparen}
		\item\label{enum:PIdegAndCoisotropicRestPIdeg} $\PIdeg((\univ{g}/I)^{G'}) < \infty$
		\item\label{enum:PIdegAndCoisotropicRestPoisson} $(S(\lie{g})/\sqrt{\gr(I)})^{G'}$ is Poisson-commutative.
		\item\label{enum:PIdegAndCoisotropicRestCoisotropic} the $G'$-action on $\calV(\gr(I))$ is coisotropic
		\item\label{enum:PIdegAndCoisotropicRestFinite} $(\univ{g}/I)^{G'}$ is finitely generated as a $\univcent{g'}$-module.
	\end{enumparen}
\end{theorem}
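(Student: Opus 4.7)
The plan is to close the cycle $\ref{enum:PIdegAndCoisotropicRestPIdeg}\Rightarrow \ref{enum:PIdegAndCoisotropicRestPoisson}\Leftrightarrow \ref{enum:PIdegAndCoisotropicRestCoisotropic}\Rightarrow \ref{enum:PIdegAndCoisotropicRestFinite}\Rightarrow \ref{enum:PIdegAndCoisotropicRestPIdeg}$. The implication $\ref{enum:PIdegAndCoisotropicRestPIdeg}\Rightarrow\ref{enum:PIdegAndCoisotropicRestPoisson}$ is immediate from Corollary~\ref{cor:PidegPoissonCommutativeUg} applied to $I\cap\univ{g}^{G'}$: exactness of $(\cdot)^{G'}$ (reductivity of $G'$) gives $\gr(I\cap\univ{g}^{G'})=\gr(I)\cap S(\lie{g})^{G'}$, so the corollary's output is exactly $(S(\lie{g})/\sqrt{\gr(I)})^{G'}$. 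For $\ref{enum:PIdegAndCoisotropicRestFinite}\Rightarrow\ref{enum:PIdegAndCoisotropicRestPIdeg}$, note that $[\lie{g'},\univ{g}^{G'}]=0$ forces $\univcent{g'}$ into the center of $\univ{g}^{G'}$; hence $(\univ{g}/I)^{G'}$ is a module-finite extension of a commutative central subalgebra, and Cayley--Hamilton on the regular representation yields a polynomial identity of bounded degree.

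The heart of the argument is the analysis of $X:=\calV(\gr(I))\subset\lie{g}^*$. Because $\gr(I)$ is a Poisson ideal of $S(\lie{g})$ (associated graded of a two-sided ideal) and radicals of Poisson ideals are Poisson, $X$ is a reduced Poisson subvariety, conical for the standard grading of $S(\lie{g})$, and $G'$-Hamiltonian via the moment map $H\colon\lie{g'}\hookrightarrow S(\lie{g})\twoheadrightarrow\rring{X}$. The finite-codimension hypothesis on $I\cap\univcent{g}$ translates, through the Chevalley embedding $\gr\univcent{g}\subset S(\lie{g})^G$, into finiteness of the image of $X$ under the Chevalley quotient $\lie{g}^*\to\lie{g}^*/\!/G$; by Jordan decomposition combined with Fact~\ref{fact:FiniteNilpotentOrbits}, each fiber of this quotient is a finite union of coadjoint $G$-orbits, each of which is a symplectic subvariety of $\lie{g}^*$ by Example~\ref{ex:CoadjointOrbits}. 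Hence $X$ is a finite disjoint union of symplectic $G$-subvarieties.

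Now I decompose $X=\bigcup_i X_i$ into irreducible (hence $G_0$-stable) components and pass to normalizations $\widetilde{X}_i$, which by Proposition~\ref{prop:NormalizationHamiltonian} remain conical Hamiltonian and finite disjoint unions of symplectic subvarieties. Exactness of $G'$-invariants makes $\rring{X}^{G'}\twoheadrightarrow\rring{X_i}^{G'}$ surjective, and Proposition~\ref{prop:IntegralExtensionPoisson} lets the Poisson bracket lift uniquely to $\rring{\widetilde{X}_i}^{G'}$ and carries Poisson-commutativity in both directions between $\rring{X_i}^{G'}$ and $\rring{\widetilde{X}_i}^{G'}$ (via the min-polynomial derivation $f'(y)\{y,\cdot\}=0$ argument, using that $\rring{\widetilde{X}_i}^{G'}$ is an integral domain). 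Combining, Poisson-commutativity of $(S(\lie{g})/\sqrt{\gr(I)})^{G'}=\rring{X}^{G'}$ is equivalent to that of each $\rring{\widetilde{X}_i}^{G'}$, which by Corollary~\ref{cor:CoisotropicPoisson} is equivalent to coisotropy of the $G'$-action on each $\widetilde{X}_i$, i.e.\ to \ref{enum:PIdegAndCoisotropicRestCoisotropic}. Under coisotropy, Theorem~\ref{thm:FinitePoissonCenter}\ref{enum:FinitePoissonCenterFinite} gives $\rring{\widetilde{X}_i}^{G'}=\mathfrak{z}(\rring{\widetilde{X}_i}^{G'})$ finite over $S(\lie{g'})^{G'}$; Noetherian descent to the submodule $\rring{X}^{G'}$ and absorption of the nilradical of $\gr(I)^{G'}$ show $\gr((\univ{g}/I)^{G'})=S(\lie{g})^{G'}/\gr(I)^{G'}$ is finite over $S(\lie{g'})^{G'}=\gr(\univcent{g'})$, and a standard graded-to-filtered lift then yields \ref{enum:PIdegAndCoisotropicRestFinite}.

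The principal obstacle I anticipate is the geometric second paragraph: verifying cleanly, through the Jordan-decomposition/Chevalley-quotient argument, that $X$ decomposes as a finite disjoint union of symplectic $G$-subvarieties in the possibly nonreduced setting of $\gr(I)$, and then the careful bookkeeping through irreducible components and normalization so that the Poisson-commutativity and module-finiteness conclusions on the $\widetilde{X}_i$ transfer faithfully back to the original ring $(\univ{g}/I)^{G'}$ of interest.
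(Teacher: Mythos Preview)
Your argument is correct and follows the same overall cycle as the paper, but the geometric core is handled differently. The paper first reduces to primitive $I$ via Lemma~\ref{lem:TwosidedPrimitive} and then invokes Joseph's theorem \cite[Theorem 3.10]{Jo85} to conclude that $\calV(\gr(I))$ is the closure of a \emph{single nilpotent} coadjoint orbit; this makes $X$ irreducible and lets Theorem~\ref{thm:FinitePoissonCenter} and Corollary~\ref{cor:CoisotropicPoisson} apply without any component decomposition. You instead keep $I$ general and argue, via the Chevalley quotient and Jordan decomposition, that $X$ is a finite union of (possibly non-nilpotent) coadjoint orbits, then treat the irreducible components one at a time. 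Both routes are valid: the paper's is shorter and avoids non-nilpotent orbits entirely, while yours bypasses Joseph's theorem and the verification (left implicit in the paper) that the primitive-ideal reduction is compatible with all four conditions. One simplification you could make: Theorem~\ref{thm:FinitePoissonCenter} and Corollary~\ref{cor:CoisotropicPoisson} are stated for not-necessarily-normal $X$ (their proofs normalize internally via Proposition~\ref{prop:NormalizationHamiltonian}), so you may apply them directly to each $X_i$ and skip the explicit passage to $\widetilde{X}_i$ and the min-polynomial transfer of Poisson-commutativity.
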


\begin{proof}
	Obviously, we can assume $I \neq \univ{g}$.

	By Lemma \ref{lem:TwosidedPrimitive}, we can take primitive ideals $J_1, J_2, \ldots, J_n \subset \univ{g}$ containing $I$ such that $J_1J_2\cdots J_n \subset I$.
	Hence it is enough to show the theorem for primitive $I$.
	By \cite[Theorem 3.10]{Jo85}, the associated variety $\calV(\gr(I))$ is the Zariski closure of a nilpotent coadjoint orbit in $\lie{g}^*$.
	By Fact \ref{fact:FiniteNilpotentOrbits}, we can apply Theorem \ref{thm:FinitePoissonCenter} to the affine irreducible conical Hamiltonian $G'$-variety $\calV(\gr(I))$.

	\ref{enum:PIdegAndCoisotropicRestPIdeg} $\Rightarrow$ \ref{enum:PIdegAndCoisotropicRestPoisson} has been proved in Corollary \ref{cor:PidegPoissonCommutativeUg}.
	
	\ref{enum:PIdegAndCoisotropicRestPoisson} $\Leftrightarrow$ \ref{enum:PIdegAndCoisotropicRestCoisotropic} has been shown in Corollary \ref{cor:CoisotropicPoisson}.

	\ref{enum:PIdegAndCoisotropicRestCoisotropic} $\Rightarrow$ \ref{enum:PIdegAndCoisotropicRestFinite}:
	Assume \ref{enum:PIdegAndCoisotropicRestCoisotropic}.
	Then $K(\calV(\gr(I)))^{G'}$ is Poisson-commutative by Fact \ref{fact:CoisotropicAndPoisson}.
	In particular, the Poisson center $\mathfrak{z}(\rring{\calV(\gr(I))}^{G'})$ is equal to $\rring{\calV(\gr(I))}^{G'}$ itself.
	By Theorem \ref{thm:FinitePoissonCenter} \ref{enum:FinitePoissonCenterFinite}, $\rring{\calV(\gr(I))}^{G'}$ is finitely generated as an $S(\lie{g'})^{G'}$-module.
	Set $V_i := (\sqrt{\gr(I)}^{G'})^i$ for $i \geq 0$.
	Then $V_r$ is contained in $\gr(I)^{G'}$ for some $r > 0$, and $V_i / V_{i+1}$ is finitely generated as an $\rring{\calV(\gr(I))}^{G'}$-module and hence as an $S(\lie{g'})^{G'}$-module for any $i\geq 0$.
	This shows that $(S(\lie{g})/\gr(I))^{G'}$ is finitely generated as an $S(\lie{g'})^{G'}$-module.
	Therefore $(\univ{g}/I)^{G'}$ is finitely generated as a $\univcent{g'}$-module.

	\ref{enum:PIdegAndCoisotropicRestFinite} $\Rightarrow$ \ref{enum:PIdegAndCoisotropicRestPIdeg}:
	Take a finite generating subset $S \subset (\univ{g}/I)^{G'}$ as a $\univcent{g'}$-module.
	Then any irreducible $(\univ{g}/I)^{G'}$-module $V$ has at most $|S|$ dimension since $\univcent{g'}$ acts on $V$ by scalars.
	By Proposition \ref{prop:PIdegSemiprimitive}, we have
	\begin{equation*}
		\PIdeg((\univ{g}/I)^{G'}) \leq |S| < \infty. \qedhere
	\end{equation*}
\end{proof}

\begin{theorem}\label{thm:PIdegAndCoisotropicInd}
	Let $I$ be a two-sided ideal of $\univ{g'}$ such that $I\cap \univcent{g'}$ has finite codimension in $\univcent{g'}$.
	Then the following conditions are equivalent:
	\begin{enumparen}
		\item $\PIdeg((\univ{g}/I\univ{g})^{G'}) < \infty$
		\item $(S(\lie{g})/\sqrt{\gr(I)}S(\lie{g}))^{G'}$ is Poisson-commutative
		\item the $G$-action on $\Ind^G_{G'}(\calV(\gr(I)))$ is coisotropic
		\item $(\univ{g}/I\univ{g})^{G'}$ is finitely generated as a $\univcent{g}$-module.
	\end{enumparen}
\end{theorem}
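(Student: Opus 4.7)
The plan is to mirror the proof of Theorem~\ref{thm:PIdegAndCoisotropicRest}, replacing the affine Hamiltonian $G'$-variety $\calV(\gr(I))\subset\lie{g'}^*$ by the affine Hamiltonian $G$-variety
\[
X\;:=\;\Ind^G_{G'}(\calV(\gr(I)))\;\subset\; G/G'\times\lie{g}^*,
\]
and then to run the cycle (i)$\Rightarrow$(ii)$\Leftrightarrow$(iii)$\Rightarrow$(iv)$\Rightarrow$(i). The geometric heart of the argument is the application of Losev's Theorem~\ref{thm:FinitePoissonCenter} and Corollary~\ref{cor:CoisotropicPoisson} to $X$ in place of $\calV(\gr(I))$.

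First I would reduce to the case of primitive $I$ via Lemma~\ref{lem:TwosidedPrimitive} and Proposition~\ref{prop:pidegIdeal}\ref{item:pidegIdealProduct}, exactly as in the proof of Theorem~\ref{thm:PIdegAndCoisotropicRest}; each of the four conditions respects the decomposition $\calV(\gr(I))=\bigcup_i\calV(\gr(J_i))$ for primitive ideals $J_i\supset I$. For primitive $I$, Joseph's theorem identifies $\calV(\gr(I))$ with the closure of a single nilpotent coadjoint $G'$-orbit in $\lie{g'}^*$, and Corollary~\ref{cor:InductionOrbit} then exhibits $X$ as a closed Poisson subvariety of $G/G'\times\lie{g}^*$ that is a finite disjoint union of $G$-stable symplectic subvarieties. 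Since $G'$ is reductive, $X$ is affine; it is moreover $G$-irreducible, and conical with respect to the $\NN$-grading inherited from $\lie{g}^*$, with moment map $\lie{g}\to\rring{X}$ coming from the second projection. Thus all the hypotheses of Theorem~\ref{thm:FinitePoissonCenter} are in place for $X$.

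The bridge between the algebraic and the geometric sides is the Poisson-algebra identification
\[
\rring{X}^G\;\simeq\;(S(\lie{g})/\sqrt{\gr(I)}S(\lie{g}))^{G'},
\]
which follows from $X\cong G\times_{G'}((\lie{g}/\lie{g'})^*\times\calV(\gr(I)))$ together with $\rring{G\times_{G'}Y}^G\cong\rring{Y}^{G'}$; here $\sqrt{\gr(I)S(\lie{g})}=\sqrt{\gr(I)}\,S(\lie{g})$ is clear from $S(\lie{g})\cong S(\lie{g'})\otimes S(\lie{g}/\lie{g'})$. Reductivity of $G'$ further guarantees that $\univ{g}^{G'}$ commutes with $\univ{g'}$ inside $\univ{g}$, so that $I\univ{g}\cap\univ{g}^{G'}$ is a genuine two-sided ideal of the filtered algebra $\univ{g}^{G'}$ with associated graded $\gr(I)S(\lie{g})\cap S(\lie{g})^{G'}$ and quotient $(\univ{g}/I\univ{g})^{G'}$. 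With these identifications, (i)$\Rightarrow$(ii) is immediate from Corollary~\ref{cor:PidegPoissonCommutativeUg} applied to $\univ{g}^{G'}/(I\univ{g}\cap\univ{g}^{G'})$; (ii)$\Leftrightarrow$(iii) is Corollary~\ref{cor:CoisotropicPoisson} applied to $X$; (iii)$\Rightarrow$(iv) combines Fact~\ref{fact:CoisotropicAndPoisson} with Theorem~\ref{thm:FinitePoissonCenter}\ref{enum:FinitePoissonCenterFinite} to yield finite generation of $\rring{X}^G$ over $S(\lie{g})^G$, after which a filtration by powers of the nilradical $\sqrt{\gr(I)}S(\lie{g})/\gr(I)S(\lie{g})$ lifts this to finite generation of $(\univ{g}/I\univ{g})^{G'}$ over $\univcent{g}$, exactly as in the proof of Theorem~\ref{thm:PIdegAndCoisotropicRest}; finally (iv)$\Rightarrow$(i) is routine via Proposition~\ref{prop:PIdegSemiprimitive}, since central characters bound the dimension of each irreducible module by the number of module generators.

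The main obstacle will be confirming that $X$ genuinely meets every hypothesis of Theorem~\ref{thm:FinitePoissonCenter}, most notably its decomposition into finitely many $G$-stable symplectic subvarieties and the verification that its Poisson structure extends from the open symplectic leaf to the whole closure; both are controlled by Corollary~\ref{cor:InductionOrbit} via Lemma~\ref{lem:InducedOrbit} and the finiteness of nilpotent coadjoint $G'$-orbits in $\lie{g'}^*$, but the bookkeeping through the orbit-closure stratification must be done with some care. The disconnected case of $G$ and $G'$ is handled as usual through Proposition~\ref{prop:PIdegStableConnected}.
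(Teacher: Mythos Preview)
Your proposal is correct and follows essentially the same route as the paper, which simply states that the proof is ``essentially the same as that of Theorem~\ref{thm:PIdegAndCoisotropicRest}'' and points to Corollary~\ref{cor:InductionOrbit}, Lemma~\ref{lem:InductionAndOrbit}, and the canonical anti-isomorphism $(\rring{G/G'}\otimes\univ{g})^{G}\simeq(\univ{g}^{G'})^{\opalg}$. Your direct identification $\rring{X}^G\simeq(S(\lie{g})/\sqrt{\gr(I)}S(\lie{g}))^{G'}$ via the fiber-bundle description $X\cong G\times_{G'}((\lie{g}/\lie{g'})^*\times\calV(\gr(I)))$ is exactly the content of Lemma~\ref{lem:InductionAndOrbit} combined with that anti-isomorphism, just unpacked on the graded side; the remainder of your cycle (i)$\Rightarrow$(ii)$\Leftrightarrow$(iii)$\Rightarrow$(iv)$\Rightarrow$(i) matches the paper's intended argument step for step.
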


\begin{proof}
	The proof of the theorem is essentially the same as that of Theorem \ref{thm:PIdegAndCoisotropicRest}.
	Use Corollary \ref{cor:InductionOrbit}, Lemma \ref{lem:InductionAndOrbit}
	and the canonical isomorphism $(\rring{G/G'}\otimes \univ{g})^{G} \simeq (\univ{g}^{G'})^{\opalg}$.
	See also Lemmas \ref{lem:AnnhilatorOUmod} and \ref{lem:faithfully1}.
\end{proof}

The following two results are useful to classify coisotropic actions on nilpotent coadjoint orbits and $\Ind^G_{G'}(\OO)$.

\begin{proposition}
	Let $X$ be an irreducible affine Hamiltonian $G$-variety satisfying the assumption of Theorem \ref{thm:FinitePoissonCenter}, e.g.\ the Zariski closure of a nilpotent coadjoint orbit and $\Ind^G_{G'}(\overline{\OO})$ for a nilpotent coadjoint orbit $\OO$ in $(\lie{g'})^*$.
	If the $G$-action on $X$ is coisotropic, then the $G$-action on any $G$-stable Poisson subvariety $Y$ of $X$ is coisotropic.
\end{proposition}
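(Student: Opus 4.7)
The plan is to apply Corollary \ref{cor:CoisotropicPoisson}---which identifies coisotropicity with Poisson-commutativity of the algebra of $G$-invariants---to the irreducible components of $Y$, using that $\rring{Y}^G$ is a Poisson quotient of $\rring{X}^G$. Without loss of generality I may assume $Y$ is closed in $X$ (the closure of a $G$-stable Poisson subvariety is again a $G$-stable Poisson subvariety, and coisotropicity only depends on generic points). Let $Y = Y_1 \cup \cdots \cup Y_r$ be the decomposition into irreducible components. Since $G$ is connected each $Y_i$ is $G$-stable, and since minimal primes over a Poisson ideal of $\rring{X}$ are themselves Poisson, each $Y_i$ is a Poisson subvariety of $X$.

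I next verify that each $Y_i$ satisfies the hypotheses of Theorem \ref{thm:FinitePoissonCenter}. Affineness and the Hamiltonian structure are inherited from $X$ by restriction. Conicality transfers because the $\ZZ$-grading on $\rring{X}$ descends to $\rring{Y_i}$, the image of $\lie{g}$ under the moment map stays in the same positive degree, and the vanishing of $\rring{Y_i}^G$ in negative degrees follows from the surjection $\rring{X}^G \twoheadrightarrow \rring{Y_i}^G$ (by reductivity of $G$) combined with the corresponding vanishing on $X$. Writing $X = \bigsqcup_{j=1}^n X_j$ as in the assumption of Theorem \ref{thm:FinitePoissonCenter}, Fact \ref{fact:BasicPoissonVariety} shows that any Poisson subvariety of $X$ contains every symplectic leaf through each of its points, so $Y_i$ is a union of some of the $X_j$'s; being closed and irreducible, $Y_i$ is then the closure of a single $X_j$, hence a finite disjoint union of those $X_{j'}$'s that it contains.

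Applying Corollary \ref{cor:CoisotropicPoisson} to each $Y_i$ now does the work: the surjective $G$-equivariant Poisson homomorphism $\rring{X} \twoheadrightarrow \rring{Y_i}$ induces a surjection $\rring{X}^G \twoheadrightarrow \rring{Y_i}^G$ of Poisson algebras, and $\rring{X}^G$ being Poisson-commutative (by the hypothesis that the $G$-action on $X$ is coisotropic together with Corollary \ref{cor:CoisotropicPoisson}) forces $\rring{Y_i}^G$ to be Poisson-commutative, whence the $G$-action on each $Y_i$ is coisotropic. To assemble a witness for $Y$ itself, I choose $G$-stable open dense symplectic subvarieties $U_i \subset Y_i$ realizing coisotropicity, set $V_i := U_i \setminus \bigcup_{j \neq i} Y_j$, and take $U := \bigsqcup_i V_i$; then $U$ is a $G$-stable open dense symplectic subvariety of $Y$, and for $x \in V_i$ we have $T_x U = T_x U_i$ so the coisotropic condition on $L(\lie{g})_x$ is inherited from $U_i$. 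The main obstacle in this plan is the bookkeeping of paragraph two---identifying each irreducible component $Y_i$ as the closure of a single leaf $X_j$ so that Theorem \ref{thm:FinitePoissonCenter} applies to $Y_i$; once this is in hand, the rest of the argument is a direct consequence of the equivalence in Corollary \ref{cor:CoisotropicPoisson}.
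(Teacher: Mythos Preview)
Your proof is correct and follows essentially the same approach as the paper: translate coisotropicity into Poisson-commutativity of the invariant ring via Corollary~\ref{cor:CoisotropicPoisson}, then use the surjection $\rring{X}^G \twoheadrightarrow \rring{Y}^G$ coming from reductivity of $G$. The paper's version is terser---it reduces at once to $Y$ closed irreducible and simply asserts that such $Y$ inherits the hypotheses of Theorem~\ref{thm:FinitePoissonCenter}---whereas you carry out the reduction to irreducible components and the verification of those hypotheses explicitly.
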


\begin{proof}
	By the definition of coisotropic actions, we can assume that $Y$ is closed and irreducible.
	Hence $Y$ satisfies the assumption of Theorem \ref{thm:FinitePoissonCenter}.
	By Corollary \ref{cor:CoisotropicPoisson}, $\rring{X}^G$ is Poisson-commutative.
	Since $G$ is reductive, $\rring{Y}^G$ is a quotient of $\rring{X}^G$.
	Therefore $\rring{Y}^G$ is also Poisson-commutative.
	By Corollary \ref{cor:CoisotropicPoisson}, $Y$ is coisotropic.
\end{proof}

\begin{remark}
	The proposition has been proved in \cite[Proposition 2.7]{AvPe14} for nilpotent coadjoint orbits.
	Our proof is essentially the same as theirs.
\end{remark}

The following fact is proved in \cite[Theorem 2.6]{AvPe14}.
One can give an alternative proof of the fact using Proposition \ref{prop:boundedAndSphericalGP} and Theorems \ref{thm:pidegAnnihilatorVerma} and \ref{thm:PIdegAndCoisotropicRest}.

\begin{fact}\label{fact:ParabolicSpherical}
	Let $P$ be a parabolic subgroup of $G$.
	Set $X:=\Ad^*(G)(\lie{g}/\lie{p})^*$.
	Then $G/P$ is $G'$-spherical if and only if the $G'$-action on $X$ is coisotropic.
\end{fact}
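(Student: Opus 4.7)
The strategy is to translate the geometric equivalence into one about the PI degree of $(\univ{g}/J)^{G'}$, where $J$ is the annihilator of a carefully chosen scalar generalized Verma module. Specifically, fix a dominant integral character $\lambda$ of $\lie{m}$ (extended trivially on $\lie{u}$ to give a character of $\lie{p}$) such that $M := \ind{g}{p}(\CC_\lambda)$ is a simple $\lie{g}$-module; such $\lambda$ exists by standard irreducibility results for generalized Verma modules when $\lambda + \rho$ is sufficiently regular. Set $J := \Ann_{\univ{g}}(M) \subset \univ{g}$.

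The first step is to identify $\calV(\gr J) = X$. Since $\gr M \simeq S(\lie{g})/S(\lie{g})\lie{p}$ under the PBW filtration, the associated variety of $M$ as a $\univ{g}$-module equals $\lie{p}^{\perp} \simeq (\lie{g}/\lie{p})^*$, and the $G$-invariance of $J$ then forces $X = \Ad^*(G)\lie{p}^\perp \subset \calV(\gr J)$. For the reverse inclusion, $J$ is a primitive ideal (as $M$ is simple), so by Joseph's irreducibility theorem $\calV(\gr J)$ is the closure of a single nilpotent coadjoint $G$-orbit; the Borho--Brylinski/Gabber identity $\dim \calV(\gr J) = 2\,\GKdim(\univ{g}/J) = 2\dim(G/P) = \dim X$ then gives equality.

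The second step applies the cited PI-degree results. If $G/P$ is $G'$-spherical, Theorem \ref{thm:pidegAnnihilatorVerma} applied with the ideal $I := \ker(\univ{m}\to \CC_\lambda) \subset \univ{m}$ yields $\PIdeg((\univ{g}/J)^{G'}) \leq C\cdot \PIdeg((\univ{m}/I)^{L'}) = C$, since $\univ{m}/I \simeq \CC$. If $G/P$ is \emph{not} $G'$-spherical, apply Proposition \ref{prop:boundedAndSphericalGP} to the two-sided ideal $\ker(\univ{p}\to \CC_\lambda) \subset \univ{p}$, which contains $\lie{u}$ and is proper; a short direct computation shows that the ideal of $\univ{g}$ constructed from it in that proposition coincides with our $J$, so $\PIdeg(\univ{g}^{G'}/J\cap \univ{g}^{G'}) = \infty$. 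Since $J \cap \univcent{g}$ has finite codimension in $\univcent{g}$ (as $M$ carries an infinitesimal character), Theorem \ref{thm:PIdegAndCoisotropicRest} translates the two complementary statements into the claim that the $G'$-action on $\calV(\gr J) = X$ is coisotropic if and only if $G/P$ is $G'$-spherical.

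The main obstacle is the identification $\calV(\gr J) = X$ in the first step. The inclusion $X \subset \calV(\gr J)$ is immediate from $G$-invariance of $J$ and the computation of the associated variety of $M$, but the reverse inclusion requires external inputs — Joseph's irreducibility theorem for primitive ideals and the dimension identity for associated varieties — and crucially forces one to choose $\lambda$ so that $M$ is simple. A reducible $M$ (e.g.\ $\lambda = 0$) might leave $\calV(\gr J)$ as a union of several orbit closures, in which case the equivalence between coisotropicity on $\calV(\gr J)$ and on $X$ would only follow in one direction via the Poisson-subvariety restriction proposition stated just before the fact, and the converse direction would need a separate argument.
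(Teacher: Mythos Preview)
Your approach is correct and is precisely the alternative proof the paper alludes to. The paper does not give its own proof of this statement: it is recorded as a Fact with a citation to \cite[Theorem 2.6]{AvPe14}, together with the remark that ``one can give an alternative proof of the fact using Proposition \ref{prop:boundedAndSphericalGP} and Theorems \ref{thm:pidegAnnihilatorVerma} and \ref{thm:PIdegAndCoisotropicRest}.'' You have supplied exactly that argument.

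Your identification $\calV(\gr J)=X$ is indeed the one nontrivial extra ingredient beyond the three cited results, and your handling of it is sound: the inclusion $X\subset\calV(\gr J)$ is immediate, Joseph's irreducibility theorem guarantees $\calV(\gr J)$ is an orbit closure, and the dimension equality via \cite[Korollar 3.7]{BoHa76} (which the paper invokes elsewhere for exactly this purpose, in the discussion of quaternionic discrete series) forces equality. Your observation that this step genuinely requires $M$ to be irreducible is accurate; without it one would only get one direction cleanly via the Poisson-subvariety restriction proposition preceding the Fact. The verification that the ideal $I$ of Proposition \ref{prop:boundedAndSphericalGP} coincides with your $J=\Ann_{\univ{g}}(M)$ is also already noted in the paper's Remark following that proposition.
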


\begin{remark}
	It is well-known that $X$ has a unique open $G$-orbit called a Richardson orbit.
	The variety $X$ coincides with the image of $T^*(G/P)$ by the moment map
	and depends only on the conjugacy class in $G$ of Levi subgroups of $P$.
	See e.g.\ \cite[Chapter 7]{CoMc93}.
\end{remark}

Spherical actions on flag varieties are well studied by many mathematicians.
We refer to \cite{AvPe20} and references therein for classification results.
We note the difference between Theorem \ref{thm:PIdegAndCoisotropicRest} and the following result of A. Petukhov.
A $\lie{g}$-module $V$ is called a $(\lie{g}, \lie{g'})$-module if the $\lie{g'}$-action on $V$ is locally finite.

\begin{fact}[{\cite[Theorem 3.1 and its proof]{Pe18}}]\label{fact:Petukhov}
	Let $V$ be an irreducible $(\lie{g}, \lie{g'})$-module with annihilator $I\subset \univ{g}$.
	Set $\AV(V):=\calV(\gr(\Ann_{\univ{g}}(v)))$ for non-zero $v\in V$.
	(Note that $\AV(V)$ is independent of $v$.)
	Then the following conditions are equivalent:
	\begin{enumparen}
		\item $\SupDim(\Hom_{\lie{g'}}(\Mod_{fd}(\lie{g'}), V)) < \infty$
		\item $\AV(V)$ is $G'$-spherical
		\item the $G'$-action on $\calV(\gr(I))$ is coisotropic.
	\end{enumparen}
	Here $\Mod_{fd}(\lie{g'})$ is the category of completely reducible finite-dimensional $\lie{g'}$-modules.
\end{fact}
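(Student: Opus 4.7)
My plan is to prove the three-way equivalence by establishing (1)$\Leftrightarrow$(3) via the PI degree machinery of this paper and (1)$\Leftrightarrow$(2) via an associated-graded argument; the equivalence (2)$\Leftrightarrow$(3) then follows by transitivity.

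For (1)$\Leftrightarrow$(3), set $I := \Ann_{\univ{g}}(V)$. Because $\lie{g'}$ is reductive and acts locally finitely on $V$, we have the $\lie{g'}$-isotypic decomposition $V = \bigoplus_F V_F \otimes F$ with $V_F := \Hom_{\lie{g'}}(F, V)$, where $F$ ranges over irreducible finite-dimensional $\lie{g'}$-modules. Each nonzero $V_F$ is an irreducible module over the noetherian algebra $\univ{g}^{G'}/I \cap \univ{g}^{G'}$ by the Jacobson density argument recorded as properties \ref{intro:item:Jacobson} and \ref{intro:item:Faithful} in the introduction, and the family $\{V_F\}$ is faithful since $V$ is completely reducible over $\lie{g'}$. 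By Proposition \ref{prop:PIdegSemiprimitive},
\begin{align*}
\SupDim(\Hom_{\lie{g'}}(\Mod_{fd}(\lie{g'}), V)) = \PIdeg(\univ{g}^{G'}/I \cap \univ{g}^{G'}),
\end{align*}
so (1) is equivalent to the finiteness of this PI degree. Since the irreducible $V$ has an infinitesimal character, $I \cap \univcent{g}$ has finite codimension, so Theorem \ref{thm:PIdegAndCoisotropicRest} applies and yields the equivalence with (3).

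For (1)$\Leftrightarrow$(2), pick a $G'$-stable finite-dimensional generating subspace $V_0 \subset V$ (possible because $\lie{g'}$ acts locally finitely) and let $V_k := \univ{g}_k V_0$ be the corresponding good filtration. The associated graded $\gr V$ is a finitely generated $G'$-equivariant graded $S(\lie{g})$-module whose support is $\AV(V)$ and whose $\lie{g'}$-isotypic multiplicities match those of $V$ degree-by-degree. Hence (1) is equivalent to the uniform boundedness of $\dim \Hom_{\lie{g'}}(F, \gr V)$ in $F$. Since $\gr V$ is a finitely generated module over the reduced coordinate ring $\rring{\AV(V)}$, one filters $\gr V$ by $G'$-stable graded submodules whose subquotients are torsion-free over the coordinate rings of the irreducible components of $\AV(V)$; boundedness of $\lie{g'}$-multiplicities in $\gr V$ then translates into the same boundedness for $\rring{\AV(V)}$, which by the Vinberg--Kimelfeld type criterion is precisely the $G'$-sphericity of the quasi-affine variety $\AV(V)$.

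The main obstacle is this last reduction from $\gr V$ to $\rring{\AV(V)}$: it requires a careful $G'$-equivariant d\'evissage of the finitely generated graded $S(\lie{g})$-module $\gr V$ along the irreducible components of its support, together with the fact that, for an irreducible affine $G'$-variety $X$, the $\lie{g'}$-multiplicities in $\rring{X}$ grow polynomially in the highest weight with degree equal to the complexity of the $G'$-action, being bounded if and only if $X$ is $G'$-spherical. Executed carefully, this gives (1)$\Leftrightarrow$(2) and hence, combined with (1)$\Leftrightarrow$(3), the full equivalence.
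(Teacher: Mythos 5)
Two remarks before the comparison: this statement is imported by the paper as a Fact from Petukhov, so the paper contains no proof of it; its own contribution is the surrounding Remark, which proves $\PIdeg((\univ{g}/I)^{G'}) = \SupDim(\Hom_{\lie{g'}}(\Mod_{fd}(\lie{g'}), V))$ via Jacobson density and Propositions \ref{prop:pidLowerbound} and \ref{prop:pidegUpperbound}, and the observation that Theorem \ref{thm:PIdegAndCoisotropicRest} (with Theorem \ref{thm:BoundedRestriction}) generalizes the Fact. Your first half, (1) $\Leftrightarrow$ (3), is exactly this route: isotypic decomposition of $V|_{\lie{g'}}$, irreducibility of each $\Hom_{\lie{g'}}(F,V)$ over $\univ{g}^{G'}$ by Jacobson density, faithfulness of the family by complete reducibility, hence the displayed equality, and then Theorem \ref{thm:PIdegAndCoisotropicRest}, which applies because $I\cap\univcent{g}$ is a maximal ideal ($V$ irreducible of countable dimension). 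Two small corrections there: the equality should be attributed to Propositions \ref{prop:pidLowerbound} and \ref{prop:pidegUpperbound} rather than to Proposition \ref{prop:PIdegSemiprimitive}, which concerns the supremum over \emph{all} irreducible modules; and you should justify complete reducibility of $V|_{\lie{g'}}$ (the $\lie{g'}$-socle of $V$ is $\lie{g}$-stable because $\lie{g}\otimes W$ is a completely reducible $\lie{g'}$-module for every irreducible finite-dimensional $W$, so the socle equals $V$).

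Your second half, (1) $\Leftrightarrow$ (2), is the associated-variety argument in the spirit of Petukhov's original proof and is not in the paper, but as written it has two concrete gaps. First, the assertion that $\gr V$ is a module over the \emph{reduced} coordinate ring $\rring{\AV(V)}$ is false in general, since $\Ann_{S(\lie{g})}(\gr V)$ need not be radical; what you actually need is a $G'$-equivariant prime filtration of $\gr V$ (associated primes are $G'$-stable for a connected reductive $G'$ acting rationally), arranged so that for \emph{every} irreducible component $X$ of $\AV(V)$ some subquotient is a nonzero finitely generated torsion-free $\rring{X}$-module; without hitting every component you cannot conclude sphericity of all of $\AV(V)$. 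Second, the ``translation'' of bounded multiplicities between $\gr V$ and $\rring{\AV(V)}$ requires a separate lemma in each direction, neither of which is supplied: for (1) $\Rightarrow$ (2), if $X$ is not spherical then any nonzero torsion-free equivariant $\rring{X}$-module $N$ has unbounded multiplicities (take a $B'$-eigenvector $n\in N$, note $\rring{X}^{U'}\cdot n$ embeds into $N^{U'}$ with a weight shift, and two independent $U'$-invariant functions of equal weight have a nonconstant $B'$-invariant ratio, so the weight spaces of $\rring{X}^{U'}$ grow without bound --- this is the analogue of Proposition \ref{prop:multiplicityNotSpherical} on the function side); for (2) $\Rightarrow$ (1), each subquotient is a quotient of $\rring{X}\otimes F_0$ for a finite-dimensional generating $G'$-module $F_0$, and one needs that $\rring{X}$ is multiplicity-free together with the uniform bound $\mathrm{length}(F\otimes F_0^*)\le \dim F_0$ to conclude multiplicities $\le\dim F_0$, then sum over the finitely many steps of the filtration. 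With these two lemmas supplied your argument closes and reproduces Petukhov's proof; alternatively, (2) $\Leftrightarrow$ (3) could be extracted more directly from the Poisson-geometric results of Section \ref{sect:Coisotropic}, which is how the paper positions the Fact relative to its own theorems.
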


\begin{remark}
	The $\lie{g'}$-action on $V$ is completely reducible since $V$ is irreducible.
	By the Jacobson density theorem, the $\univ{g}^{G'}$-action on $\Hom_{\lie{g'}}(F, V)$ is irreducible for any irreducible $F \in \Mod_{fd}(\lie{g'})$.
	Hence we have
	\begin{align*}
		\PIdeg((\univ{g}/I)^{G'}) = \SupDim(\Hom_{\lie{g'}}(\Mod_{fd}(\lie{g'}), V))
	\end{align*}
	by Propositions \ref{prop:pidLowerbound} and \ref{prop:pidegUpperbound}.
	This equation is known in \cite[Theorem 4.3]{PeSe12}.
\end{remark}

Theorem \ref{thm:PIdegAndCoisotropicRest} (combined with Theorem \ref{thm:BoundedRestriction}) is a generalization of Fact \ref{fact:Petukhov}.
For some primitive ideal $I\subset \univ{g}$, there can be no irreducible $(\lie{g}, \lie{g'})$-module with the annihilator $I$.
For example, the Gelfand--Kirillov dimension of any irreducible $(\so(n,\CC), \so(n-1, \CC))$-module is less than or equal to $n-2$, and hence there is no irreducible $(\so(n,\CC), \so(n-1, \CC))$-module whose annihilator is a minimal primitive ideal if $n \geq 5$.
This means that Theorem \ref{thm:PIdegAndCoisotropicRest} can not be deduced from Fact \ref{fact:Petukhov} in general.

If $(G, G')$ is a symmetric pair, a classification of spherical nilpotent $G'$-orbits in $(\lie{g'})^\perp$ is given by D. R. King \cite{Ki04}.
Using this classification and Fact \ref{fact:Petukhov}, we can obtain many coisotropic actions on nilpotent coadjoint orbits.

Let $V$ be a $\lie{g}$-module.
Then $\calV(V):=\calV(\gr(\Ann_{\univ{g}}(V)))$ is a $G$-stable Poisson subvariety of $\lie{g}^*$.
The variety $\calV(V)$ is an invariant of a $\lie{g}$-module and well-behaved in the translation principle.
More precisely, $\calV(V\otimes F)$ coincides with $\calV(V)$ for any finite-dimensional $\lie{g}$-module $F$.
This and the results in this subsection suggest that the PI degree and the supremum of multiplicities are also well-behaved in the translation principle.
In fact, the following proposition holds.
Although one can show similar results for e.g.\ $\PIdeg$, $(\lie{g}, K)$-modules and inductions, we omit such variations.
See \cite[Section 5]{Ko08} for the case of $(\lie{g}, K)$-modules.

\begin{proposition}
	Let $V$ be a $\lie{g'}$-module and $F$ a finite-dimensional $\lie{g'}$-module.
	Then there exists a constant $C > 0$ independent of $V$ and $F$ such that
	\begin{align*}
		\SupDim(V\otimes_{\univ{g'}}\calO_{\lie{g'}}) &\leq \SupDim((V\otimes F)\otimes_{\univ{g'}}\calO_{\lie{g'}}) \\
		&\leq C\cdot \dim_{\CC}(F) \cdot \SupDim(V\otimes_{\univ{g'}}\calO_{\lie{g'}}).
	\end{align*}
\end{proposition}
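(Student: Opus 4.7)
The plan is to exploit the natural isomorphism $(V\otimes F)\otimes_{\univ{g'}}W \cong V\otimes_{\univ{g'}}(F\otimes W)$, valid for any $\lie{g'}$-module $W$, which arises by identifying both sides with the diagonal coinvariants $(V\otimes F\otimes W)_{\lie{g'}}$. This moves the finite-dimensional factor $F$ from one tensor slot to the other, at the cost of replacing an irreducible $W\in\Irr(\calO_{\lie{g'}})$ by the finite-length object $F\otimes W\in\calO_{\lie{g'}}$, which we then analyze by length and right exactness.

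For the upper bound, I fix an irreducible $W\in\calO_{\lie{g'}}$ and first bound $\Len_{\calO_{\lie{g'}}}(F\otimes W)$ by a constant of the form $C_1\dim_{\CC}(F)$ depending only on $\lie{g'}$. This follows along the lines of the proof of Lemma \ref{lem:BoundLoewy}: decompose $F\otimes W$ into primary components $\PR_\theta(F\otimes W)$ under the center $\univcent{g'}$, apply Proposition \ref{prop:generalizedInfCharTensorWithFin} to bound each component, and combine with the length estimates for category $\calO$ guaranteed by Fact \ref{fact:UniformlyBoundedO}. Right exactness of $V\otimes_{\univ{g'}}(-)$ then yields
\[
\dim\bigl((V\otimes F)\otimes_{\univ{g'}}W\bigr)=\dim\bigl(V\otimes_{\univ{g'}}(F\otimes W)\bigr)\leq C_1\dim_{\CC}(F)\cdot \SupDim(V\otimes_{\univ{g'}}\calO_{\lie{g'}}),
\]
and taking the supremum over $W$ gives the upper bound.

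For the lower bound, the key point is that the trivial module $\CC$ is always a direct summand of $F\otimes F^*$ as a $\lie{g'}$-module: the coevaluation $\id_F\in\End(F)\cong F\otimes F^*$ splits the evaluation $F\otimes F^*\to\CC$. Therefore any $W$ is a direct summand of $F\otimes F^*\otimes W$, so $V\otimes_{\univ{g'}}W$ is a direct summand of $V\otimes_{\univ{g'}}(F\otimes F^*\otimes W)\cong (V\otimes F)\otimes_{\univ{g'}}(F^*\otimes W)$. Decomposing $F^*\otimes W$ into composition factors and using right exactness of $(V\otimes F)\otimes_{\univ{g'}}(-)$ together with pigeonhole, some irreducible composition factor $W'$ satisfies $\dim\bigl((V\otimes F)\otimes_{\univ{g'}}W'\bigr)\geq \dim\bigl(V\otimes_{\univ{g'}}W\bigr)/\Len(F^*\otimes W)$.

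The main obstacle is obtaining the lower bound with the sharp constant $1$ as stated, since the direct-summand argument above naively leaves a factor of $\Len(F^*\otimes W)\leq C_1\dim_{\CC}(F)$. One route is to absorb the loss into the upper bound constant, which preserves the qualitative content that both suprema are finite simultaneously and comparable up to a multiplicative factor depending on $\dim_{\CC}F$. A cleaner route is to pass to the PI degree: the version of Theorem \ref{thm:BoundedInductionGeneral} adapted to the diagonal case $G=G'$ gives $\SupDim(V\otimes_{\univ{g'}}\calO_{\lie{g'}})\asymp \Len(V)\cdot\PIdeg((\univ{g'}/\Ann(V))^{G'})$, and because $\calV(V\otimes F)=\calV(V)$ the associated graded ideals $\sqrt{\gr(\Ann(V))}$ and $\sqrt{\gr(\Ann(V\otimes F))}$ coincide, so Theorem \ref{thm:PIdegAndCoisotropicRest} forces the two PI degrees to be finite together, yielding the desired comparison up to the claimed bounded factor.
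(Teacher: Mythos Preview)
Your upper bound argument is on the right track, though the appeal to Lemma~\ref{lem:BoundLoewy} and Fact~\ref{fact:UniformlyBoundedO} is imprecise: Lemma~\ref{lem:BoundLoewy} bounds Loewy length, not length, and Fact~\ref{fact:UniformlyBoundedO} goes in the wrong direction (it deduces uniform boundedness from bounded lengths, not vice versa). The paper instead bounds $\Len_{\lie{g'}}(V'\otimes F)$ directly: realize the irreducible $V'$ as a quotient of a Verma module $\univ{g'}\otimes_{\univ{b'}}\CC_\lambda$; then $(\univ{g'}\otimes_{\univ{b'}}\CC_\lambda)\otimes F \simeq \univ{g'}\otimes_{\univ{b'}}(\CC_\lambda\otimes F)$ has a Verma filtration of length $\dim_{\CC}(F)$, and each Verma module has length bounded by a universal constant $C$ (Fact~\ref{fact:BoundedGeneralizedVerma}). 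This yields $\Len_{\lie{g'}}(V'\otimes F)\leq C\dim_{\CC}(F)$, after which your right-exactness step is exactly the paper's.

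The real gap is in the lower bound. Your $F\otimes F^*$ direct-summand argument followed by pigeonhole genuinely loses a factor of $\Len(F^*\otimes W)$, and neither of your proposed rescues recovers the stated constant~$1$: ``absorbing into the upper bound constant'' changes the statement, and the PI-degree route via Theorem~\ref{thm:BoundedInductionGeneral} and Theorem~\ref{thm:PIdegAndCoisotropicRest} requires hypotheses on $V$ (finite length, uniformly bounded family) that the proposition does not impose, and in any case only yields comparability up to constants depending on $V$. The paper's argument is a one-line sharpening of what you already wrote down: rather than decomposing $F^*\otimes W$ into all composition factors and pigeonholing, take $W'$ to be an irreducible submodule of $F^*\otimes W$ (which exists since $F^*\otimes W\in\calO_{\lie{g'}}$ has finite length). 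Adjunction turns the embedding $W'\hookrightarrow F^*\otimes W$ into a nonzero map $W'\otimes F\to W$, which is surjective since $W$ is irreducible. Right exactness then gives
\[
\dim_{\CC}\bigl(V\otimes_{\univ{g'}}W\bigr)\ \leq\ \dim_{\CC}\bigl(V\otimes_{\univ{g'}}(F\otimes W')\bigr)\ =\ \dim_{\CC}\bigl((V\otimes F)\otimes_{\univ{g'}}W'\bigr),
\]
and taking the supremum over irreducible $W$ yields the first inequality with constant exactly~$1$. You had all the ingredients; the missing step was to select $W'$ in the socle of $F^*\otimes W$ and use adjunction, instead of averaging over an entire composition series.
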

		
\begin{proof}
	For any irreducible object $V'$ in $\calO_{\lie{g'}}$, there is an irreducible object $W'$ in $\calO_{\lie{g'}}$ such that $V'$ is isomorphic to a quotient of $W'\otimes F$.
	This implies
	\begin{align*}
		\SupDim((V\otimes F)\otimes_{\univ{g'}}\calO_{\lie{g'}}) &= \SupDim((V\otimes_{\univ{g'}} (F \otimes\calO_{\lie{g'}})) \\
		&\geq \SupDim(V\otimes_{\univ{g'}}\calO_{\lie{g'}}).
	\end{align*}
	This is the first inequality in the assertion.

	Let $V'$ be an irreducible object in $\calO_{\lie{g'}}$.
	We shall show that there exists a constant $C > 0$ independent of $V'$ and $F$ such that
	\begin{align*}
		\Len_{\lie{g'}}(V' \otimes F) \leq C\cdot \dim_{\CC}(F).
	\end{align*}
	The second inequality in the assertion follows from this claim since the functor $V\otimes_{\univ{g'}}(\cdot)$ is right exact.

	Take a Borel subalgebra $\lie{b'}$ of $\lie{g'}$ and a character $\lambda$ of $\lie{b'}$ such that $V'$ is isomorphic to a quotient of $\univ{g'}\otimes_{\univ{b'}}\CC_{\lambda}$.
	Then we have
	\begin{align*}
		(\univ{g'}\otimes_{\univ{b'}} \CC_{\lambda})\otimes F \simeq \univ{g'} \otimes_{\univ{b'}} (\CC_{\lambda}\otimes F).
	\end{align*}
	$X:=\univ{g'} \otimes_{\univ{b'}} (\CC_{\lambda}\otimes F)$ has a filtration $0 = X_0 \subset X_1 \subset X_2 \subset \cdots \subset X_{\dim_{\CC}(F)} = X$ such that $X_i/X_{i-1}$ is isomorphic to a Verma module for any $i$.
	By Fact \ref{fact:BoundedGeneralizedVerma}, we can take a constant $C>0$ such that the length of any Verma module of $\lie{g'}$ is less than or equal to $C$.
	Therefore we obtain
	\begin{equation*}
		\begin{split}
			\Len_{\lie{g'}}(V\otimes F) &\leq \Len_{\lie{g'}}(X) \\
			&\leq \sum_{i=1}^{\dim_{\CC}(F)} \Len_{\lie{g'}}(X_i/X_{i-1}) \\
			&\leq C\cdot \dim_{\CC}(F). \qedhere
		\end{split}
	\end{equation*}
\end{proof}

\subsection{Example}\label{subsect:MultiplicityOne}

As an application of our characterization of the uniform boundedness of multiplicities, we consider the case of so called the multiplicity one theorem.
We use the notation $G_\RR, G'_\RR, K, K', G$ and $G'$ in Subsection \ref{sect:Setting}.

\begin{theorem}\label{thm:StrongGelfand}
	The following conditions for the pair $(G_\RR, G'_\RR)$ is equivalent:
	\begin{enumparen}
		\item\label{item:StrongGelfandCW} $\SupDim(\Hom_{G'_\RR}(\calC_{G_\RR}, \calC_{G'_\RR})) < \infty$
		\item\label{item:StrongGelfandGK} $\SupDim(\Hom_{\lie{g'}, K'}(\calC_{\lie{g}, K}, \calC_{\lie{g'}, K'})) < \infty$
		\item\label{item:StrongGelfandSpherical} $(G_0\times G'_0)/\Delta(G'_0)$ is $(G_0\times G'_0)$-spherical
		\item\label{item:StrongGelfandAlgebra} $\univ{g}^{G'}$ is commutative.
	\end{enumparen}
\end{theorem}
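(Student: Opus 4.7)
The plan is to recast the branching problem for $(G_\RR, G'_\RR)$ as a harmonic-analysis problem for the pair $(G_\RR \times G'_\RR, \Delta(G'_\RR))$ and then invoke the induction-side theorems already established. Using the standard smooth duality for Casselman--Wallach representations (and the algebraic $K'$-finite dual for $(\lie{g}, K)$-modules) combined with Frobenius reciprocity, one has a natural isomorphism
\[
\Hom_{G'_\RR}(V, W') \simeq \Hom_{G_\RR\times G'_\RR}\bigl(V \boxtimes W'^{*\infty},\ \Ind^{G_\RR\times G'_\RR}_{\Delta(G'_\RR)}(\CC)\bigr)
\]
for irreducible CW representations $V, W'$, and similarly on the Harish-Chandra side. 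Since every irreducible object in $\calC_{G_\RR\times G'_\RR}$ is an external tensor product of irreducibles, conditions (1) and (2) become, respectively, the finiteness of $\SupDim(\Hom_{G_\RR\times G'_\RR}(\calC_{G_\RR\times G'_\RR}, \Ind^{G_\RR \times G'_\RR}_{\Delta(G'_\RR)}(\CC)))$ and its $(\lie{g}, K)$-analog.

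Next I apply Theorem \ref{thm:BoundedInduction} to the pair $(G_\RR \times G'_\RR, \Delta(G'_\RR))$ with the trivial representation $V' = \CC$, whose annihilator in $\calU(\Delta(\lie{g'}))$ is the augmentation ideal $I = \Delta(\lie{g'})\calU(\Delta(\lie{g'}))$. The theorem shows that both (1) and (2) are equivalent to
\[
\PIdeg\bigl(\univ{g \oplus g'}^{\Delta(G')}\big/(I\univ{g \oplus g'} \cap \univ{g \oplus g'}^{\Delta(G')})\bigr) < \infty.
\]
By reductivity of $\Delta(G')$ together with Lemma \ref{lem:CommuteTwoSidedIdeal}, this quotient is canonically isomorphic to $(\univ{g\oplus g'}/\univ{g\oplus g'}\Delta(\lie{g'}))^{\Delta(G')}$, the algebra of $(G\times G')$-invariant differential operators on $(G\times G')/\Delta(G')$. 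The PBW decomposition $\lie{g} \oplus \lie{g'} = (\lie{g}\oplus 0)\oplus\Delta(\lie{g'})$ then yields a $\Delta(G')$-equivariant algebra isomorphism identifying the latter with $\univ{g}^{G'}$ via $Y \mapsto [Y \otimes 1]$. Hence (1) and (2) are each equivalent to $\PIdeg(\univ{g}^{G'}) < \infty$.

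To close the loop with (3) and (4), I apply Theorem \ref{thm:PIdegAndCoisotropicInd} to the same pair $(G\times G', \Delta(G'))$ with the same augmentation ideal $I$. Since $\calV(\gr(I)) = \{0\} \subset \Delta(\lie{g'})^*$, the induced variety $\Ind^{G\times G'}_{\Delta(G')}(\{0\})$ is the cotangent bundle $T^*((G\times G')/\Delta(G'))$, so $\PIdeg(\univ{g}^{G'}) < \infty$ is equivalent to the coisotropy of the $(G\times G')$-action on this cotangent bundle. By the classical characterization of affine spherical homogeneous varieties \cite[Theorem 25.4]{Ti11} cited in the introduction, this coisotropy is equivalent both to the sphericity of $(G_0\times G'_0)/\Delta(G'_0)$ (condition (3)) and to the commutativity of the algebra of $(G\times G')$-invariant differential operators on this variety, which under the identification of the previous paragraph is precisely $\univ{g}^{G'}$ (condition (4)).

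The main technical obstacle is verifying the algebra identification $\univ{g}^{G'} \simeq (\univ{g\oplus g'}/\univ{g\oplus g'}\Delta(\lie{g'}))^{\Delta(G')}$: specifically, that the PBW-induced vector-space isomorphism respects both the algebra structure on the quotient (coming from the invariant differential operator algebra) and the $\Delta(G')$-action, which becomes the adjoint $G'$-action on $\univ{g}$. Secondary concerns are the topological care required in the initial Frobenius reduction for Casselman--Wallach representations, and the passage between $G'$ and its identity component $G'_0$ in conditions (3) and (4), which can be bridged via Proposition \ref{prop:PIdegStableConnected}.
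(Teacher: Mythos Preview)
Your proposal is correct and follows the same overall strategy as the paper: recast the branching problem as the induction problem for $(G_\RR\times G'_\RR,\Delta(G'_\RR))$ with $V'=\CC$, identify $(\univ{g\oplus g'}/\Delta(\lie{g'})\univ{g\oplus g'})^{\Delta(G')}$ with $\univ{g}^{G'}$, and invoke the induction-side results. The difference lies in which established result is used as the pivot. The paper obtains $(1)\Leftrightarrow(2)\Leftrightarrow(3)$ directly from Theorem~\ref{thm:BoundedInduction} together with Corollary~\ref{cor:ReductionToFiber} (for $V'=\CC$ the fiber condition is vacuous, so sphericity alone decides everything), then proves $(4)\Rightarrow(1)$ via Theorem~\ref{thm:BoundedRestriction} and $(3)\Rightarrow(4)$ via the classical commutativity criterion~\cite[Theorem 25.4]{Ti11}. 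You instead route everything through $\PIdeg(\univ{g}^{G'})<\infty$, pass to coisotropy of the $(G\times G')$-action on $T^*((G\times G')/\Delta(G'))$ via Theorem~\ref{thm:PIdegAndCoisotropicInd}, and then read off both sphericity and commutativity from the classical characterization simultaneously. Both routes are valid; yours is slightly more uniform (all four conditions hang off a single PI-degree statement), while the paper's avoids invoking the Poisson-geometric machinery of Section~\ref{sect:Coisotropic} for this particular theorem.
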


\begin{proof}
	Note that we have
	\begin{align*}
		\SupDim(\Hom_{G'_\RR}(\calC_{G_\RR}, \calC_{G'_\RR})) &= \SupDim(\Hom_{\Delta(G'_\RR)}(\calC_{G_\RR\times G'_\RR}, \CC)), \\
		\SupDim(\Hom_{\lie{g'}, K'}(\calC_{\lie{g}, K}, \calC_{\lie{g'}, K'})) &= \SupDim(\Hom_{\Delta(\lie{g'}), \Delta(K')}(\calC_{\lie{g\oplus g'}, K\times K'}, \CC)).
	\end{align*}
	See Fact \ref{fact:TensorIsom}, Proposition \ref{prop:IsomorphismTensor} and Lemma \ref{lem:HomIsomSmooth}.
	Therefore the equivalence \ref{item:StrongGelfandCW} $\Leftrightarrow$ \ref{item:StrongGelfandGK} $\Leftrightarrow$ \ref{item:StrongGelfandSpherical} follows from Theorem \ref{thm:BoundedInduction} and Corollary \ref{cor:ReductionToFiber} for $V' = \CC$.
	The implication \ref{item:StrongGelfandAlgebra} $\Rightarrow$ \ref{item:StrongGelfandCW} follows from Theorem \ref{thm:BoundedRestriction}.

	We shall show \ref{item:StrongGelfandSpherical} $\Rightarrow$ \ref{item:StrongGelfandAlgebra}.
	The homomorphism $\univ{g}^{G'_0} \rightarrow \univ{g\oplus g'}^{\Delta(G'_0)}$ ($X \mapsto X\otimes 1$) induces an isomorphism
	\begin{align*}
		\univ{g}^{G'_0} \simeq (\univ{g\oplus g'}/\Delta(\lie{g'})\univ{g\oplus g'})^{\Delta(G'_0)},
	\end{align*}
	where we identify $\univ{g\oplus g'}$ with $\univ{g}\otimes \univ{g'}$.
	Hence $\univ{g}^{G'_0}$ is isomorphic to the algebra of $(G_0\times G'_0)$-invariant differential operators on $(G_0\times G'_0)/\Delta(G'_0)$.
	By \cite[Theorem 25.4]{Ti11}, \ref{item:StrongGelfandSpherical} implies that $\univ{g}^{G'_0}$ is commutative and hence so is $\univ{g}^{G'}$.
\end{proof}

Theorem \ref{thm:StrongGelfand} without \ref{item:StrongGelfandGK} is known by Kobayashi--Oshima \cite[Theorem D]{KoOs13}.
Note that the author of the present paper has no direct proof of the equivalence \ref{item:ExistenceQuotInductionCW} $\Leftrightarrow$ \ref{item:StrongGelfandGK}.
The pair $(\lie{g}, \lie{g'})$ satisfying the equivalent conditions in Theorem \ref{thm:StrongGelfand} is classified by M. Kr\"amer \cite{Kr76}, and is isomorphic to a direct sum of some copies of $(\sl(n,\CC), \gl(n-1, \CC))$ and $(\so(n,\CC), \so(n-1,\CC))$.
If $(G_\RR, G'_\RR)$ is one of $(\GL(n+1,\CC), \GL(n,\CC))$, $(\upO(n+1,\CC), \upO(n,\CC))$, $(\SO(n + 1, \CC), \SO(n,\CC))$, $(\GL(n+1,\RR), \GL(n,\RR))$, $(\U(p,q+1), \U(p,q))$, $(\upO(p, q+1), \upO(p, q))$, $(\SO(p, q + 1), \SO(p, q))$, then it is known that $\SupDim(\Hom_{G'_\RR}(\calC_{G_\RR}, \calC_{G'_\RR}))$ is just one \cite{AiGo09,SuZh12}, which is called the multiplicity one theorem.

The multiplicity one theorem has been proved for Jacobi groups in \cite{SuZh12}.
We shall show an analogue of Theorem \ref{thm:StrongGelfand} for the Jacobi group case.
For simplicity, suppose that $G_\RR$ is connected non-compact simple.
Then $\lie{g}$ is simple or a direct sum of two mutually isomorphic simple factors.
We denote by $\calO_m$ the closure of the set of nilpotent elements in $\lie{g}^* \simeq \lie{g}$ whose projections onto any simple ideal belong to the minimal nilpotent orbit of the ideal.
Clearly, if $\lie{g}$ is simple, then $\calO_m$ is the closure of the minimal nilpotent orbit in $\lie{g}^*$.

\begin{lemma}\label{lem:MinimalNilpotentCoisotropic}
	Let $\calO$ be a nilpotent coadjoint orbit in $\lie{g}^*$
	and $H$ a maximal torus in $G$.
	Suppose $\calO \neq \set{0}$.
	Then the $H$-action on $\calO$ is coisotropic if and only if $\calO$ is contained in $\calO_m$ and $\lie{g}$ is isomorphic to $\sl(n,\CC)$, $\sl(n,\CC)\oplus \sl(n,\CC)$, $\sp(n,\CC)$ or $\sp(n,\CC)\oplus \sp(n,\CC)$.
\end{lemma}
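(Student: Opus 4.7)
The plan is to exploit the abelianness of $\lie{h}$, which rigidifies the coisotropy condition, and then run through the classification of simple Lie algebras. First I would reduce to the simple case: if $\lie{g} = \lie{g}_1 \oplus \lie{g}_2$ with each $\lie{g}_i$ simple, then $H = H_1 \times H_2$, any nilpotent coadjoint orbit decomposes as $\mathcal{O} = \mathcal{O}_1 \times \mathcal{O}_2$ for nilpotent orbits $\mathcal{O}_i \subset \lie{g}_i^*$, and both the KKS form and the $H$-action split accordingly. Thus $H$-coisotropy is equivalent to simultaneous $H_i$-coisotropy of each $\mathcal{O}_i$ (the zero orbit being trivially coisotropic), and it suffices to prove, for simple $\lie{g}$ and $\mathcal{O} \neq \{0\}$, that coisotropy is equivalent to $\mathcal{O} = \mathcal{O}_{\min}$ with $\lie{g} \cong \sl(n,\CC)$ or $\sp(n,\CC)$.

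Second, I would reformulate coisotropy as a dimension count. For any $X, Y \in \lie{h}$ and $\mu \in \mathcal{O}$, the KKS form (see Example~\ref{ex:CoadjointOrbits}) gives $\omega_\mu(\ad^*(X)\mu, \ad^*(Y)\mu) = \mu([X,Y]) = 0$ because $\lie{h}$ is abelian, so $\ad^*(\lie{h})\mu$ is automatically isotropic in $T_\mu \mathcal{O}$. Coisotropy at a generic $\mu$ therefore amounts to $\ad^*(\lie{h})\mu$ being Lagrangian, i.e.
\[
\dim_{\CC}\ad^*(\lie{h})\mu \;=\; \tfrac{1}{2}\dim_{\CC}\mathcal{O}.
\]
Combined with the universal bound $\dim \ad^*(\lie{h})\mu \leq \dim\lie{h} = \rank(\lie{g})$, this yields the necessary inequality $\dim\mathcal{O} \leq 2\rank(\lie{g})$.

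Third, I would apply this bound orbit-by-orbit and algebra-by-algebra. Using the formula $\dim\mathcal{O}_{\min} = 2h^\vee - 2$ for simple $\lie{g}$, the inequality $\dim\mathcal{O}_{\min} \leq 2\rank(\lie{g})$ becomes $h^\vee \leq \rank(\lie{g}) + 1$; this holds with equality for $\sl(n,\CC)$ and $\sp(n,\CC)$, and fails for every exceptional type and for $\so(n,\CC)$ with $n \geq 7$. The remaining orthogonal cases $\so(n,\CC)$ with $n \leq 6$ reduce via the exceptional isomorphisms $\so(3)\cong \sl(2)$, $\so(4)\cong \sl(2)\oplus\sl(2)$, $\so(5)\cong \sp(4)$, $\so(6)\cong \sl(4)$ to the $\sl$ or $\sp$ case. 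For non-minimal nilpotent orbits in $\sl(n,\CC)$ and $\sp(n,\CC)$, a routine check from the partition parametrization shows $\dim\mathcal{O} > 2\rank(\lie{g})$. Hence the only candidates compatible with the necessary bound are the minimal orbits of $\sl(n,\CC)$ and $\sp(n,\CC)$.

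Finally, for sufficiency I would verify that in these cases the generic $H$-stabilizer on $\mathcal{O}_{\min}$ is finite, so $\dim \ad^*(\lie{h})\mu = \rank(\lie{g}) = \tfrac{1}{2}\dim\mathcal{O}_{\min}$ and the orbit is Lagrangian, hence coisotropic. For $\sp(n,\CC)$, parametrise $\mathcal{O}_{\min}$ via the moment map from the defining representation on $\CC^{n}$, with $H$ acting with weights $\pm\varepsilon_i$ of multiplicity one; at a vector with all coordinates nonzero the stabiliser is $\{h : h_i^2 = 1\}$, which is finite. For $\sl(n,\CC)$, parametrise $\mathcal{O}_{\min}$ by pairs $(x,y) \in \CC^n \oplus \CC^n$ with $y^T x = 0$ via $\mu = xy^T$; for $x$ and $y$ with all entries nonzero, the equation $(hx)(h^{-1}y)^T = xy^T$ forces $h = cI$ with $c^n = 1$, again finite. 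The main obstacle will be the case analysis in step three for non-minimal orbits in $\sp(n,\CC)$: the admissible partitions (odd parts with even multiplicity) require going through several families to ensure no low-dimensional non-minimal orbit slips through, but this remains purely combinatorial.
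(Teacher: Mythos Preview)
Your proposal is correct, and for the \emph{necessity} direction it is essentially the same as the paper's argument: both obtain the bound $\dim_{\CC}\mathcal{O}\le 2\dim_{\CC}H$ and then run through the classification of simple Lie algebras (your observation that the $H$-orbits are automatically isotropic, because $\lie{h}$ is abelian, sharpens this to ``coisotropic $\Leftrightarrow$ Lagrangian'' but leads to the same numerical constraint).

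The \emph{sufficiency} direction, however, takes a genuinely different route. The paper argues via representation theory: for $\lie{g}=\sl(n,\CC)$ or $\sp(n,\CC)$ it exhibits an irreducible highest weight module $V$ (a component of the Segal--Shale--Weil representation for $\sp$, an irreducible generalized Verma module from a maximal parabolic for $\sl$) whose restriction to $\lie{h}$ is multiplicity-free and whose associated variety is $\mathcal{O}_m$, and then invokes Theorems~\ref{thm:BoundedRestriction} and~\ref{thm:PIdegAndCoisotropicRest} to conclude coisotropy. You instead give a direct symplectic-geometric verification: using explicit parametrisations of $\mathcal{O}_{\min}$ you check that the generic $H$-stabiliser is finite, so the generic $H$-orbit has dimension $\rank(\lie{g})=\tfrac12\dim\mathcal{O}_{\min}$ and is therefore Lagrangian. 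Your argument is more elementary and self-contained; the paper's has the virtue of illustrating its main theorems in action, turning the machinery back on a concrete geometric question. (A minor point: in your $\sp$ computation the generic stabiliser in $H$ of a point of $\mathcal{O}_{\min}$ is $\{\pm I\}$ rather than $\{h:h_i^2=1\}$, since the moment-map fibre is $\{\pm v\}$; this does not affect the conclusion.)
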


\begin{proof}
	It is enough to show the assertion for simple $\lie{g}$.
	Assume that the $H$-action on $\calO$ is coisotropic.
	By the definition of coisotropic actions, we have $\dim_{\CC}(\calO) \leq 2 \dim_{\CC}(H)$.
	The dimension of the minimal nilpotent orbit can be computed by the root system of $\lie{g}$ (see \cite[Lemma 4.3.5 and Subsection 8.4]{CoMc93}).
	By a case by case analysis, we can see that $\calO$ is $\set{0}$ or the minimal nilpotent orbit, and $\lie{g}$ is isomorphic to $\sl(n,\CC)$ or $\sp(n,\CC)$.
	Note that $\dim_{\CC}(\calO_m) = 2 \dim_{\CC}(H)$ in the cases.

	We shall show the converse.
	It is enough to show that the $H$-action on $\calO_m$ is coisotropic if $\lie{g} = \sl(n,\CC)$ or $\sp(n,\CC)$.
	We can take an irreducible highest weight module $V$ of $\lie{g}$ such that $V|_{\lie{h}}$ is completely reducible and multiplicity-free, and $\calV(\gr(\Ann_{\univ{g}}(V))) = \calO_m$.
	By Theorems \ref{thm:BoundedRestriction} and \ref{thm:PIdegAndCoisotropicRest}, the $H$-action on $\calO_m$ is coisotropic.

	Note that as such a representation $V$, we can take an irreducible component of the Segal--Shale--Weil representation if $\lie{g} = \sp(n,\CC)$,
	and an irreducible generalized Verma module $\univ{g}\otimes_{\univ{p}}\CC_{\lambda}$ if $\lie{g} = \sl(n,\CC)$.
	Here $\lie{p}$ is a maximal parabolic subalgebra of $\sl(n,\CC)$
	containing $\sl(n-1,\CC)\oplus \CC$ as a Levi subalgebra.
\end{proof}

\begin{theorem}\label{thm:FourierJacobi}
	Let $V$ be an irreducible Casselman--Wallach representation of $G_\RR$ with annihilator $I \subset \univ{g}$.
	Suppose that $V$ is infinite dimensional.
	Fix a Cartan subgroup $H_\RR$ of $G_\RR$.
	The following conditions for $V$ is equivalent:
	\begin{enumparen}
		\item\label{item:FourierJacobiCW} $\SupDim(\Hom_{G_\RR}(\calC_{G_\RR}\toptensor[\pi] V, \calC_{G_\RR})) < \infty$
		\item\label{item:FourierJacobiGK} $\SupDim(\Hom_{\lie{g}, K}(\calC_{\lie{g}, K}\otimes V_K, \calC_{\lie{g}, K})) < \infty$
		\item\label{item:FourierJacobiFiber} $\SupDim(\Hom_{H_\RR}(V, \calC_{H_\RR})) < \infty$
		\item\label{item:FourierJacobiOrbit} $\calV(\gr(I)) \subset \calO_m$, and $\lie{g}$ is isomorphic to $\sl(n,\CC)$, $\sl(n,\CC)\oplus \sl(n,\CC)$, $\sp(n,\CC)$ or $\sp(n,\CC)\oplus \sp(n,\CC)$,
	\end{enumparen}
	where $\toptensor[\pi]$ is the projective tensor product (see Subsection \ref{subsect:LocallyConvex}).
\end{theorem}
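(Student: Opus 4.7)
The plan is to run $(1)\Leftrightarrow(2)\Leftrightarrow(3)\Leftrightarrow(4)$ through the diagonal pair $(\widetilde G_\RR,\widetilde G'_\RR):=(G_\RR\times G_\RR,\Delta(G_\RR))$, in direct parallel with the proof of Theorem~\ref{thm:StrongGelfand} but carrying $V$ as an extra passive tensorand. The equivalence $(1)\Leftrightarrow(2)$ follows from Fact~\ref{fact:CasselmanWallach} once one observes that $(W_1\toptensor[\pi] V)_K\simeq W_{1,K}\otimes V_K$. The ternary-invariant identity
\[
\dim\Hom_{\lie g,K}(W_1\otimes V_K,W_2)=\dim (W_1^\vee\otimes V_K^\vee\otimes W_2)^{\lie g,K}=\dim\Hom_{\Delta(\lie g),\Delta(K)}(W_1\boxtimes W_2^\vee,V_K^\vee)
\]
then reformulates (2) as
\[
\SupDim\bigl(\Hom_{\Delta(\lie g),\Delta(K)}(\calC_{\widetilde{\lie g},\widetilde K},V_K^\vee)\bigr)<\infty,
\]
since $W_1\boxtimes W_2^\vee$ exhausts $\Irr(\calC_{\widetilde{\lie g},\widetilde K})$ as $(W_1,W_2)$ varies.

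The pair $(\widetilde G,\widetilde G')=(G\times G,\Delta(G))$ is spherical: for a Borel $B$ and its opposite $B^-$, the $(B\times B^-)$-orbit of $e\in G\simeq(G\times G)/\Delta(G)$ is the big cell $BB^-$, with stabilizer $P=B\times B^-$, giving the Levi
\[
L=\bigl((B\times B^-)\cap\Delta(G)\bigr)_0=\Delta(T),\qquad T:=B\cap B^-.
\]
Proposition~\ref{prop:BLVreal} lets us choose $B$ so that $\Delta(\lie t)\cap(\lie g_\RR\oplus\lie g_\RR)$ is a real form of $\Delta(\lie t)$, identifying $L_\RR$ with a Cartan subgroup of $G_\RR$. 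Corollary~\ref{cor:ReductionToFiber} then reduces the reformulated statement to $\SupDim(\Hom_{\lie h,K_H}(V_K^\vee,\calC_{\lie h,K_H}))<\infty$; by Theorem~\ref{thm:BoundedRestriction} and Lemma~\ref{lem:CommuteTwoSidedIdeal} this is equivalent to the PI-degree condition $\PIdeg((\univ g/I)^H)<\infty$ with $I:=\Ann_{\univ g}(V)$, which depends only on the complex torus $H$ and is insensitive to replacing $V$ by $V^\vee$. Hence it coincides with the PI-degree reformulation of (3) for any chosen Cartan $H_\RR$.

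For $(3)\Leftrightarrow(4)$: Theorem~\ref{thm:PIdegAndCoisotropicRest} converts the PI-degree condition above into coisotropy of the $H$-action on $\calV(\gr(I))$. Irreducibility of $V$ makes $I$ primitive, so by Joseph's theorem $\calV(\gr(I))=\overline{\calO}$ for a single nilpotent coadjoint orbit $\calO\subset\lie g^*$; infinite-dimensionality of $V$ forces $\calO\ne\set{0}$. Lemma~\ref{lem:MinimalNilpotentCoisotropic} then characterises coisotropy of the $H$-action on such a nonzero nilpotent orbit precisely by the pair of conditions in (4): $\calO\subset\calO_m$ and $\lie g$ isomorphic to $\sl(n,\CC)$, $\sl(n,\CC)\oplus\sl(n,\CC)$, $\sp(n,\CC)$, or $\sp(n,\CC)\oplus\sp(n,\CC)$.

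The main obstacle is the first step for the topological setting of (1): extending the ternary-invariant identification from $(\lie g,K)$-modules to the projective tensor product $\toptensor[\pi]$ requires compatibility of Casselman--Wallach globalization with duals and with tensor products, plus the identification of $K$-finite vectors in the completed diagonal tensor. These are routine but technical; the proof of Theorem~\ref{thm:StrongGelfand} (the case $V=\CC$) provides a template that carries over verbatim once $V$ is kept as an additional fixed factor throughout the tensor manipulations.
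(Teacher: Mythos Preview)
Your proposal is correct and follows essentially the same route as the paper: reformulate via the diagonal pair $(G_\RR\times G_\RR,\Delta(G_\RR))$ with the dual of $V$ on the target, apply Corollary~\ref{cor:ReductionToFiber} (the paper also cites Theorem~\ref{thm:BoundedInduction}) to identify the Levi $L=\Delta(H)$ and reduce to the Cartan restriction, then use Theorem~\ref{thm:PIdegAndCoisotropicRest} and Lemma~\ref{lem:MinimalNilpotentCoisotropic} for $(3)\Leftrightarrow(4)$. The only cosmetic difference is that the paper packages the dual as $W:=(\calH^*)^\infty$ for a Hilbert globalization $\calH$ of $V$ and derives $(1)\Leftrightarrow(2)$ from the induction machinery rather than directly from Fact~\ref{fact:CasselmanWallach}, but the substance is identical.
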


\begin{proof}
	Let $\calH$ be a Hilbert globalization of $V$ and set $W:=(\calH^*)^\infty$.
	As in the proof of Theorem \ref{thm:StrongGelfand}, we have
	\begin{align*}
		\SupDim(\Hom_{G_\RR}(\calC_{G_\RR}\toptensor[\pi] V, \calC_{G_\RR})) &= \SupDim(\Hom_{\Delta(G_\RR)}(\calC_{G_\RR\times G_\RR}, W)), \\
		\SupDim(\Hom_{\lie{g}, K}(\calC_{\lie{g}, K}\otimes V_K, \calC_{\lie{g}, K})) &= \SupDim(\Hom_{\Delta(\lie{g}), \Delta(K)}(\calC_{\lie{g\oplus g}, K\times K}, W_K)).
	\end{align*}
	We shall apply Corollary \ref{cor:ReductionToFiber} to $(G_\RR, G'_\RR, V') = (G_\RR\times G_\RR, \Delta(G_\RR), W)$.
	Let $B$ be a Borel subgroup of $G$ and $\overline{B}$ an opposite Borel subgroup of $G$ such that $H:=B\cap \overline{B}$ is the analytic subgroup of $G$ with the Lie algebra $\lie{h}$.
	Then $(B\times \overline{B})\Delta(G)$ is open dense in $G\times G$ and $(B\times \overline{B}) \cap \Delta(G) = \Delta(H)$.
	This implies that the subgroup $L$ in Corollary \ref{cor:ReductionToFiber} is $\Delta(H)$ in this case.
	Therefore the equivalence \ref{item:FourierJacobiCW} $\Leftrightarrow$ \ref{item:FourierJacobiGK} $\Leftrightarrow$ \ref{item:FourierJacobiFiber} follows from Theorem \ref{thm:BoundedInduction} and Corollary \ref{cor:ReductionToFiber}.
	
	By Theorems \ref{thm:BoundedRestriction} and \ref{thm:PIdegAndCoisotropicRest}, the property \ref{item:FourierJacobiFiber} holds if and only if the $H$-action on $\calV(\gr(I))$ is coisotropic.
	Hence we have \ref{item:FourierJacobiFiber} $\Leftrightarrow$ \ref{item:FourierJacobiOrbit} by Lemma \ref{lem:MinimalNilpotentCoisotropic}.
	Note that the variety $\calV(\gr(I))$ has positive dimension since $V$ is infinite dimensional.
\end{proof}

\section{Unitary representation}\label{sect:Unitary}

In this section, we prove an analogue of Theorems \ref{thm:BoundedRestriction} and \ref{thm:BoundedInduction} for unitary representations.

\subsection{Locally convex space}\label{subsect:LocallyConvex}

We summarize the notions of nuclear spaces, barreled spaces and topological tensor products, and their properties.
We refer the reader to \cite{Tr67}.
In this section, any locally convex space is over $\CC$ and Hausdorff.
We denote by $\topdual{V}$ the strong dual of a locally convex space $V$.

We recall two kinds of topological tensor products.
Let $V$ and $W$ be locally convex spaces.
Then the projective topology on $V\otimes W$ is the strongest locally convex topology such that the canonical bilinear map $V\times W\rightarrow V\otimes W$ is continuous.
Similarly, the inductive topology on $V\otimes W$ is the strongest locally convex topology such that the canonical bilinear map $V\times W\rightarrow V\otimes W$ is separately continuous.
We denote by $V\toptensor[\pi] W$ (resp.\ $V\toptensor[i] W$) the completion of $V\otimes W$ with respect to the projective (resp.\ inductive) topology, and the obtained space is called the projective (resp.\ inductive) tensor product of $V$ and $W$.

The following properties are fundamental.
See \cite[Theorems A 2.2.3 and A 2.2.4]{Wa72} and \cite[Corollary of Theorem 34.1]{Tr67}.

\begin{fact}
	Let $B\colon V\times W \rightarrow X$ be a bilinear map to a locally convex space $X$.
	Then the unique linear extension $V\otimes W\rightarrow X$ of $B$ is continuous with respect to the projective (resp.\ inductive) topology if and only if $B$ is continuous (resp.\ separately continuous).
\end{fact}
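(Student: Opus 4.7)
The plan is to deduce this from the universal-property characterizations of the projective and inductive topologies, rather than from explicit seminorm descriptions. Let $\iota\colon V\times W \to V\otimes W$ denote the canonical bilinear map and, given a bilinear $B\colon V\times W\to X$, let $\widetilde B\colon V\otimes W\to X$ denote its unique linear extension, so that $B = \widetilde B\circ \iota$.

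For the projective direction, I will first do the easy implication: by the very definition of the projective topology, $\iota$ is continuous; so if $\widetilde B$ is continuous, then $B = \widetilde B\circ \iota$ is continuous as a composition of continuous maps. For the converse, suppose $B$ is continuous and introduce on $V\otimes W$ the coarsest locally convex topology $\tau$ for which $\widetilde B$ is continuous, that is, the topology generated by the seminorms $q\circ\widetilde B$ as $q$ ranges over continuous seminorms on $X$. Since $(q\circ\widetilde B)\circ \iota = q\circ B$ is continuous on $V\times W$ by the continuity of $B$, the map $\iota\colon V\times W\to (V\otimes W,\tau)$ is continuous. By the maximality in the definition of the projective topology, $\tau$ is coarser than the projective topology, and hence $\widetilde B$ is continuous with respect to the projective topology.

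For the inductive direction the argument is formally identical with ``continuous'' replaced by ``separately continuous'' throughout: again $\iota$ is separately continuous by definition of the inductive topology, so continuity of $\widetilde B$ forces $B = \widetilde B\circ \iota$ to be separately continuous; conversely, given separately continuous $B$, the topology $\tau$ defined as above still has the property that $(q\circ\widetilde B)\circ \iota = q\circ B$ is separately continuous for every continuous seminorm $q$ on $X$, so $\iota\colon V\times W\to (V\otimes W,\tau)$ is separately continuous, and then the maximality defining the inductive topology yields that $\tau$ is coarser than the inductive topology, giving the continuity of $\widetilde B$.

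The only potentially delicate point is verifying that the pulled-back topology $\tau$ is genuinely locally convex and Hausdorff enough to fit the defining universal properties --- but this is automatic because $\tau$ is defined by a family of seminorms $\{q\circ\widetilde B\}_q$, so it is locally convex by construction, and we never need separation of $\tau$, only the comparison $\tau \le \tau_\pi$ (respectively $\tau\le \tau_i$) which is exactly what the maximality clauses in the definitions deliver. Thus the proof reduces entirely to unwinding the definitions, and no quantitative seminorm estimates are required.
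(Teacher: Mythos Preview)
Your argument is correct: both directions follow immediately from the maximality clause in the definitions of the projective and inductive topologies given in the paper, and your device of pulling back the topology of $X$ along $\widetilde B$ to get a locally convex topology $\tau$ on $V\otimes W$ for which $\iota$ is (separately) continuous is exactly the right way to invoke that maximality.

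There is nothing to compare against here: the paper does not supply its own proof of this statement but labels it a \emph{Fact} and refers the reader to Warner \cite[Theorems A 2.2.3 and A 2.2.4]{Wa72} and Tr\`eves \cite[Corollary of Theorem 34.1]{Tr67}. Your proof is the standard universal-property argument and is essentially what one finds in those references; given that the paper defines the projective and inductive topologies precisely by these universal properties (rather than by explicit seminorm formulas), your argument is the most direct one available.
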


\begin{fact}\label{fact:TensorIsom}
	If $V$ and $W$ are Fr\'echet, then any separately continuous bilinear map $B\colon V\times W \rightarrow X$ to a locally convex space is continuous.
	In particular, we have a canonical isomorphism
	\begin{align*}
		V\toptensor[i] W &\simeq V\toptensor[\pi] W.
	\end{align*}
\end{fact}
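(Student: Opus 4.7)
The plan is to deduce the continuity of $B$ from the Banach--Steinhaus theorem, and then extract the tensor product isomorphism as a formal consequence via universal properties.

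First I would reduce, by bilinearity, to checking continuity of $B$ at the origin $(0,0)$. Since $V$ and $W$ are Fr\'echet, $V \times W$ is metrizable, so it suffices to verify sequential continuity: if $v_n \to 0$ in $V$ and $w_n \to 0$ in $W$, then $B(v_n, w_n) \to 0$ in $X$. Consider the family $T_n := B(\,\cdot\,, w_n) \colon V \to X$ of continuous linear maps, continuity being guaranteed by separate continuity of $B$ in the first variable. Separate continuity in the second variable gives $T_n(v) = B(v, w_n) \to 0$ in $X$ for each fixed $v \in V$, so the sequence $\{T_n(v)\}_n$ is bounded in $X$ for every $v$. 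Now I invoke Banach--Steinhaus: because $V$ is Fr\'echet it is barreled, so a pointwise bounded family of continuous linear maps from $V$ into a locally convex space is equicontinuous. Thus for any balanced neighborhood $U$ of $0$ in $X$ there exists a neighborhood $V_0$ of $0$ in $V$ with $T_n(V_0) \subset U$ for all $n$. For sufficiently large $n$ we have $v_n \in V_0$ and hence $B(v_n, w_n) = T_n(v_n) \in U$, proving the claim.

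For the canonical isomorphism $V \toptensor[i] W \simeq V \toptensor[\pi] W$, I would argue by comparing universal properties. The projective topology on $V \otimes W$ is, by definition, at least as fine as the inductive topology, so the identity on $V \otimes W$ is continuous from $\pi$ to $i$; passing to completions yields a continuous linear map $V \toptensor[\pi] W \to V \toptensor[i] W$. Conversely, the canonical bilinear map $V \times W \to V \toptensor[i] W$ is separately continuous by the very definition of the inductive topology; by the first part of the statement it is therefore (jointly) continuous, and the universal property of $V \toptensor[\pi] W$ produces a continuous linear map $V \toptensor[\pi] W \to V \toptensor[i] W$ that is inverse to the first one on the dense subspace $V \otimes W$, and hence everywhere after completion.

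The only mildly delicate point is the invocation of Banach--Steinhaus in the locally convex setting; this requires barreledness of $V$, which is built into the Fr\'echet hypothesis (Fr\'echet spaces are Baire, hence barreled). Beyond that, everything is formal: the sequential reduction uses metrizability of Fr\'echet spaces, and the tensor product statement is a direct translation of the bilinear continuity result through the universal properties defining $\toptensor[\pi]$ and $\toptensor[i]$.
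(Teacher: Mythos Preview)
The paper does not prove this statement; it is recorded as a standard fact with references to Tr\`eves and Warner. Your argument for the first assertion via Banach--Steinhaus is the standard one and is correct.

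In the second paragraph you have the comparison of topologies reversed. By definition, the inductive topology is the strongest locally convex topology making the canonical bilinear map separately continuous, while the projective topology is the strongest making it jointly continuous. Since joint continuity implies separate continuity, the inductive topology is the supremum over a \emph{larger} class of topologies and is therefore \emph{finer} than the projective one, not coarser. This already gives, for free, a continuous identity map $V \toptensor[i] W \to V \toptensor[\pi] W$. Your ``conversely'' argument then correctly uses the first part of the statement to produce the map in the other direction, $V \toptensor[\pi] W \to V \toptensor[i] W$. As written, however, both of your constructions land in the same direction, so the sentence declaring them mutually inverse does not parse. Once the direction of the easy inclusion is corrected, the argument is complete and is exactly the standard one.
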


The projective topology on $V\otimes W$ is given by all semi-norms of the form $p\otimes q$ for continuous semi-norms $p$ on $V$ and $q$ on $W$.
Hence if $V$ and $W$ are Fr\'echet, then $V\toptensor[\pi] W$ is also Fr\'echet.

The notion of barreled spaces is useful to relate the inductive tensor product to the space of continuous linear maps.
A locally convex space $V$ is called barreled if any closed absorbing absolutely convex subset (called barrel) in $V$ is a neighborhood of $0\in V$.

Many spaces used in the representation theory are barreled.
For example, Fr\'echet spaces and strong duals of nuclear Fr\'echet spaces are barreled,
and strict inductive limits of Fr\'echet spaces are barreled.
See \cite[Section 33]{Tr67}.
In particular, Casselman--Wallach representations and their strong duals are barreled.

One of important properties of barreled spaces is the Banach--Steinhaus theorem:
if $V$ is a barreled space and $W$ is a locally convex space,
then a subset $S \subset \Hom_{\CC}(V, W)$ is bounded with respect to the weak operator topology if and only if $S$ is equicontinuous.
See \cite[Corollary 33.1]{Tr67}.
The following propositions are easy consequences of the Banach--Steinhaus theorem.

\begin{proposition}\label{prop:IsomorphismTensor}
	If $V$ is barreled, then there exists a natural isomorphism
	\begin{align*}
		\topdual{(V\toptensor[i] W)} &\simeq \Hom_{\CC}(V, \topdual{W})
	\end{align*}
	of vector spaces.
\end{proposition}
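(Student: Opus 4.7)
The plan is to construct inverse linear maps between the two sides and to verify that the barreled hypothesis is needed only at one specific point via the Banach--Steinhaus theorem (already invoked just above the statement).

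First I would set up the forward map. Given $\varphi\in\topdual{(V\toptensor[i] W)}$, its restriction to the dense subspace $V\otimes W$ is continuous for the inductive topology, so by the universal property recalled in the previous fact it corresponds to a separately continuous bilinear form $B_\varphi\colon V\times W\to \CC$. I would then define $T_\varphi\colon V\to W^*$ by $T_\varphi(v)(w):=B_\varphi(v,w)$; separate continuity in the $W$-variable shows $T_\varphi(v)\in \topdual{W}$, while separate continuity in the $V$-variable shows that $T_\varphi$ is at least weakly continuous into $\topdual{W}$.

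The key step, and the only place where the barreled hypothesis enters, is to promote this to continuity into the \emph{strong} dual $\topdual{W}$. A neighborhood basis of $0$ in $\topdual{W}$ is given by polars
\begin{equation*}
A^\circ=\{f\in\topdual{W}:\sup_{w\in A}|f(w)|<\varepsilon\}
\end{equation*}
for bounded $A\subset W$ and $\varepsilon>0$, so I need the family of linear forms $\{B_\varphi(\cdot,w):w\in A\}$ on $V$ to be equicontinuous. Pointwise, for fixed $v\in V$ the set $\{B_\varphi(v,w):w\in A\}$ is bounded because $B_\varphi(v,\cdot)\in\topdual{W}$ is continuous and therefore bounded on the bounded set $A$. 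Since $V$ is barreled, the Banach--Steinhaus theorem cited above converts this pointwise boundedness into equicontinuity, giving the required continuity of $T_\varphi\colon V\to\topdual{W}$.

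For the reverse direction, given $T\in\Hom_\CC(V,\topdual{W})$ I would set $B_T(v,w):=\langle T(v),w\rangle$. Continuity of $T$ composed with evaluation at $w\in W$ gives continuity in $v$, and $T(v)\in\topdual{W}$ gives continuity in $w$, so $B_T$ is separately continuous and extends uniquely to a continuous linear functional $\varphi_T\in\topdual{(V\toptensor[i]W)}$ by the universal property. Finally, I would check that $\varphi\mapsto T_\varphi$ and $T\mapsto\varphi_T$ are mutually inverse: both composites act as the identity on elementary tensors, and by density of $V\otimes W$ in $V\toptensor[i]W$ and continuity this determines them. Linearity and naturality are immediate. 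The only nontrivial obstacle is the Banach--Steinhaus step; everything else is formal bookkeeping.
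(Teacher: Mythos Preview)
Your proof is correct and follows essentially the same approach as the paper: both identify $\topdual{(V\toptensor[i]W)}$ with separately continuous bilinear forms and use the Banach--Steinhaus theorem on the barreled space $V$ to turn pointwise boundedness over a bounded set $A\subset W$ into equicontinuity, yielding continuity of $T_\varphi\colon V\to\topdual{W}$. The paper's proof is merely more terse, leaving the construction of the inverse and the verification of bijectivity implicit where you spell them out.
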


\begin{proof}
	Let $S$ be a bounded subset of $W$ and $B\colon V\times W \rightarrow \CC$ a separately continuous bilinear map.
	Then the subset $\set{B(\cdot, w): w \in S}$ of $V^*$ is equicontinuous by the Banach--Steinhaus theorem.
	In other words, the linear map $T_B\colon V\rightarrow W^*$ ($T_B(v) = B(v,\cdot)$) is continuous.
	It is easy to see that the obtained linear map $T\colon \topdual{(V\toptensor[i] W)}\rightarrow \Hom_{\CC}(V, \topdual{W})$ is bijective.
\end{proof}

\begin{lemma}\label{lem:HomBarrelled}
	Let $V$ and $W_1$ are locally convex spaces and $W_2$ a subspace of $W_1$ provided with a locally convex topology such that the inclusion map $W_2\hookrightarrow W_1$ is continuous.
	Assume that $V$ is barreled and the image of the restriction map $\topdual{W_1}\rightarrow \topdual{W_2}$ is sequentially dense in $\topdual{W_2}$.
	If the image of an operator $T \in \Hom_{\CC}(V, W_1)$ is contained in $W_2$, then we have $T \in \Hom_{\CC}(V, W_2)$.
\end{lemma}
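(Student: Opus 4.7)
The plan is to deduce the continuity of $T \colon V \to W_2$ from two applications of the Banach--Steinhaus theorem, available because $V$ is barreled. In the first application, I will show that $\varphi \circ T \in \topdual{V}$ for every $\varphi \in \topdual{W_2}$; in the second, I will promote this pointwise-continuity information into full continuity of $T$ viewed as a map into $W_2$.

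For the first step, fix $\varphi \in \topdual{W_2}$. By the sequential density hypothesis, I can choose a sequence $(\psi_n) \subset \topdual{W_1}$ with $\psi_n|_{W_2} \to \varphi$ in $\topdual{W_2}$. Strong convergence forces pointwise convergence on $W_2$, so $\psi_n(T(v)) \to \varphi(T(v))$ for every $v \in V$. Each $\psi_n \circ T$ lies in $\topdual{V}$ since $T \in \Hom_{\CC}(V, W_1)$ and $\psi_n \in \topdual{W_1}$. The sequence $\{\psi_n \circ T\}$ is thus pointwise bounded in $\topdual{V}$; since $V$ is barreled, the Banach--Steinhaus theorem yields equicontinuity, and the pointwise limit of an equicontinuous family of linear functionals is continuous. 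Hence $\varphi \circ T \in \topdual{V}$.

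For the second step, let $U \subset W_2$ be a closed absolutely convex neighborhood of $0$. By the bipolar theorem, $U = B^\circ$ where $B := U^\circ$ is an equicontinuous subset of $\topdual{W_2}$. Each $\varphi \circ T$ with $\varphi \in B$ belongs to $\topdual{V}$ by the previous step, and the family $\{\varphi \circ T : \varphi \in B\}$ is pointwise bounded on $V$ because the equicontinuous set $B$ is bounded at every point of $W_2$, in particular at $T(v)$ for any $v \in V$. A second application of the Banach--Steinhaus theorem then produces a neighborhood $V'$ of $0 \in V$ with $|\varphi(T(v))| \leq 1$ for all $\varphi \in B$ and $v \in V'$, i.e.\ $T(V') \subset B^\circ = U$. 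This proves the continuity of $T \colon V \to W_2$.

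The main point is the first step: without sequential density one cannot in general extend $\varphi \in \topdual{W_2}$ to a continuous functional on $W_1$, and the approximation by $\psi_n|_{W_2}$ combined with Banach--Steinhaus is what substitutes for such an extension. Once this is in place, the rest is a routine manipulation of polars and equicontinuous families in barreled spaces.
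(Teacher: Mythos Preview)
Your proof is correct and follows essentially the same approach as the paper: the first step, obtaining $\varphi\circ T\in\topdual{V}$ via sequential approximation and the Banach--Steinhaus theorem, is identical. In the second step the paper simply cites the standard fact (Bourbaki, TVS IV.\S1.3.7) that a linear map from a barreled space continuous for the weak topologies is continuous for the original topologies, whereas you unpack this via the bipolar theorem and a second application of Banach--Steinhaus; this is a difference in presentation rather than in substance.
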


\begin{proof}
	Take a sequence $(\varphi_i)_{i\in \NN}$ in $\topdual{W_1}$ converging in $\topdual{W_2}$ to a functional $\varphi \in \topdual{W_2}$.
	Then $\varphi_i \circ T$ is continuous for any $i \in \NN$
	and $(\varphi_i \circ T)_{i \in \NN}$ converges pointwise to $\varphi\circ T$.
	By the Banach--Steinhaus theorem, we have $\varphi\circ T \in \topdual{V}$.
	This implies that $T$ is continuous for the weak topologies of $V$ and $W_2$.
	Since $V$ is barreled, any continuous linear map for the weak topology is also continuous for the initial topology (see \cite[Proposition IV.\S 1.3.7]{Bo03_TVS}).
	Hence $T\colon V\rightarrow W_2$ is continuous.
\end{proof}

Let $V$ be a locally convex space.
For a continuous semi-norm $p$ on $V$, we denote by $V_p$ the completion of the normed space $V/p^{-1}(0)$ with the norm $\overline{p}$ induced from $p$.
If $\overline{p}$ is obtained from an inner product on $V_p$, we say that the semi-norm $p$ is Hilbert.

We recall the notion of nuclear spaces.
In this paper, we do not need the definition of nuclear spaces itself.
For a smooth manifold $M$, it is well-known that the function spaces $C_c^\infty(M)$ and $C^\infty(M)$ are nuclear.
Casselman--Wallach representations of real reductive Lie groups are nuclear.
The following characterization of the nuclearity is important to the disintegration of unitary representations.
See \cite[III.7.2 and Corollary III.7.3.2]{ScWo99} for the characterization.

\begin{fact}\label{fact:NuclearCharacterization}
	A locally convex space $V$ is nuclear if and only if $V$ satisfies the following two conditions:
	\begin{enumparen}
		\item for any continuous semi-norm $p$ on $V$, there exists a continuous Hilbert semi-norm $q$ on $V$ such that $p(v) \leq q(v)$ for any $v \in V$
		\item for any continuous linear map $\varphi\colon V\rightarrow \calH$ to a Hilbert space $\calH$, there exists a continuous Hilbert semi-norm $p$ on $V$ such that $\varphi$ lifts to a Hilbert--Schmidt operator $\overline{\varphi}\colon V_p\rightarrow \calH$.
	\end{enumparen}
\end{fact}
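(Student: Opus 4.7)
\medskip

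\noindent\textbf{Proof plan.} The strategy is to reduce the statement to three classical structural facts about operator ideals: (a) a nuclear operator between Banach spaces factors through a Hilbert space; (b) a nuclear operator between Hilbert spaces is Hilbert--Schmidt, and conversely the composition of two Hilbert--Schmidt operators between Hilbert spaces is trace-class, hence nuclear; (c) the class of nuclear operators is stable under composition with continuous linear maps on either side. Combined with the standard definition of nuclearity (for every continuous semi-norm $p$ on $V$ there exists a continuous semi-norm $q \geq p$ such that the canonical map $V_q \to V_p$ is nuclear), everything reduces to a bookkeeping argument about lifting and iterating semi-norms.

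\medskip

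\noindent For the implication $(\Rightarrow)$, assume $V$ is nuclear. Given a continuous semi-norm $p$, pick $q \geq p$ with the canonical map $\pi\colon V_q \to V_p$ nuclear. By (a), $\pi$ factors as $V_q \xrightarrow{\alpha} \calH \xrightarrow{\beta} V_p$ with $\alpha,\beta$ continuous and $\calH$ Hilbert; defining $r(v) := \|\alpha(\pi_q(v))\|_\calH$, where $\pi_q\colon V \to V_q$ is the canonical map, gives a continuous Hilbert semi-norm with $p \leq \|\beta\| \cdot r$, so after rescaling $r$ we obtain (i). For (ii), given $\varphi\colon V \to \calH$ continuous, let $p(v) := \|\varphi(v)\|$, a continuous Hilbert semi-norm. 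Choose $q \geq p$ with $V_q \to V_p$ nuclear; since $V_p$ embeds isometrically in $\calH$, we get a nuclear lift $V_q \to \calH$, and by (b) this lift is Hilbert--Schmidt, yielding (ii).

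\medskip

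\noindent For the reverse implication, assume (i) and (ii). Given a continuous semi-norm $p$, apply (i) to obtain a continuous Hilbert semi-norm $q_1 \geq p$, so that $V_{q_1}$ is a Hilbert space and the canonical map $V \to V_{q_1}$ is continuous. Apply (ii) to $\varphi_1\colon V \to V_{q_1}$ to produce a continuous Hilbert semi-norm $q_2$ and a Hilbert--Schmidt lift $T_2\colon V_{q_2} \to V_{q_1}$. Apply (ii) once more to $\varphi_2\colon V \to V_{q_2}$ (noting that $V_{q_2}$ is Hilbert) to obtain a Hilbert semi-norm $q_3$ and a Hilbert--Schmidt lift $T_3\colon V_{q_3} \to V_{q_2}$. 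Then the composition $V_{q_3} \xrightarrow{T_3} V_{q_2} \xrightarrow{T_2} V_{q_1} \to V_p$ is a Hilbert--Schmidt times Hilbert--Schmidt followed by a bounded map, so by (b) and (c) it is nuclear. This is exactly the canonical map $V_{q_3} \to V_p$, so $V$ is nuclear with the continuous semi-norm $q_3 \geq p$ witnessing nuclearity.

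\medskip

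\noindent The main obstacle is the bridge between the intrinsic operator-ideal definition of nuclearity and the externally stated conditions (i) and (ii). Condition (ii) only produces a Hilbert--Schmidt (not nuclear) factorization, so a single application cannot reconstruct nuclearity; the key observation is that two Hilbert--Schmidt operators composed give a trace-class operator, which is why one must iterate (ii) twice in the converse direction. Beyond this, the argument is routine, and all the analytic content is packaged in (a)--(c), which are standard.
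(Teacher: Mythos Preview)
The paper does not prove this statement; it is recorded as a Fact with a reference to Schaefer--Wolff \cite[III.7.2 and Corollary III.7.3.2]{ScWo99}. Your proposal supplies the standard argument behind that citation, and the overall strategy is correct: both directions reduce to the operator-ideal facts (a)--(c) you list, and the key observation that condition (ii) must be iterated twice in the converse direction (because Hilbert--Schmidt composed with Hilbert--Schmidt is trace-class, hence nuclear) is exactly the point.

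There is one small gap in the forward direction, part (ii). You write ``Choose $q \geq p$ with $V_q \to V_p$ nuclear'' and then invoke (b) to conclude that the resulting lift $V_q \to \calH$ is Hilbert--Schmidt. But (b), as you state it, applies to operators between Hilbert spaces, whereas the bare definition of nuclearity only guarantees that $V_q$ is Banach. The fix is immediate: having already established (i), dominate $q$ by a continuous Hilbert semi-norm $q'$, so that $V_{q'}$ is Hilbert and the composite $V_{q'} \to V_q \to V_p \hookrightarrow \calH$ is still nuclear by (c), hence Hilbert--Schmidt by (b). With this adjustment your argument goes through.
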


The nuclearity is preserved by several operations of locally convex spaces.
In this section, the following properties are important.
See \cite[Proposition 50.1]{Tr67}.

\begin{fact}\label{fact:Nuclear}
	\begin{enumparen}
		\item A subspace of a nuclear space is nuclear.
		\item A Hausdorff quotient of a nuclear space is nuclear.
		\item The projective tensor product of two nuclear spaces is nuclear.
		\item A countable inductive limit of nuclear spaces is nuclear.
	\end{enumparen}
\end{fact}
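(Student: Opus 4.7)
The plan is to deduce all four permanence properties from the characterization in Fact \ref{fact:NuclearCharacterization}, combined with the ideal properties of Hilbert--Schmidt operators between Hilbert spaces (namely, that they are closed under composition with bounded maps, restriction to subspaces, passage to quotients, tensor products, and countable colimits).

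For assertion (1), let $W\subset V$ be a subspace with the induced topology. Any continuous semi-norm $p$ on $W$ is dominated by the restriction to $W$ of some continuous semi-norm on $V$; applying part (i) of Fact \ref{fact:NuclearCharacterization} to $V$ and restricting yields a continuous Hilbert semi-norm on $W$ dominating $p$. For part (ii), given a continuous linear map $\varphi\colon W\to\calH$, I would produce a continuous Hilbert semi-norm $p$ on $V$ whose restriction to $W$ controls $\|\varphi(\cdot)\|$; nuclearity of $V$ then gives a second continuous Hilbert semi-norm $q\geq p$ on $V$ making $V_q\to V_p$ Hilbert--Schmidt, and restricting this factorization to $W$ produces the required Hilbert--Schmidt lift $W_{q|_W}\to\calH$.

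For assertion (2), continuous semi-norms and continuous linear maps on $V/N$ pull back to those on $V$ vanishing on $N$, so both conditions of Fact \ref{fact:NuclearCharacterization} for $V$ descend to $V/N$ by passing to the induced Hilbert semi-norm on the quotient of the local Hilbert space $V_q$ by the closure of the image of $N$. For assertion (3), any continuous semi-norm on $V\toptensor[\pi]W$ is dominated by a tensor-product semi-norm $p\otimes q$; replacing $p,q$ by dominating continuous Hilbert semi-norms and using that the tensor product of two Hilbert--Schmidt operators is again Hilbert--Schmidt yields the desired factorization. For assertion (4), given $V=\varinjlim V_n$, a continuous semi-norm or continuous linear map on $V$ restricts to each $V_n$; applying the nuclearity of each $V_n$ and reassembling via the universal property, using the Banach--Steinhaus theorem to control joint continuity on the inductive limit, yields the required factorization on $V$.

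The main obstacle is the quotient case (2): unlike the subspace case, where a Hilbert structure on the ambient space restricts to a Hilbert structure, the induced Hilbert semi-norm on a quotient depends on taking orthogonal complements in the local Banach completion, and one must verify that the Hilbert--Schmidt property is preserved under this quotienting operation. This is most cleanly handled by first establishing a trace-class version of the characterization (due to Grothendieck) and then recovering the Hilbert--Schmidt formulation by composing two trace-class factorizations, which is why these permanence properties, though standard, are not entirely elementary.
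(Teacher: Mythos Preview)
The paper does not prove this statement: it is recorded as a standard fact with the reference ``See \cite[Proposition 50.1]{Tr67}'' and no argument is given. So there is no paper-proof to compare against; what matters is whether your outline stands on its own.

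For assertions (1)--(3) your sketch is essentially correct. In (1) the point you need (and implicitly use) is that a Hilbert--Schmidt map $V_q\to V_p$ restricts to a Hilbert--Schmidt map $W_{q|_W}\to W_{p|_W}$, after which composition with the bounded $W_{p|_W}\to\calH$ finishes. In (2) the concern you flag is real but resolvable exactly as you indicate: the induced map on the Hilbert quotient $V_q/\overline{N}$ is the restriction of the original Hilbert--Schmidt map to $\overline{N}^\perp$, hence still Hilbert--Schmidt. In (3) the tensor product of two Hilbert--Schmidt maps is trace class, which is more than enough.

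Your treatment of (4), however, has a genuine gap. Given a continuous semi-norm $p$ on $V=\varinjlim V_n$, nuclearity of each $V_n$ produces Hilbert semi-norms $q_n$ on $V_n$ with $(V_n)_{q_n}\to (V_n)_{p|_{V_n}}$ Hilbert--Schmidt, but there is no mechanism to assemble these \emph{independently chosen} $q_n$ into a single continuous Hilbert semi-norm $q$ on the inductive limit: the $q_n$ need not be compatible along the transition maps $V_n\to V_{n+1}$, and the inductive-limit topology only tells you which semi-norms are continuous, not how to build one from incompatible local data. Invoking the Banach--Steinhaus theorem does not help here, since that controls equicontinuity of families of maps out of a barrelled space, not the gluing of semi-norms. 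The standard proofs of (4) (as in Tr\`eves) proceed instead via the tensor-product characterization of nuclearity, or via duality (the strong dual of a countable inductive limit is a projective limit, and projective limits of nuclear spaces are nuclear as subspaces of products), rather than directly through the Hilbert--Schmidt factorization of Fact~\ref{fact:NuclearCharacterization}.
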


\subsection{Frobenius reciprocity}

We review a reciprocity theorem of induced representations.
Let $X$ be a smooth manifold and $V$ a locally convex space.
Let $C^\infty(X, V)$ denote the space of all $V$-valued $C^\infty$-functions
and put
\begin{align*}
	C^\infty_Y(X, V) &:= \set{f \in C^\infty(X, V): \supp(f) \subset Y} \quad (\text{closed }Y\subset X), \\
	C^\infty_c(X, V) &:= \set{f \in C^\infty(X, V): \supp(f) \text{ is compact}}.
\end{align*}
We provide $C^\infty(X, V)$ with the topology of uniform convergence on compact subsets of functions and their derivatives.
We consider $C^\infty_Y(X, V)$ as a closed subspace of $C^\infty(X, V)$,
and $C^\infty_c(X, V)$ as the strict inductive limit of the subspaces $C^\infty_K(X, V)$ over all compact subsets $K\subset X$.
The spaces can be described by topological tensor products.
See \cite[Appendices 2.2]{Wa72} and \cite[Proposition 51.6]{Tr67}.

\begin{fact}\label{fact:FunctionSpace}
	Let $V$ be a complete locally convex space and $K$ a compact subset of $X$.
	Then there exist natural isomorphisms
	\begin{align*}
		C^\infty(X, V) &\simeq C^\infty(X)\toptensor[\pi] V, \\
		C^\infty_K(X, V) &\simeq C^\infty_K(X)\toptensor[\pi] V (\simeq C^\infty_K(X) \toptensor[i] V \quad \text{if $V$ is Fr\'echet}), \\
		C^\infty_c(X, V) &\simeq C^\infty_c(X)\toptensor[i] V \quad \text{if $V$ is Fr\'echet}.
	\end{align*}
	In particular, if $V$ is nuclear (resp.\ separable), then the three function spaces are nuclear (resp.\ separable), and if $V$ is Fr\'echet, then the spaces are barreled.
\end{fact}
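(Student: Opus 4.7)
The plan is to reduce everything to the classical description $C^\infty(X) \toptensor[\pi] V \simeq C^\infty(X, V)$, which is \cite[Theorem 44.1 and Proposition 51.6]{Tr67}, and then bootstrap the remaining identifications and the structural properties (nuclearity, separability, barreledness) from general facts about locally convex spaces already summarized in Subsection \ref{subsect:LocallyConvex}. The $\epsilon$-product/projective tensor product formula of Treves is stated for Fr\'echet $V$ in his book, but the same proof works for any complete locally convex $V$ once one expresses $C^\infty$-functions as inverse limits of Banach-valued spaces; this gives the first isomorphism.

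For the second isomorphism, first I would observe that $C^\infty_K(X, V)$ is the kernel of the (continuous, surjective) restriction map $C^\infty(X, V) \to C^\infty(X \setminus K, V)$, so it is a closed subspace of $C^\infty(X, V)$. On the other side, I would check that $C^\infty_K(X) \toptensor[\pi] V$ embeds as the closed subspace of $C^\infty(X) \toptensor[\pi] V$ corresponding to functions supported in $K$; since the projective tensor product is compatible with closed subspaces of Fr\'echet spaces in the usual sense, the first isomorphism restricts to the second. For the parenthetical claim $C^\infty_K(X)\toptensor[\pi] V \simeq C^\infty_K(X) \toptensor[i] V$ when $V$ is Fr\'echet, I invoke Fact~\ref{fact:TensorIsom} directly, since $C^\infty_K(X)$ is itself Fr\'echet.

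For the third isomorphism, the plan is to pass to the strict inductive limit over an exhaustion $K_1 \subset K_2 \subset \dots$ of $X$ by compact sets. On the function side this is the definition of $C^\infty_c(X, V)$. On the tensor side, I would use that the inductive tensor product commutes with countable strict inductive limits in the first argument, so
\begin{align*}
C^\infty_c(X) \toptensor[i] V \simeq \varinjlim_n \bigl( C^\infty_{K_n}(X) \toptensor[i] V \bigr),
\end{align*}
and then apply the Fr\'echet case of the second isomorphism to each term. The main thing to verify is that the strict inductive limit topology matches on both sides; this follows because each $C^\infty_{K_n}(X, V)$ is closed in $C^\infty_{K_{n+1}}(X, V)$ (again by a restriction-to-complement argument).

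The final sentence about nuclearity, separability and barreledness is a direct application of Facts~\ref{fact:NuclearCharacterization} and~\ref{fact:Nuclear} together with the two standard facts that (i) Fr\'echet spaces are barreled and (ii) strict inductive limits of Fr\'echet spaces are barreled (see \cite[Section 33]{Tr67}): $C^\infty(X)$ and $C^\infty_K(X)$ are nuclear Fr\'echet, so their projective tensor products with nuclear $V$ remain nuclear by Fact~\ref{fact:Nuclear}, and the countable inductive limit clause of the same fact handles $C^\infty_c(X, V)$; separability is preserved by both projective tensor products (of separable Fr\'echet spaces) and countable inductive limits. The main obstacle I anticipate is purely bookkeeping, namely keeping straight whether the projective or inductive topology is needed at each step and verifying that the two agree on the Fr\'echet pieces before taking the final inductive limit; nothing deep is required beyond the cited references.
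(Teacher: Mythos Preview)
The paper does not prove this statement at all: it is labeled a \emph{Fact} and simply cites \cite[Appendices 2.2]{Wa72} and \cite[Proposition 51.6]{Tr67}. So there is no proof in the paper to compare your proposal against; your sketch is essentially a reconstruction of what those references contain, and the overall strategy (Treves' identification for $C^\infty(X,V)$, restriction to closed subspaces for the compactly supported case, strict inductive limits for $C^\infty_c$) is the standard one.

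One small caution: your step ``the projective tensor product is compatible with closed subspaces of Fr\'echet spaces'' is not true in full generality; what makes it work here is that $C^\infty_K(X)$ is nuclear (so that the closed subspace $C^\infty_K(X)$ of $C^\infty(X)$ tensored with $V$ sits as a closed subspace of $C^\infty(X)\toptensor[\pi] V$). You implicitly use nuclearity of $C^\infty(X)$ and $C^\infty_K(X)$ throughout, so it would be cleaner to say so explicitly rather than appealing to a general compatibility that fails without nuclearity. Otherwise the sketch is sound.
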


Let $G_\RR$ be a Lie group and $G'_\RR$ a closed subgroup of $G_\RR$.
For simplicity, we assume that $G_\RR$ and $G'_\RR$ are unimodular.
Fix Haar measures $\mu$ and $\mu'$ on $G_\RR$ and $G'_\RR$, respectively, and a $G_\RR$-invariant Radon measure $\nu$ on $G_\RR/G'_\RR$.
For a continuous representation $V$ of $G_\RR$, let $V^\infty$ denote the subspace of all smooth vectors in $V$.

In this section, we use two kinds of induced representations.
See \cite[5.1.1 and Lemma 5.1.1.5]{Wa72}.

\begin{definition}\label{def:CompactInd}
	Let $(\pi, V)$ be a smooth Fr\'echet representation of $G'_\RR$.
	We set
	\begin{align*}
		\ccInd^{G_\RR}_{G'_\RR}(V) := \set{f \in C^\infty(G_\RR, V):
		\begin{array}{l}
			f(gh) = \pi(h^{-1})f(g) \text{ ($\forall h \in G'_\RR$)},\\
			\supp(f)/G'_\RR \subset G_\RR/G'_\RR \text{ is compact}
		\end{array}}.
	\end{align*}
	Let $(\pi, \calH)$ be a unitary representation of $G'_\RR$ on a Hilbert space $\calH$ with inner product $\langle \cdot, \cdot \rangle$.
	We denote by $\uInd^{G_\RR}_{G'_\RR}(\calH)$ the Hilbert completion of the pre-Hilbert space $\ccInd^{G_\RR}_{G'_\RR}(\calH^\infty)$ with the inner product
	\begin{align*}
		\langle f, h \rangle = \int_{G_\RR/G'_\RR} \langle f(x), h(x)\rangle d\nu(x).
	\end{align*}
\end{definition}

The subspace $\ccInd^{G_\RR}_{G'_\RR}(V) \cap C^\infty_{KG'_\RR}(G_\RR, V)$ of $C^\infty(G_\RR, V)$ is closed for any compact subset $K\subset G_\RR/G'_\RR$.
We consider $\ccInd^{G_\RR}_{G'_\RR}(V)$ as the strict inductive limit of the subspaces $\ccInd^{G_\RR}_{G'_\RR}(V) \cap C^\infty_{KG'_\RR}(G_\RR, V)$.
The following proposition is clear from Facts \ref{fact:Nuclear} and \ref{fact:FunctionSpace}.

\begin{proposition}
	$\ccInd^{G_\RR}_{G'_\RR}(V)$
	is barreled.
	If $V$ is nuclear (resp.\ separable), then so is $\ccInd^{G_\RR}_{G'_\RR}(V)$.
\end{proposition}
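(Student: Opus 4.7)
The plan is to realize $\ccInd^{G_\RR}_{G'_\RR}(V)$ as a countable strict inductive limit of Fr\'echet spaces and then invoke the preservation properties in Facts \ref{fact:Nuclear} and \ref{fact:FunctionSpace}. Since $G_\RR$ is a Lie group, the homogeneous space $G_\RR/G'_\RR$ is $\sigma$-compact; I would fix an exhaustion $K_1 \subset K_2 \subset \cdots$ by compact subsets whose union is $G_\RR/G'_\RR$. By cofinality this family computes the same strict inductive limit as the one defining $\ccInd^{G_\RR}_{G'_\RR}(V)$, so it suffices to work with the countable system
\begin{align*}
	E_n := \ccInd^{G_\RR}_{G'_\RR}(V) \cap C^\infty_{K_n G'_\RR}(G_\RR, V),
\end{align*}
which by the paragraph immediately preceding the proposition is closed in $C^\infty_{K_n G'_\RR}(G_\RR, V)$.

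Next I would examine each $E_n$ separately. Because $V$ is Fr\'echet, Fact \ref{fact:FunctionSpace} shows that $C^\infty_{K_n G'_\RR}(G_\RR, V)$ is Fr\'echet, hence the closed subspace $E_n$ is Fr\'echet. If $V$ is additionally nuclear, the same fact gives nuclearity of $C^\infty_{K_n G'_\RR}(G_\RR, V)$, and then Fact \ref{fact:Nuclear}~(a) transfers nuclearity to the closed subspace $E_n$. If instead $V$ is separable, Fact \ref{fact:FunctionSpace} yields separability of $C^\infty_{K_n G'_\RR}(G_\RR, V)$, and a subspace of a separable metrizable space is separable.

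Finally I would lift these properties to the inductive limit. A strict inductive limit of barreled spaces, and Fr\'echet spaces are barreled, is barreled; this gives the first assertion. Nuclearity of $\ccInd^{G_\RR}_{G'_\RR}(V)$ under nuclear $V$ follows from Fact \ref{fact:Nuclear}~(d) applied to the countable strict inductive limit of the nuclear spaces $E_n$. For separability, pick a countable dense subset $D_n$ in each $E_n$; then $\bigcup_n D_n$ is a countable set whose closure contains every $E_n$ and therefore all of $\ccInd^{G_\RR}_{G'_\RR}(V) = \bigcup_n E_n$.

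Since every ingredient is either in Facts \ref{fact:Nuclear} and \ref{fact:FunctionSpace} or is the standard assertion that strict inductive limits of Fr\'echet spaces are barreled, I do not anticipate a serious obstacle. The only point that needs explicit mention is the existence of a countable cofinal exhausting family of compact subsets of $G_\RR/G'_\RR$, which is immediate from $\sigma$-compactness of the Lie group $G_\RR$.
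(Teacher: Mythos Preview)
Your proposal is correct and is precisely the argument the paper has in mind when it says the proposition is ``clear from Facts~\ref{fact:Nuclear} and~\ref{fact:FunctionSpace}''. One small imprecision worth fixing: Fact~\ref{fact:FunctionSpace} is stated for $C^\infty_K(X,V)$ with $K$ compact, whereas $K_nG'_\RR$ is in general not compact in $G_\RR$; the clean way around this is to view $C^\infty_{K_nG'_\RR}(G_\RR,V)$ as a closed subspace of $C^\infty(G_\RR,V)$, which is Fr\'echet (and nuclear, separable) by the first isomorphism of Fact~\ref{fact:FunctionSpace}, and then transfer these properties to the closed subspace via Fact~\ref{fact:Nuclear}(1) and the standard facts about closed subspaces of Fr\'echet or separable metrizable spaces.
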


We regard $\ccInd^{G_\RR}_{G'_\RR}(V)$ and $\uInd^{G_\RR}_{G'_\RR}(\calH)$ as representations of $G_\RR$ via the left translation.
It is well-known that $\ccInd^{G_\RR}_{G'_\RR}(V)$ is a smooth representation and $\uInd^{G_\RR}_{G'_\RR}(\calH)$ is a unitary representation.

\begin{fact}[See e.g.\ {\cite[Lemma 5.1.1.4]{Wa72}}]\label{fact:CompactInd}
	Let $(\pi, V)$ be a smooth Fr\'echet representation of $G'_\RR$.
	Then the map $I \colon C^\infty_c(G_\RR, V) \rightarrow \ccInd^{G_\RR}_{G'_\RR}(V)$ defined by $I(f)(x) = \int_{G'_\RR} \pi(g) f(xg) d\mu'(g)$ is continuous and surjective, and the map induces an isomorphism
	\begin{align*}
		C^\infty_c(G_\RR, V) / \Ker(I) \simeq \ccInd^{G_\RR}_{G'_\RR}(V)
	\end{align*}
	of smooth representations.
\end{fact}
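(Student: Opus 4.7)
The plan is to establish four things in turn: well-definedness of $I$ (the map lands in $\ccInd^{G_\RR}_{G'_\RR}(V)$), continuity, $G_\RR$-equivariance, and surjectivity with a continuous section; the induced isomorphism will then be immediate from the universal property of the quotient together with the continuous section.

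First I would check well-definedness directly. For $f \in C^\infty_c(G_\RR, V)$, translation-invariance of $d\mu'$ gives
\[
I(f)(xh) = \int_{G'_\RR} \pi(g) f(xhg)\, d\mu'(g) = \pi(h^{-1})\int_{G'_\RR} \pi(g) f(xg)\, d\mu'(g) = \pi(h^{-1}) I(f)(x),
\]
and $\supp(I(f))/G'_\RR \subset \supp(f)G'_\RR/G'_\RR$ is compact. Smoothness of $I(f)$ follows by differentiating under the integral; because $f$ has compact support, for every $x$ in a compact neighborhood the integrand is supported in a fixed compact subset of $G'_\RR$, which justifies the exchange. For continuity, note that if $f$ is supported in a compact set $K \subset G_\RR$, then for each $x$ the integral is over the compact set $x^{-1}K \cap G'_\RR$, whose $\mu'$-measure is bounded uniformly as $x$ varies in a compact set; this yields estimates of seminorms of $I(f)$ (sup of derivatives over compacta) by seminorms of $f$, showing that $I$ restricted to each $C^\infty_K(G_\RR, V)$ is continuous into $\ccInd^{G_\RR}_{G'_\RR}(V)\cap C^\infty_{KG'_\RR}(G_\RR,V)$. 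Since $C^\infty_c(G_\RR, V)$ carries the strict inductive-limit topology, this gives continuity of $I$. Equivariance $I \circ L_y = L_y \circ I$ is a one-line change of variable in $G_\RR$.

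The main content is surjectivity via an explicit section. Given $F \in \ccInd^{G_\RR}_{G'_\RR}(V)$ with $\supp(F)/G'_\RR$ contained in a compact subset $C \subset G_\RR/G'_\RR$, I would choose, using local cross-sections of $G_\RR \to G_\RR/G'_\RR$ together with a smooth partition of unity, a function $\psi \in C^\infty_c(G_\RR)$, $\psi \geq 0$, such that $h(xG'_\RR) := \int_{G'_\RR}\psi(xg)\,d\mu'(g)$ is strictly positive on an open neighborhood $U$ of $C$ in $G_\RR/G'_\RR$. Picking $\chi \in C^\infty_c(G_\RR/G'_\RR)$ with $\chi \equiv 1$ on $C$ and $\supp\chi \subset U$, the function $\varphi(x) := \psi(x)\chi(xG'_\RR)/h(xG'_\RR)$ lies in $C^\infty_c(G_\RR)$ and satisfies $\int_{G'_\RR}\varphi(xg)\,d\mu'(g) = 1$ for all $xG'_\RR \in \supp(F)/G'_\RR$. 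Setting $f = \varphi \cdot F \in C^\infty_c(G_\RR, V)$, the identity $F(xg) = \pi(g^{-1})F(x)$ gives
\[
I(f)(x) = \int_{G'_\RR}\pi(g)\varphi(xg)F(xg)\,d\mu'(g) = F(x)\int_{G'_\RR}\varphi(xg)\,d\mu'(g) = F(x).
\]
The map $F \mapsto \varphi F$ is clearly continuous from $\ccInd^{G_\RR}_{G'_\RR}(V)\cap C^\infty_{KG'_\RR}(G_\RR, V)$ into some $C^\infty_{K'}(G_\RR, V)$ (with $\varphi$ chosen once for a slightly enlarged compact set), so on each step of the inductive limit we get a continuous section of $I$.

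Finally, the induced bijection $\bar I\colon C^\infty_c(G_\RR, V)/\Ker(I) \to \ccInd^{G_\RR}_{G'_\RR}(V)$ is continuous by the universal property, and its inverse is continuous because the section $F \mapsto \varphi F$ composed with the quotient map is continuous on each Fréchet step of the strict inductive limit on the target side; equivariance of the quotient passes through both maps. The main obstacle in this argument is the construction of $\varphi$ with $\int_{G'_\RR}\varphi(xg)\,d\mu'(g) = 1$ on the prescribed compact set: it requires some care with local cross-sections to ensure $h$ is nonvanishing where needed, and with cutoffs on $G_\RR/G'_\RR$ to convert the resulting smooth positive function into an exact partition of unity along the fibers. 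Once this is done, everything else follows from routine dominated-convergence/differentiation-under-the-integral estimates.
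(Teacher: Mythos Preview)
The paper does not give its own proof of this statement: it is recorded as a \emph{Fact} with a citation to \cite[Lemma 5.1.1.4]{Wa72} and used as a black box. Your sketch is the standard argument (and is essentially what one finds in the cited reference): verify the equivariance and support condition for $I(f)$ by a change of variable, and prove surjectivity by constructing a fiberwise partition of unity $\varphi \in C^\infty_c(G_\RR)$ with $\int_{G'_\RR}\varphi(xg)\,d\mu'(g)=1$ on $\supp(F)/G'_\RR$, so that $f=\varphi F$ satisfies $I(f)=F$. The argument is correct; there is nothing to compare against in the paper itself.
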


We recall the action of $C^\infty_c(G_\RR)$ on a continuous representation $(\pi, V)$ of $G_\RR$ on a complete locally convex space $V$.
For $f \in C^\infty_c(G_\RR)$ and $v \in V$, we set
\begin{align*}
	\pi(f)v = \int_{G_\RR} f(g)\pi(g)v d\mu(g) \in V^\infty.
\end{align*}
Then we obtain a separately continuous bilinear map $C^\infty_c(G_\RR)\times V \rightarrow V^\infty$ and the map extends to a continuous linear map $C^\infty_c(G_\RR)\toptensor[i] V \rightarrow V^\infty$.

Let $(\pi, V)$ be a Hilbert representation of $G_\RR$.
Then $V^\infty$ is a Fr\'echet space with the semi-norms of the form $\|\pi(X)v\|$ ($X \in \univ{g}$).
In other words, $V^\infty$ is the projective limit of countably many Hilbert spaces and hence reflexive by \cite[IV.5.6 and IV.5.8]{ScWo99}.
We set $V^{-\infty} := \topdual{((\topdual{V})^\infty)}$, which is the space of distribution vectors of $V$.
Since $(V^*)^\infty$ is reflexive, $V^{-\infty}$ is also reflexive and hence barreled.
Then $\pi$ extends to a continuous representation on $V^{-\infty}$.

In the case, the image of $\pi(f)$ ($f \in C^\infty_c(G_\RR)$)
is contained in $V^\infty$, that is,
\begin{align*}
	\pi(C^\infty_c(G_\RR)) V^{-\infty} \subset V^\infty.
\end{align*}
In fact, $V^{-\infty} = \pi(\univ{g})V$ holds and hence $\pi(C^\infty_c(G_\RR)) V^{-\infty} =  \pi(C^\infty_c(G_\RR))V \subset V^\infty$.
In particular, $V^\infty$ is sequentially dense in $V^{-\infty}$.
See e.g.\ \cite[1.3]{Ca76} and \cite[Proposition 5.1]{He14}.
In the literature, this result is stated for unitary representations, but the unitarity is not essential.

By Lemma \ref{lem:HomBarrelled}, the bilinear map $C^\infty_c(G_\RR)\times V^{-\infty} \rightarrow V^\infty$ is separately continuous.
Hence we obtain a continuous linear map
\begin{align*}
	C^\infty_c(G_\RR)\toptensor[i] V^{-\infty} \rightarrow V^\infty.
\end{align*}

\begin{lemma}\label{lem:HomIsomSmooth}
	Let $(\pi, V)$ be a smooth Fr\'echet representation of $G_\RR$ and $(\tau, W)$ a Hilbert representation of $G_\RR$.
	Then the natural injection $\Hom_{G_\RR}(V, W^{\infty}) \rightarrow \Hom_{G_\RR}(V, W^{-\infty})$ is bijective.
\end{lemma}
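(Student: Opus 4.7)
The plan is to prove surjectivity of the natural injection, injectivity being immediate from $W^\infty\hookrightarrow W^{-\infty}$. Fix $T\in\Hom_{G_\RR}(V,W^{-\infty})$; I will establish (i) that $T(V)\subset W^\infty$, and (ii) that $T$ is continuous as a map $V\to W^\infty$. The two ingredients are the Dixmier--Malliavin factorization theorem for smooth Fr\'echet representations and Lemma~\ref{lem:HomBarrelled}, both applied to the Hilbert-representation smoothing property $\tau(C^\infty_c(G_\RR))W^{-\infty}\subset W^\infty$ recorded in the excerpt.

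For (i), fix $v\in V$. Since $V=V^\infty$ is a smooth Fr\'echet representation of $G_\RR$, by Dixmier--Malliavin one may write $v=\sum_{i=1}^n \pi(f_i)v_i$ with $f_i\in C^\infty_c(G_\RR)$ and $v_i\in V$. Each $\pi(f_i)v_i$ equals the $V$-valued Bochner integral $\int_{G_\RR}f_i(g)\pi(g)v_i\,d\mu(g)$, and the continuity together with $G_\RR$-equivariance of $T$ permit interchanging $T$ with the integral:
\[
T(\pi(f_i)v_i)=\int_{G_\RR}f_i(g)\tau(g)T(v_i)\,d\mu(g)=\tau(f_i)T(v_i).
\]
The right-hand side lies in $W^\infty$ by the smoothing property recalled above. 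Summing over $i$ gives $T(v)\in W^\infty$.

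For (ii), I apply Lemma~\ref{lem:HomBarrelled} with $W_1=W^{-\infty}$ and $W_2=W^\infty$: the Fr\'echet space $V$ is barreled and the inclusion $W^\infty\hookrightarrow W^{-\infty}$ is continuous. The strong duals satisfy $(W^{-\infty})^*\simeq (W^*)^\infty$ by the reflexivity of $(W^*)^\infty$ noted in the excerpt, and $(W^\infty)^*\simeq (W^*)^{-\infty}$ by applying the definition of distribution vectors to the Hilbert representation $W^*$. Under these identifications the restriction map $(W^{-\infty})^*\to(W^\infty)^*$ becomes the natural inclusion of smooth vectors into distribution vectors for $W^*$, which is sequentially dense by the fact recalled just before Lemma~\ref{lem:HomBarrelled}. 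The lemma therefore upgrades $T$ to a continuous map $V\to W^\infty$.

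The only delicate point is the dual-space bookkeeping in step~(ii), namely verifying that the sequential-density hypothesis of Lemma~\ref{lem:HomBarrelled} matches exactly the sequential density of $(W^*)^\infty$ in $(W^*)^{-\infty}$ under the canonical identifications. Once this is set up, step~(i) is a routine application of Dixmier--Malliavin combined with the smoothing property, so no further analytic input is required.
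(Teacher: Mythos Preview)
Your proof is correct and follows exactly the same strategy as the paper: Dixmier--Malliavin gives $T(V)\subset \tau(C^\infty_c(G_\RR))W^{-\infty}\subset W^\infty$, and then Lemma~\ref{lem:HomBarrelled} upgrades continuity. Your verification of the sequential-density hypothesis via the identifications $(W^{-\infty})^*\simeq (W^*)^\infty$ and $(W^\infty)^*\simeq (W^*)^{-\infty}$ is precisely the bookkeeping the paper leaves implicit.
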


\begin{proof}
	Let $T \in \Hom_{G_\RR}(V, W^{-\infty})$.
	By the Dixmier--Malliavin theorem \cite{DiMa78}, any vector of $V$ is a finite sum of vectors of the form $\pi(f)w$ ($f \in C^\infty_c(G_\RR)$ and $w \in V$).
	Hence the image of $T$ is contained in $\tau(C^\infty_c(G_\RR))W^{-\infty}\subset W^\infty$ and the lemma follows from Lemma \ref{lem:HomBarrelled}.
\end{proof}

The following proposition is a variation of the Frobenius reciprocity for the induction $\ccInd^{G_\RR}_{G'_\RR}(V)$.
For some literature (e.g.\ \cite{Wa72}), the Frobenius reciprocity for smooth representations is described by the terminology of bilinear forms.
We rewrite it using homomorphisms.

\begin{proposition}\label{prop:SmoothReciprocity}
	Let $(\pi, V)$ be a Hilbert representation of $G_\RR$ and $(\tau, W)$ a Hilbert representation of $G'_\RR$.
	Then there exists a natural isomorphism
	\begin{align*}
		\Hom_{G'_\RR}((\topdual{V})^{\infty}, (\topdual{W})^\infty) \simeq \Hom_{G_\RR}(\ccInd^{G_\RR}_{G'_\RR}(W^\infty), V^\infty)
	\end{align*}
	of vector spaces.
	In particular, if $V$ and $W$ are unitary, then we have
	\begin{align*}
		\Hom_{G'_\RR}(V^{\infty}, W^\infty) \simeq \Hom_{G_\RR}(\ccInd^{G_\RR}_{G'_\RR}(W^\infty), V^\infty).
	\end{align*}
\end{proposition}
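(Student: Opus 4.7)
The plan is to exhibit mutually inverse linear maps $\Phi$ and $\Psi$ between the two Hom spaces. For the forward map $\Phi$, given $S \in \Hom_{G'_\RR}((\topdual V)^\infty, (\topdual W)^\infty)$, I would define $\Phi(S)\colon \ccInd^{G_\RR}_{G'_\RR}(W^\infty) \to V^\infty$ by the integral formula
\begin{equation*}
\langle \xi,\, \Phi(S)(f)\rangle = \int_{G_\RR/G'_\RR}\langle S(x^{-1}\cdot \xi),\, f(x)\rangle\, d\nu(xG'_\RR), \qquad \xi \in (\topdual{V})^\infty,
\end{equation*}
where $g\cdot \xi$ denotes the contragredient $G_\RR$-action on $\topdual V$ and the pairing is the canonical one between $(\topdual W)^\infty \subset \topdual W$ and $W^\infty \subset W$. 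The integrand is constant on $G'_\RR$-cosets by $G'_\RR$-equivariance of $S$ together with the duality identity $\langle \tau(h^{-1})\eta,\tau(h^{-1})w\rangle = \langle \eta, w\rangle$; the integral converges by compactness of $\supp(f)/G'_\RR$; and the expression depends continuously on $\xi$, so $\Phi(S)(f)$ belongs to $V^{-\infty} = \topdual{((\topdual V)^\infty)}$. A change of variables using left-invariance of $\nu$ yields $G_\RR$-equivariance. Smoothness of $\Phi(S)(f)$ as a vector in $V$ then follows verbatim from the proof of Lemma \ref{lem:HomIsomSmooth}: $\ccInd^{G_\RR}_{G'_\RR}(W^\infty)$ is smooth as a $G_\RR$-representation, so by the Dixmier--Malliavin theorem every $f$ decomposes as a finite sum $\sum L(\varphi_i) f_i$ with $\varphi_i \in C_c^\infty(G_\RR)$, and $\pi(C_c^\infty(G_\RR)) V^{-\infty} \subset V^\infty$.

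For the inverse map $\Psi$, I would exploit the surjection $I\colon C_c^\infty(G_\RR) \toptensor[i] W^\infty \to \ccInd^{G_\RR}_{G'_\RR}(W^\infty)$ from Fact \ref{fact:CompactInd}. Given $T$ in the right-hand Hom space and $(\xi,w) \in (\topdual V)^\infty \times W^\infty$, the scalar distribution $D_{\xi,w}(\varphi) := \langle \xi,\, T(I(\varphi \otimes w))\rangle$ on $G_\RR$ satisfies the covariance $D_{\xi,w}(L_g\varphi) = D_{g^{-1}\cdot \xi,\, w}(\varphi)$ by $G_\RR$-equivariance of $T$. Since $\xi$ is $G_\RR$-smooth, a convolution argument shows that $D_{\xi,w}$ is represented by a smooth function $F_{\xi,w}$ on $G_\RR$; the $G'_\RR$-averaging built into $I$ forces this function to descend appropriately, and I would set $\langle \Psi(T)(\xi), w\rangle := F_{\xi,w}(e)$. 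Continuity of $w \mapsto F_{\xi,w}(e)$ comes from continuity of $T$ and joint continuity of $I$, while $G'_\RR$-equivariance and smoothness of $\Psi(T)(\xi)$ under the contragredient $G'_\RR$-action on $\topdual W$ follow from the same covariance identity together with smoothness of $\xi$.

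The hard part will be the smoothness analysis underlying $\Psi$: proving rigorously that $D_{\xi,w}$ is a smooth function on $G_\RR$, that $w \mapsto \langle \Psi(T)(\xi), w\rangle$ is continuous on $W^\infty$, and that $\Psi(T)(\xi)$ lies in $(\topdual W)^\infty$ rather than merely in $\topdual W$. Once this is established, checking $\Phi \circ \Psi = \id$ and $\Psi \circ \Phi = \id$ reduces to substituting the two formulas into each other and using that smooth approximate identities on $G_\RR$ recover evaluation at the identity coset. The final unitary case follows automatically from the antilinear $G'_\RR$-equivariant identifications $(\topdual V)^\infty \simeq \overline{V^\infty}$ and $(\topdual W)^\infty \simeq \overline{W^\infty}$ afforded by the respective Hilbert inner products.
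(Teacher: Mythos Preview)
Your forward map $\Phi$ coincides with the paper's construction: the paper first passes through the chain
\[
\Hom_{G'_\RR}((\topdual V)^\infty,(\topdual W)^\infty)\;\simeq\;\Hom_{G'_\RR}((\topdual V)^\infty,(\topdual W)^{-\infty})\;\simeq\;\Hom_{G'_\RR}((\topdual V)^\infty\toptensor[i]W^\infty,\CC)\;\simeq\;\Hom_{G'_\RR}(W^\infty,V^{-\infty})
\]
(using Lemma~\ref{lem:HomIsomSmooth} and Proposition~\ref{prop:IsomorphismTensor}), and then defines, for $T\in\Hom_{G'_\RR}(W^\infty,V^{-\infty})$, a map $\widetilde\alpha(T)\colon C^\infty_c(G_\RR)\toptensor[i]W^\infty\to V^\infty$ by composing $\id\toptensor T$ with the smoothing map $C^\infty_c(G_\RR)\toptensor[i]V^{-\infty}\to V^\infty$, checks that $\widetilde\alpha(T)$ kills $\Ker(I)$, and uses Fact~\ref{fact:CompactInd} to descend to $\ccInd^{G_\RR}_{G'_\RR}(W^\infty)$. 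Unwinding the identifications, this is exactly your integral formula for $\Phi(S)$.

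The difference is in how bijectivity is established. The paper does \emph{not} construct $\Psi$ at all: instead it further identifies $\Hom_{G_\RR}(\ccInd^{G_\RR}_{G'_\RR}(W^\infty),V^\infty)\hookrightarrow\Hom_{G_\RR}(\ccInd^{G_\RR}_{G'_\RR}(W^\infty),V^{-\infty})\simeq\Hom_{G_\RR}((\topdual V)^\infty\toptensor[i]\ccInd^{G_\RR}_{G'_\RR}(W^\infty),\CC)$ and then observes that the total composite $\Hom_{G'_\RR}((\topdual V)^\infty\toptensor[i]W^\infty,\CC)\to\Hom_{G_\RR}((\topdual V)^\infty\toptensor[i]\ccInd^{G_\RR}_{G'_\RR}(W^\infty),\CC)$ is precisely Warner's Frobenius reciprocity for bilinear forms \cite[Theorem~5.3.3.1]{Wa72}, which is already known to be bijective. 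Your route instead reproves that black box by hand: the distribution $D_{\xi,w}$ and the Dixmier--Malliavin/convolution argument showing it is represented by a smooth function amount to constructing Warner's inverse explicitly. Your outline is correct (in particular, writing $\xi=\sum_i\pi^*(\psi_i)\xi_i$ yields $D_{\xi,w}$ as a finite sum of convolutions of distributions against test functions, hence smooth, and the same decomposition shows $w\mapsto F_{\xi,w}(e)$ is continuous for the Hilbert norm on $W$, not merely on $W^\infty$, since the derivatives get absorbed into $\varphi$). The paper's approach is shorter and avoids this analysis by delegating it to the literature; yours is more self-contained but duplicates a known result.
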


\begin{proof}
	By Proposition \ref{prop:IsomorphismTensor} and Lemma \ref{lem:HomIsomSmooth}, we have natural isomorphisms
	\begin{align*}
		\Hom_{G'_\RR}((\topdual{V})^{\infty}, (\topdual{W})^\infty) &\simeq \Hom_{G'_\RR}((\topdual{V})^{\infty}, (\topdual{W})^{-\infty}) \\
		&\simeq \Hom_{G'_\RR}((\topdual{V})^{\infty} \toptensor[i] W^\infty, \CC)
		\simeq \Hom_{G'_\RR}(W^\infty, V^{-\infty}).
	\end{align*}
	For $T \in \Hom_{G'_\RR}(W^\infty, V^{-\infty})$, consider the composition map $\widetilde{\alpha}(T)$
	\begin{align*}
		C^\infty_c(G_\RR) \toptensor[i] W^\infty \xrightarrow{\id\toptensor T} C^\infty_c(G_\RR) \toptensor[i] V^{-\infty} \rightarrow V^\infty,
	\end{align*}
	where is written as
	\begin{align*}
		\widetilde{\alpha}(T)(f) = \int_{G_\RR/G'_\RR} \int_{G'_\RR} \pi(g)T(\tau(h)f(gh)) d\mu'(h)d\nu(gG'_\RR)
	\end{align*}
	for $f \in C^\infty_c(G_\RR, W^\infty)$ (up to scaling).
	Then we have $\widetilde{\alpha}(T)(\Ker(I)) = 0$, where $I$ is the map defined in Fact \ref{fact:CompactInd}.
	By Fact \ref{fact:CompactInd}, the map $\widetilde{\alpha}(T)$ factors through a continuous linear map $\alpha(T)\colon \ccInd^{G_\RR}_{G'_\RR}(W^\infty) \rightarrow V^\infty$.
	Hence we obtain a linear map
	\begin{align*}
		\alpha \colon \Hom_{G'_\RR}(W^\infty, V^{-\infty}) \rightarrow \Hom_{G_\RR}(\ccInd^{G_\RR}_{G'_\RR}(W^\infty), V^\infty).
	\end{align*}
	By the Frobenius reciprocity \cite[Theorem 5.3.3.1]{Wa72} for bilinear forms, the composition 
	\begin{align*}
		\Hom_{G'_\RR}((\topdual{V})^\infty\toptensor[i]W^\infty, \CC) &\simeq \Hom_{G'_\RR}(W^\infty, V^{-\infty}) \\
		&\xrightarrow{\alpha} \Hom_{G_\RR}(\ccInd^{G_\RR}_{G'_\RR}(W^\infty), V^\infty) \\
		&\hookrightarrow \Hom_{G_\RR}(\ccInd^{G_\RR}_{G'_\RR}(W^\infty), V^{-\infty})\\
		&\simeq \Hom_{G_\RR}((\topdual{V})^\infty \toptensor[i] \ccInd^{G_\RR}_{G'_\RR}(W^\infty), \CC)
	\end{align*}
	is bijective and hence so is $\alpha$.
\end{proof}

\subsection{Direct integral decomposition}\label{subsect:DirectIntegral}

In this and next subsections, we summarize several results about the direct integral decomposition of unitary representations.
We recall the notion of direct integrals of Hilbert spaces following \cite[Part II. Chapter 1]{Di81}.

Let $(X, \mu)$ be a measure space with $\sigma$-finite measure $\mu$.
Let $(\calH_x)_{x \in X}$ be a family of separable Hilbert spaces.
We say that $(\calH_x)_{x \in X}$ equipped with a subspace $S \subset \prod_{x \in X} \calH_x$ is a measurable field of Hilbert spaces if the following conditions are satisfied:
\begin{enumerate}
	\item For any $v \in S$, the function $X \ni x \mapsto \|v(x)\|_{x} \in \CC$ is measurable.
	\item Let $v \in \prod_{x \in X} \calH_x$.
	If the function $x \mapsto \langle v(x), u(x)\rangle_x$ is measurable for any $u \in S$, then $v$ belongs to $S$.
	\item There exists a sequence $v_0, v_1, \ldots \in S$ such that $\set{v_i(x): i \in \NN}$ is dense in $\calH_x$ for any $x \in X$.
\end{enumerate}
Here $\langle\cdot, \cdot \rangle_x$ and $\|\cdot\|_x$ are the inner product and the norm of $\calH_x$, respectively.
An element of $S$ is called a measurable section.
If $v \in S$ satisfies $\|v\| := \left(\int_X \|v(x)\|^2_x d\mu(x)\right)^{1/2} < \infty$, the section $v$ is said to be square integrable.
We set
\begin{align*}
	\int_X^\oplus \calH_x d\mu(x) := \set{v \in S: v\text{ is square integrable}}/\set{v \in S: \|v\| = 0},
\end{align*}
which is called the direct integral of the measurable family $(\calH_x)_{x \in X}$.
Then it is known that $\int_X^\oplus \calH_x d\mu(x)$ is a separable Hilbert space with the norm $\|\cdot \|$.
See \cite[Propositions II.1.5 and 6]{Di81}.
We set $\calH:= \int_X^\oplus \calH_x d\mu(x)$.

\begin{definition}\label{def:PointwiseDefined}
	Let $\varphi \colon \calW\rightarrow \calH = \int_X^\oplus \calH_x d\mu(x)$ be a continuous linear map from a topological vector space.
	We say that $\varphi$ is \define{pointwise defined} if there exists a family of continuous linear maps $\varphi_x \colon \calW\rightarrow \calH_x$ ($x \in X$) such that
	$\varphi(v) = (\varphi_x(v))_{x \in X}$ for any $v \in \calW$ as elements of $\calH$.
\end{definition}
Note that the choice of $(\varphi_x)_{x \in X}$ is not unique.
Hereafter whenever we say that $\varphi$ is pointwise defined, we fix a family of continuous linear maps $\varphi_x$ satisfying the condition in Definition \ref{def:PointwiseDefined},
and we say that $\varphi_x$ is the component of $\varphi$ at $x \in X$.
The following fact is useful to find pointwise defined linear maps.
See \cite[Theorem 1.5]{Be88_plancherel} and \cite[Proposition 12.2.1]{Sc89}.

\begin{fact}\label{fact:PointWiseDefined}
	Let $\varphi \colon \calW \rightarrow \calH = \int^\oplus_X \calH_x d\mu(x)$
	be a Hilbert--Schmidt operator from a separable Hilbert space to the direct integral.
	Then $\varphi$ is pointwise defined.
\end{fact}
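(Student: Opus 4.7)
The plan is to transfer the Hilbert--Schmidt condition on $\varphi$ into a pointwise-a.e.\ $\ell^2$ estimate on the images of an orthonormal basis, and then to define the component maps $\varphi_x$ by an absolutely convergent series in $\calH_x$.

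First, fix an orthonormal basis $(e_n)_{n \in \NN}$ of the separable Hilbert space $\calW$, and for each $n$ choose a measurable section of $(\calH_x)_{x \in X}$ representing $\varphi(e_n)$; by slight abuse, write $\varphi(e_n)(x)$ for its value at $x$. The Hilbert--Schmidt property reads $\sum_n \|\varphi(e_n)\|_\calH^2 < \infty$, and by Tonelli this becomes
\[
\int_X \Bigl(\sum_n \|\varphi(e_n)(x)\|_x^2\Bigr) d\mu(x) = \sum_n \|\varphi(e_n)\|_\calH^2 < \infty.
\]
Hence outside a $\mu$-null set $N \subset X$ the quantity $c(x) := \bigl(\sum_n \|\varphi(e_n)(x)\|_x^2\bigr)^{1/2}$ is finite.

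Second, for $x \notin N$ define $\varphi_x(v) := \sum_n \langle v, e_n\rangle \varphi(e_n)(x)$ and set $\varphi_x := 0$ for $x \in N$. Cauchy--Schwarz in $\ell^2$ (using $\sum_n |\langle v, e_n\rangle|^2 = \|v\|_\calW^2$) gives $\|\varphi_x(v)\|_x \le c(x)\,\|v\|_\calW$, so each $\varphi_x\colon \calW \to \calH_x$ is a bounded linear operator, as required by Definition~\ref{def:PointwiseDefined}.

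Finally, one must verify that $(\varphi_x(v))_{x \in X}$ actually represents $\varphi(v)$ in $\calH$. For fixed $v \in \calW$ the partial sums $\varphi^N(v) := \sum_{n \le N} \langle v, e_n\rangle \varphi(e_n)$ converge to $\varphi(v)$ in $\calH$, so a subsequence converges $\mu$-a.e.\ in $\calH_x$ to $\varphi(v)(x)$; but by construction $\varphi^N(v)(x) \to \varphi_x(v)$ for every $x \notin N$. Hence $\varphi(v)(x) = \varphi_x(v)$ for a.e.\ $x$, which is the desired identity in $\calH$. The only delicate step is the first paragraph's choice of measurable representatives and the interchange of sum and integral; the rest is routine.
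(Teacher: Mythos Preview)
Your argument is correct and is essentially the standard proof of this result. Note, however, that the paper does not supply its own proof: the statement is recorded as a \emph{Fact} and simply referred to \cite[Theorem 1.5]{Be88_plancherel} and \cite[Proposition 12.2.1]{Sc89}. Your write-up matches the argument one finds in those references, so there is nothing to compare against in the paper itself.

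One small point worth making explicit (you hint at it in your final sentence): the family $(\varphi_x(v))_{x\in X}$ should itself be a measurable section. This holds because for $x\notin N$ it is the pointwise limit of the measurable partial sums $\sum_{n\le N}\langle v,e_n\rangle\,\varphi(e_n)(x)$, and pointwise limits of measurable sections are measurable by axiom (2) of a measurable field (the inner products with a fundamental sequence are limits of measurable functions, hence measurable). With this said, the equality $\varphi(v)=(\varphi_x(v))_{x\in X}$ in $\calH$ follows exactly as you wrote.
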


Let $G_\RR$ be a Lie group (or more generally separable locally compact group).
Consider unitary representations $(\pi_x, \calH_x)$ of $G_\RR$ on the Hilbert spaces $\calH_x$ in the above notation.
We say that the family $((\pi_x, \calH_x))_{x \in X}$ is measurable if $S$ is $G_\RR$-stable in $\prod_{x \in X} \calH_x$.
Then we can provide $\calH$ with a $G_\RR$-action by $\pi(g)v := (\pi_x(g)v(x))_{x \in X}$ for $v \in \calH$.
It is easy to see that $(\pi, \calH)$ is a unitary representation of $G_\RR$ (see \cite[Lemma 14.9.2]{Wa92_real_reductive_II}), which is called the direct integral of the unitary representations $((\pi_x, \calH_x))_{x\in X}$.

\begin{fact}\label{fact:PointWiseGequiv}[See e.g.\ {\cite[Lemma 1.3]{Be88_plancherel}}]
	Let $\varphi \colon \calW \rightarrow \calH = \int^\oplus_X \calH_x d\mu(x)$
	be a continuous linear map from a separable locally convex space to the direct integral.
	Assume that $\varphi$ is pointwise defined.
	\begin{enumparen}
		\item If $\varphi(\calW)$ is dense in $\calH$, then $\varphi_x(\calW)$ is dense in $\calH_x$ $\mu$-a.e.
		\item Suppose that $\calW$ is a continuous representation of $G_\RR$ and $\varphi$ is $G_\RR$-linear.
		Then $\varphi_x$ is $G_\RR$-linear $\mu$-a.e.
	\end{enumparen}
\end{fact}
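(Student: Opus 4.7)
The plan is to treat (1) and (2) separately, exploiting the separability of $\calW$ to reduce each statement to a countable collection of a.e.\ assertions.

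For part (1), first fix a countable dense subset $\set{w_n}_{n\in\NN} \subset \calW$, which exists by separability. Since $\varphi$ is continuous and $\varphi(\calW)$ is dense in $\calH$, the countable family $\set{\varphi(w_n)}$ is also dense in $\calH$. Because $\varphi$ is pointwise defined, each $\varphi(w_n)$ is a measurable section, so the family $\set{\varphi(w_n)}$ generates a measurable subfield $(H'_x)_{x\in X}$ of $(\calH_x)_{x\in X}$ by setting $H'_x := \overline{\mathrm{span}\{\varphi_x(w_n) : n \in \NN\}}$. The plan is to invoke the standard fact that the direct integral $\calH' := \int^\oplus_X H'_x\, d\mu(x)$ sits as a closed subspace of $\calH$ and contains every $\varphi(w_n)$; hence it must equal $\calH$. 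By the standard characterization of equality of direct integrals, this forces $H'_x = \calH_x$ for $\mu$-a.e.\ $x$, which gives $\overline{\varphi_x(\calW)} \supset H'_x = \calH_x$ a.e.

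For part (2), I would fix a countable dense subset $\set{g_k}_{k \in \NN}$ of $G_\RR$ (using second countability of a Lie group) and a countable dense subset $\set{w_n}$ of $\calW$. For each pair $(k,n)$, the $G_\RR$-linearity of $\varphi$ together with the fact that $\varphi$ is pointwise defined implies that there is a $\mu$-null set $N_{k,n} \subset X$ such that
\begin{align*}
    \varphi_x(g_k \cdot w_n) = \pi_x(g_k) \varphi_x(w_n) \qquad (x \notin N_{k,n}).
\end{align*}
Here I am using that two elements of a direct integral agree iff their components agree $\mu$-a.e. Set $N := \bigcup_{k,n} N_{k,n}$, which is still $\mu$-null. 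For $x \notin N$, the identity $\varphi_x(g_k w_n) = \pi_x(g_k)\varphi_x(w_n)$ holds on the dense countable set $\set{(g_k, w_n)}$. Using that $\varphi_x$ is continuous (by the pointwise-defined assumption), that the $G_\RR$-action on $\calW$ is jointly continuous, and that $\pi_x$ is strongly continuous as a unitary representation, a standard density argument extends the identity to all $(g, w) \in G_\RR \times \calW$.

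The main technical obstacle is the measurability step in part (1): one must verify that the assignment $x \mapsto H'_x$ really defines a measurable subfield in the sense of \cite[Part II, Chapter 1]{Di81}, and that its direct integral is the closed subspace of $\calH$ generated by the sections $\varphi(w_n)$. This is standard but requires careful use of the countable sequence of fundamental sections in the definition of a measurable field. A secondary (minor) obstacle in part (2) is the simultaneous joint-continuity passage, but this reduces to the observation that $(g, w) \mapsto \pi_x(g)\varphi_x(w)$ is separately continuous and that $\calH_x$ is metrizable, so continuity on a countable dense subset determines everything.
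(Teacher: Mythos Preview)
The paper does not give its own proof of this statement; it is recorded as a \emph{Fact} with a reference to \cite[Lemma 1.3]{Be88_plancherel}, so there is no in-paper argument to compare against. Your proof is correct and follows the standard route one finds in the literature: reduce to countable data via separability, use the measurable-subfield machinery of \cite[Part II, Chapter 1]{Di81} for (1), and a countable-null-set plus density argument for (2). The only points worth tightening are the ones you already flagged: in (1), the fact that $x\mapsto H'_x$ is a measurable subfield and that $\int^\oplus_X H'_x\,d\mu(x)$ equals the closed span of the $\varphi(w_n)$ is exactly \cite[Proposition II.1.8]{Di81}; in (2), your two-step density extension (first in $w$ at fixed $g_k$, then in $g$ at fixed $w$) is cleaner than invoking joint continuity and avoids any subtlety there.
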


The notion of nuclear spaces is useful to construct a Hilbert--Schmidt operator as in Fact \ref{fact:PointWiseDefined}.
Let $\calW$ be a separable nuclear space.
As we have seen in Fact \ref{fact:NuclearCharacterization}, any continuous linear map $\varphi \colon \calW\rightarrow \calH$ to a Hilbert space factors through a Hilbert--Schmidt operator $\overline{\varphi} \colon \calW_p \rightarrow \calH$, where $\calW_p$ is the completion of $\calW$ with respect to some continuous Hilbert semi-norm on $\calW$.
Clearly, $\calW_p$ is also separable.
The following formulation is useful to the representation theory, which has appeared in \cite[3.3]{Li18} and \cite[Theorem 5.2]{Ki19}.

\begin{proposition}\label{prop:DisintegrationNuclear}
	Let $\calW$ be a continuous representation of $G_\RR$ on a separable nuclear space and $\calH = \int^\oplus_X \calH_x d\mu(x)$ a direct integral of unitary representations of $G_\RR$.
	Then any continuous $G_\RR$-linear map $\varphi\colon \calW\rightarrow \calH$ is pointwise defined and each component $\varphi_x\colon \calW\rightarrow \calH_x$ is $G_\RR$-linear $\mu$-a.e.
	If, in addition, $\varphi(\calW)$ is dense in $\calH$, then $\varphi_x(\calW)$ is dense in $\calH_x$ $\mu$-a.e.
\end{proposition}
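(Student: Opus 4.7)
The plan is to assemble the result directly from Facts \ref{fact:NuclearCharacterization}, \ref{fact:PointWiseDefined}, and \ref{fact:PointWiseGequiv}; the only real substance is choosing the right factorization through a Hilbert space so that Fact \ref{fact:PointWiseDefined} can be applied, and then separating the argument for pointwise definedness from the argument for $G_\RR$-equivariance.

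First I would use the second characterization in Fact \ref{fact:NuclearCharacterization}: since $\calH$ is a Hilbert space, the continuous linear map $\varphi \colon \calW \rightarrow \calH$ lifts to a Hilbert--Schmidt operator $\overline{\varphi}\colon \calW_p \rightarrow \calH$ for some continuous Hilbert semi-norm $p$ on $\calW$, where $\calW_p$ is the Hilbert completion of $\calW/p^{-1}(0)$. Let $q\colon \calW\rightarrow \calW_p$ denote the canonical continuous linear map, which has dense image; since $\calW$ is separable, so is $\calW_p$. Applying Fact \ref{fact:PointWiseDefined} to the Hilbert--Schmidt operator $\overline{\varphi}$ yields continuous linear maps $\overline{\varphi}_x\colon \calW_p\rightarrow \calH_x$ such that $\overline{\varphi}(w) = (\overline{\varphi}_x(w))_{x\in X}$ for every $w\in \calW_p$. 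Setting $\varphi_x := \overline{\varphi}_x\circ q\colon \calW\rightarrow \calH_x$, we obtain continuous linear maps with $\varphi(v) = (\varphi_x(v))_{x\in X}$ for every $v\in \calW$. This shows that $\varphi$ is pointwise defined in the sense of Definition \ref{def:PointwiseDefined}.

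Once pointwise definedness is established, the two remaining assertions are a direct application of Fact \ref{fact:PointWiseGequiv}, which is stated for pointwise defined continuous linear maps from a separable locally convex space and applies because $\calW$ is separable nuclear and $\varphi$ is $G_\RR$-linear. Part (2) of that fact gives that each $\varphi_x$ is $G_\RR$-linear $\mu$-a.e., and part (1) gives the density of $\varphi_x(\calW)$ in $\calH_x$ $\mu$-a.e.\ whenever $\varphi(\calW)$ is dense in $\calH$.

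The main conceptual point, and the only one worth flagging, is that the intermediate Hilbert space $\calW_p$ need not carry a $G_\RR$-action compatible with $\overline{\varphi}$: the $G_\RR$-structure is used only at the level of $\calW$, and the factorization through $\calW_p$ is purely a tool to bring Hilbert--Schmidt theory to bear and hence invoke Fact \ref{fact:PointWiseDefined}. Once the components $\varphi_x$ exist, the $G_\RR$-equivariance and density statements are decoupled from the choice of $p$ and follow from Fact \ref{fact:PointWiseGequiv} applied to $\calW$ directly. There is no serious obstacle beyond this bookkeeping.
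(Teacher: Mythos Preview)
Your proposal is correct and follows essentially the same approach as the paper: the paper's proof is a one-line citation of Facts \ref{fact:PointWiseDefined} and \ref{fact:PointWiseGequiv}, with the Hilbert--Schmidt factorization via Fact \ref{fact:NuclearCharacterization} already spelled out in the paragraph preceding the proposition. You have simply made the bookkeeping explicit, including the useful observation that the intermediate Hilbert space $\calW_p$ need not carry a $G_\RR$-action.
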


\begin{proof}
	The proposition follows from Facts \ref{fact:PointWiseDefined} and \ref{fact:PointWiseGequiv}.
\end{proof}

In the representation theory of Lie groups, many smooth representations are nuclear and separable.
It is well-known that if $G_\RR$ is reductive or nilpotent, for any irreducible unitary representation $V$, the smooth representation $V^\infty$ is nuclear and separable.
Hence its induced representation (see Definition \ref{def:CompactInd}) is also separable and nuclear by Fact \ref{fact:CompactInd}.
A vector space of countable dimension equipped with the strongest locally convex linear topology is nuclear and separable.
Note that $V^\infty$ of a unitary representation $V$ of $G_\RR$ is nuclear only if $V$ is trace class.

\subsection{Disintegration of invariant operators}

We shall study disintegration of actions of unbounded operators.
We refer the reader to \cite[Chapter 12]{Sc89}.
In this subsection, we assume that any closable operator is densely defined.

\begin{definition}
	Let $\calH$ be a Hilbert space and $S$ a subset of $\End_{\CC}(\calH)$.
	We denote by $S'$ the commutator of $S$ in $\End_{\CC}(\calH)$.
	We say that a closable operator $T$ on $\calH$ with domain $D$ commutes with $S$ if $XD \subset D$ and $TXv = XTv$ for any $v \in D$ and $T \in S$.
\end{definition}

\begin{definition}
	A bounded operator $T$ on a direct integral $\calH = \int^\oplus_X \calH_x d\mu(x)$ is said to be \define{diagonalizable} if $T$ can be represented as a multiplication operator by some $f \in L^\infty(X, \mu)$.
\end{definition}

Let $\calH$ be a Hilbert space and $S$ a $*$-stable subset of $\End_{\CC}(\calH)$.
Then $S'$ is a von Neumann algebra.
By the von Neumann bicommutant theorem \cite[Corollary I.3.1]{Di81}, $S''$ is the closure of the algebra generated by $S \cup \set{\id}$ with respect to the strong operator topology.
Hence we have the following lemma.

\begin{lemma}\label{lem:CommutativeClosedOperator}
	Retain the above setting.
	Let $X$ be a closable operator on $\calH$ with domain $D$.
	If $X$ commutes with $S$, then the closure $\overline{X}$ commutes with $S''$.
\end{lemma}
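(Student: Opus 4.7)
The plan is to translate the commutation of the unbounded operator $X$ with $S$ into an invariance statement for the graph of $X$ inside $\calH\oplus\calH$, and then apply the von Neumann bicommutant theorem in the doubled Hilbert space. Recall that ``$\overline{X}$ commutes with $Z\in\End_{\CC}(\calH)$'' means exactly that $Z\cdot\mathrm{Dom}(\overline{X})\subset\mathrm{Dom}(\overline{X})$ and $\overline{X}Zv=Z\overline{X}v$ for all $v\in\mathrm{Dom}(\overline{X})$, which is equivalent to saying that the bounded operator $\Delta(Z):=Z\oplus Z$ on $\calH\oplus\calH$ preserves the closed subspace $\mathrm{Graph}(\overline{X})$.

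First I would verify that $\Delta(Y)\mathrm{Graph}(X)\subset\mathrm{Graph}(X)$ for every $Y\in S$; this is just a restatement of the hypothesis. Taking closures and using that $\Delta(Y)$ is bounded and $\mathrm{Graph}(\overline{X})=\overline{\mathrm{Graph}(X)}$ yields $\Delta(Y)\mathrm{Graph}(\overline{X})\subset\mathrm{Graph}(\overline{X})$. Because $S$ is $*$-stable, the same argument applies to $Y^{*}\in S$, so $\Delta(Y^{*})=\Delta(Y)^{*}$ preserves $\mathrm{Graph}(\overline{X})$ as well; hence the orthogonal complement is also $\Delta(Y)$-invariant. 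Therefore the orthogonal projection $P\in\End_{\CC}(\calH\oplus\calH)$ onto $\mathrm{Graph}(\overline{X})$ satisfies $P\Delta(Y)=\Delta(Y)P$ for every $Y\in S$, i.e.\ $P\in\Delta(S)'$.

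Next I would compute the relevant commutants in $M_{2}(\End_{\CC}(\calH))=\End_{\CC}(\calH\oplus\calH)$. A direct matrix computation shows $\Delta(S)'=M_{2}(S')$ (a matrix commutes with every $Y\oplus Y$ iff each entry lies in $S'$), and then another direct computation shows $(M_{2}(S'))'=\Delta(S'')$ (a matrix commuting with every $M_{2}(S')$-element must be a scalar multiple of the identity in the off-diagonal and must have equal diagonal entries lying in $S''$). Thus $\Delta(S)''=\Delta(S'')$. Since $P\in\Delta(S)'$ and $\Delta(Z)\in\Delta(S)''$ for $Z\in S''$, the trivial fact that $\Delta(S)'$ and $\Delta(S)''$ commute gives $P\Delta(Z)=\Delta(Z)P$, so $\Delta(Z)$ preserves $\mathrm{Graph}(\overline{X})$, which by the graph interpretation above means $\overline{X}$ commutes with $Z$.

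There is no serious obstacle here; the only subtle point is the asymmetry in the notion of commutation for unbounded operators (requiring both stability of the domain and equality on the domain), which is precisely what makes the graph formulation natural. The $*$-stability of $S$ is used crucially to get that $P$, not merely $\mathrm{Graph}(\overline{X})$, lies in the commutant $\Delta(S)'$, which is what allows the bicommutant theorem to do the work.
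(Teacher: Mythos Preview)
Your proof is correct and follows essentially the same route as the paper: translate commutation into $\Delta(S)$-invariance of the graph in $\calH\oplus\calH$, pass to the closure, and invoke the bicommutant theorem. The only cosmetic difference is that the paper uses the strong density of the algebra generated by $S\cup\{\id\}$ in $S''$ directly (a closed $\Delta(S)$-stable subspace is automatically $\Delta(S'')$-stable because closed subspaces are preserved under strong limits), whereas you instead compute $\Delta(S)''=\Delta(S'')$ via the matrix commutant identities $\Delta(S)'=M_{2}(S')$ and $M_{2}(S')'=\Delta(S'')$ and argue with the projection $P$; both arguments amount to the same thing.
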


\begin{proof}
	Let $\mathcal{G} \subset \calH\oplus \calH$ be the graph of $X$.
	By the definition of the commutativity, $\mathcal{G}$ is $\Delta(S)$-stable, where $\Delta(S) = \set{T\oplus T \in \End_{\CC}(\calH\oplus \calH) : T \in S}$.
	This implies that $\overline{\mathcal{G}}$ is $\Delta(S)$-stable and hence $\Delta(S'')$-stable.
	Therefore $\overline{X}$ commutes with $S''$.
\end{proof}

Let $\calA$ be a $*$-algebra, i.e.\ $\calA$ is a $\CC$-algebra equipped with a conjugate linear involutive antiautomorphism $(\cdot)^*$.
Let $V$ be a dense subspace of a Hilbert space $\calH$.
We say that an $\calA$-module $(\pi, V)$ is a $*$-representation if $\langle \pi(a)v, w\rangle = \langle v, \pi(a^*)w\rangle$ holds for any $a \in \calA$, $v, w \in V$.
Then it is easy to see that $\pi(a)$ is a closable operator for any $a \in \calA$.

Let $\calA$ be a $*$-algebra and $(\pi, V)$ a $*$-representation on a dense subspace $V$ of a direct integral $\calH = \int^\oplus_X \calH_x d\mu(x)$.
Assume that $V$ admits a separable nuclear locally convex linear topology such that the inclusion $\varphi \colon V\hookrightarrow \calH$ is continuous.
Then $\varphi$ is pointwise defined by Fact \ref{fact:PointWiseDefined}.

\begin{fact}[{\cite[Proposition 12.2.3]{Sc89}}]\label{fact:PointWiseStarRep}
	Let $\calN$ be the von Neumann algebra of all bounded diagonalizable operators on $\calH= \int^\oplus_X \calH_x d\mu(x)$ and set $V_x := \varphi_x(V)$.
	Assume that $\overline{\pi(X)}$ commutes with $\calN$ for any $X \in \calA$.
	Then there exist a $\mu$-null set $N$ and a family $\set{(\pi_x, V_x)}_{x \in X-N}$ of $*$-representations of $\calA$ such that $\varphi_x \colon V\rightarrow V_x$ is an $\calA$-homomorphism for any $x \in X-N$.
\end{fact}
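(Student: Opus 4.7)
The plan is to combine the disintegration theory of closed operators commuting with the diagonal algebra with Proposition \ref{prop:DisintegrationNuclear} applied to $\varphi$. The main idea is that because $\calN$ is a maximal abelian von Neumann subalgebra of $\End_{\CC}(\calH)$ (so $\calN' = \calN$), any closed operator commuting with $\calN$ is decomposable. Applied to each $\overline{\pi(X)}$, this yields a measurable field of closed operators $(T_x^X)_{x \in X}$ on the fibers such that $\overline{\pi(X)} = \int_X^\oplus T_x^X\,d\mu(x)$, meaning a section $v = (v_x)_x$ lies in $\Dom(\overline{\pi(X)})$ iff $v_x \in \Dom(T_x^X)$ a.e.\ and $\int \|T_x^X v_x\|_x^2\,d\mu(x) < \infty$, in which case $(\overline{\pi(X)}v)_x = T_x^X v_x$ a.e.

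By Proposition \ref{prop:DisintegrationNuclear} the inclusion $\varphi$ is pointwise defined, so we have components $\varphi_x \colon V\rightarrow \calH_x$ with $\varphi_x(V)$ dense in $\calH_x$ a.e. For every $v\in V$ and $X\in \calA$, the vector $\varphi(v)$ lies in $\Dom(\overline{\pi(X)})$ because $\pi(X)v\in V$, and the above decomposition gives
\begin{equation*}
T_x^X\varphi_x(v) = \varphi_x(\pi(X)v) \quad \text{for $\mu$-a.e.\ } x.
\end{equation*}
Next, I would reduce to a countable situation: fix a countable $\QQ(i)$-subspace $V_0\subset V$ dense in $V$ (using separability and nuclearity of $V$) and a countable $\QQ(i)$-subalgebra $\calA_0\subset \calA$ stable under $(\cdot)^*$ and sufficiently large (in the intended applications $\calA$ has at most countable dimension, so we may take $\calA_0=\calA$; in general one picks $\calA_0$ dense in a suitable sense). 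After replacing $V_0$ by $\calA_0 V_0$, which is still countable, one obtains a $\mu$-null set $N$ — the countable union of the negligible sets associated with each pair $(v,X) \in V_0\times \calA_0$ — outside which the displayed identity holds simultaneously for all such pairs.

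For $x\notin N$, set $V_x:=\varphi_x(V)$ and attempt to define $\pi_x(X)\varphi_x(v):=\varphi_x(\pi(X)v)$. To see this is well defined and sends $V_x$ into itself, observe that on the countable dense piece $\varphi_x(V_0)$ the value coincides with $T_x^X\varphi_x(v)$, and $T_x^X$ is a closed (hence graph-closed) operator; this reduces well-definedness on $V_x$ to the pointwise identity already secured on $V_0$, extended by $\QQ(i)$-linearity and by closedness of $T_x^X$ to take limits within the closure of $\varphi_x(V_0)$ where needed. With $\pi_x$ thus defined, $\varphi_x\colon V\rightarrow V_x$ is by construction an $\calA$-homomorphism. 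The $*$-property $\langle \pi_x(X)u,w\rangle_x = \langle u,\pi_x(X^*)w\rangle_x$ for $u,w\in V_x$ follows from the corresponding identity on $V$: for $u=\varphi_x(v)$, $w=\varphi_x(v')$ with $v,v'\in V_0$, the identity $\langle \pi(X)v,v'\rangle_V = \langle v,\pi(X^*)v'\rangle_V$ (suitably transported through $\varphi_x$) holds a.e., and the countable intersection of these null sets may be absorbed into $N$.

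The principal obstacle I anticipate is the passage from the $\mu$-a.e.\ identities, each attached to a single pair $(v,X)$, to a single null set $N$ independent of $(v,X)\in V\times \calA$, together with the verification that $T_x^X$ actually restricts to an operator $V_x\rightarrow V_x$. Both issues are handled via the countability reduction above together with the closedness of $T_x^X$; the nuclearity and separability of the topology on $V$ are exactly what let us pick a sufficiently nice countable $V_0$ and apply the Hilbert--Schmidt factorization underlying Fact \ref{fact:PointWiseDefined} uniformly.
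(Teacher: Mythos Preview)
The paper does not prove this statement; it is cited as \cite[Proposition 12.2.3]{Sc89} and used as a black box, with only a remark afterwards explaining how to arrange the measure-theoretic hypotheses assumed in Schm\"udgen's book. So there is no in-paper argument to compare against; you are attempting to reconstruct Schm\"udgen's proof.

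Your overall strategy---decompose each $\overline{\pi(X)}$ fiberwise, then pass to a countable family to get a single null set---is the right shape, but two points are wrong or incomplete.

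First, the assertion that $\calN$ is maximal abelian, i.e.\ $\calN'=\calN$, is false unless $\mu$-almost every fiber $\calH_x$ is one-dimensional. In general $\calN'$ is the von Neumann algebra of all \emph{decomposable} bounded operators $\int^\oplus T_x\,d\mu(x)$, which strictly contains $\calN$ (the operators that are scalar on each fiber). The fact you actually need---that a closed densely defined operator commuting with $\calN$ decomposes as $\int^\oplus T_x^X\,d\mu(x)$---is true, but it is a separate theorem (due to Nussbaum and treated in Chapter~12 of \cite{Sc89}); it does not follow from $\calN'=\calN$.

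Second, the extension from the countable $V_0$ to all of $V$ is not justified. To pass from $T_x^X\varphi_x(v_n)=\varphi_x(\pi(X)v_n)$ for $v_n\in V_0$ to the same identity for $v\in V$ via closedness of $T_x^X$, you would need $\varphi_x(\pi(X)v_n)\to\varphi_x(\pi(X)v)$ along some sequence $v_n\to v$; this requires continuity of $\pi(X)\colon V\to V$ in the nuclear topology, which is \emph{not} part of the hypotheses (only the inclusion $\varphi\colon V\hookrightarrow\calH$ is assumed continuous). Without it you cannot conclude, for $x$ outside a \emph{fixed} null set, that $\varphi_x(v)\in\Dom(T_x^X)$ and $T_x^X\varphi_x(v)=\varphi_x(\pi(X)v)$ for every $v\in V$, and hence you cannot show that $\pi_x(X)\varphi_x(v):=\varphi_x(\pi(X)v)$ is well defined on all of $V_x=\varphi_x(V)$. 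Schm\"udgen's argument circumvents this by exploiting the Hilbert--Schmidt factorization of $\varphi$ through a separable Hilbert space more directly; your sketch does not.
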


\begin{remark}
	In the reference \cite{Sc89}, $X$ is assumed to be locally compact $\sigma$-compact metric space and $\mu$ to be regular Borel.
	We can replace $(X, \mu)$ in our direct integral to satisfy this assumption.
	In fact, the von Neumann algebra $\calN$ contains a norm closed separable unital $C^*$-algebra $\calN_0$ which is dense in $\calN$ with respect to the strong operator topology,
	and we can replace $X$ with the compact metric space $\spec(\calN_0)$
	(see \cite[Corollary II.6 and Theorem II.6.4]{Di81}).
\end{remark}

Let $G_\RR$ be a reductive Lie group (or more generally separable locally compact group of type I)
and $(\pi, \calH)$ a unitary representation of $G_\RR$ on a separable Hilbert space.
Then $(\pi, \calH)$ has the irreducible decomposition of the form
\begin{align*}
	\calH \simeq \int^\oplus_{\widehat{G}_\RR} \calH_\tau \toptensor \calM_\tau d\mu(\tau),
\end{align*}
where $\widehat{G}_\RR$ is the unitary dual of $G_\RR$, $\mu$ is a Borel measure on $\widehat{G}_\RR$ and $\calH_\tau$ is a representation space of $\tau \in \widehat{G}_\RR$.
The space $\calM_\tau$ is a Hilbert space on which $G_\RR$ acts trivially and $\toptensor$ means the tensor product of Hilbert spaces.
It is known that the function $\widehat{G}_\RR \ni \tau \mapsto \dim_{\CC}(\calM_\tau)$ is $\mu$-measurable.
See \cite[Theorem 8.6.6]{Di77}.

\begin{definition}\label{def:SupMultiplitiy}
	We set $\calM_{G_\RR}(\calH) := \esssup_{\tau \in \widehat{G}_\RR}\set{\dim_{\CC}(\calM_\tau)}$.
\end{definition}

The von Neumann algebra of all bounded diagonalizable operators of the direct integral coincides with $\pi(G_\RR)' \cap \pi(G_\RR)''$, which is the center of $\pi(G_\RR)''$.
See the construction of the direct integral decomposition into factorial representations (\cite[Theorem 8.4.2]{Di77} or \cite[Theorem 14.10.2]{Wa92_real_reductive_II}).

\begin{lemma}\label{lem:FactorStarRep}
	Fix $\tau \in \widehat{G}_\RR$.
	Let $\calA$ be a $*$-algebra and $(\alpha, D)$ a $*$-representation of $\calA$ on a dense subspace $D\subset \calH_\tau \toptensor \calM_\tau$.
	Assume that any element of $\alpha(\calA)$ commutes with $\tau(G_\RR)\toptensor \id$.
	Then there exists a $*$-representation $(\alpha', D')$ of $\calA$ on a dense subspace $D' \subset \calM_\tau$ such that $\overline{\alpha(X)} = \overline{\id\otimes \alpha'(X)}$ for any $X \in \calA$.
\end{lemma}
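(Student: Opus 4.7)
The plan is to combine Lemma~\ref{lem:CommutativeClosedOperator} with Schur's lemma to reduce to the factorial structure of $B(\calH_\tau)\otimes \id$ acting on $\calH_\tau \toptensor \calM_\tau$, and then to extract $\alpha'(X)$ by compressing to a distinguished copy of $\calM_\tau$.

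First, I would apply Lemma~\ref{lem:CommutativeClosedOperator} to the $*$-stable set $\tau(G_\RR)\otimes \id$. The hypothesis gives that each $\alpha(X)$ commutes with $\tau(G_\RR)\otimes \id$, so every closure $\overline{\alpha(X)}$ commutes with the von Neumann algebra $(\tau(G_\RR)\otimes \id)''$. Since $(\pi_\tau, \calH_\tau)$ is irreducible, Schur's lemma gives $\tau(G_\RR)'' = \End_{\CC}(\calH_\tau)$, whence $(\tau(G_\RR)\otimes \id)'' = \End_{\CC}(\calH_\tau)\otimes \id$. Therefore each $\overline{\alpha(X)}$ commutes with every operator of the form $T\otimes \id$, $T \in \End_{\CC}(\calH_\tau)$.

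Second, I would invoke the standard structure theorem for closed densely defined operators affiliated with the commutant of a Type~I factor: any closed densely defined operator $R$ on $\calH_\tau\toptensor\calM_\tau$ that commutes with $\End_{\CC}(\calH_\tau)\otimes \id$ is of the form $R = \overline{\id\otimes S}$ for a unique closed densely defined operator $S$ on $\calM_\tau$. This can be proved by applying polar decomposition $R = U|R|$ and then the spectral theorem inside the commutant $\id\otimes \End_{\CC}(\calM_\tau)$, which forces $U = \id\otimes U_0$ and $|R| = \overline{\id\otimes T_0}$ for a positive self-adjoint $T_0$; one then sets $S := U_0 T_0$. Applying this to each $\overline{\alpha(X)}$ yields a closed densely defined operator $S_X$ on $\calM_\tau$ with $\overline{\alpha(X)} = \overline{\id\otimes S_X}$.

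Third, I would produce the common dense domain. Fix a unit vector $e_0 \in \calH_\tau$ and set $p_0 := |e_0\rangle\langle e_0|\otimes \id \in \End_{\CC}(\calH_\tau)\otimes \id$. Since $\overline{\alpha(X)}$ commutes with $p_0$, for every $v \in D$ one has $p_0 v \in \Dom(\overline{\alpha(X)})$ and $\overline{\alpha(X)}(p_0 v) = p_0 \overline{\alpha(X)} v = p_0 \alpha(X) v \in p_0 D$, using that $\alpha(X) v \in D$. Hence $p_0 D$ is contained in $\bigcap_X \Dom(\overline{\alpha(X)})$, is stable under every $\overline{\alpha(X)}$, and is dense in $p_0(\calH_\tau\toptensor \calM_\tau) = e_0\otimes \calM_\tau$. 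Via the canonical isometry $e_0\otimes \calM_\tau \cong \calM_\tau$ I would define
\[
D' := \{\, m \in \calM_\tau : e_0\otimes m \in p_0 D\,\}, \qquad \alpha'(X) := S_X|_{D'},
\]
so that $D'$ is dense, $\alpha'(X)$-stable, and the formula $\alpha(X)(e_0\otimes m) = e_0\otimes \alpha'(X) m$ makes $\alpha'$ into a representation. The $*$-property $\langle \alpha'(X)m, n\rangle = \langle m, \alpha'(X^*) n\rangle$ on $D'$ follows from the corresponding identity for $\alpha$ together with the isometric inclusion $e_0\otimes \calM_\tau \hookrightarrow \calH_\tau\toptensor\calM_\tau$.

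The main obstacle will be the final equality $\overline{\alpha(X)} = \overline{\id\otimes \alpha'(X)}$, which requires showing that $D'$ is a core for $S_X$ (the inclusion $\overline{\id\otimes \alpha'(X)} \subset \overline{\alpha(X)}$ is immediate, since $\alpha'(X) \subset S_X$). I would handle this by transferring the core property from $D$ to $D'$ using the whole system of rank-one operators $|e_0\rangle\langle e_i|\otimes \id$ associated with an orthonormal basis of $\calH_\tau$ containing $e_0$; these all commute with $\overline{\alpha(X)}$ by the first step, so given $v\in D$ one can approximate each coordinate of $v$ along the basis by elements of $D'$, and assembling these approximations shows that the graph of $\id\otimes \alpha'(X)$ is dense in the graph of $\overline{\id\otimes S_X} = \overline{\alpha(X)}$. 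This core argument, rather than the existence of $S_X$, is the delicate point.
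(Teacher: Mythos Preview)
Your approach is correct but differs from the paper's. You pass to closures via Lemma~\ref{lem:CommutativeClosedOperator}, invoke the polar-decomposition/affiliation structure theorem to write $\overline{\alpha(X)}=\overline{\id\otimes S_X}$, and then compress by a single rank-one projection $p_0=|e_0\rangle\langle e_0|\otimes\id$ to build $D'$. The paper instead works directly with the graph $\mathcal{G}\subset\calH_\tau\toptensor(\calM_\tau\oplus\calM_\tau)$ of $\alpha(X)$: since $\overline{\mathcal{G}}$ is $G_\RR$-stable and $\calH_\tau$ is irreducible, one has $\overline{\mathcal{G}}=\calH_\tau\toptensor\Phi(\overline{\mathcal{G}})$ where $\Phi(V):=\spn{(\varphi\toptensor\id)v:v\in V,\ \varphi\in\topdual{\calH_\tau}}$; this $\Phi(\overline{\mathcal{G}})$ is automatically a closed graph, and one sets $D':=\Phi(D)$ and lets $\alpha'(X)$ be the operator with graph $\Phi(\mathcal{G})$. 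The payoff of the paper's route is that the step you flag as delicate---the equality $\overline{\id\otimes\alpha'(X)}=\overline{\alpha(X)}$---comes for free, because $\calH_\tau\otimes\Phi(\mathcal{G})$ is dense in $\calH_\tau\toptensor\Phi(\overline{\mathcal{G}})=\overline{\mathcal{G}}$ by construction. Your route trades this for an appeal to a standard structure theorem, but then you must run the core argument by hand; note in particular that the paper's $D'=\Phi(D)$ already contains every basis-coefficient $m_i$ of every $v\in D$, whereas your $p_0D$ only captures the $e_0$-coefficient, which is exactly why you need the extra approximation step through the operators $|e_0\rangle\langle e_i|\otimes\id$.
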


\begin{proof}
	For a Hilbert space $\calW$ and a subspace $V\subset \calH_\tau \toptensor \calW$, we set
	\begin{align*}
		\Phi(V) := \spn{(\varphi\toptensor \id)(v)\in \calW: v\in V, \varphi \in \topdual{\calH_\tau}}.
	\end{align*}
	Note that for $u = \sum_{i=0}^\infty v_i \otimes w_i \in \calH_\tau\toptensor \calW$, if $\set{v_i}$ is an orthonormal basis of $\calH_\tau$ and $u$ belongs to $V$, then we have $w_i \in \Phi(V)$ for any $i$.

	Fix $X \in \calA$ and let $\mathcal{G}$ be the graph of $\alpha(X)$.
	We shall construct a closable operator $\alpha'(X)$ on $\calM_\tau$ with the domain $\Phi(D)$ satisfying
	\begin{align}
		\alpha(X)\left(\sum_{i=0}^\infty v_i \otimes w_i \right)
		= \sum_{i=0}^\infty v_i \otimes \alpha'(X) w_i \label{eqn:GequivClosable}
	\end{align}
	for any orthonormal basis $\set{v_i}$ of $\calH_\tau$ and $\sum_{i=0}^\infty v_i \otimes w_i \in D$.

	$\overline{\mathcal{G}}$ is a $G_\RR$-stable closed subspace of $\calH_\tau \toptensor \calM_\tau \oplus \calH_\tau \toptensor \calM_\tau$.
	We regard $\mathcal{G}$ and $\overline{\mathcal{G}}$ as subspaces of $\calH_\tau \toptensor (\calM_\tau \oplus \calM_\tau) \simeq \calH_\tau \toptensor \calM_\tau \oplus \calH_\tau \toptensor \calM_\tau$.
	Since $\calH_\tau$ is an irreducible $G_\RR$-representation, $\Phi(\overline{\mathcal{G}})$ is closed and we have
	$\overline{\mathcal{G}} = \calH_\tau \toptensor \Phi(\overline{\mathcal{G}})$.
	Since $\overline{\mathcal{G}}$ is a graph, so is $\Phi(\overline{\mathcal{G}})$ in $\calM_\tau \oplus \calM_\tau$.

	We set $D' := \Phi(D)$ and $\mathcal{G'} := \Phi(\mathcal{G})$.
	Then $\mathcal{G'}$ is a dense subspace of the graph $\Phi(\overline{\mathcal{G}})$ and hence $\mathcal{G'}$ is a graph.
	Let $\alpha'(X)$ denote the corresponding closable operator to the graph $\mathcal{G}'$.
	Then the domain of $\alpha'(X)$ is equal to $D'$.
	Since $\calH_\tau \otimes \mathcal{G}'$ is dense in $\overline{\mathcal{G}} = \calH_\tau \toptensor \Phi(\overline{\mathcal{G}})$, we have $\overline{\id \otimes \alpha'(X)} = \overline{\alpha(X)}$.

	By construction, $\alpha'(X)$ satisfies \eqref{eqn:GequivClosable} and $\Im(\alpha'(X)) = \Phi(\Im(\alpha(X))) \subset D'$.
	Hence $(\alpha', D')$ is an $\calA$-module.
	Since $(\alpha, D)$ is a $*$-representation, we have $\overline{\alpha(X)} \subset \alpha(X^*)^* = \overline{\alpha(X^*)}^*$.
	For any $w, w' \in D'$ and unit vector $v \in \calH_\tau$, we have
	\begin{align*}
		\langle \alpha'(X)w, w'\rangle &= \langle v\otimes \alpha'(X)w, v\otimes w\rangle \\
		&= \langle \overline{\alpha(X)} (v\otimes w), v\otimes w\rangle \\
		&= \langle v\otimes w, \overline{\alpha(X^*)} (v\otimes w)\rangle \\
		&= \langle w, \alpha'(X^*)w'\rangle.
	\end{align*}
	This shows that $(\alpha', D')$ is a $*$-representation of $\calA$.
\end{proof}

Summarizing Proposition \ref{prop:DisintegrationNuclear}, Fact \ref{fact:PointWiseStarRep} and Lemma \ref{lem:FactorStarRep}, we obtain the following result.
Retain the above notations $G_\RR$ and $\calH$.
Let $\calW$ be a dense $G_\RR$-stable subspace of $\calH$ and $(\alpha, \calW)$ a $*$-representation of a $*$-algebra $\calA$.
Assume that
\begin{enumerate}
	\item $\calW$ admits a separable nuclear locally convex linear topology such that the $G_\RR$-representation on $\calW$ is continuous and the inclusion $\iota\colon \calW\hookrightarrow \calH$ is continuous
	\item $\alpha(X)$ commutes with $\pi(G_\RR)$ for any $X \in \calA$.
\end{enumerate}
Then $\iota$ is pointwise defined by Proposition \ref{prop:DisintegrationNuclear}, and $\overline{\alpha(X)}$ commutes with $\pi(G_\RR)''$ for any $X \in \calA$ by Lemma \ref{lem:CommutativeClosedOperator}.

\begin{theorem}\label{thm:PointWiseGAmodule}
	There exist a $\mu$-null set $N$ of $\widehat{G}_\RR$ and $\set{(\alpha_\tau, V_\tau)}_{\tau \in \widehat{G}_\RR -N}$, a family of $*$-representations of $\calA$ on dense subspaces $V_\tau \subset \calM_\tau$ satisfying the following conditions for any $\tau \in \widehat{G}_\RR -N$:
	\begin{enumparen}
		\item\label{enum:PointWiseGAmoduleConti} the component $\iota_\tau\colon \calW\rightarrow \calH_\tau \toptensor \calM_\tau$ is continuous and $G_\RR$-linear
		\item\label{enum:PointWiseGAmoduleDense} the image of $\iota_\tau$ is dense in $\calH_\tau \toptensor \calM_\tau$ and contained in the domain of $\overline{\id \otimes \alpha_\tau(X)}$ for any $X \in \calA$
		\item\label{enum:PointWiseGAmoduleEquiv} $\iota_\tau \circ \alpha(X) = \overline{\id \otimes \alpha_\tau(X)} \circ \iota_\tau$ holds for any $X \in \calA$.
	\end{enumparen}
	Moreover, we have $\bigcap_{\tau \in \widehat{G}_\RR-N} \Ker(\iota_\tau) = 0$.
\end{theorem}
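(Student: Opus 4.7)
The plan is to combine Proposition~\ref{prop:DisintegrationNuclear}, Fact~\ref{fact:PointWiseStarRep}, and Lemma~\ref{lem:FactorStarRep}, as the discussion immediately preceding the statement suggests. First, I would apply Proposition~\ref{prop:DisintegrationNuclear} to the continuous $G_\RR$-equivariant inclusion $\iota\colon \calW \hookrightarrow \calH$: since $\calW$ is separable and nuclear and $\iota(\calW)$ is dense in $\calH$, there is a $\mu$-null set $N_0$ such that $\iota$ is pointwise defined with components $\iota_\tau$ that are continuous, $G_\RR$-linear, and have dense image for every $\tau \notin N_0$. This already delivers condition~\ref{enum:PointWiseGAmoduleConti} and the density half of~\ref{enum:PointWiseGAmoduleDense}.

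Next, since $\alpha(X)$ commutes with $\pi(G_\RR)$, Lemma~\ref{lem:CommutativeClosedOperator} implies that $\overline{\alpha(X)}$ commutes with the bicommutant $\pi(G_\RR)''$, and in particular with the center $\calN := \pi(G_\RR)' \cap \pi(G_\RR)''$, which is the algebra of bounded diagonalizable operators on $\calH$. Fact~\ref{fact:PointWiseStarRep} then supplies a $\mu$-null set $N_1$ and $*$-representations $(\tilde{\alpha}_\tau, V'_\tau)$ of $\calA$ on $V'_\tau := \iota_\tau(\calW) \subset \calH_\tau \toptensor \calM_\tau$ for $\tau \notin N_1$, with $\iota_\tau$ intertwining $\alpha$ and $\tilde{\alpha}_\tau$. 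For $\tau \notin N_0 \cup N_1$, $v \in \calW$, and $g \in G_\RR$, the computation
\begin{align*}
\tilde{\alpha}_\tau(X)(\tau(g)\toptensor \id)\iota_\tau(v) &= \tilde{\alpha}_\tau(X)\iota_\tau(\pi(g)v) = \iota_\tau(\alpha(X)\pi(g)v) \\
&= \iota_\tau(\pi(g)\alpha(X)v) = (\tau(g)\toptensor \id)\tilde{\alpha}_\tau(X)\iota_\tau(v),
\end{align*}
together with density of $V'_\tau$ in $\calH_\tau \toptensor \calM_\tau$, shows that $\tilde{\alpha}_\tau(X)$ commutes with $\tau(G_\RR)\toptensor \id$ on $V'_\tau$.

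Setting $N := N_0 \cup N_1$, I would then apply Lemma~\ref{lem:FactorStarRep} to each $(\tilde{\alpha}_\tau, V'_\tau)$ with $\tau \notin N$ to obtain a $*$-representation $(\alpha_\tau, V_\tau)$ on a dense subspace $V_\tau \subset \calM_\tau$ satisfying $\overline{\tilde{\alpha}_\tau(X)} = \overline{\id \otimes \alpha_\tau(X)}$; consequently $\iota_\tau(\calW) = V'_\tau$ lies in the domain of $\overline{\id \otimes \alpha_\tau(X)}$, completing~\ref{enum:PointWiseGAmoduleDense}, and composing with $\iota_\tau \circ \alpha(X) = \tilde{\alpha}_\tau(X) \circ \iota_\tau$ yields~\ref{enum:PointWiseGAmoduleEquiv}. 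For the intersection claim, any $v \in \bigcap_{\tau \notin N} \Ker(\iota_\tau)$ produces an element $\iota(v) \in \calH$ whose components vanish $\mu$-a.e., hence $\iota(v) = 0$, and thus $v = 0$ since $\calW \hookrightarrow \calH$ is injective. The main obstacle, I expect, is the bookkeeping of null sets: Fact~\ref{fact:PointWiseStarRep} must be invoked so that a single $N_1$ handles all of $\calA$ simultaneously (which is packaged into its statement), and the $G_\RR$-equivariance of $\tilde{\alpha}_\tau$ needed for Lemma~\ref{lem:FactorStarRep} must be deduced intrinsically from the $\calA$- and $G_\RR$-equivariance of $\iota_\tau$ together with density of its image, rather than from any countability hypothesis on $G_\RR$.
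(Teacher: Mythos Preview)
Your proposal is correct and follows exactly the route the paper indicates: the paper does not give a separate proof but states the theorem as a summary of Proposition~\ref{prop:DisintegrationNuclear}, Fact~\ref{fact:PointWiseStarRep}, and Lemma~\ref{lem:FactorStarRep}, with the preparatory remarks before the statement already recording that $\iota$ is pointwise defined and that $\overline{\alpha(X)}$ commutes with $\pi(G_\RR)''$. Your write-up is a faithful expansion of this outline; the only cosmetic point is that the density of $V'_\tau$ is not needed to deduce that $\tilde{\alpha}_\tau(X)$ commutes with $\tau(G_\RR)\toptensor\id$ (your displayed computation already proves it on all of $V'_\tau=\iota_\tau(\calW)$), but density is of course required for the application of Lemma~\ref{lem:FactorStarRep}.
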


We shall state corollaries of Theorem \ref{thm:PointWiseGAmodule}.
Retain the notation in the theorem.

\begin{corollary}\label{cor:DirectIntegralAnnPIdeg}
	We have
	\begin{align}
		\Ann_{\calA}(\calW) &= \bigcap_{\tau \in \widehat{G}_\RR - N} \Ann_{\calA}(V_\tau), \label{eqn:DirectIntegralAnn} \\
		\PIdeg(\calA/\Ann_{\calA}(\calW)) &\leq \supmul_{G_\RR}(\calH). \label{eqn:DirectIntegralPIdeg}
	\end{align}
\end{corollary}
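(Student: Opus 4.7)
The plan is to derive both statements as essentially formal consequences of the three properties produced by Theorem \ref{thm:PointWiseGAmodule}, combined with Proposition \ref{prop:pidegUpperbound} for the PI-degree bound.

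For \eqref{eqn:DirectIntegralAnn}, I would establish the two inclusions separately. For $\Ann_\calA(\calW) \subset \Ann_\calA(V_\tau)$ for each $\tau \notin N$: if $\alpha(X) = 0$, then property (iii) of Theorem \ref{thm:PointWiseGAmodule} gives $\overline{\id\otimes \alpha_\tau(X)} \circ \iota_\tau = \iota_\tau \circ \alpha(X) = 0$, so the closed operator $\overline{\id\otimes \alpha_\tau(X)}$ annihilates $\iota_\tau(\calW)$. This subspace is dense in $\calH_\tau \toptensor \calM_\tau$ by (ii), and since $\overline{\id\otimes \alpha_\tau(X)}$ is by definition the closure of $\id\otimes \alpha_\tau(X)$ on $\calH_\tau\otimes V_\tau$, its vanishing on a dense subset of its domain forces $\id\otimes \alpha_\tau(X) = 0$ on $\calH_\tau\otimes V_\tau$, hence $\alpha_\tau(X) = 0$. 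Conversely, if $X$ lies in $\bigcap_{\tau\notin N}\Ann_\calA(V_\tau)$, then for each such $\tau$ and each $w \in \calW$, property (iii) gives $\iota_\tau(\alpha(X)w) = 0$, so by the last clause of Theorem \ref{thm:PointWiseGAmodule} we get $\alpha(X)w \in \bigcap_{\tau\notin N}\Ker(\iota_\tau) = 0$, whence $X \in \Ann_\calA(\calW)$.

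For \eqref{eqn:DirectIntegralPIdeg}, I may assume $c := \supmul_{G_\RR}(\calH) < \infty$, since the inequality is otherwise trivial. By definition of the essential supremum, there is a $\mu$-null set $N_1 \subset \widehat{G}_\RR$ with $\dim_\CC(\calM_\tau) \leq c$ for every $\tau \notin N_1$. I apply Proposition \ref{prop:pidegUpperbound} to the family $\{V_\tau\}_{\tau \in \widehat{G}_\RR-(N\cup N_1)}$ of $\calA/\Ann_\calA(\calW)$-modules (these are well-defined as quotient modules by the inclusion established in the previous paragraph). To verify the hypothesis of Proposition \ref{prop:pidegUpperbound}, I repeat the second-inclusion argument on the smaller index set: for $X \in \bigcap_{\tau\notin N\cup N_1}\Ann_\calA(V_\tau)$ and $w\in \calW$, one has $\iota_\tau(\alpha(X)w) = 0$ for every $\tau$ outside the $\mu$-null set $N\cup N_1$; since $\iota\colon \calW\hookrightarrow \calH$ is injective and a direct-integral section vanishes when its components vanish off a null set, this forces $\alpha(X)w = 0$, so $X \in \Ann_\calA(\calW)$. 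Then $\dim_\CC(V_\tau) \leq \dim_\CC(\calM_\tau) \leq c$ on the selected index set, and Proposition \ref{prop:pidegUpperbound} delivers $\PIdeg(\calA/\Ann_\calA(\calW)) \leq c$.

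The only delicate point I anticipate is the passage from $\overline{\id\otimes \alpha_\tau(X)}|_{\iota_\tau(\calW)} = 0$ to $\alpha_\tau(X) = 0$ on $V_\tau$, which requires unpacking the convention that $\overline{\id\otimes \alpha_\tau(X)}$ is the closure of the honestly defined map $\id\otimes \alpha_\tau(X)$ on $\calH_\tau\otimes V_\tau$; once this is clarified, the rest of the argument is a direct bookkeeping exercise combining the three properties of the disintegration with injectivity of the inclusion $\calW\hookrightarrow \calH$.
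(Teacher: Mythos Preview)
Your proposal is correct and follows the same approach as the paper's proof, which is extremely terse: the paper simply says that \eqref{eqn:DirectIntegralAnn} ``follows easily from $\bigcap_{\tau \in \widehat{G}_\RR-N} \Ker(\iota_\tau) = 0$,'' then enlarges $N$ so that $\dim_\CC(\calM_\tau) = \dim_\CC(V_\tau)$ is bounded by $\supmul_{G_\RR}(\calH)$ off $N$ and invokes Proposition~\ref{prop:pidegUpperbound}. Your write-up spells out both inclusions for \eqref{eqn:DirectIntegralAnn} and explicitly re-verifies the annihilator identity after enlarging the null set, which the paper leaves implicit; the ``delicate point'' you flag is handled correctly by the closed-graph argument (a closed operator vanishing on a dense subspace of the ambient Hilbert space must be zero everywhere).
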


\begin{proof}
	\eqref{eqn:DirectIntegralAnn} follows easily from $\bigcap_{\tau \in \widehat{G}_\RR-N} \Ker(\iota_\tau) = 0$.
	We can replace $N$ by a larger $\mu$-null set to satisfy
	\begin{align*}
		\supmul_{G_\RR}(\calH) &= \sup\set{\dim_{\CC}(\calM_\tau): \tau \in \widehat{G}_\RR-N} \\
		&= \sup\set{\dim_{\CC}(V_\tau): \tau \in \widehat{G}_\RR-N}.
	\end{align*}
	Then \eqref{eqn:DirectIntegralPIdeg} follows from \eqref{eqn:DirectIntegralAnn} and Proposition \ref{prop:pidegUpperbound}.
\end{proof}

\begin{corollary}\label{cor:MultiplictySpaceAndHom}
	For any $\tau \in \widehat{G}_\RR - N$, there exists an injection $\varphi_\tau \colon \topdual{\calM_\tau} \hookrightarrow \Hom_{G_\RR}(\calW, \calH_\tau)$.
	In particular, we have
	\begin{align*}
		\dim_{\CC}(\calM_\tau) \leq \dim_{\CC}(\Hom_{G_\RR}(\calW, \calH_\tau)).
	\end{align*}
\end{corollary}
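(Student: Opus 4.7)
The plan is to construct $\varphi_\tau$ by composition with $\iota_\tau$ from Theorem~\ref{thm:PointWiseGAmodule} as follows. For $\psi \in \topdual{\calM_\tau}$, the map $\id_{\calH_\tau}\toptensor \psi$ is a continuous linear $G_\RR$-map $\calH_\tau \toptensor \calM_\tau \rightarrow \calH_\tau$ (where $G_\RR$ acts only on the first tensor factor, so $\id \toptensor \psi$ is automatically $G_\RR$-equivariant). I then set
\begin{align*}
\varphi_\tau(\psi) := (\id_{\calH_\tau}\toptensor \psi)\circ \iota_\tau\colon \calW \rightarrow \calH_\tau.
\end{align*}
By property~\ref{enum:PointWiseGAmoduleConti} of Theorem~\ref{thm:PointWiseGAmodule}, $\iota_\tau$ is continuous and $G_\RR$-linear, so $\varphi_\tau(\psi)$ is continuous and $G_\RR$-linear, i.e.\ an element of $\Hom_{G_\RR}(\calW, \calH_\tau)$. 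Clearly $\varphi_\tau$ is $\CC$-linear in $\psi$.

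For injectivity, suppose $\psi \in \topdual{\calM_\tau}$ with $\varphi_\tau(\psi) = 0$. Then $\iota_\tau(\calW)$ is contained in $\Ker(\id_{\calH_\tau}\toptensor \psi) = \calH_\tau \toptensor \Ker(\psi)$, which is a closed subspace of $\calH_\tau \toptensor \calM_\tau$ (with $\Ker(\psi)$ closed in $\calM_\tau$ since $\psi$ is continuous). By property~\ref{enum:PointWiseGAmoduleDense}, the image $\iota_\tau(\calW)$ is dense, forcing $\calH_\tau \toptensor \Ker(\psi) = \calH_\tau \toptensor \calM_\tau$, hence $\Ker(\psi) = \calM_\tau$, hence $\psi = 0$. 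This proves $\varphi_\tau$ is injective.

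For the dimension inequality, the injection $\varphi_\tau$ gives $\dim_\CC(\topdual{\calM_\tau}) \leq \dim_\CC(\Hom_{G_\RR}(\calW, \calH_\tau))$, and since the canonical pairing $\calM_\tau \times \topdual{\calM_\tau} \rightarrow \CC$ is nondegenerate (by Hahn--Banach, or directly via the Riesz representation), one has $\dim_\CC(\calM_\tau) \leq \dim_\CC(\topdual{\calM_\tau})$, yielding the claim. No step is a serious obstacle here; the only subtlety is verifying that density of $\iota_\tau(\calW)$ precludes containment in a proper closed subspace of the specific product form $\calH_\tau \toptensor \Ker(\psi)$, which is immediate once one notes $\Ker(\psi)$ is closed in $\calM_\tau$.
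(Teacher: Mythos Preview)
Your proof is correct and follows exactly the same approach as the paper: define $\varphi_\tau(\psi) = (\id\toptensor \psi)\circ \iota_\tau$ and deduce injectivity from the density of $\iota_\tau(\calW)$ in $\calH_\tau\toptensor\calM_\tau$. The paper's proof is just a two-line version of what you wrote out in full.
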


\begin{proof}
	We define $\varphi_\tau$ by $\varphi_\tau(f) = (\id\toptensor f)\circ \iota_\tau$ for $f \in \topdual{\calM_\tau}$.
	Since $\iota_\tau(\calW)$ is dense in $\calH_\tau\toptensor \calM_\tau$,
	the linear map $\varphi_\tau$ is injective.
\end{proof}

\begin{remark}
	The corollary is known in \cite[Proposition 3.3.3]{Li18}.
\end{remark}

If $\alpha(X)$ is continuous on $\calW$ for any $X \in \calA$, then $\Hom_{G_\RR}(\calW, \calH_\tau)$ is a right $\calA$-module.
In the case, we can extend $\varphi_\tau$ to an $\calA$-module homomorphism.

\begin{corollary}
	Fix $\tau \in \widetilde{G}_\RR - N$.
	We provide $V_\tau$ with the locally convex linear topology given by the graph norms of the operators $\overline{\alpha_\tau(X)}$ ($X \in \calA$).
	Assume that $\alpha(X)$ is continuous on $\calW$ for any $X \in \calA$.
	Then $\varphi_\tau$ in Corollary \ref{cor:MultiplictySpaceAndHom} extends to an injective $\calA$-homomorphism
	\begin{align*}
		\topdual{V_\tau} \hookrightarrow \Hom_{G_\RR}(\calW, \calH_\tau).
	\end{align*}
\end{corollary}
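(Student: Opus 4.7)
The plan is to write down an explicit formula for the extension using ``Fourier coefficients'' of $\iota_\tau(v)$ with respect to an orthonormal basis of $\calH_\tau$, and then to use the Hahn--Banach description of continuous functionals for the graph-norm topology in order to prove convergence.

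First I would fix an orthonormal basis $\set{e_n}_{n\in\NN}$ of $\calH_\tau$ and, for $v\in \calW$, write $\iota_\tau(v)=\sum_n e_n\otimes m_n(v)$ with $m_n(v)\in\calM_\tau$ and $\sum_n\|m_n(v)\|^2=\|\iota_\tau(v)\|^2<\infty$. The construction of $V_\tau$ in the proof of Lemma \ref{lem:FactorStarRep} as the linear span of partial contractions $(\varphi\toptensor\id)(\iota_\tau(v))$ shows that $m_n(v)\in V_\tau$, and property \ref{enum:PointWiseGAmoduleEquiv} of Theorem \ref{thm:PointWiseGAmodule} together with the defining identity $\overline{\alpha(X)}=\overline{\id\toptensor\alpha_\tau(X)}$ from Lemma \ref{lem:FactorStarRep} yields the crucial intertwining relation $\alpha_\tau(X)m_n(v)=m_n(\alpha(X)v)$ for every $n\in\NN$, $X\in\calA$ and $v\in\calW$. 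With these coefficients in hand, I would define $\varphi_\tau(f)(v):=\sum_{n} f(m_n(v))\,e_n$ for $f\in\topdual{V_\tau}$.

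The main obstacle is showing that this series converges in $\calH_\tau$, i.e.\ that $\sum_n|f(m_n(v))|^2<\infty$. By Hahn--Banach applied to the continuous embedding $V_\tau\ni w\mapsto (w,\overline{\alpha_\tau(X_1)}w,\ldots,\overline{\alpha_\tau(X_r)}w)\in \calM_\tau^{r+1}$, any $f\in\topdual{V_\tau}$ can be represented (non-uniquely) as $f(w)=\langle w,w_0\rangle+\sum_{i=1}^r\langle\overline{\alpha_\tau(X_i)}w,w_i\rangle$ for some $X_i\in\calA$ and $w_i\in\calM_\tau$. Using the intertwining relation from the previous step this rewrites as $f(m_n(v))=\langle m_n(v),w_0\rangle+\sum_i\langle m_n(\alpha(X_i)v),w_i\rangle$, and Cauchy--Schwarz then gives
\begin{align*}
\sum_n |f(m_n(v))|^2\leq (r+1)\bigl(\|w_0\|^2\|\iota_\tau(v)\|^2+\sum_i\|w_i\|^2\|\iota_\tau(\alpha(X_i)v)\|^2\bigr),
\end{align*}
which is finite by continuity of $\iota_\tau$ and of each $\alpha(X_i)$ on $\calW$. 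The same estimate simultaneously gives continuity of $\varphi_\tau(f)\colon \calW\to\calH_\tau$.

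It remains to verify the formal properties. $G_\RR$-equivariance follows from property \ref{enum:PointWiseGAmoduleConti} of Theorem \ref{thm:PointWiseGAmodule}: expanding $\pi(g)\iota_\tau(v)=\iota_\tau(\pi(g)v)$ in the basis $\{e_n\}$ gives $m_n(\pi(g)v)=\sum_k\langle \pi_\tau(g)e_k,e_n\rangle m_k(v)$, and inserting this into the definition of $\varphi_\tau(f)$ yields $\varphi_\tau(f)(\pi(g)v)=\pi_\tau(g)\varphi_\tau(f)(v)$. The $\calA$-equivariance is immediate from $m_n(\alpha(X)v)=\alpha_\tau(X)m_n(v)$. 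Injectivity follows because the set $\set{m_n(v):n\in\NN,\,v\in\calW}$ spans $V_\tau$, so $\varphi_\tau(f)=0$ forces $f=0$. Finally, compatibility with the previous Corollary \ref{cor:MultiplictySpaceAndHom} is automatic: for bounded $f\in\topdual{\calM_\tau}$ the series $\sum_n f(m_n(v))e_n$ equals $(\id\toptensor f)(\iota_\tau(v))$.
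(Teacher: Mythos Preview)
Your argument is correct and takes a genuinely different, coordinate-based route from the paper. The paper instead forms the completion $D_\tau$ of $\calH_\tau\otimes V_\tau$ in the graph-norm topology, observes via Theorem~\ref{thm:PointWiseGAmodule}~\ref{enum:PointWiseGAmoduleConti},\ref{enum:PointWiseGAmoduleDense} that $\iota_\tau\colon\calW\to D_\tau$ is continuous, extends $\id\otimes f$ by continuity to $f'\colon D_\tau\to\calH_\tau$, and sets $\varphi_\tau(f)=f'\circ\iota_\tau$; injectivity then comes from density of $\iota_\tau(\calW)$ in $D_\tau$ (citing \cite[Proposition~2.2.12]{Sc89}). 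Your Fourier-coefficient approach is more explicit and avoids the abstract completion, at the cost of heavier bookkeeping; the paper's approach packages the same analytic estimate into a single continuity-and-density statement.

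One point needs tightening. The set $\{m_n(v):n\in\NN,\,v\in\calW\}$ need not \emph{span} $V_\tau$ as a vector space: by definition $V_\tau=\Phi(\iota_\tau(\calW))$ consists of vectors $(\varphi\toptensor\id)\iota_\tau(v)=\sum_n\varphi(e_n)m_n(v)$, which are generally infinite $\calM_\tau$-convergent sums in the $m_n(v)$. What you actually need for injectivity is that this set is total for $\topdual{V_\tau}$, and this does follow from your Hahn--Banach representation: if $f(m_n(v))=0$ for all $n,v$, then for $w=(\varphi\toptensor\id)\iota_\tau(v)$ one has $\alpha_\tau(X_i)w=(\varphi\toptensor\id)\iota_\tau(\alpha(X_i)v)$, whence each inner-product summand in $f(w)$ vanishes. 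The same Hahn--Banach representation, together with the standing hypothesis that $\alpha(X)$ commutes with $\pi(G_\RR)$, is what justifies the interchange of $f$ with the infinite sum implicit in your $G_\RR$-equivariance computation; you should make this explicit.
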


\begin{proof}
	We provide $\calH_\tau \otimes V_\tau$ with the locally convex linear topology given by the graph norms of the operators $\overline{\id\otimes \alpha_\tau(X)}$ ($X \in \calA$) and write $D_\tau$ for the completion of $\calH_\tau \otimes V_\tau$.
	Then for any $f \in \topdual{V_\tau}$, the $G_\RR$-linear map $\id\otimes f\colon \calH_\tau \otimes V_\tau \rightarrow \calH_\tau$ extends to a continuous $G_\RR$-linear map $f'\colon D_\tau\rightarrow \calH_\tau$.
	
	By Theorem \ref{thm:PointWiseGAmodule} \ref{enum:PointWiseGAmoduleConti} and \ref{enum:PointWiseGAmoduleDense}, the image of $\iota_\tau$ is contained in $D_\tau$ and $\iota_\tau\colon \calW\rightarrow D_\tau$ is continuous.
	By \cite[Proposition 2.2.12]{Sc89}, $\iota_\tau(\calW)$ is dense in $D_\tau$.
	Therefore we have $f' \circ \iota_\tau \in \Hom_{G_\RR}(\calW, \calH_\tau)$.
	Hence we obtain an injective $\calA$-homomorphism $\topdual{V_\tau}\rightarrow \Hom_{G_\RR}(\calW, \calH_\tau)$ whose restriction to $\topdual{\calM_\tau}$ coincides with $\varphi_\tau$.
\end{proof}

\subsection{Unitary representation and polynomial identity}

In Theorems \ref{thm:BoundedRestriction} and \ref{thm:BoundedInduction}, we have given an upper bound and a lower bound for multiplicities of restrictions and inductions of smooth representations.
We shall show analogues of these results for unitary representations.

Let $G_\RR$ be a reductive Lie group and $G'_\RR$ a reductive subgroup of $G_\RR$.
Assume that $\lie{g'}$ is algebraic in $\lie{g}$.
See Subsection \ref{subsect:CategoriesGKCW} and Definitions \ref{def:supmul}
and \ref{def:SupMultiplitiy} for the notations $\calC_{G'_\RR}$, $\SupDim(\cdot)$ and $\supmul_{G'_\RR}(\calH)$.

\begin{theorem}\label{thm:BoundedRestrictionUnitary}
	Let $(\pi, \calH)$ be an irreducible unitary representation of $G_\RR$
	and set $I:=\Ann_{\univ{g}}(\calH^\infty)$.
	Then there exists some constant $C > 0$ independent of $\calH$ such that
	\begin{align*}
		\PIdeg((\univ{g}/I)^{G'_\RR}) &\leq \supmul_{G'_\RR}(\calH) \\
		&\leq \SupDim(\Hom_{G'_\RR}(\calH^\infty, \calC_{G'_\RR})) \\
		&\leq C\cdot \PIdeg((\univ{g}/I)^{G'_\RR}).
	\end{align*}
	Moreover, $\calH|_{G'_\RR}$ is multiplicity-free only if $(\univ{g}/I)^{G'_\RR}$ is commutative.
\end{theorem}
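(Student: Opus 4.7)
The plan is to apply the disintegration machinery of Subsection \ref{subsect:DirectIntegral} to the restriction of $(\pi, \calH)$ to $G'_\RR$ and then chain the result with Theorem \ref{thm:BoundedRestriction}. First, I would set $\calW := \calH^\infty$ and $\calA := \univ{g}^{G'}$, the latter coinciding with $\univ{g}^{G'_\RR}$ because the image of $G'_\RR$ in $G'$ is Zariski dense. Since $\calH$ is irreducible unitary, $\calH^\infty$ is an irreducible Casselman--Wallach representation of $G_\RR$ (hence nuclear, Fr\'echet and separable), and it inherits a continuous $G'_\RR$-action via $G_\RR$-smoothness. The convention $X^* := -X$ for $X \in \lie{g}_\RR$, extended conjugate-linearly, makes $\univ{g}$ into a $*$-algebra; $\calA$ is $*$-stable because $\Ad(G'_\RR)$ preserves $\lie{g}_\RR$, and $\calW$ becomes a $*$-representation of $\calA$ because $\pi(X)$ is skew-Hermitian for $X \in \lie{g}_\RR$. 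Every element of $\calA$ commutes with $\pi(G'_\RR)$ by definition of the invariants, so the hypotheses of Theorem \ref{thm:PointWiseGAmodule} are satisfied for the canonical decomposition $\calH \simeq \int^\oplus_{\widehat{G'_\RR}} \calH_\tau \toptensor \calM_\tau\, d\mu(\tau)$ of $\calH|_{G'_\RR}$.

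The left inequality $\PIdeg((\univ{g}/I)^{G'_\RR}) \leq \supmul_{G'_\RR}(\calH)$ is then immediate from Corollary \ref{cor:DirectIntegralAnnPIdeg}, once one observes $\Ann_\calA(\calW) = I \cap \calA$, which yields the identification $(\univ{g}/I)^{G'_\RR} \simeq \calA/\Ann_\calA(\calW)$ (using reductivity of $G'$ and Zariski density to commute invariants with the quotient). For the middle inequality, Corollary \ref{cor:MultiplictySpaceAndHom} furnishes injections $\topdual{\calM_\tau} \hookrightarrow \Hom_{G'_\RR}(\calH^\infty, \calH_\tau)$ for $\mu$-a.e.\ $\tau$; Lemma \ref{lem:HomIsomSmooth} rewrites the right-hand side as $\Hom_{G'_\RR}(\calH^\infty, \calH_\tau^\infty)$, and since each $\calH_\tau$ is irreducible unitary the smooth vectors $\calH_\tau^\infty$ lie in $\calC_{G'_\RR}$. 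Taking the essential supremum over $\tau$ delivers $\supmul_{G'_\RR}(\calH) \leq \SupDim(\Hom_{G'_\RR}(\calH^\infty, \calC_{G'_\RR}))$. The rightmost inequality is then nothing but Theorem \ref{thm:BoundedRestriction} applied to $\calH^\infty \in \CWCat{G_\RR}$, whose length equals $1$ by irreducibility, so the constant $C$ depends only on $(G_\RR, G'_\RR)$.

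For the final multiplicity-free assertion, suppose $\calH|_{G'_\RR}$ is multiplicity-free, so $\supmul_{G'_\RR}(\calH) = 1$ and each $*$-representation $V_\tau$ produced by Theorem \ref{thm:PointWiseGAmodule} is at most one-dimensional. Every commutator $[X, Y]$ with $X, Y \in \calA$ then acts as zero on every $V_\tau$, and the identity \eqref{eqn:DirectIntegralAnn} forces $[X, Y] \in \Ann_\calA(\calW) = I \cap \calA$; hence $(\univ{g}/I)^{G'_\RR}$ is commutative. The bulk of the work is really in the first paragraph: checking that the topological and $*$-algebraic data on $\calH^\infty$ and $\calA$ fit the hypotheses of Theorem \ref{thm:PointWiseGAmodule} precisely. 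Once that is in place, the three inequalities and the commutativity statement follow almost mechanically from Corollaries \ref{cor:DirectIntegralAnnPIdeg} and \ref{cor:MultiplictySpaceAndHom} together with Theorem \ref{thm:BoundedRestriction}.
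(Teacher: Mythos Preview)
Your proposal is correct and follows essentially the same route as the paper: apply Theorem \ref{thm:PointWiseGAmodule} with $(\calH,\calW,\calA)=(\calH|_{G'_\RR},\calH^\infty,\univ{g}^{G'_\RR})$, read off the first two inequalities from Corollaries \ref{cor:DirectIntegralAnnPIdeg} and \ref{cor:MultiplictySpaceAndHom}, and invoke Theorem \ref{thm:BoundedRestriction} for the last. Your extra bookkeeping (the $*$-structure on $\univ{g}$, the passage $\Hom_{G'_\RR}(\calH^\infty,\calH_\tau)\simeq\Hom_{G'_\RR}(\calH^\infty,\calH_\tau^\infty)$ via Lemma \ref{lem:HomIsomSmooth}, and the explicit commutator argument for the multiplicity-free claim) are details the paper leaves implicit but which are exactly what one needs to fill in.
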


\begin{proof}
	The Casselman--Wallach representation $\calH^\infty$ is separable and nuclear.
	Since any $\pi(X)$ ($X \in \univ{g}^{G'_\RR}$) with the domain $\calH^\infty$ commutes with $\pi(G'_\RR)$, we can apply Theorem \ref{thm:PointWiseGAmodule} to
	\begin{align*}
		(\calH, \calW, \calA) = (\calH|_{G'_\RR}, \calH^\infty, \univ{g}^{G'_\RR}).
	\end{align*}
	The first two inequalities in the assertion are special cases of Corollaries \ref{cor:DirectIntegralAnnPIdeg} and \ref{cor:MultiplictySpaceAndHom}.
	The last inequality has been proved in Theorem \ref{thm:BoundedRestriction}.

	Suppose that $\calH|_{G'_\RR}$ is multiplicity-free.
	By Corollary \ref{cor:DirectIntegralAnnPIdeg}, $(\univ{g}/I)^{G'}$ can be embedded in a commutative algebra.
	Hence $(\univ{g}/I)^{G'}$ is commutative.
\end{proof}

If a $*$-representation is finite dimensional, then the representation is completely reducible.
Hence we can show that if the multiplicities in $\calH|_{G'_\RR}$ are finite $\mu$-a.e., then $(\univ{g}/I)^{G'}$ is semiprimitive.
The last assertion in Theorem \ref{thm:BoundedRestrictionUnitary} can be deduced from this result.
See also \ref{prop:PIdegSemiprimitive}.

\begin{theorem}\label{thm:BoundedInductionUnitary}
	Let $\calH'$ be an irreducible unitary representation of $G'_\RR$
	and set $I:=\Ann_{\univ{g'}}((\calH')^\infty)$.
	Then there exists some constant $C > 0$ independent of $\calH'$ such that
	\begin{align*}
		\PIdeg((\univ{g}/I\univ{g})^{G'_\RR}) &\leq \supmul_{G_\RR}(\uInd^{G_\RR}_{G'_\RR}(\calH')) \\
		&\leq \SupDim(\Hom_{G'_\RR}(\calC_{G_\RR}, (\calH')^\infty)) \\
		&\leq C\cdot \PIdeg((\univ{g}/I\univ{g})^{G'_\RR}).
	\end{align*}
	Moreover, $\uInd^{G_\RR}_{G'_\RR}(\calH')$ is multiplicity-free only if $(\univ{g}/I\univ{g})^{G'_\RR}$ is commutative.
\end{theorem}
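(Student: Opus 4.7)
The plan is to mirror the proof of Theorem \ref{thm:BoundedRestrictionUnitary} on the inducing side, applying the disintegration machinery of Theorem \ref{thm:PointWiseGAmodule} to the unitarily induced representation. Set $\calH := \uInd^{G_\RR}_{G'_\RR}(\calH')$ with irreducible decomposition $\calH \simeq \int^\oplus_{\widehat{G}_\RR} \calH_\tau \toptensor \calM_\tau\, d\mu(\tau)$, and let $\calW := \ccInd^{G_\RR}_{G'_\RR}((\calH')^\infty)$ together with $\calA := (\univ{g}/I\univ{g})^{G'_\RR}$. Combining Fact \ref{fact:CompactInd} and Fact \ref{fact:Nuclear}, $\calW$ is a continuous separable nuclear barreled representation of $G_\RR$ with dense image in $\calH$. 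The $(\rring{G/G'}\otimes\univ{g})$-module structure from Section 6 equips $\calW$ with an action of $(\rring{G/G'}\otimes\univ{g})^G \simeq (\univ{g}^{G'})^{\opalg}$ commuting with the $G_\RR$-action; by Lemma \ref{lem:AnnSubInd} (the evaluation at $e$ maps $\calW$ onto $(\calH')^\infty$), this action descends to a faithful action of $\calA$. The $*$-operation $X^* = -X$ on $\lie{g}_\RR$ is the formal $L^2$-adjoint for right translation, preserves $I$ (which annihilates a unitary representation) and $\univ{g}^{G'_\RR}$, and by Lemma \ref{lem:CommuteTwoSidedIdeal} descends to $\calA$, making $\calW$ a $*$-representation of $\calA$ with commuting $G_\RR$-action.

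With these data in place, Theorem \ref{thm:PointWiseGAmodule} applies. For the first inequality $\PIdeg(\calA) \leq \supmul_{G_\RR}(\calH)$, Corollary \ref{cor:DirectIntegralAnnPIdeg} yields $\PIdeg(\calA) = \PIdeg(\calA/\Ann_\calA(\calW)) \leq \supmul_{G_\RR}(\calH)$ using the faithfulness above. For the final \emph{moreover} assertion, multiplicity-freeness forces $\dim_{\CC}(V_\tau) \leq 1$ $\mu$-a.e.\ for the pre-Hilbert $*$-representations $V_\tau$ produced by Theorem \ref{thm:PointWiseGAmodule}; the resulting embedding $\calA \hookrightarrow \prod_\tau \End_{\CC}(V_\tau) \subset \prod_\tau \CC$ then forces $\calA$ to be commutative.

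For the middle inequality, Corollary \ref{cor:MultiplictySpaceAndHom} gives $\dim_\CC(\calM_\tau) \leq \dim_\CC \Hom_{G_\RR}(\calW, \calH_\tau)$ for $\mu$-a.e.\ $\tau$. Any continuous $G_\RR$-equivariant operator $T\colon \calW \to \calH_\tau$ automatically takes values in $\calH_\tau^\infty$ (smoothness of $\calW$ and continuity of $T$), and Lemma \ref{lem:HomBarrelled}, using the barreledness of $\calW$ together with the sequential density of $\calH_\tau \subset \calH_\tau^{-\infty}$ recalled before Lemma \ref{lem:HomIsomSmooth}, promotes $T$ to a continuous operator $\calW \to \calH_\tau^\infty$. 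The unitary form of Proposition \ref{prop:SmoothReciprocity} identifies $\Hom_{G_\RR}(\calW, \calH_\tau^\infty) \simeq \Hom_{G'_\RR}(\calH_\tau^\infty, (\calH')^\infty)$, and since $\calH_\tau^\infty \in \Irr(\calC_{G_\RR})$, taking the essential supremum gives $\supmul_{G_\RR}(\calH) \leq \SupDim(\Hom_{G'_\RR}(\calC_{G_\RR}, (\calH')^\infty))$. The last inequality is already established in Theorem \ref{thm:BoundedInduction}.

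The main technical obstacle is the verification that the right $\univ{g}^{G'_\RR}$-action on $\calW$ is continuous with respect to the strict inductive limit topology and that its formal $L^2$-adjoint on $\calW$ coincides with the algebraic $*$-operation, so that the hypotheses of Theorem \ref{thm:PointWiseGAmodule} (a genuine $*$-representation of $\calA$ commuting with the $G_\RR$-action on a separable nuclear space continuously embedded in $\calH$) are fully met; once these compatibilities are in hand, the disintegration and Frobenius reciprocity deliver the two new inequalities mechanically.
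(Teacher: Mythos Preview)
Your argument is correct and follows the same route as the paper: apply Theorem \ref{thm:PointWiseGAmodule} to $(\calH,\calW,\calA)=(\uInd^{G_\RR}_{G'_\RR}(\calH'),\,\ccInd^{G_\RR}_{G'_\RR}((\calH')^\infty),\,\univ{g}^{G'_\RR})$, read off the first inequality and the multiplicity-free statement from Corollary \ref{cor:DirectIntegralAnnPIdeg}, the middle inequality from Corollary \ref{cor:MultiplictySpaceAndHom} together with Proposition \ref{prop:SmoothReciprocity}, and the last inequality from Theorem \ref{thm:BoundedInduction}, with Lemma \ref{lem:AnnSubInd} identifying the annihilator. Your write-up is in fact more detailed than the paper's (you spell out the $*$-structure and the passage $\Hom_{G_\RR}(\calW,\calH_\tau)\to\Hom_{G_\RR}(\calW,\calH_\tau^\infty)$ via Lemma \ref{lem:HomBarrelled}); the only superfluous worry is your final ``technical obstacle'': Theorem \ref{thm:PointWiseGAmodule} does not require the $\calA$-action on $\calW$ to be continuous, only that $\calW\hookrightarrow\calH$ is continuous and that each $\alpha(X)$ commutes with $\pi(G_\RR)$ as a closable operator, both of which are immediate here.
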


\begin{proof}
	By Fact \ref{fact:CompactInd}, the induced representation $\ccInd^{G_\RR}_{G'_\RR}((\calH')^\infty)$ is separable and nuclear.
	Hence we can apply Theorem \ref{thm:PointWiseGAmodule} to
	\begin{align*}
		(\calH, \calW, \calA) = (\uInd^{G_\RR}_{G'_\RR}(\calH'), \ccInd^{G_\RR}_{G'_\RR}((\calH')^\infty), \univ{g}^{G'_\RR}).
	\end{align*}
	By Lemma \ref{lem:AnnSubInd}, we have
	\begin{align*}
		\Ann_{\univ{g}^{G'_\RR}}(\ccInd^{G_\RR}_{G'_\RR}((\calH')^\infty)) = (I\univ{g})^{G'_\RR}
	\end{align*}
	as a right module, and by Proposition \ref{prop:SmoothReciprocity},
	\begin{align*}
		\Hom_{G'_\RR}(\calH^\infty, (\calH')^\infty) \simeq \Hom_{G_\RR}(\ccInd^{G_\RR}_{G'_\RR}((\calH')^\infty), \calH^\infty)
	\end{align*}
	for any irreducible unitary representation $\calH$ of $G_\RR$.
	Therefore the theorem follows from Theorem \ref{thm:BoundedInduction} and Corollaries \ref{cor:DirectIntegralAnnPIdeg} and \ref{cor:MultiplictySpaceAndHom}.
\end{proof}


\def\cprime{$'$} \def\cprime{$'$}

\end{document}